   \def\MR#1{}
\theoremstyle{plain}
\newtheorem{theorem}{Theorem}[section]
\newtheorem{theoremA}{Theorem}
\newtheorem{lemma}[theorem]{Lemma}
\newtheorem{proposition}[theorem]{Proposition}
\newtheorem{corollary}[theorem]{Corollary}
\theoremstyle{definition}
\newtheorem{definition}[theorem]{Definition}
\newtheorem{example}[theorem]{Example}
\newtheorem{remark}[theorem]{Remark}
\newcommand{\R}{\mathbb{R}}
\newcommand{\C}{\mathbb{C}}
\newcommand{\N}{\mathbb{N}}
\newcommand{\G}{\mathcal{G}}
\newcommand{\T}{\mathcal{T}}
\newcommand{\Z}{\mathbb{Z}}
\newcommand{\Lcal}{\mathcal{L}}
\newcommand{\Acal}{\mathcal{A}}
\newcommand{\Ncal}{\mathcal{N}}
\newcommand{\w}{\wr_{*}}
\newcommand{\ebar}{\overline{e}}
\newcommand{\ot}{\otimes}
\newcommand{\id}{{\rm id}}
\newcommand{\Tr}{{\rm Tr}}
\newcommand{\Pol}{{\rm Pol}}
\newcommand{\Irr}{{\rm Irr}}
\newcommand{\Linf}{{\rm L}^\infty}
\newcommand{\F}{\mathbb{F}}
\newcommand{\Aut}{{\rm Aut}}
\newcommand{\Ad}{{\rm Ad}}
\newcommand{\Int}{{\rm Int}}
\newcommand{\Mor}{\mathrm{Mor}}
\newcommand{\Rep}{\mathrm{Rep}}
\newcommand{\GB}{{\rm Aut}^+(B,\psi)}
\newcommand{\GG}{\mathbb{G}}
\newcommand{\E}{\mathbb{E}}
\newcommand{\Ccal}{\mathcal{C}}
\newcommand{\Zcal}{\mathcal{Z}}
\newcommand{\Ucal}{\mathcal{U}}
\newcommand{\Gtilde}{\widetilde{\GG}}
\newcommand{\Etilde}{\widetilde{\E}}
\newcommand{\Dtilde}{\widetilde{\Delta}}
\newcommand{\otm}{\ot_{{\rm max}}}
\newcommand{\ubar}{\overline{u}}
\newcommand\scalemath[2]{\scalebox{#1}{\mbox{\ensuremath{\displaystyle #2}}}}
\title{Generalized free wreath products and their operator algebras}
\author{Pierre Fima}
\address{Pierre Fima
\newline
Universit\'e Paris Cit\'e and Sorbonne Universit\'e, CNRS, IMJ-PRG, F-75013 Paris, France.}
\email{pierre.fima@imj-prg.fr}
\thanks{P.F. is partially supported by the CNRS IEA GAOA}
 \author{Arthur Troupel}
\address{Arthur Troupel
\newline
Universit\'e Paris Cit\'e and Sorbonne Universit\'e, CNRS, IMJ-PRG, F-75013 Paris, France.}
\email{arthur.troupel@imj-prg.fr}
\begin{document}

\begin{abstract}
   We develop a new approach on free wreath products, generalizing the constructions of Bichon and of Fima-Pittau. We show stability properties for certain approximation properties such as exactness, Haagerup property, hyperlinearity and K-amenability. We study qualitative properties of the associated von Neumann algebra: factoriality, primeness and absence of Cartan subalgebra and we give a formula for Connes' $T$-invariant. Finally, we give some  explicit computations of K-theory groups for C*-algebras of generalized free wreath products.
\end{abstract}

\maketitle

\section{Introduction}

\noindent Quantum information theory has now emerged as a powerful framework for processing information, leveraging the rich structures of quantum mechanics. Over the past decade, there has been a growing interest in understanding probability distributions arising from models of entangled quantum systems. In this context, compact quantum groups (CQG), as introduced by Woronowicz \cite{wor87,wor88,wor98}, play a fundamental role in capturing the natural notion of symmetry. More recently, deep connections have been established with long-standing problems in operator algebras. A major breakthrough in this area is the proposed resolution of the Connes Embedding Problem by Ji, Natarajan, Vidick, Wright, and Yuen in 2020 \cite{JNVWY22}, which employs techniques from quantum information theory to address a fundamental question regarding the approximation of operator algebras using finite structures. Simultaneously, intriguing links have emerged between quantum information theory and compact quantum groups. In particular, the well-established connection between quantum automorphisms and quantum information theory \cite{MR20} provides insight into non-local games and physically observable quantum behaviors.

\vspace{0.2cm}

\noindent We believe that a detailed study of operator algebras associated with quantum automorphism groups could lead to concrete applications in quantum information theory. The present paper is dedicated to such an investigation.

\vspace{0.2cm}

\noindent Following Woronowicz’s development of compact quantum group theory—which notably includes Drinfeld and Jimbo’s $q$-deformations of compact Lie groups \cite{dri86,dri87,jim85}—new examples of compact quantum groups emerged from Wang’s work \cite{Wa95,wan98}. Among them are the quantum permutation group $S_N^+$ and, more generally, the quantum automorphism group ${\rm Aut}^+(B,\psi)$ of a finite-dimensional C*-algebra equipped with a faithful state $\psi$ \cite{Ba99, Ba02}. The operator algebras associated with ${\rm Aut}^+(B,\psi)$ have been extensively studied \cite{Br13,Vo17}. Additionally, the quantum automorphism group of a finite graph was introduced by Bichon in \cite{bic03}. It is well-known that the classical automorphism group of the graph obtained by taking the $N$-fold disjoint union of a finite connected graph $\G$ is given by the wreath product of the classical automorphism group of $\G$ with $S_N$. Motivated by this, Bichon introduced a free version of the wreath product \cite{Bi04}, which satisfies a similar property for quantum automorphisms, when replacing the group $S_N$ by the quantum permutation group $S_N^+$, offering a parallel geometric interpretation in the quantum setting.

\vspace{0.2cm}

\noindent This \emph{free wreath product} introduced by Bichon has been subsequently generalized in various directions \cite{pit14,FP16,TW18,FS18,Fr22}, and is the central object of study in this paper. The operator algebras associated with Bichon’s free wreath product $G\wr_*S_N^+$, where $G$ is a compact quantum group, are now well understood \cite{FT24}. The primary goal of this work is to extend this understanding to a more general free wreath product, namely $G\wr_*{\rm Aut}^+(B,\psi)$ as defined in \cite{FP16}.

\vspace{0.2cm}

\noindent During our study, we found it beneficial to develop an alternative approach to defining the quantum group $G\wr_*{\rm Aut}^+(B,\psi)$, diverging from the existing methods in \cite{FP16} and \cite{TW18}. Compact quantum groups, are naturally categorical objects, and both previous approaches define the free wreath product via Tannaka-Krein reconstruction. While this method is well-adapted to grasp the representation theory of the free wreath product, our prior research \cite{FT24} on operator algebras associated with Bichon’s free wreath product $G\wr_*S_N^+$ revealed that the representation theory is not a mandatory step toward understanding the operator algebras. Instead, the key ingredient was the construction of a specific representation of the C*-algebra of the free wreath product, through its universal property as originally defined by Bichon rather than through Tannaka-Krein reconstruction.

\vspace{0.2cm}

\noindent This realization led us to seek a new definition of the C*-algebra of the free wreath product $G\wr_*{\rm Aut}^+(B,\psi)$, aiming for a universal property simpler than that of the Tannaka-Krein approach. In doing so, we were able to unify and generalize various constructions from \cite{Bi04,pit14,FP16,Fr22}. Specifically, given any compact quantum group $G$ and any $\psi$-preserving action $H\curvearrowright B$ of a compact quantum group $H$ on a finite-dimensional C*-algebra $B$, we construct in Theorem \ref{ThmDefFreeWr} a compact quantum group $G\wr_{*,\beta}H$. This construction coincides with $G\wr_*{\rm Aut}^+(B,\psi)$ when considering the universal action $\beta: {\rm Aut}^+(B,\psi) \curvearrowright B$. Furthermore, we introduce an amalgamated version, $G\wr_{*,\beta, F}H$, incorporating a dual quantum subgroup $F$ of $G$, thereby unifying the constructions in \cite{pit14,FP16} with those in \cite{Fr22}.

\vspace{0.2cm}

\noindent In the preliminary section, Section \ref{preliminaries} after introducing the necessary background on amalgamated free products and compact quantum groups, we provide a detailed review of compact quantum group actions on finite-dimensional C*-algebras and establish key technical lemmas that will be used throughout the paper.

\vspace{0.2cm}

\noindent In Section \ref{sectiongen}, we construct the (amalgamated) generalized free wreath product $\GG := G\wr_{*,\beta, F}H$ and present several examples.

\vspace{0.2cm}

\noindent Section \ref{sectionblock} develops our main analytical tool: the block decomposition technique, which we use to compute the Haar measure.

\vspace{0.2cm}

\noindent In Section \ref{section reduced}, we use the Haar measure to analyze the reduced C*-algebra, the von Neumann algebra, and the modular structure of $\GG$. This, in turn, allows us to investigate approximation properties of the associated operator algebras. To state our first result, we will use the following non-standard terminology: we will always denote by $\widehat{G}$ the discrete dual of a compact quantum group $G$, and we say that $\widehat{G}$ has the \textit{Haagerup property} if $\Linf(G)$ has the Haagerup property (with respect to the Haar state, see \cite{CS15}) and that $\widehat{G}$ is \textit{exact} when the reduced C*-algebra $C_r(G)$ is an exact C*-algebra. When $G$ is Kac, we say that its dual $\widehat{G}$ is \textit{hyperlinear} if the finite von Neumann algebra $(\Linf(G),h_G)$ embeds in an ultraproduct of the hyperfinite ${\rm II}_1$-factor in a trace preserving way, where $h_G$ denotes the Haar state on $\Linf(G)$. Finally, $G$ is called \textit{coamenable} when the canonical surjection from its full C*-algebra $C(G)$ to $C_r(G)$ is an isomorphism. The main result is the following, see Theorem \ref{thmAtxt}.

\begin{theoremA}\label{ThmA}
Let $\beta\,:\, H\curvearrowright B$ be an ergodic action, $G$ any compact quantum group with dual quantum subgroup $F$ and define $\GG:=G\wr_{*,\beta,F}H$. The following holds.
\begin{enumerate}
    \item $\widehat{\GG}$ is exact if and only if both $\widehat{G}$ and $\widehat{H}$ are exact.
    \item If $\Irr(F)$ is finite, $\widehat{\GG}$ has the Haagerup property if and only if both $\widehat{G}$ and $\widehat{H}$ have the Haagerup property.
    \item $\GG$ is Kac if and only if both $G$ and $H$ are Kac and in this case, assuming that $F$ is coamenable, $\widehat{\GG}$ is hyperlinear if and only if both $\widehat{G}$ and $\widehat{H}$ are hyperlinear.
    \item If $F$ is trivial and both $G$ and $H$ are non-trivial then $\GG$ is coamenable if and only if $G=H=\Z_2$ and $B\in\{\C,\C^2\}$.
\end{enumerate}
\end{theoremA}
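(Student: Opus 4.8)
My plan is to treat the four statements largely independently, proving each via the block decomposition / Haar measure machinery of Section \ref{sectionblock} together with the amalgamated-free-product description of $C_r(\GG)$ and $\Linf(\GG)$ established in Section \ref{section reduced}. The unifying idea is that $\GG = G\wr_{*,\beta,F}H$ is built as an amalgamated free product of (countably many) copies of ingredients coming from $G$ over $F$, together with one copy related to $H\curvearrowright B$, so each of (1)--(3) reduces to a permanence property of the relevant class (exact C*-algebras, Haagerup property, hyperlinearity) under amalgamated free products over the common finite/amenable subalgebra coming from $F$. For the forward implications I will exhibit $\widehat G$ and $\widehat H$ (equivalently $C_r(G)$, $\Linf(G)$, and their $H$-counterparts) as subobjects of $\GG$-objects that inherit the property; for the converse implications I will assemble the property for $\GG$ from that of the building blocks.

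For (1), I would use the known stability of exactness under reduced amalgamated free products over a finite-dimensional (here the inclusion is over the image of $C(F)$, which is finite-dimensional since, though $\Irr(F)$ need not be finite, the amalgam is over a co-finite-dimensional corner — I will need to check the precise form of the amalgam appearing in our construction): if both $C_r(G)$ and $C_r(H)$ are exact, then so is the amalgamated free product describing $C_r(\GG)$, and conversely $C_r(G)$ and $C_r(H)$ sit as subalgebras with conditional expectations, hence are exact. For (2), the finiteness of $\Irr(F)$ is exactly what is needed so that the amalgam is over a finite-dimensional C*-algebra with a state for which the relevant conditional expectations are well behaved; then I invoke stability of the Haagerup property under amalgamated free products over finite-dimensional subalgebras (Boca-type results, in the von Neumann setting), after first establishing the Haagerup property of each building block from that of $\Linf(G)$ and $\Linf(H)$. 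For (3), the Kac characterization follows immediately from the modular/Haar-state computation of Section \ref{section reduced} (the modular group of $h_\GG$ is trivial iff those of $h_G$, $h_H$ and the state $\psi$ are, and $\psi$ being a trace is automatic on the relevant blocks when $H$ is Kac and $\beta$ ergodic); hyperlinearity then follows from Brown-Dykema-Jung-type stability of embeddability into $R^\omega$ under tracial amalgamated free products over the (amenable, since $F$ coamenable and Kac) subalgebra.

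Statement (4) is the genuinely new computation and I expect it to be the main obstacle. Here $F$ is trivial, so $\GG = G\wr_{*,\beta}H$ and $C(\GG)$ is a genuine (non-amalgamated) free product type construction; coamenability of $\GG$ forces coamenability of $G$ and $H$ (they are dual quantum subgroups, or appear as such after the construction), so $G$ and $H$ are coamenable, and one then has to decide when the free-wreath-type free product stays coamenable. The plan is: first, coamenability of $\GG$ implies coamenability of $G$ and $H$; second, reduce to the classical/discrete side by noting that a coamenable $\GG$ whose ingredients are nontrivial must have $\Linf(\GG)$ amenable, which via the free-product structure is extremely restrictive — essentially the only way a free product (over $\C$, or over the small amalgam) of nontrivial pieces is amenable is the $\Z_2 * \Z_2$ situation; third, run the computation of $C(\GG)$ explicitly for small $G,H$ and $B$ to pin down that the surviving case is exactly $G=H=\Z_2$ with $B\in\{\C,\C^2\}$, using that for $B=\C$ one has $\Aut^+(\C)$ trivial (excluded) while $B=\C^2$ gives $S_2^+=\Z_2$ and larger $B$ gives $S_N^+$ nonamenable for $N\geq 4$ or still forces growth. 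The delicate point is ruling out all the borderline cases (e.g. $B=\C^2$ with the non-uniform state $\psi$, or $G$ or $H$ equal to $\Z_3$), which I will do by directly identifying enough of the fusion/representation structure of $\GG$ from the block decomposition to exhibit a free subgroup or a non-amenable quantum subgroup, contradicting coamenability. I anticipate that assembling this case analysis cleanly — and in particular verifying that $G=H=\Z_2$, $B=\C^2$ really does yield a coamenable $\GG$ (rather than merely failing the obstructions) — will require the most care.
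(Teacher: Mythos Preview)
Your broad strategy—reduce everything to permanence of approximation properties under amalgamated free products, using the block decomposition of Section~\ref{sectionblock}—is exactly what the paper does, but you are missing the key structural move that makes (2) and (3) go through, and there are a couple of factual errors in (4).

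For (2) and (3): the natural free-product description of $\Linf(\GG)$ (or of $\Linf(\Gtilde)$) has amalgam $\Linf(H)\otimes\Linf(F)$, which is \emph{not} finite-dimensional even when $\Irr(F)$ is finite, since $H$ is arbitrary. So your plan ``invoke stability of the Haagerup property under amalgamated free products over finite-dimensional subalgebras'' does not apply directly. The paper's fix is to reorganise $\Linf(\Gtilde)$ as an \emph{iterated} amalgamated free product in which every amalgam is $B\otimes\Linf(F)$, which \emph{is} finite-dimensional under the hypothesis on $F$: one first forms $M_0=(\ast_\kappa M_{N_\kappa}(\C))\otimes\Linf(H)\otimes\Linf(F)$ and then adjoins the copies $B\otimes\Linf(G)$ one at a time over $B\otimes\Linf(F)$ (Remark~\ref{rmk fd amal}). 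Only after this rearrangement can one cite \cite{CKSVW21} for Haagerup and \cite{BDJ08} for hyperlinearity. Your proposal does not contain this reorganisation, and without it the argument stalls. (For (1) your confusion about the amalgam is harmless: Dykema's exactness theorem \cite{Dy04} needs no hypothesis on the amalgam. But you still need the isomorphism $C_{r,\kappa}(\Gtilde)\simeq M_{N_\kappa}(\C)\otimes C_{r,\kappa}(\GG)$ to see $C_r(G)$ inside something built from $C_r(\GG)$.)

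For (4): $G$ is \emph{not} a dual quantum subgroup of $\GG$; rather $G^{*K}$ is a quantum subgroup via the surjection $\pi_G$ of Remark~\ref{RmkFwp}(3), which is how the paper obtains coamenability of $G$. More importantly, the paper's argument for $(2)\Rightarrow(3)$ does not proceed by ``identifying fusion structure'': it cuts $L_\kappa(\Gtilde)=C\ast_B D$ by a minimal projection $p\in B$ to produce an honest \emph{non-amalgamated} free product $pCp\ast pDp\subset pL_\kappa(\Gtilde)p$, then applies Ueda's dichotomy \cite{Ue10} to force $\dim(\Linf(G))\le 2$ and $\dim(\beta_\kappa(p)(M_{N_\kappa}\otimes\Linf(H))\beta_\kappa(p))\le 2$. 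This compression trick, together with a separate classification of ergodic actions of $\Z_2$, $\Z_3$ and $\widehat{S_3}$ on low-dimensional $B$ (Lemmas~\ref{LemZ3}--\ref{LemS3}), is what handles the borderline cases; there is no way to avoid this case analysis, and your proposal does not indicate how you would carry it out.
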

\noindent Note that this result is particularly applicable to the universal action $\beta : {\rm Aut}^+(B, \psi) \curvearrowright B$, which is ergodic so that Theorem \ref{ThmA} covers in particular the case of Fima-Pittau free wreath products. The statement of Theorem \ref{ThmA} concerning exactness is only known for Fima-Pittau free wreath products \cite{FP16} (see also \cite{lt14} for the more specific case of Bichon's free wreath-products). The other statements of Theorem \ref{ThmA} are only known for Bichon's free wreath products \cite{FT24} i.e. when considering the universal action $\beta\,:\,S_N^+\curvearrowright\C^N$. Let us however mention the works \cite{lem14,lt14,FP16,TW18} containing results in relation with the stability of the Haagerup property (actually the stability of the \textit{central ACPAP}) for some specific free wreath products and the work \cite{BCF20}, based on the topological generation method developed in \cite{BCV17}, concerning the hyperlinearity of quantum reflection groups $\widehat{\Z_s}\wr_*S_N^+$.

\vspace{0.2cm}

\noindent In Section \ref{section quali}, we delve into the qualitative properties of the von Neumann algebra $\text{L}^\infty(\GG)$. Following a preliminary section on intertwining theory, we examine the factoriality and Connes' $T$ invariant of $\text{L}^\infty(\GG)$.

\begin{theoremA}\label{ThmB}
Suppose that $\beta$ is $2$-ergodic, both $G$ and $H$ are infinite quantum groups and $F$ is trivial then $\Linf(\GG)$ is a non amenable factor of type ${\rm II}_1$ or ${\rm III}$ and its $T$ invariant is:
$$T(\Linf(\GG))=\{t\in\R\,:\,\sigma_t^{h_G}=\id\text{ and }\sigma_t^{h_H}=\id\}.$$
where $\sigma_t^{h_G}$ is the modular group of the Haar state of $G$ (and the same for $H$). Moreover $\Linf(\GG)$ is of type ${\rm II}_1$ if and only if both $G$ and $H$ are Kac.
\end{theoremA}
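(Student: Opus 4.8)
The plan is to deduce Theorem \ref{ThmB} from the structural description of $\Linf(\GG)$ obtained in the earlier sections, together with a free-product/amalgamation argument. Recall from Section \ref{section reduced} that the Haar state analysis realizes $\Linf(\GG)$ (for $F$ trivial) as a von Neumann algebraic amalgamated free product built from $N$ (where $N = \dim B$ is encoded via the blocks of $B$) copies of $\Linf(G)$ together with a copy of $\Linf(H)$, amalgamated over the diagonal $\Linf(B,\psi)$-part. Since $\beta$ is $2$-ergodic and $G$, $H$ are infinite, this free product is ``genuinely free'' in the sense that no two of the amalgamated pieces coincide with the base, and each piece is infinite-dimensional over the base. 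The strategy is therefore: (i) establish factoriality, (ii) read off the type, and (iii) compute the $T$-invariant.

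\textbf{Step 1: Factoriality and non-amenability.} First I would invoke the general machinery for factoriality of amalgamated free products of von Neumann algebras equipped with faithful normal states — in the tracial case this is Popa's work, and in the type III case one uses the theory developed by Ueda and by Houdayer–Ueda on free products with respect to arbitrary states, or more directly the fact that a free product of diffuse von Neumann algebras over a common Cartan-like or atomic subalgebra is a factor under a mild non-triviality hypothesis. The $2$-ergodicity of $\beta$ is exactly what guarantees that the base $\Linf(B,\psi)$ sits inside each copy of the $G$-piece ``irreducibly enough'' so that the relative commutant computation collapses to the scalars; this is the precise role of the hypothesis and it should already have been exploited in the proof of Theorem \ref{ThmA}(3) or in an intertwining lemma in Section \ref{section quali}. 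Non-amenability then follows because the free product of non-amenable pieces (or of at least two non-trivial pieces over a small base) is non-amenable; since $G$ and $H$ are infinite, $\Linf(\GG)$ contains a non-amenable free subalgebra, e.g. via the intertwining/embedding results of the preceding subsection. So $\Linf(\GG)$ is a non-amenable factor, hence of type ${\rm II}_1$ or type ${\rm III}$ (type ${\rm I}_n$ and ${\rm II}_\infty$ are excluded since $\Linf(\GG)$ is finite-dimensional-free and carries a faithful normal state that is a trace precisely when Kac — and a factor carrying a faithful normal state is $\sigma$-finite, so the only remaining options are ${\rm II}_1$ and ${\rm III}$).

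\textbf{Step 2: Type dichotomy.} The Haar state $h_\GG$ on $\Linf(\GG)$ is a trace if and only if $\GG$ is Kac, and by Theorem \ref{ThmA}(3) (proved in Section \ref{section reduced}) $\GG$ is Kac iff both $G$ and $H$ are Kac. If $\GG$ is Kac then $\Linf(\GG)$ is a finite factor, hence type ${\rm II}_1$ (it is infinite-dimensional since $G$ or $H$ is infinite). If $\GG$ is not Kac, then since $\Linf(\GG)$ is a factor with a non-tracial faithful normal state and it is not type ${\rm II}_1$, it must be type ${\rm III}$ (type ${\rm II}_\infty$ and ${\rm I}$ are ruled out as above). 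This gives the last sentence of the theorem.

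\textbf{Step 3: Connes' $T$-invariant.} Here the computation of the modular group of $h_\GG$ from Section \ref{section reduced} is the key input: the modular group $\sigma^{h_\GG}_t$ restricts, on each copy of $\Linf(G)$ inside $\Linf(\GG)$, to $\sigma^{h_G}_t$, and on the copy of $\Linf(H)$ to $\sigma^{h_H}_t$ (this is how modular structure behaves under amalgamated free products with states whose modular groups agree on the base — the standard fact from Dykema and Ueda). Therefore $\sigma^{h_\GG}_t$ is inner on $\Linf(\GG)$ if and only if it is trivial: indeed, by the factoriality and the free-product structure, a unitary implementing $\sigma^{h_\GG}_t$ would have to commute modularly with each free piece, forcing $\sigma^{h_G}_t$ and $\sigma^{h_H}_t$ to be inner on their respective (sub)factors, and then by the usual rigidity of free products an inner automorphism that is a free product of automorphisms of the pieces is inner only if each piece-automorphism is inner and the implementing unitaries live in the base; combined with factoriality of each piece and the fact that the Haar state on a compact quantum group has the property that $\sigma^{h_G}_t$ is inner on $\Linf(G)$ iff it is the identity (because it fixes the Haar state and a state-preserving inner automorphism of a factor implemented by a unitary in the centralizer is trivial once one pushes through the comultiplication — more simply, $T(\Linf(G))$ for a compact quantum group equals $\{t : \sigma^{h_G}_t = \id\}$ when $\Linf(G)$ is a factor, and in general the relevant statement is the one we are proving relativized to a single factor). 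Thus $t \in T(\Linf(\GG))$ iff $\sigma^{h_\GG}_t = \id$ iff $\sigma^{h_G}_t = \id$ and $\sigma^{h_H}_t = \id$.

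\textbf{Main obstacle.} The delicate point is Step 1 in the type ${\rm III}$ regime: proving factoriality of an amalgamated free product with respect to non-tracial states requires care, and one must verify the precise non-degeneracy hypotheses (each amalgamated piece is ``free enough'' over the base) which is exactly where $2$-ergodicity of $\beta$ enters — the base $\Linf(B,\psi)$ embedded diagonally must have trivial relative commutant computations in each $G$-piece, and one has to check this is not spoiled by the modular structure. I expect this to have been prepared by an intertwining-theory lemma in the preliminary part of Section \ref{section quali}, so the proof will mostly consist of citing the appropriate free-product factoriality theorem (Ueda, or Houdayer–Ueda) and verifying its hypotheses, then reading off type and $T$-invariant from the modular data already computed.
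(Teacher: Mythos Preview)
Your proposal has the right general architecture (amalgamated free product plus Houdayer--Ueda/Houdayer--Isono type results), but two concrete points are wrong or missing, and they are exactly where the content of the proof lies.

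First, the free product structure you describe is not the one in the paper. It is \emph{not} ``$N$ copies of $\Linf(G)$ and one copy of $\Linf(H)$ amalgamated over $B$''. The paper uses a two-level decomposition: at the outer level $\Linf(\GG)=\ast_{\Linf(H)} L_\kappa(\GG)$ is amalgamated over $\Linf(H)$ (one piece per matrix block $1\le\kappa\le K$ of $B$), and at the inner level, after tensoring with $M_{N_\kappa}(\C)$ (the ``block extension''), one has $M_{N_\kappa}(\C)\ot L_\kappa(\GG)\simeq (B\ot\Linf(G))\ast_{B}(M_{N_\kappa}(\C)\ot\Linf(H))$. The inner amalgam is over $B$, with $B$ embedded in the right-hand factor via $\beta_\kappa$. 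Theorem~\ref{HI20} (Houdayer--Isono) is applied to these decompositions to get $\mathcal Z(\Linf(\GG))\subset B$ and $T(\Linf(\GG))=\{t:\exists\,u\in\mathcal U(B),\ \sigma_t^{h_\GG}=\Ad(u)\}$, but this reduces the problem rather than solving it.

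Second, you misidentify where $2$-ergodicity enters. It is not an irreducibility statement about $B$ inside $\Linf(G)$. The modular group of the free-product state restricts to $\sigma_t^\psi\ot\sigma_t^{h_G}$ on $B\ot\Linf(G)$ and to $\sigma_{-t}^{\psi_\kappa}\ot\sigma_t^{h_H}$ on $M_{N_\kappa}(\C)\ot\Linf(H)$; the extra $\sigma_t^\psi$ twist is essential and you cannot just say ``restricts to $\sigma_t^{h_G}$''. Once Houdayer--Isono gives you an implementing unitary $u\in B$, the crux is Lemma~\ref{LemmaT} (or Lemma~\ref{LemmaTdim1} when $B=\C^K$): using the explicit Haar state formula for products of two coefficients of $u$ (Lemma~\ref{LemmaHaarProduct}, which is where $2$-ergodicity is actually consumed), one shows that any $u\in\mathcal U(B\ot C)$ with $\Ad((\beta_\kappa\ot\id)(u))=\sigma_{-t}^{\psi_\kappa}\ot\sigma_t^{h_H}\ot\id$ must lie in $\C1\ot\mathcal Z(C)$ and force $\sigma_t^{h_H}=\id$. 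This in turn gives $\sigma_t^\psi=\id$, and only then does the restriction to $B\ot\Linf(G)$ yield $\sigma_t^{h_G}=\id$. Your Step~3 claim that ``$\sigma_t^{h_G}$ inner on $\Linf(G)$ iff identity'' is neither used nor true in general; the argument bypasses it entirely via the $\beta_\kappa$-rigidity lemma.
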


\noindent We actually prove a more general result in Theorem \ref{ThmFactor}, where we allow the amalgam $F$ to be non-trivial. In this case, $\text{L}^\infty(\GG)$ is not always a factor, and we explicitly compute the center. Note that Theorem \ref{ThmB} particularly applies to the universal action $\beta : {\rm Aut}^+(B, \psi) \curvearrowright B$, which is 2-ergodic (see Section \ref{preliminaries} for the definition). The statement of Theorem \ref{ThmB} is known only in the case of Bichon's free wreath product $G \wr_* S_N^+$, as seen in \cite{FT24}. Let us also mention \cite{lem14,Wa14} where factoriality is proved for some specific cases of Bichon's free wreath products.

\vspace{0.2cm}

\noindent In Section \ref{section quali}, we also study primeness and absence of Cartan subalgebras. Recall that a factor $M$ is called \textit{prime} if it is not isomorphic to the tensor product $P\ot Q$ of two diffuse factors $P$ and $Q$. Also, we say that a von Neumann subalgebra $A\subset M$ of a von Neumann algebra is a \textit{Cartan subalgebra} if $A$ is maximal abelian i.e. $A'\cap M=A$, \textit{regular} i.e. $M$ is generated, as a von Neumann algebra, by the normalizer $\{u\in\mathcal{U}(M)\,:\, uAu^*=A\}$ of $A$ and $A$ \textit{has expectation} i.e. there is a faithful normal conditional expectation from $M$ to $A$. The result is the following, see Theorem \ref{ThmPrimeCartan}.

\begin{theoremA}\label{ThmC}
Suppose that ${\rm dim}(B)\geq 3$, $\beta$ is $2$-ergodic, both $G$ and $H$ are infinite quantum groups and $F$ is trivial then the following holds.
\begin{enumerate}
\item $\Linf(\GG)$ is prime.
\item $\Linf(\GG)$ has no Cartan subalgebras.
\end{enumerate}
\end{theoremA}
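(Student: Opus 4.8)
The plan is to deduce Theorem~\ref{ThmC} from the general structural picture of $\GG := G\wr_{*,\beta}H$ established earlier, together with the deformation/rigidity machinery of Popa, Ozawa, Ioana, Peterson, Chifan--Houdayer and others, exactly as was done for Bichon's free wreath products in \cite{FT24}. The starting point is the block decomposition from Section~\ref{sectionblock}, which realizes $\Linf(\GG)$ as built out of an amalgamated free product (over a common "diagonal" subalgebra coming from the base $H$-piece) of $\dim(B)$ copies of algebras each containing a copy of $\Linf(G)$, together with $\Linf(H)$; the condition $\dim(B)\geq 3$ guarantees that the amalgamated free product has at least three free "legs" with a common subalgebra, which is precisely what powers both primeness and the absence of Cartan subalgebras. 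First I would record the reduced/von Neumann description of $\GG$ and the relevant conditional expectations, use $2$-ergodicity of $\beta$ (as in Theorem~\ref{ThmB}) to get that $\Linf(\GG)$ is a non-amenable factor, and identify inside it the free malleable deformation — either the free product s-malleable deformation of Ioana--Peterson--Popa / Boca, or a Gaussian-type dilation in the Kac case and a more delicate modular-theoretic version in the type III case.

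The proof of primeness (part (1)) then follows the standard dichotomy: suppose $\Linf(\GG)=P\ot Q$ with $P,Q$ diffuse; apply the malleable deformation and the spectral-gap/transversality argument (Popa's spectral gap rigidity, in the amalgamated free product form due to Ioana and Chifan--Houdayer, with the type III extension via continuous cores as in Houdayer--Vaes / Boutonnet--Houdayer) to conclude that one of $P$, $Q$ must be amenable relative to one of the building blocks; then use that the building blocks themselves (a copy of $\Linf(G)$ amalgamated against the diagonal, and $\Linf(H)$) do not contain commuting diffuse subalgebras with the relevant relative-commutant property — here one invokes that $G$ and $H$ are infinite so $\Linf(G),\Linf(H)$ are diffuse, and that the amalgam over the base is "small" enough (in the trivial-$F$ case the relevant corner intertwining is obstructed because the legs are genuinely free over an algebra that does not absorb them). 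The upshot, via Ioana's and Chifan--Houdayer's AFP primeness criteria, is a contradiction, giving primeness. For part (2), absence of Cartan subalgebras, I would argue contrapositively from a would-be Cartan subalgebra $A\subset \Linf(\GG)$ with expectation: by Popa--Vaes / Ioana's theorems on normalizers inside amalgamated free products, either $A$ embeds (in Popa's intertwining sense $\prec$) into one of the free legs — impossible since $A$ is maximal abelian and regular while the legs are proper with non-trivial complement when $\dim(B)\geq 3$ — or the normalizer of $A$ generates an amenable subalgebra, contradicting non-amenability of $\Linf(\GG)$. In the type III case one passes to the continuous core $\Linf(\GG)\rtimes_{\sigma^{h}}\R$ and uses that a Cartan subalgebra of $M$ lifts to one of the core, then runs the same AFP dichotomy there (the core of an AFP is again an AFP of the cores, by Vaes' work), deriving the same contradiction.

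The main obstacle — and the place where genuine work beyond citing \cite{FT24} is needed — is the non-Kac / type III situation: one must verify that the block decomposition is compatible with the modular structure (Theorem~\ref{ThmB} already tells us the $T$-invariant and that $\Linf(\GG)$ can be type III), so that the continuous core of $\GG$ is an amalgamated free product of the continuous cores of the pieces over the core of the base algebra, with the legs still in free position and the amalgam still satisfying the "no intertwining" hypotheses required by the Boutonnet--Houdayer--Vaes-type structure theorems; controlling the relative commutants and the $\prec$-obstructions at the level of these (non-finite) cores, and checking the semifiniteness/weight-theoretic hypotheses of the relevant rigidity theorems, is the technical heart. A secondary point to handle carefully is that when $F$ is trivial the amalgam in the block decomposition reduces to exactly the diagonal subalgebra supplied by $\beta$, and one needs $2$-ergodicity together with $\dim(B)\geq 3$ to ensure this diagonal is "thin" enough — in particular that it is a proper subalgebra with non-trivial free complement in at least three legs — which is what rules out the degenerate cases; I would isolate this as a lemma (probably already available from Section~\ref{sectionblock} and the intertwining preliminaries of Section~\ref{section quali}) and then feed it into the cited rigidity results essentially verbatim.
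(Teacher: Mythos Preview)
Your proposal has the right high-level flavor (amalgamated free product structure plus intertwining/rigidity) but misses the key technical trick and, as written, would not go through.

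The essential point you miss is that the paper does \emph{not} work with the decomposition $\Linf(\GG)=\underset{\Linf(H)}{*}L_\kappa(\GG)$ (whose amalgam $\Linf(H)$ is diffuse, so the finite-dimensional-amalgam hypotheses in the relevant rigidity lemmas fail), but instead passes to the larger block-extended algebra $\Linf(\Gtilde)$ and uses the iterated decomposition of Remark~\ref{rmk fd amal}:
\[
\Linf(\Gtilde)\;\simeq\; M_{K-1}\ \underset{B\ot\Linf(F)}{*}\ \nu_K(B\ot\Linf(G)),
\]
whose amalgam $B\ot\Linf(F)=B$ (since $F$ is trivial) is \emph{finite dimensional}. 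This is exactly what is needed to invoke Lemma~\ref{LemmaNormalizer}, Lemma~\ref{normalizerfp} and \cite[Theorem~4.4]{HU16a}. The whole argument then hinges on a single Claim: $\Linf(\GG)\nprec_{\Linf(\Gtilde)} M_{K-1}$ and $\Linf(\GG)\nprec_{\Linf(\Gtilde)} \nu_K(B\ot\Linf(G))$, proved by producing diffuse unitary sequences inside $\Linf(\GG)$ coming from $\Linf(G)$ (via $\rho_K$) and from $\Linf(H)$, and running them through Lemma~\ref{LemmaNets}. Primeness and absence of Cartan then follow in a few lines by contradiction, using that $\Linf(\GG)\subset \mathcal{N}_{\Linf(\Gtilde)}(P)''$ (resp.\ $\subset \mathcal{N}_{\Linf(\Gtilde)}(A)''$).

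Several further points in your plan are off. First, $\dim(B)\geq 3$ is \emph{not} used to get ``three free legs'' for primeness/Cartan; it is used only to ensure factoriality via Theorem~\ref{ThmFactor} (either some $N_\kappa\geq 2$ or $K\geq 3$). The primeness/Cartan argument itself uses only the two-fold AFP above. Second, no continuous-core passage is needed: the Houdayer--Isono intertwining theory \cite{HI17,HI20} and the lemmas in Section~\ref{section quali} already handle the type~III case directly. Third, your justification for why a Cartan $A$ cannot embed in a leg (``$A$ is maximal abelian and regular while the legs are proper'') is not a valid obstruction; the actual obstruction is the Claim above, namely that $\Linf(\GG)$ itself (hence anything containing it, such as $\mathcal{N}_{\Linf(\Gtilde)}(A)''$) does not intertwine into either leg of $\Linf(\Gtilde)$. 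Finally, you do not need to build any malleable deformation by hand: the rigidity input is entirely black-boxed in \cite{HU16a,HU16b,HI20,IS22}.
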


\noindent Once again, the previous theorem applies to the particular case of the universal action $\beta : \text{Aut}^+(B, \psi) \curvearrowright B$, thus significantly generalizing the results of \cite{FT24}.

\vspace{0.2cm}

\noindent In Section \ref{section KK}, we study the $K$-theory of C*-algebras of free wreath products. We begin by characterizing the $K$-amenability of a free wreath product. Recall that $\widehat{G}$ is \textit{K-amenable} if the canonical surjection from the full C*-algebra $C(G)$ to the reduced C*-algebra $C_r(G)$ is a KK-equivalence. It is known that $\widehat{S_N^+}$ is K-amenable \cite{Vo17} and, by using a monoidal equivalence argument, it is known \cite{FM20} that if $G$ is torsion-free and satisfies the strong Baum-Connes conjecture then $\widehat{G\wr_*S_N^+}$ is K-amenable. The following result, corresponding to Theorem \ref{thmDtxt}, was only known in the case of Bichon's free wreath products, see \cite[Theorem A]{FT24}.

\begin{theoremA}\label{thmD}
Let $\beta\,:\,H\curvearrowright B$ be any ergodic action, $G$ any compact quantum group with dual quantum subgroup and $\GG:=G\wr_{*,\beta,F}H$ the free wreath product. The following are equivalent.
\begin{enumerate}
\item $\widehat{G}$ and $\widehat{H}$ are $K$-amenable.
\item $\widehat{\GG}$ is $K$-amenable.
\end{enumerate}
\end{theoremA}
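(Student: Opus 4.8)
The plan is to reduce the statement to the behaviour of $K$-amenability under amalgamated free products of C*-algebras, using the block decomposition of Section~\ref{sectionblock} to present the C*-algebras of $\GG$ as such free products. Since $\beta$ is ergodic, that decomposition pins down the Haar state of $\GG$ and --- in the same manner as the reduced C*-algebra is analyzed in Section~\ref{section reduced} --- exhibits $C_r(\GG)$ as a full-corner compression $p\,\Acal_r\,p$ of a reduced amalgamated free product $\Acal_r = \Acal_r^{H} *_D \Acal_r^{G}$, where $\Acal_r^{H}$ is built from $C_r(H)$, $\Acal_r^{G}$ is a matrix amplification $\bigoplus_k M_{n_k}(C_r(G))$ of $C_r(G)$ over the blocks of $B$, the amalgam $D$ is a fixed C*-algebra assembled from $\Linf(B)$ and $F$, the projection $p$ lies in $D$, and the canonical conditional expectations of the free product onto $D$ are GNS-faithful. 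The universal property defining $\GG$ (Theorem~\ref{ThmDefFreeWr}) identifies $C(\GG)$ with the corner $p\,\Acal\,p$ of the \emph{full} amalgamated free product $\Acal = \Acal^{H} *_D \Acal^{G}$ of the full C*-algebras $\Acal^{H}$ (built from $C(H)$) and $\Acal^{G} = \bigoplus_k M_{n_k}(C(G))$ over the \emph{same} $D$, in such a way that the canonical surjection $C(\GG)\to C_r(\GG)$ is the compression by $p$ of the surjection $\pi\,:\,\Acal\to\Acal_r$. As compression by a full projection preserves $KK$-equivalence (Morita equivalence), and $\Acal^{G}\to\Acal_r^{G}$ (resp. $\Acal^{H}\to\Acal_r^{H}$) is a $KK$-equivalence precisely when $\widehat G$ (resp. $\widehat H$) is $K$-amenable, the theorem amounts to showing that $\pi$ is a $KK$-equivalence if and only if $\Acal^{G}\to\Acal_r^{G}$ and $\Acal^{H}\to\Acal_r^{H}$ are.

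The external inputs are the known results on the $KK$-theory of full and reduced amalgamated free product C*-algebras (Germain, Thomsen): both the full and the reduced amalgamated free product of unital C*-algebras over a common subalgebra sit inside natural six-term Mayer--Vietoris exact sequences in $KK$-theory, linking (for $\Acal_r = \Acal_r^{H} *_D \Acal_r^{G}$) the terms $D$, $\Acal_r^{H}\oplus\Acal_r^{G}$ and $\Acal_r$, and similarly for the full versions, the hypotheses for the reduced case being met by the GNS-faithful expectations onto $D$ supplied by the block decomposition. The surjection $\pi$ fits into a morphism between the full Mayer--Vietoris sequence of $\Acal$ and the reduced one of $\Acal_r$ which is the identity on the $D$-term and is $\Acal^{H}\to\Acal_r^{H}$, $\Acal^{G}\to\Acal_r^{G}$ on the factor term.

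Granting these inputs, both implications become a diagram chase. For $(1)\Rightarrow(2)$, factor $\pi$ as $\Acal \to \Acal' \to \Acal_r$ where $\Acal' = \Acal_r^{H} *_D \Acal_r^{G}$ is the \emph{full} amalgamated free product of the reduced factors over $D$; if $\widehat G$ and $\widehat H$ are $K$-amenable then $\Acal^{G}\to\Acal_r^{G}$, $\Acal^{H}\to\Acal_r^{H}$ are $KK$-equivalences, so comparing the full Mayer--Vietoris sequences of $\Acal$ and $\Acal'$ (they agree on the $D$-term) and using the five lemma shows $\Acal\to\Acal'$ is a $KK$-equivalence, while comparing the full sequence of $\Acal'$ with the reduced sequence of $\Acal_r$ (now agreeing on both the $D$-term and the factor term) gives, again by the five lemma, that $\Acal'\to\Acal_r$ is a $KK$-equivalence; hence so is $\pi$, and therefore $C(\GG)\to C_r(\GG)$. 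For $(2)\Rightarrow(1)$, assume $\pi$ is a $KK$-equivalence; applying the five lemma to the morphism of Mayer--Vietoris sequences above, whose outer terms are $\id_D$ and $\pi$ (both $KK$-equivalences), forces $\Acal^{H}\oplus\Acal^{G}\to\Acal_r^{H}\oplus\Acal_r^{G}$ to be a $KK$-equivalence; since this map is the direct sum of $\Acal^{H}\to\Acal_r^{H}$ and $\Acal^{G}\to\Acal_r^{G}$, each summand is a $KK$-equivalence, i.e. $\widehat G$ and $\widehat H$ are $K$-amenable. (To pass from isomorphisms of $K$-groups to $KK$-equivalences one runs the same five-lemma argument on $KK(-,X)$ and $KK(X,-)$ for every $X$, the Mayer--Vietoris sequences existing with coefficients.)

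The step I expect to demand genuine care is the first one: extracting from the block decomposition the \emph{exact} amalgamated free product presentations of $C(\GG)$ and $C_r(\GG)$, and in particular arranging that the amalgam $D$ and the projection $p$ are the \emph{same} in the full and the reduced picture --- this is what lets the five-lemma comparisons act as the identity on the $D$-term and so avoids imposing any $K$-amenability hypothesis on $F$. It should hold because the amalgam in these constructions is an intrinsic object assembled from $\Linf(B)$ and a reduced model of $F$, rather than something that changes under passage from the universal to the reduced completion; but verifying this, together with the GNS-faithfulness of the conditional expectations and the mild hypotheses needed to apply the amalgamated free product $KK$-theory, is where the work lies. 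Much of this overlaps with the structural analysis already performed for Theorems~\ref{ThmA}--\ref{ThmC}, so the genuinely new content is making those identifications compatible with the full-to-reduced surjection, hence functorial in $KK$.
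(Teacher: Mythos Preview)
Your $(1)\Rightarrow(2)$ is essentially the paper's argument: the block decomposition (Theorems~\ref{ThmBlockAction} and~\ref{ThmReduced}) writes $M_{N_\kappa}(\C)\ot C_{\bullet,\kappa}(\GG)$ as an amalgamated free product of $B\ot C_\bullet(G)$ and $M_{N_\kappa}(\C)\ot C_\bullet(H\times F)$ over $B\ot C_\bullet(F)$, and then $C_\bullet(\GG)$ as the free product of the $C_{\bullet,\kappa}(\GG)$ over $C_\bullet(H\times F)$, so one applies the Fima--Germain result on $K$-amenability of amalgamated free products twice. Your $(2)\Rightarrow(1)$, by contrast, is genuinely different from the paper and has a real gap. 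You assert that the amalgam $D$ can be taken to be \emph{the same} in the full and reduced pictures, arguing it is ``an intrinsic object assembled from $\Linf(B)$ and a reduced model of $F$''. This is false: the full block algebra is amalgamated over $B\ot C(F)$ while the reduced one is over $B\ot C_r(F)$, and there is no natural embedding of $C_r(F)$ into $C(G)$ that would let you form the full free product over the reduced amalgam. Your morphism of Mayer--Vietoris sequences therefore acts on the $D$-term by $\id_B\ot\lambda_F$, not by the identity, so your five-lemma argument as written does not go through.

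The gap is repairable: in both directions, $F$ is a dual quantum subgroup of something already known to be $K$-amenable (of $G$ in $(1)\Rightarrow(2)$, of $H\times F\subset\GG$ in $(2)\Rightarrow(1)$), and $K$-amenability passes to dual quantum subgroups by restricting the witness along the inclusion and using $\varepsilon_\GG\circ\iota=\varepsilon_F$; so $\lambda_F$ is a $KK$-equivalence after all and your five-lemma comparison can be salvaged. But note that the paper avoids this issue entirely in $(2)\Rightarrow(1)$: rather than comparing exact sequences, it observes that $H$ is $K$-amenable as a dual quantum subgroup and then produces a $K$-amenability witness for $G$ directly, by pushing the given $\gamma\in KK(C_r(\GG),\C)$ along the (non-unital) map $C_r(G)\to M_{N_\kappa}(\C)\ot C_r(\GG)$, $a\mapsto\beta_\kappa(e^\kappa_{11})\rho_{r,\kappa}(a)$, composing with Morita, and checking by an explicit Kasparov-product computation (using that $(\id\ot\varepsilon_\GG)\circ\pi=\varepsilon_G(\cdot)e^\kappa_{11}$) that the result satisfies the defining equation. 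This is shorter, needs no six-term sequences at all, and makes the structural input minimal: only the existence of $\rho_\kappa$ and the relation $(\id\ot h_\GG)\rho_\kappa=h_G(\cdot)1$ from Theorem~\ref{TheoremErgodic}. Your approach, once patched, would also work but requires the reduced Mayer--Vietoris with coefficients and the iterated free-product bookkeeping you allude to (the decomposition is two-step, not the single $\Acal^H*_D\Acal^G$ you sketch).
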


\noindent We then provide long exact sequences to compute the $K$-theoretic groups of $C_\bullet(\GG)$ (which denotes either the full or the reduced C*-algebra of $\GG$). However, to derive explicit computations from these long exact sequences, one needs to know the $K$-theoretic groups $K_i(C_\bullet(H))$ as well as explicit generators of $K_0(C_\bullet(H))$. Note that the $K$-theory of $C_\bullet({\Aut}^+(B, \psi))$ has been computed by Voigt \cite{Vo17}, but explicit generators of the $K_0$-group are known only for $S_N^+$. Nevertheless, we find explicit generators of the $K_0$-group in the case $B = M_N(\C)$, which allows us to obtain some explicit computations. For $s \in \mathbb{N}^*$, we denote by $\F_s$ the free group with $s$ generators and $\Z_s := \Z / s\Z$. The following result is Theorem \ref{thmEtxt}.

\begin{theoremA}\label{ThmE}
For all $s,N\in\N^*$ and for any faithful state $\psi\in M_N(\C)^*$ one has:
$$K_i(C_\bullet(\Z_s\wr_*{\rm Aut}^+(M_N(\C),\psi)))=\left\{\begin{array}{lcl}\Z^s\oplus\Z_N&\text{if}&i=0,\\\Z&\text{if}&i=1.\end{array}\right.\quad\text{and,}$$
$$K_i(C_\bullet(\widehat{\F}_s\wr_*{\rm Aut}^+(M_N(\C),\psi)))=\left\{\begin{array}{lcl}\Z\oplus\Z_N&\text{if}&i=0,\\\Z^{s+1}&\text{if}&i=1.\end{array}\right.$$
\end{theoremA}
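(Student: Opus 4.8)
The plan is to derive Theorem \ref{ThmE} from the long exact sequences in $K$-theory established in Section \ref{section KK} together with the explicit $K_0$-generators found in the case $B=M_N(\C)$. The general picture is that $\GG := G\wr_{*,\beta}H$ is built, via the block decomposition technique of Section \ref{sectionblock}, as a (reduced or full) free product amalgamated over a finite-dimensional C*-algebra coming from $B$ and $\Linf(H)$, so one gets a Mayer--Vietoris/Pimsner-type six-term exact sequence relating $K_\bullet(C_\bullet(\GG))$ to $K_\bullet(C_\bullet(G))$, $K_\bullet(C_\bullet(H))$ and $K_\bullet$ of the amalgam. Specializing, I would first record the input: for $H = {\rm Aut}^+(M_N(\C),\psi)$, Voigt's computation \cite{Vo17} gives $K_0(C_\bullet(H))=\Z$ and $K_1(C_\bullet(H))=\Z$ (the same as for $S_N^+$ up to the generator, since these quantum groups are monoidally equivalent), and one needs the explicit generator of $K_0$ that the paper has identified for $B=M_N(\C)$ — this is the crucial extra ingredient beyond \cite{Vo17}, since the boundary maps in the exact sequence are only computable once we know what the class $[1]$ (or the relevant projection coming from $B$) does in $K_0$.

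Next I would treat the two cases $G=\Z_s$ and $G=\widehat{\F}_s$ in parallel. For $G=\Z_s$ we have $C_\bullet(\Z_s)=C(\Z_s)\cong\C^s$ (amenable, so full = reduced), with $K_0=\Z^s$ generated by the $s$ minimal projections and $K_1=0$. For $G=\widehat{\F}_s$ we have $C_\bullet(\widehat{\F}_s)=C^*(\F_s)$ (full) or $C^*_r(\F_s)$ (reduced); by $K$-amenability of $\F_s$ these agree in $K$-theory, with $K_0=\Z$ (generated by $[1]$) and $K_1=\Z^s$ (generated by the $s$ canonical unitaries). In both cases $\widehat{G}$ and $\widehat{H}$ are $K$-amenable (for $\widehat{H}={\rm Aut}^+(M_N(\C),\psi)$ this follows from \cite{Vo17} or by monoidal equivalence with $S_N^+$ plus \cite{Vo17}, exactly as in \cite{FT24}), so Theorem \ref{thmD} applies and $\widehat{\GG}$ is $K$-amenable; this lets us work with whichever of the full/reduced C*-algebras is convenient and conclude the answer for $C_\bullet$ uniformly. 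Then I plug into the six-term sequence: the even part reads roughly
\[
K_0(\text{amalgam})\longrightarrow K_0(C_\bullet(G))\oplus K_0(C_\bullet(H))\longrightarrow K_0(C_\bullet(\GG))\longrightarrow K_1(\text{amalgam})\longrightarrow\cdots,
\]
and one identifies the first map using the known generators. Because the amalgam is finite-dimensional its $K_1$ vanishes, which forces the exact sequence to split into short pieces; chasing through, the contribution $\Z_N$ in $K_0(C_\bullet(\GG))$ arises precisely as a cokernel of a multiplication-by-$N$ map coming from the matrix-algebra block $M_N(\C)$ (this is where the hypothesis $B=M_N(\C)$, rather than a general $B$, is essential — the generator of $K_0(C_\bullet(H))$ maps to $N$ times a minimal projection class).

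Concretely, for $G=\Z_s$ I expect the sequence to yield $K_0(C_\bullet(\GG))\cong\Z^s\oplus(\Z/N\Z)$ — the $\Z^s$ from the $s$ minimal projections of $C(\Z_s)$ surviving, and the $\Z_N$ from the torsion created by the $M_N$-block — while $K_1(C_\bullet(\GG))\cong\Z$, coming from $K_1(C_\bullet(H))=\Z$ (the $K_1$ of $\Z_s$ being zero, there is nothing from $G$ to add, and the boundary map $K_0(\text{amalgam})\to K_1$ is zero for rank reasons). For $G=\widehat{\F}_s$ the roles are shifted by the free-group $K$-theory: $K_0$ picks up only one copy of $\Z$ from $C^*(\F_s)$ (the unit), plus the same $\Z_N$ from the $M_N$-block, giving $\Z\oplus\Z_N$; and $K_1$ picks up the $s$ unitary generators of $\F_s$ together with the one $\Z$ from $K_1(C_\bullet(H))$, giving $\Z^{s+1}$. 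The main obstacle I anticipate is bookkeeping the boundary/index maps precisely: one must verify that the connecting homomorphism $K_0(\text{amalgam})\to K_1(C_\bullet(\GG))$ vanishes and that the map $K_0(\text{amalgam})\to K_0(C_\bullet(G))\oplus K_0(C_\bullet(H))$ is injective with the claimed cokernel — this is exactly the step where the explicit generator of $K_0(C_\bullet({\rm Aut}^+(M_N(\C),\psi)))$ does the work, and it has to be checked that its image in the amalgam is the class that becomes $N\cdot[\text{minimal projection}]$. Once that single computation is pinned down, both displayed formulas fall out of pure diagram chasing, and the passage between the full and reduced C*-algebras is free of charge by Theorem \ref{thmD}.
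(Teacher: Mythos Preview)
Your overall strategy is correct and matches the paper's: use the block decomposition to write $M_N(\C)\otimes C_\bullet(\GG)\simeq C_\bullet(\Gtilde)$ as an amalgamated free product over the finite-dimensional algebra $B=M_N(\C)$, apply the six-term exact sequence from \cite{FG20}, and conclude by $K$-amenability (Theorem \ref{thmD}). The $K_1$-computations you sketch are fine.

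There is, however, a genuine error in your $K_0$-analysis. You assert that $K_0(C_\bullet({\rm Aut}^+(M_N(\C),\psi)))=\Z$, arguing by monoidal equivalence with $S_N^+$. This is wrong: Voigt's formula for $K_0(C_\bullet({\rm Aut}^+(B,\psi)))$ is $\Z^{K^2-2K+2}\oplus\Z_d^{2K-1}$ where $K$ is the number of blocks and $d$ their gcd; for $B=M_N(\C)$ one has $K=1$, $d=N$, hence $K_0(C_\bullet(H))\cong\Z\oplus\Z_N$, not $\Z$. Monoidal equivalence does not in general preserve $K$-theory of the underlying C*-algebras, and in any case ${\rm Aut}^+(M_N(\C),\psi)$ is not monoidally equivalent to $S_N^+$.

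This invalidates your proposed mechanism for the torsion. The $\Z_N$ in $K_0(C_\bullet(\GG))$ does \emph{not} arise as the cokernel of a multiplication-by-$N$ map; it is already present in $K_0(C_\bullet(H))$. What the paper actually proves (the lemma immediately preceding Theorem \ref{thmEtxt}) is that the class $[\beta(e_{11})]$ generates a \emph{free} $\Z$-direct summand of $K_0(M_N(\C)\otimes C_\bullet(H))\cong\Z\oplus\Z_N$, via the counit splitting $(\id\otimes\varepsilon_H)\circ\beta=\id$. The short exact sequence
\[
0\to K_0(M_N(\C))\to K_0(M_N(\C)\otimes C_\bullet(G))\oplus K_0(M_N(\C)\otimes C_\bullet(H))\to K_0(C_\bullet(\Gtilde))\to 0
\]
then identifies the primitive class $[1]\in\Z^s$ (respectively the generator $[1]\in\Z$ for $G=\widehat{\F}_s$) with the primitive class $[\beta(e_{11})]\in\Z\oplus\Z_N$, and the quotient is $\Z^s\oplus\Z_N$ (respectively $\Z\oplus\Z_N$). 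With your incorrect $K_0(H)=\Z$ and both classes primitive, the same quotient would give $\Z^s$ with no torsion, so your computation would not produce the claimed answer.
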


\section*{Acknowledgements}

\noindent The authors would like to thank Cyril Houdayer for helpful conversations.

\tableofcontents

\section{Preliminaries}\label{preliminaries}

\noindent All von Neumann algebras are supposed to have separable predual and all C*-algebras and Hilbert spaces are supposed to be separable. The scalar product on an Hilbert space, or an Hilbert C*-module, is always right linear. We use the same symbol $\ot$ to denote the algebraic tensor product of unital $*$-algebras, the minimal tensor product of C*-algebras, the tensor product of Hilbert spaces and the tensor product of von Neumann algebras. We use the symbol $\otm$ to denote the maximal tensor product of C*-algebras. When $H$ is a Hilbert space (resp. Hilbert $A$-module), we denote by $\Lcal(H)$ (resp. $\Lcal_A(H)$) the C*-algebra of bounded linear operators on $H$ (resp. adjointable operators on $H$).

\subsection{Operator algebras} Let $A$, $B$ be unital C*-algebras and $\varphi\,:\,A\rightarrow B$ a unital completely positive (ucp) map. The \textit{GNS-construction} of $\varphi$ is the unique, up to a canonical isomorphism, triple $(H,\pi,\Omega)$, where $H$ is a Hilbert C*-module over $B$, $\pi\,:\,A\rightarrow\Lcal_B(H)$ is a unital $*$-homomorphism and $\Omega\in H$ is a unit vector such that $\varphi(x)=\langle\Omega,\pi(x)\Omega\rangle$ for all $x\in A$ and $\pi(A)\Omega\cdot B$ is dense in $H$. \textit{The reduced C*-algebra of $\varphi$} is defined by $C^*_r(\varphi):=\pi(A)$, where $\pi$ is the GNS-morphism of $\varphi$. By uniqueness of the GNS-construction, it does not depends on a particular choice of the GNS construction $(H,\pi,\Omega)$ of $\varphi$. Moreover, $\varphi$ is called \textit{GNS-faithful} when its GNS-morphism $\pi$ is faithful. In that case, $A$ itself is the reduced C*-algebra of $\varphi$. Note that the reduced C*-algebra of $\varphi$ is, up to a canonical isomorphism, the unique quotient $\pi\,:\,A\twoheadrightarrow C^*_r(\varphi)$ of $A$ such that there is a GNS-faithful ucp map $\varphi_r\,:\, C^*_r(\varphi)\rightarrow B$ satisfying $\varphi_r\circ\pi=\varphi$. To be more precise, the uniqueness means that if $C$ is any unital C*-algebra with a surjective unital $*$-homomorphism $\rho\,:\,A\twoheadrightarrow C$ and a GNS-faithful ucp map $\psi\,:\,C\rightarrow B$ such that $\psi\circ\rho=\varphi$ then there exists a unique unital $*$-isomorphism $\widetilde{\rho}\,:\,C^*_r(\varphi)\rightarrow C$ such that $\widetilde{\rho}\circ\pi=\rho$. Note that $\widetilde{\rho}$ automatically intertwines the ucp maps i.e. $\psi\circ\widetilde{\rho}=\varphi_r$. The following elementary Remark will be useful.

\begin{remark}\label{RmkReducedUcp}
Let $\varphi\,:\, A\rightarrow B$ be a ucp map with reduced C*-algebra and GNS-morphism $\pi\,:\,A\twoheadrightarrow C^*_r(\varphi)$ and let $\varphi_r\,:\,C^*_r(\varphi)\rightarrow B$ be the unique GNS-faithful ucp map such that $\varphi_r\circ\pi=\varphi$. If $\varphi_r$ is faithful then, for any unital C*-subalgebra $A_0\subset A$ one has $C^*_r(\varphi\vert_{A_0})=\pi(A_0)$. It is a direct consequence of the uniqueness of the reduced C*-algebra of a ucp map since $\varphi_r\vert_{A_0}$ is still faithful hence GNS-faithful.
\end{remark}

\vspace{0.2cm}

\noindent Let $B\subset A$ be a unital C*-subalgebra. A \textit{conditional expectation} from $A$ to $B$ is a ucp map $E\,:\,A\rightarrow B$ such that $E(b)=b$ for all $b\in B$. In particular, since $B$ is in the multiplicative domain of the ucp map $E$, we have $E(xay)=xE(a)y$ for all $x,y\in B$ and $a\in A$.

\vspace{0.2cm}

\noindent We now quickly review some of the results of \cite{FG20}. Let $A_1,A_2$ be unital C*-algebras with a common subalgebra $B\subset A_k$ and write $A=A_1\underset{B}{*}A_2$ the full amalgamated free product. For $k=1,2$, it is well known that the canonical maps $A_k\rightarrow A$ are faithful so we will view $A_k\subset A$. Given two conditional expectations $E_k\,:\, A_k\rightarrow B$ (not necessarily GNS faithful), an operator $a\in A$ is called \textit{reduced} (relatively to $E_1$ and $E_2$) if it is of the form $a=a_1\dots a_n$ with, $n\in \N^*$ and for $1\leq l\leq n$, $a_l\in \ker(E_{i_l})\subset A_{i_l}$ and $i_{l}\neq i_{l+1}$. It is shown in \cite{FG20} that there exist unique conditional expectations $E_{A_k}\,:\,A\rightarrow A_k$ satisfying:
$$E_{A_k}(a)=0\text{ for }a=a_1\dots a_n\text{ reduced with }n\geq 2\text{ and }E_{A_k}\vert_{A_{\overline{k}}}=E_{\overline{k}},\text {where }\overline{k}\in\{1,2\}\setminus\{k\}.$$
It is also shown that there exists a unique conditional expectation $E_B\,:\, A\rightarrow B$ such that $E_B(a)=0$ for any reduced operator $a\in A$.

\vspace{0.2cm}

\noindent When both $E_1$ and $E_2$ are GNS-faithful then the reduced C*-algebras of $E_{A_1}$, $E_{A_2}$ and $E_B$ are all canonically isomorphic to the Voiculescu's reduced amalgamated free product denoted by $A_r:=(A_1,E_1)\underset{B}{*}(A_2,E_2)$. We will still denote by $E_{A_k}$ and $E_B$ the GNS-faithful conditional expectations $A_r\rightarrow A_k$ and $A_r\rightarrow B$. An easy adaptation of the proof of the main result in \cite{Dy98} (see also \cite[Exercise 4.5.5]{BO08}), shows that if both $E_1$ and $E_2$ are faithful then $E_B$ is faithful on $A_r$. This implies that both $E_{A_1}$ and $E_{A_2}$ are faithful on $A_r$ since we have $E_B=E_k\circ A_k$, for $k=1,2$.

\vspace{0.2cm}

\noindent Let now $M_1,M_2$ be two von Neumann algebras with a common subalgebra $N\subset M_k$ and faithful normal conditional expectation $E_k\,:\, M_k\rightarrow N$. There exists a unique, up to a canonical isomorphism, von Neumann algebra $M:=M_1\underset{N}{*}M_2$ generated by $M_1\cup M_2\subset M$, where the two copies of $N$ are identified and with a faithful normal conditional expectation $E_N\,:\, M\rightarrow N$ such that $E_N(x)=0$ for any $x\in M$ reduced (relative to $E_1$ and $E_2$). Moreover, $M$ comes equipped with two normal faithful conditional expectations $E_{M_k}\,:\, M\rightarrow M_k$ such that $E_{M_k}(x)=0$ for any $x$ reduced of length at least $2$ and $E_{M_k}\vert_{M_{\overline{k}}}=E_{\overline{k}}$, where $\overline{k}\in\{1,2\}\setminus\{k\}$. To specify the conditional expectations, we sometimes write $M=(M_1,E_1)\underset{N}{*}(M_2,E_2)$.

\vspace{0.2cm}

\noindent Suppose now that we have strongly continuous one parameter groups $\alpha_k\,:\,\R\rightarrow {\rm Aut}(M_k)$ for $k\in\lbrace 1,2\rbrace$, and $\gamma\,:\,\R\rightarrow {\rm Aut}(N)$ such that $\alpha^{(k)}_t\vert_N=\gamma_t$ and $E_k\circ\alpha^{(k)}_t=\gamma_t\circ E_1$ for all $t\in\R$, $k\in\{1,2\}$. By the universal property of the von Neumann algebraic free product, there exists, for each $t\in\R$, a unique $\tau_t\in{\rm Aut}(M)$ such that $\tau_t\vert_{M_k}=\alpha^{(k)}_t$ for $k\in\lbrace 1,2\rbrace$. We sometimes write $\tau:=\alpha_1*\alpha_2$. Using the uniqueness, we see that $(t\mapsto\tau_t)$ is a group homomorphism. We will use the following well known Lemma and we include a proof for the convenience of the reader.

\begin{lemma}\label{LemOneParameter}
Fix a faithful normal state (f.n.s.) $\mu\in N_*$ and assume that $\mu\circ\gamma_t=\mu$ for all $t\in\R$. Then $\tau$ is a strongly continuous one parameter group of $M$ preserving $\mu\circ E_N$.
\end{lemma}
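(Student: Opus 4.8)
The plan is the following. By the universal property of the von Neumann algebraic free product, $(t\mapsto\tau_t)$ is already known to be a group homomorphism into $\Aut(M)$, so what has to be proved is (a) that $\tau$ preserves $\omega:=\mu\circ E_N$, which is a faithful normal state on $M$ since $\mu$ is a f.n.s.\ and $E_N$ is faithful and normal, and (b) that $\tau$ is strongly continuous.

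For the invariance I would first establish that $\tau_t$ maps reduced operators to reduced operators. Indeed, if $a=a_1\cdots a_n$ with $a_l\in\ker E_{i_l}\subset M_{i_l}$ and $i_l\neq i_{l+1}$, then $\tau_t(a)=\alpha^{(i_1)}_t(a_1)\cdots\alpha^{(i_n)}_t(a_n)$, and the compatibility $E_{i_l}\circ\alpha^{(i_l)}_t=\gamma_t\circ E_{i_l}$ forces $\alpha^{(i_l)}_t(a_l)\in\ker E_{i_l}$, so $\tau_t(a)$ is reduced with the same alternating index pattern; hence $E_N(\tau_t(a))=0=\gamma_t(E_N(a))$. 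On $N$ one has $E_N\circ\tau_t|_N=\gamma_t=\gamma_t\circ E_N|_N$, using $\tau_t|_N=\gamma_t$ and $E_N|_N=\id$. Since $N$ together with the reduced operators spans a $\sigma$-weakly dense subspace of $M$ and both $E_N\circ\tau_t$ and $\gamma_t\circ E_N$ are normal, this gives $E_N\circ\tau_t=\gamma_t\circ E_N$, whence $\omega\circ\tau_t=\mu\circ\gamma_t\circ E_N=\mu\circ E_N=\omega$ by the assumption $\mu\circ\gamma_t=\mu$.

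For strong continuity I would pass to the GNS representation $(H_\omega,\pi_\omega,\xi_\omega)$ of $\omega$, which is faithful and normal with $\xi_\omega$ cyclic and separating. Since each $\tau_t$ preserves $\omega$, it is implemented by a unitary $U_t$ on $H_\omega$ with $U_t\xi_\omega=\xi_\omega$ and $\pi_\omega(\tau_t(x))=U_t\pi_\omega(x)U_t^*$, and $(U_t)$ is a one-parameter unitary group; it then suffices to prove $(U_t)$ is strongly continuous, because then $t\mapsto\pi_\omega(\tau_t(x))=U_t\pi_\omega(x)U_t^*$ is $\|x\|$-bounded and SOT-continuous, hence $\sigma$-strongly continuous, for each $x$, and faithful normality of $\pi_\omega$ transports this back to $\tau$. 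Strong continuity of a unitary group follows from weak continuity, and by $\|U_t\|=1$ and totality of $\{\pi_\omega(b)\xi_\omega:b\in\mathcal{A}_0\}$, where $\mathcal{A}_0\subset M$ is the ($\sigma$-weakly dense) $*$-algebra generated by $M_1\cup M_2$, weak continuity reduces to continuity of $t\mapsto\langle\pi_\omega(a)\xi_\omega,U_t\pi_\omega(b)\xi_\omega\rangle=\omega(a^*\tau_t(b))$ for $a,b\in\mathcal{A}_0$. To get this, note that on $\mathcal{A}_0$ the map $\tau_s$ is just the factorwise product of the $\alpha^{(k)}_s$; strong continuity of each $\alpha^{(k)}$ gives $\alpha^{(k)}_s(y)\to y$ $\sigma$-strongly-$*$ as $s\to0$, and since these nets are norm bounded and multiplication is jointly $\sigma$-strongly continuous on bounded sets, $\tau_s(b)\to b$ $\sigma$-strongly for $b\in\mathcal{A}_0$. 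Writing $\tau_{t_0+s}(b)=\tau_{t_0}(\tau_s(b))$ and applying in turn the normal automorphism $\tau_{t_0}$, left multiplication by $a^*$, and the normal state $\omega$, one gets $\omega(a^*\tau_{t_0+s}(b))\to\omega(a^*\tau_{t_0}(b))$, which is the desired continuity.

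I do not expect a serious obstacle; the two points needing care are the fact that $\tau_t$ preserves reduced operators — this is exactly where the relation $E_k\circ\alpha^{(k)}_t=\gamma_t\circ E_k$ is used — and the passage from strong continuity of the $\alpha^{(k)}$ on the factors $M_k$ to strong continuity of $\tau$ on $M$, which is why it is convenient to test continuity on the generating $*$-algebra $\mathcal{A}_0$, on which $\tau_s$ acts factorwise, rather than on all of $M$.
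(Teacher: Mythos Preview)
Your proof is correct. The invariance argument is essentially the same as the paper's (you are slightly more explicit, establishing $E_N\circ\tau_t=\gamma_t\circ E_N$ rather than just $\omega\circ\tau_t=\omega$, but the underlying observation that $\tau_t$ preserves reduced operators is identical).

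For strong continuity, however, you take a genuinely different route. The paper works intrinsically in $M$: it sets $X=\{x\in M:t\mapsto\tau_t(x)\text{ is strongly continuous}\}$, checks directly that $X$ is a $*$-subalgebra containing $M_1\cup M_2$, and then shows $X$ is strong-$*$ closed. For this closure step the paper uses a modular-theory trick: for $\xi=a\xi_\omega$ with $a$ analytic for $\sigma^\omega$, one rewrites $\tau_t(x_n-x)a\xi_\omega=J\sigma_{i/2}(a)^*J\,\tau_t(x_n-x)\xi_\omega$ and uses $\omega$-invariance of $\tau_t$ to get a uniform-in-$t$ estimate, so that continuous maps converge uniformly. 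Your approach instead passes to the GNS space, implements $\tau_t$ by a unitary group $(U_t)$, and reduces everything to weak continuity of $(U_t)$ on the total set $\{\pi_\omega(b)\xi_\omega:b\in\mathcal{A}_0\}$, which you obtain from joint strong continuity of multiplication on bounded sets applied factorwise. Your argument is a bit more mechanical and avoids the analytic-element/modular-conjugation manipulation; the paper's argument is more self-contained in $M$ but leans on Tomita--Takesaki ingredients. Both are standard techniques and both buy exactly the same conclusion.
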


\begin{proof}
Define the f.n.s. $\omega=\mu\circ E_N\in M_*$ and note that, for all $t\in\R$ and $x\in M$, $\omega\circ\tau_t(x)=\omega(x)$. Indeed, the equality is clear when $x$ is reduced and for $x\in N$ it follows from the hypothesis $\mu\circ\gamma_t=\mu$ for all $t\in\R$. Define $X:=\{x\in M\,:\,(t\mapsto\tau_t(x))\text{ is strongly continuous}\}$ and note that $X$ is a vector subspace, stable by involution and contains $M_1\cup M_2$. Let $x,y\in X$ and let us show that $xy\in X$. View $M\subset\mathcal{L}({\rm L}^2(M,\omega))$, where $\xi_\omega$ is the cyclic vector in the GNS construction ${\rm L}^2(M,\omega)$ of $\omega$. One has, for all $s,t\in\R$, $\tau_t(xy)-\tau_s(xy)=\tau_t(x)(\tau_t(y)-\tau_s(y))+(\tau_t(x)-\tau_s(y))\tau_s(y)$. Hence, for all $\xi\in H$, and all sequence $(t_n)_n$ converging to $t$ one has:
$$\Vert(\tau_{t_n}(xy)-\tau_{t}(xy))\xi\Vert\leq\Vert x\Vert\,\Vert(\tau_{t_n}(y)-\tau_t(y))\xi\Vert+\Vert(\tau_{t_n}(x)-\tau_t(x))\tau_t(y)\xi\Vert\rightarrow_n0$$
since both $x,y\in X$. It follows that $X$ is a subalgebra of $M$ containing both $M_1$ and $M_2$. Hence, it suffices to show that $X$ is strong* closed in $M$. Let $(x_n)_n\in X$ be a sequence converging strongly to $x\in M$ (in particular, $(x_n)$ is norm bounded). It suffices to show that the set $$Z:=\{\xi\in {\rm L}^2(M,\omega)\,:\,(t\mapsto\tau_t(x)\xi)\text{ is continuous }\}$$
is equal to ${\rm L}^2(M,\omega)$. Let $\xi=a\xi_\omega$, where $a\in M$ is analytic with respect to the modular group $(\sigma_t)_t$ of $\omega$. Denoting by $J$ the anti-unitary modular operator of $\omega$ we have,
\begin{eqnarray*}
\Vert\tau_t(x_n)\xi-\tau_t(x)\xi\Vert&=&\Vert\tau_t(x_n-x)a\xi_\omega\Vert=\Vert\tau_t(x_n-x)J\sigma_{i/2}(a)^*J\xi_\omega\Vert\\
&=&\Vert J\sigma_{i/2}(a)^*J\tau_t(x_n-x)\xi_\omega\Vert\leq\Vert\sigma_{i/2}(a)\Vert\Vert\tau_t(x_n-x)\xi_\omega\Vert\\
&=&\Vert\sigma_{i/2}(a)\Vert\cdot\omega(\tau_t((x_n-x)^*(x_n-x)))^{1/2}\\
&=&\Vert\sigma_{i/2}(a)\Vert\cdot\omega((x_n-x)^*(x_n-x))^{1/2}.
\end{eqnarray*}
It follows that the sequence of continuous maps $(t\mapsto\sigma_t(x_n)\xi)_n$ converges uniformly on $\R$ to the map $t\mapsto\sigma_t(x)\xi$. Hence, $a\xi_\omega\in Z$ for all $a\in M$ analytic with respect to $(\sigma_t)_t$. Since this set is dense in ${\rm L}^2(M,\omega)$, it suffices to show that $Z$ is norm-closed in ${\rm L}^2(M,\omega)$. Let $\xi_n\in Z$ be a sequence converging to $\xi\in{\rm L}^2(M,\omega)$. Writing $\Vert\tau_t(x)\xi_n-\tau_t(x)\xi\Vert\leq\Vert x\Vert\Vert\xi_n-\xi\Vert$, we can argue as before to deduce that $\xi\in Z$. It concludes the proof.\end{proof}

\begin{remark}\label{RmkModularFree}
Let $\mu\in N_*$ be any f.n.s. and define $\varphi_k:=\mu\circ E_k\in (M_k)_*$ and $\varphi:=\mu\circ E_N\in M_*$. The following relation between the modular groups of $\varphi$, $\varphi_1$ and $\varphi_2$ is shown in \cite{Ue99}: $\sigma^\varphi=\sigma^{\varphi_1}*\sigma^{\varphi_2}$.
\end{remark}

\noindent Let $\omega$ be a f.n.s. on a von Neumann algebra $M$ with modular group $(\sigma_t)_{t\in\R}$. For $\lambda\in\C$, let $V_\lambda:=\{x\in M\,:\,\sigma_t(x)=\lambda^{it}x\text{ for all }t\in\R\}$ and note that $V_\lambda V_\mu\subset V_{\lambda\mu}$ and $V_\lambda^*=V_{\lambda^{-1}}$. In particular $V_1$, also known as the centralizer of $\omega$ and denoted by $M^\omega$, is a unital von Neumann subalgebra of $M$ and $V_\omega:={\rm Span}\left\{V_\lambda\,:\,\lambda\in\C\right\}$ is a unital $*$-subalgebra of $M$.

\vspace{0.2cm}

\noindent The f.n.s. $\omega$ is called \textit{almost periodic} if $V_\omega''=M$. It is easy to check that any state on a finite dimensional von Neumann algebra is almost periodic and the tensor product as well as the amalgamated free product of almost periodic states is also almost periodic. Finally, we also note that if $G$ is a compact quantum group and $M={\rm L}^\infty(G)$ then the Haar state on $M$ is almost periodic, with $V_\omega={\rm Pol}(G)$. The following well known remark will be useful.

\begin{remark}
If $\omega$ is almost periodic on a diffuse von Neumann algebra $M$ then its centralizer is diffuse. Indeed, if $p\in M^\omega$ is a (non-zero) minimal projection, then the centralizer of the f.n.s. on $pMp$ given $\varphi:=\omega(p)^{-1}\omega(p\cdot p)$ is $M^\varphi=pM^\omega p=\C p$. It follows that, for all $\lambda\in\C$ and all $x\in V^\varphi_\lambda$, $x^*x\in V^\varphi_\lambda V^\varphi_{\lambda^{-1}}\subset M^\varphi=\C p$. Hence, $V_\varphi\subset \C p$ and, since $\omega$ is almost periodic, $\varphi$ is almost periodic on $pMp$ so $pMp=\C p$ and $p$ is minimal in $M$.
\end{remark}

\subsection{Compact quantum groups}\label{SectionCQG}

\noindent For a CQG $G$, we denote by $C(G)$ its \textit{maximal} C*-algebra, which is the enveloping C*-algebra of the unital $*$-algebra $\Pol(G)$ given by the linear span of coefficients of irreducible unitary representations of $G$. The collection of finite dimensional unitary representations is denoted by $\Rep(G)$ and the set of equivalence classes of irreducible unitary representations is denoted by $\Irr(G)$. We will denote by $\varepsilon_G\,:\, C(G)\rightarrow\C$ the counit of $G$ which satisfies ($\id\ot\varepsilon_G)(u)=\id_H$ for all finite dimensional unitary representations $u\in\Lcal(H)\ot C(G)$.

\vspace{0.2cm}

\noindent Let us recall below the modular ingredients of a CQG. Let $u\in\Lcal(H_u)\ot C(G)$ be a finite dimensional unitary representation (not necessarily irreducible). The contragredient representation $\overline{u}$ is the unique, up to isomorphism, finite dimensional unitary representation such that there exists a pair $(s_u,s_{\overline{u}})$ with $s_u\in\Mor(1,u\ot\overline{u})$ and $s_{\overline{u}}\in\Mor(1,\overline{u}\ot u)$ such that: $$(s_{\overline{u}}^*\ot\id_{H_{\ubar}})(\id_{H_{\overline{u}}}\ot s_u)=\id_{H_{\overline{u}}}\text{ and }(s_{u}^*\ot\id_{H_{u}})(\id_{H_{u}}\ot s_{\overline{u}})=\id_{H_{u}}.$$
We say that such a pair $(s_u,s_{\ubar})$ solves the conjugate equation for $(u,\ubar)$. Let $(s_u,s_{\ubar})$ be a pair solving the conjugate equation for $(u,\ubar)$ and define the positive functionals $\varphi_u,\psi_u$ on the C*-algebra ${\rm End}(u)\subset\Lcal(H_u)$ by $\varphi_u(T):=s_{\ubar}^*(\id_{H_{\ubar}}\ot T)s_{\ubar}$ and $\psi_u(T):=s_{u}^*( T\ot\id_{H_{\ubar}})s_{u}$. Note that $\psi_u={\rm Tr}(Q_u\cdot )$ and $\varphi_u={\rm Tr}(Q_{\ubar}\cdot)$ (where ${\rm Tr}$ is the unique trace on $\Lcal(H_u)$ such that ${\rm Tr}(1)={\rm dim}(H_u)$), where $Q_u=J_u^*J_u\in\Lcal(H_u)$ is positive invertible and $J_u\,:\, H_u\rightarrow H_{\overline{u}}$ is the unique invertible antilinear map satisfying $\langle J_u\xi,\eta\rangle=\langle s_u,\xi\ot\eta\rangle$ for all $\xi\in H_u$, $\eta\in H_{\overline{u}}$. $Q_{\ubar}=J_{\ubar}J_{\ubar}^*$ is defined similarly, using $s_{\ubar}$ instead of $s_u$. The pair $(s_u,s_{\ubar})$ is called \textit{standard} whenever $\varphi_u=\psi_u$ and in the case of a standard pair one has $J_{\ubar}=J_u^{-1}$, $Q_{\ubar}=Q_u^{-1}$.
 
\vspace{0.2cm}

\noindent It is known that a standard pair $(s_u,s_{\ubar})$ always exists and for any standard pair $(s_u,s_{\ubar})$ one has $\Vert s_{\overline{u}}\Vert=\Vert s_u\Vert$ and this number does not depend on the choice of a standard pair. We then define \textit{the quantum dimension of $u$} as ${\rm dim}_q(u):=\Vert s_u\Vert^2={\rm Tr}(Q_u)={\rm Tr}(Q_{u}^{-1})=\Vert s_{\ubar}\Vert^2$ for a standard pair $(s_u,s_{\ubar})$. In what follows, $Q_u\in\Lcal(H_u)$ will always denote the unique positive invertible operator in $\Lcal(H_u)$ defined above. Such operators satisfy $SQ_u=Q_vS$ for all $u,v\in\Rep(G)$ and all $S\in\Mor(u,v)$.

\vspace{0.2cm}

\noindent We will always choose an orthonormal basis $(e^u_i)_i$ of $H_u$ diagonalizing $Q_u$ i.e. $Q_ue^u_i=Q_{u,i}e^u_i$ ($Q_{u,i}>0$) then, $e^{\overline{u}}_i:=Q_{u,i}^{-\frac{1}{2}}J_ue^u_i$ is an orthonormal basis diagonalizing $Q_{\overline{u}}$ and we have $Q_{\overline{u}}e^{\overline{u}}_i=Q^{\frac{1}{2}}_{u,i}e^{\overline{u}}_i$ and $J_{\overline{u}}e^{\overline{u}}_i=Q_{u,i}^{-\frac{1}{2}}e^u_i$ so that our notations are coherent with $Q_{\overline{u},i}:=Q_{u,i}^{-1}$. Note that, in this basis we have $s_u=\sum_iQ_{u,i}^{\frac{1}{2}}e^{u}_{i}\ot e^{\overline{u}}_i$ (and $s_{\overline{u}}=\sum_iQ_{u,i}^{-\frac{1}{2}}e^{\overline{u}}_i\ot e^u_i$).

\vspace{0.2cm}

\noindent Let $h_G$ be the Haar state on $C(G)$ and $({\rm L}^2(G),\lambda_G,\xi_G)$ its GNS construction. Let $$C_r(G):=\lambda_G(C(G))\subset\Lcal({\rm L}^2(G))$$ be the \textit{reduced C*-algebra} of $G$. The surjective unital $*$-homomorphism $\lambda_G\,:\,C(G)\rightarrow C_r(G)$ is called the \textit{canonical surjection}. Recall that $G$ is called \textit{coamenable} whenever $\lambda_G$ is injective. The \textit{von Neumann algebra of} $G$ is denoted by $\Linf(G):=C_r(G)''\subset\Lcal({\rm L}^2(G))$. We will still denote by $h_G$ the Haar state of $G$ on the C*-algebra $C_r(G)$ as well as on the von Neumann algebra ${\rm L}^\infty(G)$ i.e. $h_G=\langle\xi_G,\cdot\xi_G\rangle$ when viewed as a state on $C_r(G)$ or a normal state on $\Linf(G)$. We recall that $h_G$ is faithful on both $C_r(G)$ and $\Linf(G)$. The modular group $\sigma_t^G$ of $h_G$ and the scaling group $\tau_t^G$ of $G$ are given by $(\id\ot\sigma^G_t)(u)=(Q_u^{it}\ot 1)u(Q_u^{it}\ot 1)$ and $(\id\ot\tau^G_t)(u)=(Q_u^{it}\ot 1)u(Q_u^{-it}\ot 1)$ for all $t\in\R$. It is well known that the scaling group of $G$ is the unique one parameter group $(\tau_t^G)_t$ of $\Linf(G)$ such that $\Delta\circ\sigma_t^G=(\tau_t^G\ot\sigma_t^G)\circ\Delta$  for all $u\in\Rep(G)$ and $t\in\R$. Moreover, the scaling group preserves that Haar state: $h_G\circ\tau_t^G=h_G$. The compact quantum group $G$ is said to be of \textit{Kac type} whenever $h_G$ is a trace which happens if and only if $Q_u=1$ for all $u\in\Rep(G)$.

\vspace{0.2cm}

\noindent We say that a compact quantum group $F$ is a \textit{dual quantum subgroup} of a compact quantum group $G$ if there is a faithful *-morphism $C(F)\hookrightarrow C(G)$ intertwining the comultiplications. We often see $C(F)$ as a C*-subalgebra of $C(G)$. Recall the following proposition from \cite{FT24}.
\begin{proposition}\label{dualqsg}
Let $G$ and $F$ be CQG. The following data are equivalent.
\begin{itemize}
    \item $\iota\,:\, C(F)\rightarrow C(G)$ is a faithful unital $*$-homomorphism intertwining the comultiplications.
    \item $\iota\,:\,\Pol(F)\rightarrow\Pol(G)$ is a faithful unital $*$-homomorphism intertwining the comultiplications.
    \item $\iota\,:\,C_r(F)\rightarrow C_r(G)$ is a faithful unital $*$-homomorphism intertwining the comultiplications.
\end{itemize} If one of the following equivalent conditions is satisfied, we view $\Pol(F)\subset\Pol(G)$, $C(F)\subset C(G)$ and $C_r(F)\subset C_r(G)$. Then, the unique linear map $E_F\,:\,\Pol(G)\rightarrow \Pol(F)$ such that
$$(\id\ot E_F)(u^x)=\left\{\begin{array}{lcl}u^x&\text{if}&x\in\Irr(F),\\0&\text{if}&x\in\Irr(G)\setminus\Irr(F).\end{array}\right.$$
has a unique ucp extension to a map $E_F\,:\,C(G)\rightarrow C(F)$ which is a Haar-state-preserving conditional expectation onto the subalgebra $C(F)\subset C(G)$. Hence, there exists a unique conditional expectation $E_F^r\,:\, C_r(G)\rightarrow C_r(F)$ such that $E_F^r\circ\lambda_G=\lambda_F\circ E_F$. Moreover, $E^r_F$ is faithful and Haar-state-preserving and extends to a Haar-state preserving normal faithful conditional expectation $E_F''\,:\,\Linf(G)\rightarrow\Linf(F)$.
\end{proposition}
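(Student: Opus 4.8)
The plan is to reduce the whole statement to a single reconstruction of the conditional expectation $E_F$ on $\Pol(G)$, and then to propagate it to the C*-algebraic and von Neumann levels by standard GNS/reduced-algebra arguments already set up in the excerpt. First I would establish the equivalence of the three data. The implications ``$C(F)\hookrightarrow C(G)$'' $\Rightarrow$ ``$\Pol(F)\hookrightarrow\Pol(G)$'' and its converse are essentially formal: a faithful unital $*$-homomorphism intertwining comultiplications must send coefficients of irreducible representations of $F$ to coefficients of (finite-dimensional, hence decomposable) representations of $G$, so it restricts to $\Pol(F)\to\Pol(G)$; conversely $C(F)$ is the enveloping C*-algebra of $\Pol(F)$ and the map extends by universality, faithfulness being inherited because $\Pol(F)$ carries a faithful state (the Haar state) that factors through the image. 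For the reduced picture, I would use that the Haar state of $F$ is the restriction of the Haar state of $G$ to $\Pol(F)$ (this follows from uniqueness of the Haar state and invariance of the restriction), so the GNS space $\rL^2(F)$ embeds isometrically and $G$-equivariantly as a subspace of $\rL^2(G)$, whence $C_r(F)\hookrightarrow C_r(G)$ faithfully; the converse is again universality plus faithfulness of $\lambda_F$ composed with the inclusion.

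Next I would construct $E_F$ on $\Pol(G)$ by the displayed formula and check it is well defined, unital, and a $\Pol(F)$-bimodule map; this is immediate once one knows $\Pol(F)$ is the span of coefficients $u^x_{ij}$ with $x\in\Irr(F)$ and that $\Irr(F)\subset\Irr(G)$ as a full subcategory closed under tensor products, conjugates and subobjects (which is exactly what the dual-subgroup hypothesis gives). The key point — and the one I expect to be the main obstacle — is complete positivity of $E_F$ and the fact that it is Haar-state-preserving. For the Haar-state-preserving property I would use the orthogonality relations: $h_G$ kills every coefficient of a nontrivial irreducible, hence $h_G\circ E_F=h_G$ on $\Pol(G)$ (both sides equal the coefficient of the trivial representation). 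Complete positivity is where one must work: I would realise $E_F$ as a ``slice'' of the regular representation. Concretely, writing $M={\rm L}^\infty(G)$, the map $x\mapsto E_F(x)$ should coincide on $\Pol(G)$ with the restriction to $\rL^2(F)\subset\rL^2(G)$ in the following sense — the orthogonal projection $p_F\in\Lcal(\rL^2(G))$ onto $\rL^2(F)$ is in the commutant of the right regular representation, and compressing the left regular representation by $p_F$ implements $E_F$ after identifying $p_F\Lcal(\rL^2(G))p_F$ appropriately with operators on $\rL^2(F)$; alternatively, and more robustly, one can invoke the averaging argument over the (possibly quantum) ``quotient'': $E_F=(\id\ot h_{G})\circ$ (a suitable coaction) restricted correctly, but the cleanest route is to cite the standard fact that the projection onto the span of $F$-isotypic subspaces of the regular representation yields a normal conditional expectation. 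Because this is ``well known,'' I would likely present it as: the formula defines a state-preserving projection of norm one onto a C*-subalgebra, hence by Tomiyama's theorem it is automatically a (completely positive) conditional expectation once we check $\|E_F(x)\|\le\|x\|$, and the norm bound follows from $E_F$ being the restriction to a subspace of the GNS representation.

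Finally I would push everything to the analytic levels. Since $E_F$ on $C(G)$ is Haar-state-preserving and ucp, Remark \ref{RmkReducedUcp} applied with $\varphi=h_F\circ E_F$ (whose GNS-morphism is $\lambda_G$, as $h_F\circ E_F=h_G$) gives that the reduced C*-algebra of this ucp map is $\lambda_G(C(G))=C_r(G)$; the induced GNS-faithful ucp map is then the desired $E_F^r\colon C_r(G)\to C_r(F)$, and $E_F^r\circ\lambda_G=\lambda_F\circ E_F$ holds by uniqueness. Faithfulness of $E_F^r$ follows because $h_F\circ E_F^r=h_G$ is faithful on $C_r(G)$ (being the GNS state), so $E_F^r(a^*a)=0$ forces $h_G(a^*a)=h_F(E_F^r(a^*a))=0$, hence $a=0$. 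Since $E_F^r$ is normal (it is the compression by $p_F$ described above, which is automatically $\sigma$-weakly continuous), it extends to $E_F''\colon\Linf(G)\to\Linf(F)$ by taking double commutants; normality and faithfulness pass to the closure because $h_F\circ E_F''=h_G$ remains a faithful normal state on $\Linf(G)$. The Haar-state-preserving property is preserved at every step by construction. The only genuinely delicate point in this last paragraph is normality of $E_F^r$, which I would get either from the $p_F$-compression description or from the fact that a Haar-state-preserving conditional expectation between the von Neumann algebras always exists (it commutes with the modular group because $E_F$ intertwines the scaling groups, $Q^{it}$ acting identically on $F$-coefficients as elements of $G$-representations) and restricts to the C*-level.
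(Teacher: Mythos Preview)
The paper does not give its own proof of this proposition; it is recalled from \cite{FT24}. Your overall architecture is sound, and your treatment of the $\Pol$-, reduced-$C^*$-, and von Neumann levels is correct: the identity $p_F\,\lambda_G(a)\,p_F=\lambda_F(E_F(a))$ for $a\in\Pol(G)$ does hold (because $\Irr(F)\subset\Irr(G)$ is closed under tensor, conjugate and subobject, so coefficients of $x\notin\Irr(F)$ send $\rL^2(F)$ into its orthocomplement), and from it you correctly obtain $E_F^r$ ucp on $C_r(G)$, its faithfulness via $h_F\circ E_F^r=h_G$, and the normal extension $E_F''$.

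There is, however, a genuine gap at the \emph{full} $C^*$-level, and this level is actually used later in the paper (e.g.\ in the construction of $E_2$ in the proof of Lemma~\ref{LemReducedAction}). Your compression argument only controls the \emph{reduced} norm: it gives
\[
\|E_F(a)\|_{C_r(F)}=\|\lambda_F(E_F(a))\|\le\|a\|_{C(G)},
\]
and since $\lambda_F$ is not isometric when $F$ is not coamenable, this does \emph{not} yield $\|E_F(a)\|_{C(F)}\le\|a\|_{C(G)}$. So Tomiyama's theorem cannot be invoked to produce a conditional expectation $C(G)\to C(F)$. The same issue undermines your faithfulness argument for $\iota:C(F)\to C(G)$: the Haar state is faithful on $\Pol(F)$ and on $C_r(F)$, but not on $C(F)$, so it cannot detect injectivity at the universal level. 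Both gaps are closed by one idea, namely induction of representations for the discrete quantum subgroup $\widehat F\subset\widehat G$: given any $*$-representation $\pi$ of $C(F)=C^*_u(\widehat F)$ one forms $\mathrm{Ind}\,\pi$ of $C(G)=C^*_u(\widehat G)$, and compression by the ``identity-coset'' projection gives $p\,(\mathrm{Ind}\,\pi)(a)\,p=\pi(E_F(a))$ on $\Pol(G)$, hence $\|\pi(E_F(a))\|\le\|a\|_{C(G)}$ for every $\pi$ and therefore $\|E_F(a)\|_{C(F)}\le\|a\|_{C(G)}$. Equivalently, one makes $\Pol(G)$ into a right pre-Hilbert $C(F)$-module with inner product $\langle a,b\rangle=E_F(a^*b)$ and verifies positivity from the coset decomposition of $\Irr(G)$ over $\Irr(F)$; then $E_F(\cdot)=\langle 1_G,\,\cdot\,1_G\rangle$ is ucp into $C(F)$, and $E_F\circ\iota=\id$ forces $\iota$ to be isometric. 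Your $p_F$-compression is exactly the special case $\pi=\lambda_F$, which is why it only sees the reduced norm.
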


\begin{remark}\label{RmkEF} The following holds.
\begin{enumerate}
\item Note that, since $\lambda_G$ is surjective, $E_F^r$ is faithful and $E^r_F\circ\lambda_G=\lambda_F\circ E_F$, $C_r(\GG)$ is actually the reduced C*-algebra of the ucp map $\lambda_F\circ E_F$ i.e. $C_r(\GG)=C^*_r(\lambda_F\circ E_F)$.
\item If $a\in\Pol(G)$ and $E_F(a)=0$ then we can write $\Delta(a)$ as a finite sum $\Delta(a)=\sum_k a_k\ot b_k$ with $a_k,b_k\in\Pol(G)$ and $E_F(a_k)=0=E_F(b_k)$ for all $k$. Indeed, assuming that $a\neq 0$ and since $a\in\Pol(G)$, there exists a finite subset $S\subset\Irr(G)$ and non-zero $\lambda^x_{ij}\in\C$, $1\leq i,j\leq{\rm dim}(x)$ such that $a=\sum_{x\in S,i,j}\lambda^x_{ij}u^x_{ij}$. Then, $\Delta(a)=\sum_{x\in S,i,j}\lambda^x_{i,j,s} u^x_{is}\ot u^x_{sj}$ and $E_F(a)=\sum_{x\in S\cap\Irr(F),i,j}\lambda^x_{ij}u^x_{ij}$. Since $E_F(a)=0$ and coefficients of representations $u^x$ for $x\in\Irr(G)$ form a basis of $\Pol(G)$, we deduce that $S\subset\Irr(G)\setminus\Irr(F)$.
\item $E_F$ is the unique Haar state preserving conditional expectation, at the full, reduced, von Neumann or algebraic level. Indeed, if $E\,:\, C(G)\rightarrow C(F)$ is a conditional expectation preserving the Haar states then there exists a unique conditional expectation $E^r\,:\, C_r(G)\rightarrow C_r(F)$ such that $E^r\circ\lambda_G=\lambda_F\circ E$ and $E^r$ still preserves the Haar states. By faithfulness of the Haar states at the reduced level, we deduce that $E^r=E^r_F$. Indeed, $h_F((E^r_F(x)-E^r(x))b)=h_F(E_F^r(xb))-h_F(E^r(xb))=h_G(xb)-h_G(xb)=0$ for all $b\in C_r(F)$ and all $x\in C_r(G)$. Since $h_F$ is faithful on $C_r(F)$ it gives the result. It implies that $E=E_F$ on $\Pol(G)$ and so $E=E_F$ on $C(G)$. For the reduced, von Neumann and algebraic level, the statement is clear since the Haar states are faithful in those cases.
\end{enumerate}
\end{remark}

\subsection{Actions on finite dimensional C*-algebras}\label{SectionQAut}

\noindent A (right) \textit{action} of a CQG $H$ on a finite dimensional unital C*-algebra $B$ is a unital $*$-homomorphism $\beta\,:\, B\rightarrow B\ot \Pol(H)$ such that $(\id_B\ot\Delta_H)\beta=(\beta\ot\id)\beta$ and $(\id\ot\varepsilon_H)\beta=\id_B$. In particular, $\beta$ has to be injective. The action is called \textit{ergodic} if the fixed point C*-subalgebra is trivial i.e. $B^H:=\{b\in B\,:\,\beta(b)=b\ot 1\}=\C1_B$. Note that, by invariance of Haar measure, the map $(\id_B\ot h_H)\beta$ is a conditional expectation onto $B^H$.
The action is called \textit{faithful} if $C(H)$ is generated, as a C*-algebra, by $\{(\omega\ot\id)\beta(b)\,:\,b\in B,\,\omega\in B^*\}$. Given a state $\psi$ on $B$, we say that the action $\beta$ is $\psi$-\textit{preserving}, or that $\psi$ is a $\beta$-\textit{invariant} state, if $(\psi\ot\id)\beta=\psi(\cdot)1$. Note that if $\beta$ is ergodic then there exists a unique $\beta$-invariant state $\psi\in B^*$, given by $\psi(b)1=(\id\ot h_H)\beta(b)$. In particular, since $h_H$ is faithful on $\Pol(H)$ and $\beta$ is injective, if $\beta$ is ergodic then its unique invariant state $\psi\in B^*$ is faithful.

\vspace{0.2cm}

\noindent For the remainder of this paper, $\psi$ will be a faithful state on the finite dimensional C*-algebra $B$. View $B$ as a Hilbert space with scalar product $\langle a,b\rangle_{\psi}:=\psi(a^*b)$ for all $a,b\in B$. $B$ acts on the Hilbert space $B$ by left multiplication: the map $L_B\,:\,B\rightarrow\mathcal{L}(B)$, $L_B(b)(a)=ba$ is a faithful unital $*$-homomorphism.

\vspace{0.2cm}

\noindent Let $\beta\,:\,B\rightarrow B\ot C(H)$ be a $\psi$-preserving action of the CQG $H$ on $B$. From the action $\beta\,:\,B\rightarrow B\ot C(H)$ we consider the unique operator $u\in\Lcal(B)\ot C(H)=\mathcal{K}_{C(H)}(B\ot C(H))$ satisfying $u(b\ot a):=\beta(b)(1_B\ot a)$ for all $b\in B$, $a\in C(H)$. Let $m\,:\, B\ot B\rightarrow B$ and $\eta\,:\,\C\rightarrow B$ be the multiplication and the unit of $B$ respectively. The following simple lemma is well known \cite{Ba99}.

\begin{lemma}\label{LemRepAction}
Given any action $\psi$-preserving action $\beta\,:\, B\rightarrow B\ot C(H)$ the associated unitary representation $u\in\Lcal(B)\ot C(H)$ satisfies:
\begin{itemize}
\item $u$ is unitary representation of $H$ on $B$.
\item $m\in\Mor_H(u\ot u,u)$.
\item $\eta\in\Mor_H(1,u)$.
\item $u(L_B(b)\ot 1)u^*=(L_B\ot\id)(\beta(b))$ for all $b\in B$.
\item The action $\beta$ is faithful if and only if $C(H)$ is generated by the coefficients of $u$.
\item The action $\beta$ is ergodic if and only if $\Mor_H(1,u)=\C\eta$.
\end{itemize}
\end{lemma}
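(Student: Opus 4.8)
The plan is to verify each of the six assertions directly from the definition of the operator $u \in \Lcal(B)\ot C(H)$ determined by $u(b\ot a) = \beta(b)(1_B\ot a)$, using the three defining properties of an action: $*$-homomorphism, coassociativity $(\id_B\ot\Delta_H)\beta = (\beta\ot\id)\beta$, and counitality $(\id\ot\varepsilon_H)\beta = \id_B$.

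\begin{proof}
First I would show $u$ is a unitary representation. Since $\psi$ is $\beta$-invariant, for $b,c\in B$ and $a,a'\in C(H)$ we compute $\langle u(b\ot a), u(c\ot a')\rangle = (\psi\ot\id)\big((1_B\ot a^*)\beta(b^*c)(1_B\ot a')\big) = a^*\psi(b^*c)a' = \langle b\ot a, c\ot a'\rangle$, so $u$ is isometric; surjectivity of $u$ onto $B\ot C(H)$ follows because $\beta$ is injective with $\psi$-preserving, hence unitarily implemented, and more concretely one checks that the map $v(b\ot a) = (m\ot\id)(\id_B\ot\beta)\big((S\ot 1)(b\ot a)\big)$ built from the antipode-like inverse provides a two-sided inverse; alternatively, since $B$ is finite-dimensional, an isometry is automatically a unitary on the Hilbert $C(H)$-module $B\ot C(H)$ once it is adjointable and has dense range, which holds as $\beta(B)(1\ot C(H))$ spans a dense submodule by counitality. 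The representation property $(\id\ot\Delta_H)(u) = u_{12}u_{13}$ is exactly a restatement of coassociativity $(\id_B\ot\Delta_H)\beta = (\beta\ot\id)\beta$ applied to $b\ot a$.

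Next, for the morphism properties: $m\in\Mor_H(u\ot u, u)$ means $u(m\ot\id) = (m\ot\id)(u\ot u)_{12,13}$ on $B\ot B\ot C(H)$, i.e. $\beta(bc) = \beta(b)\beta(c)$, which is precisely multiplicativity of $\beta$. Similarly $\eta\in\Mor_H(1,u)$ means $u(\eta\ot\id) = (\eta\ot\id)$, i.e. $\beta(1_B) = 1_B\ot 1_{C(H)}$, which holds since $\beta$ is unital. For the fourth bullet, $u(L_B(b)\ot 1)u^*(c\ot a) = u(L_B(b)\ot 1)(u^*(c\ot a))$; writing $u^*(c\ot a) = \sum_i d_i\ot a_i$ so that $\sum_i \beta(d_i)(1\ot a_i) = c\ot a$, one gets $u(L_B(b)\ot 1)\sum_i d_i\ot a_i = \sum_i \beta(bd_i)(1\ot a_i) = \beta(b)\sum_i\beta(d_i)(1\ot a_i) = \beta(b)(c\ot a) = (L_B\ot\id)(\beta(b))(c\ot a)$, using multiplicativity of $\beta$ again.

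For the last two bullets: the coefficients of $u$ are the elements $(\omega\ot\id)(u)$ for $\omega\in\Lcal(B)^*$, and unwinding the definition these span the same subspace of $C(H)$ as $\{(\mu\ot\id)\beta(b) : b\in B, \mu\in B^*\}$ (each matrix coefficient $\langle e_i, u(e_j\ot 1)\rangle$ in an orthonormal basis is of the form $(\langle e_i,\,\cdot\,\rangle\ot\id)\beta(e_j)$, and conversely), so faithfulness of $\beta$ — that $C(H)$ is generated by $\{(\mu\ot\id)\beta(b)\}$ — is equivalent to $C(H)$ being generated by the coefficients of $u$. Finally, $\xi\in\Mor_H(1,u)$ with $\xi\in B$ means $u(\xi\ot 1) = \xi\ot 1$, i.e. $\beta(\xi) = \xi\ot 1$, i.e. $\xi\in B^H$; hence $\Mor_H(1,u) = B^H$ as subspaces of $B$, and this equals $\C\eta = \C 1_B$ exactly when $\beta$ is ergodic. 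I expect the only mildly delicate point to be the unitarity of $u$ as an adjointable operator on the Hilbert module — one must check $u$ is adjointable with the stated adjoint and that the computation $\langle u(\cdot),u(\cdot)\rangle = \langle\cdot,\cdot\rangle$ combined with density of the range (from counitality) forces surjectivity; everything else is a direct translation of the action axioms.
\end{proof}
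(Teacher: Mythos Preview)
The paper does not prove this lemma; it is stated as well known with a reference to \cite{Ba99}. So there is no argument in the paper to compare against, and your direct verification of each bullet from the action axioms is exactly the intended route.

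Your treatment of the second through sixth bullets is correct and complete: multiplicativity of $\beta$ gives $m\in\Mor_H(u\ot u,u)$, unitality gives $\eta\in\Mor_H(1,u)$, the covariance relation $u(L_B(b)\ot 1)u^*=(L_B\ot\id)\beta(b)$ is a clean consequence of multiplicativity as you wrote, the identification of coefficients of $u$ with slices of $\beta$ is right, and $\Mor_H(1,u)=B^H$ is immediate.

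The one point that deserves tightening is unitarity of $u$. You correctly get $u^*u=1$ from $\psi$-invariance, but your surjectivity argument is loose: an isometry in $M_n(C(H))$ need not be unitary in general, and ``dense range by counitality'' is really the Podle\'s density condition, which is true but not a one-line consequence of $(\id\ot\varepsilon_H)\beta=\id_B$. The cleanest fix, and the one implicit in the paper's conventions, is this: by the paper's definition an action maps $B$ into $B\ot\Pol(H)$, so all coefficients $u_{ij}$ lie in $\Pol(H)$. Since $(\id\ot\Delta_H)(u)=u_{12}u_{13}$, the antipode axiom on $\Pol(H)$ gives $(\id\ot S)(u)\cdot u = u\cdot(\id\ot S)(u)=(\id\ot\varepsilon_H)(u)\ot 1=1$, so $u$ is invertible in $M_n(\Pol(H))$. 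An invertible isometry is unitary. This replaces your two hand-wavy alternatives with a two-line argument that uses exactly the structure available.
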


\begin{remark}\label{RmkConjugate}
Let $u\in\Lcal(B)\ot C(H)$ be the representation associated to a $\psi$-preserving action $\beta\,:\, H\curvearrowright B$ then $u\simeq\ubar$. Indeed, it is easy to check that $s_u=s_{\ubar}:=m^\star\eta\in{\rm Mor}(1,u\ot u)$ is a pair solving the conjugate equation of $(u,u)$, where $m^\star\,:\, B\rightarrow B\ot B$ is the adjoint of $m$ with respect to the $\psi$-scalar product on $B$.
\end{remark}

\noindent Denote by ${\rm Aut}^+(B,\psi)$ the quantum automorphism group of $(B,\psi)$. It is the universal CQG acting on $(B,\psi)$ in the sense that there is a $\psi$-preserving action $\beta\,:\, B\rightarrow B\otimes C(\GB)$ and, if $G$ is any CQG acting on $B$ via $\alpha\,:\,B\rightarrow B\ot C(G)$ in a $\psi$-preserving way then, there exists a unique unital $*$-homomorphism $\pi\,:\,C({\rm Aut}^+(B,\psi))\rightarrow C(G)$ such that $\pi$ intertwines the comultiplications and the actions i.e. $(\id\ot\pi)\beta=\alpha$. The uniqueness, up to a canonical isomorphism, of $\GB$ is clear, and the action $\beta$ is faithful. The unitary $u\in\Lcal(B)\ot C(\GB)$ associated with $\beta$ is called the \textit{fundamental representation} of $\GB$. The existence of $\GB$ is also clear (see \cite{Ba99,Ba02}), it suffices to define $C({\rm Aut}^+(B,\psi))$ as the universal unital C*-algebra generated by coefficients of $u\in\mathcal{L}(B)\ot C({\rm Aut}^+(B,\psi))$ with relations:

\begin{itemize}
\item $u$ is unitary.
\item $m\in\Mor(u\ot u,u)$.
\item $\eta\in\Mor(1,u)$.
\end{itemize}

\noindent The comultiplication $\Delta_B\,:\,C({\rm Aut}^+(B,\psi))\rightarrow C({\rm Aut}^+(B,\psi))\ot C({\rm Aut}^+(B,\psi))$ is then defined, by the universal property of $C(\GB)$, as the unique unital $*$-homomorphism such that $u$ is a representation i.e. $(\id\ot\Delta_B)(u)=u_{12} u_{13}$. We sometimes write $\beta_B\,:\,\GB\curvearrowright B$ for the universal $\psi$-preserving  action of $\GB$ on $B$.

\vspace{0.2cm}

\noindent Write $B=\bigoplus_{\kappa=1}^KM_{N_\kappa}(\C)$ and $\psi=\sum_{\kappa}\Tr(Q_\kappa\cdot)$, where $Q_\kappa\in M_{N_\kappa}(\C)$ is a positive definite matrix with $\sum_\kappa\Tr(Q_\kappa)=1$. Diagonalising $Q_\kappa$, we find matrix units $(e^\kappa_{ij})_{1\leq i,j\leq N_\kappa}$ in $M_{N_\kappa}(\C)$ such that $Q_\kappa e^\kappa_{ij}=\lambda_{\kappa,i}e^\kappa_{ij}$ for all $1\leq i,j\leq N_\kappa$ and all $1\leq \kappa\leq K$. Since $\psi((e^{\kappa}_{ij})^*e^\zeta_{kl})=\delta_{\kappa,\zeta}\delta_{ik}\Tr(Q_\kappa e^\kappa_{jl})=\delta_{\kappa,\zeta}\delta_{ik}\delta_{jl}\lambda_{\kappa,j}$, we deduce that $(b^\kappa_{ij})_{\kappa,i,j}$ is an orthonormal basis of the Hilbert space $B$, where $b^\kappa_{ij}:=\lambda^{-\frac{1}{2}}_{\kappa,j}e^\kappa_{ij}$. Note that $b^\kappa_{ip}b^{\zeta}_{qj}=\delta_{\kappa\zeta}\delta_{pq}\lambda^{-\frac{1}{2}}_{\kappa,p}b^\kappa_{ij}$ and $(b^\kappa_{ij})^*=\left(\frac{\lambda_{\kappa,i}}{\lambda_{\kappa,j}}\right)^{\frac{1}{2}}b^\kappa_{ji}$.

\begin{remark}\label{RmkSpectralDecomposition}
 The positive operator $mm^\star$ is actually diagonal in the orthonormal basis $(b^\kappa_{rs})$ of $B$. Indeed, a direct computation shows that $mm^\star(b^\kappa_{rs})={\rm Tr}(Q_\kappa^{-1})b^\kappa_{rs}$ for all $\kappa,r,s$. Writing $\delta_1,\dots\delta_n>0$ the pairwise distinct values taken by ${\rm Tr}(Q_\kappa^{-1})$ we see that ${\rm Sp}(mm^\star)=\{\delta_1,\dots,\delta_n\}$ and, as C*-algebras, $B=\bigoplus_i B_i$, where
$$B_i:={\rm Span}\{b^\kappa_{rs}\,:\,1\leq\kappa\leq K\text{ such that }{\rm Tr}(Q_\kappa^{-1})=\delta_i,\,1\leq r,s\leq N_\kappa\}=\bigoplus_{\kappa,{\rm Tr} (Q_\kappa^{-1})=\delta_i}M_{N_\kappa}(\C)$$
is the $\delta_i$-eigenspace of $mm^\star$.
\end{remark}

\noindent Recall that a faithful state $\psi\in B^*$ is called a \textit{$\delta$-form} ($\delta>0$) if $mm^\star=\delta\id_B$. Note that our convention is a little bit different from the usual one \cite{Ba99,Ba02,Br13}.

\begin{remark}\label{remadelta}\label{ergodelta}
The following holds.
\begin{enumerate}
\item \textit{$\psi$ is a $\delta$-form $\Leftrightarrow$  $\kappa\mapsto{\rm Tr}(Q_\kappa^{-1})$ is constant and in that case $\delta={\rm Tr}(Q_\kappa^{-1})$ $\forall\kappa$. In general, writing $B=\bigoplus B_i$, each $\psi_i:=\frac{1}{\psi(1_{B_i})}\psi\vert_{B_i}$ is a $\delta_i$-form on $B_i$.}

\noindent It is a direct consequence of Remark \ref{RmkSpectralDecomposition}.

\item \textit{$\beta_B\,:\,\GB\curvearrowright B$ is always faithful and $\beta_B$ is ergodic if and only if  $\psi$ is a $\delta$-form.}

\noindent Indeed, if $\psi$ is a $\delta$-form then it is known \cite{Ba99,Ba02,Br13} that ${\Mor}(1,u)=\C\eta$ so $\beta_B$ is ergodic. If $\psi$ is not a $\delta$-form then, by the spectral decomposition from Remark \ref{RmkSpectralDecomposition}, $n\geq 2$ and, for $1\leq i\neq j\leq n$, one has $\delta_i\neq \delta_j$ so that $p_i\eta$ and $p_j\eta\in B$ are linearly independent since $1_B=\sum_{\kappa,r}\lambda_\kappa^{-1/2}b^\kappa_{rr}$ which gives $p_k(1_B)=\sum_{\kappa,\,{\rm Tr}(Q_\kappa^{-1})=\delta_k}\lambda_\kappa^{-1/2}b^\kappa_{rr}$ for all $1\leq k\leq n$. Note that, since $mm^\star$ is in the C*-algebra ${\rm End}(u)\subset\Lcal(B)$, we also have $p_k\in{\rm End}(u)$ for all $k$. Hence, $p_i\eta,p_j\eta\in{\rm Mor}(1,u)$ which implies that $\beta_B$ is not ergodic.

\item \textit{For any $\psi$-preserving action $\beta\,:\,H\curvearrowright B$, if $\beta$ is ergodic then $\psi$ is a $\delta$-form.}

\noindent Indeed, assume that $\psi$ is not a $\delta$-form and write $\pi\,:\,C(\GB)\rightarrow C(H)$ the canonical map intertwining the comultiplications and such that $(\id\ot\pi)\beta_B=\beta$. In particular, $B^{\GB}\subseteq B^H$. By $(2)$, $\beta_B$ is not ergodic so ${\rm dim}(B^{\GB})\geq 2$ which implies that $\beta$ is not ergodic either.
\end{enumerate}
\end{remark}

\noindent Given any $\psi$-preserving action $\beta\,:\, B\rightarrow B\ot C(H)$ with associated representation $u\in\Lcal(B)\ot C(H)$ we write:
$$u=\sum_{\kappa,\zeta,i,j,k,l}b^{\kappa}_{ij}\circ (b^{\zeta}_{kl})^{\star}\ot u^{ij,\kappa}_{kl,\zeta},$$
where we view $b\in B$ as an operator in $\Lcal(\C,B)$ and $b^\star\in\Lcal(B,\C)$ is the adjoint operator, the action $\beta$ is then given by  $\beta(b^{\zeta}_{kl})=\sum_{i,j,\kappa} b^{\kappa}_{ij}\ot u^{ij,\kappa}_{kl,\zeta}$. The relations $u$ unitary, $m\in\Mor(u\ot u,u)$ and $\eta\in\Mor(1,u)$ are equivalent to the following relations on the coefficients of $u$.

\begin{itemize}
\item $\sum_{r=1}^{N_\gamma}\lambda_{\gamma,r}^{-\frac{1}{2}}u^{ir,\gamma}_{kp,\kappa}u^{rj,\gamma}_{ql,\zeta}=\delta_{\kappa\zeta}\delta_{pq}\lambda_{\kappa,p}^{-\frac{1}{2}}u^{ij,\gamma}_{kl,\kappa}$ and $\sum_{r=1}^{N_\gamma}\lambda_{\gamma,r}^{-\frac{1}{2}}u^{ip,\kappa}_{kr,\gamma}u^{qj,\zeta}_{rl,\gamma}=\delta_{\kappa\zeta}\delta_{pq}\lambda_{\kappa,p}^{-\frac{1}{2}}u^{ij,\kappa}_{kl,\gamma}$.
\item $\sum_{\gamma,r}\lambda_{\gamma,r}^{\frac{1}{2}}u^{ij,\kappa}_{rr,\gamma}=\delta_{ij}\lambda_{\kappa,i}^{\frac{1}{2}}$ and $\sum_{\gamma,r}\lambda_{\gamma,r}^{\frac{1}{2}}u^{rr,\gamma}_{kl,\zeta}=\delta_{kl}\lambda_{\zeta,k}^{\frac{1}{2}}$.
\item $\left(u^{ij,\kappa}_{kl,\zeta}\right)^*=\left(\frac{\lambda_{\kappa,i}}{\lambda_{\zeta,k}}\right)^{-\frac{1}{2}}\left(\frac{\lambda_{\kappa,j}}{\lambda_{\zeta,l}}\right)^{\frac{1}{2}}u^{ji,\kappa}_{lk,\zeta}$.
 \end{itemize}
And the comultiplication is given on coefficients by $\Delta(u^{ij,\kappa}_{kl,\zeta})=\sum_{r,s,\gamma}u^{ij,\kappa}_{rs,\gamma}\ot u^{rs,\gamma}_{kl,\zeta}$. In particular $C(\GB)$ is the universal unital C*-algebra generated by elements $u^{ij,\kappa}_{kl,\zeta}\in C(\GB)$ for $1\leq\kappa,\zeta\leq K$, $1\leq i,j\leq N_\kappa$, $1\leq k,l\leq N_\zeta$ satisfying the above relations.

\begin{proposition}\label{PropQAut}
Let $\beta\,:\, H\curvearrowright B$ be an ergodic action on the finite dimensional C*-algebra $B$ with unique invariant state $\psi\in B^*$ and associated representation $u\in\mathcal{L}(B)\ot C(H)$. The following holds.
\begin{enumerate}
\item $h(u^{ij,\kappa}_{kl,\gamma})=\delta_{ij}\delta_{kl}(\lambda_{\kappa,i}\lambda_{\gamma,k})^{\frac{1}{2}}$.
\item $Q_u$ is the positive modular operator $\nabla_\psi$ of $\psi$. In particular,
\begin{enumerate}
\item ${\rm dim}_q(u)=\delta$.
\item $\sigma^h_t(u^{ij,\kappa}_{kl,\gamma})=\left(\frac{\lambda_{\gamma,k}\lambda_{\kappa,i}}{\lambda_{\gamma,l}\lambda_{\kappa,j}}\right)^{it}u^{ij,\kappa}_{kl,\gamma}$ and
$(\id\ot\sigma^h_t)\circ\beta=(\sigma_t^\psi\ot\id)\circ\beta\circ\sigma_t^\psi$.
\item $\tau^h_t(u^{ij,\kappa}_{kl,\gamma}) = \left(\frac{\lambda_{\gamma,l}\lambda_{\kappa,i}}{\lambda_{\gamma,k}\lambda_{\kappa,j}}\right)^{it}u^{ij,\kappa}_{kl,\gamma}$ and
$(\id\ot\tau^h_t)\circ\beta=(\sigma_t^\psi\ot\id)\circ\beta\circ\sigma_{-t}^{\psi}$.
\end{enumerate}
\end{enumerate}
\end{proposition}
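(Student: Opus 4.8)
The plan is to exploit the ergodicity of $\beta$, which by Remark \ref{ergodelta}(3) forces $\psi$ to be a $\delta$-form; in particular $\Tr(Q_\kappa^{-1})=\delta$ for all $\kappa$ and $mm^\star=\delta\,\id_B$. First I would prove (1): since $\beta$ is ergodic, its unique invariant state satisfies $\psi(b)1=(\id\ot h)\beta(b)$, so applying $\psi$ to $\beta(b^\zeta_{kl})=\sum_{i,j,\kappa}b^\kappa_{ij}\ot u^{ij,\kappa}_{kl,\zeta}$ and using that $\psi(b^\kappa_{ij})=\psi(\lambda_{\kappa,j}^{-1/2}e^\kappa_{ij})=\lambda_{\kappa,j}^{-1/2}\delta_{ij}\Tr(Q_\kappa e^\kappa_{ii})=\lambda_{\kappa,j}^{-1/2}\delta_{ij}\lambda_{\kappa,i}=\delta_{ij}\lambda_{\kappa,i}^{1/2}$, one gets $\sum_{i,\kappa}\delta_{ij}\lambda_{\kappa,i}^{1/2}\,h(u^{ii,\kappa}_{kl,\zeta})$ on one side; comparing with $\psi(b^\zeta_{kl})1=\delta_{kl}\lambda_{\zeta,k}^{1/2}1$ and tracking indices carefully (using the $\eta\in\Mor(1,u)$ relation to normalize) yields the claimed $h(u^{ij,\kappa}_{kl,\gamma})=\delta_{ij}\delta_{kl}(\lambda_{\kappa,i}\lambda_{\gamma,k})^{1/2}$. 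Alternatively, and more cleanly, I would use Remark \ref{RmkConjugate}: the pair $s_u=s_{\ubar}=m^\star\eta$ solves the conjugate equations, so the Haar state on coefficients of $u$ is computed by the standard formula $h((\omega\ot\id)(u))=\omega(\text{something involving }s_u)$, which directly gives the rank-one-type formula in (1).

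Next, for (2), the key point is to identify $Q_u$. Since $(s_u,s_{\ubar})=(m^\star\eta,m^\star\eta)$ is a solution of the conjugate equation (Remark \ref{RmkConjugate}), I would compute the associated functionals $\varphi_u$ and $\psi_u$ on $\End(u)$. One has $\psi_u(T)=s_u^\star(T\ot\id)s_u=\langle m^\star\eta,(T\ot\id)m^\star\eta\rangle$; expanding $m^\star\eta=\sum_\kappa\sum_r\lambda_{\kappa,r}^{1/2}\,b^\kappa_{?}\ot\cdots$ — more precisely using that $m^\star(b)$ is the coproduct-like element dual to multiplication — and plugging in, I would recognize that $\psi_u=\Tr(\nabla_\psi\,\cdot\,)$ and similarly $\varphi_u=\Tr(\nabla_\psi^{-1}\,\cdot\,)$, where $\nabla_\psi$ is the positive modular operator of $\psi$ acting on $B$ viewed as the GNS Hilbert space (explicitly $\nabla_\psi(b^\kappa_{ij})=\frac{\lambda_{\kappa,i}}{\lambda_{\kappa,j}}b^\kappa_{ij}$). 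Since $\psi$ is a $\delta$-form the pair is already standard up to the scalar $\delta$, and by the uniqueness of $Q_u$ (the unique positive invertible operator with $\psi_u=\Tr(Q_u\cdot)$, $\varphi_u=\Tr(Q_u^{-1}\cdot)$) we conclude $Q_u=\nabla_\psi$. Then (2a) follows since $\dim_q(u)=\Tr(Q_u)=\Tr(\nabla_\psi)=\sum_{\kappa,i,j}\frac{\lambda_{\kappa,i}}{\lambda_{\kappa,j}}=\sum_\kappa\Tr(Q_\kappa)\Tr(Q_\kappa^{-1})=\delta\sum_\kappa\Tr(Q_\kappa)=\delta$.

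For (2b) and (2c), I would substitute $Q_u=\nabla_\psi$ with eigenvalues $\nabla_\psi e^u_i=Q_{u,i}e^u_i$ — here the orthonormal basis is $(b^\kappa_{ij})$ with $\nabla_\psi b^\kappa_{ij}=\frac{\lambda_{\kappa,i}}{\lambda_{\kappa,j}}b^\kappa_{ij}$ — into the general formulas recalled in Section \ref{SectionCQG}: $(\id\ot\sigma^h_t)(u)=(Q_u^{it}\ot 1)u(Q_u^{it}\ot 1)$ and $(\id\ot\tau^h_t)(u)=(Q_u^{it}\ot 1)u(Q_u^{-it}\ot 1)$. Writing $u=\sum b^\kappa_{ij}\circ(b^\gamma_{kl})^\star\ot u^{ij,\kappa}_{kl,\gamma}$ and conjugating by $\nabla_\psi^{it}$ on the left and $\nabla_\psi^{\pm it}$ on the right multiplies $u^{ij,\kappa}_{kl,\gamma}$ by $\left(\frac{\lambda_{\kappa,i}}{\lambda_{\kappa,j}}\right)^{it}\left(\frac{\lambda_{\gamma,k}}{\lambda_{\gamma,l}}\right)^{\pm it}$, which rearranges into the displayed scalars in (2b) and (2c). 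The identities $(\id\ot\sigma^h_t)\circ\beta=(\sigma_t^\psi\ot\id)\circ\beta\circ\sigma_t^\psi$ and the $\tau$-version then follow by translating the coefficient formulas back through $\beta(b^\gamma_{kl})=\sum_{i,j,\kappa}b^\kappa_{ij}\ot u^{ij,\kappa}_{kl,\gamma}$ and using that $\sigma_t^\psi(b^\gamma_{kl})=\left(\frac{\lambda_{\gamma,k}}{\lambda_{\gamma,l}}\right)^{it}b^\gamma_{kl}$ (the modular automorphism of $\psi$). I expect the main obstacle to be the bookkeeping in the identification $Q_u=\nabla_\psi$: one must carefully expand $m^\star\eta$ in the basis $(b^\kappa_{ij})$ and verify that the quadratic forms $\varphi_u,\psi_u$ produce exactly $\Tr(\nabla_\psi^{\mp1}\cdot)$, rather than some twisted variant; the rest is a routine substitution once this is in place.
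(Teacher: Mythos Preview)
Your approach to (1) works and is essentially the paper's: both amount to using ergodicity to identify $(\id\ot h)(u)$ with the rank-one projection $\eta\eta^\star$ onto $\Mor(1,u)=\C\eta$; writing $\eta=\sum_{\kappa,i}\lambda_{\kappa,i}^{1/2}b^\kappa_{ii}$ and reading off coefficients in the basis $(b^\kappa_{ij})$ gives the stated formula. Your derivations of (2a)--(2c) from $Q_u=\nabla_\psi$, by substituting into the general identities $(\id\ot\sigma^h_t)(u)=(Q_u^{it}\ot 1)u(Q_u^{it}\ot 1)$ and $(\id\ot\tau^h_t)(u)=(Q_u^{it}\ot 1)u(Q_u^{-it}\ot 1)$, are also correct and match the paper.

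The genuine gap is in (2), at the point where you assert that the pair $(m^\star\eta,m^\star\eta)$ is standard. Your justification ``since $\psi$ is a $\delta$-form the pair is already standard up to the scalar $\delta$'' is not sufficient: the $\delta$-form property only yields $\Vert s_u\Vert^2=\Vert s_{\ubar}\Vert^2=\delta$, whereas standardness means $\varphi_u=\psi_u$ on ${\rm End}(u)$, i.e.\ ${\rm Tr}(\nabla_\psi T)={\rm Tr}(\nabla_\psi^{-1}T)$ for every $T\in{\rm End}(u)$. This is not automatic (note that $m^\star\eta$ is not flip-invariant unless $\psi$ is tracial, so no symmetry shortcut is available), and the ``uniqueness of $Q_u$'' you invoke only characterizes the canonical $Q_u$ \emph{after} standardness is established. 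The paper supplies exactly this missing step, using ergodicity in an essential way: for $T\in{\rm End}(u)$, both $m(T\ot\id_B)m^\star\eta$ and $m(\id_B\ot T)m^\star\eta$ lie in $\Mor(1,u)=\C\eta$, hence equal $\lambda\eta$ and $\mu\eta$ for scalars $\lambda,\mu$; expanding in matrix units and exploiting that the functions $(\kappa,i)\mapsto\sum_k\lambda_{\kappa,k}^{-1}T^{ik,\kappa}_{ik,\kappa}$ and $(\kappa,i)\mapsto\sum_k\lambda_{\kappa,k}^{-1}T^{ki,\kappa}_{ki,\kappa}$ are both constant yields $\lambda=\mu$. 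Only then does the paper compute $J_u(a)=a^*=S_\psi(a)$ and conclude $Q_u=S_\psi^*S_\psi=\nabla_\psi$. Your plan would need to insert this (or an equivalent) argument before the identification $Q_u=\nabla_\psi$.
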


\begin{proof}\noindent$(1)$. Since $(\id\ot h)(u)$ is the orthogonal projection onto ${\rm Mor}(1,u)=\C\eta$ and $\Vert\eta\Vert^2=\psi(1)=1$ one has $(\id\ot h)(u)=\eta\eta^\star$. Writing $\eta=\sum_{\kappa,i}\lambda_{\kappa,i}^{\frac{1}{2}}b^\kappa_{ii}$ we have
$$\eta\eta^\star=\sum_{\kappa,\gamma,i,k}b^\kappa_{ii}\circ(b^\gamma_{kk})^\star(\lambda_{\kappa,i}\lambda_{\gamma,k})^{\frac{1}{2}}=(\id\ot h)(u).$$

\vspace{0.2cm}

\noindent$(2)$. Recall from Remark \ref{RmkConjugate} that the pair $(m^\star\eta,m^\star\eta)$ solves the conjugate equation for $(u,u)$. Let us show, under the hypothesis of ergodicity, that the pair $(m^\star\eta,m^\star\eta)$ is standard. It suffices to show that, for all $T\in{\rm End}(u)$, $\varphi_u(T)=\psi_u(T)$, where $\varphi_u(T):=\eta^\star m(\id_B\ot T)m^\star\eta$ and $\psi_u(T):=\eta^\star m(T\ot \id_B)m^\star\eta$. Note that both $m(T\ot\id_B)m^\star\eta$ and $m(\id_B\ot T)m^\star\eta$ are in $\Mor(1,u)$. Since the action is ergodic, there exist $\lambda,\mu\in\C$ such that $m(T\ot\id_B)m^\star\eta=\lambda\eta$ and $m(\id_B\ot T)m^\star\eta=\mu\eta$. It suffices to show that $\lambda=\mu$. Write, in the basis of matrix units $(e^\kappa_{ij})_{\kappa,i,j}$ in $\Lcal(B)$ such that $Q_\kappa e^{\kappa}_{ij}=\lambda_{\kappa,i}e^{\kappa}_{ij}$, $T(e^\kappa_{ij})=\sum_{\zeta,k,l}T^{ij,\kappa}_{kl,\zeta}e^\zeta_{kl}$, where $T^{ij,\kappa}_{kl,\zeta}\in\C$ and note that $m^\star\eta=\sum_{\kappa,i,j}\lambda_{\kappa,j}^{-1}e^\kappa_{ij}\ot e^\kappa_{ji}$. A direct computation gives:
$$m(T\ot\id_B)m^\star\eta=\sum_{\kappa,i,j}\left(\sum_k\lambda_{\kappa,k}^{-1}T^{jk,\kappa}_{ik,\kappa}\right)e^\kappa_{ij}\quad\text{and}\quad m(\id_B\ot T)m^\star\eta=\sum_{\kappa,i,j}\left(\sum_k\lambda_{\kappa,k}^{-1}T^{ki,\kappa}_{kj,\kappa}\right)e^\kappa_{ij}.$$
Since $m(T\ot\id_B)m^\star\eta=\lambda \sum_{\kappa,i}e^\kappa_{ii}$ we find that the map $\left[(\kappa,i)\mapsto\sum_k\lambda_{\kappa,k}^{-1}T^{ik,\kappa}_{ik,\kappa}\right]$ is constant, equals to $\lambda$. Similarly, the map $\left[(\kappa,i)\mapsto\sum_k\lambda_{\kappa,k}^{-1}T^{ki,\kappa}_{ki,\kappa}\right]$ is constant, equals to $\mu$. Hence, for all $1\leq\kappa\leq K$ and all $1\leq k\leq N_\kappa$, one has $\mu=\lambda_{\kappa,1}^{-1}T^{1k,\kappa}_{1k,\kappa}+\sum_{i=2}^{N_\kappa}\lambda_{\kappa,i}^{-1}T^{ik,\kappa}_{ik,\kappa}$ which implies that $T^{1k,\kappa}_{1k,\kappa}=\lambda_{\kappa,1}(\mu-\sum_{i=2}^{N_\kappa}\lambda_{\kappa,i}^{-1} T^{ik,\kappa}_{ik,\kappa})$. It follows that:

\begin{eqnarray*}
\lambda&=&\sum_{k=1}^{N_\kappa}\lambda_{\kappa,k}^{-1}T^{1k,\kappa}_{1k,\kappa}=\lambda_{\kappa,1}\sum_{k=1}^{N_\kappa}\lambda_{\kappa,k}^{-1}(\mu-\sum_{i=2}^{N_\kappa}\lambda_{\kappa,i}^{-1} T^{ik,\kappa}_{ik,\kappa})=\lambda_{\kappa,1}\delta\mu-\lambda_{\kappa,1}\sum_{i=2}^{N_\kappa}\lambda_{\kappa,i}^{-1}\sum_{k=1}^{N_\kappa}\lambda_{\kappa,k}^{-1}T^{ik,\kappa}_{ik,\kappa}\\
&=&\lambda_{\kappa,1}\delta\mu-\lambda_{\kappa,1}\sum_{i=2}^{N_\kappa}\lambda_{\kappa,i}^{-1}\lambda
=\lambda_{\kappa,1}\delta\mu-\lambda_{\kappa,1}(\delta-\lambda_{\kappa,1}^{-1})\lambda=\lambda_{\kappa,1}\delta(\mu-\lambda)+\lambda.
\end{eqnarray*}

We deduce that $\lambda=\mu$ so $(m^\star\eta,m^\star\eta)$ is standard. It follows that $Q_u=J_u^*J_u$, where $\langle J_u(a),b\rangle=\langle m^\star\eta,a\ot b\rangle=\langle 1_B,ab\rangle=\psi(ab)=\langle a^*,b\rangle$ for all $a,b\in B$. Hence, $J_u(a)=a^*=S_\psi(a)$, the modular operator of $\psi$. It follows, by definition, that $Q_u=S_\psi^*S_\psi=\nabla_\psi$. Write $S_\psi=J_\psi\nabla^{\frac{1}{2}}_\psi$. A direct computation shows that $J_\psi$ is the unique anti-unitary on $B$ such that $J_\psi(b^\gamma_{kl})=b^\gamma_{lk}$ and $\nabla_\psi(b^\gamma_{kl})=\frac{\lambda_{\gamma,k}}{\lambda_{\gamma,l}}b^\gamma_{kl}$. Hence, ${\rm dim}_q(u)={\rm Tr}(\nabla_\psi)=\sum_\kappa{\rm Tr}(Q_\kappa){\rm Tr}(Q_\kappa^{-1})=\delta\sum_\kappa{\rm Tr}(Q_\kappa)=\delta$. Since $(\id\ot\sigma_t^h)(u)=(\nabla_\psi^{it}\ot 1)u(\nabla_\psi^{it}\ot 1)$, it follows that $(\id\ot\sigma^h_t)(u)=\sum\left(\frac{\lambda_{\gamma,k}\lambda_{\kappa,i}}{\lambda_{\gamma,l}\lambda_{\kappa,j}}\right)^{it}b^\kappa_{ij}(b^\gamma_{kl})^\star\ot u^{ij,\kappa}_{kl,\gamma}$ which proves the first part of $(b)$. Moreover, since $\sigma_t^\psi(b)=\nabla_\psi^{it}b\nabla_\psi^{-it}$ for all $b\in B$ we find $\sigma_t^\psi(b^\gamma_{kl})=\left(\frac{\lambda_{\gamma,k}}{\lambda_{\gamma,l}}\right)^{it}b^\gamma_{kl}$ from which we can easily check the last part of $(b)$. $(c)$ is proven the same way using the relation:
\begin{equation*}
    (\id\ot \tau_t^h)(u) = (\nabla_\psi^{it}\ot 1)u(\nabla_\psi^{-it}\ot 1).\qedhere
\end{equation*}
\end{proof}

\begin{remark}
Let $\beta\,:\,H\curvearrowright B$ be any ergodic $\psi$-preserving action of the CQG $H$ on the finite dimensional C*-algebra $B$. It follows from statement $(b)$ of Proposition \ref{PropQAut} that if $H$ is Kac then $\psi$ is tracial. Moreover, if $\beta$ is a faithful action and $\psi$ is tracial then $H$ is Kac.
\end{remark}

\noindent For the remainder of this section, we fix an ergodic action $\beta\,:\,H\curvearrowright B$ on the finite dimensional C*- algebra $B$ with unique invariant state $\psi\in B^*$ and associated representation $u\in\Lcal(B)\ot C(H)$.

\vspace{0.2cm}

\noindent For $1\leq\kappa\leq K$ consider the unique unital $*$-homomorphism $\chi_\kappa\,:\, B\rightarrow M_{N_\kappa}(\C)$ such that $\chi_\kappa(b^{\zeta}_{ij})=\delta_{\kappa\zeta}b^{\kappa}_{ij}$
and define the unital $*$-homomorphism 
$$\beta_\kappa:=(\chi_\kappa\ot\id)\beta\,:\, B\rightarrow M_{N_\kappa}(\C)\ot C(H)$$
so that $\beta_\kappa(b^\gamma_{kl})=\sum_{i,j} b^{\kappa}_{ij}\ot u^{ij,\kappa}_{kl,\gamma}$. We also define the state $\psi_\kappa:=\frac{{\rm Tr}(Q_\kappa\cdot)}{{\rm Tr}(Q_\kappa)}\in M_{N_\kappa}(\C)^*$. Note that $\sigma_t^{\psi_\kappa}\circ\chi_\kappa=\chi_\kappa\circ\sigma_t^\psi$ for all $t\in\R$. Note also that for all $1\leq \kappa \leq K$ the fact that $\psi$ is a $\delta$-form implies that $\Tr(Q_\kappa^{-1})=\delta$.

\vspace{0.2cm}

\noindent Given a faithful state $\omega\in M_N(\C)^*$, $\omega={\rm Tr}(Q\cdot)$ where $Q\in M_N(\C)$, $Q>0$ of trace $1$, we define its \textit{inverse state} $\omega^{-1}:=\frac{{\rm Tr}(Q^{-1}\cdot)}{{\rm Tr}(Q^{-1})}\in M_{N}(\C)^*$. Note that $\omega^{-1}$ is faithful and $\sigma_t^{\omega^{-1}}=\sigma_{-t}^\omega$ for all $t\in\R$.

\begin{proposition}\label{PropUcpSection}
For all $1\leq\kappa\leq K$, the following holds.
\begin{enumerate}
\item $(\id\ot h)\beta_\kappa=\psi(\cdot)1_\kappa$. In particular, $\beta_\kappa$ is a faithful homomorphism.
\item For all $t\in\R$, $(\sigma_{-t}^{\psi_\kappa}\ot\sigma_t^h)\beta_\kappa=\beta_\kappa\sigma_t^\psi$  and $(\sigma_{-t}^{\psi_\kappa}\ot\tau_t^h)\beta_\kappa=\beta_\kappa\sigma_{-t}^\psi$.
\item There exists a unique ucp map $E_\kappa\,:\, M_{N_\kappa}(\C)\ot\Linf(H)\rightarrow B$ such that
$$\psi\circ E_\kappa=\psi_\kappa^{-1}\ot h\quad\text{and}\quad E_\kappa\circ\beta_\kappa=\id_B.$$
Moreover, $E_\kappa$ is normal, faithful and, for all $1\leq r,s\leq N_\kappa$ and all $a\in\Linf(H)$,
$$E_\kappa(b^\kappa_{rs}\ot a)=\frac{1}{\delta\lambda^2_{\kappa,s}}\sum_{\gamma,k,l}h\left(\left(u^{rs,\kappa}_{kl,\gamma}\right)^*a\right)b^\gamma_{kl}.$$
In particular, $E_\kappa(e^\kappa_{rs}\ot 1)=\frac{\delta_{rs}}{\delta\lambda_{\kappa,r}}1_B$.

\end{enumerate}
\end{proposition}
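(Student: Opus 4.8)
For parts (1) and (2) the strategy is to pull back the corresponding facts about $\beta$ through the unital $*$-homomorphism $\chi_\kappa$. Since $\beta_\kappa=(\chi_\kappa\otimes\id)\beta$ and $(\id\otimes h)\beta=\psi(\cdot)1_B$ by ergodicity, one gets $(\id\otimes h)\beta_\kappa=\psi(\cdot)\chi_\kappa(1_B)=\psi(\cdot)1_\kappa$; as $\psi$ is faithful (the action being ergodic), $\beta_\kappa(b^*b)=0$ forces $\psi(b^*b)1_\kappa=0$, hence $b=0$, so $\beta_\kappa$ is a faithful $*$-homomorphism. For (2) I would combine $\sigma_t^{\psi_\kappa}\circ\chi_\kappa=\chi_\kappa\circ\sigma_t^\psi$ with the identities $(\id\otimes\sigma_t^h)\beta=(\sigma_t^\psi\otimes\id)\beta\sigma_t^\psi$ and $(\id\otimes\tau_t^h)\beta=(\sigma_t^\psi\otimes\id)\beta\sigma_{-t}^\psi$ of Proposition \ref{PropQAut}(2): substituting into $\beta_\kappa=(\chi_\kappa\otimes\id)\beta$ and cancelling $\sigma_{-t}^\psi\sigma_t^\psi=\id$ yields $(\sigma_{-t}^{\psi_\kappa}\otimes\sigma_t^h)\beta_\kappa=\beta_\kappa\sigma_t^\psi$ and $(\sigma_{-t}^{\psi_\kappa}\otimes\tau_t^h)\beta_\kappa=\beta_\kappa\sigma_{-t}^\psi$; these are short computations.

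The substance is (3). Put $M:=M_{N_\kappa}(\C)\otimes\Linf(H)$ with faithful normal state $\Phi:=\psi_\kappa^{-1}\otimes h$, and view $\beta_\kappa:B\hookrightarrow M$ (still faithful, cf. (1)). A direct computation from Proposition \ref{PropQAut}(1) and $\Tr(Q_\kappa^{-1})=\delta$ gives $\Phi\circ\beta_\kappa=\psi$, and, since $\sigma_t^\Phi=\sigma_t^{\psi_\kappa^{-1}}\otimes\sigma_t^h=\sigma_{-t}^{\psi_\kappa}\otimes\sigma_t^h$, part (2) shows $\sigma_t^\Phi(\beta_\kappa(B))=\beta_\kappa(\sigma_t^\psi(B))=\beta_\kappa(B)$ for all $t$. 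By Takesaki's theorem there is a unique $\Phi$-preserving normal faithful conditional expectation $\mathcal{E}:M\to\beta_\kappa(B)$, and I set $E_\kappa:=\beta_\kappa^{-1}\circ\mathcal{E}$. Then $E_\kappa$ is ucp (a ucp map followed by a $*$-isomorphism), normal, satisfies $E_\kappa\circ\beta_\kappa=\id_B$, and $\psi\circ E_\kappa=(\psi\circ\beta_\kappa^{-1})\circ\mathcal{E}=(\Phi|_{\beta_\kappa(B)})\circ\mathcal{E}=\Phi$ (using that $\psi\circ\beta_\kappa^{-1}=\Phi|_{\beta_\kappa(B)}$ is just $\Phi\circ\beta_\kappa=\psi$ read on $\beta_\kappa(B)$); finally $E_\kappa$ is faithful because $\psi\circ E_\kappa=\Phi$ is. One could instead take the displayed formula as the definition of $E_\kappa$, but proving its complete positivity directly is less convenient than the route above.

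For uniqueness and the explicit formula, let $E'$ be any ucp map with $\psi\circ E'=\psi_\kappa^{-1}\otimes h$ and $E'\circ\beta_\kappa=\id_B$. Then $\beta_\kappa(B)$ lies in the multiplicative domain of $E'$, since $E'(\beta_\kappa(b)^*\beta_\kappa(b))=E'(\beta_\kappa(b^*b))=b^*b=E'(\beta_\kappa(b))^*E'(\beta_\kappa(b))$ and likewise with $bb^*$; hence $E'(\beta_\kappa(b)^*x)=b^*E'(x)$, so $\psi(b^*E'(x))=\Phi(\beta_\kappa(b)^*x)$ for all $b\in B$, $x\in M$. Expanding $E'(x)\in B$ in the orthonormal basis $(b^\gamma_{kl})_{\gamma,k,l}$ gives $E'(x)=\sum_{\gamma,k,l}\Phi(\beta_\kappa(b^\gamma_{kl})^*x)\,b^\gamma_{kl}$, a formula depending only on $\Phi$ and $\beta_\kappa$; hence $E'=E_\kappa$. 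Applying it to $x=b^\kappa_{rs}\otimes a$ and computing $\Phi(\beta_\kappa(b^\gamma_{kl})^*(b^\kappa_{rs}\otimes a))$ with the relations $(b^\kappa_{ij})^*=(\lambda_{\kappa,i}/\lambda_{\kappa,j})^{1/2}b^\kappa_{ji}$, $b^\kappa_{ji}b^\kappa_{rs}=\delta_{ir}\lambda_{\kappa,i}^{-1/2}b^\kappa_{js}$ and $\psi_\kappa^{-1}(b^\kappa_{js})=\delta_{js}\lambda_{\kappa,s}^{-3/2}/\delta$ gives $\frac{1}{\delta\lambda_{\kappa,s}^2}h((u^{rs,\kappa}_{kl,\gamma})^*a)$, i.e. the displayed formula; setting $a=1$ and using $h(u^{rs,\kappa}_{kl,\gamma})=\delta_{rs}\delta_{kl}(\lambda_{\kappa,r}\lambda_{\gamma,k})^{1/2}\in\R$ together with $1_B=\sum_{\gamma,k}\lambda_{\gamma,k}^{1/2}b^\gamma_{kk}$ then yields $E_\kappa(e^\kappa_{rs}\otimes 1)=\frac{\delta_{rs}}{\delta\lambda_{\kappa,r}}1_B$.

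The only genuinely delicate point is the use of Takesaki's theorem in (3): its hypothesis is exactly the modular invariance $\sigma_t^\Phi(\beta_\kappa(B))=\beta_\kappa(B)$, which is why part (2) is proved first. Everything else reduces to routine manipulations with the matrix units $b^\kappa_{ij}$ and Proposition \ref{PropQAut}.
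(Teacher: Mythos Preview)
Your proof is correct and follows essentially the same approach as the paper: parts (1) and (2) are obtained by pushing the identities for $\beta$ through $\chi_\kappa$, and part (3) is obtained via Takesaki's theorem applied to the $\sigma^{\psi_\kappa^{-1}\otimes h}$-invariant subalgebra $\beta_\kappa(B)\subset M_{N_\kappa}(\C)\otimes\Linf(H)$, followed by the explicit formula coming from the orthonormal basis $(\beta_\kappa(b^\gamma_{kl}))_{\gamma,k,l}$. The one place where you add something is the uniqueness argument: the paper deduces uniqueness implicitly from Takesaki's theorem (any ucp $E'$ with the two stated properties yields a $\Phi$-preserving conditional expectation $\beta_\kappa\circ E'$ onto $\beta_\kappa(B)$, hence equals $\mathcal E$), whereas you argue directly via the multiplicative domain, which is a nice self-contained alternative and simultaneously yields the explicit expansion $E'(x)=\sum_{\gamma,k,l}\Phi(\beta_\kappa(b^\gamma_{kl})^*x)\,b^\gamma_{kl}$.
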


\begin{proof}
$(1)$. The unique $\beta$-invariant state $\psi$ is defined by $(\id\ot h)\beta=\psi(\cdot)1$. Applying $\chi_\kappa$ gives $(1)$. Since $\psi$ is faithful, the maps $\beta_\kappa$ also are.

\vspace{0.2cm}

\noindent$(2)$. Follows from Assertion $(2.b)$ and $(2.c)$ in Proposition \ref{PropQAut}.

\vspace{0.2cm}

\noindent$(3)$. Since $\sigma_t^{\psi_\kappa^{-1}}=\sigma_{-t}^{\psi_\kappa}$ it follows from $(2)$ that the subalgebra $\beta_\kappa(B)\subset M_{N_\kappa}(\C)\ot \Linf(H)$ is globally invariant under the modular group of $\psi_\kappa^{-1}\ot h$. By Takesaki's Theorem there exists a unique $\psi_\kappa^{-1}\ot h$-preserving conditional expectation $\E_\kappa\,:\,M_{N_\kappa}(\C)\ot\Linf(H)\rightarrow\beta_\kappa(B)$, which is normal and faithful. Then, $E_\kappa:=\beta_\kappa^{-1}\circ \E_\kappa$ satisfies $E_\kappa\circ\beta_\kappa=\id_B$ and, using $(1)$ and the fact that $\E_\kappa$ is state preserving, we deduce that $\psi\circ E_\kappa=\psi_\kappa^{-1}\ot h$. From this property and since $\psi_\kappa^{-1}\ot h$ is faithful, we deduce that $E_\kappa$ is faithful. By $(1)$, $(\beta_\kappa(b^\gamma_{kl}))_{\gamma,k,l}$ is a $\psi_\kappa^{-1}\ot h$-orthonormal basis of the finite dimensional subalgebra $\beta_\kappa(B)$ which easily implies that
$$\E_\kappa(X)=\sum_{\gamma,k,l}(\psi_\kappa^{-1}\ot h)(\beta_\kappa(b^\gamma_{kl})^*X)\beta_\kappa(b^\gamma_{kl})\text{ for all }X\in M_{N_\kappa}(\C)\ot\Linf(H).$$
Using the formulas $(b^\gamma_{kl})^*=\left(\frac{\lambda_{\gamma,k}}{\lambda_{\gamma,l}}\right)^{\frac{1}{2}}b^\gamma_{lk}$ and $b^\kappa_{ip}b^\kappa_{qj}=\lambda_{\kappa,p}^{-\frac{1}{2}}\delta_{pq}b^\kappa_{ij}$ we get:
$$\beta_\kappa(b^\gamma_{kl})^*(b^\kappa_{rs}\ot a)=\left(\frac{\lambda_{\gamma,k}}{\lambda_{\gamma,l}}\right)^{\frac{1}{2}}\lambda_{\kappa,r}^{-\frac{1}{2}}\sum_ib^\kappa_{is}\ot u^{ir,\kappa}_{lk,\gamma}a.$$
Since $\psi_\kappa^{-1}(b^\kappa_{is})={\rm Tr}(Q_\kappa^{-1})^{-1}\lambda_{\kappa,s}^{-\frac{3}{2}}\delta_{is}$  and $\left(u^{rs,\kappa}_{kl,\gamma}\right)^*=\left(\frac{\lambda_{\kappa,s}\lambda_{\gamma,k}}{\lambda_{\kappa,r}\lambda_{\gamma,l}}\right)^{\frac{1}{2}}u^{sr,\kappa}_{lk,\gamma}$ we deduce that:
\begin{align*}
\E_\kappa(b^\kappa_{rs}\ot a)&=\sum_{\gamma,k,l}(\psi_\kappa^{-1}\ot h)(\beta_\kappa(b^\gamma_{kl})^*(b^\kappa_{rs}\ot a))\beta_\kappa(b^\gamma_{kl})\\
&={\rm Tr}(Q_\kappa^{-1})^{-1}\lambda_{\kappa,r}^{-\frac{1}{2}}\lambda_{\kappa,s}^{-\frac{3}{2}}\sum_{\gamma,k,l}\left(\frac{\lambda_{\gamma,k}}{\lambda_{\gamma,l}}\right)^{\frac{1}{2}}h(u^{sr,\kappa}_{lk,\gamma}a)\beta_\kappa(b^\gamma_{kl})\\
&={\rm Tr}(Q_\kappa^{-1})^{-1}\lambda_{\kappa,s}^{-2}\sum_{\gamma,k,l}h\left(\left(u^{rs,\kappa}_{kl,\gamma}\right)^*a\right)\beta_\kappa(b^\gamma_{kl}).
\qedhere
\end{align*}
\end{proof}

\begin{remark}
We will use the same symbol to denote the restriction $$E_\kappa\,:\,M_{N_\kappa}(\C)\ot C_r(H)\rightarrow B$$ which is still ucp and faithful, as well as $E_\kappa:=E_\kappa\circ(\id\ot\lambda_{H})\,:\,M_{N_\kappa}(\C)\ot C(H)\rightarrow B$, still ucp but not even GNS-faithful if $H$ is not coamenable. Those maps satisfy the same algebraic properties as the initial map.
\end{remark}

\noindent We will need the following Lemma.

\begin{lemma}\label{LemE} The following holds.
\begin{enumerate}
\item For all $x\in \Pol(H)$, $1\leq\kappa,\zeta,\gamma\leq K$, $1\leq i,j\leq N_\kappa$, $1\leq k,l\leq N_\gamma$, $1\leq r,p\leq N_\zeta$,
$$\sum_q\lambda_{\gamma,q}^{-1/2}u^{pr,\zeta}_{kq,\gamma}h(u^{ij,\kappa}_{ql,\gamma}x)=\sum_{s,(1),(2)}\lambda_{\zeta,r}^{-1/2}h(u^{ij,\kappa}_{rs,\zeta}x_{(1)})u^{ps,\zeta}_{kl,\gamma}x_{(2)}$$.
\item For all $X=\sum_{ij}b^\kappa_{ij}\ot x_{ij}\in M_{N_\kappa}(\C)\ot \Pol(H)$ if $E_\kappa(X)=0$ then
$$\sum_{i,j,(1),(2)}\lambda_{\kappa,i}^{-1/2}\lambda_{\kappa,j}^{-3/2}h(u^{ji,\kappa}_{kl,\gamma}(x_{ij})_{(1)})(x_{ij})_{(2)}=0,\quad\text{for all }\gamma,k,l.$$
\end{enumerate}
\end{lemma}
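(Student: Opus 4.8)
The plan is to unwind the formula for $E_\kappa$ from Proposition~\ref{PropUcpSection}(3), using only two structural facts about the action representation $u$: the coaction relation $(\id\ot\Delta_H)(u)=u_{12}u_{13}$, i.e. $\Delta_H(u^{ij,\kappa}_{kl,\gamma})=\sum_{m,n,\eta}u^{ij,\kappa}_{mn,\eta}\ot u^{mn,\eta}_{kl,\gamma}$, and the relations coming from ``$u$ unitary'' and $m\in\Mor(u\ot u,u)$ (Lemma~\ref{LemRepAction}), which at the level of coefficients give the two multiplicativity relations already displayed and the unitarity relations $\sum_{\gamma,k,l}u^{ij,\kappa}_{kl,\gamma}(u^{i'j',\kappa'}_{kl,\gamma})^*=\delta_{\kappa\kappa'}\delta_{ii'}\delta_{jj'}$.

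For (1) I would apply the left invariance of the Haar state, $(h\ot\id)\Delta_H(y)=h(y)1$, to $y=u^{ij,\kappa}_{ql,\gamma}x$: expanding $\Delta_H(y)=\Delta_H(u^{ij,\kappa}_{ql,\gamma})\Delta_H(x)$ with the coaction formula gives, for each $q$,
$$h(u^{ij,\kappa}_{ql,\gamma}x)\,1_H=\sum_{m,n,\eta}\sum_{(x)}h(u^{ij,\kappa}_{mn,\eta}x_{(1)})\,u^{mn,\eta}_{ql,\gamma}x_{(2)}.$$
Multiplying on the left by $\lambda_{\gamma,q}^{-1/2}u^{pr,\zeta}_{kq,\gamma}$ and summing over $q$, the scalars $h(u^{ij,\kappa}_{mn,\eta}x_{(1)})$ factor out and, by the second multiplicativity relation, $\sum_q\lambda_{\gamma,q}^{-1/2}u^{pr,\zeta}_{kq,\gamma}u^{mn,\eta}_{ql,\gamma}=\delta_{\zeta\eta}\delta_{rm}\lambda_{\zeta,r}^{-1/2}u^{pn,\zeta}_{kl,\gamma}$; the sums over $m,n,\eta$ then collapse to the single sum over $s$ appearing on the right-hand side of (1). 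This step is pure index bookkeeping.

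For (2) I would argue in two moves. First, spelling out $E_\kappa(X)$ with the formula of Proposition~\ref{PropUcpSection}(3), reading off the coefficient of each $b^\gamma_{kl}$, and using the adjoint relation $(u^{ij,\kappa}_{kl,\gamma})^*=(\lambda_{\kappa,i}/\lambda_{\gamma,k})^{-1/2}(\lambda_{\kappa,j}/\lambda_{\gamma,l})^{1/2}u^{ji,\kappa}_{lk,\gamma}$, one sees that the hypothesis $E_\kappa(X)=0$ means $\sum_{i,j}\lambda_{\kappa,i}^{-1/2}\lambda_{\kappa,j}^{-3/2}h(u^{ji,\kappa}_{kl,\gamma}x_{ij})=0$ for all $\gamma,k,l$. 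Second, the \emph{same} coefficient computation carried out on $(\id_{M_{N_\kappa}(\C)}\ot\Delta_H)(X)$ shows that the identity to be proved in (2) is, up to an invertible scalar factor $\tfrac1\delta\lambda_{\gamma,k}^{1/2}\lambda_{\gamma,l}^{-1/2}$ and the relabelling $k\leftrightarrow l$, exactly the vanishing of every coefficient of $(E_\kappa\ot\id_{\Pol(H)})\big((\id\ot\Delta_H)(X)\big)\in B\ot\Pol(H)$, i.e. that $(E_\kappa\ot\id)(\id\ot\Delta_H)(X)=0$. Hence (2) will follow at once from the \emph{equivariance} of $E_\kappa$,
$$(E_\kappa\ot\id_{\Pol(H)})\circ(\id_{M_{N_\kappa}(\C)}\ot\Delta_H)=\beta\circ E_\kappa,$$
applied to $X$, since then the left-hand side is $\beta(E_\kappa(X))=\beta(0)=0$; the explicit reformulation of the hypothesis is in fact only needed to make this link transparent, not in the final step.

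The one genuine point is thus the equivariance of $E_\kappa$. Conceptually it is immediate: $\beta_\kappa(B)\subset M_{N_\kappa}(\C)\ot\Pol(H)$ is globally invariant under $\id\ot\Delta_H$ because $(\id\ot\Delta_H)\beta_\kappa=(\beta_\kappa\ot\id)\beta$, and $\psi_\kappa^{-1}\ot h$ is $(\id\ot\Delta_H)$-invariant because $(h\ot\id)\Delta_H=h(\cdot)1$, so the $(\psi_\kappa^{-1}\ot h)$-preserving conditional expectation onto $\beta_\kappa(B)$, which is $\beta_\kappa\circ E_\kappa$, intertwines the coactions. For a self-contained verification one checks the identity on the generators $b^\kappa_{rs}\ot a$ ($a\in\Pol(H)$): expanding both sides with the formulas for $E_\kappa$ and for $\beta$ and matching the coefficient of each $b^\eta_{pq}$, the identity reduces to the ``reproducing'' relation
$$\sum_{(a)}h\big((u^{rs,\kappa}_{pq,\eta})^*a_{(1)}\big)a_{(2)}=\sum_{\gamma,k,l}h\big((u^{rs,\kappa}_{kl,\gamma})^*a\big)u^{pq,\eta}_{kl,\gamma},$$
which is a ``transpose'' of (1) (via $u\simeq\ubar$, see the Remark after Lemma~\ref{LemRepAction}) and is proved directly: in the composite row/column index notation $I=(r,s,\kappa)$, $J=(p,q,\eta)$, $M=(k,l,\gamma)$, left invariance of $h$ applied to $(u_{IM})^*a$ gives $h((u_{IM})^*a)\,1_H=\sum_N\sum_{(a)}h((u_{IN})^*a_{(1)})(u_{NM})^*a_{(2)}$, and multiplying on the left by $u_{JM}$, summing over $M$, and using the unitarity relation $\sum_M u_{JM}(u_{NM})^*=\delta_{JN}$ yields the claim. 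The only thing to keep in check throughout is the six-index notation together with the $\lambda$-weights concealed inside ``$u$ unitary'' and inside $u^*$; beyond that I anticipate no difficulty.
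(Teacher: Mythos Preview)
Your argument is correct. For part~(1) you and the paper do the same thing: both apply left invariance of $h$ to $u^{ij,\kappa}_{ql,\gamma}x$ and then contract with $u^{pr,\zeta}_{kq,\gamma}$ using the second multiplicativity relation; the only difference is the order in which the two operations are written down.

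For part~(2) your route is genuinely different from the paper's, and in fact more conceptual. The paper first unwinds $E_\kappa(X)=0$ into the scalar identities $\sum_{i,j}\lambda_{\kappa,i}^{-1/2}\lambda_{\kappa,j}^{-3/2}h(u^{ji,\kappa}_{kl,\gamma}x_{ij})=0$, then feeds these into part~(1) and finishes by a further contraction against $\sum_{\gamma,k}\lambda_{\gamma,k}^{1/2}u^{ps,\zeta}_{kk,\gamma}=\delta_{ps}\lambda_{\zeta,p}^{1/2}$ (the relation coming from $\eta\in\Mor(1,u)$). You instead observe that the desired conclusion is exactly $(E_\kappa\ot\id)(\id\ot\Delta_H)(X)=0$ and deduce it from the equivariance $(E_\kappa\ot\id)(\id\ot\Delta_H)=\beta\circ E_\kappa$, which you prove via the reproducing identity $\sum_{(a)}h((u_{IJ})^*a_{(1)})a_{(2)}=\sum_M h((u_{IM})^*a)\,u_{JM}$ obtained from left invariance and the unitarity $\sum_M u_{JM}(u_{NM})^*=\delta_{JN}$. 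So where the paper uses part~(1) together with the $\eta$-relation, you use a ``transpose'' of~(1) together with the unitarity of $u$; your packaging has the advantage of isolating the structural fact (equivariance of $E_\kappa$ for the $H$-coaction) that is really being used, while the paper's computation stays closer to the indices and avoids stating this extra intermediate identity.
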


\begin{proof}
$(1)$. Fix $x\in \Pol(H)$, $1\leq\kappa,\zeta,\gamma \leq K$, $1\leq i,j\leq N_\kappa$, $1\leq k,l\leq N_\gamma$ and $1\leq p,r\leq N_\zeta$ and define $Z:=\sum_q\lambda_{\gamma,q}^{-1/2}(1\ot u^{pr,\zeta}_{kq,\gamma})\Delta(u^{ij,\kappa}_{ql,\gamma}x)$. Using the relation
$$\Delta(u^{ij,\kappa}_{ql,\gamma}x)=\sum_{\zeta',r',s}u^{ij,\kappa}_{r's,\zeta'}x_{(1)}\ot u^{r's,\zeta'}_{ql,\gamma}x_{(2)},\,\text{we get:}$$
\begin{eqnarray*}
Z&=&\sum_{\zeta',r',s}u^{ij,\kappa}_{r's,\zeta'}x_{(1)}\ot\left(\sum_q\lambda_{\gamma,q}^{-1/2}u^{pr,\zeta}_{kq,\gamma}u^{r's,\zeta'}_{ql,\gamma}x_{(2)}\right)
=\sum_s\lambda_{\zeta,r}^{-1/2}u^{ij,\kappa}_{rs,\zeta}x_{(1)}\ot u^{ps,\zeta}_{kl,\gamma}x_{(2)}
\end{eqnarray*}
Hence, using invariance of the Haar state, and the previous equation we get:
\begin{eqnarray*}
(h\ot\id)(Z)&=&\sum_q\lambda_{\gamma,q}^{-1/2}u^{pr,\zeta}_{kq,\gamma}(h\ot\id)\Delta(u^{ij,\kappa}_{ql,\gamma}x)=\sum_q\lambda_{\gamma,q}^{-1/2}u^{pr,\zeta}_{kq,\gamma}h(u^{ij,\kappa}_{ql,\gamma}x)\\
&=&\sum_s\lambda_{\zeta,r}^{-1/2}h(u^{ij,\kappa}_{rs,\zeta}x_{(1)})u^{ps,\zeta}_{kl,\gamma}x_{(2)}.
\end{eqnarray*}

\vspace{0.2cm}

\noindent$(2)$. Using the formula in Proposition \ref{PropUcpSection}, we get:
$$0=E_\kappa(X)=\delta^{-1}\sum_{\gamma,k,l}\left(\frac{\lambda_{\gamma,l}}{\lambda_{\gamma,k}}\right)^{\frac{1}{2}}\left(\sum_{i,j}\lambda_{\kappa,i}^{-\frac{1}{2}}\lambda_{\kappa,j}^{-\frac{3}{2}}h(u^{ji,\kappa}_{kl,\gamma}x_{ij})\right)b^\gamma_{lk}.$$
Hence, for all $\gamma,k,l$ we have $\sum_{i,j}\lambda_{\kappa,i}^{-\frac{1}{2}}\lambda_{\kappa,j}^{-\frac{3}{2}}h(u^{ji,\kappa}_{kl,\gamma}x_{ij})=0$. Using $(1)$ we find, for all $\gamma,\zeta,k,p$,
\begin{eqnarray*}
0&=&\sum_q\lambda_{\gamma,q}^{-1/2}u^{pr,\zeta}_{kq,\gamma}\left(\sum_{i,j}\lambda_{\kappa,i}^{-\frac{1}{2}}\lambda_{\kappa,j}^{-\frac{3}{2}}h(u^{ji,\kappa}_{qk,\gamma}x_{ij})\right)
=\sum_{i,j}\lambda_{\kappa,i}^{-\frac{1}{2}}\lambda_{\kappa,j}^{-\frac{3}{2}}\left(\sum_q\lambda_{\gamma,q}^{-1/2}u^{pr,\zeta}_{kq,\gamma}h(u^{ji,\kappa}_{qk,\gamma}x_{ij})\right)\\
&=&\sum_{i,j}\lambda_{\kappa,i}^{-\frac{1}{2}}\lambda_{\kappa,j}^{-\frac{3}{2}}\left(\sum_{s,(1),(2)}\lambda_{\zeta,r}^{-1/2}h(u^{ij,\kappa}_{rs,\zeta}(x_{ij})_{(1)})u^{ps,\zeta}_{kk,\gamma}(x_{ij})_{(2)}\right).
\end{eqnarray*}
Multiplying by $\lambda_{\gamma,k}^{1/2}$ and summing over $k,\gamma$ we get:
\begin{eqnarray*}
0&=&\sum_k\lambda_{\gamma,k}^{1/2}\left(\sum_{i,j}\lambda_{\kappa,i}^{-\frac{1}{2}}\lambda_{\kappa,j}^{-\frac{3}{2}}\left(\sum_{s,(1),(2)}\lambda_{\zeta,r}^{-1/2}h(u^{ij,\kappa}_{rs,\zeta}(x_{ij})_{(1)})u^{ps,\zeta}_{kk,\gamma}(x_{ij})_{(2)}\right)\right)\\
&=&\sum_{i,j}\lambda_{\kappa,i}^{-\frac{1}{2}}\lambda_{\kappa,j}^{-\frac{3}{2}}\left(\sum_{s,(1),(2)}\lambda_{\zeta,r}^{-1/2}h(u^{ij,\kappa}_{rs,\zeta}(x_{ij})_{(1)})\left(\sum_k\lambda_{\gamma,k}^{1/2}u^{ps,\zeta}_{kk,\gamma}\right)(x_{ij})_{(2)}\right)\\
&=&\left(\frac{\lambda_{\zeta,p}}{\lambda_{\zeta,r}}\right)^{1/2}\sum_{i,j,(1),(2)}\lambda_{\kappa,i}^{-\frac{1}{2}}\lambda_{\kappa,j}^{-\frac{3}{2}}h(u^{ij,\kappa}_{rp,\zeta}(x_{ij})_{(1)})(x_{ij})_{(2)}.
\end{eqnarray*}
This concludes the proof.\end{proof}

\begin{definition}\label{2-ergodic}
A $\psi$-preserving action $\beta\,:\, B\rightarrow B\ot C(H)$ with associated unitary $u\in\Lcal(B)\ot C(H)$ is called $2$-\textit{ergodic} if ${\rm dim}({\rm Mor}(u,u))=2$.
\end{definition}

\begin{remark}
Note that if $\beta$ is $2$-ergodic then ${\rm dim}(B)\geq 2$. Moreover, the following holds.
\begin{enumerate}
\item $\beta$ is $2$-ergodic if and only if $\Mor(u,u)=\C\eta\eta^\star\oplus\C(\id_B-\eta\eta^\star)$. Indeed, the two vectors $\eta\eta^\star,\id_B-\eta\eta^\star\in\Mor(u,u)$ are linearly independent whenever ${\rm dim}(B)\geq 2$.
    \item As in $(1)$, we see that $\beta$ is $2$-ergodic if and only if $\Mor(1,u\ot u) = \C \eta\ot \eta\oplus \C m^\star\eta$.
    \item A $2$-ergodic action $\beta$ is ergodic. Indeed, if ${\rm dim}({\rm Mor}(u,u))=2$ then $u$ is isomorphic to the sum of two non equivalent irreducible representations which implies that ${\rm dim}(1,u)\leq 1$. Since $\eta\in{\rm Mor}(1,u)$, $\beta$ is ergodic.
\end{enumerate}
\end{remark}

\begin{example}\label{ExAction1} The following holds.
\begin{enumerate}
    \item The canonical $\psi$-preserving action of $\Aut^+(B,\psi)$ on $B$ (${\rm dim}(B)\geq 2$) is $2$-ergodic if and only if $\psi$ is a $\delta$-form, in particular the action of $S_N^+$ on $\C^N$ for any $N\geq 2$ is $2$-ergodic. Similarly, the action of $S_N$ on $\C^N$ is also $2$-ergodic for any $N\geq 2$.
    \item  For a finite group $\Lambda$, the action $\widehat{\Lambda}\curvearrowright C^*(\Lambda)$ given by the comultiplication is ergodic and $\tau_\Lambda$-preserving, where $\tau_\Lambda\in C^*(\Lambda)^*$ is the canonical (faithful) trace. This action is not $2$-ergodic whenever $\vert\Lambda\vert\neq 2$ since the corresponding representation $u$ is such that ${\rm dim}(\Mor(\varepsilon,u\ot u))=\vert \Lambda \vert$.
    \end{enumerate}
\end{example}

\noindent In the next Lemma we show how the $2$-ergodic assumption provide a computation of the Haar state of a product of two coefficients of the representation $u$.

\begin{lemma}\label{LemmaHaarProduct}
If $\beta$ is $2$-ergodic then the following holds.
\begin{eqnarray*}
h(u^{ij,\alpha}_{kl,\gamma}u^{rs,\kappa}_{vw,\zeta})&=&\dfrac{\delta}{\delta-1}\delta_{ij}\delta_{kl}\delta_{rs}\delta_{vw} (\lambda_{\alpha,i}\lambda_{\gamma,k}\lambda_{\kappa,r}\lambda_{\zeta,v})^{\frac{1}{2}}+\frac{\delta_{\alpha \kappa}\delta_{jr}\delta_{is}\delta_{\gamma \zeta}\delta_{kw}\delta_{lv}}{\delta-1} \left(\frac{\lambda_{\alpha,i}\lambda_{\gamma,k}}{\lambda_{\alpha,j}\lambda_{\gamma,l}}\right)^{\frac{1}{2}}\\
  &-& \delta_{ij}\delta_{rs}\delta_{\gamma \zeta}\delta_{kw}\delta_{lv}\frac{1}{\delta-1}\lambda_{\alpha,i}^{\frac{1}{2}}\lambda_{\kappa,r}^{\frac{1}{2}}\lambda_{\gamma,k}^{\frac{1}{2}}\lambda_{\gamma,l}^{-\frac{1}{2}}
  -  \delta_{\alpha \kappa}\delta_{is}\delta_{jr}\delta_{kl}\delta_{vw} \frac{1}{\delta-1}\lambda_{\alpha,j}^{-\frac{1}{2}}\lambda_{\alpha,i}^{\frac{1}{2}}\lambda_{\gamma,k}^{\frac{1}{2}}\lambda_{\zeta,v}^{\frac{1}{2}}.
\end{eqnarray*}
\end{lemma}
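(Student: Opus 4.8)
The strategy is to express the product of two coefficients in terms of the intertwiner spaces and then use the $2$-ergodic hypothesis to pin down those spaces completely. Recall that for any unitary representation $u$ of a compact quantum group, the element $(\id\ot\id\ot h)(u_{13}u_{23})\in\Lcal(H_u\ot H_u)$ is the orthogonal projection (with respect to the GNS inner product of $\psi$-type functionals, i.e.\ weighted by the $Q$-operators) onto $\Mor(1,u\ot u)$, appropriately interpreted. More precisely, since $u\simeq\ubar$ here with $s_u=s_{\ubar}=m^\star\eta$ standard (Remark \ref{RmkConjugate} and Proposition \ref{PropQAut}), the map $P:=(\id\ot\id\ot h)(u\ot u)\in\Lcal(B\ot B)$ is the orthogonal projection onto $\Mor(1,u\ot u)$ viewed inside $B\ot B$ with its natural $\psi\ot\psi$ Hilbert space structure. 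The key point: by the $2$-ergodic hypothesis (see the Remark following Definition \ref{2-ergodic}, item (2)), $\Mor(1,u\ot u)=\C\,\eta\ot\eta\oplus\C\,m^\star\eta$, a two-dimensional space. So the whole computation reduces to writing down the orthogonal projection onto the span of these two explicit vectors of $B\ot B$ and reading off its matrix coefficients in the orthonormal basis $(b^\kappa_{ij})$.

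\textbf{Key steps.} First I would record the two vectors explicitly: $\eta=\sum_{\kappa,i}\lambda_{\kappa,i}^{1/2}b^\kappa_{ii}$ and $m^\star\eta=\sum_{\kappa,i,j}\lambda_{\kappa,j}^{-1}e^\kappa_{ij}\ot e^\kappa_{ji}=\sum_{\kappa,i,j}\lambda_{\kappa,j}^{-1}\lambda_{\kappa,j}^{1/2}\lambda_{\kappa,i}^{1/2}\,b^\kappa_{ij}\ot b^\kappa_{ji}$ (converting matrix units $e^\kappa_{ij}=\lambda_{\kappa,j}^{1/2}b^\kappa_{ij}$), which simplifies to $m^\star\eta=\sum_{\kappa,i,j}\lambda_{\kappa,j}^{-1/2}\lambda_{\kappa,i}^{1/2}\,b^\kappa_{ij}\ot b^\kappa_{ji}$. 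Second I would compute the Gram matrix of $\{\eta\ot\eta,\,m^\star\eta\}$ in $B\ot B$: one has $\|\eta\ot\eta\|^2=\|\eta\|^4=1$ (since $\psi(1)=1$), $\|m^\star\eta\|^2=\langle\eta,mm^\star\eta\rangle=\langle\eta,\delta\eta\rangle=\delta$ using $mm^\star=\delta\id$ (the $\delta$-form property, which holds since $\beta$ is ergodic, Remark \ref{ergodelta}), and $\langle\eta\ot\eta,m^\star\eta\rangle=\langle m(\eta\ot\eta),\eta\rangle=\langle\eta,\eta\rangle=1$ since $m(\eta\ot\eta)=m(1_B\ot 1_B)=1_B=\eta$ as vectors — wait, one must be careful: $\eta$ as a vector in $B$ is $1_B$, so $m(\eta\ot\eta)=1_B=\eta$ and the overlap is $1$. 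Third, from the Gram matrix $\begin{pmatrix}1&1\\1&\delta\end{pmatrix}$ (Gram determinant $\delta-1$, explaining the $\delta-1$ denominators throughout), I would write the orthogonal projection onto the span as $P=\frac{1}{\delta-1}\big(\delta\,|\xi_1\rangle\langle\xi_1| - |\xi_1\rangle\langle\xi_2| - |\xi_2\rangle\langle\xi_1| + |\xi_2\rangle\langle\xi_2|\big)$ where $\xi_1=\eta\ot\eta$, $\xi_2=m^\star\eta$ — this is the standard formula for the projection onto the span of two non-orthogonal vectors. Fourth, I would identify $h(u^{ij,\alpha}_{kl,\gamma}u^{rs,\kappa}_{vw,\zeta})$ with the matrix coefficient $\langle b^\alpha_{ij}\ot b^\kappa_{rs},\,P\,(b^\gamma_{kl}\ot b^\zeta_{vw})\rangle$ up to the explicit scalar factors coming from the fact that $u$ is written in the basis $b^\kappa_{ij}\circ(b^\zeta_{kl})^\star$ and the inner product on $B$ weights $b^\kappa_{ij}$ with unit norm — here I need to track the relation between $\langle\,\cdot\,,u\,\cdot\,\rangle$-type pairings and the coefficients, using $(\id\ot\id\ot h)(u\ot u)(b^\gamma_{kl}\ot b^\zeta_{vw})=\sum h(u^{ij,\alpha}_{kl,\gamma}u^{rs,\kappa}_{vw,\zeta})\,b^\alpha_{ij}\ot b^\kappa_{rs}$ so that the coefficient is literally $\langle b^\alpha_{ij}\ot b^\kappa_{rs},P(b^\gamma_{kl}\ot b^\zeta_{vw})\rangle$ in the orthonormal basis, no extra factors. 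Fifth, expand the four rank-one terms of $P$ against the basis vectors: each gives a product of two inner products, e.g.\ $\langle b^\alpha_{ij}\ot b^\kappa_{rs},\xi_1\rangle=\langle b^\alpha_{ij},\eta\rangle\langle b^\kappa_{rs},\eta\rangle=\delta_{ij}\lambda_{\alpha,i}^{1/2}\delta_{rs}\lambda_{\kappa,r}^{1/2}$ and $\langle b^\gamma_{kl}\ot b^\zeta_{vw},\xi_2\rangle=\langle b^\gamma_{kl}\ot b^\zeta_{vw},\sum\lambda^{-1/2}_{\bullet}\lambda^{1/2}_{\bullet}b\ot b\rangle$ which forces $\gamma=\zeta$, $w=k$, $v=l$ and yields $\lambda_{\gamma,l}^{-1/2}\lambda_{\gamma,k}^{1/2}$. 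Collecting the four contributions with signs $+\delta,-1,-1,+1$ and the overall $\frac{1}{\delta-1}$ should reproduce exactly the four terms in the statement, with the first term (the $|\xi_1\rangle\langle\xi_1|$ one, coefficient $\delta/(\delta-1)$) matching the leading $\frac{\delta}{\delta-1}\delta_{ij}\delta_{kl}\delta_{rs}\delta_{vw}(\cdots)^{1/2}$, the $|\xi_2\rangle\langle\xi_2|$ term matching the $\delta_{\alpha\kappa}\delta_{jr}\delta_{is}\delta_{\gamma\zeta}\delta_{kw}\delta_{lv}$ term, and the two cross terms matching the remaining two.

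\textbf{Main obstacle.} The conceptual content is light — it is entirely the projection-onto-a-two-dimensional-space computation — so the real work, and the place where sign/index errors creep in, is the bookkeeping in step five: correctly computing $\langle b^\kappa_{ij}\ot b^\zeta_{kl},m^\star\eta\rangle$ and its conjugate, keeping straight which Kronecker deltas and which powers of $\lambda$ come out, and matching the result term-by-term with the (somewhat asymmetric-looking) four-term expression in the statement. In particular one should double-check the evaluation $\langle b^\kappa_{ij},m^\star\eta\text{-factor}\rangle$ against the stated exponents $\lambda_{\alpha,j}^{-1/2}\lambda_{\alpha,i}^{1/2}$ versus $\lambda_{\alpha,i}^{-1/2}\lambda_{\alpha,j}^{1/2}$, since $b^\kappa_{ij}$ is not self-adjoint and the pairing $\langle a,b\rangle=\psi(a^*b)$ introduces a ratio $(\lambda_{\kappa,i}/\lambda_{\kappa,j})^{1/2}$. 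Once the Gram inversion and these four scalar pairings are nailed down, assembling the final formula is mechanical.
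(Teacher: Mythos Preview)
Your proposal is correct and follows essentially the same approach as the paper: identify $(\id\ot\id\ot h)(u\ot u)$ with the orthogonal projection onto $\Mor(1,u\ot u)=\C\,\eta\ot\eta\oplus\C\,m^\star\eta$, compute that projection explicitly, and read off the matrix coefficients in the orthonormal basis $(b^\kappa_{ij})$. The only cosmetic difference is that the paper Gram--Schmidt orthogonalizes first (using the basis $(\eta\ot\eta,\frac{1}{\sqrt{\delta-1}}(m^\star\eta-\eta\ot\eta))$) whereas you invert the $2\times 2$ Gram matrix directly; both yield the identical four-term expansion $\frac{\delta}{\delta-1}|\xi_1\rangle\langle\xi_1|+\frac{1}{\delta-1}|\xi_2\rangle\langle\xi_2|-\frac{1}{\delta-1}|\xi_1\rangle\langle\xi_2|-\frac{1}{\delta-1}|\xi_2\rangle\langle\xi_1|$ and the rest is exactly the bookkeeping you describe.
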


\begin{proof}
The action $\beta$ being $2$-ergodic, it is ergodic so its unique invariant state $\psi$ is a $\delta$-form. Moreover, $(\id\ot h)(u\ot u)$ is the orthogonal projection onto $\Mor(1,u\ot u) = \C \eta\ot \eta\oplus \C m^\star\eta$. Write $\eta=\sum_{\kappa,i}\lambda_{\kappa,i}^{\frac{1}{2}}b^\kappa_{ii}$, $m^\star\eta = \sum_{\kappa,i,t}\lambda_{\kappa,i}^{\frac{1}{2}}\lambda_{\kappa,t}^{-\frac{1}{2}} b^\kappa_{it}\ot b^\kappa_{ti}$, and therefore $\Vert m^\star\eta \Vert^2 = \sum_\kappa \Tr(Q_\kappa^{-1})\Tr(Q_\kappa)= \sum_\kappa \delta \Tr(Q_\kappa) = \delta$, because $\psi$ is a $\delta$-form.
Since $(\eta^\star\ot \eta^\star) m^\star\eta = 1$ we have $\Vert m^\star\eta - \eta \ot \eta \Vert^2 = \Vert m^\star\eta \Vert^2-1 =: \delta-1 $. The orthogonal projection on the space is then the orthogonal projection that we obtain taking $(\eta\ot \eta, \frac{1}{\sqrt{\delta-1}}(m^\star\eta - \eta\ot \eta))$ as an orthogonal base. The projection is therefore given by $p = \eta\eta^\star\ot \eta\eta^\star + \frac{1}{\delta-1}(m^\star\eta  - \eta\ot \eta)(m^\star\eta  - \eta\ot \eta)^\star$. Note that:
$$
            p = \left(1+\dfrac{1}{\delta-1}\right) (\eta\eta^\star\ot \eta\eta^\star )+ \frac{1}{\delta-1} m^\star\eta \eta^\star m - \frac{1}{\delta-1}(\eta\ot \eta)\eta^\star m  - \frac{1}{\delta-1}m^\star \eta (\eta^\star\ot \eta^\star).$$
            We have:
\begin{eqnarray*}
    \eta\eta^\star\ot \eta\eta^\star = \sum_{\substack{\alpha,i\\ \gamma,k}} \sum_{\substack{\kappa,r\\ \zeta,v}} (b_{ii}^\alpha \ot b_{rr}^\kappa)\circ (b_{kk}^\gamma \ot b_{vv}^\zeta)^\star  (\lambda_{\alpha,i}\lambda_{\gamma,k}\lambda_{\kappa,r}\lambda_{\zeta,v})^{\frac{1}{2}}\\
    m^\star\eta \eta^\star m = \sum_{\substack{\alpha,i,x\\ \gamma,k,y}} (b_{ix}^\alpha \ot b_{xi}^\alpha)\circ (b_{ky}^\gamma \ot b_{yk}^\gamma)^\star (\lambda_{\alpha,i}^{\frac{1}{2}}\lambda_{\alpha,x}^{-\frac{1}{2}}\lambda_{\gamma,k}^{\frac{1}{2}}\lambda_{\gamma,y}^{-\frac{1}{2}})\\
    (\eta\ot \eta)\eta^\star m = \sum_{\substack{\alpha,i\\ \kappa,r}} \sum_{\substack{\gamma,k,t}} (b_{ii}^\alpha \ot b_{rr}^\kappa)\circ (b_{kt}^\gamma \ot b_{tk}^\gamma)^\star \lambda_{\alpha,i}^{\frac{1}{2}}\lambda_{\kappa,r}^{\frac{1}{2}}\lambda_{\gamma,k}^{\frac{1}{2}}\lambda_{\gamma,t}^{-\frac{1}{2}}\\
    m^\star \eta (\eta^\star\ot \eta^\star)= \sum_{\substack{\alpha,i,t}} \sum_{\substack{\gamma,k\\ \zeta,v}} (b_{it}^\alpha \ot b_{ti}^\alpha)\circ (b_{kk}^\gamma \ot b_{vv}^\zeta)^\star \lambda_{\alpha,t}^{-\frac{1}{2}}\lambda_{\alpha,i}^{\frac{1}{2}}\lambda_{\gamma,k}^{\frac{1}{2}}\lambda_{\zeta,v}^{\frac{1}{2}}\\
\end{eqnarray*}
We deduce that the coefficient associated to $(b_{ij}^\alpha \ot b_{rs}^\kappa)\circ (b_{kl}^\gamma \ot b_{vw}^\zeta)^\star$ is:
\begin{eqnarray*}
  &\left(1+\dfrac{1}{\delta-1}\right)\delta_{ij}\delta_{kl}\delta_{rs}\delta_{vw} (\lambda_{\alpha,i}\lambda_{\gamma,k}\lambda_{\kappa,r}\lambda_{\zeta,v})^{\frac{1}{2}}
  + \delta_{\alpha \kappa}\delta_{jr}\delta_{is}\delta_{\gamma \zeta}\delta_{kw}\delta_{lv}\frac{1}{\delta-1} (\lambda_{\alpha,i}^{\frac{1}{2}}\lambda_{\alpha,j}^{-\frac{1}{2}}\lambda_{\gamma,k}^{\frac{1}{2}}\lambda_{\gamma,l}^{-\frac{1}{2}})\\
  &- \delta_{ij}\delta_{rs}\delta_{\gamma \zeta}\delta_{kw}\delta_{lv}\frac{1}{\delta-1}\lambda_{\alpha,i}^{\frac{1}{2}}\lambda_{\kappa,r}^{\frac{1}{2}}\lambda_{\gamma,k}^{\frac{1}{2}}\lambda_{\gamma,l}^{-\frac{1}{2}}
  -  \delta_{\alpha \kappa}\delta_{is}\delta_{jr}\delta_{kl}\delta_{vw} \frac{1}{\delta-1}\lambda_{\alpha,j}^{-\frac{1}{2}}\lambda_{\alpha,i}^{\frac{1}{2}}\lambda_{\gamma,k}^{\frac{1}{2}}\lambda_{\zeta,v}^{\frac{1}{2}}.
\end{eqnarray*}
This coefficient is equal to $h(u^{ij,\alpha}_{kl,\gamma}u^{rs,\kappa}_{vw,\zeta}).$
\end{proof}

\noindent We record the following Lemma for later use.

\begin{lemma}\label{LemmaT}
If $\beta$ is $2$-ergodic, $\kappa$ is such that $N_\kappa\geq 2$ then, for any unital C*-algebra $C$,
\begin{enumerate}
    \item If $t\in\R$ and $u\in\mathcal{U}(B\ot C)$ are such that $\sigma_{-t}^{\psi_\kappa}\ot\sigma^h_t\ot\id_C={\rm Ad}((\beta_\kappa\ot\id)(u))$ then $u\in\C1\ot\mathcal{Z}(C)$ and $\sigma_t^h=\id$.
    \item $\{u\in\mathcal{U}(B\ot C)\,:\,{\rm Ad}((\beta_\kappa\ot\id)(u))=\id\}\subset\C1_B\ot\mathcal{Z}(C)$.
        
    \item If $t\in\R$ and $u\in\mathcal{U}(B\ot C)$ are such that $\sigma_{-t}^{\psi_\kappa}\ot\tau^H_t\ot\id_C={\rm Ad}((\beta_\kappa\ot\id)(u))$ then $u\in\C1\ot\mathcal{Z}(C)$ and $\tau^H_t=\id$.
\end{enumerate}
\end{lemma}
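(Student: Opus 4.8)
The plan is to prove the three statements uniformly by first showing that $u\in\C 1_B\ot\mathcal{Z}(C)$, after which the assertions on the modular and scaling groups will follow essentially for free. Write $u=\sum_{\gamma,k,l}b^\gamma_{kl}\ot c^\gamma_{kl}$ with $c^\gamma_{kl}\in C$, set $V:=(\beta_\kappa\ot\id)(u)$, which is a unitary of $M_{N_\kappa}(\C)\ot C(H)\ot C$ (as $\beta_\kappa\ot\id$ is a unital $*$-homomorphism, injective since $\beta_\kappa$ is faithful by Proposition \ref{PropUcpSection}(1)), and let $\Theta$ denote the automorphism appearing on the left-hand side of the relevant hypothesis. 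In each of the three cases $\Theta$ fixes $1\ot 1\ot C$ pointwise and restricts on $M_{N_\kappa}(\C)\ot 1\ot 1$ to $\sigma_{-t}^{\psi_\kappa}\ot\id\ot\id$ (or to the identity, in case (2)). From the first fact I would deduce, by applying $\beta_\kappa\ot\id$ to $u(1_B\ot c)$ and to $(1_B\ot c)u$ and using injectivity of $\beta_\kappa\ot\id$, that $u$ commutes with $1_B\ot C$, i.e. $u\in B\ot\mathcal{Z}(C)$; in particular each $c^\gamma_{kl}$ lies in $\mathcal{Z}(C)$.

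Next I would test the hypothesis against $e^\kappa_{pq}\ot 1\ot 1$, getting $V(e^\kappa_{pq}\ot1\ot1)=\mu_{pq}\,(e^\kappa_{pq}\ot1\ot1)V$ with $\mu_{pq}=1$ in case (2) and $\mu_{pq}=(\lambda_{\kappa,q}/\lambda_{\kappa,p})^{it}$ in cases (1) and (3). Expanding $V=\sum_{\gamma,k,l,i,j}b^\kappa_{ij}\ot u^{ij,\kappa}_{kl,\gamma}\ot c^\gamma_{kl}$ and comparing, on both sides, the coefficient of $e^\kappa_{ab}$ with $b=q$ and $a\neq p$ (the right-hand side contributes only to coefficients with $a=p$) yields $\sum_{\gamma,k,l}u^{ap,\kappa}_{kl,\gamma}\ot c^\gamma_{kl}=0$ whenever $a\neq p$. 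Since $N_\kappa\geq 2$, as $(a,p)$ runs over such pairs one obtains
$$\sum_{\gamma,k,l}u^{ij,\kappa}_{kl,\gamma}\ot c^\gamma_{kl}=0\qquad\text{for all }i\neq j.$$

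The technical heart of the argument, and the step I expect to be the most computational, is the determination of the Gram matrix in ${\rm L}^2(H)$ of the family $\{u^{ij,\kappa}_{kl,\gamma}\}_{\gamma,k,l}$ for a fixed pair $i\neq j$. Using the $*$-relation for the coefficients of $u$ to rewrite $(u^{ij,\kappa}_{vw,\zeta})^*$ as a scalar multiple of $u^{ji,\kappa}_{wv,\zeta}$, and then invoking Lemma \ref{LemmaHaarProduct} (which is exactly where the $2$-ergodicity hypothesis is used, $\psi$ being then a $\delta$-form), one should arrive at
$$h\big((u^{ij,\kappa}_{vw,\zeta})^*u^{ij,\kappa}_{kl,\gamma}\big)=\frac{\lambda_{\kappa,j}}{(\delta-1)\lambda_{\kappa,i}}\big(\delta_{\zeta\gamma}\delta_{vk}\delta_{wl}-\delta_{vw}\delta_{kl}\,\lambda_{\zeta,v}^{1/2}\lambda_{\gamma,k}^{1/2}\big).$$
Indexing rows and columns by triples $(\gamma,k,l)$, this means that the Gram matrix equals $\frac{\lambda_{\kappa,j}}{(\delta-1)\lambda_{\kappa,i}}(\id-\xi\xi^*)$, where $\xi$ is the vector with entries $\xi_{(\gamma,k,l)}=\delta_{kl}\lambda_{\gamma,k}^{1/2}$. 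Since $\psi$ is a normalised state, $\|\xi\|^2=\sum_{\gamma,k}\lambda_{\gamma,k}=1$ and $\delta\geq N_\kappa^2\geq 4$; hence $\id-\xi\xi^*$ is the orthogonal projection onto $\xi^{\perp}$, the prefactor is positive, and the Gram matrix has kernel exactly $\C\xi$.

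To finish, I would multiply the last displayed relation on the left by $(u^{ij,\kappa}_{vw,\zeta})^*\ot 1$ and apply $h\ot\id_C$: this expresses that $\id-\xi\xi^*$ annihilates the $C$-valued vector $(c^\gamma_{kl})_{\gamma,k,l}$, so $c^\gamma_{kl}=\delta_{kl}\lambda_{\gamma,k}^{1/2}c_0$ for the single element $c_0:=\sum_{\gamma,k}\lambda_{\gamma,k}^{1/2}c^\gamma_{kk}\in C$. Therefore $u=\big(\sum_{\gamma,k}\lambda_{\gamma,k}^{1/2}b^\gamma_{kk}\big)\ot c_0=1_B\ot c_0$, and since $u$ is a unitary lying in $B\ot\mathcal{Z}(C)$ we get $c_0\in\mathcal{U}(\mathcal{Z}(C))$ and $u\in\C 1_B\ot\mathcal{Z}(C)$. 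Finally $(\beta_\kappa\ot\id)(u)=1\ot 1\ot c_0$ with $c_0$ central, so ${\rm Ad}((\beta_\kappa\ot\id)(u))=\id$; comparing with the hypothesis forces $\Theta=\id$, and restricting $\Theta$ to the $C(H)$-leg gives $\sigma_t^h=\id$ in case (1) and $\tau_t^H=\id$ in case (3) (as well as $\sigma_t^{\psi_\kappa}=\id$ in passing). The only non-formal ingredient is this Gram-matrix identity; everything else is bookkeeping.
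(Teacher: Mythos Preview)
Your argument is correct and complete. The Gram-matrix identity you single out does check out: using the $*$-relation and Lemma~\ref{LemmaHaarProduct} with $i\neq j$ kills the first and third terms of that formula, and the remaining two terms combine exactly into $\frac{\lambda_{\kappa,j}}{(\delta-1)\lambda_{\kappa,i}}(\delta_{\zeta\gamma}\delta_{vk}\delta_{wl}-\delta_{vw}\delta_{kl}\lambda_{\zeta,v}^{1/2}\lambda_{\gamma,k}^{1/2})$, so the Gram matrix is a positive multiple of the rank-one defect $\id-\xi\xi^*$ as you claim.

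Your route, however, is genuinely different from the paper's. The paper first exploits the intertwining relation $(\sigma_{-t}^{\psi_\kappa}\ot\sigma_t^h)\beta_\kappa=\beta_\kappa\circ\sigma_t^\psi$ from Proposition~\ref{PropUcpSection}(2) to obtain the stronger preliminary fact that $v:=(b_{-t}\ot 1)u$ is \emph{central} in $B\ot C$, hence of the form $\sum_\gamma 1_\gamma\ot\alpha_\gamma$ with $\alpha_\gamma\in\mathcal{U}(\mathcal{Z}(C))$. It then tests the hypothesis on the \emph{diagonal} element $b^\kappa_{pp}\ot 1\ot 1$, applies $h\ot\id$ and Lemma~\ref{LemmaHaarProduct}, and arrives at the relation $ww^*=1$ for $w:=\sum_{\gamma,r}\lambda_{\gamma,r}^{1+it}\alpha_\gamma$; since $w$ is a convex combination of unitaries, an extreme-point argument forces all $\lambda_{\gamma,r}^{it}\alpha_\gamma$ to coincide, whence $u=1\ot a$. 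By contrast, you only establish $u\in B\ot\mathcal{Z}(C)$ in the first step, then test against \emph{off-diagonal} matrix units $e^\kappa_{pq}$ to produce linear relations among the $c^\gamma_{kl}$, and conclude by computing the kernel of the resulting Gram matrix. Your approach is more linear-algebraic and self-contained (no extreme-point argument), while the paper's is shorter once the intertwining relation and the centrality of $v$ are in hand; both hinge on Lemma~\ref{LemmaHaarProduct}, but you use its off-diagonal instance $h((u^{ij,\kappa}_{vw,\zeta})^*u^{ij,\kappa}_{kl,\gamma})$ whereas the paper uses the diagonal one $h(u^{pp,\kappa}_{rr,\gamma}u^{pp,\kappa}_{ss,\zeta})$.
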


\begin{proof}
$(1)$. Let $t\in\R$ and $u\in\mathcal{U}(B\ot C)$ be such that $\sigma_{-t}^{\psi_\kappa}\ot\sigma^h_t\ot\id_C={\rm Ad}((\beta_\kappa\ot\id)(u))$. Recall that $\sigma_t^\psi={\rm Ad}(b_t)$, where $b_t=\sum_{\gamma,r}\lambda_{\gamma,r}^{it}e^\gamma_{rr}=\sum_{\gamma,r}\lambda_{\gamma,r}^{it+1/2}b^\gamma_{rr}\in B$ is unitary and $b_t^*=b_{-t}$. Then, for all $b\in B$ and $c\in C$,
\begin{eqnarray*}
(\sigma_{-t}^{\psi_\kappa}\ot\sigma^h_t\ot\id_C)(\beta_\kappa\ot\id)(b\ot c)&=&\beta_\kappa(\sigma_t^\psi(b))\ot c=\beta_\kappa(b_tbb_{-t})\ot c\\
&=&(\beta_\kappa\ot\id)((b_t\ot 1)(b\ot c)(b_{-t}\ot 1))\\
&=&(\beta_\kappa\ot\id)(u)(\beta_\kappa\ot\id)(b\ot c)(\beta_\kappa\ot\id)(u^*)\\
&=&(\beta_\kappa\ot\id)(u(b\ot c)u^*).
\end{eqnarray*}
Since $\beta_\kappa$ is faithful, we deduce that $(b_t\ot 1)(b\ot c)(b_{-t}\ot 1)=u(b\ot c)u^*$ for all $b\in B$, $c\in C$. Hence, $v:=(b_{-t}\ot 1)u\in\mathcal{Z}(B\ot C)$. Since $v$ is unitary and central in $B\ot C$, we may write $v=\sum_{\gamma,r}e^\gamma_{rr}\ot\alpha_\gamma=\sum\lambda_{\gamma,r}^{1/2} b^\gamma_{rr}\ot\alpha_\gamma$ where $\alpha_\gamma\in \mathcal{U}(\mathcal{Z}(C))$. Then, $u=(b_t\ot 1)v=\sum_{\gamma,r}\lambda_{\gamma,r}^{it+1/2} b^\gamma_{rr}\ot \alpha_\gamma$ and $u^*=\sum_{\zeta,s}\lambda_{\zeta,s}^{-it+1/2}b^\zeta_{ss}\ot \alpha^*_\zeta $. It follows that:
$$(\beta_\kappa\ot\id)(u)=\sum_{\gamma,r,k,t}\lambda_{\gamma,r}^{it+1/2} b^\kappa_{kt}\ot u^{kt,\kappa}_{rr,\gamma}\ot \alpha_\gamma\,\,\text{and}\,\,(\beta_\kappa\ot\id)(u^*)=\sum_{\zeta,s,t',l}\lambda_{\zeta,s}^{-it+1/2} b^\kappa_{t'l}\ot u^{t'l,\kappa}_{ss,\zeta}\ot \alpha_\zeta^*.$$
Fix $1\leq p\leq N_\kappa$. Using the formula $b^\kappa_{kt}b^\kappa_{pp}b^\kappa_{t'l}=\frac{\delta_{tp}\delta_{t'p}}{\lambda_{\kappa,p}}b^\kappa_{kl}$, we find:
$$
(\beta_\kappa\ot\id)(u)(b^\kappa_{pp}\ot 1\ot 1)(\beta_\kappa\ot\id)(u^*)=\sum_{\gamma,\zeta,r,s,k,l}\left(\frac{\lambda_{\gamma,r}}{\lambda_{\zeta,s}}\right)^{it}\frac{(\lambda_{\gamma,r}\lambda_{\zeta,s})^{1/2}}{\lambda_{\kappa,p}}b^\kappa_{kl}\ot u^{kp,\kappa}_{rr,\gamma}u^{pl,\kappa}_{ss,\zeta}\ot \alpha_\gamma\alpha_{\zeta}^*.
$$
Note that $(\sigma_{-t}^{\psi_\kappa}\ot\sigma_t^h\ot\id_C)(b^\kappa_{pp}\ot 1\ot 1)=b^\kappa_{pp}\ot 1\ot 1$ hence,
\begin{equation}\label{EqT}\sum_{\gamma,\zeta,r,s}\left(\frac{\lambda_{\gamma,r}}{\lambda_{\zeta,s}}\right)^{it}\frac{(\lambda_{\gamma,r}\lambda_{\zeta,s})^{1/2}}{\lambda_{\kappa,p}} u^{pp,\kappa}_{rr,\gamma}u^{pp,\kappa}_{ss,\zeta}\ot \alpha_\gamma\alpha_{\zeta}^*=1\ot 1.\end{equation}
By the Haar state formula given in Lemma \ref{LemmaHaarProduct} we have:
$$h(u^{pp,\kappa}_{rr,\gamma}u^{pp,\kappa}_{ss,\zeta})=\left(\frac{\delta}{\delta-1}\lambda_{\kappa,p}-\frac{1}{\delta-1}\right)(\lambda_{\gamma,r}\lambda_{\zeta,s})^{1/2}+\delta_{\gamma,\zeta}\delta_{rs}\frac{1-\lambda_{\kappa,p}}{\delta-1}.$$
Applying $h\ot\id_C$ to Equation $(\ref{EqT})$ we get:
$$
\left(\frac{\delta}{\delta-1}-\frac{1}{\delta-1}\lambda_{\kappa,p}^{-1}\right)\left[\sum_{\gamma,\zeta,r,s}\left(\frac{\lambda_{\gamma,r}}{\lambda_{\zeta,s}}\right)^{it}\lambda_{\gamma,r}\lambda_{\zeta,s}\alpha_\gamma\alpha_{\zeta}^*\right]+\left(\frac{1}{\delta-1}\lambda_{\kappa,p}^{-1}-\frac{1}{\delta-1}\right)1=1.
$$
Define $w:=\sum_{\gamma,r}\lambda_{\gamma,r}\lambda_{\gamma,r}^{it}\alpha_\gamma\in \mathcal{Z}(C)$. Note that, since $N_\kappa\geq 2$, $\delta=\sum_{t}\lambda_{\kappa,t}^{-1}>\lambda_{\kappa,p}^{-1}$ hence,
$$\left(\frac{\delta}{\delta-1}-\frac{1}{\delta-1}\lambda_{\kappa,p}^{-1}\right)\neq 0$$
and $ww^*=\sum_{\gamma,\zeta,r,s}\left(\frac{\lambda_{\gamma,r}}{\lambda_{\zeta,s}}\right)^{it}\lambda_{\gamma,r}\lambda_{\zeta,s}\alpha_\gamma\alpha_{\zeta}^*=1$. We conclude that $w$ is unitary so the convex combination $w=\sum_{\gamma,r}\lambda_{\gamma,r}\lambda_{\gamma,r}^{it}\alpha_\gamma$ is an extreme point in the unit ball of $C$ which implies that $\lambda_{\gamma,r}^{it}\alpha_\gamma=\lambda_{\zeta,s}^{it}\alpha_\zeta$ for all $\gamma,\zeta,r,s$.
Writing $a\in\mathcal{U}(\mathcal{Z}(C))$ the unitary satisfying $a=\lambda_{\gamma,r}^{it}\alpha_\gamma$ for all $1\leq\gamma\leq K$ and all $1\leq r\leq N_\gamma$ one has $$u=\sum_{\gamma,r}\lambda_{\gamma,r}^{it+1/2} b^\gamma_{rr}\ot \alpha_\gamma=\sum\lambda_{\gamma,r}^{1/2}b^\gamma_{rr}\ot a=1\ot a.$$ Hence, for all $x\in\Linf(H)$, one has:
\begin{eqnarray*}
1\ot\sigma_t^h(x)\ot 1&=&(\sigma_{-t}^{\psi_\kappa}\ot\sigma_t^h\ot\id)(1\ot x\ot 1)={\rm Ad}((\beta_\kappa\ot\id)(u))(1\ot x\ot 1)\\
&=&(1\ot x\ot aa^*)
=1\ot x\ot 1.
\end{eqnarray*}
It follows that $\sigma_t^h=\id$.

\vspace{0.2cm}

\noindent$(2)$ follows from $(1)$ (with $t=0$).
\vspace{0.2cm}

\noindent$(3)$ can be obtained through the same reasoning as for $(1)$, replacing the modular group relation $(\sigma_{-t}^{\psi_\kappa}\ot \sigma_t^h)\beta_\kappa = \beta_\kappa\circ \sigma_t^\psi$ by the scaling group relation $(\sigma_{-t}^{\psi_\kappa}\ot \tau_t^H)\beta_\kappa = \beta_\kappa\circ \sigma_{-t}^\psi$.\end{proof}

\noindent When $N_\kappa=1$ for all $\kappa$, so that $B=\C^K$, we obtain the following Lemma.

\begin{lemma}\label{LemmaTdim1}
If $\beta$ is $2$-ergodic and $N_\kappa=1$ for all $\kappa$ (so that $B=\C^K$) and $K\geq 3$ then, for any unital C*-algebra $C$ and any $1\leq \gamma,\kappa\leq K$ with $\kappa\neq\gamma$,
$$\C1\ot C=\beta_\gamma(\C^K)\ot C\cap\beta_\kappa(\C^K)\ot C\subset C(H)\ot C.$$
\end{lemma}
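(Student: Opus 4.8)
The plan is to prove the inclusion $\beta_\gamma(\C^K)\ot C\cap\beta_\kappa(\C^K)\ot C\subset \C1\ot C$ (the reverse inclusion being trivial), and then identify the left-hand side. Here $B=\C^K$ so each $M_{N_\kappa}(\C)=\C$, and $\beta_\kappa=(\chi_\kappa\ot\id)\beta\,:\,\C^K\to\C\ot C(H)=C(H)$. Writing $p_1,\dots,p_K$ for the minimal projections of $\C^K$ (so $b^\kappa=\lambda_\kappa^{1/2}p_\kappa$ in the earlier notation, with $\lambda_\kappa>0$ and $\sum_\kappa\lambda_\kappa=1$), the action is $\beta(p_\zeta)=\sum_\kappa p_\kappa\ot u_{\kappa\zeta}$ where the $u_{\kappa\zeta}:=u^{11,\kappa}_{11,\zeta}\in C(H)$ form a "magic-type" matrix: each row and column sums to $1$, and by the $2$-ergodicity we have the Haar-state formula from Lemma~\ref{LemmaHaarProduct}, which in this scalar case reads $h(u_{\kappa\gamma}u_{\kappa'\gamma'}) = \tfrac{\delta}{\delta-1}\lambda_\kappa\lambda_{\kappa'}\lambda_\gamma\lambda_{\gamma'}$ plus correction terms supported on the diagonals $\{\kappa=\kappa'\}$ and $\{\gamma=\gamma'\}$. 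Then $\beta_\gamma(\C^K)\ot C = \mathrm{Span}\{u_{\gamma\zeta}\ot 1,\ \zeta=1,\dots,K\}\cdot(\C1\ot C)$, i.e. its elements are exactly $\sum_\zeta u_{\gamma\zeta}\ot c_\zeta$ with $c_\zeta\in C$, and similarly for $\kappa$.

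**Key step: an element of the intersection.** Suppose $x=\sum_\zeta u_{\gamma\zeta}\ot c_\zeta = \sum_\zeta u_{\kappa\zeta}\ot d_\zeta$ lies in the intersection. The idea is to extract scalar equations by applying slice maps of the form $(h(u_{\gamma\eta}^*\,\cdot\,)\ot\id)$ and $(h(u_{\kappa\eta}^*\,\cdot\,)\ot\id)$ and exploiting the $2$-ergodic Haar formula, together with the relation $\sum_\zeta u_{\gamma\zeta}=1$ coming from $\eta\in\Mor(1,u)$. Concretely, I would first note that $E_\gamma\,:\,C(H)\to\C^K$ from Proposition~\ref{PropUcpSection}, applied in the form $(E_\gamma\ot\id)$, sends $\beta_\gamma(b)\ot c\mapsto b\ot c$ and, on the other side, $(E_\gamma\ot\id)(\sum_\zeta u_{\kappa\zeta}\ot d_\zeta)$ can be computed explicitly via the formula $E_\gamma(u_{\kappa\zeta})=\tfrac{1}{\delta\lambda_\gamma^2}\sum_\eta h(u_{\gamma\eta}^* u_{\kappa\zeta})\, b^\eta\cdots$ from that proposition. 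Plugging in the $2$-ergodic Haar formula, the "free" term $\tfrac{\delta}{\delta-1}\lambda_\gamma\cdots$ produces a multiple of $1_B=\sum_\eta p_\eta$ (independent of $\zeta$), while the correction terms, because $\gamma\neq\kappa$, reduce to a single surviving Kronecker factor; after summing against $\sum_\zeta u_{\kappa\zeta}=1$ the $\zeta$-dependence of the correction collapses. The upshot should be an identity forcing all the $c_\zeta$ (and $d_\zeta$) to be equal to a common $c\in C$, whence $x=(\sum_\zeta u_{\gamma\zeta})\ot c = 1\ot c\in\C1\ot C$.

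**The main obstacle.** The delicate point is keeping track of which correction terms in Lemma~\ref{LemmaHaarProduct} survive: in the $B=\C^K$ case the four terms degenerate, and the hypothesis $\kappa\neq\gamma$ is used precisely to kill the two terms carrying $\delta_{\alpha\kappa}$ (with the roles of $\alpha,\kappa$ being $\gamma,\kappa$ here), while $K\geq 3$ is needed so that the linear system one obtains has full enough rank to force equality of all coefficients rather than leaving a one-parameter ambiguity. I expect the bookkeeping of these degenerate Kronecker deltas, and verifying that the resulting scalar matrix (indexed by pairs of the $K$ colors) is invertible on the relevant subspace when $K\geq 3$, to be the real content; everything else is slice-map manipulation. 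An alternative, possibly cleaner, route is to work entirely at the level of the finite-dimensional operator system $V:=\mathrm{Span}\{u_{\kappa\zeta}\}\subset C(H)$: show that $V\cap(\C1+ \text{(span of the $\gamma$-row orthogonal to }1)) = \C1$ using the $2$-ergodicity to pin down the Gram matrix of $V$ in $L^2(H)$, then tensor with $C$ by a standard slice-map argument. I would pursue whichever of these makes the invertibility claim most transparent, but the $2$-ergodic Haar formula of Lemma~\ref{LemmaHaarProduct} is the engine in both.
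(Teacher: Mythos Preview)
Your proposal is correct and takes a route that differs from the paper's in one key respect. Both approaches begin by applying $E_\gamma\ot\id$ (equivalently, the slice maps $h(u_{\gamma\eta}^*\,\cdot)\ot\id$) to an element $x=\sum_\zeta u_{\gamma\zeta}\ot c_\zeta=\sum_\zeta u_{\kappa\zeta}\ot d_\zeta$ of the intersection, obtaining $c_l=\tfrac{1}{K-1}\sum_{\zeta\neq l}d_\zeta$. The paper then restricts to \emph{unitary} $x$ (the intersection, being a C*-algebra, is spanned by its unitaries), so that each $d_\zeta$ and each $c_l$ is unitary in $C$; the identity expresses the unitary $c_l$ as an average of $K-1\geq 2$ unitaries, hence an extreme point of the unit ball, forcing all $d_\zeta$ with $\zeta\neq l$ to coincide; varying $l$ over $K\geq 3$ values then makes all $d_\zeta$ equal. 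Your approach instead applies $E_\kappa\ot\id$ as well, obtaining the symmetric relation $d_l=\tfrac{1}{K-1}\sum_{\zeta\neq l}c_\zeta$; substituting one into the other yields $K(K-2)c_l=(K-2)\sum_\eta c_\eta$, and dividing by $K-2$ (here is where $K\geq 3$ enters) gives $c_l=\tfrac{1}{K}\sum_\eta c_\eta$ for all $l$, so all $c_l$ coincide. Your argument is purely linear and avoids the reduction to unitaries and the extreme-point trick; the paper's is shorter once one has the unitary reduction. One simplification you should make explicit: since $\beta$ is ergodic, $\psi$ is a $\delta$-form on $\C^K$, and the only $\delta$-form on $\C^K$ is the uniform trace, so all $\lambda_\kappa=1/K$ and $\delta=K$; this collapses the Haar formula from Lemma~\ref{LemmaHaarProduct} to the clean trichotomy $h(u_{ij}u_{kl})\in\{\tfrac{1}{K},\,0,\,\tfrac{1}{K(K-1)}\}$ and makes the computation of $E_\gamma(u_{\kappa\zeta})$ immediate.
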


\begin{proof}
Since $\beta$ is ergodic, $\psi$ is a $\delta$-form. It is easy to check that the unique $\delta$-form on $\C^K$ is the uniform normalized trace and $\delta=K$. Write $u=\sum_{i,j=1}^Ke_{ij}\ot u_{ij}$, where $e_{ij}$ are the canonical matrix units, the unitary representation of $H$ associated with $\beta$. Note that $\beta_i(\C^K)={\rm Span}\{u_{ij}\,:\,1\leq j\leq K\}$. By Lemma \ref{LemmaHaarProduct} and since $\delta=K$ we find:

\begin{eqnarray*}
h(u_{ij} u_{kl}) &=& \dfrac{1}{K(K-1)} +\dfrac{1}{K-1}\delta_{ik}\delta_{jl} - \dfrac{1}{K(K-1)}\delta_{jl} -  \dfrac{1}{K(K-1)}\delta_{ik}\\
  &=& \begin{cases}
    \dfrac{1}{K} & \text{if $i=k$ and $j=l$}.\\
    0 & \text{if $i=k$ and $j\neq l$, or $i\neq k$ and $j= l$}.\\
    \dfrac{1}{K(K-1)} & \text{otherwise}.\\
  \end{cases}
\end{eqnarray*} 
Let $E_i\,:\,\Linf(H)\rightarrow\C^K$ be the ucp map from Proposition \ref{PropUcpSection}. Using the formula above for the Haar state as well as the formula given in Proposition \ref{PropUcpSection} we obtain:
$$E_i(u_{jk})=\left\{\begin{array}{lcl}\frac{1}{K-1}\sum_{l\neq k}e_{l}&\text{if}& i\neq j,\\e_k&\text{if}&i=j.\end{array}\right.$$
Let $i\neq j$ and $w\in \beta_i(\C^K)\ot C\cap \beta_j(\C^K)\ot C$ be a unitary ($i\neq j$), then we can write $$w  = \sum_k u_{jk}\ot \mu_k = \sum_l u_{il}\ot \nu_l\text{ where, for all }k,l,\,\,\mu_k,\nu_l\in\mathcal{U}(C).$$ Applying $E_i\ot\id_C$ to $w$, we get that for any $l$, $ \nu_l = \frac{1}{K-1} \sum_{k\neq l} \mu_k$. In particular, $\frac{1}{K-1}\sum_{k\neq l} \mu_k$ is a unitary so it is an extreme point in the unit ball of $C$. It follows that $\mu_k=\mu_{k'}$ for all $k,k'\neq l$. Since this holds for all $l$ and $K\geq 3$, we deduce that $\mu_k$ does not depend on $k$ hence $w$ is of the form $w=1\ot v$, with $v\in\mathcal{U}(C)$. The C*-algebra $\beta_i(\C^K)\ot C\cap\beta_j(\C^K)\ot C$ being the linear span of its unitaries, we get $\beta_i(\C^K)\ot C\cap\beta_j(\C^K)\ot C=\C1\ot C$.\end{proof}

\section{Generalized Free Wreath Products}\label{sectiongen}

\noindent In this section, we introduce a novel approach to free wreath products. Recall that for any groups $\Gamma$ and $\Lambda$, and any action $\beta : \Lambda \curvearrowright X$ on a set $X$, the generalized wreath product is defined as $\Gamma \wr_\beta \Lambda := \Gamma^X \rtimes \Lambda$. In this definition, the group action $\Lambda \curvearrowright \Gamma^X$ is by shifting the coordinates according to the action $\beta$. Our aim is to extend this construction, in a free manner, to any compact quantum groups $G$ and $H$ with an action $\beta : H \curvearrowright B$ on a unital C*-algebra $B$. Below, we outline some motivations for our construction and address certain limitations inherent to the C*-algebra $B$.

\subsection{Motivations}
Let's begin by reviewing Bichon's original construction \cite{Bi04}. Let $G$ be a compact quantum group and $u\in M_N(\C)\ot C(S_N^+)$ the generating magic unitary. The C*-algebra of the free wreath product is defined by $C(G\wr_{*}S_N^+):=C(G)^{*N}*C(S_N^+)/I$, where $I$ is the closed two sided ideal generated by $\{\nu_i(a)u_{ij}-u_{ij}\nu_i(a)\,:\,a\in A,\,1\leq i,j\leq N\}$, with $\nu_i\,:\, C(G)\rightarrow C(G)^{*N}$ being the $i^{th}$-copy of $C(G)$ in the free product. 

\vspace{0.2cm}

\noindent We consider now a new way to describe this C*-algebra. Let $\beta\,:\,\C^N\rightarrow \C^N\ot C(S_N^+)$ be the universal action of $S_N^+$ on $\C^N$ i.e. $\beta(e_j):=\sum_ie_i\ot u_{ij}$ and $\rho\,:\,C(G)\rightarrow \C^N\ot C(G)^{*N}$ be the unital $*$-homomorphism defined by $\rho(a):=\sum_ie_i\ot\nu_i(a)$. Viewing both $C(S_N^+),C(G)^{*N}\subset C(G\wr_*S_N^+)$ we have $\rho(a)\beta(e_j)=\sum_ie_i\ot \nu_i(a)u_{ij}=\sum_ie_i\ot u_{ij}\nu_i(a)=\beta(e_j)\rho(a)$. It follows that for all $a\in C(G)$ and $b\in\C^N$ one has $\rho(a)\beta(b)=\beta(b)\rho(a)$. The C*-algebra $C(G\wr_*S_N^+)$ is actually the universal unital C*-algebra generated by $C(S_N^+)$ and the coefficients of a unital $*$-homomorphism $\rho\,:\,C(G)\rightarrow\C^N\ot C(G\wr_*S_N^+)$ satisfying $[\rho(a),\beta(b)]=0$ for all $a\in C(G)$ and $b\in\C^N$ in the following precise sense.

\begin{proposition}\label{PropMotivation}
If $A$ is a unital C*-algebra with unital $*$-homomorphisms $\iota\,:\, C(S_N^+)\rightarrow A$ and $\pi\,:\,C(G)\rightarrow\C^N\ot A$ such that $[\pi(a),(\id\ot\iota)(\beta(b))]=0$ for all $a\in C(G)$ and $b\in\C^N$ then, there exists a unique unital $*$-homomorphism $\widetilde{\pi}\,:\,C(G\wr_*S_N^+)\rightarrow A$ such that $$(\id\ot\widetilde{\pi})\rho=\pi\text{ and }\widetilde{\pi}\vert_{C(S_N^+)}=\iota.$$
\end{proposition}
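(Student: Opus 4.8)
The plan is to use the explicit presentation $C(G\wr_*S_N^+)=C(G)^{*N}*C(S_N^+)/I$ recalled just above the statement, build $\widetilde{\pi}$ out of the universal properties of the two free products involved, and then check that the commutation hypothesis on $(\pi,\iota)$ forces the defining ideal $I$ into the kernel.

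First I would decompose $\pi$. Since $\C^N\ot A\cong A^N$ as C*-algebras, composing $\pi$ with the $i$-th coordinate character $\omega_i\ot\id:\C^N\ot A\to A$ produces unital $*$-homomorphisms $\pi_i:=(\omega_i\ot\id)\circ\pi:C(G)\rightarrow A$ with $\pi(a)=\sum_i e_i\ot\pi_i(a)$ for all $a\in C(G)$. By the universal property of the full free product, the family $(\pi_i)_{1\le i\le N}$ yields a unique unital $*$-homomorphism $\Pi:C(G)^{*N}\rightarrow A$ with $\Pi\circ\nu_i=\pi_i$, and then the pair $(\Pi,\iota)$ yields a unique unital $*$-homomorphism $\Phi:C(G)^{*N}*C(S_N^+)\rightarrow A$ restricting to $\Pi$ on $C(G)^{*N}$ and to $\iota$ on $C(S_N^+)$.

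Next I would check that $I\subset\ker\Phi$. As $I$ is generated by the elements $\nu_i(a)u_{ij}-u_{ij}\nu_i(a)$, it suffices to prove $\pi_i(a)\iota(u_{ij})=\iota(u_{ij})\pi_i(a)$ for all $a\in C(G)$ and all $i,j$. Using $\beta(e_j)=\sum_i e_i\ot u_{ij}$ one has $(\id\ot\iota)(\beta(e_j))=\sum_i e_i\ot\iota(u_{ij})$, hence
\[
\pi(a)\,(\id\ot\iota)(\beta(e_j))=\sum_i e_i\ot\pi_i(a)\iota(u_{ij}),\qquad (\id\ot\iota)(\beta(e_j))\,\pi(a)=\sum_i e_i\ot\iota(u_{ij})\pi_i(a).
\]
The hypothesis $[\pi(a),(\id\ot\iota)(\beta(e_j))]=0$, together with the linear independence of the $e_i$ in $\C^N$, gives the desired commutation for every $i,j$ and every $a$; since the $e_j$ span $\C^N$ this is exactly the condition $[\pi(a),(\id\ot\iota)(\beta(b))]=0$ for all $b\in\C^N$. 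Thus $\Phi$ annihilates the generators of $I$, hence $I$, and descends to a unital $*$-homomorphism $\widetilde{\pi}:C(G\wr_*S_N^+)\rightarrow A$.

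Finally I would verify the two identities and uniqueness. Viewing $C(G)^{*N},C(S_N^+)\subset C(G\wr_*S_N^+)$ we have $\rho(a)=\sum_i e_i\ot\nu_i(a)$, so $(\id\ot\widetilde{\pi})(\rho(a))=\sum_i e_i\ot\pi_i(a)=\pi(a)$, and $\widetilde{\pi}\vert_{C(S_N^+)}=\iota$ by construction. For uniqueness, note that $C(G\wr_*S_N^+)$ is generated as a C*-algebra by $C(S_N^+)$ together with the coefficients $\nu_i(a)$, $a\in C(G)$, $1\le i\le N$; pairing $(\id\ot\widetilde{\pi})\rho=\pi$ with $\omega_i$ pins down $\widetilde{\pi}(\nu_i(a))=\pi_i(a)$, and $\widetilde{\pi}\vert_{C(S_N^+)}=\iota$ pins down the rest. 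The only step that requires genuine care is the translation of the single operator identity $[\pi(a),(\id\ot\iota)(\beta(b))]=0$ into the $N^2$ scalar relations generating $I$, which is precisely where linear independence of the $e_i$ enters; everything else is a routine chase through universal properties, so I do not expect a real obstacle.
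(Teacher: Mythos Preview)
Your proof is correct and follows essentially the same approach as the paper: decompose $\pi$ into its coordinate maps $\pi_i$, assemble them with $\iota$ via the universal property of $C(G)^{*N}*C(S_N^+)$, and use the commutation hypothesis to show that the ideal $I$ lies in the kernel. You give more detail than the paper (in particular spelling out the coefficient-by-coefficient check that $[\pi(a),(\id\ot\iota)(\beta(e_j))]=0$ yields $\pi_i(a)\iota(u_{ij})=\iota(u_{ij})\pi_i(a)$, and the uniqueness argument), but the underlying argument is identical.
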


\begin{proof}
The uniqueness being obvious, let us show the existence. Write $\pi(a)=\sum_ie_i\ot\pi_i(a)$ and note that, since $\pi$ is a unital $*$-homomorphism, each $\pi_i\,:\, C(G)\rightarrow A$ is a unital $*$-homomorphism. Define $\pi_0\,:\, C(G)^{*N}*C(S_N^+)\rightarrow A$ by $\pi_0\circ\nu_i:=\pi_i$ and $\pi_0\vert_{C(S_N^+)}:=\iota$. Note that the commutations relations $[\pi(a),(\id\ot\iota)(\beta(b))]=0$ imply that, for all $a\in C(G)$ and all $1\leq i,j\leq N$ one has $\pi_0(\nu_i(a)u_{ij}-u_{ij}\nu_i(a))=0$. Hence $I\subseteq\ker(\pi_0)$ which implies that there exists a unital $*$-homomorphism $\widetilde{\pi}\,:\,C(G\wr_* S_N^+)\rightarrow A$ satisfying the properties of the Proposition.\end{proof}

\noindent Let now $G, H$ be any compact quantum groups and $\beta\,:\,B\rightarrow B\ot C(H)$ be any action of $H$ on the unital C*-algebra $B$. Motivated by Proposition \ref{PropMotivation} we want to define the C*-algebra $C(\GG)$ of the generalized free wreath product $\GG=G\wr_{*,\beta}H$ as the universal unital C*-algebra generated by $C(H)$ and the coefficients of a unital $*$-homomorphism $\rho\,:\, C(G)\rightarrow B\ot C(\GG)$ with the relation $[\rho(a),\beta(b)]=0$ for all $a\in C(G)$ and $b\in B$. It is not clear however that such a C*-algebra even exists and we will elaborate on that in the next subsection.

\subsection{The amalgamated free wreath product quantum group} \noindent For the remainder of this section, we fix an action $\beta\,:\, B\rightarrow B\ot C(H)$ of a CQG $H$ on finite dimensional C*-algebra $B$ and a CQG $G$ with a dual quantum subgroup $F$, $C(F)\hookrightarrow C(G)$. We denote by $E_F: C(G)\rightarrow C(F)$ the canonical unital ucp map defined in Proposition \ref{dualqsg}.

\vspace{0.2cm}

\noindent Recall that the direct product quantum group $H\times F$ is defined as the compact quantum group such that $C(H\times F) = C(H)\otm C(F)$ and $\Delta_{H\times F} = \Sigma_{23}(\Delta_H \ot \Delta_F)$. Note that, by universal property of the maximal tensor product, $C(H)\otm C(F)$ is the universal unital C*-algebra generated $C(H)$ and $C(F)$ with the relations making $C(H)$ and $C(F)$ commuting.

\vspace{0.2cm}

\noindent We now define \textit{the amalgamated free wreath product quantum group} C*-algebra $C(\GG)$ as the universal unital C*-algebra generated by $C(H)$,  $C(F)$ and the coefficients of a unital $*$-homomorphism $\rho\,:\,C(G)\rightarrow B\ot C(\GG)$ with the relations  $[\rho(a),\beta(b)]=0$ for all $a\in C(G)$ and $b\in B$, $C(H)$ and $C(F)$ commute in $C(\GG)$ and $\rho(f)=1_B\ot f$ for all $f\in C(F)\subset C(\GG)$. More precisely the C*-algebra $C(\GG)$, if it exists, satisfies the following properties:

\begin{itemize}
\item There exists three unital $*$-homomorphisms $\iota_H\,:\,C(H)\rightarrow C(\GG)$, $\iota_F\,:\,C(F)\rightarrow C(\GG)$ and $\rho\,:\,C(G)\rightarrow B\ot C(\GG)$ such that:
\begin{itemize}
\item $[\iota_H(x),\iota_F(f)]=0$ for all $x\in C(H)$ and all $f\in C(F)$.
\item $[(\id\ot\iota_H)(\beta(b)),\rho(a)]=0$ for all $a\in C(G)$ and all $b\in B$.
\item $\rho(f)=1_B\ot\iota_F(f)$ for all $f\in C(F)$.
\end{itemize}
\item If $C$ is any unital C*-algebra with unital $*$-homomorphisms $\iota'_H\,:\,C(H)\rightarrow C$, $\iota'_F\,:\,C(F)\rightarrow C$ and $\rho'\,:\,C(G)\rightarrow B\ot C$ such that:
\begin{itemize}
    \item $[\iota'_H(x),\iota'_F(f)]=0$ for all $x\in C(H)$, $f\in C(F)$,
    \item $[(\id\ot\iota'_H)(\beta(b)),\rho'(a)]=0$ for all $a\in C(G)$, $b\in B$,
    \item $\rho'(f)=1_B\ot\iota'_F(f)$ for all $f\in C(F)$,
\end{itemize}
then, there exists a unique unital $*$-homomorphism $\nu\,:\, C(\GG)\rightarrow C$ such that:
$$\nu\circ\iota_H=\iota'_H,\,\,\nu\circ\iota_F=\iota'_F\text{ and }(\id\ot\nu)\circ\rho=\rho'.$$
\end{itemize}

\noindent While it is not clear that $C(\GG)$ even exists, its uniqueness, up to a canonical isomorphism, is an obvious consequence of the universal property defining $C(\GG)$. When $C(\GG)$ exists, we will sometimes write $\iota\,:\, C(H)\otm C(F)\rightarrow C(\GG)$ the unique unital $*$-homomorphism such that $\iota(x\ot f)=\iota_H(x)\iota_F(f)$ for all $x\in C(H)$ and all $f\in C(F)$.

\begin{remark}\label{RmkIota} Suppose that $C(\GG)$ exists then $\iota$ and $\rho$ are faithful (in particular, both $\iota_H$ and $\iota_F$ are faithful). Indeed, it follows from the universal property, taking $C=C(H)\otm C(G)$, $\iota'_H(x)=x\ot 1_{C(F)}$, $\iota'_F(f)=1_{C(H)}\ot f$ and $\rho'(a)=1_B\ot 1_{C(H)}\ot a$, that there exists a unique unital $*$-homomorphism $\pi\,:\,C(\GG)\rightarrow C(H)\otm C(G)$ such that
    $$(\id\ot\pi)\rho(a)=1_B\ot 1_{C(H)}\ot a\text{ for all }a\in C(G)\text{ and }\pi\circ\iota=\id.$$
We easily deduce that $\iota$ and $\rho$ are faithful.
\end{remark}

\begin{example} When $F$ equals $G$ i.e.  the inclusion $C(F)\subset C(G)$ is surjective then $C(\GG)$ exists: it is easy to see that $C(\GG):=C(H\times G)=C(H)\otm C(G)$ with $\iota_H(x)=x\ot 1_{C(G)}$, $\iota_F(a)=1_{C(H)}\ot a$ and $\rho(a)=1_B\ot 1_{C(H)}\ot a$ for all $x\in C(H)$ and $a\in C(G)$ satisfy the universal property.
\end{example}

\noindent We discuss the obvious limitations of such a construction and some of its properties in the following Proposition.

\begin{proposition}The following holds.
\begin{enumerate}
\item If $B$ is finite dimensional then $C(\GG)$ exists.
\item If $F$ is trivial, $G$ is non-trivial and $B$ is abelian then $C(\GG)$ exists if and only if $B$ is finite dimensional.
\item If $C(\GG)$ exists then $C(\GG)$ is generated, as a C*-algebra, by 
$$\iota_H(C(H))\cup\{(\omega\ot\id)(\rho(a))\,:\,a\in C(G) ,\,\omega\in B^*\}.$$
\end{enumerate}
\end{proposition}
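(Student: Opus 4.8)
I would dispose of (1) and (3) quickly and put the work into the ``only if'' half of (2), treating the three items in the order (1), (3), (2).

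\emph{Part (1).} Since $B$ is finite dimensional, fix a linear basis $(b_l)$ of $B$ with dual functionals $(b_l^\star)\subset B^*$; then a unital $*$-homomorphism $\rho\colon C(G)\to B\ot D$ is the same datum as the family of coefficients $\rho_l:=(b_l^\star\ot\id)\circ\rho\colon C(G)\to D$, subject to finitely many algebraic identities expressing that $\rho$ is a unital $*$-homomorphism. Thus $C(\GG)$ is a universal unital C*-algebra on the generating set consisting of (a copy of) $C(H)$, (a copy of) $C(F)$, and the elements $\rho_l(a)$ ($a\in C(G)$), subject to the listed relations. In any admissible representation these generators have norm bounded by $\|\cdot\|_{C(H)}$, $\|\cdot\|_{C(F)}$, and $\|b_l^\star\|\,\|\cdot\|_{C(G)}$ respectively ($*$-homomorphisms of C*-algebras are contractive), so the universal C*-seminorm is finite and $C(\GG)$ exists; finite dimensionality of $B$ is exactly what guarantees that the $\rho_l$ reassemble into a genuine map into the \emph{minimal} tensor product $B\ot C(\GG)$. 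The universal mapping property is then formal (alternatively one realises $C(\GG)$ as a sub-C*-algebra of a product over a set of representatives of admissible tuples).

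\emph{Part (3).} Let $C_0\subseteq C(\GG)$ be the sub-C*-algebra generated by $\iota_H(C(H))$ together with all $(\omega\ot\id)(\rho(a))$, $a\in C(G)$, $\omega\in B^*$. Applying any state $\omega$ of $B$ to the relation $\rho(f)=1_B\ot\iota_F(f)$ gives $\iota_F(f)\in C_0$ for $f\in C(F)$. Hence $C_0$ carries corestrictions of all three structure maps and $\rho(C(G))\subseteq B\ot C_0$ (immediate from the basis description of (1) when $B$ is finite dimensional — and in general from the characterisation of $B\ot C_0$ inside $B\ot C(\GG)$ by slice maps), so $(C_0,\iota_H,\iota_F,\rho)$ is again an admissible tuple. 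The universal property yields $\nu\colon C(\GG)\to C_0$ compatible with the structure; composing with the inclusion $C_0\hookrightarrow C(\GG)$ produces a structure-preserving endomorphism of $C(\GG)$, which equals $\id$ by the uniqueness clause, so $C_0=C(\GG)$.

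\emph{Part (2).} ``If'' is (1). For ``only if'', assume $B$ abelian and infinite dimensional, $G$ nontrivial, $F$ trivial, and suppose for contradiction that $C(\GG)$ exists. Write $B=C(Y)$ with $Y$ infinite compact metrizable, and pick pairwise distinct points $z_n\to z_\infty$ in $Y$ with $z_n\neq z_\infty$. Since $G$ is nontrivial, fix $a_0=a_0^*\in\Pol(G)\setminus\C1$ and a faithful representation $\pi\colon C(G)\to\Lcal(\Hcal)$, so $\pi(a_0)\notin\C1$; choose a unitary $v\in\Lcal(\Hcal)$ with $[v,\pi(a_0)]\neq 0$, set $c:=\|v\pi(a_0)v^*-\pi(a_0)\|>0$, and fix a norm-continuous path $t\mapsto v(t)$ in $\mathcal U(\Lcal(\Hcal))$ from $1$ to $v$ (the unitary group of $\Lcal(\Hcal)$ is norm connected). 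Let $C$ be the separable C*-algebra generated by $\pi(C(G))$ and $\{v(t)\}_{t}$. For each pair $m<m'$ I build an admissible tuple with target $C$: choose an open set $V\ni z_{m'}$ with $z_m,z_\infty\notin\overline V$ and a Urysohn function $h\colon Y\to[0,1]$ with $h(z_{m'})=1$ and $\operatorname{supp}h\subseteq V$, and define $\rho^{m,m'}\colon C(G)\to C(Y)\ot C=C(Y,C)$ by $\rho^{m,m'}(a)(y):=\Ad(v(h(y)))\,\pi(a)$ for $y\in V$ and $\rho^{m,m'}(a)(y):=\pi(a)$ otherwise. This is a unital $*$-homomorphism (the two formulas agree where $h=0$, so it is continuous in $y$), and with $\iota_H^{m,m'}:=\varepsilon_H(\cdot)1_C$, $\iota_F^{m,m'}:=\varepsilon_F(\cdot)1_C$ all the defining relations hold: the commutation relation $[(\id\ot\iota_H^{m,m'})(\beta(b)),\rho^{m,m'}(a)]=0$ reduces, by the counit axiom, to $[b\ot 1,\rho^{m,m'}(a)]=0$, which is automatic because $B$ is abelian, and $\rho^{m,m'}(f)=1_B\ot\iota_F^{m,m'}(f)$ is trivial since $F$ is trivial.

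By the universal property there is a contractive $\nu_{m,m'}\colon C(\GG)\to C$ with $(\id\ot\nu_{m,m'})\circ\rho=\rho^{m,m'}$. Setting $\xi_k:=(\mathrm{ev}_{z_k}\ot\id)(\rho(a_0))\in C(\GG)$, one gets $\nu_{m,m'}(\xi_m)=\pi(a_0)$ and $\nu_{m,m'}(\xi_{m'})=v\pi(a_0)v^*$, hence $\|\xi_m-\xi_{m'}\|\ge c$ for \emph{all} $m\neq m'$. But $\rho(a_0)\in B\ot C(\GG)=C(Y,C(\GG))$ is a norm-continuous function of $y$ and $z_n\to z_\infty$, so $(\xi_n)_n$ is Cauchy — a contradiction. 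Therefore $C(\GG)$ does not exist, which completes (2). \textbf{The main obstacle} is precisely this construction in (2): the naive attempt — putting a free copy of $C(G)$ over each of the points $z_m$ — does not give an \emph{admissible} tuple, since one cannot in general join the relevant representations by norm-continuous paths (e.g. for $G=\Z_2$ the two canonical projections of $C(G)\ast C(G)$ lie in distinct homotopy classes), so the candidate $\rho$ fails to be continuous on $Y$. Working inside $\Lcal(\Hcal)$, whose unitary group is norm connected, removes this obstruction and allows a genuinely continuous family over $Y$ whose values at $z_m$ and $z_{m'}$ remain a fixed distance apart — all that is needed to contradict continuity of $\rho(a_0)$.
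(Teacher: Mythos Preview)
Your argument is correct, and parts (1) and (3) are essentially the same as the paper's (the paper phrases (1) via the universal quantum homomorphism $C(G)\to B\ot\Ucal$ of \cite{FMP24} and then quotients $\Ucal*(C(H)\otm C(F))$, but this is equivalent to your direct universal-C*-algebra construction once $B$ is finite dimensional; for (3) both invoke uniqueness in the universal property).

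The genuine difference is in (2). The paper observes that when $F$ is trivial and $B$ is abelian, the subalgebra $\Ucal\subset C(\GG)$ generated by the slices of $\rho$ already satisfies the universal property of the quantum homomorphism algebra of \cite{FMP24}: given any $\pi\colon C(G)\to B\ot C$, one takes $\iota'_H=\varepsilon_H(\cdot)1_C$, the commutation relation becomes $[b\ot 1,\pi(a)]=0$ and is automatic by commutativity of $B$, and the universal property of $C(\GG)$ produces the required factorisation. Then \cite[Theorem D]{FMP24} forces $B$ finite dimensional. Your approach bypasses this external result entirely: you exhibit, for each pair of points $z_m\neq z_{m'}$ in the spectrum of $B$, a concrete admissible tuple in which the fibres of $\rho(a_0)$ over $z_m$ and $z_{m'}$ are a fixed positive distance apart, contradicting the norm continuity of $y\mapsto\rho(a_0)(y)$ forced by $\rho(a_0)\in C(Y)\ot C(\GG)=C(Y,C(\GG))$. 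The key technical point --- using a norm-continuous unitary path in $\Lcal(\Hcal)$ to interpolate between $\pi$ and $\Ad(v)\pi$ --- is exactly what makes your candidate $\rho^{m,m'}$ land in the \emph{minimal} tensor product, and you identify this correctly. Your route is more elementary and fully self-contained; the paper's is shorter but relies on the structural result from \cite{FMP24}.
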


\begin{proof}$(1)$. If $B$ is finite dimensional, it is known \cite{FMP24} that the universal quantum homomorphism $\rho\,:\, C(G)\rightarrow B\ot \Ucal$ exists. Then, the C*-algebra
$$C(\GG):=\Ucal*(C(H)\otm C(F))/I,$$
where $I$ is the closed two sided ideal generated by $$\{(\omega\ot\id)(\rho(a)\beta(b)-\beta(b)\rho(a))\,:\,\omega\in B^*,\,a\in C(G),\,b\in B\}$$
and by $\{(\omega\ot\id)(\rho(f)-1_B\ot f)\,:\,\omega\in B^*,\,f\in C(F)\}$ does the job.

\vspace{0.2cm}

\noindent$(2)$. Suppose that $C(\GG)$ exists and define $\Ucal\subset C(\GG)$ to be the C*-subalgebra of $C(\GG)$ generated by $\{(\omega\ot\id)(\rho(a))\,:\,a\in C(G) ,\,\omega\in B^*\}$ so that $\rho(C(G))\subset B\ot\Ucal$. We show below that $\rho\,:\, C(G)\rightarrow B\ot\Ucal$ is the universal quantum homomorphism from $C(G)$ to $B$ in the sense of \cite{FMP24}. Since ${\rm dim}(C(G))\geq 2$ \cite[Theorem D]{FMP24} will imply that $B$ is finite dimensional. Let $\pi\,:\,C(G)\rightarrow B\ot C$ be a unital $*$-homomorphism and define $\iota'_H:=\varepsilon_H(\cdot)1_C\,:\,C(H)\rightarrow C$. Note that $(\id\ot\iota'_H)(\beta(b))=b\ot 1_C$ and since $B$ is abelian we have $[(\id\ot\iota'_H)(\beta(b)),\pi(a)]=0$ for all $a\in C(G)$ and $b\in B$. It follows that there exists a unique unital $*$-homomorphism $\nu\,:\,C(\GG)\rightarrow C$ such that $(\id\ot\nu)\circ\rho=\pi$ (and $\nu\circ\iota_H=\iota'_H$). Defining $\widetilde{\pi}:=\nu\vert_{\Ucal}$ we have $(\id\ot\widetilde{\pi})\rho=\pi$ and the uniqueness of $\widetilde{\pi}$ is clear.

\vspace{0.2cm}

\noindent$(3)$. It is a direct consequence of the uniqueness in the universal property.\end{proof}

\noindent Motivated by the previous Proposition, we assume for the rest of this paper that $B$ is finite dimensional so that $C(\GG)$ exists. From Remark \ref{RmkIota} we may and will view $C(H),C(F)\subset C(\GG)$ with $\iota_H$, $\iota_F$ being the inclusions. Also, since the map $\iota\,:\,C(H)\otm C(F)\rightarrow C(\GG)$ is faithful we view $C(H)\otm C(F)=\overline{{\rm Span}}(C(H)C(F))\subset C(\GG)$. Hence, the relations become $[\rho(a),\beta(b)]=0$, $[x,f]=0$ and $\rho(f)=1_B\ot f$ for all $a\in C(G)$, $b\in B$, $x\in C(H)$ and $f\in C(F)$. Note that there is a unique unital $*$-homomorphism $\beta_G\,:\,C(G)\ot B\rightarrow B\ot C(\GG)$ satisfying $\beta_G(a\ot b)=\rho(a)\beta(b)$, for all $a\in C(G)$, $b\in B$.

\subsubsection{Comultiplication}

\noindent In the sequel, we assume that $\beta$ is a $\psi$-preserving action of $H$ on the finite dimensional C*-algebra $B$, where $\psi\in B^*$ is a faithful state and we denote by $u\in\Lcal(B)\ot C(H)\subset\Lcal(B)\ot C(\GG)$ the unitary representation of $H$ associated to the action $\beta\,:\, B\rightarrow B\ot C(H)$ as explained before Lemma \ref{LemRepAction}.

\begin{theorem}\label{ThmDefFreeWr}
There exists a unique unital $*$-homomorphism $\Delta\,:\,C(\GG)\rightarrow C(\GG)\ot C(\GG)$ such that $\Delta\vert_{C(H)\otm C(F)}=\Delta_{H\times F}$ and $(\id\ot\Delta)\rho=(\beta_G\ot\id)(\id\ot\rho)\Delta_G$. Moreover,
\begin{enumerate}
    \item $(C(\GG),\Delta)$ is a CQG and $C(H\times F) = C(H)\otm C(F) \subset C(\GG)$ is a dual quantum subgroup.
    \item The counit of $\GG$ is the unique character of $C(\GG)$ satisfying $$(\id\ot\varepsilon_\GG)\rho=\varepsilon_G(\cdot)1_B\text{ and }\varepsilon_\GG\vert_{C(H\times F)}=\varepsilon_{H\times F}.$$
    \item If $v\in\Lcal(H_v)\ot C(G)$ is a f.d. unitary representation of $G$ then,
    $$a(v):=(\id\ot(L_B\ot\id)\rho)(v)u_{23}\in\Lcal(H_v\ot B)\ot C(\GG)$$
    is a unitary representation of $\GG$ on $H_v\ot B$ and $\overline{a(v)}\simeq a(\overline{v})$.
    \item If $v\in \Lcal(H_v)\ot C(F)$ is a f.d. representation of $F$, seen as a representation of $G$ by the fact that $\Rep(F)$ is a full subcategory of $\Rep(G)$, then the representation $a(v)$ is the representation $v\ot u\in \Lcal(H_v\ot B)\ot C(H\times F),$ seen as a representation of $C(\GG)$ because we can see $\Rep(H\times F)$ as a full subcategory of $\Rep(\GG)$.
    \item Every irreducible representation of $\GG$ is equivalent to a subrepresentation of a tensor product of representations of the form $a(v)$, for $v\in\Irr(G)$, and irreducible representations of $H$. If the action $\beta$ is faithful, any irreducible of $G$ is equivalent to a subrepresentation of a tensor product of $a(v)$, for $v\in\Irr(G)$.
    \item For all $v,w,t\in\Rep(G)$ and all $S\in\Mor_G(v\ot w,t)$, the map
    $$(S\ot m)\Sigma_{23}\,:\,H_v\ot B\ot H_w\ot B\rightarrow H_t\ot B$$
    is in $\Mor_\GG(a(v)\ot a(w),a(t))$, where $\Sigma_{23}:H_v\ot B\ot H_w\ot B\rightarrow H_v\ot H_w\ot B\ot B$ is the flip map.
    \end{enumerate}
\end{theorem}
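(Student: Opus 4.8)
The plan is to single out the family $\{a(v):v\in\Rep(G)\}$ together with the representations of $H$ as the fundamental representations of $\GG$, and to read off all the assertions from the universal property of $C(\GG)$. I would first construct $\Delta$ by applying that universal property with $C=C(\GG)\ot C(\GG)$, $\iota'_H=(\iota_H\ot\iota_H)\Delta_H$, $\iota'_F=(\iota_F\ot\iota_F)\Delta_F$ and $\rho'=(\beta_G\ot\id)(\id\ot\rho)\Delta_G$. The relation $[\iota'_H(x),\iota'_F(f)]=0$ is immediate since $C(H)$ and $C(F)$ commute in $C(\GG)$ and $\Delta_H,\Delta_F$ have ranges in $C(H)\ot C(H)$, $C(F)\ot C(F)$; the relation $\rho'(f)=1_B\ot\iota'_F(f)$ for $f\in C(F)$ follows from $\Delta_G|_{C(F)}=\Delta_F$ and $\rho(f)=1_B\ot f$. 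The one relation requiring work is $[(\id\ot\iota'_H)\beta(b),\rho'(a)]=0$: rewriting $(\id\ot\iota'_H)\beta(b)=(\id\ot\iota_H\ot\iota_H)(\beta\ot\id)\beta(b)$ via coassociativity of $\beta$ and expanding $\rho'(a)$ via coassociativity of $\Delta_G$, one reduces it to finitely many instances of the defining relation $[\rho(a),\beta(b)]=0$ in $B\ot C(\GG)$. The universal property then yields $\Delta$ with $\Delta|_{C(H)\otm C(F)}=\Delta_{H\times F}$ and $(\id\ot\Delta)\rho=(\beta_G\ot\id)(\id\ot\rho)\Delta_G$, and coassociativity holds because $(\Delta\ot\id)\Delta$ and $(\id\ot\Delta)\Delta$ satisfy the hypotheses of the same universal property (they agree on $C(H\times F)$ since $H\times F$ is a CQG, and give the same map after composition with $\rho$ by coassociativity of $\Delta_G$).

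Next I would show that $a(v)=(\id\ot(L_B\ot\id)\rho)(v)\,u_{23}$ is a unitary representation and prove (6). Unitarity is clear, since $(L_B\ot\id)\rho$ is a unital $*$-homomorphism (hence sends the unitary $v$ to a unitary) and $u$ is unitary. To get $(\id\ot\Delta)a(v)=a(v)_{12}a(v)_{13}$ I would apply $\id\ot\Delta$ separately to $u_{23}$, using $\Delta|_{C(H)}=\Delta_H$ and that $u$ is a representation of $H$ (Lemma~\ref{LemRepAction}), and to $(\id\ot(L_B\ot\id)\rho)(v)$, using the defining identity $(\id\ot\Delta)\rho=(\beta_G\ot\id)(\id\ot\rho)\Delta_G$ together with the fact that $v$ is a representation of $G$; reassembling and reordering the $\Lcal(B)$-legs by means of $u(L_B(b)\ot 1)u^*=(L_B\ot\id)\beta(b)$ (Lemma~\ref{LemRepAction}, i.e.\ $[\rho,\beta]=0$ rewritten) gives the claim. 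Statement (6) follows from the same kind of leg-tracking computation, combining $S\in\Mor_G(v\ot w,t)$ with $m\in\Mor_H(u\ot u,u)$, $\eta\in\Mor_H(1,u)$ (Lemma~\ref{LemRepAction}) and $[\rho,\beta]=0$, the latter being what lets one move the $B$-factor of the first tensor leg past the representation attached to the second. \emph{I expect this step — establishing that $a(v)$ is a representation and proving (6) — to be the main obstacle}, since it is where all the structural relations must be used simultaneously and the bookkeeping of tensor legs is heaviest; these are the analogues, in the present generality, of the computations in \cite{FP16,TW18}.

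Using (6), I would build a solution of the conjugate equation for $(a(v),a(\overline v))$ from a solution $(s_v,s_{\overline v})$ for $(v,\overline v)$ and the solution $m^\star\eta$ for $(u,u)$ from Remark~\ref{RmkConjugate}, the required vectors in $\Mor_\GG(1,a(v)\ot a(\overline v))$ and $\Mor_\GG(1,a(\overline v)\ot a(v))$ coming from (6), its adjoint, and the identity $\eta^\star\in\Mor_\GG(a(1),1)$ (valid because $a(1)=u$ and $(\eta^\star\ot1)u=\eta^\star\ot1$); hence $a(\overline v)$ is a conjugate of $a(v)$. For $v\in\Rep(F)\subset\Rep(G)$ one has $\rho(f)=1_B\ot f$, so $(\id\ot(L_B\ot\id)\rho)(v)=v_{13}$ and $a(v)=v_{13}u_{23}$, which is exactly the representation $v\ot u$ of $H\times F$ seen in $\GG$; this is (4). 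Now let $\mathcal A\subset C(\GG)$ be the $*$-subalgebra generated by the coefficients of the $a(v)$, $v\in\Rep(G)$, and of the irreducible representations of $H$. It is dense (every $\rho$-coefficient is a coefficient of $a(v)u_{23}^*$, and $\Pol(H)$ is dense in $C(H)$), it is $\Delta$-invariant, and — using that $a(\overline v)$ is a conjugate of $a(v)$ and $\overline w\in\Irr(H)$ for $w\in\Irr(H)$ — it is spanned by matrix coefficients of finite-dimensional unitary corepresentations; the standard argument using unitarity of these corepresentations then gives the density conditions $\overline{\Delta(C(\GG))(1\ot C(\GG))}=\overline{(C(\GG)\ot1)\Delta(C(\GG))}=C(\GG)\ot C(\GG)$, so $(C(\GG),\Delta)$ is a compact quantum group with $\Pol(\GG)=\mathcal A$ (one also checks it is the maximal C*-algebra of $\GG$ by lifting $*$-representations of $\mathcal A$ through the universal property). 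Finally, $C(H\times F)\hookrightarrow C(\GG)$ is faithful (Remark~\ref{RmkIota}) and intertwines the comultiplications by construction, so it is a dual quantum subgroup by Proposition~\ref{dualqsg}; in particular $\Rep(H\times F)$ is a full subcategory of $\Rep(\GG)$, which with (4) yields its last assertion.

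For (2), I would apply the universal property with $C=\C$, $\iota'_H=\varepsilon_H$, $\iota'_F=\varepsilon_F$, $\rho'=\varepsilon_G(\cdot)1_B$; the relations hold because $(\id\ot\varepsilon_H)\beta=\id_B$, $\varepsilon_G(a)1_B$ is central in $B$, and $\varepsilon_G|_{C(F)}=\varepsilon_F$. This produces the unique character $\varepsilon_\GG$ with $(\id\ot\varepsilon_\GG)\rho=\varepsilon_G(\cdot)1_B$ and $\varepsilon_\GG|_{C(H\times F)}=\varepsilon_{H\times F}$, and $(\varepsilon_\GG\ot\id)\Delta=\id=(\id\ot\varepsilon_\GG)\Delta$ follows once more from uniqueness in the universal property, so $\varepsilon_\GG$ is the counit. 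For (5): since $a$ respects direct sums and subrepresentations and $a(v\ot w)\prec a(v)\ot a(w)$ by (6) (recall $mm^\star\in{\rm End}(u)$ is invertible), $\mathcal A=\Pol(\GG)$ is linearly spanned by coefficients of tensor products of the $a(v)$ ($v\in\Irr(G)$) and of the irreducibles of $H$; hence every irreducible of $\GG$ embeds into such a tensor product. When $\beta$ is faithful, $a(1)=u$ and $C(H)$ is generated by the coefficients of $u$, so the irreducibles of $H$ are already absorbed and the tensor product may be taken over $v\in\Irr(G)$ only.
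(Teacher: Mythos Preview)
Your proposal is correct and follows essentially the same route as the paper: construct $\Delta$ via the universal property, verify $a(v)$ is a representation and prove (6) by the same leg-tracking (using $u(L_B(b)\ot1)u^*=(L_B\ot\id)\beta(b)$ to reorder the $B$-legs), build duality morphisms $S_v=\Sigma_{23}(s_v\ot m^\star)\eta$, deduce (4), (2), (5). The only noticeable difference is in how you certify that $(C(\GG),\Delta)$ is a CQG: the paper invokes \cite[Definition~1.1 and Remark~1]{FP16} and checks non-degeneracy of $S_{\overline v}$ by writing out its explicit expansion in the basis $(b^\kappa_{jk})$, whereas you instead argue through the Woronowicz density conditions, using that $\mathcal A$ is a dense $*$-subalgebra spanned by coefficients of unitary corepresentations admitting conjugates. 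Both arguments are standard and equivalent; the paper's explicit formula has the small bonus that it verifies the conjugate equations for $(S_v,S_{\overline v})$ by inspection, while in your approach you would need to unfold that verification (it reduces cleanly to the conjugate equations for $(s_v,s_{\overline v})$ together with $(\eta^\star m\ot\id_B)(\id_B\ot m^\star\eta)=\id_B$).
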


\begin{proof}
Let $\iota':=\Delta_{H\times F}\,:\, C(H\times F)\rightarrow C(\GG)\ot C(\GG)$, from which we get $\iota'_H=\Delta_H$ and $\iota'_F=\Delta_F$ with commuting images and $\rho':=(\beta_G\ot\id)(\id\ot\rho)\Delta_G\,:\,C(G)\rightarrow B\ot C(\GG)\ot C(\GG)$. For $a\in \Pol(G)$ one has $\rho'(a)=\sum (\beta_G\ot\id)(a_{(1)}\ot\rho(a_{(2)}))=\sum(\rho(a_{(1)})\ot 1)(\beta\ot\id)(\rho(a_{(2)}))$, where $\Delta_G(a)=\sum a_{(1)}\ot a_{(2)}$. Hence,
\begin{eqnarray*}
\rho'(a)(\id\ot\iota'_H)(\beta(b))&=&\rho'(a)(\id\ot\Delta_{H})(\beta(b))=\rho'(a)(\beta\ot\id)(\beta(b))\\
&=&\sum(\rho(a_{(1)})\ot 1)(\beta\ot\id)(\rho(a_{(2)}))(\beta\ot\id)(\beta(b))\\
&=&\sum(\rho(a_{(1)})\ot 1)(\beta\ot\id)(\rho(a_{(2)})\beta(b))\\
&=&\sum(\rho(a_{(1)})\ot 1)(\beta\ot\id)(\beta(b))(\beta\ot\id)(\rho(a_{(2)}))\\
&=&(\beta\ot\id)(\beta(b))\sum(\rho(a_{(1)})\ot 1)(\beta\ot\id)(\rho(a_{(2)}))\\
&=&(\id\ot\iota'_H)(\beta(b))\rho'(a).
\end{eqnarray*}

\noindent We also have that for $f\in \Pol(F)$, $\Delta_G(f)=\Delta_F(f)=\sum f_{(1)}\ot f_{(2)}\in\Pol(F)\ot\Pol(F)$ and:
\begin{eqnarray*}
\rho'(f) &=&(\beta_G\ot\id)(\id\ot\rho)\Delta_G(f)
        = \sum(\beta_G\ot\id)\left(f_{(1)}\ot\rho(f_{(2)})\right)\\
        &=&\sum(\beta_G\ot\id)(f_{(1)}\ot 1_B\ot f_{(2)})=\sum\rho(f_{(1)})\ot f_{(2)}=\sum 1_B\ot f_{(1)}\ot f_{(2)}\\
        &=&1_B\ot\Delta_F(f)=1_B\ot\iota'_F(f).
\end{eqnarray*}

\noindent The existence of $\Delta\,:\, C(\GG)\rightarrow C(\GG)\ot C(\GG)$ now follows from the universal property.

\vspace{0.2cm}

\noindent Let us show that $(\id_{\Lcal(H_v\ot B)}\ot\Delta)(a(v))=a(v)_{12}a(v)_{13}$ $\forall v\in\Rep(G)$.
\begin{eqnarray*}
&&(\id\ot\Delta)[(\id\ot(L_B\ot\id)\rho)(v)]\\
&=&(\id_{\Lcal(H_v)}\ot L_B\ot\id_{C(\GG)}\ot\id_{C(\GG)})(\id_{\Lcal(H_v)}\ot(\id_B\ot\Delta)\rho)(v)\\
&=&(\id\ot L_B\ot\id\ot\id)(\id\ot\beta_G\ot\id)(\id\ot\id\ot\rho)(\id\ot\Delta_G)(v)\\
&=&(\id\ot L_B\ot\id\ot\id)(\id\ot\beta_G\ot\id)(\id\ot\id\ot\rho)(v_{12}v_{13})\\
&=&(\id\ot L_B\ot\id\ot\id)(\id\ot\beta_G\ot\id)(v_{12}[(\id\ot\rho)(v)]_{134})\\
&=&(\id\ot L_B\ot\id\ot\id)\bigg([(\id\ot\rho)(v)]_{123}(\id\ot\beta\ot\id)((\id\ot\rho)(v))\bigg).
\end{eqnarray*}
Since $u$ is a representation of $H$ one has $(\id_{\Lcal(B)}\ot\Delta)(u)=(\id_{\Lcal(B)}\ot\Delta_H)(u)=u_{12}u_{13}$ hence,
$$
(\id_{\Lcal(H_v\ot B)}\ot\Delta)(a(v))=(\id_{\Lcal(H_v\ot B)}\ot\Delta)[(\id\ot(L_B\ot\id)\rho)(v)](\id_{\Lcal(H_v\ot B)}\ot\Delta)(u_{23})$$
$$
=(\id\ot L_B\ot\id\ot\id)\bigg([(\id\ot\rho)(v)]_{123}(\id\ot\beta\ot\id)((\id\ot\rho)(v))\bigg)u_{23}u_{24}.$$
\noindent On the other hand, viewing $a(v)_{12}a(v)_{13}$ as a $4$ legged object in $\Lcal(H_v)\ot\Lcal(B)\ot C(\GG)\ot C(\GG)$, and using the relation $u(L_B(b)\ot 1)u^*=(L_B\ot\id)(\beta(b))$, we get:
\begin{eqnarray*}
&&a(v)_{12}a(v)_{13}\\
&=&(\id\ot L_B\ot\id\ot\id)\left([(\id\ot\rho)(v)]_{123}\right)u_{23}(\id\ot L_B\ot\id\ot\id)\left([(\id\ot\rho)(v)]_{124}\right)u_{24}\\
&=&(\id\ot L_B\ot\id\ot\id)\bigg([(\id\ot\rho)(v)]_{123}(\id\ot\beta\ot\id)((\id\ot\rho)(v))\bigg)u_{23}u_{24}.
\end{eqnarray*}
This shows that $(\id_{\Lcal(H_v\ot B)}\ot\Delta)(a(v))=a(v)_{12}a(v)_{13}$, $\forall v\in\Rep(G)$.

\vspace{0.2cm}

\noindent We now prove $(6)$. Let $v,w\in\Rep(G)$. Viewing $(\Sigma_{23}\ot 1)a(v)_{13}a(w)_{23}(\Sigma_{23}\ot 1)$ as a $5$ legged object in $\Lcal(H_v)\ot\Lcal(H_w)\ot\Lcal(B)\ot\Lcal(B)\ot C(\GG)$ one has:
$$(\Sigma_{23}\ot 1)a(v)_{13}a(w)_{23}(\Sigma_{23}\ot 1)=[(\id\ot(L_B\ot\id)\rho)(v)]_{135}u_{35}[(\id\ot(L_B\ot\id)\rho)(v)]_{245}u_{45}$$

\noindent Hence, for $a,b\in B$, we get, with $Y:=(\Sigma_{23}\ot 1)a(v)_{13}a(w)_{23}(1\ot a\ot 1\ot b\ot 1)$,
\begin{eqnarray*}
Y&=&[(\id\ot(L_B\ot\id)\rho)(v)]_{135}u_{35}[(\id\ot\rho)(v)]_{245}(1\ot 1\ot a\ot\beta(b))\\
&=&[(\id\ot(L_B\ot\id)\rho)(v)]_{135}u_{35}(1\ot 1\ot a\ot\beta(b))[(\id\ot\rho)(v)]_{245}\\
&=&[(\id\ot\rho)(v)]_{135}\beta(a)_{35}\beta(b)_{45}[(\id\ot\rho)(v)]_{245}.
\end{eqnarray*}

\noindent It follows that, for any $S\in\Mor_G(v\ot w,t)$ one has:
\begin{eqnarray*}
(S\ot m\ot 1)Y&=&(S\ot\id_B\ot 1)[(\id\ot\rho)(v)]_{134}(1\ot1\ot\beta(a)\beta(b))[(\id\ot\rho)(w)]_{234}\\
&=&(S\ot\id_B\ot 1)[(\id\ot\rho)(v)]_{134}[(\id\ot\rho)(w)]_{234}(1\ot1\ot\beta(ab))\\
&=&(S\ot\id_B\ot 1)(\id\ot\rho)(v_{13}w_{23})(1\ot1\ot\beta(ab))\\
&=&(\id\ot\rho)(t)(S\ot\id_B\ot1)(1\ot1\ot\beta(ab))\\
&=&(\id\ot\rho)(t)(S\ot\id_B\ot1)u_{34}(1\ot 1\ot ab\ot 1)\\
&=&(\id\ot\rho)(t)u_{23}(S\ot m\ot1)(1\ot 1\ot a\ot b\ot 1)\\
&=&a(t)((S\ot m)\Sigma_{23})\ot 1)(1\ot a\ot 1\ot b\ot 1)
\end{eqnarray*}
\noindent Since this is true for all $a,b\in B$, it follows that
$$((S\ot m)\Sigma_{23}\ot 1)a(v)_{13}a(w)_{23}=a(t)((S\ot m)\Sigma_{23}\ot 1).$$

\noindent To show that $(C(\GG),\Delta)$ is a CQG, we will use the version \cite[Definition 1.1]{FP16} of the definition of a CQG. We already checked the second condition of \cite[Definition 1.1]{FP16}. Let us now check the first condition. Let $\mathcal{X}$ be the set of coefficients of $a(v)$, for $v\in\Irr(G)$. It suffices to show that the unital $*$-subalgebra of $C(\GG)$ generated by $\mathcal{X}$ and $\Pol(H)$ is dense in $C(\GG)$. For $\omega\in B^*$ define $\omega_0\in\Lcal(B)^*$, $\omega_0(T):=\omega(T1_B)$. Since $u(1_B\ot1)=1_B\ot 1$ we have,
\begin{eqnarray*}
(\id\ot\omega_0\ot\id)(a(v))&=&(\id\ot\omega\ot\id)\left((\id\ot(L_B\ot\id)\rho)(v)u_{23}(1\ot 1_B\ot 1)\right)\\
&=&(\id\ot\omega\ot\id)\left((\id\ot(L_B\ot\id)\rho)(v)(1\ot 1_B\ot 1)\right)\\
&=&(\id\ot\omega\ot\id)\left((\id\ot\rho)(v)\right)=(\id\ot(\omega\ot\id)\rho)(v).
\end{eqnarray*}
It follows that, for any $v\in\Rep(G)$, $(\omega\ot\id)\rho(v_{ij})\in\mathcal{X}$, for any coefficient $v_{ij}$ of $v$ and any $\omega\in B^*$. Hence, $\{(\omega\ot\id)\rho(a)\,:\,a\in\Pol(G),\,\omega\in B^*\}\subset{\rm Span}(\mathcal{X})$. Moreover, since $a(1)=u$, $\mathcal{X}$ also contains the coefficients of $u$. So the $*$-algebra generated by $\mathcal{X}$ is dense in $C(\GG)$ whenever the action $\beta$ is faithful. In general, the $*$-algebra generated by $\mathcal{X}$ and $\Pol(H)$ is dense in $C(\GG)$. It also implies $(5)$, by the general theory of CQG, once we know that $(C(\GG),\Delta)$ is a CQG. To conclude the proof of $(1)$ we only have to check the last condition of \cite[Definition 1.1]{FP16}. To do so, we use \cite[Remark 1]{FP16}. Let, for $v\in\Rep(G)$, $s_v\in\Mor_G(1,v\ot\overline{v})$ and $s_{\overline{v}}\in\Mor_G(1,\overline{v}\ot v)$ be a pair solving the conjugate equation for $(v,\overline{v})$. Since $\eta\in\Mor_H(1,u)$, $a(1)=u$, assertion $(6)$ gives:
$$S_v:=\Sigma_{23}(s_v\ot m^\star)\eta\in\Mor(1,a(v)\ot a(\overline{v}))\quad\text{and}\quad S_{\overline{v}}:=\Sigma_{23}(s_{\overline{v}}\ot m^\star)\eta\in\Mor(1,a({\overline{v}})\ot a(v)).$$
Suppose that the pair $(s_v,s_{\overline{v}})$ is standard. To show that $(C(\GG),\Delta)$ is a CQG it remains to prove that $S_{\overline{v}}$ is non-degenerated in the sense of \cite[Remark 1]{FP16}. Writing, as in Section \ref{SectionCQG}, $s_{\overline{v}}=\sum_iQ_{v,i}^{-\frac{1}{2}}e_i^{\overline{v}}\ot e_i^{v}$ we find, as in the proof of \cite[Proposition 2.7]{FP16}, and using the notations introduced at the end of Section \ref{SectionQAut},
$$S_{\overline{v}}=\sum_{\kappa,i,j,k}\sqrt{\frac{\lambda_{\kappa,j}}{Q_{v,i}\lambda_{\kappa,k}}}e^{\overline{v}}_i\ot b^\kappa_{jk}\ot e^v_i\ot b^\kappa_{kj}.$$
From this formula, it is clear that $S_{\overline{v}}$ is non-degenerated so $(C(\GG),\Delta)$ is a CQG. The same computation gives $S_v=\sum_{\kappa,i,j,k}\sqrt{\frac{Q_{v,i}\lambda_{\kappa,j}}{\lambda_{\kappa,k}}}e^v_i\ot b^\kappa_{jk}\ot e^{\overline{v}}_i\ot b^\kappa_{kj}$. From these formulas, one checks easily that the pair $(S_v,S_{\overline{v}})$ solves the conjugate equation for $(a(v),a(\overline{v}))$ so $\overline{a(v)}\simeq a(\overline{v})$ for all $v\in\Rep(G)$. 

\vspace{0.2cm}

\noindent Note that statement  $(4)$ is an easy consequence of the fact that $\rho(f)=1_B\ot f$ for all $f\in C(F)$. Finally, the existence and uniqueness of the character $\varepsilon_\GG$ satisfying the properties given in $(2)$ is a direct consequence of the universal property of $C(\GG)$. The fact that it is the counit of $\GG$ follows from the definition of $\Delta$ and $\varepsilon_\GG$.\end{proof}

\begin{definition} The CQG $\GG$ constructed above is called \textit{the generalized free wreath product} of $G$ and $(H,\beta)$, amalgamated on $F$, and denoted by $G\wr_{*,\beta,F}H$. When $F$ is the trivial group, we call it the generalized free wreath product of $G$ and $(H,\beta)$, denoted $G\wr_{*,\beta}H$.
\end{definition}

\noindent Recall that the C*-algebra $B$ is decomposed as a finite sum of matrix algebras: $$ B \simeq \bigoplus_{\kappa=1}^K M_{N_\kappa}(\C). $$ We also denote by $1_\kappa$ the unit of the block $\kappa$ so $1_B = \sum_\kappa 1_\kappa$.

\begin{remark}\label{RmkFwp}The following statements are easy to check.
\begin{enumerate}
\item If $F$ is equal to $G$ i.e. the inclusion $C(F)\subset C(G)$ is surjective, then $G\wr_{*,\beta,F}H \simeq G \times H$.
    \item The canonical surjection $\pi_H:=(\id\ot\varepsilon_G)\pi\,:\,C(\GG)\rightarrow C(H)$, where $\pi$ is constructed in Remark \ref{RmkIota} intertwines the comultiplications so $H$ is a compact quantum subgroup of $\GG$. Hence $H$ is both a quantum subgroup and a dual quantum subgroup of $\GG$.
    \item There exists a unique unital $*$-homomorphism $\pi_G\,:\, C(\GG)\rightarrow C(G)^{\ast K}$ such that, for all $a\in C(G)$, $(\id\ot\pi_G)\rho(a)= \sum_{\kappa=1}^K 1_\kappa \ot a$ and $\pi_G\vert_{C(H)}=\varepsilon_H(\cdot)1$. Moreover, $\pi_G$ is easily seen to be surjective and intertwines the comultiplications. Hence $G^{\ast K}$ is a compact quantum subgroup of $\GG$.
    \item If $\beta$ is the trivial action $B\rightarrow B\ot C(H)$ i.e. $\beta(b)=b\ot 1$ then: $$G\wr_{*,\beta,F}H=(G^{\ast_F K}) \underset{F}{*} (H\times F).$$
    \item  Suppose that $\beta_k\,:\, B_k\rightarrow B_k\ot C(H_k)$, $k=1,2$, two $\psi_k$-preserving actions, are \textit{isomorphic} in the sense that there exist two isomorphisms $\pi\,:\, B_1\rightarrow B_2$ and $\varphi\,:\, H_1\rightarrow H_2$ such that $\psi_2\circ\pi=\psi_1$ and $\beta_2\circ\pi=(\pi\ot\varphi)\beta_1$. Then, $$G\wr_{*,\beta_1,F}H_1\simeq G\wr_{*,\beta_2,F}H_2.$$
\item If we have a pair of dual quantum subgroups of $G$ , $\C 1_G \subset C(F_1) \subset C(F_2) \subset C(G)$, then we get surjective unital $*$-homomorphisms: $$ C(G\wr_{\ast,\beta}H) \rightarrow C(G\wr_{\ast,\beta,F_1} H) \rightarrow C(G\wr_{\ast,\beta,F_2} H) \rightarrow C(G\wr_{\ast,\beta,G} H) \simeq C(G)\otm C(H),$$ all of these morphisms intertwining the comultiplications. The last isomorphism is the one from $(1)$, in the case of the amalgamation on $G$ itself.
\end{enumerate}
\end{remark}

\noindent The following Proposition, which can be viewed as a generalization of Remark \ref{RmkFwp} $(4)$, shows that, up to an amalgamated free product, the action $\beta$ can always be assumed to be faithful.

\begin{proposition}\label{Propfaithful}
Denoting by $H_0$ the dual quantum subgroup of $H$ generated by the coefficients of $u$, there is an isomorphism $G\wr_{*,\beta,F}H \simeq (G\wr_{*,\beta,F}H_0) \underset{H_0 \times F}{\ast}(H\times F)$.
\end{proposition}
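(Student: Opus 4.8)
The plan is to establish the isomorphism by checking that the right-hand side, $P := (G\wr_{*,\beta,F}H_0) \underset{H_0 \times F}{\ast}(H\times F)$, satisfies the universal property that characterizes $G\wr_{*,\beta,F}H$. First I would set up the relevant inclusions: by Remark~\ref{RmkFwp}(2), $H_0$ is both a quantum subgroup and a dual quantum subgroup of $\GG_0 := G\wr_{*,\beta,F}H_0$, and (since $\beta$ restricted to $H_0$ is faithful by construction of $H_0$) $H_0 \times F$ sits inside $\GG_0$ as a dual quantum subgroup; it also sits inside $H\times F$ as a dual quantum subgroup because $H_0 \subset H$ is a dual quantum subgroup (the coefficients of $u$ generate a comultiplication-intertwining subalgebra). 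Thus the amalgamated free product $C(P) = C(\GG_0)\underset{C(H_0)\otm C(F)}{*}C(H)\otm C(F)$ makes sense, and by the universal property of the full amalgamated free product of C*-algebras we get canonical faithful unital $*$-homomorphisms $C(\GG_0) \hookrightarrow C(P)$ and $C(H)\otm C(F) \hookrightarrow C(P)$ agreeing on $C(H_0)\otm C(F)$. The comultiplication on $C(P)$ is obtained from the comultiplications of the two factors via the universal property, exactly as in Remark~\ref{RmkFwp}(4).

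The core of the argument is then to verify that $C(P)$, equipped with the obvious structure maps, has the universal property defining $C(G\wr_{*,\beta,F}H)$. The candidate maps are: $\iota_H : C(H) \to C(P)$ the inclusion of the second factor's $C(H)$ part; $\iota_F : C(F) \to C(P)$ the common copy of $C(F)$; and $\rho : C(G) \to B\ot C(P)$ the composition of $\rho_0 : C(G) \to B\ot C(\GG_0)$ (the structure map of $\GG_0$) with $\id_B \ot (C(\GG_0)\hookrightarrow C(P))$. One checks the three defining relations: $[\iota_H(x),\iota_F(f)]=0$ holds because $x,f$ already commute inside $C(H)\otm C(F)$; $\rho(f)=1_B\ot\iota_F(f)$ holds because it holds in $C(\GG_0)$ and the inclusion is a $*$-homomorphism; and the key relation $[(\id\ot\iota_H)(\beta(b)),\rho(a)]=0$ requires noting that $\beta(b) \in B\ot C(H_0) \subset B\ot C(H)$ — this is precisely the point of passing to $H_0$ — so $(\id\ot\iota_H)(\beta(b))$ actually lies in $B\ot (C(H_0)\otm C(F)) \subset B\ot C(\GG_0)$ (identified via the common subalgebra in the amalgamated free product), and there it commutes with $\rho(a)=\rho_0(a)$ by the defining relation of $\GG_0$. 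Conversely, given any target $C$ with structure maps $\iota'_H, \iota'_F, \rho'$, I would produce a $*$-homomorphism $C(P)\to C$: the restriction $\iota'_H|_{C(H_0)}$ together with $\iota'_F$ and $\rho'$ satisfies the defining relations of $\GG_0$ (again using $\beta(B)\subset B\ot C(H_0)$), hence yields $\nu_0 : C(\GG_0)\to C$; this $\nu_0$ agrees with $\iota'_H\cdot\iota'_F$ on $C(H_0)\otm C(F)$, so by the universal property of the amalgamated free product $\nu_0$ and the map $C(H)\otm C(F)\to C$ (built from $\iota'_H,\iota'_F$) glue to a unique $\nu : C(P)\to C$ with the required properties. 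Uniqueness of $\nu$ follows from uniqueness in both universal properties.

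I would then observe that the resulting isomorphism $C(G\wr_{*,\beta,F}H) \simeq C(P)$ intertwines the comultiplications, since both comultiplications are uniquely determined by their action on the generators $\iota_H(C(H))$, $\iota_F(C(F))$ and $\rho(C(G))$ via the formulas in Theorem~\ref{ThmDefFreeWr}, and these formulas are matched under the isomorphism. The main obstacle — really the only subtle point — is the bookkeeping around the amalgamation: one must be careful that the two copies of $C(H_0)\otm C(F)$ (one coming from $\GG_0$, one from $H\times F$) are genuinely identified compatibly, i.e. that the inclusion $C(H_0)\otm C(F)\hookrightarrow C(\GG_0)$ given by Theorem~\ref{ThmDefFreeWr}(1) and Remark~\ref{RmkFwp}(2) is compatible with the inclusion $C(H_0)\otm C(F)\hookrightarrow C(H)\otm C(F)$, so that the full amalgamated free product is the right object; this is a routine but necessary check using Proposition~\ref{dualqsg} and the faithfulness statements in Remark~\ref{RmkIota}. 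Everything else is a direct diagram chase through the two universal properties.
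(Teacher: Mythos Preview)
Your proposal is correct and follows essentially the same approach as the paper. The paper constructs the two mutually inverse $*$-homomorphisms $\pi\,:\,C(\GG)\to C(\Sigma)$ and $\pi^{-1}\,:\,C(\Sigma)\to C(\GG)$ explicitly via the universal properties of $C(\GG)$, $C(\GG_0)$ and the amalgamated free product, whereas you phrase the argument as verifying that $C(P)$ satisfies the defining universal property of $C(G\wr_{*,\beta,F}H)$; unpacking your verification produces exactly the same two maps, and the crucial observation in both versions is that $\beta(B)\subset B\ot C(H_0)$ so the commutation relation descends to $\GG_0$.
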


\begin{proof}
Let us denote by $\rho_0$ the map defining $\GG_0:=G\wr_{*,\beta,F}H_0$, $\rho$ the one for $\GG:=G\wr_{*,\beta,F}H$ and view $C(H_0\times F)\subset C(\GG_0)$ and $C(H\times F)\subset C(\GG)$. Define $\Sigma:=\GG_0\underset{H_0 \times F}{\ast}(H\times F)$ and view $C(\GG_0),C(H\times F)\subset C(\Sigma)$. By the universal property of $C(\GG)$, there is a unique unital $*$-homomorphism $\pi\,:\, C(\GG)\rightarrow C(\Sigma)$ such that:
$$(\id\ot\pi)\rho=\rho_0\,:\, C(G)\rightarrow B\ot C(\GG_0)\subset B\ot C(\Sigma)\text{ and }\pi\vert_{C(H\times F)}\text{ is the inclusion}.$$
It is easy to check that $\pi$ intertwines the comultiplication. Moreover, the universal property of $C(\GG_0)$ gives a unique unital $*$-homomorphism $\pi_0\,:\,C(\GG_0)\rightarrow C(\GG)$ such that $(\id\ot\pi_0)\rho_0=\rho$ and $\pi_0\vert_{C(H_0\times F)}$ is the inclusion $C(H_0\times F)\subset C(H\times F)\subset C(\GG)$. Now, by the universal property of the amalgamated free product $C(\Sigma)=C(\GG_0)\underset{C(H_0\times F)}{*}C(H\times F)$, there is a unique unital $*$-homomorphism $\pi^{-1}\,:\, C(\Sigma)\rightarrow C(\GG)$ such that $\pi^{-1}\vert_{C(\GG_0)}=\pi_0$ and $\pi\vert_{C(H\times F)}$ is the inclusion $C(H\times F)\subset C(\GG)$. It is clear that $\pi^{-1}$ is the inverse of $\pi$.\end{proof}

\subsection{Links with previous constructions} In this subsection we explore some more explicit examples and we make links with previously known constructions.

\subsubsection{Generalizations of wreath products}

The semi-direct product quantum group is defined and studied in details in \cite{Wa19}. Let us recall below the basic facts about this construction.

\vspace{0.2cm}

\noindent Let $G$ be a compact quantum group and $\Lambda$ a finite group acting on $C(G)$ by automorphisms of $G$, meaning that we have a group homomorphism $\alpha\,:\,\Lambda\rightarrow{\rm Aut}(C(G))$ such that $\Delta_G\circ\alpha_g=(\alpha_g\ot\alpha_g)\circ\Delta_G$ for all $g\in\Lambda$. Define the C*-algebra $C(G\rtimes\Lambda)=C(G)\otimes C(\Lambda)$ with the comultiplication $\Delta\,:\,C(G\rtimes\Lambda)\rightarrow C(G\rtimes\Lambda)\otimes C(G\rtimes\Lambda)$ such that:
$$\Delta(a\ot\delta_r)=\sum_{s\in\Lambda}\left[(\id\ot\alpha_{s^{-1}})(\Delta_G(a))\right]_{13}(1\ot\delta_s\ot 1\ot\delta_{s^{-1}r}).$$
In particular, the inclusion $C(\Lambda)\subset C(G\rtimes\Lambda)\,:\,x\mapsto 1\ot x$ preserves the comultiplications. It is shown in \cite{Wa19} that the pair $(C(G\rtimes\Lambda),\Delta)$ is a compact quantum group (in its maximal version).

\vspace{0.2cm}

\noindent Let $\Lambda\curvearrowright X$ be any action of a finite group $\Lambda$ on a finite set $X$, denoted by $\gamma\cdot x$, for $\gamma\in\Lambda$ and $x\in X$. We get an action $\alpha\,:\,\Lambda\rightarrow{\rm Aut}(G^{*_FX})$ of $\Lambda$ on $G^{*_FX}$ by quantum automorphisms of $G^{*_FX}$ defined by $\alpha_\gamma\circ\nu_x=\nu_{\gamma\cdot x}$, where $\nu_x\,:\, C(G)\rightarrow C(G^{*_F X})=\underset{x\in X, C(F)}{*}C(G)$ is the $x^{th}$ copy of $C(G)$ and we can consider the semi-direct product quantum group $G^{*_F X}\rtimes\Lambda$.

\vspace{0.2cm}

\noindent Fix a finite dimensional C*-algebra $B$ with a faithful state $\omega\in B^*$ and denote $$B^X:=B\oplus\dots\oplus B\quad \vert X\vert\text{-times}.$$ Define the faithful state $\psi\in (B^X)^*$ by $\psi(b):=\frac{1}{\vert X\vert}\sum_{x\in X}\omega(\chi_x(b))$, for all $b\in B^X$, where $\chi_x\,:\, B^X\rightarrow B$ is the canonical projection on the $x$-copy of $B$ in $B^X$. Consider now the unique action $\beta\,:\,B^X\rightarrow B^X\ot C(\Lambda)$ of the quantum group $C(\Lambda)$ on the finite dimensional unital C*-algebra $B^X$ satisfying, for all $x\in X$ and $b\in B^X$, $$(\chi_x\ot\id)\beta(b)=\sum_{\gamma\in\Lambda}\chi_{\gamma^{-1}\cdot x}(b)\ot\delta_\gamma.$$
Note that $\beta$ preserves the faithful state $\psi$ on $B^X$. In the following Proposition, $K$ denotes the number of matrix blocks in the decomposition $B=\bigoplus_{\kappa=1}^K M_{N_\kappa}(\C)$. Consider the matrix units $e^\kappa_{ij}\in B$ and write $e^{x,\kappa}_{ij}\in B^X$ for the matrix unit $e^\kappa_{ij}\in B$ viewed in the $x$-copy of $B$ in $B^X$. Define $e_{x,\kappa}:=\sum_re^{x,\kappa}_{rr}\in B^X$.

\begin{proposition}\label{PropSemiDirect}
If $\Lambda\curvearrowright X$ is free then there is a canonical isomorphism $$G\wr_{*,\beta,F}\Lambda\simeq G^{*_F X\times K}\rtimes\Lambda,$$
where $X\times K:=X\times\{1,\dots,K\}$ and the action $\Lambda\curvearrowright X\times K$ is given by $\gamma\cdot(x,\kappa):=(\gamma\cdot x,\kappa)$.
\end{proposition}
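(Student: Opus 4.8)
The plan is to build the isomorphism $G\wr_{*,\beta,F}\Lambda\simeq G^{*_F X\times K}\rtimes\Lambda$ by exhibiting mutually inverse $*$-homomorphisms at the level of the maximal C*-algebras, using the two universal properties at our disposal: the universal property of $C(G\wr_{*,\beta,F}\Lambda)$ spelled out in Section \ref{sectiongen}, and the universal property of the semi-direct product $C(G^{*_F X\times K}\rtimes\Lambda) = C(G^{*_F X\times K})\ot C(\Lambda)$ together with that of the amalgamated free product $C(G^{*_F X\times K})=\underset{(x,\kappa)\in X\times K,\,C(F)}{*}C(G)$. Throughout I will write $\nu_{x,\kappa}\,:\,C(G)\rightarrow C(G^{*_F X\times K})$ for the $(x,\kappa)$-th copy and $\delta_\gamma$ for the canonical generators of $C(\Lambda)$.

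First I would construct the map $\Phi\,:\,C(G\wr_{*,\beta,F}\Lambda)\rightarrow C(G^{*_F X\times K}\rtimes\Lambda)$. Since $\Lambda$ is a group, the action $\beta\,:\,B^X\rightarrow B^X\ot C(\Lambda)$ from the preamble to the Proposition is given on matrix units by $\beta(e^{x,\kappa}_{ij})=\sum_{\gamma\in\Lambda}e^{\gamma\cdot x,\kappa}_{ij}\ot\delta_\gamma$; note $\beta$ simply permutes the $X$-coordinates and fixes the block index $\kappa$ and the matrix indices $i,j$. I must produce $\iota'_H\,:\,C(\Lambda)\rightarrow C(G^{*_F X\times K}\rtimes\Lambda)$, $\iota'_F\,:\,C(F)\rightarrow C(G^{*_F X\times K}\rtimes\Lambda)$ and $\rho'\,:\,C(G)\rightarrow B^X\ot C(G^{*_F X\times K}\rtimes\Lambda)$ satisfying the three relations of the universal property. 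Take $\iota'_H(\delta_\gamma):=1\ot\delta_\gamma$; take $\iota'_F:=$ (the common copy of $C(F)$ inside $C(G^{*_F X\times K})$) $\ot\,1$; and set
$$\rho'(a):=\sum_{x\in X,\ \kappa=1}^{K}\ \sum_{i=1}^{N_\kappa} e^{x,\kappa}_{ii}\ot\big(\nu_{x,\kappa}(a)\ot 1\big)\qquad\text{for }a\in C(G),$$
i.e. $\rho'$ sends $a$ to the "diagonal" element whose $(x,\kappa)$-block is the scalar-block $\nu_{x,\kappa}(a)\cdot 1_{M_{N_\kappa}}$. One checks $\rho'$ is a unital $*$-homomorphism because the ranges $\nu_{x,\kappa}(C(G))$ are genuinely free-product copies, hence their images in distinct blocks of $B^X$ land in orthogonal corners. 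The relation $\rho'(f)=1_{B^X}\ot\iota'_F(f)$ for $f\in C(F)$ holds since all copies $\nu_{x,\kappa}$ agree on $C(F)$ and $\sum_{x,\kappa,i}e^{x,\kappa}_{ii}=1_{B^X}$. The commutation $[\iota'_H,\iota'_F]=0$ is immediate in a tensor product. The key relation is $[(\id\ot\iota'_H)\beta(b),\rho'(a)]=0$: compute $(\id\ot\iota'_H)\beta(e^{x,\kappa}_{ij})=\sum_\gamma e^{\gamma\cdot x,\kappa}_{ij}\ot(1\ot\delta_\gamma)$, and $\rho'(a)$ is block-diagonal with scalar blocks, so the $C(G^{*_F X\times K})$-part of $\rho'(a)$ commutes with the permutation matrices coming from $\beta$ (a scalar times identity commutes with any matrix unit), while the $C(\Lambda)$-part $\delta_\gamma$ lives in a commuting tensor factor. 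Hence the universal property yields $\Phi$, and one verifies it intertwines comultiplications by checking the generators, using the explicit formula for $\Delta$ in the semi-direct product and the formula $(\id\ot\Delta)\rho=(\beta_G\ot\id)(\id\ot\rho)\Delta_G$.

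Conversely I construct $\Psi\,:\,C(G^{*_F X\times K}\rtimes\Lambda)\rightarrow C(G\wr_{*,\beta,F}\Lambda)$. Here the freeness of the action $\Lambda\curvearrowright X$ is essential: it guarantees that the stabilizer of each $x$ is trivial, so that the $(x,\kappa)$-copies can be recovered inside $C(\GG)$ without over-identification. Concretely, inside $C(\GG)=C(G\wr_{*,\beta,F}\Lambda)$ one has the block projections $\rho(1_G)=1_{B^X}\ot 1$ and, for each $x,\kappa$, the element $p_{x,\kappa}:=e_{x,\kappa}\otimes 1\in B^X\ot C(\GG)$ (using the commutant description $[\rho(a),\beta(b)]=0$ and Lemma \ref{LemRepAction}, one shows that compressing $\rho$ by these projections produces a family of homomorphisms $C(G)\to C(\GG)$ that are free with amalgamation over $C(F)$ and are cyclically permuted by the copy of $C(\Lambda)$ sitting inside $C(\GG)$ via $\iota_H$). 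This gives $*$-homomorphisms $\widetilde\nu_{x,\kappa}\,:\,C(G)\to C(\GG)$ agreeing with $\iota_F$ on $C(F)$, hence a map $C(G^{*_F X\times K})\to C(\GG)$ by the universal property of the amalgamated free product, and it is compatible with the $\Lambda$-action $\alpha_\gamma\circ\nu_{x,\kappa}=\nu_{\gamma\cdot x,\kappa}$ precisely because $\iota_H(C(\Lambda))$ conjugates $\beta$-permuted projections the right way. Together with $\iota_H\,:\,C(\Lambda)\to C(\GG)$, the universal property of the semi-direct product C*-algebra $C(G^{*_F X\times K})\ot C(\Lambda)$ (which requires a covariance relation, checked from $[\rho,\beta]=0$) produces $\Psi$.

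Finally I check $\Phi\circ\Psi=\id$ and $\Psi\circ\Phi=\id$ on generators, which is routine once the two maps are defined, and note that both intertwine comultiplications, so the isomorphism is one of compact quantum groups. \textbf{The main obstacle} I anticipate is making precise, in the construction of $\Psi$, the claim that the compressed maps $\widetilde\nu_{x,\kappa}$ are genuinely \emph{free with amalgamation over $C(F)$} — i.e. that no unwanted relations among the $X\times K$ copies survive inside $C(\GG)$. This is exactly where freeness of $\Lambda\curvearrowright X$ enters (a nontrivial stabilizer would force extra identifications), and the cleanest way around it is not to prove freeness by hand but to verify directly that $\Psi$ is well-defined via the universal property of $C(G^{*_F X\times K}\rtimes\Lambda)$ and then deduce the isomorphism from $\Phi\circ\Psi=\id=\Psi\circ\Phi$ on generators, so that freeness of the free product is obtained as a \emph{consequence} rather than assumed.
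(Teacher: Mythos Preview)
Your overall strategy (build $\Phi$ and $\Psi$, check they are mutually inverse) is sound, and your $\Phi$ coincides with the paper's map. The paper, however, dispenses with constructing $\Psi$ explicitly: it shows directly that $C(G^{*_F X\times K}\rtimes\Lambda)$, equipped with your maps $\rho',\iota'_H,\iota'_F$, satisfies the \emph{universal property} of $C(G\wr_{*,\beta,F}\Lambda)$ --- i.e.\ for any $(C,\pi,\iota'_\Lambda,\iota'_F)$ obeying the three relations, $\pi$ factors uniquely through $C(G^{*_F X\times K})\ot C(\Lambda)$. Uniqueness of universal objects then yields the C*-isomorphism in one stroke, and the comultiplication check is done separately afterwards.

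The substantive gap in your sketch is that you misplace where freeness of $\Lambda\curvearrowright X$ enters. It is \emph{not} about the $\widetilde\nu_{x,\kappa}$ being free over $C(F)$ (you are right that this follows a posteriori from $\Phi\circ\Psi=\id$), nor is it a ``covariance relation'' for the semi-direct product: as a C*-algebra $C(G^{*_F X\times K}\rtimes\Lambda)=C(G^{*_F X\times K})\ot C(\Lambda)$ is just a tensor product, so defining a map out of it requires only that the two images \emph{commute}. Freeness is used precisely to obtain that commutation. Writing $\rho(a)=\sum e^{x,\kappa}_{ij}\ot\rho^{x,\kappa}_{ij}(a)$ and expanding $[\rho(a),\beta(e^{y,\zeta}_{rs})]=0$ gives, for each $z$ in the orbit of $y$,
\[
\rho^{z,\zeta}_{kr}(a)\,\iota_H(u_{zy})=\delta_{kr}\,\iota_H(u_{zy})\,\rho^{z,\zeta}_{ss}(a),\qquad u_{zy}:=\textstyle\sum_{\gamma:\gamma\cdot y=z}\delta_\gamma.
\]
Summing over $y$ (using $\sum_y u_{zy}=1$) forces $\rho^{z,\zeta}_{kr}=\delta_{kr}\,\widetilde\nu_{z,\zeta}$, so the compressed maps are indeed scalar in the matrix block --- this step does \emph{not} use freeness. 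What does is the next step: concluding $[\widetilde\nu_{z,\zeta}(a),\iota_H(\delta_\gamma)]=0$ for every individual $\gamma$. When stabilizers are trivial each $u_{zy}$ is a single $\delta_\gamma$, and as $y$ varies these exhaust $C(\Lambda)$; without freeness the $u_{zy}$ only span the stabilizer-invariant functions and commutation with all of $\iota_H(C(\Lambda))$ fails. Lemma \ref{LemRepAction} plays no role in this argument.
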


\begin{proof}
Let $\GG:=G\wr_{*,\beta,F}\Lambda$. Recall that $C(G^{*_F X}\rtimes\Lambda)=C(G^{*_F X})\ot C(\Lambda)$ and define $\rho\,:\,C(G)\rightarrow B^X\ot C(G^{*_F X\times K})\subset B^X\ot C(G^{*_F X\times K}\rtimes\Lambda)$ by $\rho(a):=\sum_{x\in X,1\leq\kappa\leq K}e_{x,\kappa}\ot\nu_{x,\kappa}(a)$ and both $\iota_F\,:\,C(F)\rightarrow C(G^{*_F X\times K})\subset C(G^{*_F X\times K}\rtimes\Lambda)$ and $\iota_\Lambda\,:\, C(\Lambda)\rightarrow C(G^{*_F X}\rtimes\Lambda)$ are the inclusions. It is clear that they satisfy the relations of $C(\GG)$. Moreover, with these maps, $C(G^{*_F X\times F}\rtimes\Lambda)$ satisfies the universal property of $C(\GG)$. Indeed, if $\pi\,:\, C(G)\rightarrow B^X\ot C$, $\iota'_F\,:\, C(F)\rightarrow C$ and $\iota'_\Lambda\,:\, C(\Lambda)\rightarrow C$ are unital $*$-homomorphisms such that
$$\pi(f)=1_{B^X}\ot\iota'_F(f), \,\,[\pi(a),(\id\ot\iota'_\Lambda)(\beta(b))]=0, \,\,[\iota'_\Lambda(c),\iota'_F(f)]=0$$
for all $a\in C(G)$, $b\in B^X$, $c\in C(\Lambda)$ and $f\in C(F)$. Writing $\pi(a)=\sum_{x,\kappa,i,j} e^{x,\kappa}_{ij}\ot\pi_{ij}^{x,\kappa}(a)$ and applying $(\chi_x\ot \id)$ to the relation $[\pi(a),(\id\ot\iota'_\Lambda)(\beta(b))]=0$ we find, for all $x\in X$, $a\in C(G)$ and $b\in B^X$,
$$\sum_{\kappa,i,j,\gamma}e^\kappa_{ij}\chi_{\gamma^{-1}\cdot x}(b)\ot\pi^{x,\kappa}_{ij}(a)\iota_\Lambda'(\delta_\gamma)=\sum_{\kappa,i,j,\gamma}\chi_{\gamma^{-1}\cdot x}(b)e^\kappa_{ij}\ot\iota_\Lambda'(\delta_\gamma)\pi^{x,\kappa}_{ij}(a).$$
Fix any $\kappa, r,s$ and $y\in X$ and apply this equation to $b:=e^{\kappa,y}_{rs}\in B^X$ to get:
$$\sum_{i}e^\kappa_{is}\ot\pi^{x,\kappa}_{ir}(a)\iota'_\Lambda(u_{xy})=\sum_{j}e^\kappa_{rj}\ot\iota'_\Lambda(u_{xy})\pi_{sj}^{x,\kappa}(a),$$
where $u_{xy}:=1_{\{\gamma\in\Lambda\,:\,\gamma\cdot y=x\}}$. This implies that, for all $\kappa,i,r$, all $a\in C(G)$ and all $x,y\in X$,
\begin{equation}\label{EqCummutation}\pi^{x,\kappa}_{ir}(a)\iota'_\Lambda(u_{xy})=\delta_{ir}\iota'_\Lambda(u_{xy})\pi^{x,\kappa}_{ss}(a).\end{equation}
Since $1=\sum_yu_{xy}\in C(\Lambda)$, for all $x\in X$, we deduce in particular that $\pi^{x,\kappa}_{ir}(a)=\delta_{ir}\pi^{x,\kappa}_{ss}(a)$ for all $x,\kappa,i,r,s,a$. For $a\in C(G)$ let $\pi_{x,\kappa}(a)\in C$ be the element satisfying $\pi_{x,\kappa}(a)=\pi^{x,\kappa}_{rr}(a)$ for all $1\leq r\leq N_\kappa$ and note that $\pi$ is of the form:
$$\pi(a)=\sum_{x,\kappa,r}e^{x,\kappa}_{rr}\ot\pi_{x,\kappa}(a)=\sum_{x,\kappa}e_{x,\kappa}\ot\pi_{x,\kappa}(a),$$
Since the projections $e_{x,\kappa}$ are pairwise orthogonal and $\pi$ is a unital $*$-homomorphism, each $\pi_{x,\kappa}\,:\,C(G)\rightarrow C$ is a unital $*$-homomorphism and, since $\pi(f)=1_{B^X}\ot\iota'_F(f)$, we have $\pi_{x,\kappa}(f)=\iota'_F(f)$ for all $f\in F$. Moreover, Equation $(\ref{EqCummutation})$ also implies that, for all  $a\in C(G)$ and all $x,y\in X$ and all $1\leq\kappa\leq K$, $\pi_{x,\kappa}(a)\iota'_\Lambda(u_{xy})=\iota'_\Lambda(u_{xy})\pi_{x,\kappa}(a)$. Hence, for all $a\in C(G)$ and $(x,\kappa)\in X\times K$, $\pi_{x,\kappa}(a)$ commutes with $\iota'_\Lambda(c)$ for all $c\in\Acal_x$, where $\Acal_x\subset C(\Lambda)$ is the unital $*$-algebra generated by $\{u_{xy}\,:\, y\in X\}$. Since the action $\Lambda\curvearrowright X$ is free, $\Acal_x$ separates the points of $\Lambda$ and so $\Acal_x=C(\Lambda)$, for all $x\in X$. This implies that for all $a\in A$, $(x,\kappa)\in X\times K$ and $c\in C(\Lambda)$ $[\pi_{x,\kappa}(a),\iota'_\Lambda(c)]=0$. Hence, there exists a unique unital $*$-homomorphism $\widetilde{\pi}\,:\, C(G^{*_FX\times K})\ot C(\Lambda)\rightarrow C$ such that $\widetilde{\pi}\circ\nu_{x,\kappa}=\pi_{x,\kappa}$ and $\widetilde{\pi}\circ\iota'_\Lambda=\iota_\Lambda$. This shows the universal property hence we may and will assume that $C(\GG)=C(G^{*_F X\times K})\ot C(\Lambda)$ with maps $\rho$, $\iota_\Lambda$ and $\iota_F$ given above. Let us denote by $\Delta_1$ the comultiplication on $C(\GG)$ given by the free wreath product construction and by $\Delta_2$ the one given by the semi-direct product construction. It suffices to show that $\Delta_1=\Delta_2$. Since by construction, they both coincide with $\Delta_\Lambda$ on $C(\Lambda)$, it suffices to show that $\Delta_1\circ\nu_{x,\kappa}=\Delta_2\circ\nu_{x,\kappa}$ for all $(x,\kappa)\in X\times K$. Note that, for all $a\in\Pol(G)$,  $\Delta_2(\nu_{x,\kappa}(a))=\sum_{\gamma\in\Lambda}\nu_{x,\kappa}(a_{(1)})\iota_\Lambda(\delta_\gamma)\ot\nu_{\gamma^{-1}\cdot x,\kappa}(a_{(2)})$ and,
\begin{eqnarray*}
(\id\ot\Delta_1)(\rho(a))&=&\sum_{x,\kappa} e_{x,\kappa}\ot\Delta_1(\nu_{x,\kappa}(a))=(\beta_G\ot\id)(\id\ot\rho)\Delta_G(a)
\\
&=&\sum\beta_G(a_{(1)}\ot e_{y,\zeta})\ot\nu_{y,\zeta}(a_{(2)})=\sum\rho(a_{(1)})\beta(e_{y,\zeta})\ot\nu_{y,\zeta}(a_{(2)})\\
&=&\sum e_{x,\kappa}e_{\gamma\cdot y,\zeta}\ot\nu_{x,\kappa}(a_{(1)})\iota_\Lambda(\delta_\gamma)\ot\nu_{y,\zeta}(a_{(2)})\\
&=&\sum e_{x,\kappa}\ot\nu_{x,\kappa}(a_{(1)})\iota_\Lambda(\delta_\gamma)\ot\nu_{\gamma^{-1}\cdot x,\kappa}(a_{(2)})
\end{eqnarray*}
It follows that $\Delta_1\circ\nu_{x,\kappa}=\Delta_2\circ\nu_{x,\kappa}$ for all $(x,\kappa)\in X\times K$.\end{proof}

\begin{remark}
Gromada's wreath product \cite{Gr23} is a particular case of generalized free wreath products: this follows from Proposition \ref{PropSemiDirect} (with $B=\C$) and \cite[Proposition 2.27]{FT24}.
\end{remark}

\subsubsection{The amalgamated free wreath product by $S_N^+$} This construction is a slight variation on Bichon's construction by adding an amalgamation on a dual quantum subgroup. This variation on Bichon's definition is due to Freslon \cite{Fr22}. The computation of the Haar measure and the study of its operator algebras have been done in \cite{FT24}. Arguing as in the proof of Proposition \ref{PropMotivation} (and checking that the comultiplications coincide) we can easily prove the following. We leave the proof to the reader.

\begin{proposition}\label{PropBichon}
Let $G$ be a compact quantum group with a dual quantum subgroup $F$. For the universal action $\beta\,:\, S_N^+\curvearrowright\C^N$ there is a canonical isomorphism $G\wr_{*,\beta,F} S_N^+\simeq G\wr_{*,F}S_N^+$, where $G\wr_{*,F}S_N^+$ denotes the free wreath product from \cite{Fr22}.
\end{proposition}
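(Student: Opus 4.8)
The plan is to verify that the free wreath product $C(G\wr_{*,F}S_N^+)$ of \cite{Fr22} satisfies exactly the universal property that characterizes $C(\GG)=C(G\wr_{*,\beta,F}S_N^+)$, and then invoke uniqueness of that universal object together with a check that the two comultiplications agree. Recall from the discussion following Proposition \ref{PropMotivation} that, writing $u\in M_N(\C)\ot C(S_N^+)$ for the magic unitary and $\beta(e_j)=\sum_i e_i\ot u_{ij}$, the C*-algebra $C(G\wr_{*,F}S_N^+)$ is (a quotient of $C(G)^{*_F N}*C(S_N^+)$) generated by $C(S_N^+)$, $C(F)$ and the images $\nu_i(a)$, with the relations: the copies of $F$ inside each $\nu_i$ are identified to a single copy commuting with $C(S_N^+)$, each $\nu_i(f)=\nu_1(f)$ for $f\in C(F)$, and $[\nu_i(a),u_{ij}]=0$.

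\textbf{Construction of the map $\rho$.} First I would define $\rho\,:\,C(G)\rightarrow \C^N\ot C(G\wr_{*,F}S_N^+)$ by $\rho(a):=\sum_i e_i\ot\nu_i(a)$; since the $e_i$ are orthogonal projections summing to $1$ and the $\nu_i$ are unital $*$-homomorphisms, $\rho$ is a unital $*$-homomorphism. Set $\iota_H$ to be the inclusion $C(S_N^+)\subset C(G\wr_{*,F}S_N^+)$ and $\iota_F$ the inclusion of $C(F)$. Then, exactly as in the computation preceding Proposition \ref{PropMotivation}, the relation $[\nu_i(a),u_{ij}]=0$ gives $\rho(a)(\id\ot\iota_H)(\beta(e_j))=(\id\ot\iota_H)(\beta(e_j))\rho(a)$ for all $j$, hence $[\rho(a),(\id\ot\iota_H)(\beta(b))]=0$ for all $b\in\C^N$; the relation $\nu_i(f)=\nu_1(f)$ combined with $\sum_i e_i=1$ gives $\rho(f)=1_{\C^N}\ot\iota_F(f)$; and $C(S_N^+)$ and $C(F)$ commute by construction. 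So the triple $(\iota_H,\iota_F,\rho)$ satisfies all the defining relations, producing a surjective unital $*$-homomorphism $\Phi\,:\,C(\GG)\rightarrow C(G\wr_{*,F}S_N^+)$ (surjective because the $\nu_i(a)=(\,e_i^\star\ot\id)\rho(a)$ and the generators of $C(H)$, $C(F)$ are all in the range).

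\textbf{The inverse map.} Conversely, given the maps $\rho,\iota_H,\iota_F$ attached to $C(\GG)$, define $\nu_i:=(e_i^\star\ot\id)\circ\rho\,:\,C(G)\rightarrow C(\GG)$ (where $e_i^\star(x)=$ the $i$-th component), which are unital $*$-homomorphisms since $e_i$ is a central projection of $\C^N$; the relations of $C(\GG)$ translate into $[\nu_i(a),u_{ij}]=0$, $\nu_i(f)=\iota_F(f)$ independent of $i$, and commutation of $C(H)$ with $C(F)$, which are precisely the defining relations of $C(G\wr_{*,F}S_N^+)$. This yields a unital $*$-homomorphism $\Psi\,:\,C(G\wr_{*,F}S_N^+)\rightarrow C(\GG)$, and $\Phi\circ\Psi$, $\Psi\circ\Phi$ are identities on generators, so $\Phi$ is an isomorphism. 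Finally one checks $\Delta_\GG\circ\Phi=(\Phi\ot\Phi)\circ\Delta_{G\wr_{*,F}S_N^+}$ by evaluating both sides on the generators $u_{ij}$, $f\in C(F)$ and $\nu_i(a)$: on $C(H\times F)$ both comultiplications restrict to $\Delta_{H\times F}$, and on $\rho(a)$ (equivalently on the $\nu_i(a)$) the formula $(\id\ot\Delta_\GG)\rho=(\beta_G\ot\id)(\id\ot\rho)\Delta_G$ from Theorem \ref{ThmDefFreeWr} unwinds, using $\beta_G(a\ot e_i)=\rho(a)\beta(e_i)$ and $\beta(e_i)=\sum_j e_j\ot u_{ji}$, to $\Delta(\nu_i(a))=\sum_j\nu_i(a_{(1)})u_{ji}\ot\nu_j(a_{(2)})$, which is exactly Bichon's comultiplication on the free wreath product.

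\textbf{Main obstacle.} The only genuinely delicate point is matching the comultiplications on the image of $\rho$: one must carefully track the leg-numbering in $(\beta_G\ot\id)(\id\ot\rho)\Delta_G(a)$ and confirm it produces $\sum_{i,j}e_i\ot(\nu_i(a_{(1)})u_{ji}\ot\nu_j(a_{(2)}))$, i.e. that the magic-unitary factor lands in the correct leg. Everything else is the bookkeeping of universal properties, which is why the authors say they leave it to the reader; the argument is entirely parallel to the proof of Proposition \ref{PropMotivation} with $C(F)$ and the amalgamation carried along passively.
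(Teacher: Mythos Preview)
Your approach is exactly what the paper intends: it explicitly says to argue as in the proof of Proposition~\ref{PropMotivation} and check that the comultiplications coincide, leaving the details to the reader; you have supplied precisely those details via the universal properties on both sides. One small slip: tracing the indices in $(\beta_G\ot\id)(\id\ot\rho)\Delta_G(a)$ gives $\Delta(\nu_i(a))=\sum_j\nu_i(a_{(1)})u_{ij}\ot\nu_j(a_{(2)})$ (with $u_{ij}$, not $u_{ji}$), which is Bichon's formula; otherwise the argument is correct.
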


\subsubsection{The free wreath product by a quantum automorphism group} When $\beta\,:\, \GB\curvearrowright B$ is the universal action of the quantum automorphism group $\GB$, we identify our generalized free wreath product $G\wr_{*,\beta}\GB$ with the free wreath product $G\wr_{*}\GB$ constructed in \cite{FP16} in the following Proposition. Let's begin by recalling the free wreath product construction $G\wr_* {\rm Aut}^+(B,\psi)$ from \cite{FP16}. The C*-algebra $C(G\wr_*{\rm Aut}^+(B,\psi))$ is the universal unital C*-algebra generated by coefficients of $r(x)\in\mathcal{L}(H_x\ot B)\ot C(G\wr_*{\rm Aut}^+(B,\psi))$, for all $x\in\Irr(G)$, with relations:

\begin{itemize}
\item $r(x)$ is unitary for all $x\in\Irr(G)$
\item $(S\ot m)\Sigma_{23}\in\Mor(r(x)\ot r(y),r(z))$ for all $S\in\Mor_G(x\ot y,z)$ and all $x,y,z\in\Irr(G)$.
\item $\eta\in\Mor(1,r(1))$, where $1$ is the trivial representation.
\end{itemize}

\noindent The comultiplication $\Delta\,:\, C(G\wr_*{\rm Aut}^+(B,\psi))\rightarrow C(G\wr_*{\rm Aut}^+(B,\psi))\ot C(G\wr_*{\rm Aut}^+(B,\psi))$ is the unique unital $*$-homomorphism such that $r(x)$ is a representation for all $x\in\Irr(G)$ i.e. $$(\id\ot\Delta)(r(x))=r(x)_{12} r(x)_{13}\text{ for all }x\in\Irr(G).$$

\begin{proposition}
There is a unique $*$-isomorphism $\pi\,:\, C(G\wr_*\GB)\rightarrow C(G\wr_{*,\beta}\GB)$ such that $(\id\ot\pi)(r(x))=a(u^x)\in\mathcal{L}(H_x)\ot\Lcal(B)\ot C(\GG)$ for all $x\in\Irr(G)$. Moreover, $\pi$ intertwines the comultiplications.
\end{proposition}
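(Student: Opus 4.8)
The strategy is to build the two $*$-homomorphisms in both directions using the respective universal properties and then check they are mutually inverse; the comultiplication intertwining then follows from the fact that both sides are determined on generators by the same formula $(\id\ot\Delta)(\cdot)=(\cdot)_{12}(\cdot)_{13}$.

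First I would construct $\pi\,:\,C(G\wr_*\GB)\rightarrow C(\GG)$ where $\GG:=G\wr_{*,\beta}\GB$. By the universal property of $C(G\wr_*\GB)$ recalled just above, it suffices to check that the elements $a(u^x)\in\Lcal(H_x\ot B)\ot C(\GG)$, for $x\in\Irr(G)$, satisfy the three defining relations: unitarity (which is Theorem \ref{ThmDefFreeWr}(3)), the morphism relation $(S\ot m)\Sigma_{23}\in\Mor(a(u^x)\ot a(u^y),a(u^z))$ for $S\in\Mor_G(x\ot y,z)$ (which is precisely Theorem \ref{ThmDefFreeWr}(6)), and $\eta\in\Mor(1,a(1))$, which holds because $a(1)=u$ and $\eta\in\Mor_H(1,u)$ by Lemma \ref{LemRepAction}. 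This produces a unique unital $*$-homomorphism $\pi$ with $(\id\ot\pi)(r(x))=a(u^x)$.

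For the inverse, I would use the universal property of $C(\GG)$ as stated in Section \ref{sectiongen}: I need unital $*$-homomorphisms $\rho'\,:\,C(G)\rightarrow B\ot C(G\wr_*\GB)$ and $\iota'_H\,:\,C(\GB)\rightarrow C(G\wr_*\GB)$ (with $F$ trivial, so no $\iota_F$ to worry about) satisfying $[\rho'(a),\beta(b)]=0$. Define $\iota'_H$ to be the inclusion $C(\GB)\hookrightarrow C(G\wr_*\GB)$ coming from the representation $r(1)$ — indeed $r(1)$ satisfies the defining relations of the fundamental representation of $\GB$ (it is unitary, $m\in\Mor(r(1)\ot r(1),r(1))$ by the morphism relation applied to $m\in\Mor_G(1\ot 1,1)$, and $\eta\in\Mor(1,r(1))$), so its coefficients generate a copy of $C(\GB)$ inside $C(G\wr_*\GB)$; and define $\rho'(a):=(\id\ot\widetilde{\pi})((\id\ot L_B^{-1}\ot\id)((\eta^\star\ot\id)r(x)(\eta\ot\id)))$-type formula — more precisely, for $v\in\Rep(G)$ with coefficients $v_{ij}$, set $\rho'(v_{ij})$ to be the corresponding "corner" of $r(v)$ obtained by the procedure dual to $a(v)=(\id\ot(L_B\ot\id)\rho)(v)u_{23}$, i.e. extract $(\id\ot(L_B^{-1}\ot\id))(r(v)u^*_{23})$ where $u$ here denotes $r(1)$. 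One checks this is a well-defined unital $*$-homomorphism (independence of the choice of $v$ containing a given coefficient, multiplicativity) using the morphism relations for $r$; then the commutation $[\rho'(a),\beta(b)]=0$ is obtained by unwinding, exactly as in the motivating computation in Section \ref{sectiongen}. The universal property then yields $\pi^{-1}\,:\,C(\GG)\rightarrow C(G\wr_*\GB)$ with $(\id\ot\pi^{-1})\rho=\rho'$ and $\pi^{-1}\vert_{C(\GB)}=\iota'_H$.

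Finally I would check $\pi\circ\pi^{-1}=\id$ and $\pi^{-1}\circ\pi=\id$ on generators, which is immediate from the defining formulas once one verifies that the "extract a corner" and "build $a(v)$" operations are inverse to each other — this reduces to $(\id\ot(L_B\ot\id)\rho')(v)u_{23}=r(v)$ and the reverse, using $u(L_B(b)\ot 1)u^*=(L_B\ot\id)(\beta(b))$. The intertwining of comultiplications is then automatic: both $\Delta_{G\wr_*\GB}$ and $\Delta_\GG$ are characterized by making $r(x)$ (resp. $a(u^x)$) a representation, and $\pi$ carries one family to the other. The main obstacle is the careful bookkeeping in constructing $\rho'$: one must show that the map on coefficients of representations is well-defined and $*$-homomorphic, which is precisely the kind of argument that makes the Tannaka–Krein approach of \cite{FP16} work in reverse, and this is where the morphism relations $(S\ot m)\Sigma_{23}\in\Mor(r(x)\ot r(y),r(z))$ must be used systematically (in particular with $S$ ranging over the morphisms encoding the $*$-algebra structure of $\Pol(G)$, i.e. multiplication, unit and the conjugation/$Q$-operator data).
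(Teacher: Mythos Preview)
Your approach matches the paper's: both directions are built from the respective universal properties, with $\pi$ coming from Theorem \ref{ThmDefFreeWr}(3),(6) and the inverse coming from the universal property of $C(\GG)$ after producing maps $\iota_B\,:\,C(\GB)\rightarrow C(G\wr_*\GB)$ (via $r(1)$) and $\varphi_0\,:\,C(G)\rightarrow B\ot C(G\wr_*\GB)$ (extracted from $r(v)$). Two places where your sketch is looser than it should be:

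First, the formula ``$(\id\ot(L_B^{-1}\ot\id))(r(v)u_{23}^*)$'' is not well-posed because $L_B$ is not onto $\Lcal(B)$; the paper instead sets $(\id\ot\varphi_0)(v):=r(v)(\id\ot 1_B\ot 1)$, i.e.\ evaluates on the unit vector $1_B\in B$ (which is equivalent to your intent since $u(1_B\ot 1)=1_B\ot 1$), and then cites \cite[Proposition~8.1]{FP16} for the fact that this is a unital $*$-homomorphism. Second, and more substantively, the commutation $[\varphi_0(a),(\id\ot\iota_B)\beta(b)]=0$ is \emph{not} ``as in the motivating computation in Section~\ref{sectiongen}'': that computation is for $B=\C^N$, where the commutation is literally part of Bichon's definition. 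In the $\GB$ case it must be derived from the intertwiner relations, and this is where the real work sits. The paper uses specifically that $(\id_{H_v}\ot m)\in\Mor(r(v)\ot r(1),r(v))$ and $(\id_{H_v}\ot m)\Sigma_{12}\in\Mor(r(1)\ot r(v),r(v))$ to obtain a coefficient identity (their Equation~(\ref{Eqfwp1})), and then carries out an explicit matrix-unit calculation with this identity to conclude the commutation. Your final paragraph correctly flags that the morphism relations must be used systematically, but you have located the difficulty in the wrong place: showing $\varphi_0$ is a homomorphism is outsourced to \cite{FP16}, whereas the commutation relation is what the paper actually computes.
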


\begin{proof}
The uniqueness is clear and the existence of $\pi$ is a direct consequence of the universal property of $C(G\wr_*\GB)$ and assertion $(6)$ of Theorem \ref{ThmDefFreeWr}. It is clear that $\pi$ intertwines the comultiplications since $a(u^x)$ is a unitary representation. Let us construct the inverse of $\pi$. By the universal property of $C(G)$ and \cite[Proposition 8.1]{FP16} there exists a unique unital $*$-homomorphism $\varphi_0\,:\,C(G)\rightarrow B\ot C(G\wr_*\GB)$ such that $$(\id\ot\varphi_0)(v)=r'(v):=r(v)(\id\ot 1_B\ot 1)\text{ for all }v\in\Rep(G),$$
where $r(v)$, for non irreducible $v\in\Rep(G)$, is defined just before \cite[Proposition 8.1]{FP16} (denoted there $a(v)$). By the universal property of $C(\GB)$ there is a unique unital $*$-homomorphism  $\iota_B\,:\,C(\GB)\rightarrow C(G\wr_*\GB)$ such that $(\id\ot\iota_B)(u)=r(1)$. From \cite[Proposition 8.1]{FP16} we know that $(\id_{H_v}\ot m)\in\Mor_{G\wr_*\GB}(r(v)\ot r(1),r(v))$ and $(\id_{H_v}\ot m)\Sigma_{12}\in\Mor_{G\wr_*\GB}(r(1)\ot r(v),r(v))$. Hence,

\begin{eqnarray*}
(\id_{H_v}\ot m\ot 1)r(v)_{124}r(1)_{34}&=&r(v)(\id_{H_v}\ot m\ot 1)\\
&=&((\id_{H_v}\ot m)\Sigma_{12}\ot 1)r(1)_{14}r(v)_{234}(\Sigma_{12}\ot 1)\\
&=&(\id_{H_v}\ot m\ot 1)r(1)_{24} r(v)_{134}
\end{eqnarray*}

\noindent Take an orthonormal basis $(e_r)_r$ of $H_v$ with matrix units $(e_{rs})_{rs}$ in $\Lcal(H_v)$, write $v=\sum_{r,s}e_{rs}\ot v_{rs}$ and:
$$r(v)=\sum_{r,s,i,j,\kappa,k,l,\zeta}e_{rs}\ot b_{ij}^\kappa\circ (b_{kl}^\zeta)^\star\ot r(v)^{ij,\kappa}_{kl,\zeta,rs}\quad r(1)=\sum_{i,j,\kappa,k,l,\zeta} b_{ij}^\kappa\circ (b_{kl}^\zeta)^\star\ot r(1)^{ij,\kappa}_{kl,\zeta,rs}.$$
Note that $\sum_{\gamma,r}\lambda_{\gamma,r}^{\frac{1}{2}}r(1)^{ij,\kappa}_{rr,\gamma}=\delta_{ij}\lambda_{\kappa,i}^{\frac{1}{2}}$, since $\iota_B$ is a unital $*$-homomorphism. From the relation $b^\kappa_{ip}b^{\zeta}_{qj}=\delta_{\kappa\zeta}\delta_{pq}\lambda^{-\frac{1}{2}}_{\kappa,p}b^\kappa_{ij}$ we find $m\circ (b^\kappa_{ip}\circ (b^\zeta_{kl})^\star\ot b^{\kappa'}_{qj}\circ (b^{\zeta'}_{k'l'})^\star)=\delta_{\kappa,\kappa'}\delta_{p,q}\lambda_{\kappa,p}^{-1/2}b^\kappa_{ij}\circ  ((b^\zeta_{kl})^\star\ot (b^{\zeta'}_{k'l'})^\star)$.

\vspace{0.2cm}

\noindent It follows that $(\id_{H_v}\ot m\ot 1)r(v)_{124}r(1)_{34}$ is equal to:
$$\sum e_{rs}\ot b^\kappa_{ij}\circ((b^\zeta_{kl})^\star\ot(b^{\zeta'}_{k'l'})^\star)\ot\left(\sum_p\lambda_{\kappa,p}^{-1/2}r(v)^{ip,\kappa}_{kl,\zeta,rs}r(1)^{pj,\kappa}_{k'l',\zeta'}\right),$$
and $(\id_{H_v}\ot m\ot 1)r(1)_{24}r(v)_{134}$ is equal to:
$$\sum e_{rs}\ot b^\kappa_{ij}\circ((b^\zeta_{kl})^\star\ot(b^{\zeta'}_{k'l'})^\star)\ot\left(\sum_p\lambda_{\kappa,p}^{-1/2}r(1)^{ip,\kappa}_{kl,\zeta}r(v)^{pj,\kappa}_{k'l',\zeta',rs}\right).$$

\noindent Then, for all $r,s,\kappa,i,j,\zeta,k,l,\zeta',k',l'$ one has

\begin{equation}\label{Eqfwp1}
\sum_p\lambda_{\kappa,p}^{-1/2}r(v)^{ip,\kappa}_{kl,\zeta,rs}r(1)^{pj,\kappa}_{k'l',\zeta'}=\sum_p\lambda_{\kappa,p}^{-1/2}r(1)^{ip,\kappa}_{kl,\zeta}r(v)^{pj,\kappa}_{k'l',\zeta',rs}.
\end{equation}

\noindent Note that $(b_{kl}^\zeta)^\star(1_B)=\psi((b_{kl}^\zeta)^*)=\delta_{kl}\lambda_{\zeta,k}^{1/2}$ hence,
$$r'(v)=\sum e_{rs}\ot b_{ij}^\kappa\circ (b_{kl}^\zeta)^\star(1_B)\ot r(v)^{ij,\kappa}_{kl,\zeta,rs}=\sum e_{rs}\ot b_{ij}^\kappa\ot  \lambda_{\zeta,k}^{1/2} r(v)^{ij,\kappa}_{kk,\zeta,rs}.$$
\noindent It follows that $\varphi_0(v_{rs})=\sum b_{ij}^\kappa\ot  \lambda_{\zeta,q}^{1/2} r(v)^{ij,\kappa}_{qq,\zeta,rs}$. Writing $\beta(b^\alpha_{kl})=\sum_{\kappa,i,j}b^\kappa_{ij}\ot u^{ij,\kappa}_{kl,\alpha}$ and using Equation $(\ref{Eqfwp1})$ we deduce that, with $X:=(\id\ot\iota_B)(\beta(b^\alpha_{kl}))\varphi_0(v_{rs})$,

\begin{eqnarray*}
X&=&\sum b^\kappa_{ij}\ot\sum_{p,\zeta,q}\left(\frac{\lambda_{\zeta,q}}{\lambda_{\kappa,p}}\right)^{1/2}r(1)^{ip,\kappa}_{kl,\alpha}r(v)^{pj,\kappa}_{qq,\zeta,rs}
=\sum b^\kappa_{ij}\ot\sum_{p,\zeta,q}\left(\frac{\lambda_{\zeta,q}}{\lambda_{\kappa,p}}\right)^{1/2}r(v)^{ip,\kappa}_{kl,\alpha,rs}r(1)^{pj,\kappa}_{qq,\zeta}\\
&=&\sum b^\kappa_{ij}\ot\sum_{p}\lambda_{\kappa,p}^{-1/2}r(v)^{ip,\kappa}_{kl,\alpha,rs}\left(\sum_{\zeta,q}\lambda_{\zeta,q}^{1/2}r(1)^{pj,\kappa}_{qq,\zeta}\right)=\sum b^\kappa_{ij}\ot r(v)^{ij,\kappa}_{kl,\alpha,rs}.
\end{eqnarray*}

\noindent The same computation with $Y:=\varphi_0(v_{rs})(\id\ot\iota_B)(\beta(b^\alpha_{kl}))$ gives:

\begin{eqnarray*}
Y&=&\sum b^\kappa_{ij}\ot\sum_{p,\zeta,q}\left(\frac{\lambda_{\zeta,q}}{\lambda_{\kappa,p}}\right)^{1/2}r(v)^{ip,\kappa}_{qq,\zeta,rs}r(1)^{pj,\kappa}_{kl,\alpha}
=\sum b^\kappa_{ij}\ot\sum_{p,\zeta,q}\left(\frac{\lambda_{\zeta,q}}{\lambda_{\kappa,p}}\right)^{1/2}r(1)^{ip,\kappa}_{qq,\zeta}r(v)^{pj,\kappa}_{kl,\alpha,rs}\\
&=&\sum b^\kappa_{ij}\ot\sum_{p}\lambda_{\kappa,p}^{-1/2}\left(\sum_{\zeta,q}\lambda_{\zeta,q}^{1/2}r(1)^{ip,\kappa}_{qq,\zeta}\right)r(v)^{pj,\kappa}_{kl,\alpha,rs}=\sum b^\kappa_{ij}\ot r(v)^{ij,\kappa}_{kl,\alpha,rs}.
\end{eqnarray*}

\noindent Hence, $(\id\ot\iota_B)(\beta(b^\alpha_{kl}))\varphi_0(v_{rs})=\varphi_0(v_{rs})(\id\ot\iota_B)(\beta(b^\alpha_{kl}))$ and since this holds for any $v\in\Rep(G)$ and any $\zeta,k,l$, we deduce that $[\varphi_0(a),(\id\ot\iota_B)(\beta(b))]=0$ for all $a\in C(G)$ and all $b\in B$. By the universal property of $C(G\wr_{*,\beta}\GB)$, there exists a unique unital $*$-homomorphism $\varphi\,:\,C(G\wr_{*,\beta}\GB)\rightarrow C(G\wr_{*}\GB)$ such that $\varphi\vert_{C(\GB)}=\iota_B$ and  $(\id\ot\varphi)\rho=\varphi_0$.
\end{proof}

\subsubsection{A new example}\label{SectionNewExample} Even starting from classical groups, our construction of generalized free wreath product produces new CQG, as shown in the following example.

\vspace{0.2cm}

\noindent Let $\Lambda$ be a finite group and $\Gamma$ a discrete group. Consider the $\widehat{\Lambda}$-action on the finite dimensional C*-algebra $B:=C^*(\Lambda)$ by left translation i.e. $\beta\,:\, C^*(\Lambda)\rightarrow C^*(\Lambda)\ot C^*(\Lambda)$, $\beta(\gamma)=\gamma\ot\gamma$ for all $\gamma\in\Lambda$ and note that $\beta$ is $\tau$-preserving, where $\tau$ is the canonical trace of $C^*(\Lambda)$ (i.e. the Haar state of $\widehat{\Lambda}$). As observed in Example \ref{ExAction1}, this action is ergodic but not $2$-ergodic as soon as $\vert\Lambda\vert\neq 2$. Then it is easy to check that $C(\widehat{\Gamma}\wr_{*,\beta}\widehat{\Lambda})$ is the universal unital C*-algebra generated by elements $\nu_\gamma(g)$, for $\gamma\in \Lambda$ and $g\in\Gamma$ with relations,
\begin{itemize}
\item $(\nu_\gamma(g))^*=\nu_{\gamma^{-1}}(g^{-1})$ and $\nu_\gamma(1)=\delta_{\gamma,1}1$ for all $g\in\Gamma$ and $\gamma\in\Lambda$.
    \item $\nu_\gamma(gh)=\sum_{r,s\in\Lambda,\,rs=\gamma}\nu_r(g)\nu_s(h)$, for all $g,h\in\Gamma$, $\gamma\in\Lambda$,
    \item $s\nu_{rs}(g)=\nu_{sr}(g)s$, for all $g\in\Gamma$, $r,s\in\Lambda$,
\end{itemize}
The comultiplication is:
$$\Delta(\gamma)=\gamma\ot\gamma\quad\text{and}\quad\Delta(\nu_\gamma(g))=\sum_{r,s\in\Lambda,\,rs=\gamma} \nu_r(g)s\ot\nu_s(g)\quad\text{for all }\gamma\in\Lambda,\,g\in\Gamma.$$

\vspace{0.2cm}

\noindent While, to the best of the authors knowledge, this type of quantum groups has never been studied before, the case of an abelian group $\Lambda$ is actually well known as shown in the following remark.

\begin{remark}\label{RemarkNewExample} If $\Lambda$ is abelian then we have a canonical isomorphism $\widehat{\Gamma}\wr_{*,\beta}\widehat{\Lambda}\simeq \widehat{\Gamma}^{*\Lambda}\rtimes\Lambda$, where the action $\Lambda\curvearrowright\Lambda$ is by left multiplication. Indeed, by Proposition \ref{PropSemiDirect} and Remark \ref{RmkFwp} $(5)$, it suffices to show that the two actions $\Lambda\curvearrowright \C^\Lambda=C(\Lambda)$ and $\widehat{\Lambda}\curvearrowright C^*(\Lambda)=C(\widehat{\Lambda})$ are isomorphic and it follows from the self Pontryagin's duality of finite abelian groups. 
\end{remark}

\subsection{The ergodic decomposition}

\noindent Whenever the state $\psi$ on the C*-algebra $B$ is not a $\delta$-form, we can decompose $B$ as a direct sum $(B,\psi)= \bigoplus_i (B_i,\psi_i)$, with $\psi_i$ a $\delta_i$-form for all $i$, for pairwise distinct $\delta_i$'s, see Remarks \ref{RmkSpectralDecomposition} and \ref{remadelta} $(1)$. We will call this decomposition \textit{the ergodic decomposition of $(B,\psi)$}.

\vspace{0.2cm}

\noindent A result by Brannan demonstrates that the quantum automorphism group of $(B, \psi)$ can be decomposed as a free product of the quantum automorphism groups of the blocks $(B_i, \psi_i)$. This is expressed through the ergodic decomposition $\GB \simeq \ast_{i}{\rm Aut}^+(B_i, \psi_i)$. Fima and Pittau extended this result to the free wreath product in \cite{FP16}, showing that for any compact quantum group $G$, if we denote $\GG_i = G \wr_* {\rm Aut}^+(B_i, \psi_i)$, the ergodic decomposition is given by $G \wr_* \GB \simeq \ast_i \GG_i$. However, this result does not generally hold for compact quantum groups acting on $B$ via $\psi$-preserving actions. Specifically, when considering compact subgroups of $\GB$ that are quantum automorphism groups of quantum graphs on $B$, \cite{Da22} provides an example where the quantum automorphism group of a quantum graph cannot be expressed as a free product of the quantum automorphism groups of its subgraphs. Consequently, we cannot extend the result to generalized free wreath products. Nevertheless, some results remain valid, and our perspective on free wreath products offers new insights into the ergodic decomposition proposed by Fima and Pittau.

\vspace{0.2cm}

\noindent We begin by recalling a basic construction of actions.

\begin{proposition}\label{PropFreeProdAction}
Let $I$ be a finite set and for all $i\in I$, $B_i$ a finite dimensional C*-algebra. Consider the unital C*-algebra $B:=\bigoplus B_i$ with canonical surjections $\chi_i\,:\, B\rightarrow B_i$. Suppose that $\beta_i\,:\, B_i\rightarrow B_i\ot C(H_i)$ is an action, for all $i\in I$. Let $H$ be a CQG containing all the $H_i$ as dual quantum subgroups. Then, there exists a unique action $\beta\,:\, B\rightarrow B\ot C(H)$ such that, viewing $C(H_i)\subset C(H)$ one has, for all $i\in I$, $\beta_i\circ\chi_i=(\chi_i\ot\id)\beta$. Moreover, if all the $\beta_i$ are $\psi_i$-preserving for some state $\psi\in B_i^*$ then, $\beta$ is $\psi$-preserving for $\psi:=\vert I\vert^{-1}\sum_{i\in I}\psi_i\circ\chi_i$.
\end{proposition}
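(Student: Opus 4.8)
The plan is to construct $\beta$ by hand using the universal property of the direct sum $B=\bigoplus_{i\in I}B_i$ and then verify the two axioms of an action together with the invariance statement. First I would note that since $H$ contains each $H_i$ as a dual quantum subgroup we have faithful $*$-homomorphisms $C(H_i)\hookrightarrow C(H)$ intertwining the comultiplications (Proposition \ref{dualqsg}), so each $\beta_i\,:\,B_i\rightarrow B_i\ot C(H_i)$ may be viewed, post-composing with $\chi_i^{-1}\ot\id$ on the target range and the inclusion into $B\ot C(H)$, as a $*$-homomorphism $\gamma_i\,:\,B_i\rightarrow B\ot C(H)$. Because $B=\bigoplus_iB_i$ as C*-algebras and the ranges of the $\gamma_i$ are supported in the orthogonal ideals $B_i\ot C(H)$ (after composing with the inclusion $B_i\subset B$), the map $\beta:=\sum_{i\in I}\gamma_i\circ\chi_i\,:\,B\rightarrow B\ot C(H)$ is a well-defined unital $*$-homomorphism, and it is the unique one satisfying $(\chi_i\ot\id)\beta=\beta_i\circ\chi_i$ for all $i$, since the $\chi_i$ jointly separate points of $B$ and any two such maps would agree after applying each $\chi_i\ot\id$.

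Next I would check the coassociativity and counit axioms. For coassociativity, it suffices to check $(\id\ot\Delta_H)\beta=(\beta\ot\id)\beta$ after applying $\chi_i\ot\id\ot\id$ for each $i$; the left side becomes $(\id\ot\Delta_H)\beta_i\chi_i$ and, since $C(H_i)\subset C(H)$ intertwines the comultiplications, this equals $(\id\ot\Delta_{H_i})\beta_i\chi_i$, while the right side becomes $(\beta_i\chi_i\ot\id)\beta_i\chi_i=(\beta_i\ot\id)\beta_i\chi_i$ using idempotency of $\chi_i$ on $B_i$; these agree by the action axiom for $\beta_i$. Similarly $(\id\ot\varepsilon_H)\beta$ composed with $\chi_i$ gives $(\id\ot\varepsilon_H)\beta_i\chi_i=(\id\ot\varepsilon_{H_i})\beta_i\chi_i=\chi_i$, using that the inclusion $C(H_i)\subset C(H)$ is compatible with counits (which follows from its intertwining the comultiplications, or directly from Proposition \ref{dualqsg}). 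Hence $\beta$ is an action. For the invariance claim, set $\psi:=|I|^{-1}\sum_{j}\psi_j\circ\chi_j$; then $(\psi\ot\id)\beta(b)=|I|^{-1}\sum_j(\psi_j\ot\id)(\chi_j\ot\id)\beta(b)=|I|^{-1}\sum_j(\psi_j\ot\id)\beta_j(\chi_j(b))=|I|^{-1}\sum_j\psi_j(\chi_j(b))1=\psi(b)1$, using that each $\beta_j$ is $\psi_j$-preserving.

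The only mild subtlety — and the step I would be most careful about — is the identification of $C(H_i)$ as a subalgebra of $C(H)$ compatibly with all the quantum-group structure maps (comultiplication and counit), and the bookkeeping that the ranges of the various $\gamma_i$ really land in mutually orthogonal summands $B_i\ot C(H)\subset B\ot C(H)$ so that their sum is again a $*$-homomorphism; once this is set up, everything reduces to the block-wise axioms for the $\beta_i$ via the separating family $\{\chi_i\}$. There is no serious obstacle here: the statement is essentially a formal assembly of the $\beta_i$, and I do not expect any hard analytic input beyond Proposition \ref{dualqsg}.
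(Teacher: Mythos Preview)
Your proof is correct and follows essentially the same approach as the paper: construct $\beta$ via the universal property of the direct sum $B=\bigoplus_i B_i$, then verify coassociativity and the counit axiom componentwise by applying $\chi_i\ot\id$ and using that each $C(H_i)\subset C(H)$ intertwines comultiplications and counits. The paper's proof is slightly terser (it declares uniqueness and the invariance statement ``clear''), but the argument is the same.
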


\begin{proof}
View $\beta_i\circ\chi_i\,:\, B\rightarrow B_i\ot C(H_i)\subset B_i\ot C(H)$. By the universal property of direct sum there exists a unique unital $*$-homomorphism $\beta\,:\, B\rightarrow B\ot C(H)$ such that $(\chi_i\ot\id)\beta=\beta_i\circ\chi_i$. Since $H_i$ is a dual quantum subgroup of $H$ one has, for all $i\in I$, \begin{eqnarray*}
(\chi_i\ot\id\ot\id)(\id\ot\Delta_H)\beta&=&(\id\ot\Delta_H)\beta_i\circ\chi_i=(\id\ot\Delta_{H_i})\beta_i=(\beta_i\ot\id)\beta_i\circ\chi_i\\
&=&(\beta_i\circ\chi_i\ot\id)\beta=(\chi_i\ot\id\ot\id)(\beta\ot\id)\beta
\end{eqnarray*}
Hence, $(\id\ot\Delta_H)\beta=(\beta\ot\id)\beta$. Note that $(\chi_i\ot\varepsilon_H)\beta=(\id\ot\varepsilon_H)\beta_i\circ\chi_i=(\id\ot\varepsilon_{H_i})\beta_i\circ\chi_i=\chi_i(\cdot)1$ for all $i\in I$. Hence, $(\id\ot\varepsilon_H)\beta=\id_B$ and $\beta$ is an action. The uniqueness and the last statement are clear.\end{proof}

\noindent The action $\beta\,:\, H\curvearrowright B$ constructed in Proposition \ref{PropFreeProdAction} will be called \textit{the action associated with $(\beta_i)_{i\in I}$}.

\begin{example}\label{ExErgoDec}
Let $\beta_B\,:\,\GB\curvearrowright B$ be the universal action and assume that $\psi$ is not a $\delta$-form. Write the ergodic decomposition decomposition $(B,\psi)=\bigoplus (B_i,\psi_i)$. Brannan's result precisely means that $\GB=\underset{i\in I}{*}{\rm Aut}^+(B_i, \psi_i)$ and $\beta_B$ is the action associated with the universal actions $\beta_{B_i}\,:\,{\rm Aut}^+(B_i, \psi_i)\curvearrowright B_i$.
\end{example}

\noindent Let $H$ be the fundamental quantum group of a graph of CQG $(H_p,\Sigma_e,\G,\T)$ in the sense of \cite{FF14}. This means that:

\begin{itemize}
    \item $\G$ is connected graph (in the sense of \cite{Se77}) with vertex set $V(\G)$, edge set $E(\G)$, source and range maps $s,r\,:\,E(\G)\rightarrow V(\G)$ and $\T\subset\G$ is a maximal subtree.
    \item For every $p\in V(\G)$ and $e\in E(\G)$ $H_p$ and $\Sigma_e$ are CQG.
    \item For every $e\in E(\G)$, $\Sigma_e=\Sigma_{\ebar}$.
    \item For every $e\in E(\G)$, $s_e\,:\,C(\Sigma_e)\rightarrow C(H_{s(e)})$ is a faithful unital $*$-homomorphism intertwining the comultiplications (i.e. $\Sigma_e$ is a dual quantum subgroup of $H_{s(e)}$).
\end{itemize}

\noindent Define $r_e:=s_{\ebar}\,:\, C(\Sigma_e)\rightarrow C(H_{r(e)})$. $C(H)$ is defined as the universal unital C*-algebra generated by the algebras $C(H_p)$, for $p\in V(\G)$ and unitaries $u_e$, for $e\in E(\G)$ with relations:

\begin{itemize}
    \item For all $e\in E(\G)$, $u_e^*=u_{\ebar}$,
    \item For all $e\in E(\G)$ and all $b\in C(\Sigma_e)$ $u_{\ebar}s_e(b)u_e=r_e(b)$,
    \item $u_e=1$ for all $e\in E(\T)$.
\end{itemize}

\noindent By the universal property of $C(H)$, there exists a unique unital $*$-homomorphism $\Delta_H\,:\,C(H)\rightarrow C(H)\ot C(H)$ such that $\Delta_H(u_e)=u_e\ot u_e$ for all $e\in E(\G)$ and $\Delta_H\vert_{C(H_p)}=\Delta_{H_p}$ for all $p\in V(\G)$. It is shown in \cite{FF14} that the pair $H:=(C(H),\Delta_H)$ is a compact quantum group. $H$ is called the fundamental quantum group and is denoted by $H=\pi_1(H_p,\Sigma_e,\G,\T)$. Note that each $H_p$ is a dual quantum subgroup of $H$. In what follows we assume moreover that $\G$ is a finite graph and we write $s_e$ for the graph maps defining $H$.

\begin{theorem}\label{deltafree}
For each $p\in V(\G)$ let $B_p$ be a finite dimensional C*-algebra with a faithful state $\psi_p\in B_p^*$ and $\beta_p\,:\,H_p\curvearrowright B_p$ be a $\psi_p$-preserving action. Let $(B,\psi):=\bigoplus_{p\in V(\G)}(B_p,\psi_p)$ with the associated $\psi$-preserving action $\beta\,:\,H\curvearrowright B$. Then, for any compact quantum group $G$ with dual quantum subgroup $F$, there is a canonical isomorphism $$G\wr_{*,\beta,F}H\simeq\pi_1(G\wr_{*,\beta_p,F}H_p,\Sigma_e\times F,\G,\T),$$
where the graph maps on the right are given by
$$s'_e:=s_e\ot\id\,:\,C(\Sigma_e\times F)\rightarrow C(H_{s(e)}\times F)\subset C(G\wr_{*,\beta_{s(e)},F}H_{s(e)}),$$
which clearly intertwines the comultiplications.
\end{theorem}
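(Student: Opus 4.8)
The strategy is to prove the isomorphism by exhibiting unital $*$-homomorphisms in both directions using the universal properties on each side, and then checking they are mutually inverse and intertwine the comultiplications. Write $\GG:=G\wr_{*,\beta,F}H$ and $\GG_p:=G\wr_{*,\beta_p,F}H_p$ for each vertex $p$, and let $\Sigma:=\pi_1(\GG_p,\Sigma_e\times F,\G,\T)$ denote the fundamental quantum group on the right-hand side. Recall from the construction of $C(H)$ that $C(H)$ is generated by the $C(H_p)$ together with the edge unitaries $u_e$, subject to $u_e^*=u_{\ebar}$, $u_{\ebar}s_e(b)u_e=r_e(b)$ for $b\in C(\Sigma_e)$, and $u_e=1$ for $e\in E(\T)$; likewise $C(\Sigma)$ is generated by the $C(\GG_p)$ and edge unitaries $v_e$ with the analogous relations built from the maps $s'_e=s_e\ot\id$. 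Note that since $C(H_p)\subset C(\GG_p)$ as a dual quantum subgroup (Theorem \ref{ThmDefFreeWr}(1), as $H_p\times F\subset\GG_p$), the $u_e$ and the $C(H_p)$'s all sit inside $C(\Sigma)$, so $C(H)$ maps canonically to $C(\Sigma)$, and $C(F)\subset C(H_{s(e)}\times F)\subset C(\GG_{s(e)})$ is the same copy for every edge because the maps $s'_e$ are the identity on the $F$-factor; hence there is a well-defined $C(F)\subset C(\Sigma)$.

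\textbf{Building $C(\GG)\to C(\Sigma)$.} The plan is to apply the universal property of $C(\GG)$ (as described after Proposition \ref{Propfaithful} / in Section \ref{sectiongen}). We need three maps into $C(\Sigma)$: $\iota'_H\,:\,C(H)\to C(\Sigma)$ (the canonical one described above), $\iota'_F\,:\,C(F)\to C(\Sigma)$ (the canonical one), and $\rho'\,:\,C(G)\to B\ot C(\Sigma)$. For $\rho'$, recall $B=\bigoplus_p B_p$ with projections $\chi_p$; each $\GG_p$ carries its own $\rho_p\,:\,C(G)\to B_p\ot C(\GG_p)$, and we set $\rho'(a):=\sum_p (\text{incl})(\rho_p(a))$, viewing $B_p\ot C(\GG_p)\subset B\ot C(\Sigma)$. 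One must check the three defining relations of $C(\GG)$: (i) $[\iota'_H(x),\iota'_F(f)]=0$ — this holds because within each $C(\GG_p)$ the copies of $C(H_p)$ and $C(F)$ commute, and the edge unitaries $u_e$ commute with $C(F)$ since $s'_e$ fixes the $F$-factor; (ii) $\rho'(f)=1_B\ot\iota'_F(f)$ — clear from $\rho_p(f)=1_{B_p}\ot f$; (iii) $[(\id\ot\iota'_H)(\beta(b)),\rho'(a)]=0$ for $b\in B$, $a\in C(G)$. For (iii), decompose $b=\sum_p b_p$ with $b_p\in B_p$; by Proposition \ref{PropFreeProdAction}, $\beta(b_p)=(\chi_p\ot\id)^{-1}\text{-version of }\beta_p(b_p)$ lands in $B_p\ot C(H_p)\subset B_p\ot C(\GG_p)$, and $\rho'(a)$ restricted to the $p$-block is exactly $\rho_p(a)$, so the commutation relation in $C(\GG_p)$ (relation (iii) there) gives what we want block-by-block, while cross-block terms vanish since $B_pB_q=0$ for $p\neq q$. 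This yields $\pi\,:\,C(\GG)\to C(\Sigma)$ with $\pi\circ\iota_H=\iota'_H$, $\pi\circ\iota_F=\iota'_F$, $(\id\ot\pi)\rho=\rho'$.

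\textbf{Building the inverse and concluding.} For the other direction, I will use the universal property of the fundamental quantum group $C(\Sigma)=\pi_1(\GG_p,\Sigma_e\times F,\G,\T)$: it suffices to produce, for each $p$, a unital $*$-homomorphism $\theta_p\,:\,C(\GG_p)\to C(\GG)$ and, for each edge $e$, a unitary $w_e\in C(\GG)$, satisfying $w_e^*=w_{\ebar}$, $w_{\ebar}\,\theta_{s(e)}(s'_e(b))\,w_e=\theta_{r(e)}(s'_e(b))$ for $b\in C(\Sigma_e\times F)$, $w_e=1$ for $e\in E(\T)$, and compatibility $\theta_p\vert_{C(H_p\times F)}$ agreeing along edges. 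The maps $\theta_p$ are obtained from the universal property of $C(\GG_p)$: inside $C(\GG)$ we have $\iota_H(C(H_p))$ (viewing $H_p$ as a dual quantum subgroup of $H$), $\iota_F(C(F))$, and the map $\rho^{(p)}\,:\,C(G)\to B_p\ot C(\GG)$ defined by $\rho^{(p)}(a):=(\chi_p\ot\id)(\rho(a))$; one checks the three relations of $C(\GG_p)$ hold for this triple (again using that $\beta_p\circ\chi_p=(\chi_p\ot\id)\beta$ and the relations in $C(\GG)$), giving $\theta_p\,:\,C(\GG_p)\to C(\GG)$. Set $w_e:=\iota_H(u_e)$. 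The HNN-type relation for $w_e$ follows from the corresponding relation $u_{\ebar}s_e(b)u_e=r_e(b)$ in $C(H)$ together with the fact that $s'_e=s_e\ot\id$ and $\theta_p$ restricted to $C(H_p)$ is $\iota_H$ while restricted to $C(F)$ is $\iota_F$, and $\iota_H(u_e)$ commutes with $\iota_F(C(F))$. This produces $\pi^{-1}\,:\,C(\Sigma)\to C(\GG)$. Finally, one verifies $\pi$ and $\pi^{-1}$ are mutually inverse by checking they act as identities on generators (the $C(\GG_p)$'s, the edge unitaries, and — for $\pi\circ\pi^{-1}$ — it suffices to check on $\iota_H(C(H))$, $\iota_F(C(F))$, and $\rho(C(G))$, using uniqueness in the respective universal properties), and that both intertwine the comultiplications, which again reduces to checking on generators where both $\Delta$'s are explicitly given by the same formulas ($\Delta(u_e)=u_e\ot u_e$, $\Delta$ on each vertex block as in $\GG_p$).

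\textbf{Main obstacle.} The routine-but-delicate point is relation (iii) in the construction of $\pi$ and its mirror in the construction of $\theta_p$: one must carefully track that $\beta$, built via Proposition \ref{PropFreeProdAction}, respects the block decomposition $B=\bigoplus_p B_p$ so that the commutation $[(\id\ot\iota_H)(\beta(b)),\rho(a)]=0$ genuinely decouples into the vertex-wise relations $[(\id\ot\iota_{H_p})(\beta_p(b_p)),\rho_p(a)]=0$ already available in $C(\GG_p)$ — the subtlety being that $\beta(b)$ for $b\in B_p$ a priori has legs in all of $C(H)$, and one needs that in fact it lies in $B_p\ot C(H_p)$, which is exactly the content of $(\chi_p\ot\id)\beta=\beta_p\circ\chi_p$ plus the fact that $\chi_q\circ(\text{this})=0$ for $q\neq p$. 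Everything else is bookkeeping with universal properties.
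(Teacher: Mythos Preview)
Your proposal is correct and follows essentially the same route as the paper: both construct $\pi:C(\GG)\to C(\Sigma)$ via the universal property of $C(\GG)$ (using the block decomposition $\rho'=\oplus_p\rho_p$ and the commutation-by-blocks argument you highlight as the ``main obstacle''), then build the inverse from the $\theta_p:C(\GG_p)\to C(\GG)$ obtained via $(\chi_p\ot\id)\rho$ together with $w_e=\iota_H(u_e)$, and finally check mutual inverse and comultiplication compatibility on generators. The only place where the paper is a bit more explicit than you are is the comultiplication intertwining, where it records the identity $(\chi_p\ot\pi)\beta_G=\beta_G^p(\id\ot\chi_p)$ and then computes $(\id\ot\Delta_{\Sigma}\circ\pi)\rho$ block-wise; your ``same formulas on generators'' is correct but you should be prepared to spell this out.
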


\begin{proof}
Write $C(H)=\langle C(H_p),u_e\vert p\in V(\G),\,e\in E(\G)\rangle$. Let $\GG:=G\wr_{*,\beta,F}H$, $\GG_p:=G\wr_{*,\beta_p,F}H_p$ with canonical homomorphisms $\rho\,:\, C(G)\rightarrow B\ot C(\GG)$ and $\rho_p\,:\, C(G)\rightarrow B_p\ot C(\GG_p)$ and view $C(H\times F)\subset C(\GG)$ and $C(H_p\times F)\subset C(\GG_p)$. Define $\GG':=\pi_1(\GG_p,\Sigma_e\times F,\G,\T)$ and write $C(\GG')=\langle C(\GG_p), v_e\vert p\in V(\G),\,e\in E(\G)\rangle$. Note that, for all $e\in E(\G)$ and $b\in C(\Sigma_e)$ one has, viewing $C(\Sigma_e)\subset C(\Sigma_e\times F)$, $r'_e(b)=v_{\ebar}(s'_e(b))v_e$. By the universal property of $C(H)$ there exists a unique unital $*$-homomorphism $\nu_{H}\,:\, C(\GG)\rightarrow C(\GG')$ such that $\nu_{H}(u_e)=v_e$ and $\nu_{H}\vert_{C(H_p)}$ is the inclusion $C(H_p)\subset C(\GG_p)\subset C(\GG')$.

\vspace{0.2cm}

\noindent Note that, by definition, all the maps $s_e'\vert_{C(F)}\,:\, C(F)\subset C(\Sigma_e\times F)\rightarrow C(\GG_{s(e)})\subset C(\GG')$ coincide, for $e\in E(\G)$. Write $\nu_F\,:\, C(F)\rightarrow C(\GG')$ the unique map such that $\nu_F=s_e'\vert_{C(F)}$ for all $e\in E(\G)$. The fundamental quantum group relations give, for all $f\in C(F)$ and $e\in E(\G)$,
$$\nu_F(f)=s'_{\ebar}(f)=r'_e(f)=v_{\ebar}(s'_e(f))v_e=v_e^*\nu_F(f)v_e.$$
Hence, $\nu_F(C(F))$ commutes with $v_e$ for all $e\in E(\G)$. Also, by the free wreath product relations, $C(F)\subset C(\GG_p)$ and $C(H_p)\subset C(\GG_p)$ commute, for all $p\in V(\G)$. Hence, $\nu_F(C(F))$ commutes with the C*-algebra generated by $C(H_p)$, $p\in V(G)$ and $v_e$, $e\in E(\G)$ inside $C(\GG')$. This C*-algebra is exactly $\nu_H(C(H))$ so we have the relation $[\nu_H(x),\nu_F(f)]=0$ for all $a\in C(H)$ and $f\in C(F)$.

\vspace{0.2cm}

\noindent Viewing $C(\GG_p)\subset C(\GG')$, we may view each map $\rho_p$ as a map $ C(G)\rightarrow B_p\ot C(\GG')$. By universal property of the direct sum there exists a unique unital $*$-homomorphism $\rho'\,:\, C(G)\rightarrow B\ot C(\GG')$ such that $(\chi_p\ot\id)\rho'=\rho_p$ for all $p\in V(\G)$, where $\chi_p\,:\, B\rightarrow B_p$ is the canonical surjection. Note that, for all $p\in V(\G)$,
\begin{eqnarray*}
(\chi_p\ot\id)[\rho'(a)(\id\ot\nu_{H})(\beta(b))]&=&\rho_p(a)(\id\ot\nu_{H})(\beta_p(\chi_p(b)))=\rho_p(a)\beta_p(\chi_p(b))\\
&=&\beta_p(\chi_p(b))\rho_p(a)
=(\chi_p\ot\id)[(\id\ot\nu_{H})(\beta(b))\rho'(a)].\end{eqnarray*}
Hence, $[\rho'(a),(\id\ot\nu_{H})(\beta(b))]=0$ for all $a\in C(G)$, $b\in B$. Moreover, $$(\chi_p\ot\id)\rho'(f)=\rho_p(f)=1_{B_p}\ot f\text{ for all }f\in C(F).$$
Hence, $\rho'(f)=1_{B}\ot f$ for all $f\in C(F)$.

\vspace{0.2cm}

\noindent By the universal property of $C(\GG)$, there exists a unique unital $*$-homomorphism
$$\pi\,:\, C(\GG)\rightarrow C(\GG')\text{ such that }(\id\ot\pi)\rho=\rho'\text{ and }\pi\vert_{C(H)}=\nu_H,\,\, \pi\vert_{C(F)}=\nu_F.$$
Consider now the canonical maps:
$$\beta_G\,:\, C(G)\ot B\rightarrow B\ot C(\GG)\text{ and }\beta^p_{G}\,:\,C(G)\ot B_p\rightarrow B_p\ot C(\GG_p).$$
We claim that:
\begin{equation}\label{EqBetaG}
(\chi_p\ot\pi)\beta_G=\beta_G^p(\id\ot\chi_p)\quad\text{for all }p\in V(\G)
\end{equation}
Indeed, for $a\in C(G)$ and $b\in B$ one has,
\begin{eqnarray*}
(\chi_p\ot\pi)\beta_G(a\ot b)&=&(\chi_p\ot\pi)(\rho(a)\beta(b))=(\chi_p\ot\id)(\rho'(a))(\id\ot\pi)(\chi_p\ot\id)(\beta(b))\\
&=&\rho_p(a)\beta_p(\chi_p(b))=\beta^p_G(a\ot\chi_p(b)).
\end{eqnarray*}
Let us show that $\Delta_{\GG'}\circ\pi=(\pi\ot\pi)\circ\Delta_{\GG}$. Since $\nu_H \,:\, C(H)\rightarrow C(\GG')$ intertwines the comultiplications and $H$ is a dual subgroup of $\GG$, it suffices to show that
$$(\id\ot\Delta_{\GG'})(\id\ot\pi)\rho=(\id\ot\pi\ot\pi)(\id\ot\Delta_\GG)\rho.$$
Using Equation $(\ref{EqBetaG})$ we find, for all $p\in V(\G)$,
\begin{eqnarray*}
(\chi_p\ot\Delta_{\GG'})(\id\ot\pi)\rho&=&(\chi_p\ot\Delta_{\GG'})\rho'=(\id\ot\Delta_{\GG'})\rho_p=(\id\ot\Delta_{\GG_p})\rho_p\\
&=&(\beta_G^p\ot\id)(\id\ot\rho_p)\Delta_G
=(\beta_G^p\ot\id)(\id\ot\chi_p\ot\id)(\id\ot\rho')\Delta_G\\
&=&(\chi_p\ot\pi\ot\id)(\beta_G\ot\id)(\id\ot\id\ot\pi)(\id\ot\rho)\Delta_G\\
&=&(\chi_p\ot\pi\ot\pi)(\id\ot\Delta_\GG)\rho.
\end{eqnarray*}
This shows that $\pi$ intertwines the comultiplications. It remains to show that $\pi$ is invertible. To do so, we construct the inverse of $\pi$. Consider the unital $*$-homomorphism $$\rho'_p:=(\chi_p\ot\id)\rho\,:\, C(G)\rightarrow B_p\ot C(\GG)$$
and view $C(H_p)\subset C(H)\subset C(\GG)$ and $C(F)\subset C(\GG)$. It is clear that $C(F)$ commutes with $C(H_p)$, $\rho'_p(f)=1_{B_p}\ot f$ for $f\in C(F)$ and $[\rho_p'(a),\beta_p(b)]=0$ for all $a\in C(G)$ and $b\in B_p$. Hence there exists a unique unital $*$-homomorphism $\pi_p^{-1}\,:\, C(\GG_p)\rightarrow C(\GG)$ such that $(\id\ot\pi_p^{-1})\rho_p=\rho_p'$ and $\pi_p^{-1}\vert_{C(H_p)}$ is the inclusion of $C(H_p)$ in $C(\GG)$. In particular, for each vertex $p\in V(\G)$, $\pi_p^{-1}\vert_{C(F)}$ is the inclusion of $C(F)$ in $C(\GG)$. Consider the unitaries $u_e\in C(H)\subset C(\GG)$. By the universal property of the fundamental quantum group algebra, there exists a unique morphism $\pi^{-1}\,:\, C(\GG')\rightarrow C(\GG)$ such that $\pi^{-1}\vert_{C(\GG_p)}=\pi_p^{-1}$ and $\pi^{-1}(v_e)=u_e$, for all $p\in V(\G)$ and $e\in E(\G)$. It is now easy to check that $\pi^{-1}$ is the inverse of $\pi$.\end{proof}

\noindent Theorem \ref{deltafree} is a generalization of the ergodic decomposition in the following sense.

\begin{remark} When $(B,\psi)=\bigoplus(B_i,\psi)$ is the ergodic decomposition of $(B,\psi)$ and $H=\GB$ we recover the generalization to the amalgamated case of the ergodic decomposition \cite[Proposition 5.4]{FP16} (by using example \ref{ExErgoDec}): $$G\wr_{\ast,F} \GB\simeq \ast_{i\in I,F}\left(G\wr_{\ast,F} {\rm Aut}^+(B_i, \psi_i)\right).$$
\end{remark}

\section{The block extended C*-algebra and Haar measure}\label{sectionblock}
In the entire section, we fix an action $\beta\,:\,B\curvearrowright H$ on a finite dimensional C*-algebra $B$ with faithful invariant state and we use the notations introduced in Section \ref{SectionQAut}.

\subsection{Block decomposition}
Given $1\leq\kappa\leq K$, let $C_\kappa(\GG)\subset C(\GG)$ be the subalgebra generated by $C(H), C(F)$ and the coefficients of $\rho_\kappa:=(\chi_\kappa\ot\id)\rho\,:\,C(G)\rightarrow M_{N_\kappa}(\C)\ot C(\GG)$. Hence, $\rho_\kappa(C(G))\subset M_{N_\kappa}(\C)\ot C_\kappa(\GG)$ and the relations $[\rho_\kappa(a),\beta_\kappa(b)]=0$, $[x,f]=0$ and  $\rho_\kappa(f)=1\ot f$ are satisfied in $M_{N_\kappa}(\C)\ot C_\kappa(\GG)$ for all $a\in C(G)$, $b\in B$, $x\in C(H)$ and $f\in C(F)$. $C_\kappa(\GG)$ is called a \textit{block subalgebra}. Recall that we view $C(H)\otm C(F)\subset C_\kappa(\GG)$. The block C*-algebra satisfies the following universal property.

\begin{proposition}\label{PropUnivAlphaAction}
Fix $1\leq \kappa\leq K$. For any unital C*-algebra $D_\kappa$, any unital $*$-morphism $\pi_\kappa\,:\,C(G)\rightarrow M_{N_\kappa}(\C)\ot D_\kappa$ and any {\rm faithful} unital $*$-morphism $\iota_\kappa \,:\, C(H)\otm C(F)\rightarrow D_\kappa$ such that, for all $a\in C(G)$, $b\in B$, $f\in C(F)$, $$\pi_\kappa(a)(\id\ot\iota_\kappa)(\beta_\kappa(b)\ot 1_F)=(\id\ot\iota_\kappa)(\beta_\kappa(b)\ot 1_F) \pi_\kappa(a)\text{ and }\pi_\kappa(f) = 1_\kappa\ot \iota_\kappa(1\ot f),$$
there exists a unique unital $*$-homomorphism $\widetilde{\pi}_\kappa\,:\, C_\kappa(\GG)\rightarrow D$ such that $$(\id\ot\widetilde{\pi}_\kappa)\rho_\kappa=\pi_\kappa\text{ and }\widetilde{\pi}_\kappa(x f)=\iota_\kappa(x\ot f)\text{ for all }x\in C(H), f\in C(F).$$
\end{proposition}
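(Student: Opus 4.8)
The plan is to derive the universal property of the block algebra $C_\kappa(\GG)$ from the universal property of $C(\GG)$ itself (Theorem \ref{ThmDefFreeWr}), by lifting the given data on the block to data on the whole free wreath product. The difficulty to keep in mind is that $\widetilde{\pi}_\kappa$ must be defined on a \emph{subalgebra} of $C(\GG)$, not on $C(\GG)$ itself, so I cannot simply invoke the universal property of $C(\GG)$ and restrict; I must build an auxiliary target C*-algebra carrying the whole data of $C(\GG)$ and then restrict \emph{that} morphism, checking it lands in $D_\kappa$ and reconstructs $\pi_\kappa$ and $\iota_\kappa$.

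First I would set up the target. Let $D:=C_{\overline{\kappa}}(\GG)\ast_{C(H)\otm C(F)}D_\kappa$ be the full amalgamated free product of $D_\kappa$ with the ``complementary'' part of $C(\GG)$ over $C(H)\otm C(F)$, where $C_{\overline\kappa}(\GG)$ denotes the C*-subalgebra of $C(\GG)$ generated by $C(H)$, $C(F)$ and the coefficients of $\rho_\zeta$ for all $\zeta\neq\kappa$ --- actually, it is cleaner to take $D$ to be the full free product of $D_\kappa$ with $C(G)^{\ast(K-1)}\ast(C(H)\otm C(F))$ suitably amalgamated, or simply to work with $D := D_\kappa \ast_{C(H)\otm C(F)} C(\GG)$, amalgamating over the common copy of $C(H)\otm C(F)$ (using that $\iota_\kappa$ is faithful, so $C(H)\otm C(F)\subset D_\kappa$). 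Inside $D$ one has $C(H)\otm C(F)\subset D$ and, via the inclusion $C(\GG)\subset D$, the homomorphisms $\iota_H,\iota_F$ and $\rho$. Define a new $\rho'\,:\,C(G)\rightarrow B\ot D$ by requiring $(\chi_\kappa\ot\id)\rho' = \pi_\kappa$ (using $M_{N_\kappa}(\C)\ot D_\kappa \hookrightarrow M_{N_\kappa}(\C)\ot D = \chi_\kappa(B)\ot D$) and $(\chi_\zeta\ot\id)\rho' = \rho_\zeta$ for $\zeta\neq\kappa$ (landing in $M_{N_\zeta}(\C)\ot C(\GG)\subset M_{N_\zeta}(\C)\ot D$); by the universal property of the direct sum $B=\bigoplus_\zeta M_{N_\zeta}(\C)$ this assembles to a unital $*$-homomorphism $\rho'\,:\,C(G)\rightarrow B\ot D$.

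Next I would verify the three relations of $C(\GG)$ for the triple $(\iota_H,\iota_F,\rho')$ with target $D$: the commutation $[\iota_H(x),\iota_F(f)]=0$ holds since it already holds in $C(H)\otm C(F)\subset D$; the relation $\rho'(f)=1_B\ot f$ for $f\in C(F)$ is checked block by block, using $\pi_\kappa(f)=1_\kappa\ot\iota_\kappa(1\ot f)$ on the $\kappa$-block and $\rho_\zeta(f)=1_\zeta\ot f$ on the others; and $[(\id\ot\iota_H)(\beta(b)),\rho'(a)]=0$ is again checked block by block after applying $\chi_\zeta\ot\id$, using the hypothesis $[\pi_\kappa(a),(\id\ot\iota_\kappa)(\beta_\kappa(b)\ot 1_F)]=0$ on the $\kappa$-block (note $(\chi_\kappa\ot\id)(\beta(b))=\beta_\kappa(b)$) and the corresponding relation inside $C(\GG)$ on the other blocks. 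By the universal property of $C(\GG)$ this produces a unique unital $*$-homomorphism $\nu\,:\,C(\GG)\rightarrow D$ with $(\id\ot\nu)\rho=\rho'$, $\nu\vert_{C(H)}=\iota_H$, $\nu\vert_{C(F)}=\iota_F$ (the latter two being the given inclusions $C(H),C(F)\subset D$).

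Finally I would restrict $\nu$ to $C_\kappa(\GG)$. Since $C_\kappa(\GG)$ is generated by $C(H)$, $C(F)$ and the coefficients of $\rho_\kappa = (\chi_\kappa\ot\id)\rho$, and $\nu$ sends these to $C(H),C(F)\subset D_\kappa$ and to the coefficients of $(\chi_\kappa\ot\id)\rho' = \pi_\kappa\in M_{N_\kappa}(\C)\ot D_\kappa$ respectively, the image $\nu(C_\kappa(\GG))$ is contained in $D_\kappa\subset D$. Thus $\widetilde{\pi}_\kappa := \nu\vert_{C_\kappa(\GG)}\,:\,C_\kappa(\GG)\rightarrow D_\kappa$ is a unital $*$-homomorphism satisfying $(\id\ot\widetilde{\pi}_\kappa)\rho_\kappa = \pi_\kappa$ and $\widetilde{\pi}_\kappa(xf) = \iota_\kappa(x\ot f)$ for $x\in C(H)$, $f\in C(F)$ (the last identity because $\widetilde\pi_\kappa(x) = \iota_H'(x)$ and $\widetilde\pi_\kappa(f)=\iota_F'(f)$ with $\iota_H',\iota_F'$ the restrictions of $\iota_\kappa$ to $C(H)$ and $C(F)$, and these multiply to $\iota_\kappa(x\ot f)$ since $\iota_\kappa$ is a $*$-homomorphism on $C(H)\otm C(F)$). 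Uniqueness of $\widetilde\pi_\kappa$ is immediate since $C_\kappa(\GG)$ is generated by the elements on which $\widetilde\pi_\kappa$ is prescribed. The one point that needs a word of care is the faithfulness of $\iota_\kappa$: it is exactly what guarantees that $C(H)\otm C(F)$ embeds in $D_\kappa$, so that the amalgamated free product $D$ makes sense with $C(\GG)$ and $D_\kappa$ as genuine subalgebras sharing $C(H)\otm C(F)$; this is the main (mild) obstacle, and it is handled simply by taking the \emph{full} amalgamated free product, for which the canonical maps from the building blocks are always faithful.
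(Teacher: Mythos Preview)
Your proof is correct and follows essentially the same strategy as the paper: build an auxiliary target containing $D_\kappa$ and the complementary block data, assemble $\rho'$ block by block, invoke the universal property of $C(\GG)$, and then restrict to $C_\kappa(\GG)$ to land back in $D_\kappa$. The only cosmetic difference is the choice of auxiliary algebra: the paper takes $\mathcal{D}=D_\kappa\ast_{C(H)\otm C(F)}\bigl(\ast_{\gamma\neq\kappa,\,C(H)\otm C(F)}C_\gamma(\GG)\bigr)$, using only the complementary blocks, whereas you take $D=D_\kappa\ast_{C(H)\otm C(F)}C(\GG)$; both work for the same reason and your identification of the amalgamated copies of $C(H)\otm C(F)$ (which is where faithfulness of $\iota_\kappa$ enters) is handled correctly.
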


\begin{proof}
If such $D_\kappa$ is given with the corresponding $*$-morphisms we define $$\mathcal{D}_0 := \underset{\gamma\neq \kappa,\, C(H)\otm C(F)}{\ast} C_\gamma(\GG)$$ and $\mathcal{D}:= D_\kappa\underset{C(H)\otm C(F)}{\ast}\mathcal{D}_0$, where the amalgamation is with respect to the faithful unital $*$-homomorphisms $\iota_\kappa\,:\,C(H)\otm C(F)\rightarrow D_\kappa$ and the inclusion $C(H)\otm C(F)\rightarrow\mathcal{D}_0$. Composing $\pi_\kappa$ (resp. $\rho_\gamma$) with the inclusion of $D_\kappa$ (resp. $C_\gamma(\GG)$ for $\gamma\neq\kappa$) in $\mathcal{D}$ we may view $\pi_\kappa\,:\,C(G)\rightarrow M_{N_\kappa}(\C)\ot \mathcal{D}$ (resp. $\rho_\gamma\,:\,C(G)\rightarrow M_{N_\gamma}(\C)\ot\mathcal{D}$). Use the universal property of the direct sum to get a map $\rho' : C(G)\rightarrow B\ot\mathcal{D}$ such that $(\chi_\gamma \ot \id) \rho' = \rho_\gamma$ for $\gamma\neq \kappa$ and $(\chi_\kappa \ot \id) \rho' = \pi_\kappa$, we also have a map $\iota\,:\,C(H)\otm C(F)\rightarrow \mathcal{D}$ which is the canonical inclusion. Note that, if $a\in C(G)$ and $b\in B$, the commutation relation $\rho'(a)((\id\ot \iota)(\beta(b)\ot 1_F)) =  ((\id\ot \iota)(\beta(b)\ot 1_F))\rho'(a)$ can be checked when composed with any $\chi_\gamma\ot \id$. For $f\in C(F)$, the relation $\rho'(f) = 1_B\ot \iota(1\ot f)$ also follows from the fact that we have the relations when applying $(\chi_\gamma\ot \id)$ and the fact that we amalgamated the different copies of $C(H)\otm C(F)$.
Applying the universal property of $C(\GG)$ gives a $*$-morphism $\widetilde{\pi} : C(\GG)\rightarrow \mathcal{D},$ such that $\rho' = (\id\ot\widetilde{\pi})\circ \rho$ and $\widetilde{\pi}_{\vert C(H)\otm C(F)} =\iota$. Define $\widetilde{\pi}_\kappa :=\widetilde{\pi}_{\vert C_\kappa(\GG)}\,:\, C_\kappa(\GG) \rightarrow \mathcal{D}$. Note that $\pi_\kappa=(\chi_\kappa\ot\id)\rho'=(\chi_\kappa\ot \widetilde{\pi})\rho=(\id\ot\widetilde{\pi})\rho_\kappa=(\id\ot \widetilde{\pi}_\kappa)\rho_\kappa$. In particular, $\widetilde{\pi}_\kappa(C_\kappa(\GG))\subset D_\kappa$ and we may view $\widetilde{\pi}_\kappa$ as a map from $C_\kappa(\GG)$ to $D_\kappa$. Hence, $\widetilde{\pi}_\kappa\vert_{C(H)\otm C(F)}=\iota_\kappa$ and this concludes the proof.\end{proof}

\begin{theorem}\label{ThmBlockAction}
The following holds.
\begin{enumerate}
\item There exists a unique unital $*$-isomorphism
$$\pi\,:\,C_1(\GG)\underset{C(H)\otm C(F)}{*}\dots\underset{C(H)\otm C(F)}{*}C_K(\GG)\rightarrow C(\GG)$$
such that, for all $1\leq\kappa\leq K$, $\pi\vert_{C_\kappa(\GG)}\,:\, C_\kappa(\GG)\rightarrow C(\GG)$ is the inclusion.
\item For all $1\leq\kappa\leq K$, there exists a unique unital $*$-isomorphism
$$\pi_\kappa\,:\,(B\ot C(G))\underset{B\ot C(F)}{*}(M_{N_\kappa}(\C)\ot C(H)\otm C(F))\rightarrow M_{N_\kappa}(\C)\ot C_\kappa(\GG)$$
such that $\pi_\kappa(b\ot a)=\beta_\kappa(b)\rho_\kappa(a)$ for $b\ot a\in B\ot C(G)$ and $\pi_\kappa\vert_{M_{N_\kappa}(\C)\ot C(H)\otm C(F)}$ is the inclusion. The amalgamated free product is done with respect to the inclusion map $B\ot C(F)\rightarrow B\ot C(G)$, and the map $\beta_\kappa\ot \id\,:\, B\ot C(F)\rightarrow M_{N_\kappa}(\C)\ot C(H)\otm C(F)$. The inverse isomorphism $\varphi_\kappa:=\pi_\kappa^{-1}$ satisfies $\varphi_\kappa\circ\rho_\kappa(a)=1\ot a\in B\ot C(G)$ for all $a\in C(G)$ and $\varphi_\kappa\vert_{M_{N_\kappa}(\C)\ot C(H)\otm C(F)}$ is the inclusion.
\end{enumerate}
\end{theorem}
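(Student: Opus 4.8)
The two statements are both instances of the following general principle: whenever a C*-algebra is presented by generators and relations that split cleanly into two commuting-with-amalgamation families, it is an amalgamated free product. The plan is to verify the universal property of the amalgamated free product in each case, using the universal property of $C(\GG)$ (and of $C_\kappa(\GG)$, Proposition \ref{PropUnivAlphaAction}) as the main input.

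For $(1)$: I would first observe that by Remark \ref{RmkFwp} and the definition of $C_\kappa(\GG)$ we have compatible faithful inclusions $C(H)\otm C(F)\subset C_\kappa(\GG)\subset C(\GG)$, so the amalgamated free product $\mathcal A:=C_1(\GG)\underset{C(H)\otm C(F)}{*}\cdots\underset{C(H)\otm C(F)}{*}C_K(\GG)$ makes sense and, by its universal property, there is a unique morphism $\pi\,:\,\mathcal A\rightarrow C(\GG)$ restricting to the inclusion on each $C_\kappa(\GG)$. For surjectivity, note that $C(\GG)$ is generated by $C(H),C(F)$ and the coefficients of $\rho=\bigoplus_\kappa\rho_\kappa$, all of which lie in the image. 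For injectivity I would construct the inverse: composing $\rho_\kappa$ with the inclusion $C_\kappa(\GG)\hookrightarrow\mathcal A$ and using the universal property of the direct sum $B=\bigoplus_\kappa M_{N_\kappa}(\C)$ gives $\rho'\,:\,C(G)\rightarrow B\ot\mathcal A$ with $(\chi_\kappa\ot\id)\rho'=\rho_\kappa$; the relations $[\rho'(a),\beta(b)]=0$ and $\rho'(f)=1_B\ot f$ are then checked after composing with each $\chi_\kappa\ot\id$ (using that the copies of $C(H)\otm C(F)$ are amalgamated), and together with the inclusion $C(H)\otm C(F)\hookrightarrow\mathcal A$ the universal property of $C(\GG)$ yields $\pi^{-1}\,:\,C(\GG)\rightarrow\mathcal A$. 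One then checks $\pi\circ\pi^{-1}$ and $\pi^{-1}\circ\pi$ are the identity on generators. This is essentially the argument already run inside the proof of Proposition \ref{PropUnivAlphaAction}, now read in reverse.

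For $(2)$: the target is $M_{N_\kappa}(\C)\ot C_\kappa(\GG)$, and the two building blocks $B\ot C(G)$ and $M_{N_\kappa}(\C)\ot C(H)\otm C(F)$ are amalgamated over $B\ot C(F)$ via the inclusion and via $\beta_\kappa\ot\id$. The key point is that $\beta_\kappa\ot\id\,:\,B\ot C(F)\rightarrow M_{N_\kappa}(\C)\ot C(H)\otm C(F)$ is faithful (Proposition \ref{PropUcpSection}$(1)$, $\beta_\kappa$ is faithful), so the amalgamated free product is defined. To produce $\pi_\kappa$ one checks that $b\ot a\mapsto\beta_\kappa(b)\rho_\kappa(a)$ (using $\beta_G$, the homomorphism $C(G)\ot B\rightarrow B\ot C(\GG)$) extends to a $*$-homomorphism from $B\ot C(G)$, that on $B\ot C(F)$ it agrees with $b\ot f\mapsto\beta_\kappa(b)(1\ot f)=(\beta_\kappa\ot\id)(b\ot f)$ (using $\rho_\kappa(f)=1\ot f$), and that the two partial maps have commuting ranges modulo the amalgam — but here we do \emph{not} need commuting ranges, only that they agree on the amalgam, which is exactly what the free-product universal property requires. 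For the inverse I would apply Proposition \ref{PropUnivAlphaAction} with $D_\kappa:=(B\ot C(G))\underset{B\ot C(F)}{*}(M_{N_\kappa}(\C)\ot C(H)\otm C(F))$, taking $\pi_\kappa'\,:\,C(G)\rightarrow M_{N_\kappa}(\C)\ot D_\kappa$ to be $a\mapsto 1\ot(1\ot a)$ (the element $1\ot a$ sitting in the $B\ot C(G)$ leg, viewed inside $D_\kappa$) — wait, more precisely $a\mapsto (\chi_\kappa\ot\mathrm{incl})$ of the image of $1_B\ot a$ under $B\ot C(G)\hookrightarrow D_\kappa$; one checks the hypotheses $[\pi_\kappa'(a),(\id\ot\iota_\kappa)(\beta_\kappa(b)\ot 1_F)]=0$ and $\pi_\kappa'(f)=1_\kappa\ot\iota_\kappa(1\ot f)$ hold because inside $D_\kappa$ the element $1_B\ot a$ of the first leg and the element $\beta_\kappa(b)\ot 1_F$ of the second leg have, by construction, the property that $1_B\ot f$ coincides across the amalgam, and the relevant commutations reduce to identities among the partial homomorphisms. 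This gives $\widetilde\pi_\kappa\,:\,C_\kappa(\GG)\rightarrow D_\kappa$, which after tensoring with $\id_{M_{N_\kappa}(\C)}$ and restricting appropriately is $\varphi_\kappa$, and one verifies $\varphi_\kappa\circ\rho_\kappa(a)=1\ot a$ and $\varphi_\kappa\vert_{M_{N_\kappa}(\C)\ot C(H)\otm C(F)}=\mathrm{incl}$ directly, whence $\varphi_\kappa=\pi_\kappa^{-1}$.

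The main obstacle is bookkeeping rather than conceptual: in $(2)$ one must be careful that the amalgamation in $D_\kappa$ is genuinely along $B\ot C(F)$ with the \emph{twisted} embedding $\beta_\kappa\ot\id$ on one side, so that the element ``$1_B\ot f$'' of the first leg is identified with ``$(\beta_\kappa\ot\id)(1_B\ot f)=1_\kappa\ot(1\ot f)$'' of the second leg — this identification is precisely what makes the relation $\rho_\kappa(f)=1\ot f$ (equivalently $\pi_\kappa'(f)=1_\kappa\ot\iota_\kappa(1\ot f)$) automatic, and what lets the two partial homomorphisms glue. Once this identification is set up correctly, both directions are formal applications of the respective universal properties, and checking inverseness on generators is routine.
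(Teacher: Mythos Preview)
Your plan for $(1)$ is correct and is exactly the paper's argument.

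For $(2)$, the construction of $\pi_\kappa$ via the universal property of the amalgamated free product is fine, but your proposed inverse has a genuine gap. Writing $\nu_\kappa,\mu_\kappa$ for the canonical inclusions of the two legs into $\Acal_\kappa:=D_\kappa$, your candidate $\pi_\kappa'(a)=1_{M_{N_\kappa}}\ot\nu_\kappa(1_B\ot a)$ (together with $\iota_\kappa(x\ot f)=\mu_\kappa(1\ot x\ot f)$) does satisfy $\pi_\kappa'(f)=1_\kappa\ot\iota_\kappa(1\ot f)$, as you correctly trace to the amalgamation. But it does \emph{not} satisfy the commutation hypothesis of Proposition~\ref{PropUnivAlphaAction}: unpacking $[\pi_\kappa'(a),(\id\ot\iota_\kappa)(\beta_\kappa(b)\ot 1_F)]=0$ requires $\nu_\kappa(1_B\ot a)$ to commute with $\mu_\kappa(1_\kappa\ot x\ot 1_F)$ in $\Acal_\kappa$ for every $C(H)$-coefficient $x$ of $\beta_\kappa(b)$, and in an amalgamated free product elements from the two legs that are not in the amalgam simply do not commute. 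There is also a shape mismatch: $\varphi_\kappa$ must land in $\Acal_\kappa$, not $M_{N_\kappa}(\C)\ot\Acal_\kappa$, so ``tensoring with $\id_{M_{N_\kappa}}$'' cannot be the final step.

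The paper's fix is a matrix-unit conjugation trick that you are missing. One sets
\[
\varphi^\kappa_{ij}(a):=\sum_{s}\mu_\kappa(e^\kappa_{si}\ot 1\ot 1)\,\nu_\kappa(1_B\ot a)\,\mu_\kappa(e^\kappa_{js}\ot 1\ot 1)\in\Acal_\kappa
\]
and $\varphi_\kappa(a):=\sum_{i,j}e^\kappa_{ij}\ot\varphi^\kappa_{ij}(a)\in M_{N_\kappa}(\C)\ot\Acal_\kappa$. One checks by hand that this is a unital $*$-homomorphism; the commutation with $(\id\ot\iota_\kappa)(\beta_\kappa(b)\ot 1_F)$ now goes through because the surrounding matrix units $e^\kappa_{si},e^\kappa_{js}$ absorb the $M_{N_\kappa}$-part of $\beta_\kappa(b)$ and the computation collapses to the amalgam identity $\mu_\kappa(\beta_\kappa(b)\ot 1)=\nu_\kappa(b\ot 1)$. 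Proposition~\ref{PropUnivAlphaAction} then produces $\varphi_\kappa\,:\,C_\kappa(\GG)\to\Acal_\kappa$; a separate short computation shows $\varphi_\kappa(C_\kappa(\GG))\subset\mu_\kappa(M_{N_\kappa}(\C)\ot 1\ot 1)'\cap\Acal_\kappa$, which is what allows the extension to $M_{N_\kappa}(\C)\ot C_\kappa(\GG)\to\Acal_\kappa$ (landing in $\Acal_\kappa$, not in an amplification). Inverseness on generators is then routine.
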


\begin{proof}
$(1)$. The existence of $\pi$ follows from the universal property of the amalgamated free product. Let us construct the inverse of $\pi$. Denote by $C$ the free product of the C*-algebras $C_\kappa(\GG)$, amalgamated over $C(H)\otm C(F)$ and view $C(H)\otm C(F)\subset C$. Note that we have $\rho_\kappa\,:\,C(G)\rightarrow M_{N_\kappa}(\C)\ot C_\kappa(\GG)\subset M_{N_\kappa}(\C)\ot C$ such that $\rho_\kappa(f)=1_\kappa\ot f$ for all $f\in C(F)$ and $1\leq\kappa\leq K$. Hence, there exists a unique $\rho'\,:\,C(G)\rightarrow B\ot C$ such that $(\chi_\kappa\ot\id)\rho'=\rho_\kappa$ for all $\kappa$ and $\rho'(f) = 1_B\ot f$ for all $f\in C(F)$. Since the images of $\rho_\kappa$ and $\beta_\kappa$ commute in $M_{N_\kappa}(\C)\ot C_\kappa(\GG)$ for all $\kappa$, it follows that the images of $\rho'$ and $\beta$ commute in $B\ot C$. By the universal property of $C(\GG)$, there exists a unique $\varphi\,:\,C(\GG)\rightarrow C$ such that $\varphi\vert_{C(H)\otm C(F)}$ is the inclusion and $(\id\ot\varphi)\rho=\rho'$. It is clear that $\varphi$ is the inverse of $\pi$.

\vspace{0.2cm}

\noindent$(2)$. During this proof we use the notation: $$\Acal_\kappa:=(B\ot C(G))\underset{B\ot C(F)}{*}(M_{N_\kappa}(\C)\ot C(H)\otm C(F))$$
and we write $\nu_\kappa\,:\, B\ot C(G)\rightarrow \Acal_\kappa$ and $\mu_\kappa\,:\, M_{N_\kappa}(\C)\ot C(H)\otm C(F)\rightarrow\Acal_\kappa$ the canonical faithful unital $*$-homomorphisms. They satisfy $\mu_\kappa(\beta_\kappa(b)\ot f)=\nu_\kappa(b\ot f)$ for all $b\in B$ and $f\in C(F)$. We fix once and for all a basis of $M_{N_\kappa}(\C)$ consisting of matrix units $(e^\kappa_{ij})_{ij}$.

\vspace{0.2cm}

\noindent The existence of $\pi_\kappa$ is a consequence of the universal property of the amalgamated free product $\Acal_\kappa$. Let us construct its inverse $\varphi_\kappa\,:\,M_{N_\kappa}(\C)\ot C_\kappa(\GG)\rightarrow \Acal_\kappa$. We define the morphism $\iota\,:\,C(H)\otm C(F)\rightarrow\Acal_\kappa$, $x\ot f\mapsto\mu_\kappa(1\ot x\ot f)$ and we consider the elements
$$\varphi^\kappa_{ij}(a):=\sum_{s=1}^{N_\kappa}\mu_\kappa(e^\kappa_{si}\ot 1\ot 1)\nu_\kappa(1\ot a)\mu_\kappa(e^\kappa_{js}\ot 1\ot 1)\in\Acal_\kappa\text{ for }a\in C(G)\text{ and }1\leq i,j\leq N_\kappa.$$ A direct computation gives the relations:
\begin{enumerate}[(i)]
    \item $\varphi^\kappa_{ij}(ab)=\sum_k\varphi^\kappa_{ik}(a)\varphi^\kappa_{kj}(b)$.
    \item $(\varphi^\kappa_{ij}(a))^*=\varphi^\kappa_{ji}(a^*)$.
    \item $\varphi^\kappa_{ij}(1)=\delta_{ij}1_{\Acal_\kappa}$.
    \item $\varphi^\kappa_{ij}(f)= \delta_{ij}\iota(1\ot f)$ if $f\in C(F)$.
\end{enumerate}
Note that $(ii)$, $(iii)$ and $(iv)$ are clear so we only prove $(i)$. Let $a,b\in C(G)$ then,
\begin{eqnarray*}
    \sum_{k=1}^{N_\kappa} \varphi^{\kappa}_{ik}(a)\varphi^{\kappa}_{kj}(b)&=&\sum_{k,s}\mu_\kappa(e^\kappa_{si}\ot 1\ot 1)\nu_\kappa(1\ot a)\mu_\kappa(e^\kappa_{kk}\ot 1\ot 1)\nu_\kappa(1\ot b)\mu_\kappa(e^\kappa_{js}\ot 1\ot 1)\\
    &=&\sum_{s}\mu_\kappa(e^\kappa_{si}\ot 1\ot 1)\nu_\kappa(1\ot ab)\mu_\kappa(e^\kappa_{js}\ot 1\ot 1)=\varphi^\kappa_{ij}(ab).
\end{eqnarray*}

\noindent This relations mean that $\varphi_\kappa\,:\, C(G)\rightarrow M_{N_\kappa}(\C)\ot \Acal_\kappa$ defined by $\varphi_\kappa(a):=\sum_{i,j=1}^{N_\kappa}e^\kappa_{ij}\ot\varphi^\kappa_{ij}(a)$ is a unital $*$-homomorphism. Let us check that $\varphi_\kappa(a)$ and $(\id\ot\iota)(\beta_\kappa(b)\ot 1)$ commutes, for all $a\in C(G)$ and $b\in B$. We write $\beta_\kappa\,:\,B\rightarrow M_{N_\kappa}(\C)\ot C(H)$ with coefficients $\beta_\kappa(b)=\sum_{i,j}e^\kappa_{ij}\ot\beta^\kappa_{ij}(b)$. Then, for all $b\in B$,
$$(e^\kappa_{si}\ot 1)\beta_\kappa(b)=\sum_re^\kappa_{sr}\ot\beta^\kappa_{ir}(b)\quad\text{and}\quad\beta_\kappa(b)(e^\kappa_{js}\ot 1)=\sum_re^\kappa_{rs}\ot\beta^\kappa_{rj}(b).$$
Hence, writing $(\id\ot\iota)(\beta_\kappa(b)\ot 1)\varphi_\kappa(a)=\sum_{i,j}e^\kappa_{ij}\ot X^\kappa_{ij}$,
\begin{eqnarray*}
X^\kappa_{ij}&=&\sum_{r,s}\iota(\beta^\kappa_{ir}(b)\ot 1)\varphi^\kappa_{rj}(a)=\sum_{r,s}\mu_\kappa(e^\kappa_{sr}\ot\beta^\kappa_{ir}(b)\ot 1)\nu_\kappa(1\ot a)\mu_\kappa(e^\kappa_{js}\ot 1\ot 1)\\
&=&\sum_s\mu_\kappa((e^\kappa_{si}\ot 1)\beta_\kappa(b)\ot 1)\nu_\kappa(1\ot a)\mu_\kappa(e^\kappa_{js}\ot 1\ot 1)\\
&=&\sum_s\mu_\kappa(e^\kappa_{si}\ot 1\ot 1)\nu_\kappa(b\ot a)\mu_\kappa(e^\kappa_{js}\ot 1\ot 1)\\
&=&\sum_s\mu_\kappa((e^\kappa_{si}\ot 1\ot 1)\nu_\kappa(1\ot a)\mu_\kappa(\beta_\kappa(b)(e^\kappa_{js}\ot 1)\ot 1)\\
&=&\sum_{r,s}\mu_\kappa(e^\kappa_{si}\ot 1\ot 1)\nu_\kappa(1\ot a)\mu_\kappa(e^\kappa_{rs}\ot\beta^\kappa_{rj}(b)\ot 1)=\sum_r\varphi^\kappa_{ir}(a)\iota(\beta^\kappa_{rj}(b)\ot 1).
\end{eqnarray*}

\noindent This shows that $(\id\ot\iota)(\beta_\kappa(b)\ot 1)\varphi_\kappa(a)=\varphi_\kappa(a)(\id\ot\iota)(\beta_\kappa(b)\ot 1)$ for all $a\in C(G)$ and $b\in B$. Note that, for all $f\in C(F)$, $\varphi_\kappa(f) = \sum_{i,j} e_{ij}^\kappa \ot \delta_{ij}\iota(1\ot f) = 1_\kappa \ot \iota(1\ot f)$. The universal property of $C_\kappa(\GG)$ (Proposition \ref{PropUnivAlphaAction}) gives a unital $*$-homomorphism, still denoted $\varphi_\kappa\,:\,C_\kappa(\GG)\rightarrow \Acal_\kappa$ such that $\varphi_\kappa(\rho^\kappa_{ij}(a))=\varphi^\kappa_{ij}(a)$ for all $1\leq i,j\leq N_\kappa$ and all $a\in C(G)$ and $\varphi_\kappa(x\ot f)=\mu_\kappa(1\ot x\ot f)$ for all $x\in C(H)$ and $f\in C(F)$. Now, we note that $\varphi_\kappa(C_\kappa(\GG))\subseteq\mu_\kappa(M_{N_\kappa}(\C)\ot 1\ot 1)'\cap\Acal_\kappa$. Indeed, it is clear that, for all $x\in C(H)$ and $f\in C(F)$, $\varphi_\kappa(x\ot f)=\mu_\kappa(1\ot x\ot f)$ commutes with $\mu_\kappa(M_{N_\kappa}(\C)\ot 1\ot 1)$ and, if $x=\rho^\kappa_{ij}(a)$ is a coefficient of $\rho_\kappa$, then:
\begin{eqnarray*}
\varphi_\kappa(x)\mu_\kappa(e^\kappa_{rs}\ot 1\ot 1)&=&\sum_t\mu_\kappa(e^\kappa_{ti}\ot 1\ot 1)\nu_\kappa(1\ot a)\mu_\kappa(e^\kappa_{jt}e^\kappa_{rs}\ot 1\ot 1)\\ &=&\mu_\kappa(e^\kappa_{ri}\ot 1\ot 1)\nu_\kappa(1\ot a)\mu_\kappa(e^\kappa_{js}\ot 1\ot 1)\\
&=&\mu_\kappa(e^\kappa_{rs}\ot 1\ot 1)\varphi_\kappa(x).
\end{eqnarray*}
Hence, $\varphi_\kappa(x)$ commutes with $\mu_\kappa(M_{N_\kappa}(\C)\ot 1\ot 1)$ for any $x$ which is a coefficients of $\rho_\kappa$. Since $C_\kappa(\GG)$ is generated by $C(H)$ and coefficients of $\rho_\kappa$, it follows that
$$\varphi_\kappa(C_\kappa(\GG))\subseteq\mu_\kappa(M_{N_\kappa}(\C)\ot 1\ot 1)'\cap\Acal_\kappa.$$
Hence, there exists a unique unital $*$-homomorphism, still denoted $\varphi_\kappa\,:\, M_{N_\kappa}(\C)\ot C_\kappa(\GG)\rightarrow \Acal_\kappa$ such that $\varphi_\kappa(b\ot x)=\mu_\kappa(b\ot 1\ot 1)\varphi_\kappa(x)$, for all $b\in M_{N_\kappa}(\C)$ and $x\in C_\kappa(\GG)$. By construction we have $\varphi_\kappa\circ\rho_\kappa(a)=\nu_\kappa(1\ot a)$ for all $a\in C(G)$ and $\varphi_\kappa\vert_{M_{N_\kappa}(\C)\ot C(H)\otm C(F)}=\mu_\kappa$. Since $M_{N_\kappa}(\C)\ot C_\kappa(\GG)$ is generated, as a C*-algebra, by $\rho_\kappa(C(G))\cup M_{N_\kappa}(\C)\ot C(H)\otm C(F)$ this easily implies that $\varphi_\kappa$ is the inverse of $\pi_\kappa$.\end{proof}

\subsection{Reduced operators}

\noindent From now on we suppose that the action $\beta\,:\, H\curvearrowright B$ is ergodic with unique invariant state $\psi\in B^*$ and associated representation $u\in\Lcal(B)\ot C(H)$. Let $h_H$ be the Haar measure of $H$. 

\vspace{0.2cm}
\noindent Let $G$ be a compact quantum group with a dual quantum subgroup $F$ and associated ucp map $E_F:C(G)\rightarrow C(F)$. For all $a\in\Pol(G)$, we may write $\Delta(a)=\sum a_{(1)}\ot a_{(2)}$ with $a_{(1)},a_{(2)}\in\Pol(G)$. If moreover $E
_F(a)=0$ then we may choose $a_{(1)}$ and $a_{(2)}$ such that $E_F(a_{(1)})=E_F(a_{(2)})=0$ for all $(1)$, $(2)$ (by Remark \ref{RmkEF}).

\vspace{0.2cm}

\noindent Let $\GG=G\wr_{*,\beta,F}H$ the amalgamated free wreath product quantum group with generic morphism $\rho\,:\,C(G)\rightarrow B\ot C(\GG)$ and $C_\kappa(\GG)\subseteq C(\GG)$ be a block algebra.

\vspace{0.2cm}

\noindent Let $\mathcal{C}_\kappa\subset M_{N_\kappa}(\C)\ot C_\kappa(\GG)$ be the linear span of \textit{reduced operators} in $M_{N_\kappa}(\C)\ot C_\kappa(\GG)$ i.e. operators $x\in M_{N_\kappa}(\C)\ot C_\kappa(\GG)$ which can be written as products alternating from operators of the form $X\in M_{N_\kappa}(\C)\ot \Pol(H)\ot \Pol(F)\subset M_{N_\kappa}(\C)\ot C_\kappa(\GG)$ with $(E_\kappa\ot \id_F) (X)=0$ and operators of the form $\rho_\kappa(a):=(\chi_\kappa\ot\id)\rho(a)$, with $a\in \Pol(G)$ and $E_F(a)=0$. More precisely, $x$ is of the form
\begin{itemize}
    \item $x=X_1\rho_\kappa(a_1)\dots X_n\rho_\kappa(a_n)$ or,
    \item $x=\rho_\kappa(a_1)X_1\dots \rho_\kappa(a_n)X_n$ or,
    \item $x=X_0\rho_\kappa(a_1)X_1\dots \rho_\kappa(a_n)X_n$ or,
    \item $x=\rho_\kappa(a_0)X_1\rho_\kappa(a_1)\dots X_n\rho_\kappa(a_n)$ or,
    \item $x=\rho_\kappa(a_1)(\beta_\kappa(b)\ot 1_{C(F)})$ for some $b\in B$.
\end{itemize}
where $n\geq 1$, $X_s\in M_{N_\kappa}(\C)\ot \Pol(H)\ot \Pol(F)$, $a_s\in\Pol(G)$, $E_F(a_s)=0$, $(E_\kappa\ot \id_F)(X_s)=0$.

\vspace{0.2cm}

\begin{lemma}\label{LemReducedAction}
The following holds.
\begin{enumerate}
    \item $X_\kappa:=\mathcal{C}_\kappa +M_{N_\kappa}(\C)\ot \Pol(H)\ot \Pol(F)$ is a dense subalgebra of $M_{N_\kappa}(\C)\ot C_\kappa(\GG)$.
    \item There exists a unique conditional expectation $$\E_\kappa\,:\,M_{N_\kappa}(\C)\ot C_\kappa(\GG)\rightarrow M_{N_\kappa}(\C)\ot C(H)\otm C(F)\text{ such that }\ker(\E_\kappa)=\overline{\mathcal{C}_\kappa}.$$
    \item $\Delta(C_\kappa(\GG))\subset C_\kappa(\GG)\ot C(\GG)$ and $(\id_{M_{N_\kappa}}\ot\Delta)(\mathcal{C}_\kappa)\subset\mathcal{C}_\kappa\otimes  C(\GG)$.
    \item For all $X\in M_{N_\kappa}(\C)\otimes C_\kappa(\GG)$ one has:
    $$( \E_\kappa\ot\id_{C(\GG)})(\id_{M_{N_\kappa}}\ot\Delta)(X)=(\id_{M_{N_\kappa}}\ot \Delta)\E_\kappa(X).$$
    \item For all $X\in M_{N_\kappa}(\C)\otimes C_\kappa(\GG)$ one has:
    $$((\id_{M_{N_\kappa}}\ot h_H\ot h_F)\circ\E_\kappa\ot\id_{C(\GG)})(\id_{M_{N_\kappa}}\ot\Delta)(X)=(\id_{M_{N_\kappa}}\ot h_H\ot h_F)\circ\E_\kappa(X)\ot 1_{C(\GG)}.$$
\end{enumerate}
\end{lemma}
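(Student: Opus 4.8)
\textbf{Overview.} The five statements are cumulative: the existence of the conditional expectation $\E_\kappa$ in $(2)$ rests on the algebraic decomposition in $(1)$, the comultiplication compatibility $(3)$ feeds into $(4)$, and $(5)$ is a one-line consequence of $(4)$ together with invariance of the Haar states. So I would organize the proof in exactly that order. Throughout I would work with the identification from Theorem \ref{ThmBlockAction}$(2)$, namely $M_{N_\kappa}(\C)\ot C_\kappa(\GG)\simeq (B\ot C(G))\underset{B\ot C(F)}{*}(M_{N_\kappa}(\C)\ot C(H)\otm C(F))$, under which $\rho_\kappa(a)\leftrightarrow 1\ot a$ and $M_{N_\kappa}(\C)\ot C(H)\otm C(F)$ sits as a free factor. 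Under this picture the ambient algebra is a \emph{bona fide} amalgamated free product over $N:=B\ot C(F)$, with the two conditional expectations coming from $E_F\,:\,C(G)\to C(F)$ (giving $E_1:=\id_B\ot E_F\,:\,B\ot C(G)\to B\ot C(F)$) and $E_\kappa\ot\id_F$ restricted to $M_{N_\kappa}(\C)\ot C(H)\otm C(F)\to B\ot C(F)$ (using that $E_\kappa\circ\beta_\kappa=\id_B$, Proposition \ref{PropUcpSection}, so this really does restrict correctly on $N$). The point is that the ``reduced operators'' $\mathcal C_\kappa$ of the lemma are exactly the reduced words relative to $(E_1,E_2)$ in the sense of \cite{FG20} recalled in the preliminaries, once one checks that $\ker(E_1)=\overline{\Span}\{\rho_\kappa(a)\,:\,a\in\Pol(G),E_F(a)=0\}$ inside $B\ot C(G)$ and $\ker(E_2)=\overline{\Span}\{X\in M_{N_\kappa}\ot\Pol(H)\ot\Pol(F)\,:\,(E_\kappa\ot\id_F)(X)=0\}$.

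\textbf{Steps $(1)$ and $(2)$.} For $(1)$: $M_{N_\kappa}(\C)\ot C_\kappa(\GG)$ is generated as a C*-algebra by $\rho_\kappa(\Pol(G))$ and $M_{N_\kappa}(\C)\ot\Pol(H)\ot\Pol(F)$; using $a=(a-\varepsilon_G(a)1)+\varepsilon_G(a)1$ type decompositions on the $C(G)$-side (more precisely $a - (\text{lift of }E_F(a))$) and the analogous splitting $X=(X-\beta_\kappa(E_\kappa\ot\id_F)(X))+\beta_\kappa(E_\kappa\ot\id_F)(X)$ on the $H$-side — noting $\beta_\kappa(E_\kappa\ot\id_F)(X)$ lands in $\beta_\kappa(B)\ot C(F)\subset N$ — one rewrites any word as a linear combination of reduced operators times an element of $M_{N_\kappa}\ot C(H)\otm C(F)$; the last bullet of the list (the $\rho_\kappa(a_1)(\beta_\kappa(b)\ot 1)$ shape) appears because $\beta_\kappa(b)\ot 1$ is in $N$, so a length-$2$ word collapses. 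This shows $X_\kappa$ is a subalgebra, and density is automatic since it contains the generators. For $(2)$: the existence and uniqueness of $\E_\kappa$ with kernel $\overline{\mathcal C_\kappa}$ is precisely the conditional expectation $E_{A_2}\,:\,A\to A_2$ of \cite{FG20} with $A_2=M_{N_\kappa}\ot C(H)\otm C(F)$, since $E_{A_2}$ kills reduced words of length $\ge 2$ and restricts to $E_1$ on the other factor, which is exactly the kernel description up to absorbing the length-$\le 1$ pieces. I would just cite the reconstruction of \cite{FG20} here rather than rebuild it.

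\textbf{Steps $(3)$, $(4)$, $(5)$.} For $(3)$: the inclusion $\Delta(C_\kappa(\GG))\subset C_\kappa(\GG)\ot C(\GG)$ follows from the defining formulas for $\Delta$ in Theorem \ref{ThmDefFreeWr} — $\Delta$ is $\Delta_{H\times F}$ on $C(H)\otm C(F)$ (which stays in $C_\kappa(\GG)\ot C(\GG)$), and $(\id\ot\Delta)\rho_\kappa(a)=(\chi_\kappa\ot\id\ot\id)(\beta_G\ot\id)(\id\ot\rho)\Delta_G(a)$, where the $\beta_G$ term is a product of $\rho_\kappa$-coefficients and $\beta_\kappa$-coefficients, all in $C_\kappa(\GG)$, tensored with $\rho$-coefficients in $C(\GG)$. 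For $(\id\ot\Delta)(\mathcal C_\kappa)\subset\mathcal C_\kappa\ot C(\GG)$, the crucial input is the ``co-kernel'' property: if $E_F(a)=0$ then $\Delta_G(a)=\sum a_{(1)}\ot a_{(2)}$ with $E_F(a_{(1)})=E_F(a_{(2)})=0$ (Remark \ref{RmkEF}$(2)$), and the analogous statement for $(E_\kappa\ot\id_F)$-kernel elements, which is Lemma \ref{LemE}$(2)$ combined with the co-product formula — this is the one spot where real work is hidden. Granting these, applying $\id\ot\Delta$ to a reduced word $X_1\rho_\kappa(a_1)\cdots$ and expanding turns each leg-$1$ factor into a sum of reduced words of the \emph{same alternating type} with $E_F$- resp. $(E_\kappa\ot\id_F)$-killed entries, times something in $C(\GG)$; so the result lies in $\mathcal C_\kappa\ot C(\GG)$. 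Then $(4)$ is immediate: both sides of $(\E_\kappa\ot\id)(\id\ot\Delta)(X)=(\id\ot\Delta)\E_\kappa(X)$ are checked on the dense algebra $X_\kappa$, where for $X\in\mathcal C_\kappa$ both sides vanish by $(3)$, and for $X\in M_{N_\kappa}\ot C(H)\otm C(F)$ both sides equal $(\id\ot\Delta)(X)$ since $\E_\kappa$ is the identity there and $\Delta=\Delta_{H\times F}$ preserves that subalgebra; linearity and continuity finish it. Finally $(5)$: apply $\id\ot h_H\ot h_F$ to $(4)$ in the second-and-third legs; since $h_{H\times F}=h_H\ot h_F$ is the Haar state on the dual quantum subgroup $C(H)\otm C(F)\subset C(\GG)$, left-invariance of $h_{H\times F}$ with respect to $\Delta_{H\times F}$ gives $(\id\ot h_H\ot h_F)(\id\ot\Delta)(Y)=(\id\ot h_H\ot h_F)(Y)\ot 1$ for $Y\in M_{N_\kappa}\ot C(H)\otm C(F)$, which is exactly the claimed identity after substituting $Y=\E_\kappa(X)$.

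\textbf{Main obstacle.} The genuinely technical point is step $(3)$ for $\mathcal C_\kappa$: verifying that $\id\ot\Delta$ sends the $(E_\kappa\ot\id_F)$-kernel of $M_{N_\kappa}\ot\Pol(H)\ot\Pol(F)$ into (that kernel)$\ot C(\GG)$. This is not a formal consequence of coassociativity — it needs the explicit Haar-state computation packaged in Lemma \ref{LemE}$(2)$, together with keeping careful track of which legs the coproduct lands in. Everything else is bookkeeping with reduced words and citing \cite{FG20}.
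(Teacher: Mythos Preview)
Your proposal is correct and follows essentially the same route as the paper: both use the amalgamated free product identification of Theorem~\ref{ThmBlockAction}(2) together with the \cite{FG20} conditional expectation $E_{A_2}$ to produce $\E_\kappa$, and both isolate Lemma~\ref{LemE}(2) as the technical core of step~(3). The only cosmetic difference is that the paper verifies $(3)$ by an explicit term-by-term computation (splitting off a ``bad'' term $Z_s$ and showing it vanishes via Lemma~\ref{LemE}(2), after first reducing to $X_s\in M_{N_\kappa}\ot\Pol(H)\ot\C1_F$ by pushing the $\Pol(F)$-factors through $\rho_\kappa$), whereas you phrase the same input more conceptually as ``$(E_\kappa\ot\id)\circ(\id\ot\Delta_H)$ annihilates $\ker(E_\kappa)$''; these are equivalent reformulations of the same identity.
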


\begin{proof}

$(1)$. It is easy to see that $X_\kappa$ is a unital $*$-subalgebra of $M_{N_\kappa}(\C)\ot C_\kappa(\GG)$ containing $M_{N_\kappa}(\C)\ot \Pol(H)\ot \Pol(F)$. Note that it also contains $1\ot\rho^\kappa_{ij}(a)$ for all $a\in \Pol(G)$. Indeed, writing $1\ot\rho^\kappa_{ij}(a)=\sum_r(e^\kappa_{ri}\ot 1)\rho_\kappa(a)(e^\kappa_{jr}\ot 1)$ and $e^{\kappa}_{rs}=(e^\kappa_{rs})^\circ+\frac{\delta_{rs}}{\delta\lambda_{\kappa,r}}1_B$, where $E_\kappa((e^\kappa_{rs})^\circ\ot 1)=0$ we see that $1\ot\rho^\kappa_{ij}(a)\in X$ when $E_F(a)=0$ and, for any $a\in \Pol(G)$, writing $a=a^\circ+E_F(a)$ we have $\rho_{ij}^\kappa(a)=\rho^\kappa_{ij}(a^\circ)+\rho^\kappa_{ij}(E_F(a)) = \rho^\kappa_{ij}(a^\circ) + \delta_{ij}(1\ot 1\ot E_F(a))$ which implies that $1\ot\rho^\kappa_{ij}(a)\in X$.

\vspace{0.2cm}

\noindent$(2)$. \noindent As in the proof of Theorem \ref{ThmBlockAction}, we write 
$$\Acal_\kappa:=(B\ot C(G))\underset{B\ot C(F)}{*}(M_{N_\kappa}(\C)\ot C(H)\otm C(F)),$$
where the amalgamation is with respect to the inclusion $B\ot C(F)\rightarrow B\ot C(G)$ and the unital $*$-homomorphism $\beta_\kappa\ot\id_F\,:\, B\ot C(F)\rightarrow M_{N_\kappa}(\C)\ot C(H)\otm C(F)$ and we write $\nu_\kappa\,:\, B\ot C(G)\rightarrow \Acal_\kappa$ and $\mu_\kappa\,:\, M_{N_\kappa}(\C)\ot C(H)\otm C(F)\rightarrow\Acal_\kappa$ the canonical faithful unital $*$-homomorphism. Consider the unique ucp map $E_2\,:\,\Acal_\kappa\rightarrow M_{N_\kappa}(\C)\ot C(H)\otm C(F)$ such that $E_2\circ\mu_\kappa=\id$, $E_2\circ\nu_\kappa=\beta_\kappa\ot E_F$ and $E_2(x)=0$ for all $x$ reduced of length $\geq 2$ in $\Acal_\kappa$ in the amalgamated free product sense and with respect to the ucp maps $(\id\ot E_F)\,:\,B\ot C(G)\rightarrow B\ot C(F)$ and
$$E_\kappa\ot \id_F\,:\, M_{N_\kappa}(\C)\ot C(H)\otm C(F)\rightarrow B\ot C(F).$$
Recall that $\varphi_\kappa:=\pi_\kappa^{-1}\,:\, M_{N_\kappa}(\C)\ot C_\kappa(\GG)\rightarrow \Acal_\kappa$ is constructed in Theorem \ref{ThmBlockAction}. Define the ucp map $\E_\kappa:=E_2\circ\varphi_\kappa\,:\,M_{N_\kappa}(\C)\ot C_\kappa(\GG)\rightarrow M_{N_\kappa}(\C)\ot C(H)\otm C(F)$. 

\vspace{0.2cm}

\noindent If $x=X_0\rho_\kappa(a_1)X_1\dots \rho_\kappa(a_n)X_n$ ($n\geq 1$) is reduced then $$\varphi_\kappa(x)=\mu_\kappa(X_0)\nu_\kappa(1\ot a_1)\dots\nu_\kappa(1\ot a_n)\mu_\kappa(X_n)$$
is reduced in $\Acal_\kappa$ of length at least $2$ so $E_2(y)=0$. It follows that $\E_\kappa(x)=E_2(y)=0$. We prove in the same way that $\E_\kappa(x)=0$ for all types of reduced operators. Hence, $\E_\kappa(\mathcal{C}_\kappa)=\{0\}$. Recall that $\varphi_\kappa\vert_{M_{N_\kappa}(\C)\ot C(H)\ot C(F)}=\mu_\kappa$ and $E_2\circ\mu_\kappa=\id$. Hence, $\E_\kappa\vert_{M_{N_\kappa}(\C)\ot C(H)\otm C(F)}=\id$. It remains to show that $\ker(\E_\kappa)\subset \overline{\Ccal_\kappa}$ and this follows from $(1)$ and the fact that $\E_\kappa$ is the identity on the subalgebra $M_{N_\kappa}(\C)\ot C(H)\otm C(F)$.

\vspace{0.2cm}

\noindent$(3)$ By definition $\Delta$ we have, writing $\Delta_G(a)=\sum a_{(1)}\ot a_{(2)}$, with $a_{(1)},a_{(2)}\in\Pol(G)$,
\begin{eqnarray*}
(\id\ot\Delta)(\rho_\kappa(a))&=&\sum(\chi_\kappa\ot\id\ot\id)(\beta_G\ot\id)(a_{(1)}\ot \rho(a_{(2)}))\\
&=&\sum_{\gamma,k,l}(\chi_\kappa\ot\id\ot\id)(\beta_G(a_{(1)}\ot e^\gamma_{kl})\ot\rho^\gamma_{kl}(a_{(2)})\\
&=&\sum_{\gamma,k,l}\left[\rho_\kappa(a_{(1)})(\beta_\kappa(e^\gamma_{kl})\ot 1_{C(F)})\right]\ot\rho^\gamma_{kl}(a_{(2)})
\end{eqnarray*}
Since $\Delta(C(H)\otm C(F))=\Delta_{H\times F}(C(H\times F))\subset (C(H)\otm C(F)) \ot (C(H)\otm C(F))$ we conclude easily that $\Delta(C_\kappa(\GG))\subset C_\kappa(\GG)\ot C(\GG)$.

\vspace{0.2cm}

\noindent Let $x$ be a reduced operator and let us show that $(\id\ot\Delta)(x)\in\Ccal_\kappa\ot C(\GG)$. We prove our statement for $x$ a reduced operator of the form $x=\rho_\kappa(a_1)X_1\dots \rho_\kappa(a_n)X_n$, the proof for other types of reduced operator is the same.

\vspace{0.2cm}

\noindent\textbf{Claim.} \textit{If for all $s$, $X_s\in M_{N_\kappa}(\C)\ot\Pol(H)\ot\C1_{C(F)}$ then $(\id\ot\Delta)(x)\in\Ccal_\kappa\ot C(\GG)$.}

\vspace{0.2cm}

\noindent\textit{Proof of the Claim.} Write $x=\rho_\kappa(a_1)(X_1\ot 1)\dots \rho_\kappa(a_n)(X_n\ot 1)$, where $X_s\in M_{N_\kappa}(\C)\ot\Pol(H)$ is such that $E_\kappa(X_s)=0$. Writing $X_s\in M_{N_\kappa}(\C)\ot \Pol(H)$ as $X_s=\sum_{i_s,j_s}b^\kappa_{i_sj_s}\ot x_{i_sj_s}$ and $(\id\ot\Delta)(X_s\ot 1)=(\id\ot\Delta_H)(X_s)\ot 1_{C(F)}=\sum (X_s)_{(1)}\ot (X_s)_{(2)}$ where $$(X_s)_{(1)}=b^\kappa_{i_sj_s}\ot (x_{i_sj_s})_{(1)}\ot 1\in M_{N_\kappa}(\C)\ot\Pol(H)\ot\Pol(F)\text{ and},$$
$$(X_s)_{(2)}=(x_{i_sj_s})_{(2)}\ot 1\in \Pol(H)\ot\Pol(F)$$ with $\Delta_{H}(x_{i_sj_s})=\sum(x_{i_sj_s})_{(1)}\ot (x_{i_sj_s})_{(2)}$, we obtain:
$$
(\id\ot\Delta)(x)=\sum\rho_\kappa((a_1)_{(1)})(\beta_\kappa(e^{\gamma_1}_{k_1l_1})\ot 1)(X_1)_{(1)}\dots\rho_\kappa((a_n)_{(1)})(\beta_\kappa(e^{\gamma_n}_{k_nl_n})\ot 1)(X_n)_{(1)}$$
$$\ot\rho^{\gamma_1}_{k_1l_1}((a_1)_{(2)})(X_1)_{(2)}\dots \rho^{\gamma_n}_{k_nl_n}((a_n)_{(2)})(X_n)_{(2)}.$$
Where $\Delta_G(a_k)=\sum (a_k)_{(1)}\ot (a_k)_{(2)}$ and $E_F((a_k)_{(1)})=0$. If  $(E_\kappa\ot \id_F)((X_s)_{(1)})=0$, for all $s$, then $(\id\ot\Delta)(x)\in \Ccal_\kappa\ot C(\GG)$, since $$(E_\kappa\ot \id_F)((\beta_\kappa(e^{\gamma_s}_{k_sl_s})\ot 1)(X_s)_{(1)})=e^{\gamma_s}_{k_sl_s}(E_\kappa\ot \id_F)((X_s)_{(1)})=0.$$ Otherwise, we write $(X_s)_{(1)}=(X_s)_{(1)}^\circ+(\beta_\kappa E_\kappa\ot\id_F)((X_s)_{(1)})$, where $(E_\kappa\ot\id_F)((X_s)_{(1)}^\circ)=0$ so that we may decompose $(\id\ot\Delta)(x)$ as the sum of two terms $Y_s+Z_s$ where in $Y_s$ is the same as $(\id\ot\Delta)(x)$ but replacing $(X_s)_{(1)}$ by $(X_s)_{(1)}^\circ$ and the second term is:
$$Z_s=\sum\rho_\kappa((a_1)_{(1)})\dots(\beta_\kappa(e^{\gamma_s}_{k_sl_s})\ot 1)(\beta_\kappa E_\kappa\ot\id_F)(b^\kappa_{i_sj_s}\ot (x_{i_sj_s})_{(1)}\ot 1)\dots (X_n)_{(1)}$$
$$\ot\rho^{\gamma_1}_{k_1l_1}((a_1)_{(2)})(X_1)_{(2)}\dots \rho^{\gamma_n}_{k_nl_n}((a_n)_{(2)})(X_n)_{(2)}.$$
By Proposition \ref{PropUcpSection} we have:
$$E_\kappa(b^\kappa_{i_sj_s}\ot (x_{i_sj_s})_{(1)})=\delta^{-1}\lambda_{\kappa,i_s}^{-\frac{1}{2}}\lambda_{\kappa,j_s}^{-\frac{3}{2}}\sum_{\beta,r,t}\left(\frac{\lambda_{\beta,r}}{\lambda_{\beta,t}}\right)^{\frac{1}{2}}h_H(u^{j_si_s,\kappa}_{tr,\beta}(x_{i_sj_s})_{(1)})b^\beta_{rt}$$
Replacing this expression in $Z_s$ we get:
$$Z_s=\delta^{-1}\sum\left(\frac{\lambda_{\beta,r}}{\lambda_{\beta,t}}\right)^{\frac{1}{2}}\rho_\kappa((a_1)_{(1)})\dots(\beta_\kappa(e^{\gamma_s}_{k_sl_s}b^\beta_{rt})\ot 1)\rho_\kappa((a_{s+1})_{(1)})\dots $$
$$\ot(\rho)^{\gamma_1}_{k_1l_1}((a_1)_{(2)})\dots\left(\sum_{i_s,j_s,(1)',(2)'}\lambda_{\kappa,i_s}^{-\frac{1}{2}}\lambda_{\kappa,j_s}^{-\frac{3}{2}}h_H(u^{j_si_s,\kappa}_{tr,\beta}(x_{i_sj_s})_{(1)'})(x_{i_sj_s})_{(2)'}\right)\dots$$
Since $E_\kappa(X_s)=0$, we may now use Lemma \ref{LemE} to deduce that $Z_s=0$. Doing that for all $s$, we conclude that $(\id\ot\Delta)(x)\in\Ccal_\kappa\ot C(\GG)$.$\hfill$ $\qed$

\vspace{0.2cm}

\noindent\textit{End of the proof of $(3)$.} Fix a basis $(y_k)_k$ of $\Pol(F)$. If now $x$ is an arbitrary reduced operator of the form $x=\rho_\kappa(a_1)X_1\dots \rho_\kappa(a_n)X_n$, with $X_s\in M_{N_\kappa}(\C)\ot \Pol(H)\ot\Pol(F)$, we may write $X_s$ as a finite sum $X_s=\sum_kX_{s,k}\ot y_k$, where $X_{s,k}\in M_{N_\kappa}(\C)\ot\Pol(H)$. Since $x$ is reduced one has $\sum_k E_\kappa(X_{s,k})y_k=(E_\kappa\ot\id)(X_s)=0$. Since the family $(y_k)_k$ is linearly independent we obtain $E_\kappa(X_{s,k})=0$ for all $k$. Write $x$ as a finite sum:
\begin{eqnarray*}
x&=&\sum_{k_1,\dots k_n}\rho_\kappa(a_1)(X_{1,k_1}\ot y_{k_1})\dots \rho_\kappa(a_n)(X_{n,k_n}\ot y_{k_n})\\
&=&\sum_{k_1,\dots k_n}\rho_\kappa(a_1y_{k_1})(X_{1,k_1}\ot 1)\dots \rho_\kappa(a_ny_{k_n})(X_{n,k_n}\ot 1),
\end{eqnarray*}
where, in the last equality, we used the relations $\rho_\kappa(f)=1\ot1\ot f$, for all $f\in C(F)$. Since $E_F(a_sy_{k_s})=E_F(a_s)y_{k_s}=0$ for all $s$, we deduce from the Claim that $x\in\Ccal_\kappa$.

\vspace{0.2cm}

\noindent$(4)$. By $(1)$, it suffices to prove the formula for $X\in\mathcal{C}_\kappa$ and for $X\in M_{N_\kappa}(\C)\ot \Pol(H)\ot \Pol(F)$. If $X\in\mathcal{C}_\kappa$ we have $(\id\ot\Delta)(X)\in\Ccal_\kappa\ot C(\GG)$ by $(3)$. It now follows from $(2)$ that
$$(\E_\kappa\ot\id_{C(\GG)})(\id_{M_{N_\kappa}}\ot\Delta)(X)=0=\E_\kappa(X)\ot 1_{C(\GG)}.$$
If now $X\in M_{N_\kappa}(\C)\ot \Pol(H)\ot \Pol(F)$ then, since $\Delta\vert_{C(H)\otm C(F)}=\Delta_{H\times F}$,
$$(\id\ot\Delta)(X)\in M_{N_\kappa}(\C)\ot \Pol(H)\ot \Pol(F)\ot \Pol(H)\ot \Pol(F),$$
and therefore $(\E_\kappa\ot\id_{C(\GG)})(\id\ot\Delta)(X) = (\id\ot\Delta)(X) =(\id\ot\Delta)\E_\kappa$.

\vspace{0.2cm}

\noindent$(5)$. It suffices to apply $(\id\ot h_H\ot h_F\ot\id)$ to the equation proved in $(4)$ and use the fact that the Haar state of $H\times F$ is $h_H\ot h_F$.\end{proof}

\subsection{Block-extended C*-algebra}\label{SectionBlockExtended}
\noindent We use the map $(x\mapsto 1\ot x)$ to view $$C(H)\otm C(F)\subset M_{N_\kappa}(\C)\ot C_\kappa(\GG)$$ for all $1\leq\kappa\leq K$. Define:
$$C(\Gtilde)=(M_{N_1}(\C)\ot C_1(\GG))\underset{C(H)\otm C(F)}{*}\dots\underset{C(H)\otm C(F)}{*}(M_{N_K}(\C)\ot C_K(\GG)).$$
$C(\Gtilde)$ is called  the \textit{block-extended C*-algebra}. We will view $M_{N_\kappa}(\C)\ot C_\kappa(\GG)\subset C(\Gtilde)$. Note that, viewing $C(\GG)=C_1(\GG)\underset{C(H)\otm C(F)}{*}\dots\underset{C(H)\otm C(F)}{*}C_K(\GG)$ (by Theorem \ref{ThmBlockAction}) and using the universal property of the amalgamated free product $C(\GG)$, there exists a unique unital $*$-homomorphism $\iota\,:\, C(\GG) \rightarrow C(\Gtilde)$ such that $\iota(x) =1\ot x$, for all $x\in C_\kappa(\GG)$ and all $\kappa$.

\vspace{0.2cm}

\noindent Consider the conditional expectation 
$$\widetilde{\E}_\kappa:=(\psi_\kappa^{-1}\ot\id\ot\id)\E_\kappa\,:\,M_{N_\kappa}(\C)\ot C_\kappa(\GG)\rightarrow C(H)\otm C(F).$$
Note that $\Ccal_\kappa\subset\ker(\Etilde_\kappa)$. Consider the unique ucp map $E\,:\, C(\Gtilde)\rightarrow C(H)\otm C(F)$ such that $E(\iota(x))=x$ for all $x\in C(H)\ot C(F)$ and $E(x)=0$ for any reduced operator in $C(\Gtilde)$ i.e. of the form $x=x_1\dots x_n$ where $n\geq 1$, $\kappa_{s}\neq\kappa_{s+1}$, $x_s\in M_{N_{\kappa_s}}(\C)\ot C_{\kappa_s}(\GG)$ with $\Etilde_{\kappa_s}(x_s)=0$.

\vspace{0.2cm}

\noindent By Lemma \ref{LemReducedAction} we may consider:
$$\widetilde{\Delta}_\kappa:=\id_{M_{N_\kappa}}\ot\Delta\vert_{C_\kappa(\GG)}\,:\,M_{N_\kappa}(\C)\ot C_\kappa(\GG)\rightarrow  M_{N_\kappa}(\C)\ot C_\kappa(\GG)\ot C(\GG) \subset C(\Gtilde)\ot C(\GG)$$
By the universal property of the amalgamated free product $C(\Gtilde)$ there exists a unique unital $*$-homomorphism $\widetilde{\Delta}\,:C(\Gtilde)\rightarrow C(\Gtilde)\ot C(\GG)$ such that $\widetilde{\Delta}\vert_{M_{N_\kappa}(\C)\ot C_\kappa(\GG)}=\widetilde{\Delta}_\kappa$ for all $\kappa$.

\begin{remark}\label{RemarkErgodic}
We have $(\iota\ot\id)\circ\Delta=\widetilde{\Delta}\circ\iota$ and $(\Dtilde\ot\id)\widetilde{\Delta}=(\id\ot\Delta)\widetilde{\Delta}$. Indeed, $x\in C_\kappa(\GG)$, we have $\widetilde{\Delta}\circ\iota(x) = \widetilde{\Delta}(1\ot x) =(\id \ot \Delta)(1\ot x) = (\iota\ot \id)\Delta(x)$ and, since $\Delta(C_\kappa(\GG))\subset C_\kappa(\GG)\ot C(\GG)$ and $\Delta$ is co-associative, one has, for all $1\leq\kappa\leq K$,
\begin{eqnarray*}
    (\Dtilde \ot \id) \widetilde{\Delta}\vert_{M_{N_\kappa}(\C)\ot C_\kappa(\GG)} &=& (\Dtilde \ot \id)(\id\ot \Delta\vert_{C_\kappa(\GG)}) = (\id\ot \Delta\vert_{C_\kappa(\GG)}\ot \id)(\id \ot \Delta\vert_{C_\kappa(\GG)})\\&=& (\id\ot \id\ot \Delta\vert_{C_\kappa(\GG)})(\id \ot \Delta\vert_{C_\kappa(\GG)})= (\id\ot\Delta)\widetilde{\Delta}\vert_{M_{N_\kappa}(\C)\ot C_\kappa(\GG)}.
\end{eqnarray*}
\end{remark}

\begin{theorem}\label{TheoremErgodic}One has $(E\ot \id)\widetilde{\Delta}= \Delta_{H\times F} \circ E$. In particular:
\begin{enumerate}
    \item $((E\circ \iota)\ot\id)\Delta=\Delta_{H\times F}\circ (E\circ  \iota)$ so $h_\GG:= (h_H\ot h_F)\circ (E\circ\iota)$ and $E\circ\iota=E_{H\times F}$ is the conditional expectation $C(\GG)\rightarrow C(H\times F)$ of the dual quantum subgroup $H\times F$.
    \item $(\id_B \ot E\circ\iota)\rho = 1_B\ot 1_{C(H)} \ot E_F(\cdot)$ therefore $(\id_B\ot h_\GG)\rho=h_G(\cdot) 1_B$.
\end{enumerate}
\end{theorem}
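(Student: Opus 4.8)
The plan is to first establish the fundamental intertwining relation $(E\ot\id)\widetilde\Delta = \Delta_{H\times F}\circ E$ and then derive the four consequences almost formally. For the main relation, I would argue that both $(E\ot\id)\widetilde\Delta$ and $\Delta_{H\times F}\circ E$ are completely positive maps $C(\Gtilde)\to (C(H)\otm C(F))\ot C(\GG)$, so by density it suffices to check they agree on a generating set. The natural generating set is: elements of $C(H)\otm C(F)$ (viewed inside $C(\Gtilde)$ via $\iota$), and reduced operators $x = x_1\cdots x_n$ with $x_s\in M_{N_{\kappa_s}}(\C)\ot C_{\kappa_s}(\GG)$, $\widetilde\E_{\kappa_s}(x_s)=0$, $\kappa_s\neq\kappa_{s+1}$. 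On $C(H)\otm C(F)$ both sides equal $\Delta_{H\times F}$, using $E\circ\iota|_{C(H)\otm C(F)}=\id$ and $\widetilde\Delta|_{C(H)\otm C(F)}=(\id\ot\Delta)|_{C(H)\otm C(F)}=\Delta_{H\times F}$ (the last because $\Delta\vert_{C(H)\otm C(F)}=\Delta_{H\times F}$). For a reduced operator, $E(x)=0$, so the right-hand side vanishes; for the left-hand side, I would use Lemma \ref{LemReducedAction}(4), which gives $(\widetilde\E_{\kappa_s}\ot\id)\widetilde\Delta_{\kappa_s} = \widetilde\Delta_{\kappa_s}\circ\widetilde\E_{\kappa_s}$ after composing with $\psi_{\kappa_s}^{-1}$ — precisely the statement that $\widetilde\Delta_{\kappa_s}$ maps $\ker\widetilde\E_{\kappa_s}$ into $(\ker\widetilde\E_{\kappa_s})\ot C(\GG)$ — so that $\widetilde\Delta(x_1)\cdots\widetilde\Delta(x_n)$ lies in the norm closure of the span of reduced operators tensored with $C(\GG)$, hence is killed by $E\ot\id$. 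Here one must be a little careful that $\widetilde\Delta$ is a homomorphism so $\widetilde\Delta(x)=\widetilde\Delta(x_1)\cdots\widetilde\Delta(x_n)$ and that a product of factors, each in $\ker\widetilde\E_{\kappa_s}\ot C(\GG)$ with alternating indices, is again (approximated by) reduced-operator-valued.

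For consequence (1): apply the main relation and the identity $\widetilde\Delta\circ\iota = (\iota\ot\id)\circ\Delta$ from Remark \ref{RemarkErgodic} to get $(E\iota\ot\id)\Delta = (E\ot\id)\widetilde\Delta\iota = \Delta_{H\times F}E\iota$. This says $E\circ\iota : C(\GG)\to C(H\times F)$ is a conditional expectation onto the dual quantum subgroup $C(H\times F)\subset C(\GG)$ which intertwines $\Delta$ and $\Delta_{H\times F}$; applying $h_H\ot h_F$ and using left/right invariance of $h_{H\times F}=h_H\ot h_F$ shows $h_\GG := (h_H\ot h_F)\circ E\circ\iota$ is bi-invariant, hence is the Haar state of $\GG$. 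Since $E\circ\iota$ is a Haar-state-preserving conditional expectation onto a dual quantum subgroup, Remark \ref{RmkEF}(3) (uniqueness of such expectations) forces $E\circ\iota = E_{H\times F}$.

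For consequence (2): compute $(\id_B\ot E\circ\iota)\rho$. Write $\rho(a)=\sum_\kappa 1_\kappa\rho_\kappa(a)$ (suitably), and note $\iota\circ\rho_\kappa(a)$, regarded in $M_{N_\kappa}(\C)\ot C_\kappa(\GG)\subset C(\Gtilde)$, has components governed by the decomposition $\rho_\kappa(a)^{\circ}+\rho_\kappa(E_F(a))$ with $\rho_\kappa(a^{\circ})\in\Ccal_\kappa\subset\ker\widetilde\E_\kappa$ and $\rho_\kappa(E_F(a))=1_\kappa\ot 1\ot E_F(a)$; applying $\psi_\kappa^{-1}\ot E\circ\iota$-type evaluation kills the reduced part and leaves $1_B\ot 1_{C(H)}\ot E_F(a)$. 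More cleanly: by part (1) and Remark \ref{RmkEF}, $(\id_B\ot E_{H\times F})\rho$ is determined on each block, and $E_\kappa$-orthogonality of $\rho_\kappa(a^\circ)$ gives the vanishing. Then applying $h_H\ot h_F$ and using $h_F\circ E_F = h_G$ yields $(\id_B\ot h_\GG)\rho = h_G(\cdot)1_B$. The main obstacle I anticipate is the reduced-operator bookkeeping in the proof of the core identity — specifically verifying that $\widetilde\Delta$ sends a reduced word to something annihilated by $E\ot\id$, which requires combining the homomorphism property of $\widetilde\Delta$ with Lemma \ref{LemReducedAction}(4) and a density/continuity argument to pass from the algebraic span of reduced operators to its closure; everything after that is formal manipulation with conditional expectations and Haar states.
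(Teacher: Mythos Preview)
Your outline is correct and follows the paper's structure closely. Two points of comparison:

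\textbf{The main identity for reduced operators.} The paper does not rely on Lemma~\ref{LemReducedAction}(4) alone. After treating single-block elements (exactly as you describe), it proves a Claim: $\ker(\widetilde{\E}_\kappa)=\overline{\Ccal_\kappa+\ker(\psi_\kappa^{-1})\ot \Pol(H)\ot \Pol(F)}$. This lets one assume each letter $x_s$ of a reduced word lies either in $\Ccal_{\kappa_s}$ or in $\ker(\psi_{\kappa_s}^{-1})\ot\Pol(H\times F)$; then Lemma~\ref{LemReducedAction}(3) (for $\Ccal_{\kappa_s}$) and the trivial computation (for the second piece) give $\widetilde\Delta(x_s)$ in an \emph{algebraic} tensor product $\ker\widetilde\E_{\kappa_s}\ot_{\rm alg} C(\GG)$, so the product is manifestly a sum of reduced words tensored with $C(\GG)$. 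Your route via Lemma~\ref{LemReducedAction}(4) only yields $(\widetilde\E_{\kappa_s}\ot\id)\widetilde\Delta(x_s)=0$; extracting from this an approximation by simple tensors in $\ker\widetilde\E_{\kappa_s}\ot C(\GG)$ and then controlling the $n$-fold product requires exactly the ``density/continuity argument'' you flag as the obstacle. It can be done (using that $\id-\widetilde\E_{\kappa_s}$ is a completely bounded idempotent onto $\ker\widetilde\E_{\kappa_s}$), but the paper's Claim is the cleaner device and is what actually fills the gap.

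\textbf{Consequence (2).} Your decomposition $a=a^\circ+E_F(a)$ with $\rho_\kappa(a^\circ)\in\Ccal_\kappa$ is correct, but what is needed is $E\circ\iota(\rho^\kappa_{ij}(a))$, i.e.\ $\widetilde\E_\kappa(1\ot\rho^\kappa_{ij}(a))$, not $\widetilde\E_\kappa(\rho_\kappa(a))$. One passes between them using $1\ot\rho^\kappa_{ij}(a)=\sum_r(e^\kappa_{ri}\ot 1)\rho_\kappa(a)(e^\kappa_{jr}\ot 1)$ and the $M_{N_\kappa}(\C)$-bilinearity of $\E_\kappa$, which gives $\E_\kappa(1\ot\rho^\kappa_{ij}(a))=\delta_{ij}\,1\ot 1\ot E_F(a)$. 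The paper instead computes directly from the explicit formula $\E_\kappa=E_2\circ\varphi_\kappa$ and the expression for $\varphi_\kappa(1\ot\rho^\kappa_{ij}(a))$ as a length-$3$ word in the free product $\Acal_\kappa$; both arguments arrive at the same conclusion.
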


\begin{proof}We start with the following Claim.

\vspace{0.2cm}

\noindent\textbf{Claim.} \textit{$\ker(\widetilde{\E}_\kappa)=\overline{\Ccal_\kappa+\ker(\psi_\kappa^{-1})\ot \Pol(H)\ot \Pol(F)}$.}

\vspace{0.2cm}

\noindent\textit{Proof of the Claim.} Let $x\in\ker(\Etilde_\kappa)$. By Lemma \ref{LemReducedAction} $(1)$, there are sequences $(x_n)$ and $(y_n)$ such that $x_n\in \Ccal_\kappa$ and $y_n\in M_{N_\kappa}(\C)\ot \Pol(H)\ot \Pol(F)$ such that $x_n+y_n\rightarrow x$. Hence, $$\Etilde_\kappa(x_n)+\Etilde_\kappa(y_n)=(\psi_\kappa^{-1}\ot\id\ot\id)(y_n)\rightarrow \Etilde_\kappa(x)=0.$$
Define $y_n^\circ:=y_n-1_{M_{N_\kappa}(\C)}\ot(\psi_\kappa^{-1}\ot\id\ot\id)(y_n)$ so that $(\psi_\kappa^{-1}\ot\id\ot\id)(y_n^\circ)=0$. Note that  $y_n^\circ\in\ker(\psi_\kappa^{-1})\ot\Pol(H)\ot\Pol(F)$ and $x_n+y_n^\circ\rightarrow x$.$\hfill$ $\qed$

\vspace{0.2cm}

\noindent\textit{End of the proof of Theorem.} Suppose that $x\in M_{N_\kappa}(\C)\ot C_\kappa(\GG)$ then, since $\widetilde{\Delta}(x)=(\id\ot\Delta)(x)\in M_{N_\kappa}(\C)\ot C_\kappa(\GG)\ot C(\GG)$ one has, by Lemma \ref{LemReducedAction},
\begin{eqnarray*}
(E\ot \id)\widetilde{\Delta}(x)&=&(E\ot \id)(\id\ot\Delta)(x)=(\Etilde_\kappa\ot \id)(\id\ot\Delta)(x)\\
&=&(\psi_\kappa^{-1}\ot\id\ot\id)(\E_\kappa\ot \id)(\id\ot\Delta)(x)\\
&=&(\psi_\kappa^{-1}\ot\Delta)\E_\kappa(x)=\Delta\circ E(x)=\Delta_{H\times F}\circ E(x).
\end{eqnarray*}
In particular, $(E\ot \id)\widetilde{\Delta}(x)=\Delta_{H\times F}\circ E(x)$ for $x\in C(H)\otm C(F)\subset C(\Gtilde)$ and it remains to show that the formula holds for any reduced operator $x$ in $C(\Gtilde)$. Write such a reduced operator $x=x_1\dots x_n\in C(\Gtilde)$, $x_s\in M_{N_{\kappa_s}}(\C)\ot C_{\kappa_s}(\GG)$ with $\Etilde_{\kappa_s}(x_s)=0$ and $\kappa_s\neq\kappa_{s+1}$ so that $E(x)=0$. It suffices to show that $(E\ot \id)\widetilde{\Delta}(x)=0$. By linearity, continuity and the Claim, we may and will assume that, for all $s$, $x_s\in\Ccal_{\kappa_s}\cup\ker(\psi_\kappa^{-1})\ot \Pol(H)\ot\Pol(F)$. Since
$$\widetilde{\Delta}(x)=(\id\ot\Delta)(x_1)\dots(\id\ot\Delta)(x_s),$$
$\widetilde{\Delta}(x)$ is an alternating product from $(\id\ot\Delta)(\Ccal_{\kappa_s})\subset\Ccal_{\kappa_s}\ot C(\GG)\subset \ker(\Etilde_{\kappa_s})\ot C(\GG)$ (Lemma \ref{LemReducedAction}) and we deduce from:
$$(\id\ot\Delta)(\ker(\psi_\kappa^{-1})\ot\Pol(H\times F))\subset \ker(\psi_\kappa^{-1})\ot\Pol(H\times F)\ot C(\GG)\subset\ker(\Etilde_{\kappa_s})\ot C(\GG)$$
that $(E\ot\id)\widetilde{\Delta}(x)=0$. It proves the first statement of the Theorem.

\vspace{0.2cm}

\noindent$(1)$. We use Remark \ref{RemarkErgodic} and the previous statement to deduce the first equation. Then, applying $h_H\ot h_F\ot\id$ shows that $(h_H\ot h_F)\circ E\circ\iota$ is $\Delta$-invariant hence, $h_\GG=(h_H\ot h_F)\circ E\circ\iota$. We conclude the proof of $(1)$ by Remark \ref{RmkEF} $(3)$.

\vspace{0.2cm}

\noindent$(2)$. Let $a\in C(G)$ and write $\rho(a)=\sum_{\kappa,i,j} e^\kappa_{ij}\ot\rho^\kappa_{ij}(a)$. We use the notations of Theorem \ref{ThmBlockAction}, Lemma \ref{LemReducedAction} and their proofs. By construction of $E$ we have: \begin{eqnarray*}
E\circ\iota(\rho^\kappa_{ij}(a))&=&E(1\ot \rho^\kappa_{ij}(a))=(\psi_\kappa^{-1}\ot\id\ot\id)\E_\kappa(1\ot\rho^\kappa_{ij}(a))\\
&=&(\psi_\kappa^{-1}\ot\id\ot\id)E_2\circ\varphi_\kappa(1\ot\rho^\kappa_{ij}(a))\\
&=&\sum_{s=1}^{N_\kappa}(\psi_\kappa^{-1}\ot\id\ot\id)E_2\left(\mu_\kappa(e^\kappa_{si}\ot 1\ot 1)\nu_\kappa(1\ot a)\mu_\kappa(e^\kappa_{js}\ot 1\ot 1)\right)\\
&=&\sum_{s=1}^{N_\kappa}(\psi_\kappa^{-1}\ot\id\ot\id)((e^\kappa_{si}\ot 1\ot 1)(\beta_\kappa\ot E_F) (1\ot a)(e^\kappa_{js}\ot 1\ot 1))\\
&=&\delta_{ij}1_{C(H)}\ot E_F(a).
\end{eqnarray*}
Hence, $(\id_B\ot E\circ\iota)\rho(a)=\sum_{\kappa,i}e^\kappa_{ii}\ot 1_{C(H)}\ot E_F(a)=1_B\ot 1_{C(H)}\ot E_F(a)$.\end{proof}

\subsection{Relations with fundamental quantum groups}

\noindent Let us now construct a dual quantum subgroup of $G\wr_{*,\beta,F} H$ from a dual quantum subgroup of $G$. Assume that $\Lambda$ is a CQG such that $C(F)\subset C(\Lambda)\subset C(G)$ with the inclusion intertwining the comultiplications. Define $\GG_\Lambda:=\Lambda\wr_{*,\beta,F}H$ with canonical homomorphism $\rho_\Lambda\,:\, C(\Lambda)\rightarrow B\ot C(\GG_\Lambda)$ and view $C(H),C(F)\subset C(\GG_\Lambda)$. Consider, by the universal property of $C(\GG_\Lambda)$, the unique unital $*$-homomorphism $\iota_\Lambda\,:\,C(\GG_\Lambda)\rightarrow C(\GG)$ such that $(\id\ot \iota_\Lambda)\rho_\Lambda=\rho\vert_{C(\Lambda)}$ and $\iota_{\Lambda}\vert_{C(H)\otm C(F)}$ is the inclusion.

\begin{proposition}\label{PropDualSUbGroups}
The map $\iota_\Lambda$ is faithful and intertwines the comultiplications so that $\GG_\Lambda$ is a dual quantum subgroup of $\GG$.
\end{proposition}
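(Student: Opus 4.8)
The statement to prove is that $\iota_\Lambda\,:\,C(\GG_\Lambda)\rightarrow C(\GG)$ is faithful and intertwines the comultiplications. The second assertion is the easier one and I would dispatch it first: since $C(\GG_\Lambda)$ is generated by $C(H)\otm C(F)$ together with the coefficients of $\rho_\Lambda$, and $\iota_\Lambda$ is already the inclusion on $C(H)\otm C(F)$ (where $\Delta_{H\times F}$ agrees with $\Delta_\GG$ restricted to $C(H\times F)$ by Theorem \ref{ThmDefFreeWr}), it suffices to check that $(\id\ot\iota_\Lambda\ot\iota_\Lambda)(\id\ot\Delta_{\GG_\Lambda})\rho_\Lambda=(\id\ot\Delta_\GG\circ\iota_\Lambda)\rho_\Lambda$. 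Both sides, via the defining formula $(\id\ot\Delta)\rho=(\beta_G\ot\id)(\id\ot\rho)\Delta_G$ applied in $\GG_\Lambda$ and then transported by $\iota_\Lambda$, rewrite to $(\beta_G\ot\id\ot\id)(\id\ot\rho\ot\iota_\Lambda)(\id\ot\Delta_\Lambda)$ after using $(\id\ot\iota_\Lambda)\rho_\Lambda=\rho\vert_{C(\Lambda)}$ and the compatibility $\Delta_\Lambda=\Delta_G\vert_{C(\Lambda)}$. This is a routine diagram chase.

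\textbf{Faithfulness.} For injectivity the natural strategy is to build a one-sided inverse on the dense generating $*$-algebra. Concretely, I would use the block-extended machinery of Section \ref{sectionblock}: by Theorem \ref{TheoremErgodic} the Haar state of $\GG$ is $h_\GG=(h_H\ot h_F)\circ E\circ\iota$, and the analogous formula holds for $\GG_\Lambda$. The plan is to show $h_{\GG_\Lambda}=h_\GG\circ\iota_\Lambda$ on $\Pol(\GG_\Lambda)$, which combined with faithfulness of $h_{\GG_\Lambda}$ on $C_r(\GG_\Lambda)$ (and the fact that $\iota_\Lambda$ is a $*$-homomorphism between full C*-algebras) yields injectivity of the induced reduced map; then one lifts to the full level using Proposition \ref{dualqsg} (the characterization of dual quantum subgroups), noting that the faithfulness at the reduced level plus the intertwining of comultiplications is exactly one of the equivalent conditions there. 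To prove $h_{\GG_\Lambda}=h_\GG\circ\iota_\Lambda$, I would compare the conditional expectations $E_\kappa$ and $\E_\kappa$ for $\GG_\Lambda$ and $\GG$: since $\iota_\Lambda$ sends reduced operators (relative to $E_F\vert_{C(\Lambda)}$ and $E_\kappa$) in $M_{N_\kappa}(\C)\ot C_\kappa(\GG_\Lambda)$ to reduced operators in $M_{N_\kappa}(\C)\ot C_\kappa(\GG)$ — because $E_F\circ\iota\vert_{C(\Lambda)}=\iota_F\circ E_F\vert_{C(\Lambda)}$ by Remark \ref{RmkEF}, i.e. the ucp maps are compatible — the expectation $E$ on $C(\Gtilde_\Lambda)$ is intertwined with $E$ on $C(\Gtilde)$, and applying $h_H\ot h_F$ gives the claim.

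\textbf{Expected main obstacle.} The only genuinely delicate point is verifying that $\iota_\Lambda$ maps reduced words to reduced words, i.e. that it is compatible with the amalgamated-free-product structure underlying the block decomposition. This requires checking that an element $a\in\Pol(\Lambda)$ with $E_F(a)=0$ stays in the kernel of $E_F$ when viewed inside $\Pol(G)$, which is immediate since $\Irr(F)\subset\Irr(\Lambda)\subset\Irr(G)$ and $E_F$ is defined by killing coefficients of irreducibles not in $\Irr(F)$; and symmetrically that $(E_\kappa\ot\id_F)(X)=0$ is preserved. Once this combinatorial compatibility is in place, everything else is formal: the existence of $\iota_\Lambda$ is given, the comultiplication intertwining is a diagram chase, and faithfulness follows from the faithfulness of the Haar state at the reduced level together with Proposition \ref{dualqsg}. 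I would also remark, as an alternative to the Haar-state argument, that one can directly construct a left inverse of $\iota_\Lambda$ by composing the canonical surjection $C(\GG)\to C(\Gtilde)\to\cdots$ with a map induced by the universal property of $C(\GG_\Lambda)$, but the Haar-state route is cleaner given the tools already developed.
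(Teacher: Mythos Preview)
Your proposal is correct and follows essentially the same route as the paper: a direct diagram chase for the comultiplication intertwining, then faithfulness via showing $h_\GG\circ\iota_\Lambda=h_{\GG_\Lambda}$ by lifting $\iota_\Lambda$ to a map $\widetilde{\iota}_\Lambda\,:\,C(\Gtilde_\Lambda)\to C(\Gtilde)$ that preserves reduced operators (hence intertwines the conditional expectations $E$), and concluding with Proposition~\ref{dualqsg}. The paper's presentation is slightly more compressed---it states directly that $\widetilde{\iota}_\Lambda$ sends reduced operators to reduced operators without spelling out the inner-level check you flag as the main obstacle---but the argument is the same.
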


\begin{proof}
Consider the two canonical maps $\beta_G\,:\, C(G)\ot B\rightarrow B\ot C(\GG)$ and $\beta_\Lambda\,:\, C(\Lambda)\ot B\rightarrow B\ot C(\GG_\Lambda)$ and note that, by definition, $(\id\ot\iota_\Lambda)\beta_\Lambda=\beta_G\vert_{C(\Lambda)\ot B}$. It follows easily that $$(\id\ot\Delta_\GG\circ\iota_\Lambda)\rho_\Lambda=(\id\ot(\iota_\Lambda\ot\iota_\Lambda)\circ\Delta_{\GG_\Lambda})\rho_\Lambda$$ which implies that $\iota_\Lambda$ intertwines the comultiplications. Let us now show that $\iota_\Lambda$ is faithful. By Proposition \ref{dualqsg}, it suffices to show that $\iota_\Lambda$ intertwines the Haar states (since it will imply that it is faithful at the reduced level). Note that, for $1\leq\kappa\leq K$, $\iota_\Lambda(C_\kappa(\GG_\Lambda))\subset C_\kappa(\GG)$. Hence, there exists a unique unital $*$-homomorphism $\widetilde{\iota}_\Lambda\,:\, C(\Gtilde_\Lambda)\rightarrow C(\Gtilde)$ such that:
$$\widetilde{\iota}_\Lambda\vert_{M_{N_\kappa}(\C)\ot C_\kappa(\GG_\Lambda)}=\id_{M_{N_\kappa}(\C)}\ot\iota_\Lambda\vert_{C_\kappa(\GG)},\text{ for all }1\leq\kappa\leq K.$$
By construction, $\widetilde{\iota}_\Lambda$ maps reduced operators in $C(\Gtilde_\Lambda)$ to reduced operators in $C(\Gtilde)$ and is the identity on $C(H)\otm C(F)$. By uniqueness property, we have $E\circ\iota_\Lambda=E_\Lambda$, where $E\,:\, C(\Gtilde)\rightarrow C(H)\otm C(F)$ and $E_\Lambda\,:\, C(\Gtilde_\Lambda)\rightarrow C(H)\otm C(F)$ are the canonical ucp maps constructed in Section \ref{SectionBlockExtended}. It follows from Theorem \ref{TheoremErgodic} that $\iota_\Lambda$ intertwines the Haar states.\end{proof}

\noindent Using this dual quantum subgroup construction, we can now study the decomposition of a free wreath product $G\wr_{*,\beta,F} H$ as a fundamental quantum group when $G$ itself is a fundamental quantum group.

\vspace{0.2cm}

\noindent Let $G:=\pi_1(G_p,\Lambda_e,\G,\T)$ be a fundamental quantum group with graph maps $s_e\,:\,C(\Lambda_e)\rightarrow C(G_{s(e)})$ so that $C(G)=\langle C(G_p),u_e\,\vert\, p\in V(\G),e\in E(\G)\rangle$. Following \cite[Definition 6.1]{FT24}, the \textit{loop subgroup} of $G$ is the subgroup $\Gamma\subset\mathcal{U}(C(G))$ generated by $\{u_e\,:\,e\in E(\G)\}$. It is shown in \cite[Proposition 6.2]{FT24} that the unique unital $*$-homomorphism $\pi\,:\,C^*(\Gamma)\rightarrow C(G)$ extending the inclusion $\Gamma\subset\mathcal{U}(C(G))$ is faithful and intertwines the comultiplications so that $\widehat{\Gamma}$ is a dual quantum subgroup of $G$.

\vspace{0.2cm}

\noindent Define, for $p\in V(\G)$ and $e\in E(\G)$, $\GG_p:=G_p\wr_{*,\beta} H$ and $\GG_e:=\Lambda_e\wr_{*,\beta} H$. Write $\rho_p\,:\,C(G_p)\rightarrow B\ot C(\GG_p)$ and $\rho_e\,:\,C(\Lambda_e)\rightarrow B\ot C(\GG_e)$ the canonical morphisms and view $C(H)\subset C(\GG_p)$ and $C(H)\subset C(\GG_e)$. As in Proposition \ref{PropDualSUbGroups} we consider, for each $e\in E(\G)$, the unique map $s'_e\,:\,C(\GG_e)\rightarrow C(\GG_{s(e)})$ such that $(\id\ot s_e')\rho_e=\rho_{s(e)}\circ s_e$ and $s_e'\vert_{C(H)}$ is the inclusion, which is, by Proposition \ref{PropDualSUbGroups}, faithful and intertwines the comultiplications. We obtain a graph of quantum groups $(\GG_p,\GG_e,\G,\T)$ with graph maps $s'_e$.

\vspace{0.2cm}

\noindent The next result should be compared with Theorem \ref{deltafree} where we studied such a decomposition when $H$ is a fundamental quantum group. It is also a generalization of \cite[Theorem 6.3]{FT24}.

\begin{theorem}
There is a canonical isomorphism $G\wr_{*,\beta,\widehat{\Gamma}} H\simeq\pi_1(\GG_p,\GG_e,\G,\T)$.
\end{theorem}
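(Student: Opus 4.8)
The strategy is to mimic the proof of Theorem \ref{deltafree}, but now exploiting the fundamental-quantum-group structure on $G$ rather than on $H$. Write $C(G)=\langle C(G_p),u_e\,\vert\,p\in V(\G),\,e\in E(\G)\rangle$ with graph maps $s_e\,:\,C(\Lambda_e)\rightarrow C(G_{s(e)})$ and $r_e:=s_{\ebar}$, and recall from \cite[Proposition 6.2]{FT24} that $\widehat{\Gamma}$, the loop subgroup, is a dual quantum subgroup of $G$, with $C(F)=\C1$ trivial here (so the amalgam is only over $\widehat{\Gamma}$). Set $\GG:=G\wr_{*,\beta,\widehat{\Gamma}}H$ with generic morphism $\rho\,:\,C(G)\rightarrow B\ot C(\GG)$, and $\GG':=\pi_1(\GG_p,\GG_e,\G,\T)$ with presentation $C(\GG')=\langle C(\GG_p),v_e\,\vert\,p,e\rangle$ and the graph maps $s'_e\,:\,C(\GG_e)\rightarrow C(\GG_{s(e)})$ constructed just before the statement (faithful, intertwining comultiplications, by Proposition \ref{PropDualSUbGroups}). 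The plan is to build mutually inverse unital $*$-homomorphisms $\pi\,:\,C(\GG)\rightarrow C(\GG')$ and $\pi^{-1}\,:\,C(\GG')\rightarrow C(\GG)$, check each intertwines the comultiplications, and verify they are inverse to one another.

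\textbf{Construction of $\pi$.} View $C(\GG_p)\subset C(\GG')$ and $C(\GG_e)\subset C(\GG')$ via the fundamental-group inclusions. Since each $\GG_p=G_p\wr_{*,\beta}H$ comes with $\rho_p\,:\,C(G_p)\rightarrow B\ot C(\GG_p)$ and $C(H)\subset C(\GG_p)$, and since the images of all the $C(H)\subset C(\GG_p)$ are identified (all equal to the fixed copy $C(H)\subset C(\GG')$ — here I must check this, using that $s'_e\vert_{C(H)}$ is the inclusion and the tree relations $v_e=1$ for $e\in E(\T)$, exactly as $\nu_F$ was handled in the proof of Theorem \ref{deltafree}), I get a single unital $*$-homomorphism $\nu_H\,:\,C(H)\rightarrow C(\GG')$, and I want a morphism $\rho'\,:\,C(G)\rightarrow B\ot C(\GG')$ restricting to $(\id\ot(\text{inclusion}))\circ\rho_p$ on $C(G_p)$ and sending $u_e\mapsto 1_B\ot v_e$. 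For this I use the universal property of the fundamental-group C*-algebra $C(G)$: I need the data $(\rho_p\text{-images in }B\ot C(\GG'),\ 1_B\ot v_e)$ to satisfy the defining relations $u_e^*=u_{\ebar}$, $u_{\ebar}s_e(b)u_e=r_e(b)$ for $b\in C(\Lambda_e)$, $u_e=1$ for $e\in E(\T)$. The first and third are immediate; the middle one follows because $s'_{\ebar}=r'_e$ and the fundamental-group relation for $\GG$ at edge $e$ reads $v_{\ebar}s'_e(x)v_e=r'_e(x)$ for $x\in C(\GG_e)$, applied to $x=\rho_e(b)$, combined with $(\id\ot s'_e)\rho_e=\rho_{s(e)}\circ s_e$. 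This gives $\rho'\,:\,C(G)\rightarrow B\ot C(\GG')$; then I must verify the free-wreath relations $[\rho'(a),(\id\ot\nu_H)(\beta(b))]=0$ for $a\in C(G)$, $b\in B$. It suffices to check this for $a\in C(G_p)$ (where it follows from the same relation inside $\GG_p$, since $\beta$ factors through each $\GG_p$ — the map $\beta\,:\,B\rightarrow B\ot C(H)$ lands in $B\ot C(H)\subset B\ot C(\GG_p)$) and for $a=u_e$ (where $\rho'(u_e)=1_B\ot v_e$ commutes with everything in $\nu_H(C(H))\subset C(\GG')$ because $v_e$ commutes with the $C(H_p)$'s and the $v_{e'}$'s inside $C(\GG')$ — this is exactly the argument at the start of the proof of Theorem \ref{deltafree}, using that $\nu_H(C(H))$ is generated by the $C(H_p)$ and the $v_e$). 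The universal property of $C(\GG)$ then yields $\pi\,:\,C(\GG)\rightarrow C(\GG')$ with $(\id\ot\pi)\rho=\rho'$ and $\pi\vert_{C(H)}=\nu_H$.

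\textbf{Comultiplication and inverse.} To see $\pi$ intertwines comultiplications it is enough, since $H\subset\GG$ is a dual quantum subgroup and $\nu_H$ already intertwines, to check $(\id\ot\Delta_{\GG'})(\id\ot\pi)\rho=(\id\ot\pi\ot\pi)(\id\ot\Delta_\GG)\rho$; evaluating on $C(G_p)$ and on $u_e$ separately and using $\Delta_\GG\rho=(\beta_G\ot\id)(\id\ot\rho)\Delta_G$ together with $\Delta_{\GG_p}\rho_p=(\beta_G^p\ot\id)(\id\ot\rho_p)\Delta_{G_p}$ (and the analogue of Equation $(\ref{EqBetaG})$ relating $\beta_G$ and $\beta_G^p$) reduces it to a bookkeeping check, parallel to the end of the proof of Theorem \ref{deltafree}. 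For the inverse: for each $p$ the restriction $\rho'_p:=(\text{no}-\text{restriction needed})$, rather, using the universal property of $C(\GG_p)=C(G_p\wr_{*,\beta}H)$ with the morphism $\rho\vert_{C(G_p)}\,:\,C(G_p)\rightarrow B\ot C(\GG)$ and $C(H)\subset C(\GG)$ (the relations hold since they hold for $\rho$), I get $\pi_p^{-1}\,:\,C(\GG_p)\rightarrow C(\GG)$; similarly $\pi_e^{-1}\,:\,C(\GG_e)\rightarrow C(\GG)$ from $\rho\circ s_e\,:\,C(\Lambda_e)\rightarrow B\ot C(\GG)$, and one checks $s'_e$-compatibility, $\pi_{s(e)}^{-1}\circ s'_e = \pi_e^{-1}$, from the defining property of $s'_e$. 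Then the universal property of $C(\GG')=\pi_1(\GG_p,\GG_e,\G,\T)$, using the unitaries $u_e\in C(G)\subset C(\GG)$ to interpret the $v_e$, yields $\pi^{-1}\,:\,C(\GG')\rightarrow C(\GG)$ with $\pi^{-1}\vert_{C(\GG_p)}=\pi_p^{-1}$ and $\pi^{-1}(v_e)=\rho$-image of $u_e$ — more precisely $(\omega\ot\id)\rho(u_e)$-type generators; here one must be a little careful since $u_e$ is a single unitary whose image under $\rho$ is $1_B\ot(\text{something})$: indeed $\rho(u_e)$ should be of the form $1_B\ot w_e$ for a unitary $w_e\in C(\GG)$ (because $u_e$ generates $\widehat\Gamma$ and $\rho$ is trivial on the amalgam $C(\widehat\Gamma)$, so $\rho(u_e)=1_B\ot\iota_{\widehat\Gamma}(u_e)$), and we send $v_e\mapsto\iota_{\widehat\Gamma}(u_e)$. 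Finally, checking $\pi\circ\pi^{-1}=\id$ and $\pi^{-1}\circ\pi=\id$ is routine on generators.

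\textbf{Main obstacle.} The delicate point — and the one I would spend the most care on — is the bookkeeping around the amalgam $\widehat{\Gamma}$: one must confirm that $\rho$ really does send each loop generator $u_e$ into $1_B\ot C(\GG)$ (so that the $v_e$'s of the fundamental-group presentation of $\GG'$ have a well-defined pre-image), that the single copy $C(H)$ inside $C(\GG')$ is correctly identified across all vertices and all the $s'_e$, and that the fundamental-group relation $v_{\ebar}s'_e(\cdot)v_e=r'_e(\cdot)$ pushed through $\rho_e$ matches the defining relation $u_{\ebar}s_e(\cdot)u_e=r_e(\cdot)$ pushed through $\rho$ — all of which hinge on the compatibility $(\id\ot s'_e)\rho_e=\rho_{s(e)}\circ s_e$ and on $\widehat\Gamma$ being a dual quantum subgroup over which we amalgamate. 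Everything else is a direct adaptation of the proof of Theorem \ref{deltafree} and of \cite[Theorem 6.3]{FT24}.
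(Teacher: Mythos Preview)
Your proposal is correct and follows essentially the same approach as the paper's proof: build $\rho'\,:\,C(G)\rightarrow B\ot C(\GG')$ via the universal property of the fundamental-group C*-algebra $C(G)$ (with $\rho'\vert_{C(G_p)}=\rho_p$ and $\rho'(u_e)=1_B\ot v_e$), verify the free-wreath relations to invoke the universal property of $C(\GG)$, check comultiplication-intertwining on generators, and construct the inverse from the universal properties of each $C(\GG_p)$ and then of $C(\GG')$ with $v_e\mapsto u_e\in C^*(\Gamma)\subset C(\GG)$.

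One small correction of exposition: your justification that $1_B\ot v_e$ commutes with $(\id\ot\nu_H)(\beta(b))$ is not ``that $\nu_H(C(H))$ is generated by the $C(H_p)$'s and the $v_{e'}$'s'' --- that was the configuration in Theorem \ref{deltafree}, where $H$ itself was a fundamental quantum group. Here there is a single copy of $C(H)$, and the reason $v_e$ commutes with it is exactly what you wrote earlier: since $s'_e\vert_{C(H)}$ is the inclusion, the fundamental-group relation $v_{\ebar}s'_e(h)v_e=r'_e(h)$ for $h\in C(H)\subset C(\GG_e)$ reads $v_e^*hv_e=h$. This is precisely how the paper argues it. Also, you do not need to construct separate maps $\pi_e^{-1}$; once you have the $\pi_p^{-1}$ and check $\pi_{s(e)}^{-1}\circ s'_e=\pi_{r(e)}^{-1}\circ r'_e$ modulo conjugation by $u_e$ (using $\rho(u_e)=1_B\ot u_e$), the universal property of $C(\GG')$ applies directly.
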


\begin{proof}
Write $\GG:=G\wr_{*,\beta,\widehat{\Gamma}} H$ with canonical morphism $\rho\,:\, C(G)\rightarrow B\ot C(\GG)$ and view $C(H),C^*(\Gamma)\subset C(\GG)$. We also write $\GG':=\pi_1(\GG_p,\GG_e,\G,\T)$, we view $C(H)\subset C(\GG_p)\subset C(\GG')$ and we denote $$C(\GG')=\langle C(\GG_p),v_e\,\vert\,p\in V(\G),e\in E(\G)\rangle\text{ and }C(G)=\langle C(G_p),u_e\,\vert\,p\in V(\G),e\in E(\G)\rangle.$$
View, for all $p\in V(\G)$, $\rho_p\,:\,C(G_p)\rightarrow B\ot C(\GG_p)\subset B\ot C(\GG')$ and note that, for all $b\in C(\Lambda_e)$, $$\rho_{s(e)}(s_e(b))=(\id\ot s_e')\rho_e(b)=(1_B\ot v_{e}^*)(\id\ot r_e')(\rho_e(b))(1_B\ot v_e)=(1_B\ot v_{e}^*)\rho_{r(e)}\circ r_e(b)(1_B\ot v_e).$$
The universal property of $C(G)$ gives a unique unital $*$-homomorphism
$$\rho'\,:\, C(G)\rightarrow B\ot C(\GG')\text{ such that }\rho'\vert_{C(G_p)}=\rho_p,\,\,\rho'(u_e)=1_B\ot v_e\text{ for all }p\in V(\G),e\in E(\G).$$ By construction of $\rho'$, one has $\rho'(C^*(\Gamma))\subset \C 1_B\ot C(\GG')$ hence, there exists a unique unital $*$-homomorphism $\iota_{\widehat{\Gamma}}\,:\,C^*(\Gamma)\rightarrow C(\GG')$ such that $\rho'(f)=1_B\ot\iota_{\widehat{\Gamma}}(f)$ for all $f\in C^*(\Gamma)$. Note that for all $p\in V(\G)$, $a\in C(G_p)$ and $b\in B$, $\rho'(a)\beta(b)=\rho_p(a)\beta(b)=\beta(b)\rho_p(a)=\beta(b)\rho'(a)$. Moreover, since each $s_e'$ is the identity on $C(H)$, each $v_e$ commutes with $C(H)$ inside $C(\GG')$ hence, $\rho'(u_e)\beta(b)=(1_B\ot v_e)\beta(b)=\beta(b)(1_B\ot v_e)$ for all $b\in B$ and $e\in E(\G)$. Also, $C(H)$ commutes with $\iota_{\widehat{\Gamma}}(C^*(\Gamma))$. By the universal property of the free wreath product C*-algebra $C(\GG)$, there exists a unique unital $*$-homomorphism $\pi\,:\, C(\GG)\rightarrow C(\GG')$ such that $(\id\ot\pi)\rho=\rho'$, $\pi\vert_{C(H)}$ is the inclusion and $\pi\vert_{C^*(\Gamma)}=\iota_{\widehat{\Gamma}}$. Writing $\beta_G\,:\, C(G)\ot B\rightarrow B\ot C(\GG)$ and $\beta_{G_p}\,:\, C(G_p)\ot B\rightarrow B\ot C(\GG_p)\subset B\ot C(\GG')$ the canonical morphisms we have, by construction, $\beta_{G_p}=(\id\ot\pi)\circ\beta_G\vert_{C(G_p)\ot B}$. For $a\in C(G_p)\subset C(G)$ one has:
\begin{eqnarray*}
(\id\ot\Delta_{\GG'}\pi)\rho(a)&=&(\id\ot\Delta_{\GG'})\rho'(a)=(\id\ot\Delta_{\GG_p})\rho_p(a)=(\beta_{G_p}\ot\id)(\id\ot\rho_p)\Delta_{G_p}(a)\\
&=&(\id\ot\pi\ot\id)(\beta_G\ot\id)(\id\ot\rho')\Delta_{G_p}(a)\\
&=&(\id\ot\pi\ot\pi)(\beta_G\ot\id)(\id\ot\rho)\Delta_G(a)
=(\id\ot(\pi\ot\pi)\Delta_\GG)\rho.
\end{eqnarray*}
Since $\Delta_{\GG'}\pi(u_e)=\Delta_{\GG'}(v_e)=v_e\ot v_e=(\pi\ot\pi)\Delta_{\GG}(u_e)$ and,
$$\Delta_{\GG'}\circ\pi\vert_{C(H)}=\Delta_{\GG'}\vert_{C(H)}=\Delta_H=(\pi\ot\pi)\Delta_{\GG}\vert_{C(H)}$$
we deduce that $\pi$ intertwines the comultiplications.

\vspace{0.2cm}

\noindent We now show that $\pi$ is an isomorphism by constructing its inverse morphism $\pi^{-1}\,:\, C(\GG')\rightarrow C(\GG)$. We first note that, by the universal property of the free wreath product C*-algebra $C(\GG_p)$, there exists a unique $\pi_p^{-1}\,:\, C(\GG_p)\rightarrow C(\GG)$ such that $(\id\ot\pi_p^{-1})\rho_p=\rho\vert_{C(\GG_p)}$ and $\pi^{-1}_p\vert_{C(H)}$ is the inclusion. Viewing $C^*(\Gamma)\subset C(\GG)$, we have the relation $\rho(u_e)=1_B\ot u_e$, for all $e\in E(\G)$. Note that, for all $e\in E(\G)$ and $b\in C(\Lambda_e)$ one has,
\begin{eqnarray*}
(\id\ot\pi^{-1}_{s(e)}\circ s_e')\rho_e(b)&=&(\id\ot\pi^{-1}_{s(e)})\rho_{s(e)}\circ s_e(b)=\rho\circ s_e(b)=\rho(u_e)^*\rho(r_e(b))\rho(u_e)\\
&=&\rho(u_e)^*(\id\ot\pi^{-1}_{r(e)})\rho_{r(e)}\circ r_e(b)\rho(u_e)\\
&=&\rho(u_e)^*(\id\ot\pi^{-1}_{r(e)}\circ r_e')\rho_e(b)\rho(u_e)\\
&=&(1\ot u_e^*)(\id\ot\pi^{-1}_{r(e)}\circ r_e')\rho_e(b)(1\ot u_e).
\end{eqnarray*}
Moreover, $\pi_{s(e)}^{-1}\circ s_e'\vert_{C(H)}$ is the inclusion of $C(H)\subset C(\GG)$ and since $C(H)$ commutes with $C^*(\Gamma)$ in  $C(\GG)$, the universal property of $C(\GG')$ gives a unique unital $*$-homomorphism $\pi^{-1}\,:\, C(\GG')\rightarrow C(\GG)$ such that $\pi^{-1}\vert_{C(\GG_p)}=\pi_p^{-1}$ and $\pi^{-1}(v_e)=u_e$. It is easy to check that $\pi^{-1}$ is the inverse of $\pi$.\end{proof}

\section{The reduced C*-algebra and the von Neumann algebra}\label{section reduced}

\noindent We fix, for the entire section, an ergodic action $\beta\,:\,H\curvearrowright B$ on the finite dimensional C*-algebra $B$ with unique invariant state $\psi\in B^*$ and we write  $\GG:=G\wr_{*,\beta,F}H$.

\subsection{The reduced C*-algebra}

\noindent For $1\leq\kappa\leq K$, we use the following notation:
$$C_\kappa(\Gtilde):=(B\ot C(G))\underset{B\ot C(F)}{*} (M_{N_\kappa}(\C)\ot C(H)\otm C(F))\text{ and,}$$
$$ C_{r,\kappa}(\Gtilde):= (B\ot C_r(G), \id_B\ot E^r_{F} ) \underset{B\ot C_r(F)}{*} (M_{N_\kappa}(\C)\ot C_r(H)\ot C_r(F),E_\kappa\ot\id_{C_r(F)}),$$
relatively to the inclusion map on the left and $\beta_\kappa\ot \id_{C_r(F)}$ on the right. Note that the conditional expectations involved in the Voiculescu's reduced amalgamated free product are faithful which implies that the canonical conditional expectation
$$E_{\kappa,2}\,:\, C_{r,\kappa}(\Gtilde)\rightarrow M_{N_\kappa}(\C)\ot C_r(H)\ot C_r(F)$$
is faithful. Consider the unique surjective homomorphism $\widetilde{\lambda}_\kappa\,:\, C_{\kappa}(\Gtilde)\rightarrow C_{r,\kappa}(\Gtilde)$ such that:
$$\widetilde{\lambda}_\kappa\vert_{B\ot C(G)}=\id_B\ot\lambda_G\text{ and }\widetilde{\lambda}_\kappa\vert_{M_{N_\kappa}(\C)\ot C(H)\otm C(F)}=\id_{M_{N_\kappa}(\C)}\ot\lambda_{H\times F}.$$
Note that $C_r(H\times F)=C_r(H)\ot C_r(F)$ and $\lambda_{H\times F}=(\lambda_H\ot \lambda_F)\circ p$, where
$$p\,:\, C(H)\otm C(F)\rightarrow C(H)\ot C(F)$$
is the canonical surjection. Recall that we have, from Theorem \ref{ThmBlockAction}, an isomorphism $\pi_\kappa\,:\, C_\kappa(\Gtilde)\rightarrow M_{N_\kappa}(\C)\ot C_\kappa(\GG)$
with inverse $\varphi_\kappa\,:\,M_{N_\kappa}(\C)\ot C_\kappa(\GG)\rightarrow C_\kappa(\Gtilde)$.

\vspace{0.2cm}

\noindent By universal property, there exists a unique surjective homomorphism
$$\widetilde{\lambda}\,:\,C(\Gtilde)\rightarrow C_r(\Gtilde):=\underset{C_r(H)\ot C_r(F)}{*}\left(C_{r,\kappa}(\Gtilde),(\psi_\kappa^{-1}\ot\id\ot\id)\circ E_{2,\kappa}\right)$$
such that $\widetilde{\lambda}\vert_{M_{N_\kappa}(\C)\ot C_\kappa(\GG)}=\widetilde{\lambda}_\kappa\circ\varphi_\kappa$, for all $1\leq\kappa\leq K$. Note that that the conditional expectations $(\psi_\kappa^{-1}\ot\id\ot\id)\circ E_{\kappa,2}$ are faithful which implies that the canonical conditional expectation $E_r\,:\, C_r(\Gtilde)\rightarrow C_r(H)\ot C_r(F)$ is also faithful.
Recall that we have an inclusion $\iota\,:\, C(\GG)\rightarrow C(\Gtilde)$ and define $C_{r,\kappa}(\GG):=\widetilde{\lambda}\circ\iota(C_\kappa(\GG))=\widetilde{\lambda}_\kappa\circ\varphi_\kappa\circ\iota(C_\kappa(\GG))\subset C_{r,\kappa}(\Gtilde)$.

\begin{theorem}\label{ThmReduced}
The following holds.
\begin{enumerate}
\item One has $C_r(\GG)=\widetilde{\lambda}\circ\iota(C(\GG))$ with GNS map $\lambda_\GG=\widetilde{\lambda}\circ\iota$. Moreover, $E_r\vert_{C_r(\GG)}=E^r_{H\times F}$ is the reduced conditional expectation associated to the dual quantum subgroup $H\times F$.
\item One has $C_r(\GG)=\underset{C_r(H\times F)}{*}\left(C_{r,\kappa}(\GG),E_r\vert_{C_{r,\kappa}(\GG)}\right)$.
\item For all $1\leq\kappa\leq K$, there is a unique unital $*$-isomorphism:
$$\pi_{r,\kappa}\,:\,C_{r,\kappa}(\Gtilde)\rightarrow M_{N_\kappa}(\C)\ot C_{r,\kappa}(\GG)$$
such that $\pi_{r,\kappa}\circ\widetilde{\lambda}_\kappa=(\id\ot(\widetilde{\lambda}\circ\iota)\vert_{C_\kappa(\GG)})\pi_\kappa$.
\end{enumerate}
\end{theorem}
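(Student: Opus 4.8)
The plan is to build $C_r(\GG)$, its reduced block algebras, and the maps $\pi_{r,\kappa}$ simultaneously out of the already-established commutative diagrams relating the full objects, using the fact that the canonical conditional expectations on the relevant amalgamated free products are faithful. First I would establish $(1)$. The composition $\widetilde\lambda\circ\iota\,:\,C(\GG)\to C_r(\Gtilde)$ is a surjection onto $\widetilde\lambda\circ\iota(C(\GG))$, and by Theorem \ref{TheoremErgodic} the Haar state is $h_\GG=(h_H\ot h_F)\circ E\circ\iota$ and $E\circ\iota=E_{H\times F}$. By construction of $\widetilde\lambda$ (via the Voiculescu reduced amalgamated free product with the faithful expectations $(\psi_\kappa^{-1}\ot\id\ot\id)\circ E_{\kappa,2}$) the expectation $E_r\,:\,C_r(\Gtilde)\to C_r(H)\ot C_r(F)$ is faithful and satisfies $E_r\circ\widetilde\lambda=\lambda_{H\times F}\circ E$ (this is checked on the generating block subalgebras, where it reduces to the compatibility of $\widetilde\lambda_\kappa$ with $E_{\kappa,2}$ already built in). Hence on $C(\GG)$ we get $E_r\circ(\widetilde\lambda\circ\iota)=\lambda_{H\times F}\circ E_{H\times F}$, so $(h_H\ot h_F)\circ E_r\circ(\widetilde\lambda\circ\iota)=h_\GG$, i.e. $\widetilde\lambda\circ\iota$ is a $*$-homomorphism implementing the GNS representation of $h_\GG$; by the uniqueness characterization of the reduced C*-algebra of a quantum group (a surjection intertwining a faithful Haar-preserving conditional expectation, using Proposition \ref{dualqsg} and Remark \ref{RmkEF}), $C_r(\GG)=\widetilde\lambda\circ\iota(C(\GG))$ with $\lambda_\GG=\widetilde\lambda\circ\iota$, and the restriction $E_r\vert_{C_r(\GG)}=E^r_{H\times F}$.

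Next, for $(2)$, I would observe that $C_r(\Gtilde)$ is by definition the reduced amalgamated free product $\underset{C_r(H)\ot C_r(F)}{*}\big(C_{r,\kappa}(\Gtilde),\,(\psi_\kappa^{-1}\ot\id\ot\id)\circ E_{2,\kappa}\big)$, and under the isomorphisms $C_{r,\kappa}(\Gtilde)\simeq M_{N_\kappa}(\C)\ot C_{r,\kappa}(\GG)$ (to be produced in $(3)$) together with Remark \ref{RmkReducedUcp} applied to the faithful reduced expectation, the subalgebra $\widetilde\lambda(\iota(C(\GG)))$ inside $C_r(\Gtilde)$ is exactly the reduced amalgamated free product of the $C_{r,\kappa}(\GG)=\widetilde\lambda(\iota(C_\kappa(\GG)))$ over $C_r(H\times F)$, with conditional expectations the restrictions of $E_r$. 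This uses that $\widetilde\lambda\circ\iota$ is multiplicative, that it maps the full free-product decomposition $C(\GG)=*_{C(H)\otm C(F)}C_\kappa(\GG)$ of Theorem \ref{ThmBlockAction}$(1)$ compatibly, and that the reduced amalgamated free product of the GNS-faithful quotients of the factors is the GNS-faithful quotient of the full amalgamated free product (the standard fact recalled in the preliminaries on \cite{FG20}, \cite{Dy98}). Faithfulness of $E_r$ is what licenses the identification, via Remark \ref{RmkReducedUcp}.

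For $(3)$: by Theorem \ref{ThmBlockAction}$(2)$ there is an isomorphism $\pi_\kappa\,:\,C_\kappa(\Gtilde)\xrightarrow{\sim} M_{N_\kappa}(\C)\ot C_\kappa(\GG)$. I would pass it to the reduced level: on $C_\kappa(\Gtilde)=(B\ot C(G))\underset{B\ot C(F)}{*}(M_{N_\kappa}(\C)\ot C(H)\otm C(F))$ the quotient $\widetilde\lambda_\kappa$ kills exactly the kernel of $\id_B\ot\lambda_G$ amalgamated with that of $\id\ot\lambda_{H\times F}$, yielding $C_{r,\kappa}(\Gtilde)$ with its faithful expectation $E_{\kappa,2}$. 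Transporting through $\pi_\kappa$, this is the same as the quotient $\id\ot(\widetilde\lambda\circ\iota)$ of $M_{N_\kappa}(\C)\ot C_\kappa(\GG)$ onto $M_{N_\kappa}(\C)\ot C_{r,\kappa}(\GG)$, because by definition $\widetilde\lambda\vert_{M_{N_\kappa}(\C)\ot C_\kappa(\GG)}=\widetilde\lambda_\kappa\circ\varphi_\kappa$ and $C_{r,\kappa}(\GG)=\widetilde\lambda_\kappa\circ\varphi_\kappa\circ\iota(C_\kappa(\GG))$. Thus the identity $\pi_{r,\kappa}\circ\widetilde\lambda_\kappa=(\id\ot(\widetilde\lambda\circ\iota)\vert_{C_\kappa(\GG)})\circ\pi_\kappa$ forces the definition of $\pi_{r,\kappa}$ on the dense image of $\widetilde\lambda_\kappa$, and it is well-defined and isometric because both sides have the same GNS-faithful conditional expectations onto $M_{N_\kappa}(\C)\ot C_r(H)\ot C_r(F)$ (apply the uniqueness of the reduced C*-algebra of a ucp map, Remark \ref{RmkReducedUcp}), with inverse transported from $\varphi_\kappa$.

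The main obstacle I anticipate is the bookkeeping of conditional expectations: one must verify that $\pi_\kappa$ (resp. $\varphi_\kappa$) intertwines the conditional expectation $E_2\,:\,C_\kappa(\Gtilde)\to M_{N_\kappa}(\C)\ot C(H)\otm C(F)$ with $\widetilde\E_\kappa$ (resp. $\E_\kappa$) up to the $\psi_\kappa^{-1}$ twist, so that passing to reduced completions is legitimate and the faithfulness of $E_{\kappa,2}$ and $E_r$ genuinely transfers; this is where Lemma \ref{LemReducedAction}$(2)$ and the definition $\widetilde\E_\kappa=(\psi_\kappa^{-1}\ot\id\ot\id)\E_\kappa$ enter, and where one has to be careful that the reduced amalgamated free product construction is performed with respect to exactly the stated expectations. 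Once these compatibilities are pinned down, all three assertions follow formally from uniqueness of reduced objects and the faithfulness statements recorded in the preliminaries.
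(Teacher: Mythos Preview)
Your proposal is correct and follows the same overall strategy as the paper: identify $C_r(\GG)$ via the faithfulness of $E_r$ and Remark~\ref{RmkReducedUcp}, then inherit the free product structure from the ambient $C_r(\Gtilde)$.

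One difference worth noting is your treatment of $(3)$. You transport $\pi_\kappa$ through the reduced quotients by checking that both sides carry the same GNS-faithful expectations, which works but requires the bookkeeping you flag as the main obstacle. The paper avoids this entirely by constructing the inverse $\varphi_{r,\kappa}$ directly from the commutant structure: since $\varphi_\kappa\circ\iota(C_\kappa(\GG))=M_{N_\kappa}(\C)'\cap C_\kappa(\Gtilde)$, one has $C_{r,\kappa}(\GG)\subset M_{N_\kappa}(\C)'\cap C_{r,\kappa}(\Gtilde)$, so the map $b\ot x\mapsto bx$ from $M_{N_\kappa}(\C)\ot C_{r,\kappa}(\GG)$ to $C_{r,\kappa}(\Gtilde)$ is a well-defined faithful $*$-homomorphism (faithfulness is automatic for a simple tensor factor $M_{N_\kappa}(\C)$), and it is surjective because $M_{N_\kappa}(\C)$ and $C_{r,\kappa}(\GG)$ together generate $C_{r,\kappa}(\Gtilde)$. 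This sidesteps any verification of expectation compatibility. Relatedly, the paper's $(2)$ does not invoke $(3)$: freeness of the $C_{r,\kappa}(\GG)$ over $C_r(H\times F)$ follows immediately from the inclusion $C_{r,\kappa}(\GG)\subset C_{r,\kappa}(\Gtilde)$ and the freeness of the latter, so your forward reference to $(3)$ is unnecessary.
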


\begin{proof}
$(1)$. By construction, $\lambda_{H\times F}\circ E=E_r\circ\widetilde{\lambda}$. Since $E_r$ is faithful and $\widetilde{\lambda}$ is surjective, we deduce that $C_r(\Gtilde)$ is the reduced C*-algebra of $\lambda_{H\times F}\circ E$ and it follows from Remark \ref{RmkReducedUcp} that $\widetilde{\lambda}\circ\iota(C(\GG))$ is the reduced C*-algebra of $\lambda_{H\times F}\circ E\circ\iota=\lambda_{H\times F}\circ E_{H\times F}$. We conclude the proof of $(1)$ using Remark \ref{RmkEF} $(1)$.

\vspace{0.2cm}

\noindent$(2)$. Since $C(\GG)$ is generated by the algebras $C_\kappa(\GG)$, for $1\leq\kappa\leq K$, we conclude that $C_r(\GG)=\widetilde{\lambda}\circ\iota(C(\GG))$ is generated by $C_{r,\kappa}(\GG)=\widetilde{\lambda}\circ\iota(C_\kappa(\GG))$  ($1\leq\kappa\leq K$). Moreover, since the $C_{r,\kappa}(\Gtilde)$ are free with amalgamation with respect to $E_r\vert_{C_{r,\kappa}(\Gtilde)}=(\psi_\kappa^{-1}\ot\id\ot\id)\circ E_{2,\kappa}$ and since $C_{r,\kappa}(\GG)\subset C_{r,\kappa}(\Gtilde)$ the result follows.

\vspace{0.2cm}

\noindent$(3)$. We construct the inverse isomorphism $\varphi_{r,\kappa}\,:\, M_{N_\kappa(\C)}\ot C_{r,\kappa}(\GG)\rightarrow C_{r,\kappa}(\Gtilde)$ satisfying $\varphi_{r,\kappa}\circ(\id\ot(\widetilde{\lambda}\circ\iota)\vert_{C_\kappa(\GG)})=\widetilde{\lambda}_\kappa\circ\varphi_\kappa$. We view both $M_{N_\kappa}(\C)\subset C_\kappa(\Gtilde)$ and $M_{N_\kappa}(\C)\subset C_{r,\kappa}(\Gtilde)$ in the obvious way. Note that, since $\varphi_\kappa\circ\iota(C_\kappa(\GG))=M_{N_\kappa}(\C)'\cap C_\kappa(\Gtilde)$ we deduce that $C_{r,\kappa}(\GG)\subseteq M_{N_\kappa}(\C)'\cap C_{r,\kappa}(\Gtilde)$. Hence, there exists a unique faithful unital $*$-homomorphism $\varphi_{r,\kappa}\,:\,M_{N_\kappa}(\C)\ot C_{r,\kappa}(\GG)\rightarrow C_{r,\kappa}(\Gtilde)$, such that $\varphi_{r,\kappa}(b\ot x)=bx$ for all $b\in M_{N_\kappa}(\C)$ and $x\in C_{r,\kappa}(\GG).$
Since $C_\kappa(\Gtilde)$ is generated by $M_{N_\kappa}(\C)$ and $\varphi_\kappa\circ\iota(C_\kappa(\GG))$ and since $\widetilde{\lambda}_\kappa$ is surjective we deduce that $C_{r,\kappa}(\Gtilde)$ is generated by $M_{N_\kappa}(\C)$ and $\widetilde{\lambda}_\kappa(\varphi_\kappa\circ\iota (C_\kappa(\GG)))=C_{r,\kappa}(\GG)$. Both these C*-subalgebras are in the image of $\varphi_{r,\kappa}$ so $\varphi_{r,\kappa}$ is surjective.\end{proof}

\noindent We obtain the following interesting Corollary.

\begin{corollary}\label{CorExact}
The following are equivalent.
\begin{enumerate}
    \item $\widehat{G}$ and $\widehat{H}$ are exact.
    \item $\widehat{\GG}$ is exact.
\end{enumerate}
\end{corollary}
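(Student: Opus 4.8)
The plan is to use the explicit structure of $C_r(\GG)$ as an iterated amalgamated free product established in Theorem \ref{ThmReduced}, together with the standard permanence properties of exactness for C*-algebras. Recall that exactness passes to C*-subalgebras, is stable under minimal tensor products, and — by a result of Dykema — is preserved by reduced amalgamated free products over exact subalgebras admitting a conditional expectation. Also, by definition, $\widehat{G}$ is exact iff $C_r(G)$ is exact, and similarly for $H$, $F$, and $\GG$. Since $F$ is a dual quantum subgroup of $G$, the reduced conditional expectation $E_F^r\,:\,C_r(G)\rightarrow C_r(F)$ from Proposition \ref{dualqsg} is faithful, so $C_r(F)$ inherits exactness from $C_r(G)$ automatically; this handles the $F$-factor for free.

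For the implication $(1)\Rightarrow(2)$: assume $C_r(G)$ and $C_r(H)$ are exact. Then $C_r(F)$ is exact (as a subalgebra of $C_r(G)$), and $B$ being finite-dimensional is nuclear hence exact, so $B\ot C_r(G)$, $B\ot C_r(F)$ and $M_{N_\kappa}(\C)\ot C_r(H)\ot C_r(F)$ are all exact. The map $E_\kappa\,:\,M_{N_\kappa}(\C)\ot C_r(H)\rightarrow B$ is a faithful conditional expectation (Proposition \ref{PropUcpSection} and the remark following it), and the inclusion $B\ot C_r(F)\hookrightarrow B\ot C_r(G)$ admits the faithful conditional expectation $\id_B\ot E_F^r$; so $C_{r,\kappa}(\Gtilde)$, being the reduced amalgamated free product of two exact C*-algebras over an exact common subalgebra with faithful conditional expectations on both sides, is exact by Dykema's theorem. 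By Theorem \ref{ThmReduced}(3), $M_{N_\kappa}(\C)\ot C_{r,\kappa}(\GG)\cong C_{r,\kappa}(\Gtilde)$ is exact, hence $C_{r,\kappa}(\GG)$ is exact (it is a corner, or use that $M_{N_\kappa}(\C)\ot D$ exact implies $D$ exact). Finally, by Theorem \ref{ThmReduced}(2), $C_r(\GG)$ is the reduced amalgamated free product $\ast_{C_r(H)\ot C_r(F)}(C_{r,\kappa}(\GG),E_r\vert_{C_{r,\kappa}(\GG)})$ over $C_r(H\times F)=C_r(H)\ot C_r(F)$, which is exact; the conditional expectations $E_r\vert_{C_{r,\kappa}(\GG)}$ are faithful (as restrictions of the faithful $E_r$). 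One more application of Dykema's theorem (iterated $K-1$ times) gives that $C_r(\GG)$ is exact, i.e. $\widehat{\GG}$ is exact.

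For the converse $(2)\Rightarrow(1)$: assume $C_r(\GG)$ is exact. By Remark \ref{RmkFwp}(2), $H$ is a dual quantum subgroup of $\GG$, so by Proposition \ref{dualqsg} there is a faithful inclusion $C_r(H)\subset C_r(\GG)$, whence $C_r(H)$ is exact, i.e. $\widehat{H}$ is exact. For $G$: we want to locate $C_r(G)$ inside $C_r(\GG)$. Since $\GG=G\wr_{*,\beta,F}H$ with $\beta$ ergodic, after Proposition \ref{Propfaithful} we may assume $\beta$ faithful (an amalgamated free product decomposition which does not affect exactness in either direction — one can alternatively argue directly). Then $N_\kappa\geq 1$ and $\beta_\kappa\,:\,B\rightarrow M_{N_\kappa}(\C)\ot C_r(H)$ is faithful, and by Theorem \ref{ThmReduced}(3) together with the structure of $C_{r,\kappa}(\Gtilde)$ as a reduced amalgamated free product containing $B\ot C_r(G)$ as a subalgebra with faithful conditional expectation, we get a faithful copy of $B\ot C_r(G)$ inside $M_{N_\kappa}(\C)\ot C_{r,\kappa}(\GG)\subset M_{N_\kappa}(\C)\ot C_r(\GG)$, which is exact; hence $B\ot C_r(G)$ is exact, hence $C_r(G)$ is exact, i.e. $\widehat{G}$ is exact.

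The main obstacle I anticipate is purely bookkeeping: making sure every conditional expectation appearing in the iterated amalgamated free products is \emph{faithful} so that Dykema's permanence result applies, and correctly threading the isomorphisms $\pi_\kappa$, $\widetilde\lambda_\kappa$, $\pi_{r,\kappa}$ of Theorems \ref{ThmBlockAction} and \ref{ThmReduced} so that the subalgebra $C_r(G)$ (resp. $C_r(H)$) is genuinely seen sitting inside $C_r(\GG)$ with expectation. The faithfulness of $E_r$ and of $E_{\kappa,2}$ is recorded in Theorem \ref{ThmReduced} and the discussion preceding it, and the faithfulness of $E_\kappa$ and $\id_B\ot E_F^r$ is available from Proposition \ref{PropUcpSection} and Proposition \ref{dualqsg}; so the ingredients are all in place, and no genuinely new idea beyond assembling these is needed.
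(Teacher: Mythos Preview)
Your proof is correct and follows essentially the same route as the paper's: Theorem \ref{ThmReduced} plus Dykema's permanence of exactness under reduced amalgamated free products for $(1)\Rightarrow(2)$, and locating $C_r(G)$ and $C_r(H)$ as C*-subalgebras of $C_{r,\kappa}(\Gtilde)\simeq M_{N_\kappa}(\C)\ot C_{r,\kappa}(\GG)$ for $(2)\Rightarrow(1)$. One small remark: the detour through Proposition \ref{Propfaithful} to reduce to a faithful action $\beta$ is unnecessary --- the faithfulness of the homomorphism $\beta_\kappa$ (which is what you actually need to embed $B\ot C_r(G)$ into $C_{r,\kappa}(\Gtilde)$) holds under ergodicity alone by Proposition \ref{PropUcpSection}(1), and is unrelated to whether the action $\beta$ is faithful in the sense that $C(H)$ is generated by coefficients of $u$.
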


\begin{proof}
$(1)\Rightarrow(2)$. Suppose that $C_r(G)$ and $C_r(H)$ are exact then, since $C_r(F)\subset C_r(G)$ is a C*-subalgebra, $C_r(F)$ is also exact. Theorem \ref{ThmReduced} and \cite[Theorem 3.2]{Dy04} implies that $C_r(\GG)$ is exact.

\vspace{0.2cm}

\noindent$(2)\Rightarrow(1)$. Assume that $C_r(\GG)$ is exact and fix $1\leq \kappa\leq K$. Since $C_{r,\kappa}(\GG)\subset C_r(\GG)$ is a C*-subalgebra, we deduce that $C_{r,\kappa}(\GG)$ is exact so $C_{r,\kappa}(\Gtilde)\simeq M_{N_\kappa}(\C)\ot C_{r,\kappa}(\GG)$ is also exact. Finally, since both $C_r(G), C_r(H)\subset C_{r,\kappa}(\Gtilde)$ are C*-subalgebras, it concludes the proof.\end{proof}

\subsection{The von Neumann algebra}\label{sectionvna}
For $1\leq\kappa\leq K$ define:
$$L_\kappa(\Gtilde)= (B\ot \Linf(G),\id\ot E_F'')\underset{B\ot \Linf(F)}{\ast}(M_{N_\kappa}(\C)\ot \Linf(H)\ot \Linf(F),E_\kappa\ot\id_{\Linf(F)}).$$
Let $E_{B\ot\Linf(F)}\,:\, L_\kappa(\Gtilde)\rightarrow B\ot\Linf(F)$ be the canonical normal faithful conditional expectation. By uniqueness of the reduced C*-algebra of the conditional expectation
$$E_{B\ot C_r(F)}\,:\,C_{r,\kappa}(\Gtilde)\rightarrow B\ot C_r(F)$$
we may and will view $C_{r,\kappa}(\Gtilde)\subset L_\kappa(\Gtilde)$ in the canonical way. Note that $C_{r,\kappa}(\Gtilde)''= L_\kappa(\Gtilde)$. Let $E''_{\kappa,2}\,:\, L_\kappa(\Gtilde)\rightarrow M_{N_\kappa}(\C)\ot\Linf(H)\ot\Linf(F)$ be the canonical normal faithful conditional expectation and consider the faithful normal conditional expectation $$\Etilde''_\kappa:=(\psi_\kappa^{-1}\ot\id\ot\id)\circ E''_{\kappa,2}\,:\, L_\kappa(\Gtilde)\rightarrow \Linf(H)\ot\Linf(F).$$ Define
$$\Linf(\Gtilde):= \underset{\Linf(H)\ot \Linf(F)}{\ast} (L_\kappa(\Gtilde),\Etilde''_\kappa).$$
By uniqueness of the reduced C*-algebra of $\lambda_{H\times F}\circ E$ we may and will view $C_r(\Gtilde)\subset\Linf(\Gtilde)$ in the obvious way and we have $C_r(\Gtilde)''=\Linf(\Gtilde)$. Write $\widetilde{E}''\,:\,\Linf(\Gtilde)\rightarrow \Linf(H)\ot\Linf(F)$ and $\widetilde{E}_\kappa''\,:\,\Linf(\Gtilde)\rightarrow L_\kappa(\Gtilde)$ the canonical normal faithful conditional expectations.

\vspace{0.2cm}

\noindent Define $L_\kappa(\GG):=C_{r,\kappa}(\GG)''\subset C_{r,\kappa}(\Gtilde)''=L_\kappa(\Gtilde)$ so that $\Linf(H)\ot\Linf(F)\subset L_\kappa(\GG)$.

\begin{theorem}
The following holds.
\begin{enumerate}
    \item $\Linf(\GG)=\underset{\Linf(H)\ot \Linf(F)}{\ast} (L_\kappa(\GG),E''\vert_{L_\kappa(\GG)})\subset\Linf(\Gtilde)$ and $E''_{H\times F}:=\widetilde{E}''\vert_{\Linf(\GG)}$ is the conditional expectation of the dual quantum subgroup $H\times F$.
    \item For all $1\leq\kappa\leq K$, there is a unique unital normal $*$-isomorphism:
$$\pi''_{\kappa}\,:\,L_{\kappa}(\Gtilde)\rightarrow M_{N_\kappa}(\C)\ot L_{\kappa}(\GG)$$
such that $\pi''_{\kappa}\vert_{C_{r,\kappa}(\Gtilde)}=\pi_{r,\kappa}$.
\end{enumerate}
\end{theorem}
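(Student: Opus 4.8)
This is the von Neumann algebra analogue of Theorem~\ref{ThmReduced}, and the plan is to obtain it by taking bicommutants in that statement, all bicommutants being computed inside the fixed ambient algebra $\Linf(\Gtilde) = C_r(\Gtilde)''$. Recall that $\Linf(\GG) = C_r(\GG)'' \subset \Linf(\Gtilde)$, that $L_\kappa(\Gtilde) = C_{r,\kappa}(\Gtilde)''$ and $L_\kappa(\GG) = C_{r,\kappa}(\GG)'' \subset L_\kappa(\Gtilde)$, and that $\widetilde{E}''$ (resp. $\Etilde''_\kappa$) is the canonical normal faithful conditional expectation of $\Linf(\Gtilde)$ (resp. $L_\kappa(\Gtilde)$) onto $\Linf(H)\ot\Linf(F)$, extending the corresponding reduced conditional expectation and satisfying $\widetilde{E}''\vert_{L_\kappa(\Gtilde)} = \Etilde''_\kappa$ by the tower property.

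I would first establish (2). Theorem~\ref{ThmReduced}(3) provides the $*$-isomorphism $\varphi_{r,\kappa}\colon M_{N_\kappa}(\C)\ot C_{r,\kappa}(\GG)\to C_{r,\kappa}(\Gtilde)$, $b\ot x\mapsto bx$, with inverse $\pi_{r,\kappa}$. It identifies $M_{N_\kappa}(\C)$ with the unital matrix subalgebra $M_{N_\kappa}(\C)\subset C_{r,\kappa}(\Gtilde)$ and $1\ot C_{r,\kappa}(\GG)$ with its relative commutant, so that $C_{r,\kappa}(\GG) = M_{N_\kappa}(\C)'\cap C_{r,\kappa}(\Gtilde)$. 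Passing to bicommutants, $M_{N_\kappa}(\C)\subset L_\kappa(\Gtilde)$ is still a unital matrix subalgebra, whence the canonical decomposition $L_\kappa(\Gtilde) = M_{N_\kappa}(\C)\ot\bigl(M_{N_\kappa}(\C)'\cap L_\kappa(\Gtilde)\bigr)$; moreover the conditional expectation $\Phi_\kappa(y) := \tfrac{1}{N_\kappa}\sum_{i,j}e^\kappa_{ij}\,y\,e^\kappa_{ji}$ onto the relative commutant is normal and maps $C_{r,\kappa}(\Gtilde)$ onto $C_{r,\kappa}(\GG)$, so $M_{N_\kappa}(\C)'\cap L_\kappa(\Gtilde) = C_{r,\kappa}(\GG)'' = L_\kappa(\GG)$. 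Thus $L_\kappa(\Gtilde) = M_{N_\kappa}(\C)\ot L_\kappa(\GG)$, and the resulting normal $*$-isomorphism $\pi''_\kappa$ sends $bx\in C_{r,\kappa}(\Gtilde)$ to $b\ot x$, i.e. it extends $\pi_{r,\kappa}$; such a normal extension is unique since $C_{r,\kappa}(\Gtilde)$ is $\sigma$-weakly dense in $L_\kappa(\Gtilde)$.

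For (1), I would note that by Theorem~\ref{ThmReduced}(2) the C$^*$-algebra $C_r(\GG)$ is generated by the $C_{r,\kappa}(\GG)$, hence $\Linf(\GG) = C_r(\GG)''$ is generated, as a von Neumann algebra, by the $L_\kappa(\GG)$. Since $\Linf(\Gtilde)$ is by definition the amalgamated free product $\underset{\Linf(H)\ot\Linf(F)}{\ast}(L_\kappa(\Gtilde),\Etilde''_\kappa)$, the subalgebras $L_\kappa(\Gtilde)$ are in free position over $\Linf(H)\ot\Linf(F)$ with respect to $\widetilde{E}''$; freeness passing to subalgebras carrying the restricted conditional expectation, the $L_\kappa(\GG)\subset L_\kappa(\Gtilde)$ are likewise free over $\Linf(H)\ot\Linf(F)$ with respect to $\widetilde{E}''$. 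As $\widetilde{E}''$ restricts to a normal faithful conditional expectation of $\Linf(\GG)$ onto $\Linf(H)\ot\Linf(F) = \Linf(H\times F)$ (using that $H\times F$ is a dual quantum subgroup of $\GG$, so $\Linf(H\times F)\subset\Linf(\GG)$, Theorem~\ref{ThmDefFreeWr}(1)), that is normal and faithful on each $L_\kappa(\GG)$, and as the $L_\kappa(\GG)$ generate $\Linf(\GG)$, the characterization of von Neumann algebraic amalgamated free products gives $\Linf(\GG) = \underset{\Linf(H)\ot\Linf(F)}{\ast}(L_\kappa(\GG),\widetilde{E}''\vert_{L_\kappa(\GG)})$ inside $\Linf(\Gtilde)$. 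Finally $E''_{H\times F} := \widetilde{E}''\vert_{\Linf(\GG)}$ is a normal faithful conditional expectation onto $\Linf(H\times F)$ that agrees on the $\sigma$-weakly dense subalgebra $C_r(\GG)$ with $E_r\vert_{C_r(\GG)} = E^r_{H\times F}$ (Theorem~\ref{ThmReduced}(1)); being thus Haar-state preserving, it coincides, by the von Neumann level version of Remark~\ref{RmkEF}(3), with the conditional expectation of the dual quantum subgroup $H\times F$ of $\GG$.

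The main point requiring care is the bookkeeping across the C$^*$- and von Neumann algebra levels: one must keep all of $\Linf(\GG)$, $L_\kappa(\GG)$, $L_\kappa(\Gtilde)$ and $\Linf(\Gtilde)$ inside one standard representation so that ``bicommutant'' is unambiguous, invoke normality of $\widetilde{E}''$, $\Etilde''_\kappa$ and $\Phi_\kappa$ only where it is available, and check the compatibilities $\widetilde{E}''\vert_{L_\kappa(\Gtilde)} = \Etilde''_\kappa$ and $C_{r,\kappa}(\GG) = M_{N_\kappa}(\C)'\cap C_{r,\kappa}(\Gtilde)$. Once these are settled, the statement is a routine transfer of Theorem~\ref{ThmReduced} together with the standard structure theory of amalgamated free products \cite{FG20}.
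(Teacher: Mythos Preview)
Your proposal is correct and follows essentially the same approach as the paper's proof, which is extremely terse: for (1) the paper defines $M$ as the abstract amalgamated free product, embeds $C_r(\GG)$ into it via uniqueness of the reduced C*-algebra of $E^r_{H\times F}$, and notes $C_r(\GG)''=M$; for (2) it simply says ``can be proved as statement (3) in Theorem~\ref{ThmReduced}''. Your write-up unpacks these steps more explicitly---in particular the relative-commutant identification $M_{N_\kappa}(\C)'\cap L_\kappa(\Gtilde)=L_\kappa(\GG)$ via the normal expectation $\Phi_\kappa$, and the freeness-plus-generation characterization of the von Neumann amalgamated free product---but the underlying argument is the same.
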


\begin{proof}
$(1).$ Let $M:=\underset{\Linf(H)\ot \Linf(F)}{\ast} (L_\kappa(\GG),E''\vert_{L_\kappa(\GG)})\subset\Linf(\Gtilde)$. By uniqueness of the reduced C*-algebra of $E^r_{H\times F}$ we may and will view $C_r(\GG)\subset M$. Note that $C_r(\GG)''=M$ and $ E''\vert_{C_r(\GG)}= E^r_{H\times F}$. $(1)$ follows.

\vspace{0.2cm}

\noindent$(2)$. Can be proved as statement $(3)$ in Theorem \ref{ThmReduced}.
\end{proof}

\noindent Define the state $h_{\Gtilde}$ on $\Linf(\Gtilde)$ by $h_{\Gtilde}:=(h_H\ot h_F)\circ \widetilde{E}''$ and note that the Haar state on $\Linf(\GG)$ is $h_\GG=(h_H\ot h_F)\circ E_{H\times F}''=h_{\Gtilde}\vert_{\Linf(\GG)}$. We also consider the states
$$\varphi_\kappa:=(\psi\ot h_F)\circ E_{B\ot\Linf(F)}\in L_\kappa(\Gtilde)_{*}\text{ for all }1\leq \kappa\leq K,$$
and $h_\kappa:=\varphi_\kappa\vert_{L_\kappa(\GG)}$.

\begin{proposition}\label{PropModular}
The following holds.
\begin{enumerate}
\item For all $1\leq\kappa\leq K$ one has $\varphi_\kappa=(h_H\ot h_F)\circ\widetilde{\E}''_\kappa$ and, for all $t\in\R$,
$$\sigma_t^{\varphi_\kappa}\vert_{B\ot\Linf(G)}=\sigma_t^{\psi}\ot \sigma_t^{h_G}\text{ and }\sigma_t^{\varphi_\kappa}\vert_{M_{N_\kappa}(\C)\ot\Linf(H)\ot\Linf(F)}=\sigma_{-t}^{\psi_\kappa}\ot\sigma_t^{h_H}\ot\sigma_t^{h_F}.$$

\item For all $1\leq\kappa\leq K$ one has $h_{\Gtilde}=\varphi_\kappa\circ\widetilde{E}_\kappa''$. In particular, $\sigma_t^{h_{\Gtilde}}\vert_{L_\kappa(\Gtilde)}=\sigma_t^{\varphi_\kappa}$ for all $t\in\R$.
\item For all $1\leq\kappa\leq K$ one has $\varphi_\kappa=(\psi_\kappa^{-1}\ot h_\kappa)\circ\pi_\kappa''$ and $\sigma_t^{\varphi_\kappa}\vert_{L_\kappa(\GG)}=\sigma_t^{h_\kappa}$
    \item For all $t\in\R$, $\sigma_t^{h_{\Gtilde}}\vert_{\Linf(\GG)}=\sigma_t^{h_\GG}$. In particular, there exists a unique normal faithful conditional expectation $E_\GG\,:\, \Linf(\Gtilde)\rightarrow\Linf(\GG)$ such that $h_{\GG}\circ E_{\GG}=h_{\Gtilde}$.
\end{enumerate}
\end{proposition}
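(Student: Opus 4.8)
The plan is to establish the four assertions in order, each feeding into the next, since they describe compatible modular data across the amalgamated free product decompositions that have already been set up.

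For $(1)$, I would start from the general formula for the modular group of a state of the form $\mu\circ E_N$ on an amalgamated free product, namely Remark \ref{RmkModularFree}: $\sigma^{\mu\circ E_N}=\sigma^{\mu\circ E_1}*\sigma^{\mu\circ E_2}$. Applying this to $L_\kappa(\Gtilde)=(B\ot\Linf(G))\ast_{B\ot\Linf(F)}(M_{N_\kappa}(\C)\ot\Linf(H)\ot\Linf(F))$ with $\mu=\psi\ot h_F$ and the two conditional expectations $\id\ot E_F''$ and $E_\kappa\ot\id_{\Linf(F)}$, I get that $\varphi_\kappa=(\psi\ot h_F)\circ E_{B\ot\Linf(F)}$ restricts on $B\ot\Linf(G)$ to $(\psi\ot h_F)\circ(\id\ot E_F'')=\psi\ot(h_F\circ E_F'')=\psi\ot h_G$ (using that $h_F\circ E_F''=h_G$ by Proposition \ref{dualqsg}), so its modular group there is $\sigma_t^\psi\ot\sigma_t^{h_G}$; and on $M_{N_\kappa}(\C)\ot\Linf(H)\ot\Linf(F)$ it restricts to $(\psi\ot h_F)\circ(E_\kappa\ot\id)=(\psi\circ E_\kappa)\ot h_F=(\psi_\kappa^{-1}\ot h_H)\ot h_F$ by Proposition \ref{PropUcpSection}$(3)$, giving modular group $\sigma_{-t}^{\psi_\kappa}\ot\sigma_t^{h_H}\ot\sigma_t^{h_F}$ (here $\sigma_t^{\psi_\kappa^{-1}}=\sigma_{-t}^{\psi_\kappa}$). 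The identity $\varphi_\kappa=(h_H\ot h_F)\circ\widetilde{\E}_\kappa''$ follows because $\widetilde{\E}_\kappa''=(\psi_\kappa^{-1}\ot\id\ot\id)\circ E_{\kappa,2}''$ and chasing the definition of $E_{B\ot\Linf(F)}$ through the free product; this is a routine compatibility check using uniqueness of conditional expectations and that both sides vanish on reduced words.

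For $(2)$, I would again invoke Remark \ref{RmkModularFree}, now one level up: $\Linf(\Gtilde)=\ast_{\Linf(H)\ot\Linf(F)}(L_\kappa(\Gtilde),\widetilde{\E}_\kappa'')$ and $h_{\Gtilde}=(h_H\ot h_F)\circ\widetilde{E}''$, so $h_{\Gtilde}=\varphi_\kappa\circ\widetilde{E}_\kappa''$ precisely because $\varphi_\kappa=(h_H\ot h_F)\circ\widetilde{\E}_\kappa''$ from $(1)$ and $\widetilde{\E}_\kappa''\circ\widetilde{E}_\kappa''=\widetilde{E}''$ (tower of expectations). Then $\sigma_t^{h_{\Gtilde}}$ globally preserves each $L_\kappa(\Gtilde)$ and restricts there to $\sigma_t^{\varphi_\kappa}$, by the free-product modular formula. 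For $(3)$, I would use the normal $*$-isomorphism $\pi_\kappa''\,:\,L_\kappa(\Gtilde)\to M_{N_\kappa}(\C)\ot L_\kappa(\GG)$ and transport $\varphi_\kappa$ through it. Since $\pi_\kappa''$ restricts to the inclusion on $\Linf(H)\ot\Linf(F)$ and $\pi_\kappa$ sends $1\ot a\in B\ot\Linf(G)$ appropriately, the state $\varphi_\kappa\circ(\pi_\kappa'')^{-1}$ on $M_{N_\kappa}(\C)\ot L_\kappa(\GG)$ should be $\psi_\kappa^{-1}\ot h_\kappa$: one checks it agrees with $\psi_\kappa^{-1}$ on the matrix block $M_{N_\kappa}(\C)$ (using $\psi\circ E_\kappa=\psi_\kappa^{-1}\ot h$ hence $\varphi_\kappa$ restricted to the block is $\psi_\kappa^{-1}$) and with $h_\kappa=\varphi_\kappa\vert_{L_\kappa(\GG)}$ on the other tensor leg, and that $M_{N_\kappa}(\C)$ and $L_\kappa(\GG)$ are in "free position" inside $L_\kappa(\Gtilde)$ under $E_{B\ot\Linf(F)}$-style expectations so the product state formula holds. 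Consequently $\sigma_t^{\varphi_\kappa}$ preserves $L_\kappa(\GG)=1\otimes L_\kappa(\GG)$ (it acts trivially on the $M_{N_\kappa}(\C)$ factor up to the inner part coming from $\psi_\kappa^{-1}$, which is inner and commutes with $L_\kappa(\GG)$) and restricts there to $\sigma_t^{h_\kappa}$.

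Finally, for $(4)$: I would show $\sigma_t^{h_{\Gtilde}}$ globally preserves $\Linf(\GG)$ by checking that it preserves each generating subalgebra $L_\kappa(\GG)$ — which follows from $(2)$ ($\sigma_t^{h_{\Gtilde}}\vert_{L_\kappa(\Gtilde)}=\sigma_t^{\varphi_\kappa}$) together with $(3)$ ($\sigma_t^{\varphi_\kappa}$ preserves $L_\kappa(\GG)$ and restricts to $\sigma_t^{h_\kappa}$) — and that these local modular groups glue, via Remark \ref{RmkModularFree} applied to $\Linf(\GG)=\ast_{\Linf(H)\ot\Linf(F)}(L_\kappa(\GG),E''\vert_{L_\kappa(\GG)})$ with $h_\GG=(h_H\ot h_F)\circ E_{H\times F}''$, since $h_\kappa=\varphi_\kappa\vert_{L_\kappa(\GG)}=(h_H\ot h_F)\circ(\widetilde{\E}_\kappa''\vert_{L_\kappa(\GG)})$ and the restricted expectations are compatible. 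Hence $\sigma_t^{h_{\Gtilde}}\vert_{\Linf(\GG)}=\sigma_t^{h_\GG}$. Once the modular group of $h_{\Gtilde}$ globally preserves $\Linf(\GG)$, Takesaki's theorem yields a unique $h_{\Gtilde}$-preserving normal faithful conditional expectation $E_\GG\,:\,\Linf(\Gtilde)\to\Linf(\GG)$, which automatically satisfies $h_\GG\circ E_\GG=h_{\Gtilde}\vert_{\Linf(\GG)}\circ E_\GG=h_{\Gtilde}$ since $h_\GG=h_{\Gtilde}\vert_{\Linf(\GG)}$.

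The main obstacle I anticipate is $(3)$: identifying the transported state as an exact tensor product $\psi_\kappa^{-1}\ot h_\kappa$ and verifying that the matrix factor $M_{N_\kappa}(\C)$ sits in the right "free-independent" position relative to $L_\kappa(\GG)$ inside $L_\kappa(\Gtilde)$, so that the modular group splits as claimed. This requires carefully unwinding the isomorphism $\pi_\kappa$ (which embeds $B\ot\Linf(G)$ into $M_{N_\kappa}(\C)\ot C_\kappa(\GG)$ via $b\ot a\mapsto\beta_\kappa(b)\rho_\kappa(a)$) and tracking how $E_{B\ot\Linf(F)}$ interacts with the reduced-word structure; everything else is a systematic application of the free-product modular formula of Remark \ref{RmkModularFree} and tower-of-expectations bookkeeping.
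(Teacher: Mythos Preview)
Your approach for $(1)$, $(2)$ and $(4)$ is exactly the paper's: everything is a systematic application of Remark \ref{RmkModularFree} together with the identity $\psi\circ E_\kappa=\psi_\kappa^{-1}\ot h_H$ from Proposition \ref{PropUcpSection}$(3)$ and tower-of-expectations bookkeeping.

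For $(3)$ you correctly isolate the only nontrivial point --- that $\varphi_\kappa$ transported through $\pi_\kappa''$ is the product state $\psi_\kappa^{-1}\ot h_\kappa$ --- but your phrasing (``free position'', ``free-independent'') is misleading: $M_{N_\kappa}(\C)$ and $L_\kappa(\GG)$ are \emph{commuting} subalgebras of $L_\kappa(\Gtilde)$ (indeed $L_\kappa(\GG)=M_{N_\kappa}(\C)'\cap L_\kappa(\Gtilde)$), so no freeness argument is available. The paper handles exactly this step by a direct computation: one takes a generic product $X=x_0\rho^\kappa_{i_1j_1}(a_1)\cdots\rho^\kappa_{i_nj_n}(a_n)x_n$ in the dense $*$-subalgebra of $L_\kappa(\GG)$ and verifies $\varphi_\kappa(e^\kappa_{st}X)=\psi_\kappa^{-1}(e^\kappa_{st})\varphi_\kappa(X)$ by expanding $e^\kappa_{st}X$ as an alternating word in $L_\kappa(\Gtilde)$, applying $E_{\kappa,2}''$, and using a trace manipulation with $Q_\kappa^{-1}$ to factor out the $\psi_\kappa^{-1}(e^\kappa_{st})$ term. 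Once that factorization is established, $\sigma_t^{\varphi_\kappa}=\sigma_{-t}^{\psi_\kappa}\ot\sigma_t^{h_\kappa}$ under the identification, and restricting to $1\ot L_\kappa(\GG)$ gives $\sigma_t^{h_\kappa}$. So your plan is right; just replace the ``freeness'' heuristic by this concrete product-state check.
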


\begin{proof}
$(1)$ and $(2)$. The first equality follows from $\psi\circ E_\kappa=\psi_\kappa^{-1}\ot h_H$ (Proposition \ref{PropUcpSection}). The equality about modular groups follows from Remark \ref{RmkModularFree} as well as statement $(2)$.

\vspace{0.2cm}

\noindent$(3)$. Fix a system of matrix units $(e^\kappa_{ij})_{ij}$ in $M_{N_\kappa}(\C)$ diagonalizing $Q_\kappa$ i.e. $Q_\kappa e^\kappa_{ij}=\lambda_{k,i}e^\kappa_{ij}$. We note that $L_\kappa(\GG)=M_{N_\kappa}(\C)'\cap L_\kappa(\Gtilde)$ is generated by $\Linf(H)\ot\Linf(F)\subset L_\kappa(\Gtilde)$ and $$\{\rho^\kappa_{ij}(a)\,:\,1\leq i,j\leq N_\kappa,\, a\in\Linf(G)\},\text{ where}$$ $\rho^\kappa_{ij}(a):=\sum_r(e^\kappa_{ri}\ot 1\ot 1)(1_B\ot a)(e^\kappa_{jr}\ot 1\ot 1)\in L_\kappa(\Gtilde)$. In particular, the linear span of $$\{x_0\rho^\kappa_{i_1j_1}(a_1)\dots\rho^\kappa_{i_nj_n}(a_n)x_n\,:\,n\geq 1, 1\leq i_s,j_s\leq N_\kappa,a_s\in\Linf(G),\,x_s\in\Linf(H)\ot\Linf(F)\}$$ is a unital weakly dense $*$-subalgebra of $L_\kappa(\GG)$. Hence, it suffices to show that, for any $X$ of the form $X=x_0\rho^\kappa_{i_1j_1}(a_1)\dots\rho^\kappa_{i_nj_n}(a_n)x_n$ as above, one has $\varphi_\kappa(e^\kappa_{st}X)=\psi_\kappa^{-1}(e^\kappa_{st})\varphi_\kappa(X)$, for all $1\leq s,t\leq N_\kappa$. Note that $e^\kappa_{st}X=(e^\kappa_{si_1}\ot x_0)(1\ot a_1)(e^\kappa_{j_1i_2}\ot x_2)\dots(1\ot a_n)(e^\kappa_{j_nt}\ot x_n)$. Then,
\begin{eqnarray*}
\varphi_\kappa(e^\kappa_{st} X)&=&(\psi_\kappa^{-1}\ot h_H\ot h_F)\circ E_{\kappa,2}''(e^\kappa_{st}X)\\
&=&(\psi_\kappa^{-1}\ot h_H\ot h_F)\left( (e^\kappa_{si_1}\ot x_0)E_{\kappa,2}''(X')(e^\kappa_{j_nt}\ot x_n)\right)\\
&=&\psi_\kappa^{-1}(e^\kappa_{si_1}Ye^\kappa_{j_nt})=\frac{{\rm Tr}(Q_\kappa^{-1}e^\kappa_{si_1}Ye^\kappa_{j_nt})}{{\rm Tr}(Q_\kappa^{-1})}
=\frac{{\rm Tr}(e^\kappa_{j_nt}Q_\kappa^{-1}e^\kappa_{si_1}Y)}{{\rm Tr}(Q_\kappa^{-1})}\\
&=&\frac{\delta_{s,t}\lambda_{\kappa,s}^{-1}{\rm Tr}(e^\kappa_{j_ni_1}Y)}{{\rm Tr}(Q_\kappa^{-1})}=\psi_\kappa^{-1}(e^\kappa_{st}){\rm Tr}(e^\kappa_{j_ni_1}Y)
\end{eqnarray*}
where $X':=(1\ot a_1)(e^\kappa_{j_1i_2}\ot x_2)\dots(1\ot a_n)$ and $$Y:=(\id\ot h_H\ot h_F)((1\ot x_0)E_{\kappa,2}''(X')(1\ot x_n)).$$
Note that $\psi_\kappa^{-1}\left(\sum_re^\kappa_{ri_1}Ye^\kappa_{j_nr}\right)=(\psi_\kappa^{-1}\ot h_H\ot h_F)E''_{\kappa,2}(X)=\varphi_\kappa(X)$.

\vspace{0.2cm}

\noindent Writing $e^\kappa_{j_ni_1}=\frac{\sum_r\lambda_{\kappa,r}^{-1}e^\kappa_{j_nr}e^\kappa_{ri_1}}{{\rm Tr}(Q_\kappa^{-1})}$ we get:
\begin{eqnarray*}
\varphi_\kappa(e^\kappa_{st}X)&=&\frac{\psi_\kappa^{-1}(e^\kappa_{st})}{{\rm Tr}(Q_\kappa^{-1})}\sum_r\lambda_{\kappa,r}^{-1}{\rm Tr}(e^\kappa_{j_nr}e^\kappa_{ri_1}Y)=\frac{\psi_\kappa^{-1}(e^\kappa_{st})}{{\rm Tr}(Q_\kappa^{-1})}\sum_r\lambda_{\kappa,r}^{-1}{\rm Tr}(e^\kappa_{ri_1}Ye^\kappa_{j_nr})\\
&=&\frac{\psi_\kappa^{-1}(e^\kappa_{st})}{{\rm Tr}(Q_\kappa^{-1})}\sum_r{\rm Tr}(Q_\kappa^{-1}e^\kappa_{ri_1}Ye^\kappa_{j_nr})=\psi_\kappa^{-1}(e^\kappa_{st})\psi_\kappa^{-1}\left(\sum_re^\kappa_{ri_1}Ye^\kappa_{j_nr}\right)\\
&=&\psi_\kappa^{-1}(e^\kappa_{st})\varphi_\kappa(X).
\end{eqnarray*}

\noindent For the last part of $(3)$ we note that, under the identification $L_\kappa(\Gtilde)=M_{N_\kappa}(\C)\ot L_\kappa(\GG)$, $\sigma_t^{\varphi_\kappa}=\sigma_{-t}^{\psi_\kappa}\ot\sigma_t^{h_\kappa}$ so, for all $x\in L_\kappa(\GG)$, $\sigma_t^{h_{\Gtilde}}(x)=\sigma_t^{\varphi_\kappa}(x)=(\sigma_{-t}^{\psi_\kappa}\ot\sigma_t^{h_\kappa})(1\ot x)=\sigma_t^{h_\kappa}(x)$.

\vspace{0.2cm}

\noindent$(4)$. Since for all $x\in L_\kappa(\GG)$, $(h_H\ot h_F)\circ E''(x)=\varphi_\kappa(x)=h_\kappa(x)$, we deduce from general properties of the amalgamated free product that $\sigma_t^{h_\GG}(x)=\sigma_t^{h_\kappa}(x)$. Since $\Linf(\GG)$ is generated by the $L_\kappa(\GG)$, for $1\leq\kappa\leq K$, it concludes the proof.\end{proof}

\noindent The following corollary is a direct consequence of Proposition \ref{PropModular}.

\begin{corollary}\label{CorKac}
$\GG$ is Kac of and only if both $G$ and $H$ are Kac.
\end{corollary}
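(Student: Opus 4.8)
\textbf{Proof plan for Corollary \ref{CorKac}.}

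The statement to prove is an equivalence: $\GG$ is Kac if and only if both $G$ and $H$ are Kac. Recall from Section \ref{SectionCQG} that a CQG is Kac precisely when its Haar state is a trace, equivalently when the modular group of the Haar state is trivial. So the plan is to characterize triviality of $\sigma_t^{h_\GG}$ in terms of $\sigma_t^{h_G}$ and $\sigma_t^{h_H}$, using the modular data assembled in Proposition \ref{PropModular}.

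First I would prove the easy direction: if $\GG$ is Kac then $h_\GG$ is tracial. Since both $H$ and $G^{*K}$ are compact quantum subgroups of $\GG$ (Remark \ref{RmkFwp} $(2)$ and $(3)$), and more relevantly since $H\times F$ is a \emph{dual} quantum subgroup of $\GG$ with a Haar-state-preserving conditional expectation $E_{H\times F}''$ (Theorem \ref{TheoremErgodic} and the discussion in Section \ref{sectionvna}), the restriction of the trace $h_\GG$ to $\Linf(H)\ot\Linf(F)$ is the trace $h_H\ot h_F$, so $H$ is Kac. For $G$: by Theorem \ref{TheoremErgodic} $(2)$ we have $(\id_B\ot h_\GG)\rho = h_G(\cdot)1_B$, and $\rho\,:\,C(G)\rightarrow B\ot C(\GG)$ is faithful (Remark \ref{RmkIota}); combining this with the fact that $h_\GG$ tracial forces $\psi\ot h_\GG$ to be tracial on $B\ot\Linf(\GG)$, and pulling back through the faithful homomorphism $\beta_G$ (or more directly using $L_\kappa(\Gtilde)\simeq M_{N_\kappa}(\C)\ot L_\kappa(\GG)$ and Proposition \ref{PropModular} $(1)$), I would deduce that $\psi\circ\cdot$ precomposed with the $G$-side embedding is tracial, whence $h_G$ is a trace. (Alternatively one can simply invoke that $\Linf(G)$ embeds with expectation into $L_\kappa(\Gtilde)$ via $B\ot\Linf(G)$, and a von Neumann subalgebra with normal faithful expectation from a finite von Neumann algebra is finite; since $\sigma^{h_\GG}$ restricts correctly by Proposition \ref{PropModular}, the modular group of $h_G$ is trivial.)

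For the converse: assume $G$ and $H$ are Kac, i.e. $\sigma_t^{h_G}=\id$ and $\sigma_t^{h_H}=\id$ for all $t$. Then $F$ is also Kac (as $\Linf(F)\subset\Linf(G)$ with Haar-preserving expectation $E_F''$, Proposition \ref{dualqsg}), so $\sigma_t^{h_F}=\id$. However, the subtlety is that $\psi$ need not be tracial in general — but here ergodicity of $\beta$ plus $H$ Kac forces $\psi$ to be a $\delta$-form with $\nabla_\psi = 1$; indeed by Proposition \ref{PropQAut} $(2)$, $Q_u=\nabla_\psi$ and $Q_u=1$ when $H$ is Kac, so $\sigma_t^{\psi}=\id$ and likewise $\sigma_t^{\psi_\kappa}=\id$ for each block. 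Now apply Proposition \ref{PropModular} $(1)$: for each $\kappa$, $\sigma_t^{\varphi_\kappa}$ restricts to $\sigma_t^\psi\ot\sigma_t^{h_G}=\id$ on $B\ot\Linf(G)$ and to $\sigma_{-t}^{\psi_\kappa}\ot\sigma_t^{h_H}\ot\sigma_t^{h_F}=\id$ on $M_{N_\kappa}(\C)\ot\Linf(H)\ot\Linf(F)$. By uniqueness of the modular group of the amalgamated free product (Remark \ref{RmkModularFree}, applied to the free product presentation of $L_\kappa(\Gtilde)$), it follows that $\sigma_t^{\varphi_\kappa}=\id$ on all of $L_\kappa(\Gtilde)$, hence $\varphi_\kappa$ is a trace. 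Then by Proposition \ref{PropModular} $(4)$, $\sigma_t^{h_\GG}(x)=\sigma_t^{h_\kappa}(x)=\sigma_t^{\varphi_\kappa}(x)=x$ for $x\in L_\kappa(\GG)$, and since $\Linf(\GG)$ is generated by the $L_\kappa(\GG)$ we conclude $\sigma_t^{h_\GG}=\id$, i.e. $\GG$ is Kac.

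The main obstacle — really the only nontrivial point — is making sure the easy direction handles the modular structure of $\psi$ correctly: one must not assume $\psi$ tracial a priori, and instead extract triviality of $\sigma^{h_G}$ (and of $\sigma^\psi$) from $\GG$ being Kac. The cleanest route is to observe that $\Linf(G)$ sits inside $L_\kappa(\Gtilde)\simeq M_{N_\kappa}(\C)\ot L_\kappa(\GG)$ via $a\mapsto 1_B\ot a$ inside $B\ot\Linf(G)$, with modular group (by Proposition \ref{PropModular} $(1)$ and $(2)$, noting $h_{\Gtilde}$ restricts to $h_\GG$ on $\Linf(\GG)$) equal to $\sigma_t^{h_G}$ on that copy; since $h_{\Gtilde} = \varphi_\kappa\circ\widetilde E_\kappa''$ and $\varphi_\kappa\vert_{L_\kappa(\GG)} = h_\kappa$ is tracial when $h_\GG$ is, all the relevant modular groups collapse, giving both $\sigma^{h_G}=\id$ and $\sigma^\psi=\id$ simultaneously. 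Everything else is bookkeeping with the already-established Propositions.
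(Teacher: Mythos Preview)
Your approach is correct and is exactly what the paper intends: the corollary is recorded as ``a direct consequence of Proposition \ref{PropModular}'', and you have correctly unpacked that dependency, including the crucial observation (via Proposition \ref{PropQAut}(2)) that $H$ Kac forces $\nabla_\psi=Q_u=1$ so $\psi$ and the $\psi_\kappa$ are tracial.

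One small streamlining for the forward direction: your first attempt (``$h_\GG$ tracial forces $\psi\ot h_\GG$ to be tracial'') is circular, since $\psi$ is not yet known to be tracial. The clean route is the one you already use in the converse: once $H$ is shown Kac (dual quantum subgroup argument), Proposition \ref{PropQAut}(2) immediately gives $\psi$ tracial; then $\varphi_\kappa=(\psi_\kappa^{-1}\ot h_\kappa)\circ\pi_\kappa''$ is tracial because both $\psi_\kappa^{-1}$ and $h_\kappa=h_\GG\vert_{L_\kappa(\GG)}$ are, so $\sigma_t^{\varphi_\kappa}=\id$ and Proposition \ref{PropModular}(1) yields $\sigma_t^{h_G}=\id$. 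Everything else in your plan is fine.
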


\noindent We now compute the scaling group of $\GG$. Note that the scaling group of $G$, $\tau_t^G$, preserves the Haar state of $G$ and satisfies $\tau^G_t\vert_{\Linf(F)}=\tau_t^F$. Given $1\leq\kappa\leq K$, Proposition \ref{PropUcpSection} $(2)$ allows to use the universal property of the amalgamated free product $L_\kappa(\Gtilde)$ which implies that there exists a unique $\varphi_\kappa$-preserving automorphism $\widetilde{\tau}^\kappa_t$ of $L_\kappa(\Gtilde)$ such that $\widetilde{\tau}^\kappa_t\vert_{B\ot\Linf(G)}=\sigma_{-t}^\psi\ot\tau_t^G$ and $\widetilde{\tau}^\kappa_t\vert_{M_{N_\kappa}(\C)\ot\Linf(H)\ot\Linf(F)}=\sigma_{-t}^{\psi_\kappa}\ot\tau_t^H\ot\tau_t^F$. Then, since $\widetilde{\tau}^\kappa_t$ is $\varphi_\kappa$-preserving and $\widetilde{\tau}^\kappa_t\vert_{\Linf(F)}=\tau_t^F$ for all $1\leq\kappa\leq K$, using the universal property of the amalgamated free product $\Linf(\Gtilde)$, there is a unique $h_{\Gtilde}$-preserving automorphism $\tau_t^{\Gtilde}$ of $\Linf(\Gtilde)$ such that, for all $1\leq\kappa\leq K$, $\tau_t^{\Gtilde}\vert_{L_\kappa(\Gtilde)}=\widetilde{\tau}_t^\kappa$. It is not difficult to check that $(t\mapsto\tau_t^{\Gtilde})$ is a one parameter automorphism group of $\Linf(\Gtilde)$ which is strongly continuous, by Lemma \ref{LemOneParameter}.

\vspace{0.2cm}

\noindent In the sequel, we will also denote by $\rho\,:\,\Linf(G)\rightarrow B\ot\Linf(\GG)$ the unital normal $*$-homomorphism defined by $\rho(a)=\sum_{\kappa,i,j}e^\kappa_{ij}\ot\rho^\kappa_{ij}(a)$, where:
$$\rho^\kappa_{ij}(a):=\sum_r(e^\kappa_{ri}\ot 1\ot 1)(1_B\ot a)(e^\kappa_{jr}\ot 1\ot 1)\in L_\kappa(\GG).$$

\begin{proposition}\label{PropScaling}
For all $t\in\R$, $\tau_t^{\Gtilde}(\Linf(\GG))=\Linf(\GG)$, the scaling group of $\GG$ is $\tau_t^\GG=\tau_t^{\Gtilde}\vert_{\Linf(\GG)}$ and, $(\id\ot\tau_t^{\GG})\rho=(\sigma_{t}^\psi\ot\id)\rho\circ\tau_t^G$ and $(\id\ot\sigma_t^{h_\GG})\rho=(\sigma_{t}^\psi\ot\id)\rho\circ\sigma_t^{h_G}$.
\end{proposition}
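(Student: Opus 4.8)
The strategy is to identify $\tau_t^{\widetilde{\GG}}$ with the scaling group of $\widetilde{\GG}$-type ingredients assembled from $G$ and $H$, then restrict to $\Linf(\GG)$. First I would record the intertwining relations: on $B\ot\Linf(G)$ we have $\widetilde{\tau}_t^\kappa=\sigma_{-t}^\psi\ot\tau_t^G$ and on $M_{N_\kappa}(\C)\ot\Linf(H)\ot\Linf(F)$ we have $\widetilde{\tau}_t^\kappa=\sigma_{-t}^{\psi_\kappa}\ot\tau_t^H\ot\tau_t^F$. Using $\rho_{ij}^\kappa(a)=\sum_r(e^\kappa_{ri}\ot1\ot1)(1_B\ot a)(e^\kappa_{jr}\ot1\ot1)$ inside $L_\kappa(\Gtilde)\simeq M_{N_\kappa}(\C)\ot L_\kappa(\GG)$, and the formula $\sigma_t^\psi=\Ad(b_t)$ with $b_t=\sum_{\gamma,r}\lambda_{\gamma,r}^{it+1/2}b^\gamma_{rr}$ (as in the proof of Lemma~\ref{LemmaT}), I would compute $\widetilde{\tau}_t^\kappa(\rho_{ij}^\kappa(a))$ directly. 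Since $\sigma_{-t}^\psi$ acts on the matrix units $e^\kappa_{rs}$ by $e^\kappa_{rs}\mapsto(\lambda_{\kappa,s}/\lambda_{\kappa,r})^{it}e^\kappa_{rs}$ and on $1_B\ot a$ by $1_B\ot\tau_t^G(a)$, a short calculation should give $\widetilde{\tau}_t^\kappa(\rho_{ij}^\kappa(a))$ as a $B\ot\Linf(\GG)$ expression still lying in $L_\kappa(\GG)$, with the net effect $(\id\ot\widetilde{\tau}_t^\kappa)\rho_\kappa=(\sigma_t^\psi\ot\id)\rho_\kappa\circ\tau_t^G$ on the block (the sign flips on the $e^\kappa$-legs combine to give $\sigma_t^\psi$ rather than $\sigma_{-t}^\psi$ after conjugating by the surrounding matrix units). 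This shows $\widetilde{\tau}_t^\kappa(L_\kappa(\GG))=L_\kappa(\GG)$ for all $\kappa$, hence $\tau_t^{\Gtilde}(\Linf(\GG))=\Linf(\GG)$ since $\Linf(\GG)$ is generated by the $L_\kappa(\GG)$, and $\widetilde{\tau}_t^\kappa\vert_{\Linf(H)\ot\Linf(F)}=\tau_t^H\ot\tau_t^F$ is consistent across blocks.

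Next I would verify that $\tau_t^\GG:=\tau_t^{\Gtilde}\vert_{\Linf(\GG)}$ is indeed the scaling group of $\GG$. By the characterization recalled in Section~\ref{SectionCQG}, the scaling group is the unique strongly continuous one-parameter group $(\alpha_t)$ of $\Linf(\GG)$ with $\Delta_\GG\circ\sigma_t^{h_\GG}=(\alpha_t\ot\sigma_t^{h_\GG})\circ\Delta_\GG$ and $h_\GG\circ\alpha_t=h_\GG$. Strong continuity of $t\mapsto\tau_t^{\Gtilde}$ is already noted (Lemma~\ref{LemOneParameter}), so it restricts to a strongly continuous group on $\Linf(\GG)$; Haar-state invariance follows from $h_\GG=h_{\Gtilde}\vert_{\Linf(\GG)}$ and the fact that $\tau_t^{\Gtilde}$ is $h_{\Gtilde}$-preserving. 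For the comultiplication relation, I would use $(\id\ot\Delta_\GG)\rho=(\beta_G\ot\id)(\id\ot\rho)\Delta_G$ from Theorem~\ref{ThmDefFreeWr}, together with $(\id\ot\sigma_t^{h_\GG})\rho=(\sigma_t^\psi\ot\id)\rho\circ\sigma_t^{h_G}$ and $(\id\ot\tau_t^\GG)\rho=(\sigma_t^\psi\ot\id)\rho\circ\tau_t^G$ (the two displayed identities we are about to prove), plus $\Delta_G\circ\sigma_t^{h_G}=(\tau_t^G\ot\sigma_t^{h_G})\circ\Delta_G$ for $G$ and $\Delta_{H\times F}\circ\sigma_t^{h_{H\times F}}=(\tau_t^{H\times F}\ot\sigma_t^{h_{H\times F}})\circ\Delta_{H\times F}$ for the dual quantum subgroup $H\times F$. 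Since $\Linf(\GG)$ is generated by $\rho(\Linf(G))$ and $\Linf(H)\ot\Linf(F)$, checking the relation on these two families suffices; on $\Linf(H)\ot\Linf(F)$ it is the already-known scaling relation for $H\times F$, and on $\rho(\Linf(G))$ it is a diagram chase combining the above intertwiners. Uniqueness of the scaling group then forces $\tau_t^\GG=\tau_t^{\Gtilde}\vert_{\Linf(\GG)}$.

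The two remaining displayed formulas, $(\id\ot\tau_t^\GG)\rho=(\sigma_t^\psi\ot\id)\rho\circ\tau_t^G$ and $(\id\ot\sigma_t^{h_\GG})\rho=(\sigma_t^\psi\ot\id)\rho\circ\sigma_t^{h_G}$, are then read off the block computations: the first from what was computed above for $\widetilde{\tau}_t^\kappa$ on each $L_\kappa(\GG)$ assembled over $\kappa$, and the second by the identical argument with $\sigma_t^{h_G}$ in place of $\tau_t^G$ and $\sigma_t^{h_{\Gtilde}}\vert_{L_\kappa(\Gtilde)}=\sigma_t^{\varphi_\kappa}$ (Proposition~\ref{PropModular}(2)) giving $\sigma_t^{\varphi_\kappa}\vert_{B\ot\Linf(G)}=\sigma_t^\psi\ot\sigma_t^{h_G}$ from Proposition~\ref{PropModular}(1); note the sign is $+t$ on the $\psi$-leg here, matching the statement, because $\sigma_t^{\varphi_\kappa}$ carries $\sigma_t^\psi$ not $\sigma_{-t}^\psi$ on the $B$-factor.

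\textbf{Main obstacle.} The delicate point is the bookkeeping of signs on the $M_{N_\kappa}(\C)$-legs when passing from the amalgamated free product model $L_\kappa(\Gtilde)$ back to $L_\kappa(\GG)=M_{N_\kappa}(\C)'\cap L_\kappa(\Gtilde)$: there $\sigma_{-t}^{\psi_\kappa}$ appears on the matrix factor, but after conjugating by the matrix units $e^\kappa_{ri},e^\kappa_{jr}$ that define $\rho_{ij}^\kappa$, the net transformation of $\rho_{ij}^\kappa(a)$ must come out with $\sigma_t^\psi$ (not $\sigma_{-t}^\psi$) on the ambient $B$-leg, consistently with Proposition~\ref{PropUcpSection}(2) which already recorded $(\sigma_{-t}^{\psi_\kappa}\ot\tau_t^h)\beta_\kappa=\beta_\kappa\sigma_{-t}^\psi$ and $(\sigma_{-t}^{\psi_\kappa}\ot\sigma_t^h)\beta_\kappa=\beta_\kappa\sigma_t^\psi$. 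Getting these sign conventions to line up — and confirming that the resulting element genuinely lies in $L_\kappa(\GG)$ rather than merely in $L_\kappa(\Gtilde)$ — is the part that requires care; everything else is a formal application of universal properties and uniqueness of the scaling group.
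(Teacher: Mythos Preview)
Your proposal is correct and follows essentially the same route as the paper: compute $\tau_t^{\Gtilde}$ on the generators $\rho^\kappa_{ij}(a)$ via the matrix-unit formula (the paper obtains exactly $\tau_t^{\Gtilde}(\rho^\kappa_{ij}(a))=(\lambda_{\kappa,i}/\lambda_{\kappa,j})^{it}\rho^\kappa_{ij}(\tau_t^G(a))$, and the analogous formula for $\sigma_t^{h_\GG}$), deduce invariance of $\Linf(\GG)$ and the two displayed identities, and then verify the defining relation $\Delta_\GG\circ\sigma_t^{h_\GG}=(\tau_t^{\Gtilde}\ot\sigma_t^{h_\GG})\circ\Delta_\GG$ on $\rho(\Pol(G))$ using $(\id\ot\Delta_\GG)\rho=(\beta_G\ot\id)(\id\ot\rho)\Delta_G$. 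The only place where the paper is more explicit than your outline is that it isolates as a separate Claim the intertwining $\beta_G\circ(\tau_t^G\ot\sigma_{-t}^\psi)=(\sigma_{-t}^\psi\ot\tau_t^{\Gtilde})\circ\beta_G$, proved from Proposition~\ref{PropQAut}(2.c); this is precisely the ``diagram chase'' you allude to, and it is the step that makes the $\sigma^\psi$ factors on the $\rho$-leg and the $\beta$-leg cancel correctly in the comultiplication check.
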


\begin{proof} As in the proof of Proposition \ref{PropModular}, we note that $\Linf(\GG)$ is generated by $\Linf(H)\ot\Linf(F)$ and $\{\rho^\kappa_{ij}(a)\,:\,1\le\kappa\leq K,\,1\leq i,j\leq N_\kappa,\,a\in \Linf(G)\}$. 
Since $H\times F$ is a dual quantum subgroup of $\GG$ one has, by definition of $\tau_t^{\Gtilde}$, $\tau_t^{\Gtilde}(x)=(\tau_t^H\ot\tau_t^F)(x)=\tau_t^{\GG}(x)$ for all $x\in\Linf(H)\ot\Linf(F)$. Moreover,
\begin{eqnarray*}
\tau_t^{\Gtilde}(\rho^\kappa_{ij}(a))&=&\widetilde{\tau}^\kappa_t(\rho^\kappa_{ij}(a))=\sum_r(\sigma_{-t}^{\psi_\kappa}(e^\kappa_{ri})\ot 1\ot 1)(1_B\ot \tau_t^G(a))(\sigma_{-t}^{\psi_\kappa}(e^\kappa_{jr})\ot 1\ot 1)\\
&=&\sum_r\left(\frac{\lambda_{\kappa,i}}{\lambda_{\kappa,j}}\right)^{it}(e^\kappa_{ri}\ot 1\ot 1)(1_B\ot \tau_t^G(a))(e^\kappa_{jr}\ot 1\ot 1)\\
&=&\left(\frac{\lambda_{\kappa,i}}{\lambda_{\kappa,j}}\right)^{it}\rho^\kappa_{ij}(\tau_t^G(a)).
\end{eqnarray*}
If follows that $\tau_t^{\Gtilde}(\Linf(\GG))=\Linf(\GG)$, for all $t\in\R$. A similar computation gives
$$\sigma_t^{h_\GG}(\rho^\kappa_{ij}(a))=\left(\frac{\lambda_{\kappa,i}}{\lambda_{\kappa,j}}\right)^{it}\rho^\kappa_{ij}(\sigma_t^{h_G}(a)).$$
It follows that $(\id\ot\tau_t^{\Gtilde})\rho=(\sigma_{t}^\psi\ot\id)\rho\circ\tau_t^G$ and $(\id\ot\sigma_t^{h_\GG})\rho=(\sigma_{t}^\psi\ot\id)\rho\circ\sigma_t^{h_G}$.

\vspace{0.2cm}

\noindent\textbf{Claim.} \textit{For all $t\in\R$ and all $X\in \Pol(G)\ot B$ one has $\beta_G\circ(\tau_t^G\ot\sigma_{-t}^\psi)(X)=(\sigma_{-t}^\psi\ot\tau_t^{\Gtilde})\circ\beta_G(X)$.}

\vspace{0.2cm}

\begin{proof}[Proof of the Claim.] We may and will assume that $X=a\ot b$, with $a\in\Pol(G)$ and $b\in B$. Then, using the computation above and Proposition \ref{PropQAut} $(2.c)$ we find:
\begin{align*}
\beta_G\circ(\tau_t^G\ot\sigma_{-t}^\psi)(X)&=\rho(\tau_t^G(a))\beta(\sigma_{-t}^\psi(b))=(\sigma_{-t}^\psi\ot\tau_t^{\Gtilde})(\rho(a))(\sigma_{-t}^\psi\ot\tau_{t}^H)(\beta(b))\\
&=(\sigma^\psi_{-t}\ot\tau_t^{\Gtilde})(\rho(a)\beta(b))=(\sigma^\psi_{-t}\ot\tau_t^{\Gtilde})\circ\beta_G(a\ot b).\qedhere
\end{align*}
\end{proof}

\vspace{0.2cm}

\noindent\textit{End of the proof of Proposition \ref{PropScaling}.} To finish the proof, it suffices to show that, for all $t\in\R$, one has $\Delta_{\GG}\circ\sigma_t^{h_\GG}(x)=(\tau_t^{\Gtilde}\ot\sigma_t^{h_\GG})\circ\Delta_{\GG}(x)$, for all $x\in\Linf(H)\ot\Linf(F)$ and all $x\in L_\kappa(\GG)$, $1\leq\kappa\leq K$. If $x\in\Linf(H)\ot\Linf(F)$ the equality is clear, since $H\times F$ is a dual quantum subgroup of $\GG$. Moreover, using the claim we find, for all $x\in\Pol(G)$,
\begin{eqnarray*}
(\id\ot\Delta_{\GG}\circ\sigma_t^{h_\GG})\rho(x)&=&(\id\ot\Delta_{\GG})(\sigma_{t}^\psi\ot\id)\rho\circ\sigma_t^{h_G}(x)=(\sigma_{t}^\psi\ot\id\ot\id)(\id\ot\Delta_{\GG})\rho(\sigma_t^{h_G}(x))\\
&=&(\sigma_{t}^\psi\ot\id\ot\id)(\beta_G\ot\id)(\id\ot\rho)\Delta_G(\sigma_t^{h_G}(x))\\
&=&(\sigma_{t}^\psi\ot\id\ot\id)(\beta_G\ot\id)(\id\ot\rho)(\tau_t^G\ot\sigma_t^{h_G})\Delta_G(x)\\
&=&(\sigma_{t}^\psi\ot\id\ot\id)(\beta_G\ot\id)(\tau_t^G\ot\sigma_{-t}^\psi\ot\sigma_t^{h_{\GG}})(\id\ot\rho)\Delta_G(x)\\
&=&(\id\ot\tau_t^{\Gtilde}\ot\sigma_t^{h_\GG})(\beta_G\ot\id)(\id\ot\rho)\Delta_G\\
&=&(\id\ot(\id\ot\tau_t^{\Gtilde}\ot\sigma_t^{h_\GG})\Delta_{\GG})\rho(x).
\end{eqnarray*}
It finishes the proof.\end{proof}

\noindent We can now compute the invariant $T^\tau(\GG):=\{t\in\R\,:\,\tau^\GG_t=\id\}$ of $\GG$ introduced in \cite{KS24}.

\begin{lemma}\label{rhofaith}
The map $\rho : \Linf(G)\rightarrow B\ot \Linf(\GG)$ is faithful.
\end{lemma}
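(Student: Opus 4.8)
The plan is to reduce the statement to the faithfulness of the Haar state $h_G$ on $\Linf(G)$, via the identity
$(\id_B\ot h_\GG)\circ\rho = h_G(\cdot)\,1_B$
read at the von Neumann algebra level. Granting this, if $a\in\Linf(G)$ satisfies $\rho(a)=0$ then, $\rho$ being a normal $*$-homomorphism, $\rho(a^*a)=\rho(a)^*\rho(a)=0$, so $h_G(a^*a)\,1_B=(\id_B\ot h_\GG)(\rho(a^*a))=0$; since $h_G$ is a faithful normal state on $\Linf(G)$ (recalled in the preliminaries), this forces $a^*a=0$, i.e. $a=0$, and $\rho$ is injective.

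It remains to prove the displayed identity on $\Linf(G)$. Both sides are normal completely positive maps $\Linf(G)\to B$: the left-hand side because $\rho$ is normal by construction (it is built from the normal compressions $\rho^\kappa_{ij}$) and $h_\GG$ is a normal state on $\Linf(\GG)$, and the right-hand side because $h_G$ is normal. Hence it suffices to verify the identity on the weakly dense subalgebra $C_r(G)=\lambda_G(C(G))\subset\Linf(G)$. Given $a_0\in C(G)$, the element $\rho(\lambda_G(a_0))\in B\ot\Linf(\GG)$ produced by the block formula defining the von Neumann algebraic $\rho$ equals the image of the C*-algebraic $\rho(a_0)\in B\ot C(\GG)$ under $\id_B\ot\lambda_\GG$: indeed $\rho^\kappa_{ij}(\lambda_G(a_0))$ is, through the identifications of Theorem \ref{ThmReduced}, the image under $\lambda_\GG=\widetilde\lambda\circ\iota$ of $\rho^\kappa_{ij}(a_0)\in C_\kappa(\GG)$. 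Using $h_\GG\circ\lambda_\GG=h_\GG$ and $h_G\circ\lambda_G=h_G$, Theorem \ref{TheoremErgodic}(2) then gives $(\id_B\ot h_\GG)(\rho(\lambda_G(a_0)))=(\id_B\ot h_\GG)(\rho(a_0))=h_G(a_0)\,1_B=h_G(\lambda_G(a_0))\,1_B$, so the identity holds on $C_r(G)$ and, by normality, on all of $\Linf(G)$.

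Alternatively, one may re-derive the identity entirely at the von Neumann level by repeating the computation at the end of the proof of Theorem \ref{TheoremErgodic}(2): with $h_\GG=h_{\Gtilde}\vert_{\Linf(\GG)}$, the factorization $h_{\Gtilde}=\varphi_\kappa\circ\widetilde{E}_\kappa''$ of Proposition \ref{PropModular}(2) together with $\widetilde{E}_\kappa''\vert_{L_\kappa(\Gtilde)}=\id$, and $\varphi_\kappa=(h_H\ot h_F)\circ\Etilde''_\kappa$ with $\Etilde''_\kappa=(\psi_\kappa^{-1}\ot\id\ot\id)\circ E''_{\kappa,2}$, one decomposes $1_B\ot a=\bigl(1_B\ot E_F''(a)\bigr)+\bigl(1_B\ot(a-E_F''(a))\bigr)$ relative to the amalgam $B\ot\Linf(F)$; the first summand contributes $\delta_{ij}\,1_\kappa\ot 1\ot E_F''(a)$ to $E''_{\kappa,2}(\rho^\kappa_{ij}(a))$, while the second is annihilated by $E''_{\kappa,2}$ after pulling out the matrix-unit factors, giving $\varphi_\kappa(\rho^\kappa_{ij}(a))=\delta_{ij}\,h_F(E_F''(a))=\delta_{ij}\,h_G(a)$; summing over $\kappa,i,j$ yields the claim. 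The only genuinely delicate point is thus the passage from the C*-algebraic statement of Theorem \ref{TheoremErgodic}(2) to the von Neumann setting, which either argument above handles routinely.
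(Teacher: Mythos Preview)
Your proof is correct and follows essentially the same approach as the paper: both establish the identity $(\id_B\ot h_\GG)\rho=h_G(\cdot)1_B$ (the paper phrases it as $(\psi\ot h_\GG)\rho=h_G$, which is the same after applying $\psi$) at the algebraic/C*-level via Theorem~\ref{TheoremErgodic}(2), then invoke normality of $\rho$ to extend it to $\Linf(G)$ and conclude from faithfulness of $h_G$. Your write-up is in fact more careful about the normality passage than the paper's own proof.
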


\begin{proof}
We first remark that the map restricted to $\Pol(G)$ is faithful by the formula proved in Theorem \ref{TheoremErgodic}, $$ (\psi\ot h_\GG) \rho = h_G, $$ and the fact that all the states are faithful. Moreover, on $\Pol(G)$, it is defined as $$ \rho(a) = \sum_{\kappa,ij} e_{ij}^\kappa \ot \sum_{s=1}^{N_\kappa}\mu_\kappa(e^\kappa_{si}\ot 1\ot 1)\nu_\kappa(1\ot a)\mu_\kappa(e^\kappa_{js}\ot 1\ot 1)\in B\ot \Pol(\GG),$$ therefore, the extension of $\rho$ to $\Linf(G)$ is also normal, and therefore the intertwining of the states still holds, and implies that $\rho$ is still faithful at the von Neumann algebraic level.
\end{proof}

\begin{corollary}
One has $T^\tau(\GG) = T^\tau(G)\cap T^\tau(H)$.
\end{corollary}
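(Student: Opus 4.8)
The plan is to prove the two inclusions separately. Recall from Proposition~\ref{PropScaling} and its proof the facts we shall use: $\tau^\GG_t=\tau^{\Gtilde}_t\vert_{\Linf(\GG)}$; the intertwining relation $(\id\ot\tau^\GG_t)\rho=(\sigma^\psi_t\ot\id)\rho\circ\tau^G_t$; that $\Linf(\GG)$ is generated by $\Linf(H)\ot\Linf(F)$ together with the elements $\rho^\kappa_{ij}(a)$; that $H\times F$ is a dual quantum subgroup of $\GG$, so $\tau^\GG_t\vert_{\Linf(H)\ot\Linf(F)}=\tau^H_t\ot\tau^F_t$ and $\tau^F_t=\tau^G_t\vert_{\Linf(F)}$; and that $\rho$ is faithful by Lemma~\ref{rhofaith}.

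For $T^\tau(\GG)\subseteq T^\tau(G)\cap T^\tau(H)$: by Remark~\ref{RmkFwp}~(2) and (3), $H$ and $G^{*K}$ are compact quantum subgroups of $\GG$, and a surjective morphism of compact quantum groups intertwines the canonical scaling groups (it is a Hopf $*$-algebra surjection on the polynomial level, hence intertwines counit, antipode and therefore $\tau_t$). Thus $\tau^\GG_t=\id$ forces $\tau^H_t=\id$ and $\tau^{G^{*K}}_t=\id$; restricting the latter to the first free factor $G\subseteq G^{*K}$, a dual quantum subgroup, gives $\tau^G_t=\id$.

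For $T^\tau(G)\cap T^\tau(H)\subseteq T^\tau(\GG)$: assume $\tau^G_t=\tau^H_t=\id$. Then $\tau^F_t=\tau^G_t\vert_{\Linf(F)}=\id$, so $\tau^\GG_t$ is the identity on $\Linf(H)\ot\Linf(F)$. The crucial point is that $\tau^H_t=\id$ forces $\sigma^\psi_t=\id$: by Proposition~\ref{PropQAut}~(2.c), $\tau^H_t=\id$ yields $\beta\circ\sigma^\psi_t=(\sigma^\psi_t\ot\id)\circ\beta$, so the inner $*$-automorphism $\sigma^\psi_t=\Ad(Q^{it})$ of $B$ (where $Q\in\mathcal U(B)$ is the density matrix of $\psi$) is $H$-equivariant, i.e. it lies in $\End(u)$; one then argues that an inner, $\psi$-preserving, $H$-equivariant $*$-automorphism of $B$ is necessarily trivial when $\beta$ is ergodic. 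Granting this, $(\id\ot\tau^\GG_t)\rho=(\sigma^\psi_t\ot\id)\rho\circ\tau^G_t=\rho$, so $\tau^\GG_t$ fixes every element $\rho^\kappa_{ij}(a)$; as $\tau^\GG_t$ is normal and fixes a generating set of $\Linf(\GG)$, we conclude $\tau^\GG_t=\id$.

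The main obstacle is exactly the claim that $\tau^H_t=\id$ implies $\sigma^\psi_t=\id$, equivalently that $\Ad(Q^{it})\in\End(u)$ forces $Q^{it}\in\C 1_B$ (equivalently $(\lambda_{\kappa,i}/\lambda_{\kappa,j})^{it}=1$ for all $\kappa,i,j$). I would establish it by one of two routes. First, directly: Proposition~\ref{PropQAut}~(2.c) shows that $\tau^H_t=\id$ forces $(\lambda_{\gamma,l}\lambda_{\kappa,i}/\lambda_{\gamma,k}\lambda_{\kappa,j})^{it}=1$ whenever the coefficient $u^{ij,\kappa}_{kl,\gamma}$ is nonzero, and one uses the algebra relations satisfied by the coefficients of $u$ ($u$ unitary, $m\in\Mor(u\ot u,u)$, $\eta\in\Mor(1,u)$) together with ergodicity/faithfulness of $\beta$ (using Proposition~\ref{Propfaithful} to reduce to the faithful case, where the coefficients generate $C(H)$) to propagate these identities to $(\lambda_{\kappa,i}/\lambda_{\kappa,j})^{it}=1$ for all indices. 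Alternatively, more structurally: from $H$-equivariance one gets that $w:=(Q^{-it}\ot 1)\beta(Q^{it})$ is a unitary lying in $\beta(B)'\cap(B\ot\Linf(H))$, and one shows that for an ergodic action this relative commutant is small enough to force $w$, and hence $Q^{it}$, to be scalar. Either way this is the only non-routine step; the rest of the argument is the bookkeeping recorded above.
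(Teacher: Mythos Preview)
Your overall strategy coincides with the paper's: both inclusions hinge on the implication $\tau^H_t=\id\Rightarrow\sigma^\psi_t=\id$, after which the intertwining relation $(\id\ot\tau^\GG_t)\rho=(\sigma^\psi_t\ot\id)\rho\circ\tau^G_t$, faithfulness of $\rho$, and the description of generators of $\Linf(\GG)$ finish the argument. For $T^\tau(\GG)\subseteq T^\tau(G)\cap T^\tau(H)$ you take a slightly different route than the paper --- passing through the quantum subgroups $H$ and $G^{*_F K}$ of $\GG$ (Remark~\ref{RmkFwp}) rather than through the dual subgroup $H$ plus faithfulness of $\rho$ --- and your route is correct and arguably cleaner; but since the key implication is needed anyway for the reverse inclusion, this buys nothing globally.

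The genuine gap is exactly the step you call the ``main obstacle'': you do not prove $\tau^H_t=\id\Rightarrow\sigma^\psi_t=\id$, and neither of your two sketched routes is carried out. The general assertion that ``an inner, $\psi$-preserving, $H$-equivariant $*$-automorphism of $B$ is trivial when $\beta$ is ergodic'' is not obvious, and the relative-commutant route is left entirely unspecified. The paper bypasses all of this with a one-line observation: specialize the formula of Proposition~\ref{PropQAut}~(2.c) to \emph{diagonal} lower indices, i.e.\ to the coefficients $u^{ij,\kappa}_{kk,\gamma}$, on which $\tau^H_t$ acts by the pure scalar $(\lambda_{\kappa,i}/\lambda_{\kappa,j})^{it}$ (the contribution of the lower indices cancels when $k=l$). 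Since $\sigma^\psi_t(b^\kappa_{ij})=(\lambda_{\kappa,i}/\lambda_{\kappa,j})^{it}b^\kappa_{ij}$, having all these scalars equal to $1$ is exactly $\sigma^\psi_t=\id$. Replace your two vague sketches by this direct computation.
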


\begin{proof}
Since $H$ is a dual quantum subgroup of $\GG$, we get $T^\tau(\GG)\subset T^\tau(H)$ and, using the formulas $\tau^H_t(u^{ij,\kappa}_{kk,\gamma}) = \left(\dfrac{\lambda_{\kappa,i}}{\lambda_{\kappa,j}} \right)^{it}u_{kk,\gamma}^{ij,\kappa}$ and $\sigma_t^\psi(b^\kappa_{ij})=\left(\frac{\lambda_{\kappa,i}}{\lambda_{\kappa,j}}\right)^{it}$ we deduce that $T^\tau(H)\subset B^\psi$, where $B^\psi$ denotes the centralizer of $\psi$. Fix $t\in T^\tau(\GG)$ then $\sigma_t^\psi=\id$ and,
$$\rho =(\id\ot \tau^\GG_t)\rho = (\sigma_t^\psi\ot \id)\rho\circ \tau_t^G = \rho\circ\tau_t^G. $$ From this we get that $\tau_t^G=\id$ by faithfulness of the map $\rho$. Therefore $T^\tau(\GG) \subset T^\tau(G)\cap T^\tau(H)$.
Assume now that $t \in T^\tau(G)\cap T^\tau(H)$, then $t\in B^\psi$ and, using that $(\id\ot \tau^\GG_t)\rho = (\sigma_t^\psi\ot \id)\rho\circ \tau_t^G = \rho$, we get that $\tau_t^\GG$ is trivial on the coefficients of $\rho$. Since $t\in T^\tau(H)$ we deduce that $\tau_t^\GG=\id$.
\end{proof}

\noindent We can now explore various approximation properties of $\GG$. To facilitate this study, the following remark will be instrumental.

\begin{remark}\label{rmk fd amal}
View $\Linf(\Gtilde)=\underset{\Linf(H)\ot\Linf(F)}{*}(B\ot\Linf(G))\underset{B\ot\Linf(F)}{*}(M_{N_\kappa}(\C)\ot\Linf(H)\ot\Linf(F))$, where the inclusion maps and the conditional expectations are the ones described before. Let us denote by $\nu_\kappa\,:\, B\ot\Linf(G)\rightarrow\Linf(\Gtilde)$ and $\mu_\kappa\,:\,M_{N_\kappa}(\C)\ot\Linf(H)\ot\Linf(F)\rightarrow\Linf(\Gtilde)$, $1\leq\kappa\leq K$, the canonical inclusions. We also write $\mu\,:\,\Linf(H)\ot\Linf(F)\rightarrow\Linf(\Gtilde)$, $\mu(x):=\mu_\kappa(1\ot x)$ for all $1\leq\kappa\leq K$ and all $x\in\Linf(H)\ot\Linf(F)$. Let $M\subset\Linf(\Gtilde)$ be the von Neumann subalgebra defined by:
$$M_0:=\left\{\mu_\kappa(M_{N_\kappa}(\C)\ot\Linf(H)\ot\Linf(F))\,:\,1\leq\kappa\leq K\right\}''.$$
Remark that $$M_0 \simeq (\ast_\kappa M_{N_\kappa}(\C),\psi_\kappa^{-1})\ot \Linf(H)\ot \Linf(F).$$
Then we define inductively the algebras $$M_{\kappa} := \left( M_{\kappa-1} \cup \nu_\kappa(B\ot\Linf(G)) \right)''\subset\Linf(\Gtilde),$$
for any $1\leq \kappa \leq K$.
Note that, by uniqueness of amalgamated free product of von Neumann algebras, one has that $$ M_{\kappa} \simeq M_{\kappa-1} \ast_{B\ot \Linf(F)} \nu_\kappa(B\ot\Linf(G)), $$
where the amalgamated free products are with respect to the maps $\beta_\kappa\ot \id_F$ and the obvious corresponding conditional expectations. This also shows that we have
$$\Linf(\Gtilde)\simeq M_K.$$ 
Observe that $\Linf(\GG)\subset\Linf(\Gtilde)$ is the von Neumann subalgebra generated by
$$\{\rho^\kappa_{ij}(a)\,:\,1\leq\kappa\leq K,1\leq i,j\leq N_\kappa,a\in\Linf(G)\}\cup\mu(\Linf(H)\ot\Linf(F)),$$where $\rho^\kappa_{ij}(a):=\sum_{r=1}^{N_\kappa}\mu_\kappa(e^\kappa_{ri}\ot 1\ot 1)\nu_\kappa(1\ot a)\mu_\kappa(e^\kappa_{jr}\ot 1\ot 1)$.
\end{remark}

\begin{theorem}\label{thmAtxt}
The following holds.
\begin{enumerate}
    \item If $\widehat{\GG}$ has the Haagerup property then both $\widehat{G}$ and $\widehat{H}$ have the Haagerup property. If moreover $\Irr(F)$ is finite, then $\widehat{\GG}$ has the Haagerup property if and only if both $\widehat{G}$ and $\widehat{H}$ have the Haagerup property.
    \item Assume that $\GG$ is Kac (so that both $G$ and $H$ are Kac). If $\widehat{\GG}$ is hyperlinear then both $\widehat{G}$ and $\widehat{H}$ are hyperlinear. Moreover, if $\Linf(F)$ is amenable, then $\widehat{\GG}$ is hyperlinear if and only if both $\widehat{G}$ and $\widehat{H}$ are hyperlinear.
\end{enumerate}
\end{theorem}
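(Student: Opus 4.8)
The plan is to transfer both properties through the free-product description of $\Linf(\GG)$ obtained in Remark \ref{rmk fd amal} together with the Haar-state-preserving conditional expectations of Proposition \ref{PropModular}. Recall the ingredients: $\Linf(\Gtilde)\simeq M_K$ is obtained from $M_0\simeq(\ast_\kappa M_{N_\kappa}(\C))\ot\Linf(H)\ot\Linf(F)$ by the finitely many amalgamated free products $M_\kappa=M_{\kappa-1}\ast_{B\ot\Linf(F)}(B\ot\Linf(G))$; the subalgebra $\Linf(\GG)\subset\Linf(\Gtilde)$ is the range of a faithful normal Haar-state-preserving conditional expectation $E_\GG$ (Proposition \ref{PropModular}(4)); and $L_\kappa(\Gtilde)\simeq M_{N_\kappa}(\C)\ot L_\kappa(\GG)$ with $L_\kappa(\GG)\subset\Linf(\GG)$. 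The two properties at stake are, respectively, stable under tensoring with finite-dimensional algebras, under passing to ranges of state-compatible faithful normal conditional expectations, and (the crucial point) under amalgamated free products over a small common subalgebra.

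For the ``only if'' directions (which require no hypothesis on $F$) I would argue as follows. Since $H\times F$ is a dual quantum subgroup of $\GG$, Proposition \ref{dualqsg} provides a Haar-state-preserving conditional expectation $\Linf(\GG)\to\Linf(H)\ot\Linf(F)$, and composing with $\id\ot h_F$ gives one onto $\Linf(H)$; so the Haagerup property, resp. hyperlinearity, of $\widehat{\GG}$ passes to $\widehat{H}$. For $G$ I would use that $\rho\colon\Linf(G)\to B\ot\Linf(\GG)$ is a faithful normal $*$-homomorphism (Lemma \ref{rhofaith}) with $(\psi\ot h_\GG)\circ\rho=h_G$ (Theorem \ref{TheoremErgodic}); as $B$ is finite-dimensional, $B\ot\Linf(\GG)$ inherits the property from $\Linf(\GG)$, and then $\Linf(G)$ inherits it through the state-preserving inclusion $\rho$ (for hyperlinearity simply because any von Neumann subalgebra of a Connes-embeddable finite von Neumann algebra, with the restricted trace, is Connes-embeddable; in the Kac case the relevant states are traces by Corollary \ref{CorKac}).

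For the ``if'' directions I would prove that $\Linf(\Gtilde)$ has the property and then restrict along $E_\GG$. Assume first $\Irr(F)$ finite, so $\Linf(F)$, hence $B\ot\Linf(F)$, is finite-dimensional. Then $M_0$ has the Haagerup property, being the tensor product of the free product $\ast_\kappa M_{N_\kappa}(\C)$ of finite-dimensional algebras (which has the Haagerup property) with $\Linf(H)\ot\Linf(F)$ (Haagerup, as a tensor product of a Haagerup algebra with a finite-dimensional one), and each $B\ot\Linf(G)$ is Haagerup for the same reason; since the Haagerup property is stable under amalgamated free products over a common finite-dimensional subalgebra with state-preserving conditional expectations, an induction on $\kappa$ yields that $\Linf(\Gtilde)=M_K$ is Haagerup, whence so is $\Linf(\GG)$. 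For hyperlinearity the argument is the same with ``finite-dimensional'' replaced by ``amenable'': when $\Linf(F)$ is amenable, $B\ot\Linf(F)$ is amenable, $\ast_\kappa M_{N_\kappa}(\C)$ is an interpolated free group factor (hence Connes-embeddable), $\Linf(H)$ is hyperlinear by hypothesis and $\Linf(F)$ is hyperfinite, so $M_0$ is hyperlinear; one climbs the tower using stability of Connes-embeddability under amalgamated free products over a common amenable (hyperfinite) subalgebra, and concludes by restricting along the trace-preserving conditional expectation $E_\GG$.

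The main obstacle is pinning down and invoking correctly the two permanence results used as black boxes: stability of the Haagerup property of (not necessarily tracial) von Neumann algebras under amalgamated free products over a common finite-dimensional subalgebra, and stability of Connes-embeddability under amalgamated free products over a common amenable subalgebra. Both are available, but one must check that their hypotheses (state-preserving expectations, inclusions with expectation) match exactly the amalgamated free products of Remark \ref{rmk fd amal}; once that is in place, everything else is a routine transfer through conditional expectations and finite-dimensional tensor factors.
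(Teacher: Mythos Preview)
Your proposal follows the same strategy as the paper: the inductive free-product tower of Remark~\ref{rmk fd amal}, the permanence of the Haagerup property over a finite-dimensional amalgam (the paper cites \cite{CKSVW21}) and of Connes-embeddability over an amenable one (the paper cites \cite{BDJ08}), together with the expectation $E_\GG$ of Proposition~\ref{PropModular}(4) to return from $\Linf(\Gtilde)$ to $\Linf(\GG)$.

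One step needs adjustment. In the ``only if'' direction for $G$ (Haagerup case) you descend from $B\ot\Linf(\GG)$ to $\Linf(G)$ via the state-preserving inclusion $\rho$; but the Haagerup property only descends along subalgebras \emph{with expectation}, and it is not obvious that $\rho(\Linf(G))\subset B\ot\Linf(\GG)$ has one --- outside the Kac case the modular group of $\psi\ot h_\GG$ does not visibly preserve $\rho(\Linf(G))$, and your parenthetical remark correctly justifies only the hyperlinear step. The paper avoids this by routing through a single block: $\Linf(\GG)\to L_\kappa(\GG)$ has the free-product expectation, hence $L_\kappa(\Gtilde)\simeq M_{N_\kappa}(\C)\ot L_\kappa(\GG)$ inherits Haagerup; then $B\ot\Linf(G)\subset L_\kappa(\Gtilde)$ has the amalgamated-free-product expectation, and finally $\Linf(G)\hookrightarrow B\ot\Linf(G)$ via $1_B\ot(\cdot)$ has the obvious one. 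Equivalently, under the isomorphism $\pi_\kappa''$ of the block-extended algebra, your map $\rho_\kappa$ (for a fixed $\kappa$) identifies $\Linf(G)$ with $\nu_\kappa(1_B\ot\Linf(G))\subset L_\kappa(\Gtilde)$, where the chain of expectations is manifest; so using $\rho_\kappa$ in place of the full $\rho$ fixes your argument.
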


\begin{proof}
\noindent$(1)$. Assume that $\Linf(\GG)$ has the Haagerup property. Since $H$ is a dual quantum subgroup of $\GG$, $\Linf(H)$ is a subalgebra with expectation, by Lemma \ref{dualqsg}. Therefore $\Linf(H)$ has the Haagerup approximation property. By the decomposition of $\Linf(\GG)$ as a free product, we get for any $\kappa$ a faithful conditional expectation onto the algebra $L_\kappa(\GG)$, which implies that the algebra $L_\kappa(\GG)$ and therefore $M_{N_\kappa}(\C) \ot L_\kappa(\GG)=L_\kappa(\Gtilde)$ have the Haagerup approximation property. Since there is a conditional expectation $L_\kappa(\Gtilde) \rightarrow (B\ot \Linf(G))$, $B\ot \Linf(G)$ has the Haagerup property, and therefore $\widehat{G}$ has it too.
For the converse, assume that $\Irr(F)$ is finite and that $\widehat{G}$ and $\widehat{H}$ have the Haagerup property, then we can apply \cite[Corollary 8.2]{CKSVW21} to the decomposition of $\Linf(\Gtilde)$ as an inductive amalgamated free product over $B\ot \Linf(F)$ as described in Remark \ref{rmk fd amal}. First, we remark that by the result of \cite[Corollary 8.2]{CKSVW21}, the algebra $\ast_\kappa( M_{N_\kappa}(\C),\psi_\kappa^{-1})$ has the Haagerup property and therefore, if $\widehat{H}$ has the Haagerup approximation property, then so does $M_0$. Moreover if $\widehat{G}$ has the Haagerup property, then so does $B\ot \Linf(G)$, and an induction shows that $M_K \simeq \Linf(\Gtilde)$ has the Haagerup property. Finally, using the conditional expectation $ \Linf(\Gtilde) \rightarrow \Linf(\GG)$, we get that $\Linf(\GG)$ has the Haagerup property.

\vspace{0.2cm}

\noindent$(2)$. We already know thanks to Proposition \ref{CorKac} that $\GG$ is Kac if and only $G$ and $H$ are Kac. Under these condition, $h_G, h_H,\psi,h_\GG$ are all traces.  Assume first that $\widehat{\GG}$ is hyperlinear then, as $H$ is a dual quantum subgroup of $\GG$, $\widehat{H}$ is also hyperlinear. Note that $B$ being finite dimensional $(B,\psi)$ is Connes embeddable (CE). Hence, $(B\ot\Linf(\GG),\psi\ot h_\GG)$ also is, by \cite{Pop95}. Since the map $\rho\,:\,\Linf(G)\rightarrow B\ot\Linf(\GG)$ is faithful and satisfies $(\psi\ot h_\GG)\rho=h_G$, $\widehat{G}$ is hyperlinear. Conversely, assume that both $\widehat{G}$ and $\widehat{H}$ are hyperlinear and $\Linf(F)$ is amenable. Then, \cite[Corollary 4.5]{BDJ08} implies that $*_\kappa(M_{N_\kappa}(\C),\psi_\kappa^{-1})$ is CE hence, $M_0$ is CE. Applying \cite[Corollary 4.5]{BDJ08} (and since $\Linf(F)$ is amenable) inductively in Remark \ref{rmk fd amal}, we deduce that $(\Linf(\Gtilde),h_{\Gtilde})$ is CE. Since the inclusion $\Linf(\GG)\subset\Linf(\Gtilde)$ is state preserving, we deduce that $\widehat{\GG}$ is hyperlinear.
\end{proof}

\noindent We now study coamenability. We need the following elementary Lemma.

\begin{lemma}\label{LemZ3}
Let $H$ be a CQG. If $\vert\Irr(H)\vert=3$ then $H\simeq\Z_3$ or $H\simeq S_3$.
\end{lemma}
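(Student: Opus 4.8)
The strategy is to exploit the constraints that having exactly three irreducible representations places on the fusion rules and the quantum dimensions. Write $\Irr(H)=\{1,x,y\}$ where $1$ is the trivial representation. First I would analyse the contragredient map $z\mapsto\overline{z}$, which is an involution on $\Irr(H)$ fixing $1$. There are two cases: either $\overline{x}=x$ and $\overline{y}=y$, or $\overline{x}=y$.

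\textbf{Case 1: $x$ and $y$ are both self-conjugate.} Then $1$ appears in $x\ot x$ and in $y\ot y$ but not in $x\ot y$. Decompose $x\ot x$ into irreducibles: it contains $1$ once, and the remaining part is a combination of $x$ and $y$. Using the quantum dimension $d:=\dim_q(x)\geq 1$ and $e:=\dim_q(y)\geq 1$, the identity $d^2 = 1 + a\,d + b\,e$ (with $a,b\in\N$) together with the analogous relations for $y\ot y$ and $x\ot y$, and the symmetry of the multiplicities (Frobenius reciprocity), pins down the possible fusion rules. The key point is that small Perron–Frobenius–type constraints force $d$ and $e$ to be small; one shows $d=e=1$, so $H$ is a (classical, finite) group with $3$ elements and a self-inverse structure, forcing $H\simeq\Z_3$. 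Alternatively, if $x$ or $y$ has dimension $>1$ one is led to the fusion rules of $S_3$, whose nontrivial $2$-dimensional irreducible $y$ satisfies $y\ot y = 1\oplus x\oplus y$ with $x$ the sign character; matching quantum dimensions $e^2 = 1 + 1 + e$ gives $e=2$, consistent, and one must then argue that a CQG with exactly these fusion rules and $\dim_q = \dim$ must be $S_3$ (using that the fusion ring together with dimension function determines the group in the cocommutative/commutative cases, or invoking that $S_N^+ = S_N$ for $N\le 3$).

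\textbf{Case 2: $\overline{x}=y$.} Then $d=e$, $1$ occurs in $x\ot y$ and $y\ot x$ but not in $x\ot x$ or $y\ot y$. Writing $x\ot y = 1 \oplus (\text{combination of }x,y)$ gives $d^2 = 1 + a d + b d$, and decomposing $x\ot x$ (which contains no $1$) gives $d^2 = c\,d + c'\,d$ with $c,c'\in\N$; comparing, and using that the total number of irreducibles is only $3$, forces $d=1$ (if $d>1$ the equation $d^2=1+(a+b)d$ with $a+b\geq 1$ has no solution with $d$ the largest eigenvalue of a $3\times3$ nonnegative integer fusion matrix unless $d=1$ — here one should invoke the standard fact, e.g. as in the classification of low-rank fusion categories, that a fusion ring of rank $3$ with these properties is either $\Z/3\Z$ or $\mathrm{Rep}(S_3)$). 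With $d=1$ all irreducibles are $1$-dimensional, so $\Pol(H)$ is commutative and $H$ is a classical abelian group of order $3$, i.e. $H\simeq\Z_3$.

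\textbf{Main obstacle.} The delicate step is ruling out ``exotic'' rank-$3$ fusion rings that are not realized by $\Z_3$ or $S_3$, and, in the case that produces the $\mathrm{Rep}(S_3)$ fusion rules, upgrading the statement from ``$\widehat H$ has the fusion ring of $S_3$'' to ``$H\simeq S_3$''. The cleanest route is probably: (i) show from the dimension equations that either all $\dim_q=1$ (whence $H$ is a classical group, necessarily $\Z_3$ since $|\Irr(H)|=3$), or the fusion rules coincide with those of $S_3$; and (ii) in the latter case note that the $2$-dimensional self-conjugate representation $y$ with $y^{\ot 2}\simeq 1\oplus x\oplus y$ generates, and that such a CQG with $\dim_q=\dim$ is a quantum subgroup of $S_3^+ = S_3$ of full rank, hence equal to $S_3$. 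I would present Case 2 and the abelian subcase of Case 1 in full, and handle the $S_3$ subcase by the fusion-rules-plus-Kac argument, citing the classical identity $S_3^+=S_3$.
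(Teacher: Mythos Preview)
Your case split on the contragredient is the same as the paper's, but several steps in your outline are either wrong or do not close.

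In Case~1 (both $x,y$ self-conjugate) your claim that ``$d=e=1$ forces $H\simeq\Z_3$'' is incorrect: if all three irreducibles are one-dimensional and self-conjugate, then $H$ is the dual of a group of order~$3$ in which every element is its own inverse, which is impossible. In fact $\Z_3$ never occurs in Case~1, since its two non-trivial characters are conjugates of one another. So the only genuine outcome of Case~1 is the $S_3$ branch, and your treatment of it as an ``alternative'' sub-case misreads the situation.

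Your proposed upgrade from ``fusion rules of $\mathrm{Rep}(S_3)$'' to ``$H\simeq S_3$'' via ``$H$ is a quantum subgroup of $S_3^+=S_3$ of full rank'' does not work as stated: nothing in the fusion data hands you an action of $H$ on $\C^3$, so there is no map $C(S_3^+)\to C(H)$ to begin with. The paper avoids this entirely. It carries out the dimension analysis in Case~1 carefully (using Frobenius reciprocity and a Diophantine argument on $d_1d_2=n_1d_1+n_2d_2$ with $\gcd(d_1,d_2)=1$) to conclude that the classical dimensions are $\{1,2\}$, hence $\dim C(H)=1+1+4=6$, and then invokes the classification of $6$-dimensional (semisimple) Hopf algebras: the only possibilities are $\Z_6$, $S_3$, and $\widehat{S_3}$, and of these only $S_3$ has three irreducibles. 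This is the missing ingredient in your plan.

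In Case~2 your Perron--Frobenius argument is much heavier than needed. With $\overline{x}=y$ one has $d:=\dim(x)=\dim(y)$ and $x\ot\overline{x}$ contains $1$ once, giving $d^2=1+(s+t)d$ for some $s,t\in\N$; since $d$ is a positive integer this forces $d\mid 1$, so $d=1$, and $H\simeq\Z_3$ follows immediately. No appeal to fusion-category classification is required.
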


\begin{proof}
Write $\Irr(H)=\{1,x_1,x_2\}$, where $x_1\neq x_2$ are non-trivial irreducibles. Since $x_1$ is non-trivial we have either $\overline{x}_1=x_1$ or $\overline{x}_1=x_2$. Assume that $\overline{x}_1=x_1$, so that $\overline{x}_2=x_2$. Hence, $\Mor(1,x_1\ot x_2)=0$ and it follows that there exists $n_1,n_2\geq 0$ such that $x_1\ot x_2=n_1x_1+n_2x_2$. Also, $\dim\Mor(1,x_k\ot x_k)=1$, $\dim\Mor(x_1,x_2\ot x_2)=\dim\Mor(x_2,x_1\ot x_2)=n_2$ and $\dim\Mor(x_2,x_1\ot x_1)=\dim\Mor(x_1,x_1\ot x_2)=n_1$. Hence, there exists $s,t\geq 0$ such that $x_1\ot x_1=1+ sx_1+n_1x_2$ and $x_2\ot x_2=1+n_2x_1+tx_2$. It follows that $d_1^2=1+sd_1+n_1d_2$ and $d_2^2=1+n_2d_1+td_2$, where $d_k:=\dim(x_k)$. In particular, ${\rm gcd}(d_1,d_2)=1$. Note that $(n_1,n_2)$ is a solution to the Diophantine equation $d_1d_2=n_1d_1+n_2d_2$ and since ${\rm gcd}(d_1,d_2)=1$ and $n_1,n_2\geq 0$ we deduce that $(n_1,n_2)\in\{(d_2,0),(0,d_1)\}$. Assume that $n_1=d_2$ and $n_2=0$. Then, $d_1^2=1+sd_1+d_2^2$ and $d_2^2=1+td_2$. From the equation $d_2^2-td_2-1=0$ we deduce that $\Delta_2:=t^2+4$ is a perfect square, which happens if and only if $t=0$. We conclude that $d_2=1$. From the equation $d_1^2-sd_1-2=0$ we deduce that $\Delta_1:=s^2+8$ is a perfect square, which happens if and only if $s=1$. Hence, $d_1=2$. It follows that $\dim(C(H))=1+d_1^2+d_2^2=6$. Assuming now that $n_1=0$ and $n_2=d_1$, the same proof, exchanging the role of $x_1$ and $x_2$ also shows that $\dim(C(H))=6$. However, the only compact quantum groups whose associated C*-algebras have dimension $6$ are $\Z_6$, $S_3 \simeq \Z_3\rtimes\Z_2$ and its dual $\widehat{S_3}$. $\Z_6$ and $\widehat{S_3}$ both have $6$ irreducibles, each of dimension $1$, and therefore $H \simeq S_3$. It remains to check the other case, that is $\overline{x}_1=x_2$ and $\overline{x}_2=x_1$. Let $s,t\geq 0$ be such that $x_2\ot x_1=1+tx_1+sx_2$ and write $d=\dim(x_1)=\dim(x_2)$. 
Hence, $d^2=1+(t+s)d$ and, this implies that $1=0$ modulo $d$ so $d=1$ and $x_2\ot x_1=1$. Letting $u\in\mathcal{U}(C(H))$ the group like unitary $u:=x_1\neq 1$ we have $u^*=x_2\neq u$ so $u^3=1$ and we deduce that $H=\Z_3$.
\end{proof}

\noindent We will also use the following  well known Lemma.

\begin{lemma}\label{LemZ2} Let $p\geq 2$ be a integer. The following are equivalent.
\begin{enumerate}
\item $p$ is prime.
\item $\Z_p$ has, up to isomorphism, exactly 2 ergodic actions on a finite dimensional C*-algebra $B$:
\begin{itemize}
    \item $B\simeq\C$ and $\beta$ is the trivial action,
    \item $B\simeq\C^p$ and $\beta$ is isomorphic to the action $\Z_p\curvearrowright C^*(\Z_p)$ by left translation.
    \end{itemize}
    \end{enumerate}
\end{lemma}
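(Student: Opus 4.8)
The plan is to prove the two implications separately, with the bulk of the work in $(2)\Rightarrow(1)$ since $(1)\Rightarrow(2)$ is an explicit check on $\Z_p$.

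\textbf{The direction $(1)\Rightarrow(2)$.} Suppose $p$ is prime and let $\beta\,:\,\Z_p\curvearrowright B$ be an ergodic action on a finite dimensional C*-algebra $B$. First I would argue that $B$ must be commutative: since $\Z_p$ is abelian (hence cocommutative as $\widehat{\Z_p}$, but here we act \emph{by} the group, so $C(\Z_p)$ is commutative), the associated representation $u\in\Lcal(B)\ot C(\Z_p)$ decomposes as a direct sum of one-dimensional representations, i.e.\ characters of $\Z_p$, which are group-like unitaries; ergodicity forces the multiplicities to behave in a controlled way. More concretely, I would use that ergodicity gives a unique invariant state $\psi$ which must then be a $\delta$-form (Remark \ref{ergodelta}), and the action $\beta$ being faithful onto the dual quantum subgroup it generates, together with $\Z_p$ being a finite \emph{group}, lets one identify $B$ with a quotient algebra on which $\Z_p$ acts by a transitive action on the spectrum (by ergodicity), i.e.\ $B\simeq\C^{\Z_p/\Lambda}$ for a subgroup $\Lambda\leq\Z_p$ with $\Z_p$ acting by translation. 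Since $p$ is prime, $\Lambda=\Z_p$ (giving $B=\C$, trivial action) or $\Lambda=\{e\}$ (giving $B=\C^p$ with the left-translation action, which via self-Pontryagin duality of $\Z_p$ is isomorphic to $\Z_p\curvearrowright C^*(\Z_p)$, cf.\ Remark \ref{RemarkNewExample}). I expect the main subtlety here is justifying rigorously that an ergodic action of a \emph{finite group} on a finite-dimensional C*-algebra is necessarily of this transitive-homogeneous-space form; this is classical (ergodic actions correspond to transitive actions on the spectrum of the center, and ergodicity plus the invariant-state argument forces $B$ itself to be abelian), but it should be spelled out.

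\textbf{The direction $(2)\Rightarrow(1)$, or rather $\neg(1)\Rightarrow\neg(2)$.} Suppose $p$ is not prime, so $p = ab$ with $1<a,b<p$. I would then exhibit a third ergodic action, distinct from the two listed, by taking $B\simeq\C^a$ with $\Z_p$ acting through the quotient $\Z_p\twoheadrightarrow\Z_a$ via the left-translation action of $\Z_a$ on $\C^a$. This action is ergodic (the induced $\Z_a$-action on $\{1,\dots,a\}$ is transitive, and transitive actions give ergodic actions), it preserves the uniform trace, and it is neither the trivial action ($\dim B = a > 1$) nor isomorphic to the regular one ($\dim B = a \neq p$). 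This immediately contradicts $(2)$. I would present this as the contrapositive to keep it clean.

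\textbf{Assembly.} The cleanest write-up structure is: (i) a short lemma-style paragraph establishing that ergodic actions of a finite group $\Lambda$ on finite-dimensional $B$ are, up to isomorphism, exactly the translation actions $\Lambda\curvearrowright\C^{\Lambda/\Lambda_0}$ for subgroups $\Lambda_0\leq\Lambda$; (ii) apply this with $\Lambda=\Z_p$, noting subgroups of $\Z_p$ are in bijection with divisors of $p$, so the number of ergodic actions equals the number of divisors of $p$, which is $2$ iff $p$ is prime; (iii) match the two extreme cases with the two listed actions, using self-duality of $\Z_p$ for the regular one. The only real obstacle is step (i); everything else is bookkeeping with divisors. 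If one prefers to avoid the general classification, the concrete $\neg(1)\Rightarrow\neg(2)$ construction above plus a direct (elementary, character-theoretic) verification that for prime $p$ there are no others suffices, and I would lean toward that more self-contained route given the elementary flavor signalled by the word "well known" in the statement.
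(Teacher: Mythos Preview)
Your contrapositive for $(2)\Rightarrow(1)$ is exactly the paper's argument: factor $p=nm$, pull back the regular $\Z_n$-action along $\Z_p\twoheadrightarrow\Z_n$, and observe this third ergodic action is neither trivial (dimension $n>1$) nor regular (it is not faithful, while the $\Z_p$-translation action is).

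For $(1)\Rightarrow(2)$, however, your ``general classification'' route has a genuine gap: it is \emph{false} that every ergodic action of a finite group $\Lambda$ on a finite-dimensional C*-algebra is a translation action on some $\C^{\Lambda/\Lambda_0}$. Already for $\Lambda=\Z_2\times\Z_2$ the adjoint action via Pauli matrices on $M_2(\C)$ is ergodic (the commutant of $\{\sigma_1,\sigma_3\}$ is $\C I$) but $M_2(\C)$ is not commutative; for non-abelian $\Lambda$ one has $S_3\curvearrowright M_2(\C)$ via its $2$-dimensional irreducible. So neither the ``invariant-state argument'' nor transitivity on ${\rm Spec}(\mathcal Z(B))$ forces $B$ abelian in general. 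What rescues the cyclic prime case is precisely what the paper exploits directly: write $\Z_p=\langle\sigma\rangle$, decompose $B=\bigoplus_{k=0}^{p-1}B_k$ into $\xi^k$-eigenspaces of $\beta(\sigma)$ (this is your character decomposition), so that $B_0=\C1$ by ergodicity, $B_k^*=B_{-k}$, $B_kB_l\subset B_{k+l}$, and every nonzero $a\in B_k$ is a scalar multiple of a unitary (since $a^*a\in B_0=\C$). If $B_1\neq 0$ one picks $u\in B_1$ unitary with $u^p=1$ and gets $B_k=\C u^k$, hence $B\simeq C^*(\Z_p)$ with the translation action. If $B_1=0$ then for any $2\le k\le p-1$ with $B_k\neq 0$ one uses that $k$ is invertible mod $p$ (here is where primality enters, via Fermat's little theorem) to land a power of a unitary from $B_k$ into $B_1=0$, a contradiction; thus $B=\C$. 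This is your ``direct character-theoretic verification'' option, and it is the one you should take; the divisor-counting shortcut does not survive without an extra argument (triviality of $H^2(\Z_p,\mathbb{T})$, equivalently the explicit construction of $u$ with $u^p=1$) that you have not supplied.
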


\begin{proof}
$(1)\Rightarrow(2)$. View $\beta\,:\,\Z_p\rightarrow{\rm Aut}(B)\subset\Lcal(B)$ and write $\Z_p=\langle\sigma\rangle$ where $\sigma^p=1$. Hence $\beta(\sigma)^p=\id_B$ so ${\rm Sp}(\beta(\sigma))\subseteq\{\xi^k\,:\, 0\leq k\leq p-1\}$, where $\xi=e^{2i\pi/p}$ and we may write $B=\bigoplus_{k=0}^{p-1}B_k$ as vector spaces, where $B_k=\{b\in B:\beta(\sigma)b=\xi^kb\}$. Since $\beta$ is ergodic we have $B_0=\C 1_B$. Note that $B_k^*=B_{-k}$ and $B_kB_l\subset B_{k+l}$, where the sum is modulo $p$. If $B_{1}\neq\{0\}$, take a nonzero $a\in B_{1}$. Then, $a^*a\in B_0=\C1$ and is non-zero. Let $\lambda\neq 0$ be such that $a^*a=\lambda1$. Take $\lambda^{-1/2}$ be any square root of $\lambda^{-1}$ and define $u:=\lambda^{-1/2}a$. Then $u$ satisfies $u^*u=1_B$ and, since $B$ is finite dimensional, we also have $uu^*=1$. Note that, for any $1\leq k\leq p-1$, any $b\in B_{k}$, one has $b(u^{k})^*\in B_kB_{-k}\subset B_0=\C1_B$ which implies that $B_{k}=\C u^{k}$. We also have $u^p\in (B_1)^p\subset B_0=\C1$. Hence, up to replacing $u$ by a modulus one multiple of $u$, we may and will assume that $u^p=1$. Hence, the unique map $C^*(\Z_p)\rightarrow B=\bigoplus_{k=0}^{p-1}\C u^k$ which map $\sigma$ onto $u$ is an isomorphism intertwining the left translation on $C^*(\Z_p)$ with the action $\beta$. Assume now that $B_1=\{0\}$. Fix $2\leq k\leq p-1$ and assume that $B_k\neq 0$. Let $a\in B_k$ be non-zero and, as before, we may further assume by ergodicity that $a^*a=aa^*=1$. Then, by Fermat's Little Theorem, $a^{k^{p-1}-1}\in B_k^{k^{p-1}-1}\subset B_{k^{p-1}}=B_1=\{0\}$. Hence, $a^{p^{k-1}-1}=0$ and $a^*=(a^*)^{p^{k-1}}a^{p^{k-1}-1}=0$ so $a=0$ a contradiction. We deduce that $B_k=0$ for all $1\leq k\leq p-1$. So $B=\C1$ and $\beta$ is the trivial action.

\vspace{0.2cm}

\noindent$(2)\Rightarrow(1)$. if $p>2$ is not prime write $p=nm$ with $n,m\geq 2$ and consider the ergodic action $\Z_n\curvearrowright \C^n=C^*(\Z_n)$ by left translation. Then, composing with the canonical surjection $\Z_p\rightarrow\Z_n$ sending the generator of $\Z_p$ on the generator of $\Z_n$, we get an ergodic action $\Z_p\curvearrowright\C^n$. However, this action is not faithful so it can not be isomorphic to the left translation of $\Z_p$, which is faithful.
\end{proof}

\noindent In the sequel we write $S_3=\langle\sigma,\tau\vert\sigma^3=1,\tau^2=1,\tau\sigma\tau^{-1}=\sigma^{-1}\rangle$. We view $\Z_3=\langle\sigma\rangle<S_3$ so $C(\Z_3)=C^*(\Z_3)\subset C^*(S_3)$ and $\Z_2=\langle\tau\rangle<S_3$ so $C(\Z_2)=C^*(\Z_2)\subset C^*(S_3)$.

\begin{lemma}\label{LemS3}
There are, up to isomorphism, exactly $4$ ergodic actions of $\widehat{S_3}$ on a finite dimensional C*-algebra $B$:
\begin{itemize}
    \item The trivial action $\beta_1$ on $B=\C$;
    \item The ergodic action $\beta_2\,:\,\C^2\rightarrow \C^2\ot C(\Z_2)\subset \C^2\ot C^*(S_3)$ on $B=\C^2$ coming from the unique ergodic action of $\Z_2$ on $\C^2$.
    \item The ergodic action $\beta_3\,:\,\C^3\rightarrow \C^3\ot C(\Z_3)\subset \C^3\ot C^*(S_3)$ on $B=\C^3$ coming from the unique ergodic action of $\Z_3$ on $\C^3$.
    \item The ergodic action $\beta_4:=\Delta_{\widehat{S_3}}\,:\,C^*(S_3)\rightarrow C^*(S_3)\ot C^*(S_3)$ on $B=C^*(S_3)$ given by the comultiplication.
\end{itemize}
\end{lemma}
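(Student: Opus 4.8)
The plan is to translate the problem into the classification of $S_3$\emph{-gradings} of finite-dimensional C*-algebras with trivial unit component, and then to run through the short list of subgroups of $S_3$.

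\medskip

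\noindent\textbf{Step 1: from actions to gradings.} Writing $C^*(S_3)=\mathrm{Span}\{\lambda_g\,:\,g\in S_3\}$ with $\Delta(\lambda_g)=\lambda_g\ot\lambda_g$, any unital $*$-homomorphism $\beta\,:\,B\rightarrow B\ot C^*(S_3)$ can be written uniquely as $\beta(b)=\sum_{g\in S_3}\beta_g(b)\ot\lambda_g$ for linear maps $\beta_g\,:\,B\rightarrow B$. I would check that the coassociativity and counit axioms are equivalent to the $\beta_g$ being pairwise orthogonal idempotents with $\sum_g\beta_g=\id_B$, so that $B=\bigoplus_g B_g$ with $B_g:=\beta_g(B)$; multiplicativity and $*$-preservation of $\beta$ give $B_gB_h\subseteq B_{gh}$ and $B_g^*=B_{g^{-1}}$; and ergodicity means exactly $B_e=\C1_B$. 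Note $1_B\in B_e$ and $\beta(b)=b\ot\lambda_g$ for all $b\in B_g$.

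\medskip

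\noindent\textbf{Step 2: the homogeneous components are lines.} Exactly as in the proof of Lemma \ref{LemZ2}: for $g\neq e$ and $0\neq a\in B_g$ one has $a^*a\in B_e=\C1$, positive and nonzero, so after rescaling $a$ becomes a unitary $u$, and then $bu^*\in B_e=\C1$ for every $b\in B_g$ forces $B_g=\C u$. Hence every $B_g$ is at most one-dimensional and, when nonzero, is spanned by a unitary. Consequently $\Gamma:=\{g\in S_3\,:\,B_g\neq 0\}$ contains $e$, is stable under inversion (by $B_g^*=B_{g^{-1}}$), and is stable under multiplication (a product of two of the spanning unitaries is again a nonzero unitary). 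So $\Gamma$ is a subgroup of $S_3$ and $\dim B=|\Gamma|$.

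\medskip

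\noindent\textbf{Step 3: case analysis over subgroups of $S_3$.} The subgroups are $\{e\}$, the three conjugate copies of $\Z_2$, $\Z_3=\langle\sigma\rangle$, and $S_3$. If $\Gamma=\{e\}$ then $B=\C$ and $\beta=\beta_1$. If $\Gamma=\langle g_0\rangle$ is cyclic of order $n\in\{2,3\}$, then $B_{g_0^k}=\C u_{g_0}^k$ and $u_{g_0}^n\in B_e=\C1$; rescaling $u_{g_0}$ by an $n$-th root of that scalar we may assume $u_{g_0}^n=1$, so that $u_{g_0}\mapsto\lambda_{g_0}$ identifies $(B,\beta)$ with $C^*(\langle g_0\rangle)$ equipped with its comultiplication action composed with the inclusion $C^*(\langle g_0\rangle)\hookrightarrow C^*(S_3)$ (one may also just quote Lemma \ref{LemZ2}). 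For $g_0=\sigma$ this is $\beta_3$, for $g_0=\tau$ this is $\beta_2$, and for the other two copies of $\Z_2$ one gets an action isomorphic to $\beta_2$ by applying an automorphism $\varphi$ of $S_3$ conjugating the chosen transposition to $\tau$; such $\varphi$ extends to an automorphism of $C^*(S_3)$ commuting with $\Delta$, and the isomorphism condition of Remark \ref{RmkFwp}(5) is then immediate. Finally, if $\Gamma=S_3$ then $B=\bigoplus_g\C u_g$ is $6$-dimensional; normalizing $u_\sigma^3=1$ and $u_\tau^2=1$, the unitary $u_\tau u_\sigma u_\tau^*$ lies in $B_{\sigma^{-1}}=\C u_\sigma^2$, say $u_\tau u_\sigma u_\tau^*=c\,u_\sigma^2$, and cubing gives $c^3=1$; replacing $u_\sigma$ by $\eta u_\sigma$ with $\eta^3=1$ turns $c$ into $\eta^{-1}c$, so we may arrange $c=1$, i.e. $u_\tau u_\sigma u_\tau^{-1}=u_\sigma^{-1}$. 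Then $u_g\mapsto\lambda_g$ identifies $B$ with $C^*(S_3)$, and since $\beta(u_g)=u_g\ot\lambda_g$ it identifies $\beta$ with $\Delta_{\widehat{S_3}}=\beta_4$ (this step is precisely the triviality of the Schur multiplier of $S_3$). The four actions are pairwise non-isomorphic because $\dim B\in\{1,2,3,6\}$ takes four distinct values.

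\medskip

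\noindent The routine parts are Steps 1 and 2 (the grading dictionary and the line argument, the latter copied from Lemma \ref{LemZ2}). The step requiring the most care is the cocycle normalization in the case $\Gamma=S_3$, together with systematically checking that every rescaling and every automorphism used above is compatible with the action data $\beta(u_g)=u_g\ot\lambda_g$ so that the resulting isomorphisms are isomorphisms of actions in the sense of Remark \ref{RmkFwp}(5).
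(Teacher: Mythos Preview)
Your proof is correct and follows essentially the same spectral/grading decomposition $B=\bigoplus_{g\in S_3}B_g$ as the paper, together with the same ``each $B_g$ is a line'' argument borrowed from Lemma~\ref{LemZ2}. Your organization is a bit cleaner: you observe directly that the support $\Gamma=\{g:B_g\neq 0\}$ is a subgroup and then run through the subgroup lattice of $S_3$, whereas the paper phrases the case split in terms of which of the restricted $\Z_3$- and $\Z_2$-actions on the subalgebras $A_\sigma$ and $A_\omega$ are nontrivial; these are the same distinction. One point where your write-up is actually more complete than the paper's: in the case $\Gamma=S_3$ the paper simply asserts ``$\dim(B)=6$ and $\beta$ is isomorphic to $\beta_4$'', while you carry out the $2$-cocycle normalization ($u_\tau u_\sigma u_\tau^{-1}=c\,u_\sigma^2$, $c^3=1$, kill $c$ by rescaling $u_\sigma$) that justifies this identification---this is exactly the vanishing of $H^2(S_3,\mathbb{T})$, and it is good that you spelled it out.
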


\begin{proof}
Write $B=B_1\oplus B_\sigma\oplus B_{\sigma^2}\oplus B_\tau\oplus B_{\tau\sigma}\oplus B_{\sigma\tau}$, where $B_g:=\{b\in B\,:\,\beta(b)=b\ot g\}$. The order $2$ elements are $\{\tau,\tau\sigma,\sigma\tau\}$. Note that $A_\sigma:=B_1\oplus B_\sigma\oplus B_{\sigma^2}$ is a subalgebra of $B$ and $\beta(A_\sigma)\subset A_\sigma\ot C^*(\sigma)$ so we get an ergodic action $\Z_3\curvearrowright A_\sigma$. Similarly, for any order two element $\omega\in\{\sigma,\tau\sigma,\sigma\tau\}$, $A_\omega:=B_1\oplus B_\omega$ is a subalgebra with an ergodic action $\Z_2\curvearrowright A_\omega$.

\vspace{0.2cm}

\noindent Assume first that $\Z_3\curvearrowright A_\sigma$ is ergodic then there is an element $u\in B_\sigma$ such that $u^3=1$ and $B_\sigma=\C u$, $B_{\sigma^2}=\C u^2$. Now, if for some $\omega\in\{\tau,\sigma\tau,\tau\sigma\}$ the action $\Z_2\curvearrowright A_\omega$ is also ergodic then we have $B_\tau=\C v$ with some $v$ satisfying $v^2=1$. Since $B_gB_h\subset B_{gh}$ and $\langle\sigma,\omega\rangle=S_3$ we deduce that $B_g\neq\{0\}$ for all $g\in S_3$. It follows that $\dim(B)=6$ and $\beta$ is isomorphic to $\beta_4$. If now $\Z_2\curvearrowright A_\omega$ is trivial for all $\omega\in \{\tau,\tau\sigma,\sigma\tau\}$. It follows that $\dim(B)=3$ and $\beta$ is isomorphic to $\beta_3$.

\vspace{0.2cm}

\noindent Assume now that $\Z_3\curvearrowright A_\sigma$ is the trivial action. Since for any $\omega_1,\omega_2\in\{\tau,\tau\sigma,\sigma\tau\}$ with $\omega_1\neq\omega_2$ we have $\langle\omega_1,\omega_2\rangle=S_3$ and since $B_\sigma=\{0\}$, there at most one $\omega\in\{\tau,\sigma\tau,\tau\sigma\}$ such that $\Z^3\curvearrowright B_\omega$ is nontrivial. If there is such a $\omega$ then $\beta$ is isomorphic to $\beta_2$ and if there is no such $\omega$ then $\beta$ is the trivial action $\beta_1$ on $B=\C$.
\end{proof}

\noindent Note that if $G$ is trivial then $\GG=H$ and if $H$ is trivial then, since the action is ergodic, we must have $B=\C$ so $\GG=G$. In the non-trivial cases we obtain the following result concerning coamenability. Recall that $H\curvearrowright B$ is always assumed to be ergodic with unique invariant state $\psi$. 

\begin{theorem}\label{PropCoamenable}
Assume that $F$ is trivial and both $G$ and $H$ are non-trivial then the following are equivalent.
\begin{enumerate}
    \item $\GG$ is coamenable.
    \item $\Linf(\GG)$ is amenable.
    \item $G\simeq H\simeq \Z_2$ and $B\in\{\C,\C^2\}$.
\end{enumerate}
Moreover, in this case we have either $\GG=\widehat{\Z_2^{*2}}$ or $\GG=\widehat{\Z_2^{*2}}\rtimes\Z_2$.
\end{theorem}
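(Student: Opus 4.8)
\textbf{Proof plan for Theorem \ref{PropCoamenable}.}

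The implication $(1)\Rightarrow(2)$ is standard: coamenability of $\GG$ forces $\Linf(\GG)=\Linf(\GG,h_\GG)$ to be amenable (injective), since coamenability gives amenability of the dual in the sense of Tomatsu, hence injectivity of $\Linf(\GG)$. The implication $(3)\Rightarrow(1)$ is a direct computation: if $G\simeq H\simeq\Z_2$ and $B\in\{\C,\C^2\}$, then by Remark \ref{RmkFwp} $(4)$ (trivial action case, $B=\C$) or by Proposition \ref{PropSemiDirect}/Remark \ref{RemarkNewExample} (the case $B=\C^2$, where the action of $\Z_2$ on $\C^2$ is the left translation action), $\GG$ is either $\widehat{\Z_2^{*2}}$ or $\widehat{\Z_2^{*2}}\rtimes\Z_2$; both are duals of amenable (virtually abelian, in fact infinite dihedral-related) discrete groups — $\Z_2*\Z_2\simeq\Z\rtimes\Z_2$ is amenable — hence coamenable. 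This simultaneously establishes the "moreover" statement, once we know these are the only two cases. So the real content is $(2)\Rightarrow(3)$.

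For $(2)\Rightarrow(3)$, I would argue as follows. Assume $\Linf(\GG)$ is amenable. Since $H$ is a dual quantum subgroup of $\GG$ (Remark \ref{RmkFwp} $(2)$) and $G^{*K}$ is a quantum subgroup of $\GG$ (Remark \ref{RmkFwp} $(3)$), and since amenability passes to subalgebras with expectation and to quotients, we deduce $\Linf(H)$ is amenable and $C_r(G^{*K})$ is nuclear (using the quotient map $\pi_G$ and that $G^{*K}$ coamenable $\Rightarrow$ nuclear; more precisely amenability of $\widehat\GG$ forces amenability of the dual of any quantum subgroup). In particular $G$ itself has amenable $\Linf(G)$ and so does $H$. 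Now I use the free-product structure from Section \ref{sectionvna}: $\Linf(\GG)=\ast_{\Linf(H)}(L_\kappa(\GG))$ with each $L_\kappa(\GG)\supset\Linf(H)$ and $L_\kappa(\Gtilde)=M_{N_\kappa}(\C)\ot L_\kappa(\GG)$ containing $B\ot\Linf(G)$ with expectation. The key point: an amalgamated free product $M_1\ast_N M_2$ of von Neumann algebras is amenable only in degenerate situations — essentially when one of the $M_i$ equals $N$, or in the "$\Z_2*\Z_2$" borderline case where $[M_i:N]$-type indices are both $2$. I would invoke a free-probabilistic obstruction: since $B$ is finite-dimensional and $\psi$ a $\delta$-form (by ergodicity, Remark \ref{ergodelta} $(3)$), the dilation trick of Remark \ref{rmk fd amal} writes $\Linf(\Gtilde)$ as an iterated amalgamated free product over $B\ot\Linf(F)=B\ot\C=B$, with first building block $M_0\simeq(\ast_\kappa M_{N_\kappa}(\C),\psi_\kappa^{-1})\ot\Linf(H)$. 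Amenability of $\Linf(\GG)$ forces amenability of $\Linf(\Gtilde)$ (they are Morita-equivalent through the blocks, or: $\Linf(\Gtilde)\supset M_{N_\kappa}(\C)'\cap\Linf(\Gtilde)=L_\kappa(\GG)$ and one reconstructs $\Linf(\Gtilde)$ from amenable pieces only if...). Then amenability of $\ast_\kappa(M_{N_\kappa}(\C),\psi_\kappa^{-1})$ forces, by the classical result of Dykema on free products of finite-dimensional algebras, that this free product is amenable (hyperfinite) only when it is $\C^2*\C^2$ with the trace giving the hyperfinite $\mathrm{II}_1$ situation — i.e. $K\le 2$ and all $N_\kappa=1$ and moreover $\dim B\le 2$; and likewise the amalgamated free product with $B\ot\Linf(G)$ over $B$ is amenable only if $\Linf(G)$ is "trivial over $B$" in the appropriate index sense, which combined with $G$ nontrivial pins down $G=\Z_2$ (the unique nontrivial CQG whose $C^*$-algebra has the minimal dimension compatible with a nondegenerate free product giving hyperfinite).

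The main obstacle — and where I would spend most of the effort — is making the last step rigorous: precisely quantifying "amenable amalgamated free product forces degenerate indices" in this quantum setting. I expect to reduce to the combinatorics of $\Irr(G)$, $\Irr(H)$: amenability of $\Linf(\GG)$ implies the fusion graph of $\GG$ has subexponential growth, and since $\GG$ contains $G^{*K}$ and $H$ as (dual) quantum subgroups with the $a(v)$ generating, one gets that $\Irr(G)^{*K}$ and $\Irr(H)$-words grow subexponentially, which by Kesten-type arguments forces $|\Irr(G)|=|\Irr(H)|=2$, hence $G,H\in\{\Z_2\}$ (the only CQG with two irreducibles both of dimension $1$, while a $2$-dimensional irreducible would give $SU_q(2)$-type growth — one rules this out via Lemma \ref{LemZ3}-style dimension bookkeeping, noting $|\Irr|=2$ already forces dimension-$1$ irreducibles by self-conjugacy and the fusion rules). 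Then $G=H=\Z_2$, and $\beta:\Z_2\curvearrowright B$ ergodic forces $B\in\{\C,\C^2\}$ by Lemma \ref{LemZ2} (with $p=2$). Finally, identifying which of $\widehat{\Z_2^{*2}}$ or $\widehat{\Z_2^{*2}}\rtimes\Z_2$ occurs: the case $B=\C$ gives the trivial action, so by Remark \ref{RmkFwp} $(4)$, $\GG=\Z_2^{*2}\underset{}{*}\Z_2=\widehat{\Z_2*\Z_2}=\widehat{\Z_2^{*2}}$; the case $B=\C^2$ gives the left-translation action of $\Z_2=\widehat{\Z_2}$ on $\C^2=C^*(\Z_2)$, so by Remark \ref{RemarkNewExample}, $\GG=\widehat{\Z_2}\wr_{*,\beta}\widehat{\Z_2}\simeq\widehat{\Z_2}^{*\Z_2}\rtimes\Z_2=\widehat{\Z_2^{*2}}\rtimes\Z_2$. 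Both of these are coamenable, completing the cycle $(3)\Rightarrow(1)\Rightarrow(2)\Rightarrow(3)$ and proving the "moreover" clause.
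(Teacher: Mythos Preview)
Your treatment of $(1)\Rightarrow(2)$ and of $(3)\Rightarrow(1)$ (together with the ``moreover'' identification) is essentially correct, and matches the paper. The substance, as you recognise, is $(2)\Rightarrow(3)$, and there your plan has genuine gaps.

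\textbf{The missing key idea: the compression trick.} You propose to work with the amalgamated free product structure of $\Linf(\Gtilde)$ (or of $L_\kappa(\Gtilde)$) and invoke either Dykema-type results on free products of finite-dimensional algebras, or a Kesten/fusion-growth argument. Neither of these is made to work: Dykema's theorems concern \emph{plain} free products, and your amalgam $B$ (or $B\ot\Linf(F)$) is not $\C$; while the Kesten route would require an actual computation of the spectral radius of the fundamental character of $\GG$, which you do not supply. The paper sidesteps both difficulties by a compression argument that you do not mention: take a minimal projection $p=e^\gamma_{jj}\in B$ (so $p\in B^\psi$ since $p$ is diagonal), and verify directly that the corners $p(B\ot\Linf(G))p=\Linf(G)$ and $p(M_{N_\kappa}(\C)\ot\Linf(H))p=\beta_\kappa(p)(M_{N_\kappa}(\C)\ot\Linf(H))\beta_\kappa(p)$ are \emph{genuinely $*$-free} (over $\C$, not over $B$) in $pL_\kappa(\Gtilde)p$ with respect to $\omega:=\varphi_\kappa(p)^{-1}\varphi_\kappa(p\cdot p)$. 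This is because for any $e\in C\cup D$ with $\omega(pep)=0$, minimality of $p$ in $B$ forces $E_B(pep)=0$, so alternating words in $pCp\ominus\C$ and $pDp\ominus\C$ are reduced in the amalgamated sense. One then checks $\langle pCp,pDp\rangle\subset pL_\kappa(\Gtilde)p$ is with expectation (it is $\sigma^\omega$-invariant), hence amenable, and can now invoke Ueda's results on plain free products to get the sharp bounds $\dim(\Linf(G))\le 2$ and $\dim(\beta_\kappa(p)(M_{N_\kappa}(\C)\ot\Linf(H))\beta_\kappa(p))\le 2$.

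\textbf{A logical gap in the order of deductions.} You write ``amenability of $\widehat{\GG}$ forces amenability of the dual of any quantum subgroup'', and use this to constrain $G^{*K}$. But hypothesis $(2)$ only gives amenability of $\Linf(\GG)$, not coamenability of $\GG$. The implication $\Linf(\GG)$ amenable $\Rightarrow$ $\GG$ coamenable is only known when $\GG$ is Kac. The paper's order is: first use the dimension bound to get $G\simeq\Z_2$, which forces $G$, $H$ Kac (the latter since $\Linf(H)$ finite-dimensional), hence $\GG$ Kac (Corollary~\ref{CorKac}), and \emph{then} apply Tomatsu to conclude $\GG$ coamenable, so that $G^{*K}$ is coamenable and one can bound $K\le 2$.

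\textbf{The case $K=2$ is more delicate than you allow.} Once $K\le 2$ and $G=\Z_2$, you assert that the Kesten argument will force $|\Irr(H)|=2$. In fact the paper only obtains $|\Irr(\widehat{H})|\le 3$ when $K=2$ (from $|I_\kappa|\le 2$ and $1\in I_1\cap I_2$), leaving the possibilities $H\in\{\Z_2,\Z_3,\widehat{S_3}\}$ (via Lemma~\ref{LemZ3}). Eliminating $\Z_3$ is easy (Lemma~\ref{LemZ2}: no ergodic action of $\Z_3$ on $\C^2$), but eliminating $\widehat{S_3}$ requires Lemma~\ref{LemS3} to identify the unique ergodic $\widehat{S_3}$-action on $\C^2$, then Proposition~\ref{Propfaithful} to write $\GG\simeq(\Z_2\wr_*\Z_2)\underset{\Z_2}{*}\widehat{S_3}$, and finally a concrete freeness computation showing that compressing by a suitable minimal projection in ${\rm L}(\Z_2)$ exhibits a non-amenable free product ${\rm L}(\Z_2^{*2})*p{\rm L}(S_3)p$ inside $p\Linf(\GG)p$. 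Your outline does not address this case at all.
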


\begin{proof}
$(1)\Rightarrow(2)$ follows from the general theory \cite{To06}.

\vspace{0.2cm}

\noindent $(2)\Rightarrow(3)$. Since $\Linf(\GG)$ is amenable,  each  $L_\kappa(\Gtilde)=M_{N_\kappa}(\C)\ot L_\kappa(\GG)$ is amenable. Write $C=B\ot\Linf(G)$ and $D=M_{N_\kappa}(\C)\ot\Linf(H)$ so that $L_\kappa(\Gtilde)=C\underset{B}{*}D$. Let $p:=e^\gamma_{jj}\in B$ be a minimal projection and note that $\sigma_t^\psi(p)=p$ for all $t\in\R$. Let us show that $pCp=\Linf(G)$ and $pDp=\beta_\kappa(p)(M_{N_\kappa}(\C)\ot\Linf(H))\beta_\kappa(p)$ are $*$-free in $pL_\kappa(\Gtilde)p$ with respect to the state $\omega:=(\varphi_\kappa(p))^{-1}\varphi_\kappa(p\cdot p)$. Let $x\in pL_\kappa(\Gtilde) p$ be a word alternating from $pCp$ and $pDp$ of the form $x=(pc_1p)(pd_1p)\dots (pc_np)(pd_np)$, with $c_k\in C$, $d_k\in D$ and $\omega(pc_kp)=\omega(pd_kp)=0$. Since $p\in B$ is a minimal projection, we have for $e_k\in\{c_k,d_k\}$, $E_B(pe_k p)=pE_B(e_k)p\in\C1_B$. Hence, $E_B(pe_k p)=\psi(E_B(pe_kp))1_B=\varphi_\kappa(pe_kp)1_B=0$. It follows that $x$ is a reduced operator in the amalgamated free product $L_\kappa(\Gtilde)=C\underset{B}{*}D$. Hence, $\varphi_\kappa(x)=0=\psi(p)\omega(x)$ so $\omega(x)=0$. One proves similarly that $\omega(x)=0$ for other alternating products $x$. This shows that $pCp*pDp\simeq\langle pCp,pDp\rangle\subset p L_\kappa(\Gtilde) p $, where the free product is with respect to the state $\omega$. Note that, since $p\in B^\psi\subset (L_\kappa(\Gtilde))^{\varphi_\kappa}$, one has $\sigma_t^\omega(pxp)=p\sigma_t^{\varphi_\kappa}(x)p$, for all $t\in\R$, $x\in L_\kappa(\Gtilde)$. In particular, $\sigma^\omega_t(pCp)=pCp$ and $\sigma^\omega_t(pDp)=pDp$. Hence, $\sigma_t^\omega(\langle pCp,pDp\rangle)=\langle pCp,pDp\rangle$ and, by Takesaki's Theorem, the subalgebra $\langle pCp,pDp\rangle\subset p L_\kappa(\Gtilde)p$ has a faithful normal conditional expectation. Hence, $pCp*pDp=\Linf(G)*\beta_\kappa(p)(M_{N_\kappa}(\C)\ot\Linf(H))\beta_\kappa(p)$ is amenable. Note that if $\Linf(H)$ is infinite dimensional, then $\Linf(H)$ is diffuse by \cite[Lemma 2.4]{FT24}. Hence, $M_{N_\kappa}(\C)\ot\Linf(H)$ is also diffuse and $\beta_\kappa(p)(M_{N_\kappa}(\C)\ot\Linf(H))\beta_\kappa(p)$ must be infinite dimensional. Since $\Linf(G)\neq\C$, the results of \cite{Ue10} imply that $pCp*pDp$ is not amenable. Hence, $\Linf(H)$ is finite dimensional. Let $q_x$ be the minimal central projections of $\Linf(H)$, which are parameterized by $x\in\Irr(\widehat{H})$ and note that $\varepsilon_H(a)q_1=aq_1$ for all $a\in C(H)$, where $1\in\Irr(\widehat{H})$ is the trivial representation. Note that $$\dim(\beta_\kappa(p)(M_{N_\kappa}(\C)\ot\Linf(H))\beta_\kappa(p))\geq 2.$$
Indeed, if the dimension is $1$, then $\beta_\kappa(p)$ is a minimal projection and it implies that $\beta_\kappa(p)(1_\kappa\ot p_1)=\beta_\kappa(p)$. Hence,
$$\beta_\kappa(p)=\beta_\kappa(p)(1_\kappa\ot p_1)=(\id\ot\varepsilon_H)(\beta_\kappa(p)(1_\kappa\ot p_1))=\chi_\kappa(p)\ot p_1=\delta_{\gamma,\kappa}(p\ot p_1).$$
Since $\beta_\kappa$ is faithful, we deduce that $K=1$ and $\beta(p)=p\ot p_1$. Applying $\psi\ot\id$ we get $\psi(p)1=\psi(p)p_1$ so $p_1=1$ and $H$ is trivial, a contradiction.

\vspace{0.2cm}

\noindent Since $pCp,pDp\neq\C$ and $pCp*pDp$ is amenable, the results of \cite{Ue10} imply that:
$$\dim(\Linf(G))\leq 2\text{ and }\dim(\beta_\kappa(p)(M_{N_\kappa}(\C)\ot\Linf(H))\beta_\kappa(p))\leq 2.$$
We deduce that $\dim(\Linf(G))= 2=\dim(\beta_\kappa(p)(M_{N_\kappa}(\C)\ot\Linf(H))\beta_\kappa(p))$ which implies that $G\simeq\Z_2$. Hence both $G$ and $H$ are Kac, by \cite[Theorem 2.2]{VD97}. Hence, $\GG$ is Kac and, since $\Linf(\GG)$ is amenable and $\GG$ is Kac, \cite{To06} implies that $\GG$ is amenable so $G^{*K}$, as a compact quantum subgroup of $\GG$, is also amenable. \cite[Proposition 2.8]{FT24} implies that $K\leq 2$. Write $$\beta(p)=\sum_{x\in\Irr(\widehat{H})}\beta(p)(1\ot q_x).$$
Write, for $1\leq\kappa\leq K$, $I_\kappa:=\{x\in\Irr(\widehat{H}):\beta_\kappa(p)(1\ot q_x)\neq 0\}$ and note that, since $(q_x)$ are central and pairwise orthogonal, $\{\beta_\kappa(p)(1\ot q_x):x\in I_\kappa\}$ is a family of non-zero pairwise orthogonal projections in $\beta_\kappa(p)(M_{N_\kappa}(\C)\ot \Linf(H))\beta_\kappa(p)$. Hence,
$$\vert I_\kappa\vert\leq\dim(\beta_\kappa(p)(M_{N_\kappa}(\C)\ot \Linf(H))\beta_\kappa(p))= 2\text{ for all }1\leq\kappa\leq K.$$
Note that $\Irr(\widehat{H})= \cup_{\kappa=1}^K I_\kappa$. Indeed, writing $I:=\{x\in\Irr(\widehat{H}):\beta(p)(1\ot p_x)\neq 0\}$ one has:
 $$\psi(p)1=(\psi\ot\id)(\beta(p))=\sum_{x\in\Irr(\widehat{H})}(\psi\ot\id)(\beta(p)(1\ot q_x))=\sum_{x\in I}\psi(p)q_x.$$
 Since $\psi(p)>0$ it follows that $\sum_{x\in I}p_x=1$ so $I=\Irr(\widehat{H})$. Let $x\in\Irr(\widehat{H})$, since $\beta(p)(1\ot q_x)\neq 0$, there exists $\kappa$ such that $x\in I_\kappa$ and this shows that $\Irr(\widehat{H})= \cup_{\kappa=1}^K I_\kappa$.
 
\vspace{0.2cm}
 
\noindent If $K=1$ this shows that $\vert\Irr(\widehat{H})\vert=2$. Arguing as in the proof of \cite[Proposition 2.8]{FT24}, we deduce that $\widehat{H}\simeq \Z_2$. Hence, $H\simeq\widehat{\Z_2}\simeq\Z_2$.
 
\vspace{0.2cm}
 
\noindent If $K=2$. Since $(\id\ot\varepsilon_H)\beta=\id_B$, the trivial representation $1$ is in $I_1\cap I_2$. Hence, $\vert \Irr(\widehat{H})\vert=\vert I_1\vert+\vert I_2\vert-\vert I_1\cap I_2\vert\leq \vert I_1\vert+\vert I_2\vert -1\leq 3$ so either $\vert \Irr(\widehat{H})\vert=2$ in which case we deduce again that $H=\Z_2$ or $\vert \Irr(\widehat{H})\vert=3$ and Lemma \ref{LemZ3} implies that $\widehat{H}\in\{\Z_3,S_3\}$ so $H\in\{\Z_3,\widehat{S}_3\}$. In conclusion, we have $G=\Z_2$ and either $K=1$ and $H=\Z_2$ or $K=2$ and $H\in\{\Z_2,\Z_3,\widehat{S_3}\}$. However, in the case $K=2$ and $H=\Z_3$, there is no ergodic action of $\Z_3$ on $B$, by Lemma \ref{LemZ2}. If $K=2$ and $H=\widehat{S_3}$ then, by Lemma \ref{LemS3} we have $B=\C^2$ and $\beta$ is the action $\C^2\rightarrow\C^2\ot C(\Z_2)=\C^2\ot C^*(\omega)\subset\C^2\ot C^*(S_3)$ coming from the unique ergodic action of $\Z_2$ on $\C^2$ and an order two element $\omega\in S_3$. We deduce from Proposition \ref{Propfaithful} that $\GG\simeq (\Z_2\wr_{*}\Z_2)\underset{\Z_2}{*}\widehat{S_3}$. By Proposition \ref{PropBichon} and \cite[Proposition 2.23]{FT24} we have $\Z_2\wr_{*}\Z_2\simeq \widehat{\Z_2^{*2}}\rtimes \Z_2$. Hence, $\Linf(\GG)=({\rm L}(\Z_2^{*2})\ot{\rm L}(\Z_2))\underset{{\rm L}(\Z_2)}{*}{\rm L}(S_3)$, where the amalgamated free product is with respect to the conditional expectations on the dual quantum subgroup $\Z_2$. Write $S_3=\langle\sigma,\tau\vert\sigma^3=1,\tau^2=1,\tau\sigma\tau=\sigma^{-1}\rangle$ and let $p:=\frac{1}{2}(1+\tau)\in{\rm L}(\Z_2)$ and note that $p=p^*=p^2$ and $p\tau=p$ so $p$ is a minimal projection in ${\rm L}(\Z_2)$. However, $p$ is not minimal in ${\rm L}(S_3)$ since $p{\rm L}(S_3)p=\C p\oplus\C p\sigma p$. It is easy to check that ${\rm L}(\Z_2^{*2})=p({\rm L}(\Z_2^{*2})\ot{\rm L}(\Z_2))p$ and $p{\rm L}(S_3)p$ are $*$-free in $p\Linf(\GG)p$ with respect to $h_\GG(p)^{-1}h_\GG\vert_{p\Linf(\GG)p}$. Hence, ${\rm L}(\Z_2^{*2})*p{\rm L}(S_3)p$ is isomorphic to a von Neumann algebra of $p\Linf(\GG)p$ which is impossible since ${\rm L}(\Z_2^{*2})$ is diffuse and $\dim(p{\rm L}(S_3)p)\geq 2$ so ${\rm L}(\Z_2^{*2})*p{\rm L}(S_3)p$ is not amenable. It follows that the only possible case is $G=H=\Z_2$ and Lemma \ref{LemZ2} implies that $B\in\{\C,\C^2\}$.
 
\vspace{0.2cm}
 
\noindent $(3)\Rightarrow(1)$. If $G=H=\Z_2$ and $B=\C$ then $\beta$ is the trivial action on $\C$ and it follows from Remark  \ref{RmkFwp} that $\GG\simeq\widehat{\Z_2^{*2}}$ which is coamenable. If $B=\C^2$ then Lemma \ref{LemZ2} implies that $\beta$ is the unique ergodic action on $B=\C^2$ so that it is isomorphic to the universal action $\Z_2=S_2^+\curvearrowright\C^2$. Proposition \ref{PropBichon} and \cite[Proposition 2.23]{FT24} imply that $\GG\simeq\widehat{\Z_2^{*2}}\rtimes \Z_2$, which is coamenable by \cite[Proposition 2.17]{FT24}.
\end{proof}

\begin{remark} The fact that, for $G$ a CQG of Kac type, the coamenability of $G$ is equivalent to the amenability of the von Neumann algebra $\Linf(G)$ is well known (see Corollary 3.17 in \cite{To06} and the discussion after its proof) and was used in the previous proof. However, the equivalence for general CQG is open. Theorem \ref{PropCoamenable} shows that in the class of CQG which are non-amalgamated generalized free wreath product of non-trivial CQG with ergodic action, the equivalence between coamenability and amenability of the von Neumann algebra is true. Since this class of CQG contains in particular the class of free products of non-trivial CQG, it extends \cite[Remark 2.9]{FT24}.\end{remark}

\begin{remark}
The ergodicity is necessary to obtain $(1)\Rightarrow (2)$ in Theorem \ref{PropCoamenable}. Indeed, consider the free action $\Z_2\curvearrowright\Z_2$ and let $B=M_N(\C)$ with $N\geq 2$ and $B^{\Z_2}=M_N(\C)\oplus M_N(\C)$ with the action $\beta\,:\,\Z_2\curvearrowright B^{\Z_2}$ defined before Proposition \ref{PropSemiDirect}. This action is non-ergodic and preserves the faithful state $\psi$ defined from any faithful state $\omega\in M_N(\C)^*$ before Proposition \ref{PropSemiDirect}. Let $\GG$ be the free wreath product $\GG:=G\wr_{*,\beta}\Z_2$. Note that, by Proposition \ref{PropSemiDirect}, $\GG\simeq G^{*2}\rtimes \Z_2$. Hence, $\GG$ is coamenable if and only if $G$ is trivial or $G=\Z_2$, by \cite[Propositions 2.8 and 2.17]{FT24}. Hence, taking $G=H=\Z_2$, we obtain a coamenable free wreath product $\GG$ but with $B=M_N(\C)\oplus M_N(\C)$.
\end{remark}

\section{Qualitative properties of the von Neumann algebra}\label{section quali}
\noindent We will now study the main properties of the von Neumann algebra of a generalized free wreath product: factoriality, type, Connes' invariant $T$, primeness and absence of Cartan subalgebra. We start with a preliminary section concerning intertwining theory.

\subsection{Intertwining theory}
We use \cite{HI17} as a reference concerning basics on intertwining theory. Let $1_A\in M$ be a projection. A von Neumann subalgebra $A\subset 1_AM1_A$ is called \textit{with expectation} if there is a faithful normal conditional expectation $1_AM1_A\rightarrow A$.

\begin{remark}\label{RmkExp}
If $A\subset M$ is with expectation then its relative commutant $A'\cap M$, the von Neumann algebra $A\vee (A'\cap M)\subset M$ generated by $A$ and $A'\cap M$ and its normalizer algebra $\mathcal{N}_M(A)''\subset M$, where $\mathcal{N}_M(A):=\{v\in\mathcal{U}(M)\,:\,vAv^*=A\}$, are all with expectation. Indeed, fix a faithful normal conditional expectation $E_A\,:\, M\rightarrow A$, fix a f.n.s. $\psi\in A_*$ and define the f.n.s. $\psi\in M_*$ by $\psi:=\psi\circ E_A$. Since $A$ is globally invariant under the modular group $\sigma_t^\psi$, it is easy to see that $A'\cap M$ also is. It implies that $A\vee (A'\cap M)$ is globally invariant under $\sigma_t^\psi$. Moreover, it is shown in \cite[Lemma 4.1]{BHV18} that, for all $u\in\mathcal{N}_M(A)$ and all $t\in\R$, one has $$u^*\sigma_t(u)\in A\vee (A'\cap M)\subset\mathcal{N}_M(A)'',$$
In particular, $\mathcal{N}_M(A)''$ is globally invariant under $\sigma_t^\psi$. It follows from Takesaki's Theorem that there exists a unique $\psi$-preserving conditional expectation from $M$ to $A'\cap M$, from $M$ to $A\vee (A'\cap M)$ and from $M$ to $\mathcal{N}_M(A)''$, these conditional expectations are moreover normal and faithful.\end{remark}

\begin{definition}\label{DefEmbedds}
Let $A\subset 1_AM1_A$ and $B\subset 1_BM1_B$ be von Neumann subalgebras with expectation. We say that $A$ \textit{embeds in} $B$ \textit{inside} $M$ and we write $A\prec_M B$ if there exist projections $e\in A$, $f\in B$ a non-zero element $v\in eMf$ and a unital, normal $*$-morphism $\theta : eAe \rightarrow fBf$ such that the inclusion $\theta(eAe)\subset fBf$ is with expectation and $av = v\theta(a)$ for all $a\in eAe$.
\end{definition}

\begin{remark}\label{RmkEmbedds} Let $A\subset 1_AM1_A$ and $B\subset 1_BM1_B$ be von Neumann subalgebras with expectations.
\begin{enumerate}
\item If $A$ is diffuse and $B$ is finite dimensional then $A\nprec_M B$. Indeed, assume that $A\prec_M B$, it follows from \cite[Remark 4.2 (3)]{HI17} that there exist projections $e\in A$, $f\in B$, a non-zero element $v\in eMf$ and a \textit{faithful} unital normal $*$-homomorphism $\theta\,:\,eAe\rightarrow fBf$ such that $av=v\theta(a)$ for all $a\in eAe$. If $B=\C1_B$ then, since $\theta$ is faithful, $eAe=\C e$ and $e$ is minimal. In general, since $fBf$ is finite dimensional and $v\in eMf$, there exists a minimal (and central) projection $q\in fBf$ such that $vq\neq 0$. Consider $w=vq\in eMq$ and $\psi\,:\, eAe\rightarrow qBq=\C q$, $\psi(a)=\theta(a)q=q\theta(a)$. Then, $w\psi(a)=vq\theta(a)=v\theta(a)q=avq=aw$. Hence, $A\prec_M\C$ and we can apply the first part of the proof.
\item Let $p\in\mathcal{Z}(M)$ be a projection. Then, $Ap\subset 1_AMp1_A$ and $Bp\subset 1_BMp1_B$ are with expectation and if $Ap\prec_{Mp}Bp$ then $A\prec_MB$ (see \cite[Remark 4.2 (2)]{HI17}). 

\item $A\prec_M B$ if and only if $A\ot M_k(\C)\prec_{M\ot M_k(\C)} B\ot M_k(\C)$ for some $k\geq 1$ (or for any $k\geq 1$). Indeed, assume that for some $k\geq 1$ one has $\mathcal{A}\prec_\mathcal{M} \mathcal{B}$, where $\mathcal{A}:=A\ot M_k(\C)$, $\mathcal{B}:=B\ot M_k(\C)$ and $\mathcal{M}:=M\ot M_k(\C)$ and consider the projection $p:=1_A\ot e_{11}\in \mathcal{A}$. Note that its central support in $\mathcal{A}$ is $z_{\mathcal{A}}(1_A\ot e_{11})=1_A\ot 1$. It follows from \cite[Remark 4.2 (4)]{HI17} that $p\mathcal{A}p=A\ot\C e_{11}\prec_{\mathcal{M}}\mathcal{B}$. The same argument, by using this time \cite[Remark 4.5]{HI17}, shows that $A\ot\C e_{11}\prec_{\mathcal{M}} B\ot\C e_{11}$. Hence, there exist projections $e\in A$, $f\in B$, a non-zero $w\in (e\ot e_{11})(M\ot M_k(\C))(f\ot e_{11})=eMf\ot \C e_{11}$ and a unital normal $*$-homomorphism $\psi\,:\,eAe\ot\C e_{11}\rightarrow fBf\ot\C e_{11}$ such that $\psi(eAe\ot\C e_{11})\subset fBf\ot\C e_{11}$ is with expectation and $(a\ot e_{11})w=w\psi(a\ot e_{11})$ for all $a\in eAe$. Let $\theta\,:\,eAe\rightarrow fBf$ be the unique unital normal $*$-homomorphism such that $\psi(a\ot e_{11})=\theta(a)\ot e_{11}$ for all $a\in eAe$ and $v\in eMf$ be the unique (non-zero) element such that $w=v\ot e_{11}$. Then $\theta(eAe)\subset fBf$ is with expectation and $av=v\theta(a)$ for all $a\in eAe$. For the converse, it is clear from Definition \ref{DefEmbedds} that $A\prec_MB$ implies $A\ot M_k(\C)\prec_{M\ot M_k(\C)} B\ot M_k(\C)$ for all $k\geq 1$.
\item If $A\prec_MB$ then, for any \textit{unital} $D\subset A$ with expectation one has $D\prec_M B$ by \cite[Lemma 4.8]{HI17}.
\end{enumerate}
\end{remark}

\noindent We will freely use the following result.

\begin{proposition}\cite[Theorem 4.3 and Lemma 4.10]{HI17}\label{HI17thm}
Let $A\subset 1_AM1_A$ and $B\subset 1_BM1_B$ with expectations and suppose that $A$ is finite or type ${\rm III}$. Then the following are equivalent:
\begin{enumerate}
    \item $A\prec_M B$.
    \item There exist $n\geq 1$, a projection $q\in M_n(\C)\ot B$, a non-zero partial isometry $w\in (e_{11}\ot 1_A)(M_n(\C)\ot M)q$ and a unital normal $*$-homomorphism $\pi\,:\, A\rightarrow q(M_n(\C)\ot B)q$ such that $\pi(A)\subset q(M_n(\C)\ot B)q$ is with expectation and $(1\ot a)w=w\pi(a)$ for all $a\in A$.
    \end{enumerate}
If $A$ is finite $(1)$ and $(2)$ are also equivalent to:
\begin{enumerate}\setcounter{enumi}{2}
    \item There is no net of unitaries $(w_n)_n$ in $\mathcal{U}(A)$ such that $E_B(x^*w_ny)\rightarrow 0$ in the $\sigma$-strong topology, for all $x,y\in 1_AM1_B$.
\end{enumerate}
Moreover, for any $A$ (not necessarily finite or type ${\rm III}$), we have $(2)\Rightarrow (3)$.
\end{proposition}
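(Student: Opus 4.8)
The statement is a packaging of Popa's intertwining-by-bimodules theory in the setting of subalgebras with expectation, and the plan is to recall the argument of \cite[Theorem 4.3 and Lemma 4.10]{HI17} and indicate where each implication comes from. Fix a faithful normal conditional expectation $E_B\,:\,1_BM1_B\rightarrow B$, pick a faithful normal state $\varphi$ on $M$ with $\varphi\circ E_B=\varphi$, represent $M$ on $\rL^2(M,\varphi)$, and form Jones' basic construction $\langle M,e_B\rangle$ together with its canonical operator-valued weight $\widehat{E_B}$ onto $M$. This is the ambient object in which all the finiteness conditions are measured.

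First I would treat $(1)\Leftrightarrow(2)$, which is essentially bookkeeping. Starting from Definition \ref{DefEmbedds}, one amplifies $M$ to $M_n(\C)\ot M$ and, using Remark \ref{RmkEmbedds} (3), repackages the data $(e\in A,\,f\in B,\,v\in eMf,\,\theta)$ into $(q\in M_n(\C)\ot B,\,w,\,\pi)$ as in $(2)$; conversely, cutting $q$ by a rank-one matrix unit times a central projection of $B$ and applying Remark \ref{RmkEmbedds} (4) recovers the form of Definition \ref{DefEmbedds}. The only point requiring care is that the ``with expectation'' clauses survive these manipulations, which holds because corners and $M_n(\C)$-amplifications of algebras with expectation are again with expectation.

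Next, the equivalence $(1)\Leftrightarrow(3)$ under the assumption that $A$ is finite is Popa's criterion in the Houdayer--Isono generality. For $\neg(3)\Rightarrow(1)$ one averages the Jones projection over the net $(w_n)$: if $E_B(x^*w_ny)\not\to 0$ uniformly over a net of unitaries, a weak-$*$ limit (along an ultrafilter) of a suitable convex combination of $w_ne_Bw_n^*$ produces a nonzero positive element $d\in A'\cap\langle M,e_B\rangle$ lying in the domain of $\widehat{E_B}$ with $\widehat{E_B}(d)$ bounded; cutting by a spectral projection of $d$ gives a nonzero $A$-$B$ subbimodule of $\rL^2(M,\varphi)$ that is finitely generated as a right $B$-module, from which $e,f,v,\theta$ are read off. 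The reverse implication $(1)\Rightarrow(3)$, and likewise $(2)\Rightarrow(3)$ for arbitrary $A$, is a direct computation: if $(1\ot a)w=w\pi(a)$ for all $a\in A$ with $w\neq 0$, then for every unitary $w_n\in\mathcal{U}(A)$ one has $w^*(1\ot w_n)w=\pi(w_n)w^*w$, whose image under $\id_{M_n(\C)}\ot E_B$ has norm equal to $\Vert(\id_{M_n(\C)}\ot E_B)(w^*w)\Vert>0$, while that same image is assembled from matrix entries of the form $E_B(x^*w_ny)$ with $x,y\in 1_AM1_B$; hence no such net can exist. No finiteness of $A$ is used in this last step.

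The genuinely non-formal input, and the reason this is a theorem of \cite{HI17} rather than an exercise, is the type ${\rm III}$ case: without a trace on $\langle M,e_B\rangle$ one cannot literally speak of ``finite-trace projections'' and must instead control finiteness through $\widehat{E_B}$ and impose that the target corner $\theta(eAe)\subset fBf$ carry a conditional expectation. This is exactly what \cite[Theorem 4.3]{HI17} provides, so rather than reproving it I would simply invoke the statement, together with \cite[Lemma 4.10]{HI17} for the amplification clause $(2)$.
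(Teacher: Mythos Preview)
The paper does not prove this proposition at all: it is stated as a direct citation of \cite[Theorem 4.3 and Lemma 4.10]{HI17}, with no argument given. Your proposal is therefore not comparable to anything in the paper; what you have written is a reasonable high-level sketch of the Houdayer--Isono argument itself (averaging the Jones projection to produce a finite $A$-$B$ subbimodule for $\neg(3)\Rightarrow(1)$, the direct norm computation $w^*(1\ot w_n)w=\pi(w_n)w^*w$ for $(2)\Rightarrow(3)$, and the amplification bookkeeping for $(1)\Leftrightarrow(2)$), and you correctly flag that the type~${\rm III}$ case is the substantial input requiring the operator-valued weight $\widehat{E_B}$ rather than a trace. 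Since the paper's own ``proof'' is simply the citation, your sketch goes well beyond what the authors provide, and there is nothing to correct against the paper.
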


\begin{remark}\label{RmkNprec}
Let $A\subset 1_AM1_A$ and $B\subset 1_BM1_B$ with expectations and $\varphi$ a f.n.s. on $A$. If there exists a net of unitaries $(w_n)_n$ in $\mathcal{U}(A^\varphi)$ such that $E_B(x^*w_ny)\rightarrow 0$ in the $\sigma$-strong topology, for all $x,y\in 1_AM1_B$ then, $A\nprec_M B$. Indeed, by Proposition \ref{HI17thm} applied to the finite von Neumann algebra $A^\varphi$, we deduce that $A^\varphi\nprec_M B$. Note that $A^\varphi\subset A$ is unital and with expectation (by Takesaki's Theorem). Hence, Remark \ref{RmkEmbedds} $(4)$ implies that $A\nprec_M B$.
\end{remark}

\begin{lemma}\label{diffusefd}
Let $M$ be a von Neumann algebra and $(u_n)_n$ a sequence in $M$ going to $0$ $\sigma$-weakly. Then, for any finite dimensional subalgebra $N\subset M$ and any normal conditional expectation $E\,:\, M\rightarrow N$, $ E(xu_n y) \rightarrow 0$ in the operator norm topology, for any $x,y\in M$.
\end{lemma}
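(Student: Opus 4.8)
The statement is a standard fact about normal conditional expectations onto finite-dimensional subalgebras, and the plan is to reduce it to a Hilbert-space computation. First I would fix a faithful normal state $\omega$ on $N$ and set $\varphi := \omega\circ E$, a normal positive functional on $M$. Since $N$ is finite-dimensional, $N$ is globally invariant under $\sigma_t^{\varphi}$ (because $E$ is $\varphi$-preserving, so $N$ is in the fixed-point algebra situation of Takesaki's theorem), and more importantly the GNS space $\mathrm{L}^2(N,\omega)$ sits inside $\mathrm{L}^2(M,\varphi)$ as a closed subspace with $E$ implemented, on the dense domain, by the orthogonal projection $e_N$ onto $\overline{N\Omega_\varphi}$; this is the Jones projection of the inclusion $N\subset M$ relative to $\varphi$. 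The key point to extract is that, because $N$ is finite-dimensional, the norm on $N$ is equivalent to the $\mathrm{L}^2(N,\omega)$-norm: there is a constant $C>0$ with $\|b\|\le C\,\|b\Omega_\omega\|_2$ for all $b\in N$. Concretely one writes $N=\bigoplus_\ell M_{n_\ell}(\mathbb C)$ and checks this blockwise using the matrix units and the fact that $\omega$ is faithful, so the relevant $Q$-weights are bounded below.

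Next, given $x,y\in M$ and $u_n\to 0$ $\sigma$-weakly, I would estimate $\|E(xu_ny)\|$. By the norm equivalence above, $\|E(xu_ny)\| \le C\,\|E(xu_ny)\Omega_\omega\|_2$. Now $E(xu_ny)\Omega_\omega$ is, up to the identification of GNS spaces, $e_N\big(x u_n y\,\Omega_\varphi\big)$, so $\|E(xu_ny)\Omega_\omega\|_2 \le \|x u_n y\,\Omega_\varphi\|_{\mathrm{L}^2(M,\varphi)}$. It remains to show $\|x u_n y\,\Omega_\varphi\|_2\to 0$. Writing $\xi := y\,\Omega_\varphi \in \mathrm{L}^2(M,\varphi)$ and noting $x u_n y\,\Omega_\varphi = (x u_n)\xi$, I would approximate: since $(u_n)$ is $\sigma$-weakly convergent to $0$ it is bounded (by the uniform boundedness principle), say $\|u_n\|\le R$; and the $\sigma$-weak convergence $u_n\to 0$ implies $\langle \eta_1, u_n\eta_2\rangle\to 0$ for all $\eta_1,\eta_2$ in $\mathrm{L}^2(M,\varphi)$, which combined with boundedness gives $u_n\to 0$ in the $\sigma$-strong topology on bounded sets, hence $\|u_n \zeta\|_2\to 0$ for every fixed $\zeta\in \mathrm{L}^2(M,\varphi)$. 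Applying this with $\zeta := \xi = y\Omega_\varphi$ yields $\|u_n y\,\Omega_\varphi\|_2\to 0$, and then $\|x u_n y\,\Omega_\varphi\|_2 \le \|x\|\,\|u_n y\,\Omega_\varphi\|_2\to 0$. Chaining the inequalities gives $\|E(xu_ny)\|\to 0$, which is exactly convergence in operator norm.

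One subtlety I would be careful about: the $\sigma$-weak convergence $u_n\to 0$ in $M$ does not a priori give $\sigma$-strong convergence on all of $M$, but it does give $u_n\zeta\to 0$ in norm for each fixed vector $\zeta$ in the predual-dual pairing Hilbert space, once boundedness is known — and that is all that is used here (a single vector $\zeta = y\Omega_\varphi$). Alternatively, and perhaps more cleanly, I would avoid invoking $\sigma$-strong convergence altogether: write $\|u_n y\Omega_\varphi\|_2^2 = \varphi(y^* u_n^* u_n y)$ and note this need not go to zero directly, so the vector argument via $\langle \zeta, u_n^* u_n \zeta\rangle$ is not immediate either; the correct route is the one above, using that $u_n\to 0$ $\sigma$-weakly and bounded implies $u_n\to 0$ in the $*$-strong (equivalently strong) operator topology is \emph{false} in general, but $u_n\zeta\to 0$ for a \emph{fixed} $\zeta$ does hold whenever $u_n\to 0$ weakly as operators — which here we do not have.

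\textbf{Corrected key step.} Let me therefore isolate what genuinely holds: if $u_n\to 0$ $\sigma$-weakly in $M$, it need not follow that $\|u_n\zeta\|_2\to 0$ for fixed $\zeta$. So the main obstacle is precisely getting from $\sigma$-weak convergence of $u_n$ to norm convergence of $E(xu_ny)$. The resolution uses finite-dimensionality of $N$ decisively: $E$ has the form $E(z) = \sum_{\ell} \sum_{i,j} e^\ell_{ij}\, \psi^\ell_{ji}(z)$ for suitable normal functionals $\psi^\ell_{ji}\in M_*$ (read off from $\omega$ and the matrix units of $N$, exactly as in Proposition~\ref{PropUcpSection} and Lemma~\ref{LemReducedAction}). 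Then $\psi^\ell_{ji}(xu_ny) = (x^*\cdot\psi^\ell_{ji}\cdot y^*)(u_n) \to 0$ since $x^*\cdot\psi^\ell_{ji}\cdot y^*\in M_*$ and $u_n\to 0$ $\sigma$-weakly; as there are only finitely many indices $(\ell,i,j)$ and the $e^\ell_{ij}$ are fixed operators, $\|E(xu_ny)\| \le \sum_{\ell,i,j}\|e^\ell_{ij}\|\,|\psi^\ell_{ji}(xu_ny)| \to 0$. This is the clean argument and it is the one I would write up; the Hilbert-space discussion above is unnecessary once one observes that a normal conditional expectation onto a finite-dimensional algebra is a finite linear combination of fixed operators times normal functionals.
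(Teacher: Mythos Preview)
Your ``corrected key step'' is exactly the paper's argument: the paper fixes a faithful state $\varphi$ on $N$, an orthonormal basis $(e_i)$ of $N$ for the $\varphi$-inner product, writes $E(a)=\sum_i(\varphi\circ E)(e_i^*a)\,e_i$, and concludes since each $a\mapsto(\varphi\circ E)(e_i^*xay)$ is a normal functional and the sum is finite. Your initial Hilbert-space attempt was indeed a dead end for the reason you identified (\(\sigma\)-weak convergence does not give strong convergence on a fixed vector), so the write-up should simply be the final paragraph.
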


\begin{proof}
 Let $\varphi\in N_*$ be a faithful state and fix an orthonormal basis $(e_i)_i$ of $N$ for the $\varphi$ scalar product. Then, $E(a) = \sum_{i} \varphi(e_i^* E(a))e_i=\sum_{i} \varphi\circ E(e_i^* a)e_i$. For all $x,y\in M$,
\begin{eqnarray*}
 \Vert E(xu_ny)\Vert &=& \left\Vert \sum_{i} \varphi\circ E(e_i^* xu_ny)e_i\right\Vert
 \leq  \sum_{i} \vert  \varphi\circ E(e_i^* xu_ny)\vert\cdot\Vert e_i\Vert,
\end{eqnarray*}
and this goes to $0$, because $u_n$ goes to $0$ weakly and the sum is finite. The same holds for the conjugate, because $a\mapsto a^*$ is weakly continuous. It concludes the proof.\end{proof}

\begin{remark}\label{RmkDiffuse}
The previous Lemma applies in particular when $M$ is diffuse, $\varphi$ is almost periodic. In that case $M^\varphi$ is diffuse and the Lemma shows that there exists a sequence of unitaries $(u_n)_n$ in $M^\varphi$ such that, for any finite dimensional subalgebra $N$ of $M$ and any normal conditional expectation $E\,:\, M\rightarrow N$, $E(xu_ny)\rightarrow 0$ in norm.
\end{remark}

\noindent The following Lemma is \cite[Lemma 2.6]{HU16b} and \cite[Lemma 2.7]{HU16a}.

\begin{lemma}{\cite[Lemma 2.6]{HU16b} \cite[Lemma 2.7]{HU16a}}\label{LemmaNets}
Let $M=(M_1,E_1)\ast_N (M_2,E_2)$ be an amalgamated free product and $\psi\in M_*$ a f.n.s. such that $\psi\circ E_{M_k}=\psi$ for $k=1,2$. If $(w_n)_n$ is a net in the unit ball of $(M_1)^\psi$ such that $E_1(xw_ny)\rightarrow 0$ $\sigma$-strongly for all $x,y\in M_1$ then,
\begin{enumerate}
    \item $E_{M_1}(xw_ny)\rightarrow 0$ $\sigma$-strongly for all $x,y\in M\ominus M_1$.
    \item $E_{M_2}(xw_ny)\rightarrow 0$ $\sigma$-strongly for all $x,y\in M$.
\end{enumerate}
\end{lemma}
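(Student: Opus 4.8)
The statement is quoted verbatim from \cite[Lemma 2.6]{HU16b} and \cite[Lemma 2.7]{HU16a}, so the plan is to reproduce the idea of their proof; everything takes place in ${\rm L}^2(M,\psi)$. The first observation will be that, since $\psi\circ E_{M_2}=\psi$ and $E_{M_2}$ annihilates $M_1\ominus N$, one gets $\psi(a)=\psi(E_{M_2}(a))=0$ for every $a\in M_1\ominus N$, hence $\psi|_{M_1}=\psi|_N\circ E_1$; in particular $E_1$ (and symmetrically $E_2$) preserves the restriction of $\psi$, all of $E_{M_1},E_{M_2},E_N,E_1,E_2$ extend to orthogonal projections on ${\rm L}^2(M,\psi)$, and ${\rm L}^2(M,\psi)$ splits orthogonally into ${\rm L}^2(N)$ and the reduced words $\xi_1\otimes_N\cdots\otimes_N\xi_d$ with $\xi_j\in{\rm L}^2(M_{i_j})\ominus{\rm L}^2(N)$ and $i_j\neq i_{j+1}$.

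First I would reduce to $x$ and $y$ being single reduced words. The net $(E_{M_k}(xw_ny))_n$ is bounded, so $\sigma$-strong and strong convergence coincide on it; and since the hypothesis and the two conclusions are linear in $x$ and in $y$, it is enough — using Kaplansky density, normality of the conditional expectations, and a weak-density-plus-convexity argument to produce $\sigma$-strong approximants inside the (non-subalgebra) operator space $M\ominus M_1$ — to treat: for $(1)$, $x,y$ reduced words representing elements of $M\ominus M_1$, hence of length $\geq 1$ and of length $1$ only when the single letter lies in $M_2\ominus N$; for $(2)$, $x,y$ reduced words of length $\geq 1$, the degenerate cases $x\in N$ or $y\in N$ being handled similarly and more simply. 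Then, feeding $x=y=1$ into the hypothesis gives $E_1(w_n)\to 0$ $\sigma$-strongly; as $N\subset M_1$ admits a $\psi$-preserving normal conditional expectation, this forces $E_1(w_n)\to 0$ in $M$, whence $xE_1(w_n)y\to 0$ $\sigma$-strongly, and composing with the ${\rm L}^2$-contraction $E_{M_k}$ I may replace $w_n$ throughout by $w_n^{\circ}:=w_n-E_1(w_n)\in (M_1)^{\psi}\cap(M_1\ominus N)$ up to a vanishing error.

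The core step will be the amalgamated free product reduction of $x_1\cdots x_p\,w_n^{\circ}\,y_1\cdots y_q$ at its middle junction, done by cases on the last letter $x_p$ and first letter $y_1$. If $x_p,y_1\in M_2\ominus N$, this product is already reduced, of length $p+1+q\geq 2$ (here $p,q\geq 1$), so by the defining property of $E_{M_k}$ recalled in Section \ref{preliminaries} it is annihilated. If $x_p\in M_1\ominus N$, write $x_pw_n^{\circ}=E_1(x_pw_n^{\circ})+(x_pw_n^{\circ})^{\circ}$: the centered summand assembles into a reduced word $x_1\cdots x_{p-1}(x_pw_n^{\circ})^{\circ}y_1\cdots y_q$ of length $\geq 2$ — in $(1)$ the assumption $x\in M\ominus M_1$ forces $p\geq 2$, ruling out the short case $p=q=1$, while in $(2)$ that short case collapses to an element of $M_1\ominus N$, on which $E_{M_2}$ vanishes — hence is killed by $E_{M_k}$, while the $N$-valued summand gives $x_1\cdots x_{p-1}\,E_1(x_pw_n)\,y_1\cdots y_q$, and $E_1(x_pw_n)\to 0$ $\sigma$-strongly by the hypothesis applied with $x_p\in M_1$. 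The symmetric case $y_1\in M_1\ominus N$, and the case where $x_p,y_1$ both lie in $M_1\ominus N$ (combine $x_pw_n^{\circ}y_1\in M_1$ and invoke the hypothesis with $x_p,y_1\in M_1$), go through in exactly the same fashion. In every branch the contribution to $E_{M_k}(xw_ny)$ is either annihilated by the projection $e_{M_k}$ or is a fixed finite multiplication applied to an $N$-valued quantity of the form $E_1(aw_nb)$ with $a,b\in M_1$; since $e_{M_k}$ is an ${\rm L}^2$-contraction and left/right multiplication by fixed bounded operators is strongly continuous, the stated convergence will follow.

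The hard part is organisational rather than conceptual: enumerating the finitely many branches of the reduction procedure and checking in each one that the ``reduced word'' piece genuinely has length $\geq 2$ — this is precisely where the hypothesis $x,y\in M\ominus M_1$ in part $(1)$ is used, to exclude the short residual words — or else falls into a small case disposed of by hand. A secondary technical point will be the first reduction step, which must route through weak density because conditional expectations need not be $\sigma$-strongly continuous. The centralizer hypothesis $w_n\in(M_1)^{\psi}$ intervenes only to keep all the relevant subalgebras globally invariant under $\sigma^{\psi}$, so that the ${\rm L}^2$-level manipulations above are legitimate.
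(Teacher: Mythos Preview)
The paper does not prove this lemma at all: it is stated with attribution to \cite[Lemma 2.6]{HU16b} and \cite[Lemma 2.7]{HU16a} and used as a black box. Your proposal reproduces the argument from those references, and the reduction-to-reduced-words followed by case analysis on the middle junction is exactly the method used there; the sketch is correct in outline and matches the cited proofs.
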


\noindent The following Lemma is \cite[Proposition 2.7]{HU16b} but we need it in the amalgamated case, with a finite dimensional amalgam. A more general statement could be given but we choose to include all the hypotheses we will have later so that we can write a shorter proof. Recall that, by Remark \ref{RmkExp}, if $Q\subset M$ is with expectation then, $\mathcal{N}_M(Q)''\subset M$ also is.

\noindent\begin{lemma}\label{LemmaNormalizer}
Let $M=M_1\ast_N M_2$ with $N$ is finite dimensional. Let $Q\subset M$ be any finite or type ${\rm III}$ diffuse von Neumann subalgebra with expectation. If $Q\prec_M M_1$ then $\mathcal{N}_M(Q)''\prec_M M_1$.
\end{lemma}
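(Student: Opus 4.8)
The strategy is the classical one for controlling normalizers in amalgamated free products, adapted to a finite-dimensional amalgam: show that if $\mathcal{N}_M(Q)''\nprec_M M_1$ then already $Q\nprec_M M_1$, contrapositively. Set $P:=\mathcal{N}_M(Q)''$ and recall from Remark \ref{RmkExp} that $P\subset M$ is with expectation, so we may fix a faithful normal conditional expectation and an associated f.n.s. $\psi\in M_*$ with $\psi\circ E_{M_k}=\psi$ for $k=1,2$ (such a $\psi$ exists: take $\mu\in N_*$ a f.n.s., let $\psi:=\mu\circ E_N$, which by Remark \ref{RmkModularFree} has $\sigma^\psi=\sigma^{\psi_1}*\sigma^{\psi_2}$, hence $\psi\circ E_{M_k}=\psi$). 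The point of passing to this $\psi$ is that $Q$ being diffuse and, up to the amalgamated free product structure, we can arrange $Q$ to carry a f.n.s. whose centralizer is diffuse — here the finite-dimensionality of $N$ is what keeps everything rigid.

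\textbf{Key steps.} First, assume for contradiction that $P\nprec_M M_1$. Since $P$ is finite or type ${\rm III}$ and $P\supset Q$ is diffuse, apply Proposition \ref{HI17thm} $(1)\Leftrightarrow(3)$ at the level of $M\ot M_k(\C)$: there is a net of unitaries $(w_n)_n$ in $\mathcal{U}(P)$ (indeed, by Remark \ref{RmkNprec}, in $\mathcal{U}(P^\psi)$, which is diffuse because $\psi$ restricted to $P$ is almost periodic after an amalgamated-free-product argument, or directly because $P\nprec_M M_1$ combined with $P\nprec_M\C$ forces the centralizer to be diffuse) such that $E_{M_1}(x^*w_n y)\to 0$ $\sigma$-strongly for all $x,y\in 1_PM1$. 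Now I would use the normalizer hypothesis: for each $u\in\mathcal{N}_M(Q)$, $u w_n u^*\in P$ normalizes $Q$; but I want to feed these into the malleability estimate for $Q$. The clean route is: since $Q\prec_M M_1$ would be what we want to refute, suppose instead $Q\nprec_M M_1$; equivalently (Proposition \ref{HI17thm}) there is a net $(v_m)_m$ in $\mathcal{U}(Q^\varphi)$ with $E_{M_1}(x^*v_m y)\to 0$ $\sigma$-strongly for all $x,y$. Apply Lemma \ref{LemmaNets} with $M_1\ni$ the relevant corner: the net $(v_m)$ controls $E_{M_1}$ on all of $M\ominus M_1$ and $E_{M_2}$ on all of $M$. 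Then for any $u\in\mathcal{N}_M(Q)$, write $u^*v_m u\in\mathcal{U}(Q)$ (after adjusting by the $*$-automorphism of $Q$ implemented by $u$, which preserves $\varphi$ up to replacing $\varphi$, using that $Q^\varphi$ is globally preserved by $u^*\sigma_t(u)\in Q\vee(Q'\cap M)$ as in Remark \ref{RmkExp}), and deduce that $E_{M_1}(x^*(u^*v_m u)y)\to 0$, hence $E_{M_1}((ux)^*v_m(uy))\to 0$, for all $x,y\in 1_QM1$; since $u$ is unitary this says $E_{M_1}(z^*v_m t)\to0$ for $z,t$ ranging over $u\cdot(1_QM1)$. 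Taking weak closures and using that $\mathcal{N}_M(Q)$ generates $P$, we upgrade the vanishing of $E_{M_1}(\cdot\,v_m\,\cdot)$ from the $Q$-bimodule to the $P$-bimodule, i.e. $E_{M_1}(x^*v_m y)\to 0$ for all $x,y\in 1_PM1$. By Proposition \ref{HI17thm} $(3)\Rightarrow(1)$ applied to $P$ (which is finite or type ${\rm III}$), this gives $P\nprec_M M_1$ — wait, that is the wrong direction. So I reorganize: it is cleaner to prove the contrapositive directly, i.e. $P\nprec_M M_1\ \Rightarrow\ Q\nprec_M M_1$, which is immediate since $Q\subset P$ is unital with expectation (Remark \ref{RmkEmbedds} $(4)$: $P\prec_M M_1$ if $Q\prec_M M_1$). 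That is trivial and says nothing. The real content is the reverse, and the correct mechanism is the one above run in the contrapositive: \emph{if $\mathcal{N}_M(Q)''\nprec_M M_1$, I produce a controlling net for $Q$ itself}. Concretely: fix by Remark \ref{RmkNprec} a net $(w_n)\subset\mathcal{U}(P^\psi)$ witnessing $P\nprec_M M_1$, meaning $E_{M_1}(x^*w_n y)\to0$ $\sigma$-strongly for $x,y\in M$ (reducing to corners and using Lemma \ref{LemmaNets}); since $Q\subset P$ and $Q$ is diffuse, a maximality/averaging argument over the net shows $E_{M_1}(x^*v y)$ stays bounded away from $0$ for no $v$ — formally, if $Q\prec_M M_1$ we would get a non-zero $v\in e Me'$ and $\theta: eQe\to fM_1f$ with $q v=v\theta(q)$; but then $\|E_{f M_1 f}(w_n^* v)\|_2$ would be forced to stay bounded below along the net using $w_n\in P\supset\mathcal{N}_M(Q)$ and the intertwiner $v$, contradicting $E_{M_1}(w_n^*v)\to0$. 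This last contradiction is exactly the content of \cite[Lemma 2.6]{HU16b}/\cite[Lemma 2.7]{HU16a} machinery combined with Popa's intertwining rigidity.

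\textbf{Main obstacle.} The delicate point is the passage from a controlling net for $Q$ (from $Q\nprec_M M_1$, if we went that way) to one for $\mathcal{N}_M(Q)''$, or dually from $P\nprec_M M_1$ to the desired conclusion about $Q$. The obstruction is that the unitaries $w_n$ controlling $P$ need not lie in $Q$, and conjugation by normalizers does not obviously preserve the $M_1$-conditional-expectation estimates unless one works in the common centralizer $P^\psi$ and exploits $u^*\sigma_t(u)\in Q\vee(Q'\cap M)$ (Remark \ref{RmkExp}, via \cite[Lemma 4.1]{BHV18}). I expect to need: (i) the finite-dimensionality of $N$ to ensure $E_N$ (hence all the corner expectations) behaves well under $\|\cdot\|$-limits as in Lemma \ref{diffusefd}; (ii) Lemma \ref{LemmaNets} to propagate vanishing of $E_{M_1}$ from $M_1$ to $M\ominus M_1$ and of $E_{M_2}$ to all of $M$; (iii) a careful choice of $\psi$ so that $Q$, $P$, $M_1$, $M_2$ are all globally invariant under $\sigma^\psi$, making all the relevant subalgebras ``with expectation'' compatibly. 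The bookkeeping of which bimodule ($Q$-bimodule vs. $P$-bimodule vs. $M$-bimodule) the net-vanishing holds over is where the proof must be written with care; once that is set up, the contradiction with $Q\prec_M M_1$ is a direct application of Proposition \ref{HI17thm}.
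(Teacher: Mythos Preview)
Your proposal has a genuine gap: the argument never gets off the ground because you try to run it ``from the outside'' via nets in $P$ or $Q$ sitting in $M$, whereas the actual mechanism works ``from the inside'' using the intertwining data. Concretely, you attempt the contrapositive $P\nprec_M M_1\Rightarrow Q\nprec_M M_1$, then realize (correctly) that you have the logic tangled --- Remark~\ref{RmkEmbedds}(4) gives the \emph{other} implication $P\prec_M M_1\Rightarrow Q\prec_M M_1$ --- and your final attempt (a net $(w_n)\subset\mathcal U(P^\psi)$ witnessing $P\nprec_M M_1$, combined with an intertwiner $v$ for $Q\prec_M M_1$, should force $\|E_{M_1}(w_n^*v)\|$ to stay bounded below) is not an argument: $w_n$ lies in $P$, not in $Q$, so the relation $qv=v\theta(q)$ for $q\in Q$ says nothing about $w_n^*v$, and there is no reason $E_{M_1}(w_n^*v)$ should be bounded away from~$0$.

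The paper proceeds directly. Starting from Proposition~\ref{HI17thm}(2), take $n$, $q\in M_n(\C)\otimes M_1$, a partial isometry $w$, and $\pi:Q\to q(M_n(\C)\otimes M_1)q$ with $xw=w\pi(x)$. The key point you are missing is that $\pi(Q)$ is a \emph{diffuse} subalgebra sitting \emph{inside} $M_n(M_1)$. Working in the amplified free product $M_n(\C)\otimes M=M_n(M_1)*_{M_n(N)}M_n(M_2)$, one chooses a f.n.s.\ $\psi$ compatible with $E_{M_1}$ and with $\pi(Q)\cap(M_n(M_1))^\psi$ diffuse; then Lemma~\ref{diffusefd} (this is where finite-dimensionality of $N$ enters) produces unitaries $u_k\in\pi(Q)\cap(M_n(M_1))^\psi$ with $\widetilde E_1(xu_ky)\to 0$ in norm, and Lemma~\ref{LemmaNets} upgrades this to $(\id\otimes E_{M_1})(xu_ky)\to 0$ $\sigma$-strongly for $x,y\in M_n(M)\ominus M_n(M_1)$. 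Now for any normalizing unitary $x\in\mathcal N_M(Q)$, the off-$M_1$ part $v:=w^*xw-(\id\otimes E_{M_1})(w^*xw)$ satisfies $v\pi(a)=\pi(xax^*)v$ for all $a\in Q$; taking $a$ with $\pi(a)=u_k$ gives $\|(\id\otimes E_{M_1})(vv^*)\|_\psi=\|(\id\otimes E_{M_1})(vu_kv^*)\|_\psi\to 0$, hence $v=0$ and $w^*xw\in M_n(M_1)$. Thus $w$ itself intertwines all of $\mathcal N_M(Q)''$ into $M_n(M_1)$, and one reads off $\mathcal N_M(Q)''\prec_M M_1$ via Remark~\ref{RmkEmbedds}(3). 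The step you never reach is placing the controlling net \emph{inside} $\pi(Q)\subset M_n(M_1)$ and using the intertwining relation $v\pi(a)=\pi(xax^*)v$ to transport it; that is the whole engine of the proof.
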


\begin{proof}
Let $n\geq 1$, a projection $q\in M_n(\C)\ot M_1$, a non-zero partial isometry $w\in (M_{1,n}(\C)\ot M)q$ and a unital normal $*$-homomorphism $\pi\,:\, Q\rightarrow q(M_n(\C)\ot M_1)q$ such that $\pi(Q)\subset q(M_n(\C)\ot M_1)q$ is with expectation and $xw=w\pi(x)$ for all $x\in Q$.

\vspace{0.2cm}

\noindent\textbf{Claim:} \textit{One has $w^*w\in qM_n( M_1)q$ and $w^*\mathcal{N}_M(Q)''w\subset w^*wM_n(M_1)w^*w$.}

\vspace{0.2cm}

\noindent\textit{Proof of the Claim.} We work in the amalgamated free product:
$$M_n(\C)\ot M=M_n(\C)\ot M_1\underset{M_n(\C)\ot N}{*}M_n(\C)\ot M_2,$$
with respect to the conditional expectation $\widetilde{E}_k:=\id_{M_{n}(\C)}\ot E_k$, $k=1,2$.
Since $\pi(Q)$ is diffuse we may apply \cite[Lemma 2.1]{HU16b} to get a f.n.s. $\varphi$ on $\pi(Q)$ with diffuse centralizer $\pi(Q)^\varphi$ and note that $q=\pi(1)\in\pi(Q)^\varphi$. Define the normal state $\psi_1:=\varphi\circ E(q\,\cdot\,q)\in (M_n(\C)\ot M_1)_*$, where $E\,:\, q (M_n(\C)\ot M_1)q\rightarrow Q$ is any faithful normal conditional expectation. Fix any f.n.s. $\omega\in [(1-q)M(1-q)]_*$ and define the f.n.s. $\psi_2:=\frac{1}{2}\psi_1(\cdot)+\frac{1}{2}\omega((1-q)\cdot(1-q))\in (M_n(\C)\ot M_1)_*$. Then, the f.n.s. $\psi:=\psi_2\circ(\id\ot E_{M_1})\in (M_n(\C)\ot M)_*$ satisfies the following: $\psi=\psi\circ(\id\ot E_{M_1})$, $q\in (M_n(\C)\ot M_1)^\psi$ and $\pi(Q)\cap q(M_n(\C)\ot M_1)^\psi q$ is diffuse. By Lemma \ref{diffusefd} there is a sequence of unitaries $(u_k)_k\in \pi(Q)\cap q(M_n(\C)\ot M_1)^\psi q$ such that $\widetilde{E}_1(xu_ky)\rightarrow 0$ in norm, for all $x,y\in q(M_n(\C)\ot M_1)q$. Viewing the sequence $(u_k)_k$ in the unit ball of $(M_n(\C)\ot M_1)^\psi$ we can apply Lemma \ref{LemmaNets} and deduce that $(\id\ot E_{M_1})(xu_ky)\rightarrow 0$ $\sigma$-strongly, for all $x,y\in q(M_n(M)\ominus M_n(M_1))q$. Fix a normalizing unitary $x\in\mathcal{N}_M(Q)$ and observe that, for all $a\in Q$, one has $w^*xw\pi(a)=w^*xaw=\pi(xax^*)w^*xw$. Define the element $v:=w^*xw-(\id\ot E_{M_1})(w^*xw)\in qM_n(M)q$ and note that $(\id\ot E_{M_1})(v)=0$. Moreover, for all $a\in Q$, one has $v\pi(a)=\pi(xax^*)v$. For any $k\in\N$, fix a unitary $a_k\in Q$ such that $\pi(a_k)=u_k$. Then, for all $k\in\N$, since $\pi(xa_kx^*)$ is a unitary in $\pi(Q)$,
\begin{eqnarray*}
\Vert(\id\ot E_{M_1})(vv^*)\Vert_\psi&=&\Vert\pi(xa_kx^*)(\id\ot E_{M_1})(vv^*)\Vert_\psi
=\Vert(\id\ot E_{M_1})(vu_kv^*)\Vert_\psi\rightarrow_k0.
\end{eqnarray*}
It follows that $vv^*=0$ so $w^*xw\in M_n(M_1)$ for all $x\in \mathcal{N}_M(Q)$. This proves the Claim.\hfill\qedsymbol

\vspace{0.2cm}

\noindent\textit{End of the proof of Lemma \ref{LemmaNormalizer}.} By Remark \ref{RmkEmbedds} $(2)$, it suffices to show that $$M_n(\C)\ot\mathcal{N}_M(Q)''\prec_{M_n(\C)\ot M} M_{n}(\C)\ot M_1.$$
Since $xw=w\pi(x)$ for all $x\in Q$, we deduce that $ww^*\in Q'\cap M\subset\mathcal{N}_M(Q)''$. Define the projections $e:=e_{11}\ot ww^*\in M_n(\C)\ot\mathcal{N}_M(Q)''$, $f:=w^*w\in M_n(\C)\ot M_1$ and note that, since $w$ is a partial isometry we have, viewing $w\in (e_{11}\ot 1)(M_n(\C)\ot M)$, $$ew=(e_{11}\ot ww^*)w=ww^*w=w\text{ and }wf=ww^*w=w$$
so $w\in e(M_n(\C)\ot M)f$. By the Claim, we have a unital normal $*$-homomorphism $$\theta\,:\,e(M_n(\C)\ot\mathcal{N}_N(Q)'')e=\C e_{11}\ot ww^*\mathcal{N}_N(Q)''ww^*\rightarrow w^*w(M_n(\C)\ot M_1)w^*w$$
defined by $\theta(e_{11}\ot x)=w^*xw$ for all $x\in ww^*\mathcal{N}_M(Q)''ww^*$. Moreover, $w\theta(x)=ww^*xw=xw$ for all $x\in ww^*\mathcal{N}_M(Q)''ww^*$. Finally, note that the image of $\theta$ is exactly $w^*\mathcal{N}_M(Q)''w$ and, given any faithful normal conditional expectation $E\,:\, M\rightarrow\mathcal{N}_M(Q)''$, the map $$\widetilde{E}\,:\,w^*w(M_n(\C)\ot M_1)w^*w\rightarrow w^*\mathcal{N}_M(Q)''w,\quad\widetilde{E}(x):=w^*E(wxw^*)w$$
is obviously a normal faithful conditional expectation. \end{proof}

\noindent We also record the following result for later use.

\begin{lemma}{Special case of \cite[Theorem 5.2]{HI20}}\label{LemmaNormalizer1} Let $M=M_1\underset{N}{*}M_2$ and $Q\subset M_1$ a unital subalgebra with expectation. If $Q\nprec_{M_1}N$ then $\mathcal{N}_M(Q)''\subset M_1$.
\end{lemma}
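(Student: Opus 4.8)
The statement is a special case of \cite[Theorem 5.2]{HI20}, and the plan is to invoke that result directly after recording why its hypotheses are met here. In \cite{HI20} one works with an amalgamated free product $M=M_1\underset{N}{*}M_2$ of arbitrary von Neumann algebras over a common with-expectation subalgebra $N$, and with a with-expectation subalgebra $Q$ of (a corner of) $M_1$; the conclusion $\Ncal_M(Q)''\subseteq M_1$ is obtained precisely under the assumption $Q\nprec_{M_1}N$ — no finiteness or type~${\rm III}$ hypothesis on $Q$ is needed beyond being with expectation. So, concretely, the proof is: unital $Q\subseteq M_1$ with expectation is a special instance of the $Q$ allowed in \cite[Theorem 5.2]{HI20}, and our hypothesis $Q\nprec_{M_1}N$ is exactly the one required there; apply the theorem.

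For completeness I would also indicate the mechanism, since it is the standard ``location of normalizers in an amalgamated free product'' argument. Fix a faithful normal conditional expectation $E_0\colon M_1\to Q$ and a faithful normal state $\psi$ on $Q$, and set $\varphi:=\psi\circ E_0\circ E_{M_1}\in M_*$, so that $\varphi=\varphi\circ E_{M_1}=\varphi\circ E_N$. Decompose $\mathrm{L}^2(M,\varphi)=\mathrm{L}^2(M_1)\oplus\Hcal$ as $M_1$--$M_1$-bimodules, where $\Hcal$ is the closure of $M\ominus M_1$; the standard description of amalgamated free products realises $\Hcal$, as an $M_1$--$M_1$-bimodule, inside an amplification of $\mathrm{L}^2(M_1)\otimes_N\mathrm{L}^2(M_1)$. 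Given $u\in\Ncal_M(Q)$ and $\theta:=\Ad(u)\in\Aut(Q)$, the relation $uq=\theta(q)u$ for $q\in Q$ (with $q,\theta(q)\in Q\subseteq M_1$) forces the component of $\widehat u$ in $\Hcal$ to be a (possibly $\theta$-twisted) $Q$--$Q$-intertwining vector in an amplification of $\mathrm{L}^2(M_1)\otimes_N\mathrm{L}^2(M_1)$. If this component were non-zero, the $\mathrm{L}^2$/bimodule form of Popa's intertwining criterion (as in \cite{HI17}) would yield $Q\prec_{M_1}N$, contradicting the hypothesis; hence $\widehat u\in\mathrm{L}^2(M_1)$, i.e.\ $u\in M_1$, and since $\Ncal_M(Q)$ generates $\Ncal_M(Q)''$ we conclude $\Ncal_M(Q)''\subseteq M_1$.

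The delicate point — and the reason it is cleanest to cite \cite{HI20} rather than reprove the statement by hand — is the final step: one must convert the existence of that intertwining vector, twisted by the automorphism $\theta=\Ad(u)$ whose implementing unitary lives in $M$ and not in $M_1$, into a genuine intertwining $Q\prec_{M_1}N$, and do so without any finiteness hypothesis on $Q$ and without assuming $N$ (or $M_1,M_2$) tracial. Dealing simultaneously with the twist, the general type, and the bookkeeping of the right $M$-action on $\mathrm{L}^2(M,\varphi)$ is exactly what the machinery behind \cite[Theorem 5.2]{HI20} is designed for, so in the write-up I would simply appeal to it.
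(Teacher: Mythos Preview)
Your approach is the same as the paper's: both reduce the lemma to a direct citation of \cite[Theorem 5.2]{HI20}. The only discrepancy is in which hypothesis that theorem actually carries. You assert it is $Q\nprec_{M_1}N$; the paper instead first deduces $M_1\nprec_{M_1}N$ from $Q\nprec_{M_1}N$ (via the contrapositive of Remark~\ref{RmkEmbedds}(4): a unital with-expectation subalgebra of $M_1$ fails to intertwine into $N$, hence so does $M_1$ itself) and feeds \emph{that} into \cite[Theorem 5.2]{HI20}. You should verify the precise hypothesis in \cite{HI20}; if it is indeed stated in terms of $M_1\nprec_{M_1}N$, your write-up is missing this one-line reduction, which is in any case immediate. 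Your informal sketch of the underlying bimodule/intertwining mechanism is accurate and matches the standard argument behind the cited theorem.
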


\begin{proof}
Since $Q\subset M_1$ is unital with expectation and $Q\nprec_{M_1}N$ we deduce from Remark \ref{RmkEmbedds} $(2)$ that $M_1\nprec_{M_1} N$ so that we can directly apply \cite[Theorem 5.2]{HI20}.
\end{proof}

\begin{lemma}\label{normalizerfp}
Let $M= M_1\underset{N}{*}M_2$ with $N$ finite dimensional. Then, for any finite diffuse and amenable subalgebra $A\subset M$ with expectation, one of the following statements holds:
\begin{itemize}
    \item There is a $k\in\{1,2\}$ such that $\mathcal{N}_M(A)''\prec_M M_k$,
    \item $\mathcal{N}_M(A)''$ is amenable.
\end{itemize}
\end{lemma}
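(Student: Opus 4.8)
The plan is to run the standard dichotomy for normalizers inside amalgamated free products, in the spirit of Ioana--Peterson--Popa and its refinements by Houdayer--Ueda and Boutonnet--Houdayer--Vaes. First I would dispose of a trivial case: if $A\prec_M N$, then since $N$ is finite dimensional and $A$ is diffuse, Remark \ref{RmkEmbedds} $(1)$ gives a contradiction, so in fact $A\nprec_M N$. Then I would apply the main structural theorem for normalizers in amalgamated free products (this is exactly the content of \cite[Theorem 5.2]{HI20}, used here in a form slightly more general than Lemma \ref{LemmaNormalizer1} since $A$ need not sit inside $M_1$): since $A$ is amenable, finite, diffuse, with expectation, and $A\nprec_M N$, one of the following holds: either $A\prec_M M_k$ for some $k\in\{1,2\}$, or $\mathcal{N}_M(A)''$ is amenable.

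In the second case we are done immediately. In the first case, say $A\prec_M M_1$, I would invoke Lemma \ref{LemmaNormalizer} (which applies precisely because $N$ is finite dimensional, $A$ is finite and diffuse with expectation): from $A\prec_M M_1$ we deduce $\mathcal{N}_M(A)''\prec_M M_1$, which is the first alternative in the statement. Symmetrically, if $A\prec_M M_2$ then $\mathcal{N}_M(A)''\prec_M M_2$.

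The only real work is to make sure the hypotheses of the two black boxes are genuinely met. For Lemma \ref{LemmaNormalizer} one needs $A$ finite, diffuse, with expectation, which is assumed, and $N$ finite dimensional, which is assumed; nothing else. For the normalizer dichotomy \cite[Theorem 5.2]{HI20} one needs $A$ with expectation, which we have, and one should check whether that theorem is stated for $A\subset M_1$ or for a general $A\subset M$ — if only the former is available, one first observes that the case $A\nprec_M N$ combined with the malleability of the amalgamated free product still yields the same trichotomy (this is how \cite{HI20} is actually phrased, for arbitrary subalgebras with expectation). I expect the main obstacle to be purely bibliographic bookkeeping: matching the exact hypotheses of \cite[Theorem 5.2]{HI20} (in particular the role of $A\nprec_M N$, which we have secured via finite-dimensionality of $N$) and confirming that no separate ``$A\prec_M N$'' branch survives. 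Once that is settled, the proof is a two-line combination of the two cited results.

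\begin{proof}
Since $N$ is finite dimensional and $A$ is diffuse with expectation, Remark \ref{RmkEmbedds} $(1)$ gives $A\nprec_M N$. As $A$ is moreover amenable, finite and diffuse with expectation, \cite[Theorem 5.2]{HI20} applies and yields that either $\mathcal{N}_M(A)''$ is amenable, in which case we are done, or $A\prec_M M_k$ for some $k\in\{1,2\}$. In the latter case, since $N$ is finite dimensional and $A$ is finite and diffuse with expectation, Lemma \ref{LemmaNormalizer} gives $\mathcal{N}_M(A)''\prec_M M_k$, which is the first alternative in the statement.
\end{proof}
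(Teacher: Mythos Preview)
Your overall strategy is the right one, but the key black box you invoke does not do what you claim. You cite \cite[Theorem 5.2]{HI20} as providing the dichotomy ``either $\mathcal{N}_M(A)''$ is amenable, or $A\prec_M M_k$ for some $k$'' for an amenable diffuse $A\subset M$ with expectation. However, the paper itself uses \cite[Theorem 5.2]{HI20} only for Lemma~\ref{LemmaNormalizer1}, a normalizer \emph{containment} statement ($Q\subset M_1$, $Q\nprec_{M_1}N$ implies $\mathcal{N}_M(Q)''\subset M_1$); it does not carry an amenability alternative. Your own hedging (``one should check whether that theorem is stated for $A\subset M_1$ or for a general $A\subset M$'') already flags this, and the gap is real: that reference, as used in this paper, does not give you the dichotomy.

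The paper's proof fills this gap with a genuine two-step argument. Arguing by contraposition, assume $\mathcal{N}_M(A)''\nprec_M M_k$ for both $k$; then Lemma~\ref{LemmaNormalizer} (your second black box, used backwards) gives $A\nprec_M M_k$ for both $k$. At this point one invokes \cite[Theorem 4.4]{HU16a} to deduce that $A'\cap M$ is amenable. One then passes to $\mathcal{A}:=A\vee(A'\cap M)$, which is still amenable and diffuse with expectation, and satisfies $\mathcal{A}'\cap M\subset\mathcal{A}$ and $\mathcal{A}\nprec_M N$ (finite-dimensionality of $N$). Since $\mathcal{N}_M(A)''\subset\mathcal{N}_M(\mathcal{A})''=:\mathcal{P}$ and $\mathcal{P}\nprec_M M_k$ (Remark~\ref{RmkEmbedds}(4)), one can now apply \cite[Theorem G]{IS22} (in its normalizer rather than stable-normalizer form, via \cite[Theorem A.4]{HU16a}) to conclude $\mathcal{P}$ is amenable, hence so is $\mathcal{N}_M(A)''$. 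The passage through $\mathcal{A}$ is needed precisely because the available amenability dichotomy (\cite{IS22}) requires control on the relative commutant, which \cite[Theorem 4.4]{HU16a} provides. Your one-line citation skips this bootstrap entirely.
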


\begin{proof}

Assume that $P:=\mathcal{N}_M(A)''\nprec_MM_k$ for $k=1,2$ and let us show that $P$ is amenable. By Lemma \ref{LemmaNormalizer} we have $A\nprec M_k$ for $k=1,2$. Hence, we may apply \cite[Theorem 4.4]{HU16a} and deduce that $A'\cap M$ is amenable. Let $\mathcal{A}:=A\vee(A'\cap M)\subset M$ and note that $\mathcal{A}$ is amenable and diffuse. Also, by Remark \ref{RmkExp}, $\mathcal{A}\subset M$ is with expectation. Define $\mathcal{P}:=\mathcal{N}_M(\mathcal{A})''$ and note that $P\subset\mathcal{P}$. By Remark \ref{RmkExp} $P\subset M$ is with expectation so $P\subset\mathcal{P}$ is with expectation and it suffices to show that $\mathcal{P}$ is amenable. Note that $\mathcal{A}'\cap M\subset A\subset \mathcal{A}$ and since $\mathcal{A}$ is diffuse and $N$ is finite dimensional, $\mathcal{A}\nprec_M N$. Also, since the inclusion $P\subset\mathcal{P}$ is unital with expectation, Remark \ref{RmkEmbedds} $(2)$ implies that $\mathcal{P}\nprec_M M_k$ for $k=1,2$. Note that the statement of \cite[Theorem G]{IS22} is still valid, using normalizers instead of stable normalizers (and the proof is even easier, by using \cite[Theorem A.4]{HU16a} instead of \cite[Lemma 6.1]{IS22}). Hence, we deduce that $\mathcal{P}$ is amenable.\end{proof}

\noindent The following Theorem is \cite[Theorem A]{HI20}. We state a slightly more general version for our purpose. An inclusion of von Neumann algebras $N\subset M$ is called \textit{entirely non-trivial} if for any non-zero projection $z\in\mathcal{Z}(N)$ one has $Nz\neq Mz$ (as sets). Note that if $N\subset M$ has expectation and if $M\nprec_M N$ then the inclusion $N\subset M$ is entirely non-trivial.

\begin{theorem}\label{HI20}
Let $M=\underset{N,1\leq i\leq K}{*}M_i$ be an amalgamated free product. Suppose that there exists $1\leq i\neq j\leq K$ such that $M_i\nprec_{M_i}N$ and the inclusion $N\subset M_j$ is entirely non trivial. Then, the following holds.
\begin{enumerate}
\item $\mathcal{Z}(M)=M'\cap N$.
\item For all f.n.s. $\varphi\in M_*$ such that $\varphi=\varphi\circ E_N$, we have
$$T(M):=\{t\in\R\,:\,\exists u\in\mathcal{U}(N)\text{ such that }\sigma_t^\varphi={\rm Ad}(u)\}.$$
\item $M$ is semifinite if and only if there exists a faithful normal trace ${\rm Tr}_N$ on $N$ such that, for all $1\leq k\leq K$, ${\rm Tr}\circ E_k$ is a trace on $M_k$.
\item For every non-zero projection $z\in\mathcal{Z}(M)$, $Mz$ is not amenable relative to $M_i$ inside $M$. In particular, $Mz$ is not amenable.
\end{enumerate}
\end{theorem}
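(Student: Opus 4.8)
The plan is to reduce everything to the case of an amalgamated free product of \emph{two} subalgebras over $N$ and then invoke \cite[Theorem A]{HI20}, after checking that its hypotheses hold. Relabelling the free factors, I may assume the two distinguished indices are $1$ and $2$, so that $M_1\nprec_{M_1}N$ and $N\subset M_2$ is entirely non-trivial. Set
$$P:=\underset{N,\, 2\le k\le K}{*}M_k,$$
which carries a canonical faithful normal conditional expectation $E_P\,:\,P\to N$ (with $P=M_2$ when $K=2$). By associativity of the von Neumann algebraic amalgamated free product, $M=M_1\underset{N}{*}P$ with respect to the pair $(E_1,E_P)$, the resulting global conditional expectation $M\to N$ coincides with $E_N$ coming from the $K$-fold construction, $E_{M_1}$ and $E_P$ are the canonical expectations of this binary decomposition, and for $j\ge 2$ the expectation $E_{M_j}$ factors through $E_P$.

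Next I would verify the two hypotheses for $M=M_1\underset{N}{*}P$. The first, $M_1\nprec_{M_1}N$, is assumed. For the second, let $z\in\mathcal{Z}(N)$ be a non-zero projection: since $2\neq 1$, the algebra $M_2$ is one of the free factors building $P$, so $M_2z\subseteq Pz$, and as $N\subset M_2$ is entirely non-trivial we have $Nz\neq M_2z$ as sets, hence $Nz\neq Pz$; thus $N\subset P$ is entirely non-trivial as well. Applying \cite[Theorem A]{HI20} to $M=M_1\underset{N}{*}P$ then yields $(1)$, $(2)$ and $(4)$ at once: for $(2)$, note that $\varphi=\varphi\circ E_N$ forces $\varphi=\varphi\circ E_{M_1}=\varphi\circ E_P$, so the modular analysis of \cite{HI20} applies verbatim; and in $(4)$ the final assertion follows because an amenable von Neumann algebra is amenable relative to any subalgebra inside $M$. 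For $(3)$ one may either read it off \cite[Theorem A]{HI20}, or argue directly: writing $\varphi_k:={\rm Tr}_N\circ E_k$ and $\varphi:={\rm Tr}_N\circ E_N$, the relation between the modular group of $\varphi$ and those of the $\varphi_k$ (Remark \ref{RmkModularFree}, applied iteratively along the binary decomposition) shows that $\varphi$ is a faithful normal semifinite trace on $M$ precisely when each $\varphi_k$ is a trace on $M_k$, while conversely, if $M$ is semifinite, one uses that $N\subset M$ has expectation together with $(2)$ to produce a faithful normal semifinite trace on $M$ of the form ${\rm Tr}_N\circ E_N$, whose restrictions to the $M_k$ are then automatically traces.

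The only point that genuinely requires care — and which I expect to be the main obstacle — is that our hypothesis on the second factor, that $N\subset M_2$ be merely entirely non-trivial, is formally weaker than the condition $M_2\nprec_{M_2}N$ under which \cite[Theorem A]{HI20} is most conveniently quoted (recall that, by the Remark preceding the present theorem, $M_2\nprec_{M_2}N$ does imply that $N\subset M_2$ is entirely non-trivial, but not conversely). Establishing the stated generality therefore amounts to inspecting the proof in \cite{HI20} and checking that the hypothesis on $M_2$ enters only through the conclusion that $N\subset M_2$ is entirely non-trivial — i.e. only in the steps analysing corners $pM_2p$ and running the intertwining and relative-amenability dichotomies — and never through the full strength of ``$M_2\nprec_{M_2}N$''. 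Once this verification is carried out, together with the routine reduction from $K$ to two free factors described above, statements $(1)$–$(4)$ follow with no further input.
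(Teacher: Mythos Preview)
Your reduction to the binary decomposition $M=M_1\underset{N}{*}P$ and the check that $N\subset P$ is entirely non-trivial (via $M_2\subset P$) is exactly what the paper does, up to labelling, and applying \cite[Theorem A]{HI20} then gives $(1)$, $(2)$, $(4)$. Your flagged ``main obstacle'' is in fact a non-issue: \cite[Theorem A]{HI20} is already stated with the asymmetric hypotheses --- non-embedding on one side, entirely non-trivial inclusion on the other --- so no inspection of its proof is required, and the paper simply invokes it.

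Where you diverge from the paper is $(3)$, and here your sketch has two defects. First, $(3)$ is \emph{not} part of \cite[Theorem A]{HI20}, so ``read it off'' is not an option; the paper explicitly says it remains to prove $(3)$ separately. Second, your direct argument for the converse via $(2)$ has a gap: statement $(2)$ only provides, for each $t$, \emph{some} $u_t\in\mathcal{U}(N)$ implementing $\sigma_t^\varphi$, with no continuity or group law. This is repairable --- start with the continuous one-parameter group $(v_t)_t\subset\mathcal{U}(M)$ that semifiniteness supplies, then combine $(2)$ with $(1)$ to get $v_tu_t^{-1}\in\mathcal{Z}(M)\subset N$, hence $v_t\in N$ --- but you do not say this. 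The paper instead argues directly, following \cite[Theorem 4.3(3)]{Ue13}: starting from the continuous family $(u_t)_t\subset\mathcal{U}(M)$, it uses Lemma~\ref{LemmaNormalizer1} to force $u_t\in M_i$, then a freeness computation against elements of $\ker(E_j)$ together with \cite[Lemma 4.2(3)]{Ue13} (which is where the entirely-non-trivial hypothesis on $N\subset M_j$ enters) to force $u_t\in N$, and finally writes $u_t=h^{it}$ with $h$ affiliated to $N^\omega$ to obtain ${\rm Tr}_N=\omega_{h^{-1}}$.
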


\begin{proof}
We write $M=M_i\underset{N}{*}M_i'$. Let us show that the inclusion $N\subset M_i'$ is entirely non trivial. If there is a non-zero projection $z\in\mathcal{Z}(N)$ such that $M_i'z=Nz$ then $M_jz\subset M_i'z=Nz$ which implies that $M_jz=Nz$. Since $N\subset M_j$ is entirely non-trivial, we deduce that $N\subset M_i'$ also is. Hence, we may apply \cite[Theorem A]{HI20} to deduce $(1)$, $(2)$ and $(4)$. It remains to show $(3)$ for which we can repeat the proof of \cite[Theorem 4.3 (3)]{Ue13}: fix any faithful normal state $\omega\in N_*$ and consider the state $\varphi:=\omega\circ E_N$. Then, $M$ is semifinite if and only if there exists one-parameter group of unitaries $u_t\in M$ such that $\sigma_t^\varphi={\rm Ad}(u_t)$ for all $t\in \R$. Fix $i$ such that $M_i\nprec_{M_i} N$. Applying Lemma \ref{LemmaNormalizer1} we deduce that $u_t\in M_i$, for all $t\in\R$. Let now $j\neq i$ be such that $N\subset M_j$ is entirely non-trivial. Let $y\in M_j$ such that $E_j(y)=0$ and note that $\sigma_t^\varphi(y)(u_t-E_i(u_t))+ \sigma_t^\varphi(y)E_i(u_t)=\sigma_t^\varphi(y)u_t=u_ty=E_i(u_t)y+(u_t-E_i(u_t))y$. Writing $v:=u_t-E_i(u_t)$ we have, by freeness of $M_i$ and $M_j$ and since $\sigma_t^\varphi(y)\in\ker(E_j)$ and $v\in\ker(E_i)$,
$$
E_N(vyy^*v^*)=E_N(\sigma_t^\varphi(y)vy^*v^*)+E_N(\sigma_t^\varphi(y)E_i(u_t)y^*v^*)-E_i(u_t)E_N(yy^*v^*)=0.
$$
It follows that $E_N(vyy^*v^*)=E_N(vE_j(yy^*)v^*)=0$ and since $E_N$ is faithful, $vE_j(yy^*)=0$, for all $y\in\ker(E_j)$. Hence, $vs(E_j(yy^*))=0$, for all $y\in\ker(E_j)$, where $s(z)$ denotes the support projection of the self-adjoint operator $z$. By \cite[Lemma 4.2 (3)]{Ue13}, there is a family $(y_i)_{i\in I}$ of elements in $\ker(E_j)$ such that $\sum_{i\in i}s(E_j(y_iy_i^*))=1$. It implies that $v=0$ i.e. $u_t=E_i(u_t)\in N$. Write $u_t=h^{it}$ with $h$ a positive non-singular operator affiliated with the centralizer $N^\omega\subset N$ of $\sigma_t^\omega$. Then, the semifinite faithful normal weight $\omega_{h^{-1}}$ on $N$ is the desired trace.
\end{proof}

\subsection{Factoriality and type}

\noindent We fix an ergodic action $\beta\,:\, H\curvearrowright B$ with unique invariant faithful state $\psi\in B^*$. Let's begin with the following Lemma.

\begin{lemma}\label{LemmaNprec}
Suppose that $\Irr(G)$ is infinite and $\Irr(F)$ is finite then, for all $1\leq\kappa\leq K$, and all projection $q\in\Linf(G)'\cap\Linf(F)$ one has: $$L_\kappa(\GG)(1_{\Linf(H)}\ot q)\nprec_{L_\kappa(\GG)(1_{\Linf(H)}\ot q)}\Linf(H)\ot\Linf(F)q.$$
\end{lemma}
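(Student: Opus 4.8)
The goal is to show that $L_\kappa(\GG)(1\ot q)$ does not embed into $\Linf(H)\ot\Linf(F)q$ inside $L_\kappa(\GG)(1\ot q)$, using the intertwining criterion of Proposition \ref{HI17thm}. Recall from Theorem \ref{ThmReduced} and the description of $L_\kappa(\Gtilde)$ that $\pi_\kappa''$ identifies $L_\kappa(\Gtilde)$ with $M_{N_\kappa}(\C)\ot L_\kappa(\GG)$, where $$L_\kappa(\Gtilde)=(B\ot\Linf(G),\id\ot E_F'')\underset{B\ot\Linf(F)}{*}(M_{N_\kappa}(\C)\ot\Linf(H)\ot\Linf(F),E_\kappa\ot\id_{\Linf(F)}).$$ By Remark \ref{RmkEmbedds} (3), it is equivalent to show that $$(B\ot\Linf(G))(1_B\ot q)\vee(\text{stuff})\nprec\dots$$ more precisely, applying $\id_{M_{N_\kappa}}\ot\,(\cdot)$, it suffices to prove the non-embedding after tensoring with $M_{N_\kappa}(\C)$, i.e. to work inside $L_\kappa(\Gtilde)(1\ot 1\ot q)$ and show $$L_\kappa(\Gtilde)(1\ot 1\ot q)\nprec_{L_\kappa(\Gtilde)(1\ot 1\ot q)} M_{N_\kappa}(\C)\ot\Linf(H)\ot\Linf(F)q.$$ So the first step is this translation: reduce, via the block isomorphism and Remark \ref{RmkEmbedds} (3) (tensoring with $M_{N_\kappa}(\C)$), the statement about $L_\kappa(\GG)$ to a statement about the amalgamated free product $L_\kappa(\Gtilde)$ and its natural subalgebra $M_{N_\kappa}(\C)\ot\Linf(H)\ot\Linf(F)$. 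Note the cut projection $q\in\Linf(G)'\cap\Linf(F)$; since $q\in\Linf(F)\subset B\ot\Linf(F)$ is in the amalgam (sitting as $1_B\ot q$), it is a central-type cut that is compatible with the amalgamated free product structure, so $L_\kappa(\Gtilde)(1\ot 1\ot q)$ is again an amalgamated free product of the corners $(B\ot\Linf(G))(1_B\ot q)$ and $(M_{N_\kappa}(\C)\ot\Linf(H)\ot\Linf(F))(1\ot 1\ot q)$ over $(B\ot\Linf(F))(1_B\ot q)$.

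\textbf{Key step: producing a separating net.} Using the criterion (3) $\Leftrightarrow$ (1) of Proposition \ref{HI17thm} together with Remark \ref{RmkNprec}, it is enough to find a net of unitaries $(w_n)_n$ in the centralizer of a fixed f.n.s. on $L_\kappa(\Gtilde)(1\ot 1\ot q)$ such that $E_{M_{N_\kappa}\ot\Linf(H)\ot\Linf(F)q}(x^*w_ny)\to 0$ $\sigma$-strongly for all $x,y$ in the corner. I will choose $w_n$ to come from the other side of the free product, i.e. from $(B\ot\Linf(G))(1_B\ot q)$, so that $w_n$ lands in the kernel of the relevant conditional expectation and Lemma \ref{LemmaNets} applies. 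Concretely: since $\Irr(G)$ is infinite while $\Irr(F)$ is finite, the algebra $\Linf(G)$ ``relative to $F$'' is infinite-dimensional; more precisely I want to produce, inside the corner $q\Linf(G)q$ (using $q\in\Linf(G)'\cap\Linf(F)$), a sequence of unitaries $(u_n)$ in the centralizer of (a corner of) the almost periodic Haar state such that $E_F''(x u_n y)\to 0$ in norm for all $x,y\in q\Linf(G)q$. Here is where infiniteness of $\Irr(G)$ versus finiteness of $\Irr(F)$ is used: the range $\Linf(F)$ of $E_F''$ is finite-dimensional after cutting by $q$ (since $\Irr(F)$ finite forces $\Linf(F)$ finite-dimensional, hence $\Linf(F)q$ too), and $q\Linf(G)q$ is diffuse with almost periodic state (being a corner of $\Linf(G)$), so by Remark \ref{RmkDiffuse} / Lemma \ref{diffusefd} there is a net of unitaries in the diffuse centralizer $(q\Linf(G)q)^{h_G}$ with $E_{\Linf(F)q}(x u_n y)\to 0$ in norm for all $x,y$. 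I must check $q$ commutes with $h_G$-modular group so that the corner state is still almost periodic — this follows since $q\in\Linf(F)$ and $\sigma_t^{h_G}\vert_{\Linf(F)}=\sigma_t^{h_F}$ leaves $\mathcal{Z}(\Linf(F))$ invariant, but actually we need $q$ itself fixed; since $q\in\Linf(G)'\cap\Linf(F)$, $q$ is central in $\Linf(F)$'s commutant context — I will need to argue $\sigma_t^{h_G}(q)=q$, which holds because $q$ being in the centralizer-relevant position... this is a point to verify carefully, possibly by noting $q\in\Linf(G)'\cap\Linf(F)\subset\Linf(F)$ and the modular group of $h_G$ restricted to $\Linf(F)$ preserves the center, while $q$ commutes with all of $\Linf(G)\supset\Linf(F)$ so $q$ is central in $\Linf(F)$, hence fixed by $\sigma_t^{h_F}$.

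\textbf{Transferring the net to $L_\kappa(\Gtilde)$.} Once the net $(u_n)$ in $(q\Linf(G)q)^{h_G}$ with $E_F''(x u_n y)\to 0$ in norm is in hand, I view $u_n$ (via $1_B\ot u_n$, cut by $1_B\ot q$) inside the corner $(B\ot\Linf(G))(1_B\ot q)$ of the free product; these are unitaries in the centralizer of the state $\varphi_\kappa\vert$ restricted to the corner (using Proposition \ref{PropModular}, $\sigma_t^{\varphi_\kappa}\vert_{B\ot\Linf(G)}=\sigma_t^\psi\ot\sigma_t^{h_G}$, and the net lies in $1_B\ot(\cdot)^{h_G}$, so is centralized by $\id\ot\sigma_t^{h_G}$; the $\psi$-part is handled since we can further require the elements to be in $1_B\ot(\cdot)$). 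Then I apply Lemma \ref{LemmaNets} with $M_1=(B\ot\Linf(G))(1_B\ot q)$, $M_2=(M_{N_\kappa}(\C)\ot\Linf(H)\ot\Linf(F))(1\ot 1\ot q)$: from $E_1(x w_n y)=E_F''$-type expectation $\to 0$ on $M_1$ we conclude $E_{M_2}(x w_n y)\to 0$ $\sigma$-strongly for all $x,y\in M$. By Remark \ref{RmkNprec} this gives $L_\kappa(\Gtilde)(1\ot 1\ot q)\nprec M_2$, and undoing the identification (Remark \ref{RmkEmbedds} (3)) yields the claim. \textbf{The main obstacle} I expect is the bookkeeping around the cut projection $q$: verifying that $q$ is compatible with the free-product structure and with the modular groups (so that corners remain amalgamated free products with almost periodic/centralizer-friendly states), and checking that a ``matching'' net in $M_1$ translates to one in the centralizer of $\varphi_\kappa$ on the full $L_\kappa(\Gtilde)$ — in particular handling the $\psi$ and $\psi_\kappa^{-1}$ factors so that Lemma \ref{LemmaNets}'s hypothesis $\psi\circ E_{M_k}=\psi$ is genuinely met after cutting. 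None of these is deep, but the combination of corners, tensor factors, and three modular groups requires care.
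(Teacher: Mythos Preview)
Your proposal is correct and follows essentially the same approach as the paper: reduce via the block isomorphism $L_\kappa(\Gtilde)\simeq M_{N_\kappa}(\C)\ot L_\kappa(\GG)$ and Remark \ref{RmkEmbedds} (3), use diffuseness of $\Linf(G)q$ together with Remark \ref{RmkDiffuse} to obtain unitaries $(u_n)$ in the centralizer with $E_F(xu_ny)\to 0$, apply Lemma \ref{LemmaNets} (2) to get $E_{M_2}(x(1\ot u_n)y)\to 0$, and conclude by Remark \ref{RmkNprec}. Your worry about $\sigma_t^{h_G}(q)=q$ is easily resolved since $q\in\Linf(G)'\cap\Linf(F)\subset\mathcal{Z}(\Linf(G))$ is central, hence automatically fixed by the modular group; the paper simply notes parenthetically that $q$ is in the centralizer of $h_G$.
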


\begin{proof}
Since $\Irr(G)$ is infinite, it follows from \cite[Lemma 2.4]{FT24} that $\Linf(G)$ is diffuse. Hence, $\Linf(G)q$ is also diffuse. Moreover, since $\Linf(F)q\subset\Linf(G)q$ is a finite dimensional subalgebra and $h_G':=h_G(q)^{-1}h_G\vert_{\Linf(G)q}$ is an almost periodic state on $\Linf(G)q$ (note that $q$ is in the centralizer of $h_G$), Remark \ref{RmkDiffuse} implies that there exists a sequence of unitaries $(u_n)_n$ in $(\Linf(G)q)^{h'_G}$ such that $E_F(xu_ny)\rightarrow 0$ in norm, for all $x,y\in\Linf(G)q$. Considering the amalgamated free product:
\begin{eqnarray*}
    M&:=&(M_{N_\kappa}(\C)\ot L_\kappa(\GG))(1\ot q)\simeq (B\ot\Linf(G)q)\underset{B\ot\Linf(F)q}{*}(M_{N_\kappa}(\C)\ot\Linf(H)\ot\Linf(F)q)\\
    &=:&M_1\underset{N}{*}M_2,
\end{eqnarray*}
Lemma \ref{LemmaNets} $(2)$ implies that $E_{M_2}(x(1\ot u_n)y)\rightarrow 0$ $\sigma$-strong for all $x,y\in M$. We conclude from Remark \ref{RmkNprec} that $M\nprec_{M}M_2$. Remark \ref{RmkEmbedds} $(3)$ implies that
\begin{equation*}
L_\kappa(\GG)(1\ot q)\nprec_{L_\kappa(\GG)(1\ot q)}\Linf(H)\ot\Linf(F)q.\qedhere
\end{equation*}
\end{proof}

\begin{lemma}\label{LemmaVNBlock}
Suppose that $\beta$ is $2$-ergodic, $\Irr(G)$ and $\Irr(H)$ are infinite and $\Irr(F)$ is finite. For any $1\leq\kappa\leq K$ such that $N_\kappa\geq 2$ one has:
\begin{enumerate}
\item $L_\kappa(\GG)$ is has no amenable direct summand and:
$$\mathcal{Z}( L_\kappa(\GG))=\C1_{\Linf(H)}\ot(\mathcal{Z}(\Linf(G))\cap\Linf(F)).$$
\item For all non-zero projection $p\in\mathcal{Z}(L_\kappa(\GG))$ one has:
$$T(L_\kappa(\GG)p)=\{t\in\R\,:\,\exists u\in\mathcal{U}(\mathcal{Z}(\Linf(F)q))\,:\,\sigma_t^{h_G}\vert_{\Linf(G)q}={\rm Ad}(u)\}\cap\{t\in\R\,:\,\sigma_t^{h_H}=\id\},$$
where $q\in\mathcal{Z}(\Linf(G))\cap\Linf(F)$ is the unique projection such that $p=1\ot q$.
\end{enumerate}
\end{lemma}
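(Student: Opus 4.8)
The plan is to realize $L_\kappa(\GG)$ as an amalgamated free product to which Theorem \ref{HI20} applies. Recall from Remark \ref{rmk fd amal} and the definition of $L_\kappa(\Gtilde)$ that
$$L_\kappa(\Gtilde)=(B\ot\Linf(G))\underset{B\ot\Linf(F)}{*}(M_{N_\kappa}(\C)\ot\Linf(H)\ot\Linf(F))$$
and that $L_\kappa(\Gtilde)\simeq M_{N_\kappa}(\C)\ot L_\kappa(\GG)$. Since the two invariants $\mathcal{Z}$ and $T$ are insensitive to tensoring by a matrix algebra (for $T$ this is standard; for $\mathcal{Z}$ it is immediate), and the embedding relation $\prec$ is likewise insensitive (Remark \ref{RmkEmbedds}~(3)), it is enough to prove the corresponding statements for $M:=L_\kappa(\Gtilde)=M_1\underset{N}{*}M_2$ with $M_1=B\ot\Linf(G)$, $M_2=M_{N_\kappa}(\C)\ot\Linf(H)\ot\Linf(F)$ and $N=B\ot\Linf(F)$, embedded via the inclusion on the left and via $\beta_\kappa\ot\id_F$ on the right.

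First I would check the two hypotheses of Theorem \ref{HI20}. For $M_1\nprec_{M_1}N$: since $\Irr(G)$ is infinite, $\Linf(G)$ is diffuse by \cite[Lemma 2.4]{FT24}, and $N=B\ot\Linf(F)$ is finite dimensional because $\Irr(F)$ is finite; hence $M_1=B\ot\Linf(G)$ is diffuse while $N$ is finite dimensional, so $M_1\nprec_{M_1}N$ by Remark \ref{RmkEmbedds}~(1). For the inclusion $N\subset M_2$ being entirely non-trivial: a non-zero central projection of $N=B\ot\Linf(F)$ has the form $z\ot q$ with $z\in\mathcal{Z}(B)$, $q\in\mathcal{Z}(\Linf(F))$ non-zero; after applying $\beta_\kappa\ot\id_F$ and using that $\Linf(H)$ is diffuse (as $\Irr(H)$ is infinite, again \cite[Lemma 2.4]{FT24}), one sees that $M_2(\beta_\kappa(z)\ot q)$ is infinite dimensional, hence strictly larger than $N(z\ot q)$, which is finite dimensional. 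So $N\subset M_2$ is entirely non-trivial (in fact $M_2\nprec_{M_2}N$). Alternatively one could invoke Lemma \ref{LemmaNprec} directly here.

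Now I apply Theorem \ref{HI20}. Part (1) gives $\mathcal{Z}(M)=M'\cap N$. Using the explicit description $M'\cap N=(M'\cap N)$ together with the fact that $M$ contains $B\ot\Linf(G)$, the commutant $M'\cap (B\ot\Linf(F))$ lies inside $B'\cap B=\mathcal{Z}(B)$ on the first leg only after pairing with the $M_{N_\kappa}(\C)$-leg; more precisely, passing to $L_\kappa(\Gtilde)=M_{N_\kappa}(\C)\ot L_\kappa(\GG)$ and taking the center of both sides, one extracts $\mathcal{Z}(L_\kappa(\GG))$. The $M_{N_\kappa}(\C)\subset M$ sits inside $M_2$ via $\beta_\kappa$ and its relative commutant inside $L_\kappa(\Gtilde)$ is exactly $L_\kappa(\GG)$ (this was established in the proof of Theorem \ref{ThmReduced}), so an element of $\mathcal{Z}(M)=M'\cap(B\ot\Linf(F))$ commuting with everything, in particular with $\beta_\kappa(B)$, must be of the form $1\ot c$ for $c\in\mathcal{Z}(C_{r,\kappa}(\GG))''$ by Lemma \ref{LemmaT}~(2), and being in $B\ot\Linf(F)$ forces $c\in\Linf(F)$; commuting with $\Linf(G)$ then forces $c\in\Linf(G)'\cap\Linf(F)$, and being central in $\Linf(G)$ as well gives $c\in\mathcal{Z}(\Linf(G))\cap\Linf(F)$. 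Conversely any such $1\ot c$ is central, giving $\mathcal{Z}(L_\kappa(\GG))=\C1_{\Linf(H)}\ot(\mathcal{Z}(\Linf(G))\cap\Linf(F))$. For the non-amenability of every direct summand, Theorem \ref{HI20}~(4) says $Mz$ is not amenable for any non-zero $z\in\mathcal{Z}(M)$, which transfers to $L_\kappa(\GG)$. For part (2), Theorem \ref{HI20}~(2) computes $T(Mz)$ as $\{t:\exists u\in\mathcal{U}(N),\ \sigma_t^\varphi=\mathrm{Ad}(u)\}$ for a suitable f.n.s.\ $\varphi$ with $\varphi=\varphi\circ E_N$; taking $\varphi=(\psi\ot h_F)\circ E_N$ (which by Proposition \ref{PropModular} has $\sigma_t^\varphi$ equal to $\sigma_t^\psi\ot\sigma_t^{h_G}$ on $M_1$ and $\sigma_{-t}^{\psi_\kappa}\ot\sigma_t^{h_H}\ot\sigma_t^{h_F}$ on $M_2$), the condition $\sigma_t^\varphi=\mathrm{Ad}(u)$ with $u\in\mathcal{U}(B\ot\Linf(F)q)$ decomposes: restricting to $M_2$ and invoking Lemma \ref{LemmaT}~(1) forces $\sigma_t^{h_H}=\id$ and $u\in\C1\ot\mathcal{Z}(\Linf(F)q)$, and restricting to $M_1=B\ot\Linf(G)$ then forces $\sigma_t^\psi\ot\sigma_t^{h_G}=\mathrm{Ad}(1\ot u')$ on $B\ot\Linf(G)q$, i.e.\ $\sigma_t^\psi=\id$ (hence automatic once $\sigma_t^{h_H}=\id$, since $\psi$ is tied to $h_H$ via the ergodic action) and $\sigma_t^{h_G}\vert_{\Linf(G)q}=\mathrm{Ad}(u')$ with $u'\in\mathcal{U}(\mathcal{Z}(\Linf(F)q))$. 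This yields exactly the claimed formula.

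The main obstacle I anticipate is the bookkeeping in passing between $M=L_\kappa(\Gtilde)$ and $L_\kappa(\GG)=M_{N_\kappa}(\C)'\cap L_\kappa(\Gtilde)$: one must track how the amalgam $N=B\ot\Linf(F)$ and the relevant unitaries interact with the matrix leg, and carefully invoke Lemma \ref{LemmaT} (which requires $N_\kappa\geq 2$, exactly our hypothesis) to pin down the central unitaries to $\C1\ot\mathcal{Z}(\Linf(F))$. Verifying that $\varphi\circ E_{M_k}=\varphi$ (needed so that $\sigma_t^\varphi$ restricts correctly, and so that Theorem \ref{HI20}~(2) applies) is routine from Proposition \ref{PropModular} but must be stated. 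Everything else is a direct citation of Theorem \ref{HI20} together with Lemma \ref{LemmaT} and Lemma \ref{LemmaNprec}.
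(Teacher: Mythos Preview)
Your proposal is correct and follows essentially the same approach as the paper: reduce to $L_\kappa(\Gtilde)=M_1*_N M_2$, verify the hypotheses of Theorem~\ref{HI20} using diffuseness of $\Linf(G)$, $\Linf(H)$ and finite-dimensionality of $B\ot\Linf(F)$, then extract the center and $T$-invariant via Lemma~\ref{LemmaT} (which is exactly where $N_\kappa\geq 2$ and $2$-ergodicity enter). The only cosmetic difference is that you take $i=1$, $j=2$ in Theorem~\ref{HI20} while the paper takes $i=2$, $j=1$; both choices are valid since each $M_k$ is diffuse and each inclusion $N\subset M_k$ is entirely non-trivial. One small point: in part~(2) you argue only the inclusion $T(L_\kappa(\GG)p)\subseteq T'$ explicitly; the paper also writes out the (routine) converse, checking that for $t\in T'$ the unitary $1\ot u\in B\ot\Linf(F)q$ implements $\sigma_t^{\varphi_\kappa}$ simultaneously on both legs (using that $u\in\mathcal{Z}(\Linf(F)q)$ makes $(\beta_\kappa\ot\id)(1\ot u)$ central in $M_2$).
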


\begin{proof}$(1)$. It suffices to prove the statement for $L_\kappa(\Gtilde)=M_{N_\kappa}(\C)\ot L_{\kappa}(\GG)$ instead of $L_\kappa(\GG)$. Since $\Irr(G)$ and $\Irr(H)$ are infinite, it follows from \cite[Lemma 2.4]{FT24} that both $\Linf(G)$ and $\Linf(H)$ are diffuse. Hence, since $B\ot\Linf(F)$ is finite dimensional, $$M_{N_\kappa}(\C)\ot\Linf(H)\ot\Linf(F)\nprec_{M_{N_\kappa}(\C)\ot\Linf(H)\ot\Linf(F)}\beta_\kappa(B)\ot\Linf(F)$$
and, $B\ot\Linf(F)\subset B\ot\Linf(G)$ is entirely non-trivial.

\vspace{0.2cm}

\noindent From the discussion above, Theorem \ref{HI20} $(1)$ and Lemma \ref{LemmaT} $(2)$ we find that $$\mathcal{U}(\mathcal{Z}(L_\kappa(\Gtilde)))\subseteq\{u\in\mathcal{U}(\mathcal{Z}(B\ot\Linf(F)))\,:\,{\rm Ad}((\beta_\kappa\ot\id)(u))=\id\}\subset\C1\ot\mathcal{Z}(\Linf(F)).$$
Let $u\in\mathcal{Z}(L_\kappa(\Gtilde))$ and write $u=1\ot v$, with $v\in\mathcal{Z}(\Linf(F))$. Then, for all $x\in\Linf(G)$, $u(1_B\ot x)u^*=1_B\ot vxv^*=1_B\ot x$, which implies that $v\in\mathcal{Z}(\Linf(G))\ot\Linf(F)$. The converse inclusion being trivial, this gives the formula for the center. Note that $L_\kappa(\Gtilde)$ has no amenable direct summand by Theorem \ref{HI20} $(4)$.

\vspace{0.2cm}

\noindent$(2)$. Let $p\in\mathcal{Z}(L_\kappa(\GG))$ be a non-zero projection. Write $p=1\ot q$ with $q\in\mathcal{Z}(\Linf(G))\cap\Linf(F)$. Note that:
$$L_\kappa(\Gtilde)(1\ot p)=M_{N_\kappa}(\C)\ot L_\kappa(\GG)p=(B\ot\Linf(G)q)\underset{B\ot\Linf(F)q}{*}(M_{N_\kappa}(\C)\ot\Linf(H)\ot\Linf(F)q).$$
Since $\Linf(G)q$ and $\Linf(H)$ are diffuse and $B\ot\Linf(F)q$ is finite dimensional, we can apply Theorem \ref{HI20} $(2)$ and we find:
$$T(L_\kappa(\GG)p)=T(L_\kappa(\Gtilde)(1\ot p))=\{t\in\R\,:\,\exists v\in\mathcal{U}(B\ot\Linf(F)q)\,:\,\sigma_t^{\varphi_\kappa}\vert_{L_\kappa(\Gtilde)(1\ot p)}={\rm Ad}(v)\},$$
Fix $t\in T(L_\kappa(\Gtilde)(1\ot p))$ and let $v\in\mathcal{U}(B\ot\Linf(F)q)$ be such that $\sigma_t^{\varphi_\kappa}\vert_{L_\kappa(\Gtilde)(1\ot p)}={\rm Ad}(v)$. Note that, since $F$ is a finite quantum group, the Haar state of $F$ is tracial by \cite[Theorem 2.2]{VD97} so $\sigma_t^{h_F}=\id$ hence, restricting the modular group of $\varphi_\kappa$ to $B\ot\Linf(H)\ot\Linf(F)q$, we find that $\sigma^{\psi_\kappa}_{-t}\ot\sigma_t^{h_H}\ot\id_{\Linf(F)q}={\rm Ad}((\beta_\kappa\ot\id)(v))$. Lemma \ref{LemmaT} implies that $v=1\ot u$ for some unitary $u\in\mathcal{Z}(\Linf(F)q)$ and $\sigma_t^{h_H}=\id$. In particular, $\sigma_t^\psi=\id$ and, restricting this time the modular group of $\varphi_\kappa$ to  $B\ot\Linf(G)q$ gives $\sigma_t^\psi\ot\sigma_t^{h_G}\vert_{\Linf(G)q}=\id\ot\sigma_t^{h_G}\vert_{\Linf(G)q}={\rm Ad}(1\ot u)$. Hence, $T(L_\kappa(\Gtilde)(1\ot p))\subseteq T'$, where:
$$T':=\{t\in\R\,:\,\exists u\in\mathcal{U}(\mathcal{Z}(\Linf(F)q))\,:\,\sigma_t^{h_G}\vert_{\Linf(G)q}={\rm Ad}(u)\}\cap\{t\in\R\,:\,\sigma_t^{h_H}=\id\}.$$
Let now $t\in T'$. Then $\sigma_t^{h_H}=\id$ implies that $\sigma_t^\psi=\id_B$ and $\sigma_{-t}^{\psi_\kappa}=\id_{M_{N_\kappa}(\C)}$. Let $u\in\mathcal{U}(\mathcal{Z}(\Linf(F)q))$ be such that $\sigma_t^{h_G}\vert_{\Linf(G)q}={\rm Ad}(u)$. Then, $$\sigma_t^{\varphi_\kappa}\vert_{B\ot\Linf(G)q}=\sigma_t^\psi\ot\sigma_t^{h_G}\vert_{\Linf(G)q}={\rm Ad}(1\ot u)\text{ and,}$$
$$\sigma_t^{\varphi_\kappa}\vert_{M_{N_\kappa}(\C)\ot \Linf(H)\ot\Linf(F)q}=\sigma_{-t}^{\psi_\kappa}\ot\sigma_t^{h_H}\ot\sigma_t^{h_F}\vert_{\Linf(F)q}=\id={\rm Ad}((\beta_\kappa\ot\id)(1\ot u))$$
since $u\in\mathcal{U}(\mathcal{Z}(\Linf(F)q))$. It follows that $t\in T(L_\kappa(\Gtilde)(1\ot p))$.
\end{proof}

\begin{theorem}\label{ThmFactor}
Suppose that $\beta$ be is a $2$-ergodic action, ${\rm dim}(B)\geq 3$, $\Irr(G)$ and $\Irr(H)$ are infinite and $\Irr(F)$ is finite. The following holds.
\begin{enumerate}
    \item $\Linf(\GG)$ has no amenable direct summand and:
    $$\mathcal{Z}(\Linf(\GG))=
        \C1_{\Linf(H)}\ot\left( \Linf(F)\cap\mathcal{Z}(\Linf(G))\right).$$
        \item For all non-zero projection $p\in\mathcal{Z}(\Linf(\GG))$ one has:
$$T(\Linf(\GG)p)=\{t\in\R\,:\,\exists u\in\mathcal{U}(\mathcal{Z}(\Linf(F)q))\,:\,\sigma_t^{h_G}\vert_{\Linf(G)q}={\rm Ad}(u)\}\cap\{t\in\R\,:\,\sigma_t^{h_H}=\id\},$$
where $q\in\mathcal{Z}(\Linf(G))\cap\Linf(F)$ is the unique projection such that $p=1\ot q$.
    \item For all non-zero projection  $p\in\mathcal{Z}(\Linf(\GG))$ the following are equivalent:
    \begin{enumerate}
    \item $H$ is Kac and there exists a positive and invertible element $Q\in\mathcal{Z}(\Linf(F)q)$ 
    such that $\sigma_t^{h_G}\vert_{\Linf(G)q}={\rm Ad}(Q^{it})$ for all $t\in\R$, where $q\in\mathcal{Z}(\Linf(G))\cap\Linf(F)$ is the unique projection such that $p=1\ot q$.
    \item $\Linf(\GG)p$ is finite.
    \item $\Linf(\GG)p$ is semifinite.
    \end{enumerate}
  \end{enumerate}
\end{theorem}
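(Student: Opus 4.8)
The strategy is to reduce everything to the block-level statements already established in Lemma \ref{LemmaVNBlock} and to the amalgamated free product structure $\Linf(\GG)=\underset{\Linf(H\times F)}{*}(L_\kappa(\GG),E''_{H\times F}\vert_{L_\kappa(\GG)})$ from Section \ref{sectionvna}. Since ${\rm dim}(B)\geq 3$, either some block has $N_\kappa\geq 2$, or $B=\C^K$ with $K\geq 3$; in the first case we use Lemma \ref{LemmaVNBlock}, and in the second we use Lemma \ref{LemmaTdim1}. The key observation throughout is that $\Linf(H)\ot\Linf(F)$ sits inside each $L_\kappa(\GG)$ with faithful normal expectation, and that $\Linf(G)$ embeds in $B\ot\Linf(\GG)$ faithfully via $\rho$ (Lemma \ref{rhofaith}), so that modular data of $\Linf(\GG)$ controls modular data of $G$, $H$, $\psi$.

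First I would prove $(1)$. Using Remark \ref{rmk fd amal} I realize $\Linf(\GG)$ (up to matrix amplification, via $\pi''_\kappa$) as an iterated amalgamated free product over the finite dimensional algebra $B\ot\Linf(F)$ of copies of $B\ot\Linf(G)$ and of $\ast_\kappa M_{N_\kappa}(\C)\ot\Linf(H)\ot\Linf(F)$. Since $\Irr(G),\Irr(H)$ are infinite, $\Linf(G)$ and $\Linf(H)$ are diffuse by \cite[Lemma 2.4]{FT24}, so $B\ot\Linf(F)$ is finite dimensional and sits non-trivially inside both amalgamated pieces. Applying Theorem \ref{HI20} (its parts $(1)$ and $(4)$) to the free product decomposition $\Linf(\Gtilde)=(B\ot\Linf(G))\underset{B\ot\Linf(F)}{*}M_0\underset{B\ot\Linf(F)}{*}\cdots$ yields that $\Linf(\Gtilde)$ has no amenable direct summand and $\mathcal{Z}(\Linf(\Gtilde))=\Linf(\Gtilde)'\cap(B\ot\Linf(F))$. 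Then I would use Lemma \ref{LemmaT}$(2)$ (or Lemma \ref{LemmaTdim1} when $B=\C^K$) to show this relative commutant is contained in $\C1_B\ot\mathcal{Z}(\Linf(F))$, and intersecting with the requirement that a central element also commute with $\rho(\Linf(G))=1_B\ot\Linf(G)$-data forces it into $\C1\ot(\Linf(F)\cap\mathcal{Z}(\Linf(G)))$; passing from $\Linf(\Gtilde)$ to $\Linf(\GG)$ via the expectation $E_\GG$ of Proposition \ref{PropModular}$(4)$ finishes this. The non-amenability of every corner transfers to $\Linf(\GG)$ because $\Linf(\GG)\subset\Linf(\Gtilde)$ is with expectation and $\Linf(\GG)p$ generates a corner of $\Linf(\Gtilde)$ that is not amenable.

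Next $(2)$: for a central projection $p=1\ot q$, reduce by $p$ to get $\Linf(\Gtilde)(1\ot p)=(B\ot\Linf(G)q)\underset{B\ot\Linf(F)q}{*}(M_{N_\kappa}(\C)\ot\Linf(H)\ot\Linf(F)q)$ (or the $B=\C^K$ analogue), apply Theorem \ref{HI20}$(2)$ with the faithful normal state $\varphi_\kappa$ (which satisfies $\varphi_\kappa=\varphi_\kappa\circ E_N$), and then decode the condition "$\sigma_t^{\varphi_\kappa}={\rm Ad}(v)$ with $v\in\mathcal{U}(B\ot\Linf(F)q)$" using the explicit modular group formulas of Proposition \ref{PropModular}$(1)$: restricting to the $M_{N_\kappa}(\C)\ot\Linf(H)\ot\Linf(F)q$ side gives $\sigma^{\psi_\kappa}_{-t}\ot\sigma_t^{h_H}\ot\sigma_t^{h_F}={\rm Ad}((\beta_\kappa\ot\id)(v))$, and since $F$ is a finite quantum group $\sigma_t^{h_F}=\id$ by \cite[Theorem 2.2]{VD97}, so Lemma \ref{LemmaT}$(1)$ (resp. Lemma \ref{LemmaTdim1}) forces $v=1\ot u$ with $u\in\mathcal{U}(\mathcal{Z}(\Linf(F)q))$ and $\sigma_t^{h_H}=\id$; restricting instead to $B\ot\Linf(G)q$ gives $\sigma_t^{h_G}\vert_{\Linf(G)q}={\rm Ad}(u)$. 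The reverse inclusion is a direct check plugging $v=1\ot u$ into the same formulas. Finally, since all corners $L_\kappa(\GG)$ share the same expectation to $\Linf(H\times F)$ and $\Linf(\GG)$ is generated by them, $T(\Linf(\GG)p)$ coincides with the common value of $T(L_\kappa(\GG)p)$ computed in Lemma \ref{LemmaVNBlock}$(2)$.

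For $(3)$: "(b)$\Rightarrow$(c)" is trivial. For "(c)$\Rightarrow$(a)", apply Theorem \ref{HI20}$(3)$ to the reduced free product: semifiniteness of $\Linf(\Gtilde)(1\ot p)$ gives a faithful normal trace ${\rm Tr}_N$ on $B\ot\Linf(F)q$ such that ${\rm Tr}_N\circ(\id\ot E_F'')\vert_{B\ot\Linf(G)q}$ and ${\rm Tr}_N\circ(E_\kappa\ot\id)\vert_{M_{N_\kappa}(\C)\ot\Linf(H)\ot\Linf(F)q}$ are traces; the second tracial condition, combined with $\psi$ being a $\delta$-form and the modular formulas, forces $H$ to be Kac (so $h_H,\psi,\psi_\kappa$ are tracial — use the Remark after Proposition \ref{PropQAut}), and then the first condition forces $\sigma_t^{h_G}\vert_{\Linf(G)q}$ to be inner implemented by a positive invertible central element $Q$ of $\Linf(F)q$. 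For "(a)$\Rightarrow$(b)": if $H$ is Kac then $\psi$ is tracial, so $\sigma^{\psi_\kappa}=\id$ and $\sigma_t^{\varphi_\kappa}\vert_{M_{N_\kappa}(\C)\ot\Linf(H)\ot\Linf(F)q}=\id$, while $\sigma_t^{\varphi_\kappa}\vert_{B\ot\Linf(G)q}=\id\ot{\rm Ad}(Q^{it})={\rm Ad}(1\ot Q^{it})$; thus $\sigma_t^{\varphi_\kappa}\vert_{L_\kappa(\Gtilde)(1\ot p)}={\rm Ad}(1\ot Q^{it})$ with $1\ot Q^{it}\in\mathcal{U}(L_\kappa(\Gtilde)(1\ot p))$ a one-parameter unitary group, which — by the standard characterization of semifiniteness via inner modular groups together with the trace constructed as in the proof of Theorem \ref{HI20}$(3)$ — shows $\Linf(\Gtilde)(1\ot p)$, hence $\Linf(\GG)p$, is finite (type ${\rm II}_1$).

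\textbf{Main obstacle.} The delicate point is the bookkeeping between $\Linf(\GG)$, its matrix amplification $M_{N_\kappa}(\C)\ot L_\kappa(\GG)\simeq L_\kappa(\Gtilde)$, and the reduced-by-$q$ algebras, making sure that the amalgam $B\ot\Linf(F)q$ genuinely stays finite dimensional and that the hypotheses of Theorem \ref{HI20} ($M_i\nprec_{M_i}N$ and entire non-triviality of $N\subset M_j$) hold after reduction — this uses diffuseness of $\Linf(G)q$ and $\Linf(H)$, which in turn needs $\Irr(G),\Irr(H)$ infinite. The second subtle point is extracting from the abstract "$\exists v\in\mathcal{U}(B\ot\Linf(F)q)$" statement of Theorem \ref{HI20}$(2)$--$(3)$ the precise shape $v=1\ot u$ with $u$ central in $\Linf(F)q$; this is exactly where the $2$-ergodicity is essential, via Lemmas \ref{LemmaT} and \ref{LemmaTdim1}, and where the case split ${\rm dim}(B)\geq 3$ (some $N_\kappa\geq 2$ versus $B=\C^K$, $K\geq 3$) must be handled carefully since the two lemmas have different hypotheses.
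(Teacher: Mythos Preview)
Your overall strategy (reduce to the block Lemma \ref{LemmaVNBlock} and split into the two cases ``some $N_\kappa\geq 2$'' versus ``$B=\C^K$ with $K\geq 3$'') matches the paper, and your handling of the inclusions $T'\subseteq T(\Linf(\GG)p)$ and of $(3)$ is essentially correct. However, there is a genuine gap in how you pass between the block level and the global level when $K\geq 2$.

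\textbf{The gap.} You repeatedly conflate $\Linf(\Gtilde)$ with a single block $L_\kappa(\Gtilde)$. For instance, in part $(2)$ you write ``$\Linf(\Gtilde)(1\ot p)=(B\ot\Linf(G)q)\underset{B\ot\Linf(F)q}{*}(M_{N_\kappa}(\C)\ot\Linf(H)\ot\Linf(F)q)$'', but this is the decomposition of $L_\kappa(\Gtilde)(1\ot p)$, not of $\Linf(\Gtilde)(1\ot p)$, which for $K\geq 2$ is a further free product over $\Linf(H\times F)$. Similarly, the description via Remark \ref{rmk fd amal} is an \emph{iterated} free product where $B\ot\Linf(F)$ embeds differently (via $\beta_\kappa\ot\id$) at each step, so Theorem \ref{HI20} does not apply to it as a single amalgamated free product over one copy of $B\ot\Linf(F)$. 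Finally, even if you computed $\mathcal{Z}(\Linf(\Gtilde))$ correctly, ``passing to $\Linf(\GG)$ via $E_\GG$'' gives nothing: for a subalgebra $A\subset M$ with expectation one has $\mathcal{Z}(M)\cap A\subseteq\mathcal{Z}(A)$, which is the trivial inclusion here; a central unitary of $\Linf(\GG)$ has no reason to be central in the larger $\Linf(\Gtilde)$.

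\textbf{What the paper does instead.} The paper works directly with the free product $\Linf(\GG)=\underset{\Linf(H\times F)}{*}L_\gamma(\GG)$ and uses Lemma \ref{LemmaNprec} to get $L_\gamma(\GG)\nprec_{L_\gamma(\GG)}\Linf(H\times F)$. This feeds Lemma \ref{LemmaNormalizer1}, yielding $\mathcal{N}_{\Linf(\GG)p}(L_\gamma(\GG)p)\subset L_\gamma(\GG)p$ for every $\gamma$. Hence any $u\in\mathcal{U}(\Linf(\GG)p)$ that normalizes all blocks (in particular any central $u$, and any $v$ implementing $\sigma_t^{h_\GG}\vert_{\Linf(\GG)p}$) must lie in $\bigcap_\gamma L_\gamma(\GG)p$. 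Only then does one invoke Lemma \ref{LemmaVNBlock} (if some $N_\kappa\geq 2$) or Lemma \ref{LemmaTdim1} (if $B=\C^K$) to finish. Your argument for the inclusion $T(\Linf(\GG)p)\subseteq T'$ and for part $(1)$ is missing precisely this normalizer step; your claim ``$T(\Linf(\GG)p)$ coincides with the common value of $T(L_\kappa(\GG)p)$'' is not automatic, because $\sigma_t^{h_\GG}$ could a priori be inner on each $L_\kappa(\GG)p$ via different unitaries without being inner on $\Linf(\GG)p$.
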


\begin{proof} We write $\Linf(\GG)=\underset{\Linf(H)\ot\Linf(F),1\leq\gamma\leq K}{*}L_\gamma(\GG)$ and since $L_\gamma(\GG)\nprec_{L_\gamma(\GG)}\Linf(H)\ot\Linf(F)$ (Lemma \ref{LemmaNprec}) we have in particular that the inclusion $\Linf(H)\ot\Linf(F)\subset L_\gamma(\GG)$ is entirely non-trivial, for $1\leq\gamma\leq K$.
\vspace{0.2cm}

\noindent$(1)$. Note that we always have $\C1_{\Linf(H)}\ot (\Linf(F)\cap\mathcal{Z}(\Linf(G)))\subseteq\mathcal{Z}(\Linf(\GG))$. If $K=1$ statement $(1)$ follows directly from Lemma \ref{LemmaVNBlock} (since ${\rm dim}(B)\geq 3$). Hence we may and will assume that $K\geq 2$ so that we can apply Theorem \ref{HI20} to deduce that $\Linf(\GG)$ has no amenable direct summand. Let $u\in\mathcal{U}(\mathcal{Z}(\Linf(\GG)))$. If there exists $1\leq\kappa\leq K$ such that $N_\kappa\geq 2$ then, since $u\in \mathcal{N}_{\Linf(\GG)}(L_\kappa(\GG))$, it follows from Lemma \ref{LemmaNormalizer1} that $u$ is in the center of $L_\kappa(\GG)$. We deduce that $u\in\C1_{\Linf(H)}\ot (\Linf(F)\cap\mathcal{Z}(\Linf(G)))$ by Lemma \ref{LemmaVNBlock}. Hence, we may and will assume that $N_\kappa=1$ for all $\kappa$. Hence, $B=\C^K$ and $K\geq 3$.

\vspace{0.2cm}

\noindent Let $u\in\mathcal{U}(\mathcal{Z}(\Linf(\GG)))$. By Theorem \ref{HI20}, $u\in\Linf(\GG)'\cap\Linf(H)\ot\Linf(F)$. In particular, $u\in\mathcal{Z}(L_1(\GG))\cap\mathcal{Z}(L_2(\GG))$. For each $1\leq\kappa\leq K$, we can again apply Theorem \ref{HI20} to the amalgamated free product $L_\kappa(\GG)=(\C^K\ot\Linf(G))\underset{\C^K\ot\Linf(F)}{*}(\Linf(H)\ot\Linf(F))$ to deduce:
$$\mathcal{Z}(L_\kappa(\GG))\subseteq\mathcal{Z}(\Linf(H)\ot\Linf(F))\cap\beta_\kappa(\C^K)\ot\Linf(F).$$
In particular, $u\in(\beta_1(\C^K)\ot\Linf(F))\cap(\beta_2(\C^K)\ot\Linf(F))=\C1\ot\Linf(F)$, by Lemma \ref{LemmaTdim1}. Then, there exists $v\in\Linf(F)$ such that $u=1\ot v$ and since $u$ is central in $\C^K\ot\Linf(G)$, we have $v\in\mathcal{Z}(\Linf(G))$. Hence, $\mathcal{Z}(\Linf(\GG))\subseteq \C1\ot\mathcal{Z}(\Linf(G))\cap\Linf(F)$.

\vspace{0.2cm}

\noindent$(2)$. Once again, we may and will assume that $K\geq 2$, By Lemma \ref{LemmaVNBlock}. Let $p=1\ot q$ be a projection in $\mathcal{Z}(\Linf(\GG))$, with $q\in\mathcal{Z}(\Linf(G))\cap\Linf(F)$. Note that $p$ is in the centralizer of $h_{\GG}$ and $q$ is in the centralizer of $h_G$ and:
$$\Linf(\GG)p\simeq\underset{\Linf(H)\ot\Linf(F)q,1\leq\gamma\leq K}{*}L_\gamma(\GG)p.$$
Since $L_\kappa(\GG)p\nprec_{L_\kappa(\GG)p}\Linf(H)\ot\Linf(F)q$ (Lemma \ref{LemmaNprec}) we have $\mathcal{N}_{\Linf(\GG)p}( L_\gamma(\GG)p)\subset L_\gamma(\GG)p$ for all $1\leq\gamma\leq K$, by Lemma \ref{LemmaNormalizer1}. Define:
$$T':=\{t\in\R\,:\,\exists u\in\mathcal{U}(\mathcal{Z}(\Linf(F)q))\,:\,\sigma_t^{h_G}\vert_{\Linf(G)q}={\rm Ad}(u)\}\cap\{t\in\R\,:\,\sigma_t^{h_H}=\id\}.$$
Let $t\in T'$ and let $u\in\mathcal{U}(\mathcal{Z}(\Linf(F)q))$ such that $\sigma_t^{h_G}\vert_{\Linf(G)q}={\rm Ad}(u)$. Since $\sigma_t^{h_H}=\id$ one has $\sigma_t^\psi=\id$ and $\sigma_{-t}^{\psi_\gamma}=\id$ for all $1\leq\gamma\leq K$. Arguing as in the end of the proof of Lemma \ref{LemmaVNBlock}, we deduce that $\sigma_t^{\varphi_\gamma}\vert_{L(\Gtilde_\gamma)(1\ot p)}={\rm Ad}(v)$, where
$$v:=(1\ot u)=(\beta_\gamma\ot\id)(1\ot u)\in\mathcal{U}(L_\gamma(\Gtilde)(1\ot p))\subset\mathcal{U}(\Linf(\Gtilde)(1\ot p)),\text{ for all }\gamma\in\{1,\dots,K\}.$$ Hence $\sigma_t^{\varphi_\gamma}\vert_{L(\Gtilde_\gamma)(1\ot p)}$ is actually implemented by the same unitary in $\Linf(F)q$, for all $1\leq\gamma\leq K$. Since the isomorphism $M_{N_\gamma}(\C)\ot L_\gamma(\GG)p\simeq L_\gamma(\Gtilde)(1\ot p)$ intertwines $\psi_\gamma^{-1}\ot h_\gamma\vert_{L_\gamma(\GG)p}$, it intertwines the modular automorphisms $\sigma_t^{\psi_\gamma^{-1}}\ot\sigma_t^{h_\gamma}\vert_{L_\gamma(\GG)p}=\id_{M_{N_\gamma}(\C)}\ot\sigma_t^{h_\gamma}\vert_{L_\gamma(\GG)p}$ and $\sigma_t^{\varphi_\gamma}\vert_{L_\gamma(\Gtilde)(1\ot p)}$ and since the isomorphism is the inclusion on $\Linf(H)\ot\Linf(F)q$, $\id_{M_{N_\gamma}(\C)}\ot\sigma_t^{h_\gamma}\vert_{L_\gamma(\GG)p}$ and hence $\sigma_t^{h_\gamma}\vert_{L_\gamma(\GG)p}$ is implemented by the same unitary in $\Linf(F)q$, for all $1\leq\gamma\leq K$. Since $\sigma_t^{h_\GG}\vert_{\Linf(\GG)p}=\underset{1\leq \gamma\leq K}{*}\sigma_t^{h_\gamma}\vert_{L_\gamma(\GG)p}$, we deduce that $t\in T(\Linf(\GG)p)$.

\vspace{0.2cm}

\noindent Let us prove the converse inclusion. Let $t\in T(\Linf(\GG)p)$. Then, there exists $v\in\mathcal{U}(\Linf(\GG)p)$ such that $\sigma_t^{h_\GG}\vert_{\Linf(\GG)p}={\rm Ad}(v)$. Since $\sigma_t^{h_\GG}({L_\gamma(\GG)})=\sigma_t^{h_\gamma}(L_\gamma(\GG))=L_\gamma(\GG)$, we have $v\in\mathcal{N}_{\Linf(\GG)p}(L_\gamma(\GG)p)\subset L_\gamma(\GG)p$, for all $1\leq\gamma\leq K$. If there exists $1\leq\kappa\leq K$ such that $N_\kappa\geq 2$ then it follows that $t\in T(L_\kappa(\GG)p)$ so $t\in T'$, by Lemma \ref{LemmaVNBlock}. Hence, we may and will assume that $N_\kappa=1$, for all $1\leq\kappa\leq K$. In that case, $B=\C^K$, $K\geq 3$ and $\psi$ is the uniform trace since it is the unique $\delta$-form on $\C^K$. Viewing, for all $1\leq\gamma\leq K$,
$$L_\gamma(\GG)p=(\C^K\ot\Linf(G)q)\underset{\C^K\ot\Linf(F)q}{*}(\Linf(H)\ot\Linf(F)q)$$
and observing that:
$$\Linf(H)\ot\Linf(F)q\nprec_{\Linf(H)\ot\Linf(F)q}\beta_\kappa(\C^K)\ot\Linf(F)q\text{ and,}$$
$$\C^K\ot\Linf(G)q\nprec_{\C^K\ot\Linf(G)q}\C^K\ot\Linf(F),$$
we may apply Lemma \ref{LemmaNormalizer1} to deduce that $v\in (\C^K\ot\Linf(G)q)\cap(\Linf(H)\ot\Linf(F)q)\subset L_\gamma(\GG)q$, for all $1\leq \gamma\leq K$. In particular, $v\in\beta_1(\C^K)\ot\Linf(F)q\cap\beta_2(\C^K)\ot\Linf(F)q=\C1\ot\Linf(F)q$, by Lemma \ref{LemmaTdim1}. Write $v=1\ot u$, where $u\in\mathcal{U}(\Linf(F)q)$. Since, $\sigma_t^\psi\ot\sigma_t^{h_G}=\id\ot\sigma_t^{h_G}={\rm Ad}(1\ot u)$ we deduce that $\sigma_t^{h_G}={\rm Ad}(u)$ and $u\in\mathcal{Z}(\Linf(F))$ (since $F$ is Kac). We also have $\sigma_t^{h_H}\ot\sigma_t^{h_F}={\rm Ad}((\beta_1\ot\id)(1\ot u))=\id$, which implies that $\sigma_t^{h_H}=\id$.

\vspace{0.2cm}

\noindent$(3)$. We fix a projection $p\in\mathcal{Z}(\Linf(\GG))$ and we write $p=1\ot q$, where $q\in \mathcal{Z}(\Linf(G))\cap\Linf(F)$ is a projection. 
\vspace{0.2cm}

\noindent$(a)\Rightarrow(b)$. Assume that $H$ is Kac and there exists $Q\in\mathcal{Z}(\Linf(F)q)$ positive and invertible such that $\sigma_t^{h_G}\vert_{\Linf(G)q}={\rm Ad}(Q^{it})$. Then $\psi$ and $\psi_\kappa^{-1}$ are traces so $$\sigma_t^{\varphi_\kappa}\vert_{M_{N_\kappa}(\C)\ot \Linf(H)\ot\Linf(F)q}=\id={\rm Ad}((\beta_\kappa\ot\id)(1\ot Q^{it}))$$
since $Q\in\mathcal{Z}(\Linf(F)q)$. Also, $\sigma_t^{\varphi_\kappa}\vert_{B\ot\Linf(G)q}=\id\ot\sigma_t^{h_G}\vert_{\Linf(G)q}={\rm Ad}(1\ot Q^{it})$. Hence, $\sigma_t^{\varphi_\kappa}\vert_{L_\kappa(\GG)p}={\rm Ad}(Q_1^{it})$, where $Q_1:=(1\ot Q)=(\beta_\kappa\ot\id)(1\ot Q)\in L_\kappa(\Gtilde)p$, for all $1\leq\kappa\leq K$. Arguing as in the end of the proof of $(2)$, we deduce that $\sigma_t^{h_\kappa}\vert_{L_\kappa(\GG)p}$ is implemented by the same unitary $Q_1^{it}\in\Linf(F)q$, for all $1\leq\kappa\leq K$ and  so $\sigma_t^{h_\GG}\vert_{\Linf(\GG)p}$ is also implemented by $Q_1^{it}\in\Linf(F)q$, for all $t\in\R$. It follows that the faithful state $\tau=h_{\GG}(p)^{-1}h_\GG(Q_1^{-1}\cdot)\vert_{\Linf(\GG)p}$ is a trace on $\Linf(\GG)p$.

\vspace{0.2cm}\noindent$(b)\Rightarrow(c)$ is obvious.

\vspace{0.2cm}

\noindent$(c)\Rightarrow(a)$. Assume that $\Linf(\GG)p$ is semifinite. Hence, $T(\Linf(\GG)p)=\R$ and it follows from $(2)$ that $H$ is Kac. In particular, $\psi$ is a tracial $\delta$-form so it is the uniform trace on $B$ i.e. $\psi=\frac{1}{K}\sum_\kappa\frac{1}{N_\kappa}{\rm Tr}_\kappa$ (and $\psi_\gamma$ is tracial, for all $1\leq\gamma\leq K$). Moreover, since $L_\gamma(\GG)p\subset\Linf(\GG)p$ has expectation, $L_\gamma(\GG)p$ is semifinite, for all $1\leq\gamma\leq K$, so $L_\gamma(\Gtilde)(1\ot p)=M_{N_\gamma}(\C)\ot L_\gamma(\GG)p$ also is. Write $L_\gamma(\Gtilde)(1\ot p)=(B\ot\Linf(G)q)\underset{B\ot\Linf(F)q}{*}(M_{N_\gamma}(\C)\ot\Linf(H)\ot\Linf(F)q)$ and apply Theorem \ref{HI20} $(3)$ to deduce that there exists a faithful normal semifinite trace $\omega$ on $B\ot\Linf(F)q$ such that $\omega\circ(\id\ot E_F)$ is tracial on $B\ot\Linf(G)q$. Note that, since  $B\ot\Linf(F)q$ is finite dimensional, $\omega$ is actually finite and up to renormalization, we may and will assume that $\omega$ is a faithful normal tracial state on $B\ot\Linf(F)q$. Then, there exists $Q_0\in\mathcal{Z}(B\ot\Linf(F)q)$ positive and invertible such that $\omega=(\psi\ot h_{F})(Q_0\cdot)$. Write $Q_0=\sum_{\gamma,r}e^\gamma_{rr}\ot Q_\gamma$, where $Q_\gamma\in\mathcal{Z}(\Linf(F)q)$ is positive and invertible. In particular, $Q:=\frac{1}{K}\sum_\gamma Q_\gamma\in\mathcal{Z}(\Linf(F)q)$ is positive and invertible and, for all $x\in \Linf(G)q$, one has:
\begin{eqnarray*}
\omega\circ(\id\ot E_F)(1\ot x)&=&\frac{1}{K}\sum_{\gamma,r}\frac{1}{N_\gamma}({\rm Tr}_\gamma\ot h_F)(e^\gamma_{rr}\ot Q_\gamma E_F(x))=\frac{1}{K}\sum_{\gamma}h_F(Q_\gamma E_F(x))\\
&=&\frac{1}{K}\sum_{\gamma}h_F( E_F(Q_\gamma x))=\frac{1}{K}\sum_{\gamma}h_G(Q_\gamma x)=h_G(Qx).
\end{eqnarray*}
Note that the modular group of $h_G(Q\cdot)\vert_{\Linf(G)q}$ is given by $Q^{it}\sigma_t^{h_G}\vert_{\Linf(G)q}(\cdot)Q^{-it}$. Since $\omega\circ(\id\ot E_F)$ is tracial, the faithful state $h_G(Q\cdot)\vert_{\Linf(G)q}$ also is hence $Q^{it}\sigma_t^{h_G}(\cdot)\vert_{\Linf(G)q}Q^{-it}=\id$ and $\sigma_t^{h_G}\vert_{\Linf(G)q}={\rm Ad}(Q^{-it})$, for all $t\in\R$.\end{proof}

\begin{remark} Note that condition $(a)$ in statement $(3)$ of the previous Theorem implies in particular that $\Linf(G)$ is a finite von Neumann algebra.
\end{remark}

\begin{example}
Under the assumptions of Theorem \ref{ThmFactor}, $\Linf(\GG)$ is a factor when the amalgam $F$ is trivial. However, coming back to the example $\GG:=\widehat{\Gamma}\wr_{*,\beta}\widehat{\Lambda}$ given in Section \ref{SectionNewExample}, Remark \ref{RemarkNewExample} implies that $\Linf(\GG)=L(\Gamma)^{*\vert\Lambda\vert}\ot {\rm L}^\infty(\Lambda)$ whenever $\Lambda$ is abelian. Hence, $\Linf(\GG)$ is never a factor, as soon as $\Lambda$ is a non-trivial abelian finite group.
\end{example}

\noindent Let us compute the invariant $T^\tau_\Int(\GG):=\{t\in\R\,:\,\exists u\in\mathcal{U}(\Linf(\GG)),\,\,\tau_t^\GG={\rm Ad}(u)\}$ as defined in \cite{KS24}. We also recall the following notation from \cite{KS24}, for any CQG $H$: $$T^\tau(H):=\{t\in\R\,:\,\tau_t^H=\id\}.$$

\begin{proposition}
If $\beta$ is $2$-ergodic, ${\dim}(B)\geq 3$, both $\Irr(G)$ and $\Irr(H)$ are infinite and $\Irr(F)$ is finite then $ T^\tau_\Int(\GG) = \{t\in\R\,:\,\exists u\in\mathcal{U}(\mathcal{Z}(\Linf(F))\,\,\tau_t^G={\rm Ad}(u)\} \cap T^\tau(H)$.
\end{proposition}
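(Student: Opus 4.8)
The strategy mirrors the computation of $T^\tau(\GG)$ in the preceding corollary, but now we must keep track of inner unitaries. First I would establish the inclusion $\subseteq$. Let $t\in T^\tau_\Int(\GG)$, so there is $u\in\mathcal{U}(\Linf(\GG))$ with $\tau_t^\GG={\rm Ad}(u)$. Since $H\times F$ is a dual quantum subgroup of $\GG$, the scaling group $\tau_t^\GG$ preserves $\Linf(H)\ot\Linf(F)$ and restricts there to $\tau_t^H\ot\tau_t^F$; moreover, since $F$ is a finite quantum group, it is Kac (by \cite{VD97}), so $\tau_t^F=\id$. Thus $u\in\mathcal{N}_{\Linf(\GG)}(\Linf(H)\ot\Linf(F))''$. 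Next, using that each $L_\kappa(\GG)$ is preserved by $\tau_t^\GG$ (since $\tau_t^{\Gtilde}\vert_{L_\kappa(\Gtilde)}=\widetilde\tau_t^\kappa$ preserves $M_{N_\kappa}(\C)\ot L_\kappa(\GG)$ by Proposition \ref{PropScaling}), one gets $u\in\mathcal{N}_{\Linf(\GG)}(L_\kappa(\GG))''$ for each $\kappa$. The key point, exactly as in the proof of Theorem \ref{ThmFactor}(2), is to push $u$ into $\Linf(H)\ot\Linf(F)$: if some $N_\kappa\ge 2$ one uses Lemma \ref{LemmaNprec} and Lemma \ref{LemmaNormalizer1} to conclude $u\in L_\kappa(\GG)$, then runs the same argument inside the block $L_\kappa(\Gtilde)=(B\ot\Linf(G))\ast_{B\ot\Linf(F)}(M_{N_\kappa}(\C)\ot\Linf(H)\ot\Linf(F))$ with Theorem \ref{HI20}(2) and Lemma \ref{LemmaT}(3) to force $u=1\ot v$ with $v\in\mathcal{U}(\mathcal{Z}(\Linf(F)))$ and $\tau_t^H=\id$; if all $N_\kappa=1$ then $B=\C^K$ with $K\ge 3$, and one uses Theorem \ref{HI20} in each block together with Lemma \ref{LemmaTdim1} (intersecting $\beta_1(\C^K)$ and $\beta_2(\C^K)$) to obtain the same conclusion $u=1\ot v$, $v\in\mathcal{U}(\mathcal{Z}(\Linf(F)))$, $\tau_t^H=\id$, i.e.\ $t\in T^\tau(H)$. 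With $u=1\ot v$ in hand, applying ${\rm Ad}(u)$ to $\rho(a)$ for $a\in\Linf(G)$ and using Proposition \ref{PropScaling}, namely $(\id\ot\tau_t^\GG)\rho=(\sigma_t^\psi\ot\id)\rho\circ\tau_t^G$ together with $t\in T^\tau(H)$ giving $\sigma_t^\psi=\id$ (this uses $T^\tau(H)\subset B^\psi$, as in the previous corollary), we get $\rho\circ\tau_t^G={\rm Ad}(1\ot v)\circ\rho={\rm Ad}_{\mathcal{Z}(\Linf(F))}$ applied coefficientwise, which by faithfulness of $\rho$ (Lemma \ref{rhofaith}) yields $\tau_t^G={\rm Ad}(v)$ with $v\in\mathcal{U}(\mathcal{Z}(\Linf(F)))$; hence $t$ lies in the right-hand side.

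For the reverse inclusion $\supseteq$, suppose $t\in T^\tau(H)$ and there is $v\in\mathcal{U}(\mathcal{Z}(\Linf(F)))$ with $\tau_t^G={\rm Ad}(v)$. Since $t\in T^\tau(H)$ we have $\tau_t^H=\id$, hence $\sigma_t^\psi=\id$ (as $T^\tau(H)\subset B^\psi$) and so $\sigma_{-t}^{\psi_\kappa}=\id$ for all $\kappa$; also $\tau_t^F=\id$ since $F$ is Kac. The goal is to produce a single unitary in $\Linf(\GG)$ implementing $\tau_t^\GG$. Consider $w:=1\ot v=(\beta_\kappa\ot\id)(1\ot v)\in\mathcal{U}(L_\kappa(\Gtilde))$; since $v$ is central in $\Linf(F)$ it makes sense in every block and defines the same element of $\mu(\Linf(H)\ot\Linf(F))\subset\Linf(\Gtilde)$. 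I would check that $\widetilde\tau_t^\kappa={\rm Ad}(w)$ on $L_\kappa(\Gtilde)$: on $B\ot\Linf(G)$ one has $\widetilde\tau_t^\kappa\vert_{B\ot\Linf(G)}=\sigma_{-t}^\psi\ot\tau_t^G=\id\ot{\rm Ad}(v)={\rm Ad}(1\ot v)$; on $M_{N_\kappa}(\C)\ot\Linf(H)\ot\Linf(F)$ one has $\widetilde\tau_t^\kappa=\sigma_{-t}^{\psi_\kappa}\ot\tau_t^H\ot\tau_t^F=\id={\rm Ad}((\beta_\kappa\ot\id)(1\ot v))$ because $v$ is central in $\Linf(F)$. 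By the universal property of the amalgamated free product $L_\kappa(\Gtilde)$, ${\rm Ad}(w)$ and $\widetilde\tau_t^\kappa$ agree. Consequently, restricting to $\Linf(\GG)\subset\Linf(\Gtilde)$ (which $\tau_t^{\Gtilde}$ preserves, with $\tau_t^\GG=\tau_t^{\Gtilde}\vert_{\Linf(\GG)}$ by Proposition \ref{PropScaling}), $\tau_t^\GG$ is implemented on each $L_\kappa(\GG)$, hence on the generated algebra $\Linf(\GG)$, by the single unitary $w\in\Linf(H)\ot\Linf(F)\subset\Linf(\GG)$. Therefore $\tau_t^\GG={\rm Ad}(w)$ with $w\in\mathcal{U}(\Linf(\GG))$, so $t\in T^\tau_\Int(\GG)$.

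\textbf{Main obstacle.} The delicate part is the $\subseteq$ direction, specifically forcing the implementing unitary $u$ to lie inside $\Linf(H)\ot\Linf(F)$ and in fact inside $\C 1\ot\mathcal{Z}(\Linf(F))$, before one can invoke faithfulness of $\rho$. This requires the full machinery of intertwining theory (the two-case analysis $N_\kappa\ge 2$ versus $B=\C^K$, $K\ge 3$) exactly as in Theorem \ref{ThmFactor}(2)--(3); I expect this step to consume most of the work, while the reverse inclusion is essentially a bookkeeping exercise with the amalgamated free product universal property. One subtlety to be careful about: in the reverse direction, since $\Linf(G)$ need not be a factor, $\mathcal{Z}(\Linf(F))$ may be larger than $\C$, and one must check that the chosen $v$ genuinely produces a well-defined element of $\Linf(\GG)$ compatible across all blocks — this is exactly why centrality of $v$ in $\Linf(F)$ (not merely in $\Linf(G)$) is the right hypothesis and makes $(\beta_\kappa\ot\id)(1\ot v)$ equal to $1\ot v$ in $\mu(\Linf(H)\ot\Linf(F))$.
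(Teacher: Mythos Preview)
Your plan is correct and follows the same two-case strategy as the paper's proof. One small correction: in the $N_\kappa\ge 2$ case you invoke Theorem~\ref{HI20}(2) to push the implementing unitary into the amalgam $B\ot\Linf(F)$, but that statement is specifically about the \emph{modular} automorphism $\sigma_t^\varphi$, not an arbitrary inner automorphism. What you actually need (and what the paper uses) is Lemma~\ref{LemmaNormalizer1} again, applied inside the block $L_\kappa(\Gtilde)$: since $\widetilde\tau_t^\kappa$ is implemented by a unitary $w=v\ot u_0$ (with $v\in\mathcal{U}(M_{N_\kappa}(\C))$ implementing $\sigma_{-t}^{\psi_\kappa}$) and preserves both factors of the amalgamated free product, $w$ normalizes each factor, and the non-embedding hypotheses force $w$ into the amalgam $B\ot\Linf(F)$; then Lemma~\ref{LemmaT}(3) finishes. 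Your appeal to faithfulness of $\rho$ to extract $\tau_t^G={\rm Ad}(v)$ is fine but slightly roundabout: once the implementing unitary is $1\ot v$ in the amalgam, restricting $\widetilde\tau_t^\kappa=\sigma_{-t}^\psi\ot\tau_t^G$ to $B\ot\Linf(G)$ gives $\tau_t^G={\rm Ad}(v)$ directly.
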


\begin{proof}
Let $t\in T^\tau_\Int(\GG)$ and $u_0\in \Ucal(\Linf(\GG))$ such that $\tau^\GG_{t} = \Ad(u_0)$. Since $\tau^\GG_t$ preserves the subalgebras $L_\kappa(\GG)$, and $\Ncal_{\Linf(\GG)}(L_\kappa(\GG))''\subset L_\kappa(\GG)$ we deduce that $u_0\in L_\kappa(\GG)$, for all $1\leq\kappa\leq K$.

\vspace{0.2cm}

\noindent First assume that, for some $1\le\kappa\leq K$, $N_\kappa\geq 2$. Identifying $L_\kappa(\Gtilde)$ with $M_{N_\kappa}(\C)\ot L_\kappa(\GG)$ and $\widetilde{\tau}^\kappa_t:=\tau_t^{\Gtilde}\vert_{L_\kappa(\Gtilde)}$ with $\sigma_{-t}^{\psi_\kappa}\ot\tau_t^{\GG}\vert_{L_\kappa(\GG)}$ we can write $\widetilde{\tau}^\kappa_t={\rm Ad}(w)$, where $w=v\ot u_0$ for some $v\in\mathcal{U}(M_{N_\kappa}(\C))$. Viewing $\Linf(\Gtilde)$ as an amalgamated free product and since, by construction, $\widetilde{\tau}^\kappa_t$ leaves globally invariant both subalgebras $B\ot\Linf(G)$ and $M_{N_\kappa}(\C)\ot\Linf(H)\ot\Linf(F)$ we deduce that there exists $u$ in the amalgam $M_{N_\kappa}(\C)\ot\Linf(F)$ such that $\widetilde{\tau}^\kappa_t={\rm Ad}(u)$. In particular, $\sigma_{-t}^{\psi_\kappa}\ot\tau_t^H\ot\id={\rm Ad}((\beta_\kappa\ot\id)(u))$ (since $\tau_t^F$ is trivial, because $\Linf(F)$ is finite dimensional). Lemma \ref{LemmaT} gives that $u\in \C1\ot \Zcal(\Linf(F))$ and $\tau_t^H=\id$. Hence, viewing $u\in \Zcal(\Linf(F))$ we have $\tau_t^G={\rm Ad}(u)$.

\vspace{0.2cm}

\noindent Now assume that $N_\kappa=1$, for all $1\leq\kappa\leq K$ so that $B=\C^K$, $\Linf(\Gtilde)=\Linf(\GG)$, $K\geq 3$ and $\psi$ is the uniform trace since it is the unique $\delta$-form on $\C^K$. Viewing, for all $1\leq\gamma\leq K$,
$$L_\gamma(\GG)=(\C^K\ot\Linf(G))\underset{\C^K\ot\Linf(F)}{*}(\Linf(H)\ot\Linf(F))$$
 and since $\tau_t^\GG$ leaves globally invariant both $\C^K\ot\Linf(G)$ and $\Linf(H)\ot\Linf(F)$, we may apply Lemma \ref{LemmaNormalizer1} to deduce that $u_0\in (\C^K\ot\Linf(G))\cap(\Linf(H)\ot\Linf(F))\subset L_\gamma(\GG)$, for all $1\leq \gamma\leq K$. In particular, $u_0\in\beta_1(\C^K)\ot\Linf(F)\cap\beta_2(\C^K)\ot\Linf(F)=\C1\ot\Linf(F)$, by Lemma \ref{LemmaTdim1}. Write $u_0=1\ot u$, where $u\in\mathcal{U}(\Linf(F))$. Since $$\tau_t^\GG\vert_{\C^K\ot \Linf(G)}=\sigma_{-t}^\psi\ot\tau_t^{h_G}={\rm Ad}(1\ot u)=\id\ot{\rm Ad}(u)$$ we deduce that $\tau_t^{h_G}={\rm Ad}(u)$ and $u\in\mathcal{Z}(\Linf(F))$ (since $\Linf(F)$ is finite dimensional). We also have:
 $$\tau_t^\GG\vert_{\Linf(H)\ot\Linf(F)}=\tau_t^{H}\ot\tau_t^{F}={\rm Ad}((\beta_1\ot\id)(1\ot u))=\id,$$
 which implies that $\tau_t^{H}=\id$. This shows the first inclusion.
 
 \vspace{0.2cm}
 
 \noindent Let now $t\in\R$ be such that $\tau_t^H=\id$ and $u\in\mathcal{U}(\mathcal{Z}(\Linf(F)))$ such that $\tau_t^G={\rm Ad}(u)$. By construction of $\tau_t^{\GG}$, we can argue as the in proof of the $T$-invariant computation in Theorem \ref{ThmFactor} to deduce that $\tau_t^{\GG}={\rm Ad}(u)$.\end{proof}

\begin{remark}
Under the hypothesis of the previous Proposition and if $F$ is moreover trivial then $T^\tau_\Int(\GG)=T^\tau(G)\cap T^\tau(H)=T^\tau(\GG)$. In particular, the Vaes $T$-invariant \cite{Va05}
$$T_V(\GG):=\{t\in\R\,:\,\exists u\in\mathcal{U}(\Linf(\GG))\,\,\tau_t^\GG={\rm Ad}(u)\text{ and }\Delta_\GG(u)=u\ot u\}$$ satisfies
$T_V(\GG)=T^\tau_\Int(\GG)=T^\tau(\GG)=T^\tau(G)\cap T^\tau(H)$.
\end{remark}

\subsection{Primeness and absence of Cartan subalgebras}

\noindent In this subsection we prove the following result, which is a more general form of Theorem \ref{ThmC}.

\begin{theorem}\label{ThmPrimeCartan}
Suppose that $\beta$ be is a $2$-ergodic action, ${\rm dim}(B)\geq 3$, $\Irr(G)$ and $\Irr(H)$ are infinite,  $\Irr(F)$ is finite and $\Linf(F)\cap\mathcal{Z}(\Linf(G))=\C1$. Then $\Linf(\GG)$ is a non-amenable factor of type ${\rm II}_1$ or ${\rm III}$ satisfying the following:
\begin{enumerate}
    \item $\Linf(\GG)$ is prime.
    \item $\Linf(\GG)$ does not have any Cartan subalgebra.
\end{enumerate}
\end{theorem}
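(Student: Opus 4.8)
The plan is to combine the amalgamated free product structure of $\Linf(\GG)$ established in Section~\ref{sectionvna} with the general deformation/rigidity machinery for amalgamated free products recalled in the intertwining subsection. The starting point is the decomposition $\Linf(\GG)=\underset{\Linf(H)\ot\Linf(F),\,1\leq\gamma\leq K}{*}L_\gamma(\GG)$ together with the fact, proved in Lemma~\ref{LemmaNprec}, that $L_\kappa(\GG)\nprec_{L_\kappa(\GG)}\Linf(H)\ot\Linf(F)$, so that in particular each inclusion $\Linf(H)\ot\Linf(F)\subset L_\gamma(\GG)$ is entirely non-trivial. Since $\Irr(H)$ is infinite, $\Linf(H)$ is diffuse by \cite[Lemma~2.4]{FT24}, and $\Irr(F)$ finite implies $\Linf(H)\ot\Linf(F)$ has infinite dimension but $B\ot\Linf(F)$ is finite dimensional; this puts us in the range of hypotheses of Theorem~\ref{HI20}. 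First I would deduce factoriality and non-amenability: by Theorem~\ref{ThmFactor}~(1) the center of $\Linf(\GG)$ is $\C1_{\Linf(H)}\ot(\Linf(F)\cap\mathcal{Z}(\Linf(G)))$, which equals $\C1$ under the standing hypothesis $\Linf(F)\cap\mathcal{Z}(\Linf(G))=\C1$, and $\Linf(\GG)$ has no amenable direct summand, hence is a non-amenable factor; its type is ${\rm II}_1$ or ${\rm III}$ by Corollary~\ref{CorKac} and Theorem~\ref{ThmFactor}~(3) (the type ${\rm III}_0$ and type ${\rm II}_\infty$ possibilities being excluded because $\Linf(\GG)$ has a faithful state and $T^\tau$-type considerations, or more directly because a genuine amalgamated free product factor of this shape is never of type ${\rm I}$ or ${\rm II}_\infty$).

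For primeness, the strategy is the standard one: suppose $\Linf(\GG)\cong P\ot Q$ with $P,Q$ diffuse factors; then $P$ contains a diffuse amenable von Neumann subalgebra $A$ (e.g.\ generated by a single Haar unitary) whose relative commutant contains $Q$, so that $\mathcal{N}_{\Linf(\GG)}(A)''\supseteq P\vee Q=\Linf(\GG)$ is the whole algebra. I would apply Lemma~\ref{normalizerfp} to the amalgamated free product $\Linf(\GG)=L_1(\GG)\underset{\Linf(H)\ot\Linf(F)}{*}L_1(\GG)'$ (grouping the other blocks): either $\mathcal{N}_{\Linf(\GG)}(A)''\prec_{\Linf(\GG)}L_\kappa(\GG)$ for some $\kappa$, or $\mathcal{N}_{\Linf(\GG)}(A)''$ is amenable. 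The latter is impossible since $\Linf(\GG)$ is not amenable. In the former case, since $\mathcal{N}_{\Linf(\GG)}(A)''=\Linf(\GG)$, we would get $\Linf(\GG)\prec_{\Linf(\GG)}L_\kappa(\GG)$, which forces the inclusion $L_\kappa(\GG)\subset\Linf(\GG)$ to be "large" and contradicts the fact that the other blocks are non-trivial and free from $L_\kappa(\GG)$ over the finite-dimensional amalgam — concretely, one reaches a contradiction with Lemma~\ref{LemmaNprec} applied after corners, using that $A\prec_{\Linf(\GG)}L_\kappa(\GG)$ together with $A'\cap\Linf(\GG)\supseteq Q$ diffuse is incompatible with the freeness structure (this is where a short argument intertwining $Q$ into a single block and using that $Q'\cap L_\kappa(\GG)$ would have to be diffuse while it sits inside a single leg is needed). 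One must be slightly careful when $K=1$, where $\Linf(\GG)=L_1(\GG)=(B\ot\Linf(G))\underset{B\ot\Linf(F)}{*}(M_{N_1}(\C)\ot\Linf(H)\ot\Linf(F))$ is already a two-term amalgamated free product over a finite-dimensional algebra, so the same Lemma~\ref{normalizerfp}/\ref{LemmaNprec} mechanism applies directly.

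For absence of Cartan subalgebras, suppose $A\subset\Linf(\GG)$ is a Cartan subalgebra; then $A$ is diffuse abelian (hence amenable), with expectation (it is maximal abelian with a faithful normal conditional expectation by hypothesis), and $\mathcal{N}_{\Linf(\GG)}(A)''=\Linf(\GG)$. Again Lemma~\ref{normalizerfp} applied to the amalgamated free product decomposition gives either $\Linf(\GG)=\mathcal{N}_{\Linf(\GG)}(A)''\prec_{\Linf(\GG)}L_\kappa(\GG)$ or $\Linf(\GG)$ amenable; the second is false, and the first is ruled out exactly as in the primeness argument, or alternatively by first applying Lemma~\ref{LemmaNormalizer} to get $A\prec_{\Linf(\GG)}L_\kappa(\GG)$ and then noting that, since $A$ is regular, the normalizer must stay inside $L_\kappa(\GG)$ by Lemma~\ref{LemmaNormalizer1}, contradicting that $\Linf(\GG)\neq L_\kappa(\GG)$ (because the remaining blocks, or the free complement in the $K=1$ case, contribute genuinely new elements). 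The main obstacle I anticipate is the bookkeeping in the case $K=1$ and $N_1=1$ excluded by $\dim(B)\geq 3$ versus the cases actually occurring: one needs $\dim(B)\geq 3$ precisely so that in the $B=\C^K$ situation $K\geq 3$ and Lemma~\ref{LemmaTdim1} is available, and in the matrix-block situation $N_\kappa\geq 2$ so that Lemma~\ref{LemmaT} applies; checking that in every admissible configuration the relevant block inclusion $L_\kappa(\GG)\subsetneq\Linf(\GG)$ is strict and the freeness is "non-degenerate" is the technical heart, and it is handled by the corner/intertwining arguments already packaged in Lemmas~\ref{LemmaNprec}, \ref{LemmaNormalizer}, \ref{LemmaNormalizer1} and \ref{normalizerfp}.
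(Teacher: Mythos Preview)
Your overall strategy---use the amalgamated free product structure, invoke Lemma~\ref{normalizerfp} on a diffuse amenable subalgebra, and derive a contradiction from non-amenability---is the right \emph{shape}, but there is a genuine gap in the choice of free product decomposition.

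You propose to apply Lemma~\ref{normalizerfp} (and implicitly Lemma~\ref{LemmaNormalizer}) to the decomposition $\Linf(\GG)=L_1(\GG)\underset{\Linf(H)\ot\Linf(F)}{*}L_1(\GG)'$. But both of these lemmas have the standing hypothesis that the amalgam $N$ is \emph{finite dimensional}, and here the amalgam is $\Linf(H)\ot\Linf(F)$, which is diffuse (you even note this yourself). The finite-dimensionality of $N$ is not a cosmetic assumption: it is what allows the proof of Lemma~\ref{LemmaNormalizer} to produce a sequence of unitaries in the centralizer with $\widetilde E_1(xu_ky)\to 0$ in \emph{norm} (via Lemma~\ref{diffusefd}), which in turn feeds Lemma~\ref{LemmaNets}. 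With a diffuse amalgam this step simply breaks down, and the dichotomy of Lemma~\ref{normalizerfp} is not available. Your sketched endgame (``$\Linf(\GG)\prec_{\Linf(\GG)}L_\kappa(\GG)$ contradicts Lemma~\ref{LemmaNprec} after corners'') is also not a concrete argument; Lemma~\ref{LemmaNprec} concerns $L_\kappa(\GG)\nprec\Linf(H)\ot\Linf(F)$, not non-intertwining of $\Linf(\GG)$ into $L_\kappa(\GG)$. Finally, in your $K=1$ remark, the displayed amalgamated free product is $L_1(\Gtilde)=M_{N_1}(\C)\ot L_1(\GG)$, not $L_1(\GG)=\Linf(\GG)$ itself.

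The paper's fix is to work not in $\Linf(\GG)$ but in the \emph{block-extended} algebra $\Linf(\Gtilde)$, which by Remark~\ref{rmk fd amal} decomposes as $M_{K-1}\underset{B\ot\Linf(F)}{*}\nu_K(B\ot\Linf(G))$ over the genuinely finite-dimensional amalgam $B\ot\Linf(F)$. One first proves the key Claim that $\Linf(\GG)\nprec_{\Linf(\Gtilde)}M_{K-1}$ and $\Linf(\GG)\nprec_{\Linf(\Gtilde)}\nu_K(B\ot\Linf(G))$, by exhibiting explicit unitaries $\rho_K(u_n)$ (resp.\ $\mu(v_n\ot 1)$) built from diffuse sequences in $\Linf(G)^{h_G}$ (resp.\ $\Linf(H)^{h_H}$). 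With that Claim in hand, Lemmas~\ref{LemmaNormalizer} and~\ref{normalizerfp} (now legitimately applicable) force, in both the primeness and Cartan arguments, that $\mathcal N_{\Linf(\Gtilde)}(A)''$---which contains $\Linf(\GG)$---intertwines into one of the two legs, contradicting the Claim. The passage to $\Linf(\Gtilde)$ is precisely the missing idea in your proposal.
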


\begin{proof}
The fact that $\Linf(\GG)$ is a non-amenable factor of type ${\rm II}_1$ or ${\rm III}$ follows from Theorem \ref{ThmFactor}. Using the notations of Remark \ref{rmk fd amal}, we write $\Linf(\Gtilde)$ as an amalgamated free product: $$ \Linf(\Gtilde) \simeq M_K \simeq M_{K-1} \underset{B\ot \Linf(F)}{\ast} \nu_K(B\ot \Linf(G)).$$ To simplify the notations we write $M$ for $M_{K-1}$. We then have the following Claim.

\vspace{0.2cm}

\noindent\textbf{Claim.} \textit{One has $\Linf(\GG)\nprec_{\Linf(\Gtilde)} M$ and $\Linf(\GG)\nprec_{\Linf(\Gtilde)} \nu_K(B\ot\Linf(G))$.}

\vspace{0.2cm}

\noindent\textit{Proof of the Claim.} Since $\Irr(G)$ is infinite, the von Neumann algebra $\Linf(G)$ is diffuse \cite[Lemma 2.4]{FT24} and since the Haar state of $G$ is almost periodic, its centralizer $\Linf(G)^{h_G}$ is still diffuse. Let $(u_n)_n$ be a sequence of unitaries in $\Linf(G)^{h_G}$ going to $0$ $\sigma$-weakly. Since $\Linf(\Gtilde)=\nu_K(B\ot{\rm L}^\infty(G))*_{B\ot\Linf(F)}M$ we may apply Lemma \ref{diffusefd} to deduce that $$(\id_B\ot E_F'')(x(1\ot u_n)y)\rightarrow 0\,\, \text{ in norm for all }x,y\in B\ot\Linf(G).$$
Since $\nu_K(1\ot u_n)$ is in the centralizer of $h_{\Gtilde}$ for all $n$, we may apply Lemma \ref{LemmaNets} to deduce that $E_{M}(x\nu_K(1\ot u_n)y)\rightarrow 0$ $\sigma$-strongly for all $x,y\in \Linf(\Gtilde)$. In particular, $E_{M}(x\rho^K_{ij}(u_n)y)\rightarrow 0$ $\sigma$-strongly for all $x,y\in \Linf(\Gtilde)$ and $1\leq i,j\leq N_\kappa$. Note that the map $$\rho_K\,:\, \Linf(G)\rightarrow M_{N_K}(\C)\ot\Linf(\GG),\quad a\mapsto\sum_{i,j}e^K_{ij}\ot\rho^K_{ij}(a)$$ is a unital $*$-homomorphism so that $\rho_K(u_n)$ is unitary in $M_{N_K}(\C)\ot\Linf(\GG)$. Moreover,
$$E_{M_{N_K}(\C)\ot M}(X\rho_K(u_n)Y)=\sum_{i,j,r,s}e^K_{ij}\ot E_{M}(X_{ir}\rho^\kappa_{rs}(u_n)Y_{sj})\rightarrow 0\,\,\sigma\text{-strongly}$$
for all $X,Y\in M_{N_\kappa}(\C)\ot \Linf(\Gtilde)$. Since $M_{N_K}(\C)\ot\Linf(\GG)$ is finite or of type ${\rm III}$ we have $$M_{N_K}(\C)\ot \Linf(\GG)\nprec_{M_{N_K}(\C)\ot\Linf(\Gtilde)} M_{N_K}(\C)\ot M.$$
Hence, $\Linf(\GG)\nprec_{\Linf(\Gtilde)}M$.

\vspace{0.2cm}

\noindent Now, since $\Irr(H)$ is infinite and arguing as in the beginning of the proof, we have a sequence of unitaries $(v_n)_n$ in $\Linf(H)^{h_H}$ going to $0$ $\sigma$-weakly so $1\ot v_n\ot 1\in M_{N_K}(\C)\ot\Linf(H)\ot\Linf(F)$ is going to $0$ $\sigma$-weakly and, by Lemma \ref{diffusefd}, we deduce that $(E_K\ot\id_F)(x(1\ot v_n\ot 1)y)\rightarrow 0$ in norm, for all $x,y\in M_{N_K}(\C)\ot\Linf(H)\ot\Linf(F)$. Next, since $\mu(v_n\ot 1)\in M$ is in the centralizer of $h_{\Gtilde}$ for all $n$, we can apply Lemma \ref{LemmaNets} and deduce that $E_{\nu_K(B\ot\Linf(G))}(x\mu(v_n\ot 1)y)\rightarrow 0$ $\sigma$-strongly for all $x,y\in\Linf(\Gtilde)$. Note that $\mu(v_n\ot 1)$ is actually a sequence of unitaries in $\Linf(\GG)$. Since $\Linf(\GG)$ is finite or type ${\rm III}$, we deduce that $\Linf(\GG)\nprec_{\Linf(\Gtilde)}\nu_K(B\ot\Linf(G))$.$\qed$

\vspace{0.2cm}

\noindent\textit{End of the proof of Theorem \ref{ThmPrimeCartan}.}$(1)$. By contradiction, assume that $\Linf(\GG)=P\ot Q$, where $P$ and $Q$ are two diffuse factors. Since $\Linf(\GG)$ is either finite or type ${\rm III}$ we may and will assume that both $P$ and $Q$ are either finite or of type ${\rm III}$. Moreover, since $\Linf(\GG)$ is not amenable one of them, let say $P$, is not amenable. Note that, since $\Linf(\GG)=P\ot Q$, there is a normal faithful conditional expectation from $\Linf(\GG)$ to $Q$ (and to $P$) and since $\Linf(\GG)\subset\Linf(\Gtilde)$ has expectation, we deduce that both $Q, P\subset\Linf(\Gtilde)$ have expectation. Since $Q$ is diffuse, we may apply \cite[Lemma 2.1]{HU16b} to get a f.n.s. $\omega\in Q_*$ with diffuse centralizer $A:=Q^\omega\subset Q$. Note that $A\subset Q$ has expectation by Takesaki's Theorem hence $A\subset\Linf(\Gtilde)$ also has expectation. Also, $P\subset Q'\cap\Linf(\GG)\subset A'\cap\Linf(\GG)\subset A'\cap\Linf(\Gtilde)$. Since $P$ is not amenable and has expectation, $A'\cap\Linf(\Gtilde)$ is not amenable either. Since $A$ is finite, diffuse and has expectation we may apply \cite[Theorem 4.4]{HU16a} to deduce that either $A\prec_{\Linf(\Gtilde)}\nu_K(B\ot\Linf(G))$ or $A\prec_{\Linf(\Gtilde)}M$. Applying now Lemma \ref{LemmaNormalizer} we deduce that $\mathcal{N}_{\Linf(\Gtilde)}(A)''\prec_{\Linf(\Gtilde)}\nu_K(B\ot\Linf(G))$ or $\mathcal{N}_{\Linf(\Gtilde)}(A)''\prec_{\Linf(\Gtilde)}M$. Since $P\subset A'\cap\Linf(\Gtilde)$, Remark \ref{RmkEmbedds} $(4)$ implies that $P\prec_{\Linf(\Gtilde)}\nu_K(B\ot\Linf(G))$ or $P\prec_{\Linf(\Gtilde)}M$. Applying again Lemma \ref{LemmaNormalizer} we get that $\mathcal{N}_{\Linf(\Gtilde)}(P)''\prec_{\Linf(\Gtilde)}\nu_K(B\ot\Linf(G))$ or $\mathcal{N}_{\Linf(\Gtilde)}(P)''\prec_{\Linf(\Gtilde)}M$. However, $P,Q\subset\mathcal{N}_{\Linf(\Gtilde)}(P)''$ which implies that $\Linf(\GG)\subset\mathcal{N}_{\Linf(\Gtilde)}(P)''$ so that, with Remark \ref{RmkEmbedds} $(4)$ and the Claim, we obtain a contradiction.

\vspace{0.2cm}

\noindent$(2)$. By contradiction, let $A\subset\Linf(\GG)$ be a Cartan subalgebra (with expectation). Since $\Linf(\GG)\subset\Linf(\Gtilde)$ has expectation, $A\subset\Linf(\Gtilde)$ has expectation. Note that $\Linf(\GG)=\mathcal{N}_{\Linf(\GG)}(A)''\subset \mathcal{N}_{\Linf(\Gtilde)}(A)''$. Hence, the Claim implies that
$$ \mathcal{N}_{\Linf(\Gtilde)}(A)''\nprec_{\Linf(\Gtilde)}\nu_K(B\ot\Linf(G))\text{ and }\mathcal{N}_{\Linf(\Gtilde)}(A)''\nprec_{\Linf(\Gtilde)}M.$$
Moreover, since $\Linf(\GG)$ is not amenable and has expectation, $\mathcal{N}_{\Linf(\Gtilde)}(A)''$ is not amenable either. We get a contradiction with Lemma \ref{normalizerfp}.\end{proof}

\section{KK-theory}\label{section KK}

\noindent In this section, we study $K$-amenability and we use our free product decomposition of the C*-algebra of a free wreath product as well as the one of the block-extended C*-algebra and the exact sequences from \cite{FG18,FG20} to compute $K$-theory.

\begin{proposition}\label{1nontor}
For a compact quantum group $G$, the class of the unit $[1]\in K_0(C(G))$ is never $0$ or of torsion. If $G$ is K-amenable, then this is also true for $[1]\in K_0(C_r(G))$.
\end{proposition}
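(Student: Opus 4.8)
The plan is to use the counit $\varepsilon_G : C(G) \to \C$ as a left inverse at the level of $K$-theory. First I would observe that $\varepsilon_G$ is a unital $*$-homomorphism, hence induces a group homomorphism $(\varepsilon_G)_* : K_0(C(G)) \to K_0(\C) = \Z$ sending $[1_{C(G)}] \mapsto [1_\C] = 1$. Since $1 \in \Z$ is neither zero nor a torsion element, and group homomorphisms send torsion elements to torsion elements and $0$ to $0$, it follows immediately that $[1] \in K_0(C(G))$ is neither $0$ nor of torsion. This handles the statement for the full C*-algebra $C(G)$ and is essentially the whole argument; the only subtlety is recalling that $\varepsilon_G$ is always defined and bounded on the full C*-algebra (it is the counit, which by definition of $C(G)$ as the enveloping C*-algebra of $\Pol(G)$ extends to a character), which is standard in the theory of compact quantum groups as recalled in Section \ref{SectionCQG}.

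For the second part, suppose $\widehat{G}$ is $K$-amenable, i.e. the canonical surjection $\lambda_G : C(G) \to C_r(G)$ is a $KK$-equivalence. Then $(\lambda_G)_* : K_0(C(G)) \to K_0(C_r(G))$ is an isomorphism, and since $\lambda_G$ is unital it sends $[1_{C(G)}] \mapsto [1_{C_r(G)}]$. An isomorphism of abelian groups carries non-torsion, non-zero elements to non-torsion, non-zero elements, so $[1] \in K_0(C_r(G))$ is also neither $0$ nor of torsion. Equivalently, one can compose $(\varepsilon_G)_*$ with the inverse of $(\lambda_G)_*$ to get a homomorphism $K_0(C_r(G)) \to \Z$ sending $[1_{C_r(G)}]$ to $1$, which gives the conclusion directly without invoking that isomorphisms preserve torsion.

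There is essentially no hard step here: the argument is a one-line application of functoriality of $K_0$ together with the existence of the counit. The only point requiring minimal care is to make clear that the counit does \emph{not} in general descend to $C_r(G)$ (it is not Haar-state continuous unless $G$ is coamenable), which is precisely why the $K$-amenability hypothesis is needed to transport the statement to the reduced level. I would phrase the proof to emphasize that $(\varepsilon_G)_*([1]) = 1 \neq 0$ in $\Z$ does the job for $C(G)$, and that $K$-amenability then transports this through the $KK$-equivalence $\lambda_G$.
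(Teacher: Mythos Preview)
Your proof is correct and follows essentially the same approach as the paper: use the counit $\varepsilon_G$ to get a homomorphism $K_0(C(G))\to K_0(\C)\simeq\Z$ sending $[1]$ to the generator, and then invoke $K$-amenability to transport the conclusion to $C_r(G)$ via the $KK$-equivalence $\lambda_G$. Your write-up is in fact slightly more detailed than the paper's (you spell out the torsion argument and note why the counit need not descend to $C_r(G)$), but the idea is identical.
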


\begin{proof}
We use the counit, which is a $*$-morphism $\varepsilon : C(G)\rightarrow \C$, which gives $\varepsilon_\ast : K_0(C(G)) \rightarrow K_0(\C)\simeq \Z$, sending $[1]$ to $[1]$, which is the generator of the K-theory group of $\C$. Therefore the class cannot be of torsion. If $G$ is K-amenable, the same holds for the reduced C*-algebra, because $\lambda_\ast$, which is an isomorphism in that case, sends the class $[1]$ to $[1]$ too.
\end{proof}

\noindent We will need the following Lemma about the Morita equivalence between a unital C*-algebra $A$ and its amplification $M_N(A)$. For any unital C*-algebra $A$, we define the KK-theory class
$$m_A\in KK(M_N(\C) \ot A, A) = [\C^N\ot A, \id, 0].$$
It is the inverse to the class of the non unital map $i : A\rightarrow M_N(\C) \ot A$ sending $a$ to $e_{11}\ot a$.

\begin{lemma}\label{Morita}
For any element $\gamma = [H,\eta, F]\in KK(A,B)$, the Kasparov product of $m_A$ and $\gamma$ is given by $m_A\ot\gamma=[\C^N\ot H, \id\ot \eta, 1\ot F]\in KK(M_N(\C)\ot A,B).$
\end{lemma}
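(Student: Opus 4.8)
The statement is a routine but slightly technical computation with Kasparov products, and the plan is to reduce everything to the definition of $m_A$ and the behaviour of the Kasparov product under external and internal tensor products. First I would recall explicitly that $m_A\in KK(M_N(\C)\ot A,A)$ is represented by the Kasparov module $(\C^N\ot A,\id,0)$, where $M_N(\C)\ot A$ acts on the Hilbert $A$-module $\C^N\ot A$ in the obvious way (with $M_N(\C)$ acting on $\C^N$), the grading is trivial, and the operator is $0$; this is a degenerate Kasparov module, so no compactness conditions are at issue. The claim that $m_A$ is inverse to the class of the corner inclusion $i\colon A\to M_N(\C)\ot A$, $a\mapsto e_{11}\ot a$, is the standard Morita-invariance statement: $i_*\circ m_A$-composition gives the identity of $K_*$, and conversely $m_A\ot[i]=1_A$, $[i]\ot m_A=1_{M_N(\C)\ot A}$ in $KK$. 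I would state this as a known fact (it is, e.g., in Blackadar or Cuntz's treatment of $KK$), so that the only thing left to prove is the explicit formula for the product $m_A\ot\gamma$.

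Next I would compute $m_A\ot_A\gamma$ for $\gamma=[H,\eta,F]\in KK(A,B)$. By definition the Kasparov product of $(\C^N\ot A,\id,0)$ with $(H,\eta,F)$ is carried by the internal tensor product Hilbert $B$-module $(\C^N\ot A)\ot_\eta H\cong \C^N\ot H$, with $M_N(\C)\ot A$ acting via $\id_{M_N(\C)}\ot\eta$ (the $M_N(\C)$ factor acting on $\C^N$, the $A$ factor acting through $\eta$ on $H$), and with a connection for $F$. Since the first module has zero operator, the general formula for the Kasparov product simplifies: any operator of the form $1_{\C^N}\ot F$ is an $F$-connection on $\C^N\ot H$, and the positivity/commutator conditions in the definition of the product are trivially satisfied because the ``$0\#F$'' part contributes nothing. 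Hence $m_A\ot\gamma$ is represented by $(\C^N\ot H,\id\ot\eta,1\ot F)$, which is exactly the asserted formula. I would spell out just enough of the connection computation to make clear why $1\ot F$ works — namely that for $\xi\in\C^N\ot A$ the maps $T_\xi\colon H\to(\C^N\ot A)\ot_\eta H$ intertwine $F$ and $1\ot F$ up to compacts, which is immediate — and leave the rest as standard.

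The main (and only real) obstacle is bookkeeping: making sure the identifications $(\C^N\ot A)\ot_\eta H\cong\C^N\ot H$ and the resulting representation $\id_{M_N(\C)}\ot\eta$ are written with consistent conventions, and checking that the connection condition is genuinely vacuous here rather than merely ``obviously fine.'' I expect this to be a half-page argument at most, invoking the explicit description of the Kasparov product of a degenerate module with an arbitrary one. No new ideas are needed; the lemma is recorded precisely so that later, in computing $K_i(C_\bullet(\GG))$, one can transport generators of $K_0$ across the block isomorphisms $C_{r,\kappa}(\Gtilde)\cong M_{N_\kappa}(\C)\ot C_{r,\kappa}(\GG)$ of Theorem \ref{ThmReduced} without losing track of how classes like $[1]$ or $[e^\kappa_{11}]$ behave.
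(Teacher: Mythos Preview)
Your proposal is correct and follows essentially the same approach as the paper: identify $(\C^N\ot A)\ot_\eta H\cong\C^N\ot H$ via the obvious unitary, check that the induced representation is $\id\ot\eta$, take $1\ot F$ as the operator, and verify directly that it is an $F$-connection using the maps $T_\xi$. The paper spells out the Kasparov-module axioms and the connection condition slightly more explicitly than you do, but the argument is the same.
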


\begin{proof}
The Kasparov product of $m_A$ and $\gamma$ can be taken as $[(\C^N \ot A)\ot_A H, \id \ot_\eta 1, \Tilde{F}]$, where $\Tilde{F}$ is an adequate $F$-connection making it a Kasparov triple. There is a unitary isomorphism 
\begin{align*}
U : (\C^N \ot A)\ot_A H&\rightarrow \C^N\ot H \\
(f\ot a) \ot_\eta \xi &\mapsto f\ot \eta(a)\xi.
\end{align*}
Under this isomorphism, the map $\id\ot_\eta 1$ corresponds to the map $U(\id\ot_\eta 1)U^*$ which sends any element $b\ot a\in M_N(\C)\ot A$ to $b\ot \eta(a)\in \mathcal{L}(\C^N\ot H)$. We then fix $\Tilde{F}$ to be the operator $U (1\ot F) U^*$, and we only have to check that $\Tilde{F}$ is a $F$-connection and that it defines a Kasparov module.
First, for any $b\ot a\in M_N(\C)\ot A$, we have 
$$[(\id\ot\eta)(b\ot a), 1\ot F] = b \ot [\eta(a), F],$$
which is a compact operator because $[\eta(a), F]$ is, by hypothesis, and $\C^N$ is finite dimensional. The same way, $(1\ot F)^2 (\id\ot \eta)(b\ot a) = b\ot F^2\eta(a)$ and
$$(1\ot F-1\ot F^*)(\id\ot \eta)(b\ot a) = b\ot (F-F^*)\eta(a)$$
are also compact operators. Lastly, we have to check the connection properties. For any $x\in (\C^N \ot A)$, we define the adjointable map $T_x \in \Lcal(H, (\C^N \ot A)\ot_A H), \xi\mapsto x\ot_\eta \xi$. Then if $x= f\ot a$, the map $UT_x$ becomes $\xi \mapsto f \ot \eta(a)\xi$ hence:
$$T_x F - \Tilde{F}T_x = T_xF-U^*(1\ot F)UT_x= U^*(UT_xF-(1\ot F)UT_x).$$
For any $\xi$, we have $(T_x F - \Tilde{F}T_x)\xi = U^*(f\ot (\eta(a)F\xi-F\eta(a)\xi))$, which is a compact operator by hypothesis. The other property can be shown the same way. Thus this indeed defines a Kasparov product for $m_A\ot \gamma$, which corresponds to $[\C^N\ot H, \id\ot \eta, 1\ot F]$.
\end{proof}

\noindent For the remainder of the section, we assume the action $\beta:H\curvearrowright B$ to be ergodic and we write $\GG=G\wr_{*,\beta,F}H$.

\vspace{0.2cm}

\noindent Concerning $K$-amenability, we prove the following. The symbol $\bullet$ is used to denote either the full or reduced C*-algebra of a CQG or amalgamated free product of C*-algebras.

\begin{theorem}\label{thmDtxt}
The following are equivalent:
\begin{enumerate}
    \item $G$ and $H$ are K-amenable.
    \item $\GG$ is K-amenable.
\end{enumerate}
\end{theorem}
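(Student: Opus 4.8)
The plan is to reduce the $K$-amenability of $\GG$ to that of $G$ and $H$ by exploiting the free product decompositions established in Theorems~\ref{ThmBlockAction} and~\ref{ThmReduced}, together with the standard permanence properties of $K$-amenability under free products with amalgamation over finite-dimensional subalgebras. The implication $(2)\Rightarrow(1)$ should be the easy direction: since both $H$ and $G^{*K}$ are compact quantum subgroups of $\GG$ (Remark~\ref{RmkFwp}~(2),(3)), and $K$-amenability passes to quantum subgroups, the $K$-amenability of $\GG$ forces that of $H$ and of $G^{*K}$; then $K$-amenability of $G^{*K}$ implies that of $G$ (again as a quantum subgroup, or directly since $K$-amenability is stable under free products in the sense that a free product is $K$-amenable iff all factors are). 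Alternatively, and perhaps more robustly, one passes through the maps $\pi_H$ and $\pi_G$ of Remark~\ref{RmkFwp} and the fact that $\lambda_\GG$ being a $KK$-equivalence is inherited by the subalgebras $C_\bullet(H)$ and $C_\bullet(G^{*K})$, which have expectations and compatible reduced structures.

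\textbf{The main direction $(1)\Rightarrow(2)$.} Assume $\widehat{G}$ and $\widehat{H}$ are $K$-amenable; then so is $\widehat{F}$, since $F$ is a dual quantum subgroup of $G$ (the conditional expectation $E_F$ of Proposition~\ref{dualqsg} gives a splitting, and $K$-amenability passes to such subgroups). The block-extended C*-algebra decomposition of Theorems~\ref{ThmBlockAction} and~\ref{ThmReduced}, in the form used for the analytic properties in Section~\ref{section reduced}, writes (at both the full and reduced levels)
$$C_\bullet(\Gtilde)=\underset{C_\bullet(H)\ot C_\bullet(F)}{*}\ \left(B\ot C_\bullet(G)\ \underset{B\ot C_\bullet(F)}{*}\ (M_{N_\kappa}(\C)\ot C_\bullet(H)\ot C_\bullet(F))\right),$$
an iterated amalgamated free product over finite-dimensional-over-$C_\bullet(H)\ot C_\bullet(F)$ pieces. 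The key point is that $K$-amenability (i.e. the map $C(\cdot)\to C_r(\cdot)$ being a $KK$-equivalence) is preserved under amalgamated free products over nuclear (here even finite-dimensional or tensor-with-finite-dimensional) subalgebras; this is the quantum-group/C*-algebraic analogue of the Baum--Connes / $K$-amenability stability used in \cite{FT24}, and one invokes the relevant result (the long exact sequences of \cite{FG18,FG20} together with a five-lemma / $KK$-equivalence argument, exactly as in the proof of \cite[Theorem~A]{FT24}). Granting this, one deduces that $\lambda_{\Gtilde}\colon C(\Gtilde)\to C_r(\Gtilde)$ is a $KK$-equivalence. Then, since $\Linf(\GG)\subset\Linf(\Gtilde)$ and $C_\bullet(\GG)\subset C_\bullet(\Gtilde)$ sit with a conditional expectation (Proposition~\ref{PropModular}~(4) and the Morita picture of Theorem~\ref{ThmReduced}~(3)), and since $M_{N_\kappa}(\C)\ot C_{r,\kappa}(\GG)\cong C_{r,\kappa}(\Gtilde)$ is a Morita equivalence compatible with the full/reduced maps, one transfers the $KK$-equivalence down to $C(\GG)\to C_r(\GG)$ using Lemma~\ref{Morita} to strip the matrix amplifications.

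\textbf{Where the work concentrates.} The genuinely delicate step is the stability of $K$-amenability under the (iterated, amalgamated) free product — one must check that the amalgam subalgebras $C_\bullet(H)\ot C_\bullet(F)$ and $B\ot C_\bullet(F)$ behave well, i.e. that the inclusions are compatible with conditional expectations and that $C_\bullet(F)$ itself is $K$-amenable so that the finite-dimensional-over-it structure makes the free product absorption argument run. Here the hypothesis that $F$ is a dual quantum subgroup of $G$ is exactly what is needed to get $\widehat{F}$ $K$-amenable from $\widehat{G}$. I would carry this out by: (i) recording $\widehat{F}$ $K$-amenable; (ii) proving $B\ot C_\bullet(G)\to B\ot C_r(G)$ and $M_{N_\kappa}(\C)\ot C_\bullet(H)\ot C_\bullet(F)\to (\text{reduced})$ are $KK$-equivalences (tensoring with finite-dimensional algebras and using Lemma~\ref{Morita}); (iii) applying the amalgamated-free-product long exact sequences of \cite{FG18,FG20} inductively over the $K$ blocks to conclude $\lambda_{\Gtilde}$ is a $KK$-equivalence; (iv) using Theorem~\ref{ThmReduced}~(3) and Lemma~\ref{Morita} to descend to $\GG$. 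The converse direction is then a one-paragraph argument via the quantum subgroups $H$ and $G^{*K}$ of $\GG$.
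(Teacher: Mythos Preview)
Your argument for $(1)\Rightarrow(2)$ is essentially the paper's: iterated free-product stability of $K$-amenability (via \cite{FG18,FG20}) applied to the block decomposition, plus Morita equivalence to pass between $C_{\bullet,\kappa}(\Gtilde)\simeq M_{N_\kappa}(\C)\ot C_{\bullet,\kappa}(\GG)$ and $C_{\bullet,\kappa}(\GG)$.

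The gap is in $(2)\Rightarrow(1)$, specifically for $G$. You treat this as the easy direction and invoke that ``$K$-amenability passes to quantum subgroups''. This is false in general: a compact quantum subgroup $H\subset\GG$ corresponds to a \emph{surjection} $C(\GG)\to C(H)$, i.e.\ on the dual side $\widehat{H}$ is a \emph{quotient} of $\widehat{\GG}$, and $K$-amenability does not descend to quotients (already classically: free groups are $K$-amenable while arbitrary finitely generated groups, all quotients of free groups, need not be). Your ``alternative'' does not repair this: $G^{*K}$ is a quantum subgroup of $\GG$, not a dual quantum subgroup, so $C(G^{*K})$ is a \emph{quotient} of $C(\GG)$, not a subalgebra with expectation, and there is no map $C_r(\GG)\to C_r(G^{*K})$ to exploit. (For $H$ your argument is fine, since $H$ happens to be a \emph{dual} quantum subgroup as well; this is exactly what the paper uses for $H$.)

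The paper handles $G$ by a direct $KK$ construction. Given $\gamma\in KK(C_r(\GG),\C)$ with $[\lambda_\GG]\ot\gamma=[\varepsilon_\GG]$, it uses the (non-unital) morphism $\pi\colon C(G)\to M_{N_\kappa}(\C)\ot C(\GG)$, $a\mapsto\beta_\kappa(e^\kappa_{11})\rho_\kappa(a)$, which \emph{does} descend to the reduced level because $(\id\ot h_\GG)\rho_\kappa=h_G(\cdot)1$ (Theorem~\ref{TheoremErgodic}), giving $\pi_r$ with $\pi_r\circ\lambda_G=(\id\ot\lambda_\GG)\circ\pi$. One then sets $\widetilde{\gamma}:=[\pi_r]\ot m_{C_r(\GG)}\ot\gamma\in KK(C_r(G),\C)$ and checks, via Lemma~\ref{Morita} and $(\id\ot\varepsilon_\GG)\pi=\varepsilon_G(\cdot)e^\kappa_{11}$, that $[\lambda_G]\ot\widetilde{\gamma}=[\varepsilon_G]$. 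The point is that the Haar-state intertwining property of $\rho$ furnishes the needed map at the reduced level, which the bare quantum-subgroup surjection $\pi_G$ does not.
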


\begin{proof}
$(1)\Rightarrow (2)$.
Assume first that $G$ and $H$ are K-amenable, then we can apply \cite[Theorem 5.1]{FG18} to the amalgamated free product $$ C_{\bullet,\kappa}(\Gtilde)\simeq (B\ot C_\bullet(G))\underset{B\ot C_\bullet(F)}{\ast_\bullet} (M_{N_\kappa}(\C)\ot C_\bullet(H\times F)),$$ to get that the full algebra $C_\kappa(\Gtilde)$ is K-equivalent to the reduced $C_{r,\kappa}(\Gtilde)$ for all $\kappa$, through the canonical map $\widetilde{\lambda_\kappa}$ and as $C_{\bullet,\kappa}(\Gtilde)\simeq M_{N_\kappa}(\C)\ot C_{\bullet,\kappa}(\GG)$, the same is true for the map $\lambda_\kappa$ such that $\widetilde{\lambda_\kappa}=\id \ot \lambda_\kappa$. We then apply it to $$ C_\bullet(\GG)\simeq \underset{C_\bullet(H\times F)}{\ast_\bullet} C_{\bullet,\kappa}(\GG),$$ to get that $\GG$ is K-amenable.

\noindent$(2)\Rightarrow (1).$
Assume that $\GG$ is K-amenable i.e. there exists $\gamma \in KK(C_r(\GG),\C)$ such that $[\lambda_\GG] \ot \gamma = [\varepsilon_\GG]\in KK(C(\GG),\C)$. Then, since $H$ is a dual quantum subgroup of $\GG$, it is also K-amenable. We then fix any $1\leq \kappa\leq K$, and we consider the morphism $\pi :C(G)\rightarrow M_{N_\kappa}(\C)\ot C(\GG)$ defined by $\pi : a \mapsto \beta_\kappa(e^\kappa_{11})\rho_\kappa(a)$. Note that $(\id\ot \varepsilon_\GG)\circ \pi = \varepsilon_G(\cdot)e_{11}^\kappa$. By Theorem \ref{TheoremErgodic} we have $(\id_B\ot h_\GG)\rho=h_G(\cdot)1_B$. In particular, $(\id\ot h_\GG)\rho_\kappa=h_G(\cdot)1$. It follows that there exists a unique unital $*$-homomorphism $\rho_{r,\kappa}\,:\,C_r(G)\rightarrow M_{N_\kappa}(\C)\ot C_r(\GG)$ such that $(\id\ot\lambda_\GG)\rho_\kappa=\rho_{r,\kappa}\lambda_G$. Viewing $\beta_\kappa(e^\kappa_{11})$ as a projection in $ (M_{N_\kappa}(\C)\ot\Pol(H))\cap\rho_{r,\kappa}(C_r(G))'$, the map $\rho_{r,\kappa}(C_r(G))\rightarrow M_{N_\kappa}(\C)\ot C_r(\GG)$, $x\mapsto\beta_\kappa(e^\kappa_{11})x$ is a (non-unital) $*$-homomorphism. Hence $\pi_r: C_r(G) \rightarrow M_{N_\kappa}(\C)\ot C_r(\GG)$ defined by $\pi_r(a):=\beta_\kappa(e^\kappa_{11})\rho_{r,\kappa}(a)$ is such that $\pi_r\circ \lambda_G = (\id \ot \lambda_\GG)\circ \pi$. Let us show that $\Tilde{\gamma} := [\pi_r] \ot m_{C_r(\GG)} \ot \gamma \in KK(C_r(G),\C)$ satisfies $[\lambda_G]\ot \Tilde{\gamma} =[\varepsilon_G]$. Using Lemma \ref{Morita} we find:
\begin{align*}
    [\lambda_G] \ot \Tilde{\gamma} &= [\lambda_G]\ot [\pi_r] \ot m_{C_r(\GG)} \ot \gamma= [\pi_r\circ \lambda_G]\ot m_{C_r(\GG)}\ot\gamma\\
                   &= [(\id\ot\lambda_\GG)\circ \pi]\ot [\C^N\ot H, \id\ot \eta, 1\ot F]\\
                   &= [\pi]\ot [\id\ot \lambda_\GG]\ot [\C^N\ot H, \id\ot \eta, 1\ot F]\\
                   &= [\pi]\ot[\C^N\ot H, (\id\ot \eta\lambda_\GG), 1\ot F]
                   = [\pi]\ot m_{C(\GG)}\ot [H,  \eta\lambda_\GG, F]\\
                   &= [\pi]\ot m_{C(\GG)}\ot [\lambda_\GG]\ot [H,  \eta, F]
                   = [\pi]\ot m_{C(\GG)}\ot [\lambda_\GG]\ot \gamma
                   = [\pi]\ot m_{C(\GG)}\ot [\varepsilon_\GG]\\
                   &= [\pi] \ot m_{C(\GG)}\ot [\C,\varepsilon_\GG,0]
                   = [\pi] \ot [\C^N,\id\ot\varepsilon_\GG,0]
                   = [\C^N,(\id\ot\varepsilon_\GG)\circ \pi,0]\\
                   &= [\C^N,\varepsilon_G(\cdot)e_{11}^\kappa,0]= [\C,\varepsilon_G,0]\oplus [\C^{N-1},0,0]=[\varepsilon_G],
\end{align*}
because the action of $e_{11}^\kappa$ is only on the first coordinate in $\C^{N}$, and the second summand is a degenerate module. Hence $[\lambda_G]\ot \Tilde{\gamma} =[\varepsilon_G]$ and $G$ is K-amenable.
\end{proof}

\begin{proposition}\label{exactseq1}
For a compact quantum group $G$ with a dual quantum subgroup $F$, and a compact quantum group $H$ acting on $(B,\psi)$, and there is an exact sequence of K-theory:
\begin{equation*}
\begin{tikzcd}[sep=small]
\scalemath{0.85}{K_0(B\ot C_\bullet(F)))} \arrow[r] & \scalemath{0.85}{K_0(B\ot C_\bullet(G))} \oplus \scalemath{0.85}{K_0(M_{N_\kappa}(\C)\ot C_\bullet(H\times F))} \arrow[r] & \scalemath{0.85}{K_0(M_{N_\kappa}(\C)\ot C_{\bullet,\kappa}(\GG))} \arrow[d]\\
 \scalemath{0.85}{K_1(M_{N_\kappa}(\C)\ot C_{\bullet,\kappa}(\GG))} \arrow[u] & \scalemath{0.85}{K_1(B\ot C_\bullet(G)) \oplus K_1(M_{N_\kappa}(\C)\ot C_\bullet(H\times F))} \arrow[l] & \scalemath{0.85}{K_1(B\ot C_\bullet(F))}. \arrow[l]
\end{tikzcd}
\end{equation*}
Moreover, the vertical arrow on the right is zero as soon as $\Irr(F)$ is finite, and the vertical arrow on the left is zero as soon as the map $\iota_\ast : K_0(C_\bullet(F))\rightarrow K_0(C_\bullet(G))$ is injective.
\end{proposition}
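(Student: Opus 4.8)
\textbf{Proof plan for Proposition \ref{exactseq1}.}
The starting point is Theorem \ref{ThmBlockAction}(2), which identifies $M_{N_\kappa}(\C)\ot C_{\bullet,\kappa}(\GG)$ with the amalgamated free product $(B\ot C_\bullet(G))\underset{B\ot C_\bullet(F)}{\ast}(M_{N_\kappa}(\C)\ot C_\bullet(H\times F))$, with amalgamation over the inclusion $B\ot C_\bullet(F)\hookrightarrow B\ot C_\bullet(G)$ on one side and over $\beta_\kappa\ot\id_F\,:\,B\ot C_\bullet(F)\to M_{N_\kappa}(\C)\ot C_\bullet(H\times F)$ on the other (for the reduced level this is part (2) of Theorem \ref{ThmReduced}, together with the compatibility of conditional expectations recorded there and in Proposition \ref{PropModular}). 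First I would invoke the six-term exact sequence in $K$-theory for a full, respectively reduced, amalgamated free product of C*-algebras: for the full case this is the Cuntz-type Mayer--Vietoris sequence available for any amalgamated free product over a common subalgebra with conditional expectations, and for the reduced case it is the sequence of Thomsen/Germain as used in \cite{FG18,FG20}, valid here because the conditional expectations involved in $C_{r,\kappa}(\Gtilde)$ are GNS-faithful (indeed faithful). This immediately produces the displayed six-term sequence, with the two horizontal maps into the middle group induced by the two canonical inclusions and the connecting maps induced, up to sign, by the two embeddings of $B\ot C_\bullet(F)$.

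Next I would treat the vanishing of the right-hand vertical arrow. The connecting map $K_0(M_{N_\kappa}(\C)\ot C_{\bullet,\kappa}(\GG))\to K_1(B\ot C_\bullet(F))$ lands in $K_1$ of a C*-algebra Morita equivalent to $C_\bullet(F)$; when $\Irr(F)$ is finite, $C_\bullet(F)=C(F)=C_r(F)$ is a finite-dimensional C*-algebra (coamenable, with finite-dimensional $\Pol(F)$), so $K_1(B\ot C_\bullet(F))=K_1(B\ot C(F))=0$ since $B\ot C(F)$ is finite dimensional. Hence the arrow is zero. (If one prefers, one notes directly from the long exact sequence that the map $K_1(B\ot C_\bullet(F))\to K_1(B\ot C_\bullet(G))\oplus K_1(M_{N_\kappa}(\C)\ot C_\bullet(H\times F))$ has trivial domain, which by exactness forces the preceding connecting map to be surjective onto $0$, i.e. zero.)

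For the left-hand vertical arrow, the relevant connecting map $K_1(M_{N_\kappa}(\C)\ot C_{\bullet,\kappa}(\GG))\to K_0(B\ot C_\bullet(F))$ is zero precisely when the map $K_0(B\ot C_\bullet(F))\to K_0(B\ot C_\bullet(G))\oplus K_0(M_{N_\kappa}(\C)\ot C_\bullet(H\times F))$ is injective, by exactness of the six-term sequence. Its first component is $(\id_B)_\ast\ot\iota_\ast$, i.e. the amplification by $[B]$ of $\iota_\ast\,:\,K_0(C_\bullet(F))\to K_0(C_\bullet(G))$ under the Künneth/Morita identifications $K_0(B\ot C_\bullet(F))\simeq K_0(C_\bullet(F))\otimes K_0(B)$ and similarly for $G$ (here $B$ is finite dimensional so $K_0(B)$ is free and the Künneth sequence splits with no Tor term, and $K_1(B)=0$). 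Since $K_0(B)$ is a free abelian group, tensoring the injective map $\iota_\ast$ by $K_0(B)$ remains injective; a fortiori the full map with both components is injective. By exactness this kills the connecting map, giving the claim.

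The main obstacle — really the only non-formal point — is making sure the $K$-theory six-term sequence genuinely applies in the generality at hand, i.e. to amalgamated free products over a \emph{non-trivial} (here finite-dimensional) subalgebra with the specified, possibly non-GNS-faithful at the full level, conditional expectations; this is exactly the content for which \cite{FG18,FG20} were designed, so I would cite the relevant statement there and check that the hypotheses (existence of the conditional expectations, faithfulness at the reduced level) are met, which they are by the constructions in Sections \ref{sectionblock} and \ref{section reduced}. The rest is bookkeeping with the Künneth theorem for finite-dimensional $B$ and the identification of the boundary maps with the two structure inclusions of $B\ot C_\bullet(F)$.
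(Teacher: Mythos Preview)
Your proposal is correct and follows essentially the same approach as the paper: both apply the six-term exact sequence of \cite{FG20} to the amalgamated free product decomposition of $M_{N_\kappa}(\C)\ot C_{\bullet,\kappa}(\GG)$ from Theorem \ref{ThmBlockAction}(2), kill the right vertical arrow via $K_1(B\ot C_\bullet(F))=0$ when $\Irr(F)$ is finite, and kill the left one by showing the first horizontal map is injective using the K\"unneth decomposition and the assumed injectivity of $\iota_\ast$. Your treatment of the last point is slightly cleaner than the paper's, in that you invoke K\"unneth via the finite-dimensionality of $B$ (so $K_0(B)$ is free and $K_1(B)=0$) rather than of $C(F)$, which makes the argument for the left vertical arrow manifestly independent of the hypothesis $\Irr(F)$ finite.
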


\begin{proof}
We write $$M_{N_\kappa}(\C)\ot C_{\bullet,\kappa}(\GG)\simeq (B\ot C_\bullet(G))\underset{B\ot C_\bullet(F)}{\ast_\bullet} (M_{N_\kappa}(\C)\ot C_\bullet(H\times F)),$$
and we can immediately apply the result of \cite{FG20} to get the exact sequences for the reduced and the full case. Now assume that $\Irr(F)$ is finite. Then $C(F)\simeq C_r(F)\simeq \Pol(F)$ is a finite dimensional C*-algebra, therefore we have immediately $K_1(B\ot C_\bullet(F))=0$ and so the vertical map on the right-hand side is $0$. We can also deduce from this that $$K_0( M_{N_\kappa}(\C)\ot C_\bullet(H) \ot C(F))\simeq K_0(M_{N_\kappa}(\C)\ot C_\bullet(H)) \ot K_0(C(F)),$$ and we also have in any case $$K_0(B\ot C_\bullet(F))\simeq K_0(B)\ot K_0(C_\bullet(F)). $$ 
Using that $\iota_\ast : K_0(C_\bullet(F))\rightarrow K_0(C_\bullet(G))$ is injective, we see that the first horizontal map must be injective in the full case. This also implies that the left vertical map is $0$.
\end{proof}

\begin{remark}
In the case where $F$ is trivial and we consider the maximal C*-algebras, Proposition \ref{1nontor} shows that the map $\iota_\ast : \C \rightarrow K_0(C(G))$ is injective, so the vertical arrow on the left is zero. This is also true for the reduced C*-algebras is $G$ has a K-amenable dual.
\end{remark}

\noindent We deduce the following proposition.

\begin{proposition}
If $\Irr(F)$ is finite and $\iota_*\,:\, K_0(C(F))\rightarrow K_0(C_\bullet(G))$ is injective then, we have an isomorphism $$ K_1(M_{N_\kappa}(\C)\ot C_{\bullet,\kappa}(\GG)) \simeq K_1(B\ot C_\bullet(G)) \oplus K_1(M_{N_\kappa}(\C)\ot C_\bullet(H)\ot C(F)), $$ and a short exact sequence $$\scalemath{0.88}{0\rightarrow K_0(B\ot C(F)) \rightarrow K_0(B\ot C_\bullet(G)) \oplus K_0(M_{N_\kappa}(\C)\ot C_\bullet(H)\ot C(F))\rightarrow K_0(M_{N_\kappa}(\C)\ot C_{\bullet,\kappa}(\GG))\rightarrow 0.}$$
This shows that the group $K_0(M_{N_\kappa}(\C)\ot C_{\bullet,\kappa}(\GG))$ is the quotient
$$\scalemath{0.85}{K_0(M_{N_\kappa}(\C)\ot C_{\bullet,\kappa}(\GG)) \simeq K_0(B\ot C_{\bullet}(G)) \oplus K_0(M_{N_\kappa}(\C)\ot C_{\bullet}(H)\ot C(F))/ <[e_{11}^\gamma\ot f]-[\beta_\kappa(e_{11}^\gamma)\ot f]>,}$$ where $f\in C(F)$ are projections corresponding to the generators of $K_0(C(F))$ and $1\leq \gamma\leq K$.
\end{proposition}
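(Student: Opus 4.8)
The plan is to feed the exact sequence from Proposition \ref{exactseq1} the two extra hypotheses and read off the conclusion. First I would invoke Proposition \ref{exactseq1} directly: since $\Irr(F)$ is finite the right-hand vertical arrow vanishes, and since $\iota_*\colon K_0(C(F))\to K_0(C_\bullet(G))$ is injective the left-hand vertical arrow vanishes as well. Therefore the six-term cyclic sequence decouples into the short exact sequence
$$0\to K_0(B\ot C(F))\to K_0(B\ot C_\bullet(G))\oplus K_0(M_{N_\kappa}(\C)\ot C_\bullet(H\times F))\to K_0(M_{N_\kappa}(\C)\ot C_{\bullet,\kappa}(\GG))\to 0$$
together with the degree-one isomorphism
$$K_1(M_{N_\kappa}(\C)\ot C_{\bullet,\kappa}(\GG))\simeq K_1(B\ot C_\bullet(G))\oplus K_1(M_{N_\kappa}(\C)\ot C_\bullet(H\times F)).$$
Here one uses $C_\bullet(H\times F)=C_\bullet(H)\otm C(F)$ (or $C_\bullet(H)\ot C(F)$, which coincides since $C(F)$ is finite dimensional) and that $K_1$ of a tensor product with a finite dimensional algebra only sees the first factor, so $K_1(M_{N_\kappa}(\C)\ot C_\bullet(H)\ot C(F))\simeq K_1(M_{N_\kappa}(\C)\ot C_\bullet(H))^{\oplus r}$ where $r=\dim_{\Z}K_0(C(F))$; I would state the $K_1$ isomorphism in the form written in the statement, since that is literally what the exact sequence gives.

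Next I would identify the image of the first map in the short exact sequence in order to describe the quotient. Because $B\ot C(F)$ is the common subalgebra over which the amalgamated free product $M_{N_\kappa}(\C)\ot C_{\bullet,\kappa}(\GG)\simeq (B\ot C_\bullet(G))\ast_{B\ot C_\bullet(F)}(M_{N_\kappa}(\C)\ot C_\bullet(H\times F))$ is formed, the boundary-free part of the Fima--Germain exact sequence is the difference of the two inclusion-induced maps: on $K_0(B\ot C(F))$ the map sends a class $y$ to $(j_{G,*}(y),-(\beta_\kappa\ot\id)_*(y))$ (up to sign conventions), where $j_G\colon B\ot C(F)\hookrightarrow B\ot C_\bullet(G)$ is the inclusion and $\beta_\kappa\ot\id_F\colon B\ot C(F)\to M_{N_\kappa}(\C)\ot C(H)\ot C(F)$ is the structural map used to build the block algebra (Theorem \ref{ThmBlockAction}). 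Exactness then gives
$$K_0(M_{N_\kappa}(\C)\ot C_{\bullet,\kappa}(\GG))\simeq \big(K_0(B\ot C_\bullet(G))\oplus K_0(M_{N_\kappa}(\C)\ot C_\bullet(H)\ot C(F))\big)\big/ \langle\, j_{G,*}(y)\oplus(\beta_\kappa\ot\id)_*(y)\,:\,y\in K_0(B\ot C(F))\,\rangle.$$
To make this concrete I would note that $K_0(B\ot C(F))\simeq K_0(B)\ot K_0(C(F))$ is generated by the classes $[e_{11}^\gamma\ot f]$ for $1\le\gamma\le K$ and $f$ ranging over a set of projections representing the generators of $K_0(C(F))$ (here $e_{11}^\gamma$ is a rank-one projection in the $\gamma$-th block of $B$). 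Under $j_G$ such a generator maps to $[e_{11}^\gamma\ot f]\in K_0(B\ot C_\bullet(G))$, and under $\beta_\kappa\ot\id$ it maps to $[(\beta_\kappa\ot\id)(e_{11}^\gamma\ot f)]=[\beta_\kappa(e_{11}^\gamma)\ot f]$ in $K_0(M_{N_\kappa}(\C)\ot C_\bullet(H)\ot C(F))$, where $\beta_\kappa(e_{11}^\gamma)$ is the projection $(\chi_\kappa\ot\id)\beta(e_{11}^\gamma)\in M_{N_\kappa}(\C)\ot C(H)$. This yields exactly the stated presentation, with relations $[e_{11}^\gamma\ot f]-[\beta_\kappa(e_{11}^\gamma)\ot f]$.

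The only genuinely delicate point is the bookkeeping of which class maps to which and with which sign, i.e.\ checking that the relevant map in the Fima--Germain sequence is indeed the difference of the inclusion maps restricted to the amalgam $B\ot C(F)$ and that $K_0(B\ot C(F))$ does inject (which is precisely where the hypothesis on $\iota_*$ is needed, via Proposition \ref{exactseq1}); once those are pinned down, everything else is the formal manipulation above. I would also remark, for completeness, that $K_0(M_{N_\kappa}(\C)\ot C_\bullet(H)\ot C(F))\simeq K_0(M_{N_\kappa}(\C)\ot C_\bullet(H))\ot K_0(C(F))$ because $C(F)$ is finite dimensional, so that the generators $[\beta_\kappa(e_{11}^\gamma)\ot f]$ make sense as elementary tensors and the quotient is genuinely finitely presented. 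This completes the proof. $\qed$
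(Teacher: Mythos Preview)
Your proposal is correct and follows exactly the approach the paper intends: the paper presents this proposition as an immediate deduction from Proposition~\ref{exactseq1} (introduced by ``We deduce the following proposition'') without writing out a proof, and your argument spells out precisely that deduction—using the two hypotheses to kill the vertical boundary maps, splitting the six-term sequence, and then reading off the generators of $K_0(B\ot C(F))$ to obtain the explicit presentation.
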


\noindent We use the other isomorphism $C_\bullet(\GG)\simeq \underset{C_\bullet(H)\ot C(F)}{\ast_\bullet}C_{\bullet,\kappa}(\GG)$ to get another exact sequence.

\begin{proposition}
There is a 6-term exact sequence
\begin{equation*}
\begin{tikzcd}
\bigoplus_{\kappa=1}^{K-1}K_0(C_\bullet(H\times F)) \arrow[r] & \bigoplus_{\kappa=1}^K K_0(C_{\bullet,\kappa}(\GG)) \arrow[r] & K_0(C_\bullet(\GG)) \arrow[d]\\
 K_1(C_\bullet(\GG)) \arrow[u] & \bigoplus_{\kappa=1}^K K_1(C_{\bullet,\kappa}(\GG)) \arrow[l] & \bigoplus_{\kappa=1}^{K-1}K_1(C_\bullet(H\times F)). \arrow[l]
\end{tikzcd}
\end{equation*}
Moreover, when $\Irr(F)$ is finite, the map on the right is $0$, and if $F$ is trivial, then the first map on the left is also $0$, in the maximal case.
\end{proposition}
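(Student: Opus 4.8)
The plan is to apply the Fima--Germain six-term exact sequence for amalgamated free products of C*-algebras (from \cite{FG20}, used already in Proposition \ref{exactseq1}) to the decomposition
$$C_\bullet(\GG)\simeq \underset{C_\bullet(H)\ot C(F),\,1\leq\kappa\leq K}{\ast_\bullet}C_{\bullet,\kappa}(\GG),$$
which is exactly the statement of Theorem \ref{ThmReduced}(2) at the reduced level and Theorem \ref{ThmBlockAction}(1) at the full level. Writing $N:=C_\bullet(H)\ot C(F)=C_\bullet(H\times F)$ for the common amalgam, the general $K$-theory exact sequence for a free product of $K$ algebras over $N$ has the three middle terms $\bigoplus_{\kappa=1}^{K-1}K_i(N)\to\bigoplus_{\kappa=1}^K K_i(C_{\bullet,\kappa}(\GG))\to K_i(C_\bullet(\GG))$, where the first arrow is induced by the difference of the inclusions $N\hookrightarrow C_{\bullet,\kappa}(\GG)$ against a fixed reference copy; this is the content of the claimed six-term sequence. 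So the first step is simply to invoke \cite{FG20} and record the sequence, checking that the conditional expectations $E_r|_{C_{r,\kappa}(\GG)}$ (resp. their full lifts) are GNS-faithful, which holds because $E_r$ is faithful (this was established in Section \ref{SectionBlockExtended} and Theorem \ref{ThmReduced}).

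Next I would treat the vanishing of the boundary (vertical) map on the right under the hypothesis that $\Irr(F)$ is finite. In that case $N=C_\bullet(H)\ot C(F)$ with $C(F)$ finite dimensional, so $K_1(N)\simeq K_1(C_\bullet(H))\ot K_0(C(F))$; but the right vertical arrow in the six-term sequence has codomain $\bigoplus_{\kappa=1}^{K-1}K_1(N)$ and its image is the obstruction to surjectivity of $\bigoplus K_0(C_{\bullet,\kappa}(\GG))\to K_0(C_\bullet(\GG))$. The argument is the same as in Proposition \ref{exactseq1}: each inclusion $N\hookrightarrow C_{\bullet,\kappa}(\GG)$ admits a conditional expectation (coming from $E_r$, or at the full level from $E_{H\times F}$ composed with the block maps), hence is split-injective on $K_1$; consequently the relevant map $\bigoplus_{\kappa=1}^{K-1}K_1(N)\to\bigoplus_{\kappa=1}^K K_1(C_{\bullet,\kappa}(\GG))$ is injective, which forces the boundary map out of $K_0(C_\bullet(\GG))$ into $\bigoplus_{\kappa=1}^{K-1}K_1(N)$ to be zero by exactness. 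I would spell this out by noting that injectivity of that $K_1$-map is equivalent, by exactness at $\bigoplus K_1(C_{\bullet,\kappa}(\GG))$ read one step further, to vanishing of the preceding boundary; here one uses that $C(F)$ finite dimensional makes $K_1$ of it vanish so the splitting via the expectations is clean.

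For the last clause, when $F$ is trivial and we work with the maximal (full) C*-algebras, the amalgam is $N=C(H)$ and the first map on the left of the sequence is $\bigoplus_{\kappa=1}^{K-1}K_0(C(H))\to\bigoplus_{\kappa=1}^K K_0(C_{\bullet,\kappa}(\GG))$, given by $x\mapsto(\iota_{1,*}(x_2)-\iota_{2,*}(x_2),\dots)$ in the appropriate indexing. I would show this map is injective, again because each $\iota_{\kappa,*}:K_0(C(H))\to K_0(C_\kappa(\GG))$ is split by the expectation; combined with exactness at $\bigoplus_{\kappa=1}^{K-1}K_0(C(H))$ this shows the incoming boundary map $\bigoplus_{\kappa=1}^{K-1}K_1(C(H))\to\bigoplus K_0(C(H))$ has trivial image, i.e.\ the ``first map on the left'' of the six-term sequence (the boundary $K_1(C_\bullet(\GG))\to\bigoplus_{\kappa=1}^{K-1}K_0(N)$) is $0$. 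Here the key input specific to $F$ trivial and the maximal completion is Proposition \ref{1nontor}: the unit class $[1]\in K_0(C(H))$ is non-torsion via the counit splitting $\varepsilon_H$, which is what guarantees the relevant inclusion-induced maps are genuinely split-injective rather than merely having torsion kernel.

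The main obstacle I anticipate is purely bookkeeping: correctly identifying which of the two boundary maps in the \cite{FG20} six-term sequence is ``the map on the right'' versus ``the first map on the left'' for a $K$-fold (rather than two-fold) amalgamated free product, and making sure the splitting-by-conditional-expectation argument is applied to the correct arrow in each of the two cases. Once the indexing is pinned down, each vanishing is a one-line consequence of exactness plus the existence of the faithful $N$-valued conditional expectations on the $C_{\bullet,\kappa}(\GG)$, which is already available from Theorem \ref{ThmReduced}, Proposition \ref{PropModular}, and (for the unit-class non-torsion refinement in the full/trivial-$F$ case) Proposition \ref{1nontor}.
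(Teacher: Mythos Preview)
Your overall strategy---apply \cite{FG20} to the block decomposition, then prove injectivity of the adjacent horizontal map to kill each boundary---is exactly the paper's. The problem is the mechanism you invoke for injectivity.

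You repeatedly write that the inclusion $N\hookrightarrow C_{\bullet,\kappa}(\GG)$ ``admits a conditional expectation \dots\ hence is split-injective on $K_1$'' (and similarly on $K_0$). This does not work: a conditional expectation is only a ucp map, not a $*$-homomorphism, and induces no map on $K$-theory or $KK$-theory in general. So neither vanishing statement follows from the existence of $E_r$ or $E_{H\times F}$.

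The paper fixes this with two different genuine splittings. For the right vertical map (when $\Irr(F)$ is finite), it does not use an expectation at all: it uses Proposition~\ref{exactseq1}, where $K_1(B\ot C_\bullet(F))=0$ forces the map $K_1(M_{N_\kappa}(\C)\ot C_\bullet(H\times F))\to K_1(M_{N_\kappa}(\C)\ot C_{\bullet,\kappa}(\GG))$ to be injective, and then Morita equivalence transfers this to injectivity of $\iota_{\kappa*}$ on $K_1$. For the left vertical map (when $F$ is trivial, full case), the paper uses the $*$-homomorphism $\pi_H:C(\GG)\to C(H)$ from Remark~\ref{RmkFwp}(2): the composition $C(H)\xrightarrow{\iota_\kappa}C_\kappa(\GG)\hookrightarrow C(\GG)\xrightarrow{\pi_H}C(H)$ is the identity, so $\iota_{\kappa*}$ is split-injective on $K_0$ by an honest retraction. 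Your appeal to Proposition~\ref{1nontor} and $\varepsilon_H$ is in the right spirit but misplaced---$\varepsilon_H$ retracts $C(H)$ onto $\C$, not $C_\kappa(\GG)$ onto $C(H)$; the map you need is $\pi_H$.
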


\begin{proof}
The exact sequence is a direct application of \cite{FG20} to the amalgamated free product. 
The map $K_1(C_\bullet(H\times F)) \rightarrow K_1(C_{\bullet,\kappa}(\GG))$ corresponds to $\iota_\ast$, while the map in the part of the isomorphism $K_1(M_{N_\kappa}(\C)\ot C_\bullet(H\times F)) \hookrightarrow K_1(M_{N_\kappa}(\C)\ot C_{\bullet,\kappa}(\GG))$ corresponds to $(\id\ot \iota)_\ast$, which is injective as shown in Proposition \ref{exactseq1}. These maps are the same at the level of the K-theory, through the Morita equivalence. Therefore the map at the utmost right on the bottom line is injective, and the map on the right is $0$. Assume then that $F$ is trivial. We have a sequence of morphisms $$ C(H) \xrightarrow{\iota_\kappa} C_\kappa(\GG) \rightarrow C(\GG) \xrightarrow{\pi_H} C(H),$$ where $\pi_H$ is defined in Remark \ref{RmkFwp}. The composition is equal to $\id_H$. Therefore, at the K-theoretic level, we have that the maps $K_0(C(H))\rightarrow K_0(C_\kappa(\GG))$ must be injective, and that is enough to show that the map on the left side is $0$.
\end{proof}

\noindent We deduce the following.
\begin{proposition}
If $F$ is trivial then:
$$ K_1(C(\GG)) \simeq K_1(C(H))\oplus \bigoplus_{\kappa=1}^K K_1(B\ot C(G))\simeq K_1(C(H))\oplus K_1(C(G))^{\oplus K^2}. $$
and,
$$K_0(C(\GG))\simeq \bigoplus_{\kappa=1}^K K_0(C_\kappa(\GG)) / < \iota_{\gamma\ast}(h) - \iota_{\kappa\ast}(h) >, $$
for all $1\leq \kappa,\gamma\leq K$, and $g\in K_0(C(H))$.
\end{proposition}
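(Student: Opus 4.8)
The plan is to combine the previous two Propositions: the 6-term exact sequence for the amalgamated free product decomposition $C(\GG)\simeq\underset{C(H)}{\ast}C_\kappa(\GG)$ (in the case $F$ trivial, so $C(H\times F)=C(H)$), together with the computation of $K_\bullet(C_\kappa(\GG))$ obtained via the Morita equivalence $C_\kappa(\GG)\simeq$ (via $M_{N_\kappa}(\C)\ot C_\kappa(\GG)$) and Proposition~\ref{exactseq1}. First I would invoke the previous Proposition, specialized to $F$ trivial: the boundary map $\bigoplus_{\kappa=1}^{K-1}K_1(C(H))\to\bigoplus_\kappa K_1(C_\kappa(\GG))$ is the relevant one, the vertical map on the right is $0$ (trivially, as $F$ trivial makes $C(H\times F)=C(H)$, but more to the point the argument for $\Irr(F)$ finite applies), and crucially the first horizontal map $\bigoplus_{\kappa=1}^{K-1}K_0(C(H))\to\bigoplus_{\kappa=1}^K K_0(C_\kappa(\GG))$ is $0$, because of the retraction $C(H)\xrightarrow{\iota_\kappa}C_\kappa(\GG)\to C(\GG)\xrightarrow{\pi_H}C(H)$ equal to $\id_H$, which forces each $\iota_{\kappa\ast}\colon K_0(C(H))\to K_0(C_\kappa(\GG))$ to be split injective.

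With the left map in degree $0$ being zero and the right map in degree $0$ being zero, the 6-term exact sequence breaks into two pieces. In degree $1$ we get a short exact sequence
\begin{equation*}
0\longrightarrow\bigoplus_{\kappa=1}^{K-1}K_1(C(H))\longrightarrow\bigoplus_{\kappa=1}^{K}K_1(C_\kappa(\GG))\longrightarrow K_1(C(\GG))\longrightarrow 0,
\end{equation*}
and in degree $0$ a short exact sequence
\begin{equation*}
0\longrightarrow\bigoplus_{\kappa=1}^{K-1}K_0(C(H))\xrightarrow{\ 0\ }\bigoplus_{\kappa=1}^{K}K_0(C_\kappa(\GG))\longrightarrow K_0(C(\GG))\longrightarrow 0
\end{equation*}
is not quite right — rather, the connecting map $K_1(C(\GG))\to\bigoplus K_0(C(H))$ is zero, so one reads off $K_0(C(\GG))\simeq\bigoplus_\kappa K_0(C_\kappa(\GG))/\mathrm{im}(\partial)$ where $\partial\colon\bigoplus_{\kappa=1}^{K-1}K_0(C(H))\to\bigoplus_{\kappa=1}^K K_0(C_\kappa(\GG))$ sends the $\kappa$-th copy's class $h$ to $\iota_{\kappa\ast}(h)-\iota_{(\kappa+1)\ast}(h)$ (the standard ``telescoping'' difference map of an iterated amalgamated free product over a common subalgebra). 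Since the $K-1$ differences $\iota_{\kappa\ast}(h)-\iota_{(\kappa+1)\ast}(h)$ generate the same subgroup as all differences $\iota_{\gamma\ast}(h)-\iota_{\kappa\ast}(h)$ for $1\le\kappa,\gamma\le K$, this gives the stated description of $K_0(C(\GG))$.

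For the degree-$1$ statement I would then split the short exact sequence: because each $\iota_{\kappa\ast}\colon K_1(C(H))\to K_1(C_\kappa(\GG))$ is (split) injective by the same retraction argument as above, the map $\bigoplus_{\kappa=1}^{K-1}K_1(C(H))\to\bigoplus_{\kappa=1}^K K_1(C_\kappa(\GG))$ is split injective, so $K_1(C(\GG))\simeq\bigoplus_{\kappa=1}^K K_1(C_\kappa(\GG))\big/\bigoplus_{\kappa=1}^{K-1}K_1(C(H))$. Now I substitute the value of $K_1(C_\kappa(\GG))$: from the previous Proposition (with $F$ trivial, $\Irr(F)$ finite and $\iota_\ast\colon\Z=K_0(C(F))\to K_0(C(G))$ injective by Proposition~\ref{1nontor}), via the Morita equivalence $C_\kappa(\GG)\sim M_{N_\kappa}(\C)\ot C_\kappa(\GG)$, one has $K_1(C_\kappa(\GG))\simeq K_1(B\ot C(G))\oplus K_1(C(H))$, and $K_1(B\ot C(G))\simeq K_1(C(G))^{\oplus K}$ since $B\simeq\bigoplus_{\kappa=1}^K M_{N_\kappa}(\C)$ is finite-dimensional and $K$-theory is Morita-invariant. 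Thus $\bigoplus_{\kappa=1}^K K_1(C_\kappa(\GG))\simeq K_1(C(G))^{\oplus K^2}\oplus K_1(C(H))^{\oplus K}$, and quotienting by the $K-1$ embedded copies of $K_1(C(H))$ (which land, after the identification, as the telescoping differences inside the $K_1(C(H))^{\oplus K}$ summand only — this is the point requiring care, namely that $\partial$ is compatible with the direct-sum decomposition of each $K_1(C_\kappa(\GG))$ and kills precisely $K-1$ dimensions of the $H$-part and none of the $G$-part) leaves $K_1(C(H))\oplus K_1(C(G))^{\oplus K^2}$. The main obstacle I anticipate is exactly this last compatibility check: verifying that under the Morita identifications the connecting/difference map $\partial$ respects the splitting $K_1(C_\kappa(\GG))=K_1(B\ot C(G))\oplus K_1(C(H))$ and acts only on the $C(H)$-summands as a telescoping map whose cokernel is a single copy of $K_1(C(H))$ — this amounts to tracking naturality of the Morita maps $m_A$ of Lemma~\ref{Morita} through the two nested amalgamated free product exact sequences, which is routine but bookkeeping-heavy.
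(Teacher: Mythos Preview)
Your proposal is correct and follows exactly the approach the paper has in mind (the paper simply writes ``We deduce the following'' and gives no separate argument). The only point you flag as an obstacle --- that the difference map $\partial$ lands purely in the $K_1(C(H))$-summands of the decomposition $K_1(C_\kappa(\GG))\simeq K_1(B\ot C(G))\oplus K_1(C(H))$ --- is in fact immediate: under the Morita identification $M_{N_\kappa}(\C)\ot C_\kappa(\GG)\simeq (B\ot C(G))\underset{B}{*}(M_{N_\kappa}(\C)\ot C(H))$ the inclusion $\iota_\kappa\colon C(H)\hookrightarrow C_\kappa(\GG)$ corresponds to the canonical embedding of the second free factor, so $\iota_{\kappa*}$ lands in the $K_1(C(H))$-summand by construction of the splitting in the preceding Proposition.
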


\noindent The easiest cases are the ones where $H = S_N^+$ or $H= \Aut^+(M_N(\C),\psi)$, in which we know the generators of the $K_0$ group by the following proposition. We first recall the following Theorem by Voigt.

\begin{theorem}\cite{Vo17}
Let $(B,\psi)$ be a finite dimensional C*-algebra with a $\delta$-form. The CQG $H=\Aut^+(B,\psi)$ is K-amenable and the K-theory groups of $C_\bullet(H)$ are
$$ K_0(C_\bullet(H))\simeq \Z^{K^2-2K+2} \oplus \Z_d^{2K-1}, $$
where $d$ is the gcd of the sizes of the blocks and $K$ their number, and
$$ K_1(C_\bullet(H))\simeq \Z.$$
\end{theorem}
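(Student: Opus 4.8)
The statement to prove is Voigt's computation of the $K$-theory of $C_\bullet(\Aut^+(B,\psi))$ for a $\delta$-form $\psi$, namely $K_0 \simeq \Z^{K^2-2K+2}\oplus\Z_d^{2K-1}$ and $K_1\simeq\Z$, where $K$ is the number of blocks and $d$ the gcd of their sizes. Since this is quoted as a Theorem of Voigt \cite{Vo17}, the honest "proof" is a pointer to that reference, and the plan I would carry out is essentially to recall the structure of Voigt's argument rather than to reprove it from scratch. So I would begin by citing \cite{Vo17} directly: the $K$-amenability of $\Aut^+(B,\psi)$ and the stated formulas are Theorem 1.2 (and the surrounding results) of that paper.

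If a self-contained derivation were wanted, the plan would be to exploit monoidal equivalence. Voigt's key observation is that $\Aut^+(B,\psi)$ with a $\delta$-form is monoidally equivalent to a free orthogonal quantum group $O^+_F$ for a suitable $F$ with $\Tr(F^*F) = \delta$ (equivalently, to $SU_q(2)$-type data after normalization), and that monoidal equivalence, combined with the Baum--Connes-type machinery for these quantum groups, preserves the relevant $K$-theory. Concretely: first establish $K$-amenability, so that full and reduced $K$-theory agree and one may work with whichever is convenient. Second, realize $C(\Aut^+(B,\psi))$ via the monoidal equivalence as a cocycle crossed product / linking algebra construction relating it to $C(O^+_F)$, whose $K$-theory is known ($K_0\simeq\Z$, $K_1\simeq\Z$) from Voigt's earlier work on the Baum--Connes conjecture for free orthogonal quantum groups. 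Third, track the class of the fundamental projection through the linking algebra: the $\delta$-form data encodes $K$ blocks of sizes $n_1,\dots,n_K$, and the ``torsion'' part $\Z_d^{2K-1}$ together with the free part $\Z^{K^2-2K+2}$ emerges from comparing the trivial representation class with the classes of the matrix-block units $e^\kappa_{11}$, exactly as in the $S_N^+$ case ($K=N$, all $n_\kappa=1$, $d=1$, giving $\Z^{N^2-2N+2}$, which specializes correctly). The bookkeeping of which projections generate $K_0$ and with what relations is where the block sizes enter and produce the gcd $d$.

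The main obstacle in a from-scratch proof would be exactly this last step: identifying the precise generators and relations in $K_0$ coming from the monoidal-equivalence linking algebra, since the passage from $K_0(C(O^+_F))\simeq\Z$ to $K_0(C(\Aut^+(B,\psi)))$ is not a plain isomorphism but involves an amplification/Morita-type argument block by block, and one must carefully show that the relations among the $[e^\kappa_{ij}]$ are precisely those giving $\Z^{K^2-2K+2}\oplus\Z_d^{2K-1}$. Since the paper only needs this result as an input (and in fact only needs the explicit generators in the special cases $B=\C^N$ and $B=M_N(\C)$, the latter being treated separately in the paper), I would not reproduce Voigt's proof here. The plan is therefore: state the result, attribute it to \cite{Vo17}, and refer the reader there for the monoidal-equivalence and Baum--Connes arguments underlying it.

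\begin{proof}
This is \cite[Theorem~1.2]{Vo17}. The $K$-amenability of $\Aut^+(B,\psi)$ and the computation of the $K$-theory groups of $C_\bullet(\Aut^+(B,\psi))$ are obtained there via the monoidal equivalence between $\Aut^+(B,\psi)$ (for a $\delta$-form $\psi$) and a free orthogonal quantum group, together with the validity of the strong Baum--Connes property for the latter; we refer to \cite{Vo17} for the details. The special cases relevant to the present paper, namely $B=\C^N$ (so $K=N$, $d=1$) and $B=M_N(\C)$ (so $K=1$, $d=N$), are recovered by specialization, and explicit generators of $K_0$ in these two cases are discussed below.
\end{proof}
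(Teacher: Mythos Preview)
Your proposal is correct and matches the paper's treatment: the paper does not prove this theorem at all but simply states it with the citation \cite{Vo17}, exactly as you do. Your additional sketch of Voigt's monoidal-equivalence and Baum--Connes argument is accurate context that the paper omits, but the bottom line is the same in both cases: cite and move on.
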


\noindent In the case $S_N^+$, it is also proved that the family of the following set of classes of projections $$\lbrace [u_{ij}],\, 1\leq i,j\leq N-1\rbrace\cup \lbrace1\rbrace$$ is a family of generators for $K_0(C_\bullet(S_N^+))$.

\begin{lemma}
If $H= \Aut^+(M_N(\C),\psi)$ then the class $[\beta(e_{11})]$ generates the free part of $K_0(C(H))$. That is to say $\Z \simeq \langle[\beta(e_{11})]\rangle \subset K_0(C(H)) \simeq \Z \oplus \Z_N$ is a direct factor.
\end{lemma}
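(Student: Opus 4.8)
The goal is to identify a concrete projection whose $K_0$-class generates the free part $\Z$ of $K_0(C(\Aut^+(M_N(\C),\psi)))\simeq\Z\oplus\Z_N$. The natural candidate is $[\beta(e_{11})]$, where $\beta\,:\,M_N(\C)\rightarrow M_N(\C)\ot C(H)$ is the universal action and $e_{11}\in M_N(\C)$ is a minimal (rank one) projection; since $\beta$ is a unital $*$-homomorphism, $\beta(e_{11})$ is indeed a projection in $M_N(\C)\ot C(H)$, and via the Morita equivalence $K_0(M_N(\C)\ot C(H))\simeq K_0(C(H))$ its class makes sense in $K_0(C(H))$. The strategy I would follow is to exhibit a $*$-homomorphism (or a sequence of them) from $C(H)$ to a commutative or finite-dimensional C*-algebra under which $[\beta(e_{11})]$ maps to a generator of the free part of the target's $K_0$, which forces $[\beta(e_{11})]$ to have infinite order and, more precisely, to be a generator of the $\Z$-summand.

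First I would use the counit $\varepsilon_H\,:\,C(H)\rightarrow\C$. Applying $\id_{M_N}\ot\varepsilon_H$ to $\beta(e_{11})$ gives, by the defining property $(\id\ot\varepsilon_H)\beta=\id$, exactly $e_{11}\in M_N(\C)$, whose class under the Morita isomorphism $K_0(M_N(\C))\simeq\Z$ is the generator $1$. Hence the composite $K_0(C(H))\xrightarrow{(\varepsilon_H)_*}K_0(\C)=\Z$ sends $[\beta(e_{11})]$ to $1$. This already shows $[\beta(e_{11})]$ generates a $\Z$ direct summand of $K_0(C(H))$: indeed a class $x$ in a finitely generated abelian group $\Z\oplus\Z_N$ for which there is a homomorphism to $\Z$ sending $x$ to a generator must itself generate a free direct factor (write $x=(a,b)$ with $a$ the image under some homomorphism realizing the coordinate projection; one checks $\gcd$ considerations force $a=\pm1$ once we know the free rank is exactly one, which is Voigt's theorem). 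To make this fully rigorous I would combine $(\varepsilon_H)_*$ with the known structure $K_0(C(H))\simeq\Z\oplus\Z_N$ from Voigt: since $(\varepsilon_H)_*$ kills torsion and is surjective onto $\Z$, its restriction to the free part $\Z$ is an isomorphism onto $\Z$, and $[\beta(e_{11})]=\xi+\tau$ with $\xi$ in the free part and $\tau$ torsion; applying $(\varepsilon_H)_*$ gives $(\varepsilon_H)_*(\xi)=1$, so $\xi$ is a generator of the free part, i.e. $\langle[\beta(e_{11})]\rangle$ maps onto that $\Z$-summand and the quotient $K_0(C(H))/\langle[\beta(e_{11})]\rangle$ is $\Z_N$ (or possibly $\Z_N\oplus$ something, but since we land exactly on a generator of the free $\Z$ it is $\Z_N$). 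That is precisely the assertion that $\Z\simeq\langle[\beta(e_{11})]\rangle\subset K_0(C(H))$ is a direct factor.

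The only genuine subtlety — and the step I expect to be the main obstacle — is the last sentence's claim about being a \emph{direct} factor rather than merely an infinite-order element: one must be careful that $[\beta(e_{11})]$ is not, say, $N$ times a generator plus torsion, which would still have infinite order but would not split off a copy of $\Z$ with complement $\Z_N$. The counit argument above resolves this because $(\varepsilon_H)_*$ sends $[\beta(e_{11})]$ to a \emph{generator} $1\in\Z$, not to a multiple; combined with the fact (from Voigt) that the free rank of $K_0(C(H))$ is exactly $1$ when $B=M_N(\C)$ (here $K=1$, so $K^2-2K+2=1$), this pins down $[\beta(e_{11})]$ up to torsion as a generator of $\Z$. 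I would write this out as a short lemma about abelian groups: if $A\simeq\Z\oplus T$ with $T$ finite and $\phi\,:\,A\rightarrow\Z$ is a homomorphism with $\phi(x)=1$ for some $x\in A$, then $A=\Z x\oplus(\ker\phi\cap\text{something})$ — more cleanly, $A=\Z x\oplus\ker\phi$ and $\ker\phi\simeq T$. Granting this, the proof is complete. (An alternative, perhaps cleaner, route would be to use K-amenability of $H$ — established by Voigt — to pass freely between $C(H)$ and $C_r(H)$, but this is not needed here since the counit lives on the full C*-algebra and the statement is about $C(H)$.)
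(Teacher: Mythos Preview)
Your proposal is correct and follows essentially the same approach as the paper: both use the counit relation $(\id\ot\varepsilon_H)\circ\beta=\id$ to produce a retraction $(\varepsilon_H)_*$ of $\beta_*$, which shows that $[\beta(e_{11})]=\beta_*([e_{11}])$ generates a $\Z$-direct factor. The paper phrases this as the composition $K_0(M_N(\C))\xrightarrow{\beta_*}K_0(M_N(\C)\ot C(H))\xrightarrow{(\id\ot\varepsilon_H)_*}K_0(M_N(\C))$ being the identity, while you phrase it via the abelian-group splitting lemma $A=\Z x\oplus\ker\phi$ when $\phi(x)=1$; these are the same argument, and your extra discussion of why the element cannot be a proper multiple of a generator is already subsumed by the splitting.
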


\begin{proof}
We use the counit $\varepsilon_H$ which is a unital $*$-morphism from $C(H)$ to $\C$. We have that $(\id\ot \varepsilon_H)\beta = \id$. Therefore the map $\varepsilon_{H\ast}$ is a left-inverse to the map $\beta_\ast$. Hence: $$ \Z \simeq K_0(M_N(\C))\xrightarrow{\beta_\ast} K_0(M_N(\C)\ot C(H))\xrightarrow{\varepsilon_\ast} K_0(M_N(\C))\simeq \Z, $$ with the composition being the identity morphism. This implies that $[\beta(e_{11})]= \beta_\ast ([e_{11}])$ generates a direct factor.
\end{proof}

\noindent As a consequence we can compute the K-theory groups of any free wreath product where $G$ is a finite abelian group and $H$ is $\Aut^+(M_N(\C),\psi)$. We give the following proposition for quantum groups of the form $\Z_s \w \Aut^+(M_N(\C),\psi)$.

\begin{theorem}\label{thmEtxt}
For $s,N\in \N$, and any faithful state $\psi\in M_N(\C)^*$, the compact quantum group $\GG :=\Z_s \w \Aut^+(M_N(\C),\psi)$ is K-amenable and the K-theory groups of $C_\bullet(\GG)$ are:
$$K_0(C_\bullet(\GG))\simeq \Z^{s}\oplus \Z_N\quad\text{and}\quad K_1(C_\bullet(\GG))\simeq \Z.$$
\end{theorem}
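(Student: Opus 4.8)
The plan is to apply the machinery just developed, specializing to $G=\Z_s$, $H=\Aut^+(M_N(\C),\psi)$, $B=M_N(\C)$, $K=1$, and trivial amalgam $F$. First I would invoke $K$-amenability: since $\widehat{\Z_s}$ is $K$-amenable (it is a finite, hence amenable, discrete group) and $\widehat{\Aut^+(M_N(\C),\psi)}$ is $K$-amenable by Voigt's theorem, Theorem \ref{thmDtxt} gives that $\widehat{\GG}$ is $K$-amenable, so $C(\GG)$ and $C_r(\GG)$ are $KK$-equivalent and it suffices to compute with $C_\bullet(\GG)$, writing everything at the full level and using the maximal tensor product (which equals the minimal one since $C(\Z_s)=\C^s$ is nuclear).

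Next I would set up the exact sequence. With $K=1$ there is a single block algebra $C_{\bullet,1}(\GG)=C_\bullet(\GG)$ and $C_{\bullet,1}(\Gtilde)\simeq (M_N(\C)\ot C_\bullet(\Z_s))*_{M_N(\C)}(M_N(\C)\ot C_\bullet(H))\simeq M_N(\C)\ot C_\bullet(\GG)$, so by Morita invariance $K_i(C_\bullet(\GG))\simeq K_i(M_N(\C)\ot C_\bullet(\GG))$. Since $F$ is trivial, $\iota_*:\Z=K_0(\C)\to K_0(C_\bullet(\Z_s))$ is injective (Proposition \ref{1nontor}), and $\Irr(F)$ is finite, so Proposition \ref{exactseq1} yields the short exact sequence
\begin{equation*}
0\to K_0(M_N(\C))\to K_0(M_N(\C)\ot C_\bullet(\Z_s))\oplus K_0(M_N(\C)\ot C_\bullet(H))\to K_0(M_N(\C)\ot C_\bullet(\GG))\to 0
\end{equation*}
together with an isomorphism $K_1(M_N(\C)\ot C_\bullet(\GG))\simeq K_1(M_N(\C)\ot C_\bullet(\Z_s))\oplus K_1(M_N(\C)\ot C_\bullet(H))$. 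Now $K_0(C(\Z_s))=\Z^s$, $K_1(C(\Z_s))=0$; and by Voigt's theorem with $K=1$, $d=N$: $K_0(C_\bullet(H))=\Z\oplus\Z_N$, $K_1(C_\bullet(H))=\Z$. Hence the $K_1$ side immediately gives $K_1(C_\bullet(\GG))\simeq 0\oplus\Z=\Z$, after stripping the $M_N(\C)$ amplifications.

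For $K_0$ the short exact sequence reads $0\to\Z\to\Z^s\oplus(\Z\oplus\Z_N)\to K_0(C_\bullet(\GG))\to 0$, and the key point is to identify the connecting map precisely so that the cokernel is $\Z^s\oplus\Z_N$ rather than, say, $\Z^{s-1}\oplus\Z_N\oplus(\text{something})$. As noted after Proposition \ref{exactseq1}, the generator $[e_{11}]\in K_0(M_N(\C))$ maps to $[e_{11}\ot 1]-[\beta(e_{11})]$, i.e. to the difference of the class of the unit generator of the first summand $K_0(M_N(\C)\ot C(\Z_s))$ (corresponding to the $\gamma$-th copy with $\gamma=1$ since $K=1$, and $f=1$ since $F$ trivial) and the class $[\beta(e_{11})]\in K_0(M_N(\C)\ot C(H))$. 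By the lemma just proved, $[\beta(e_{11})]$ generates a free direct factor $\Z$ of $K_0(C(H))=\Z\oplus\Z_N$, complementary to the torsion $\Z_N$. Therefore the image of the connecting map is the diagonal copy of $\Z$ sitting inside $\langle[1]\rangle_{\text{first }\Z\text{-summand of }\Z^s}\oplus\langle[\beta(e_{11})]\rangle\subset\Z^s\oplus\Z$, and the quotient $(\Z^s\oplus\Z\oplus\Z_N)/\Z_{\text{diag}}\simeq\Z^s\oplus\Z_N$: the diagonal $\Z$ "absorbs" the free summand $\langle[\beta(e_{11})]\rangle$ of $K_0(C(H))$ while leaving the $\Z^s$ and the torsion $\Z_N$ untouched (one chooses a basis of $\Z^s\oplus\Z$ in which the diagonal element is a basis vector). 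Undoing the Morita amplifications gives $K_0(C_\bullet(\GG))\simeq\Z^s\oplus\Z_N$.

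The main obstacle is the last identification: one must be careful that $[\beta(e_{11})]$ is genuinely a \emph{primitive} generator of a free rank-one direct summand of $K_0(C(H))$ (which is exactly the content of the preceding lemma, via the counit splitting), and that the first component of the connecting map is $\pm$ a generator of one of the $\Z$'s in $\Z^s=K_0(C(\Z_s))$ (namely the class of the trivial projection $1$, which under $K_0(\C^s)=\Z^s$ is the all-ones vector $(1,\dots,1)$, still primitive). Granting both primitivity statements, the cokernel computation is elementary linear algebra over $\Z$. I would also double-check that the $M_N(\C)$-amplifications commute with all the maps in the six-term sequence compatibly (Lemma \ref{Morita} and Morita invariance of $K$-theory handle this), so that the final answer is stated for $C_\bullet(\GG)$ itself and not merely for $M_N(\C)\ot C_\bullet(\GG)$.
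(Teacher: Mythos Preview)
Your proposal is correct and follows essentially the same approach as the paper: both use the single-block identification $M_N(\C)\ot C_\bullet(\GG)\simeq C_{\bullet,1}(\Gtilde)$, apply the six-term exact sequence of Proposition~\ref{exactseq1} with trivial $F$, read off $K_1$ immediately, and compute $K_0$ as the cokernel $(\Z^s\oplus\Z\oplus\Z_N)/\langle[1]-[\beta(e_{11})]\rangle$ using the lemma that $[\beta(e_{11})]$ generates a free direct summand and that $[1]=(1,\dots,1)$ is primitive in $\Z^s$. Your write-up is in fact a bit more explicit than the paper's on two points the paper leaves tacit: the $K$-amenability claim (which you correctly derive from Theorem~\ref{thmDtxt} and Voigt's theorem) and the linear-algebra justification that the quotient by the diagonal $\Z$ really yields $\Z^s\oplus\Z_N$.
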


\begin{proof}
In the case $B= M_N(\C)$, we have the isomorphism $M_N(\C) \ot C(\GG) \simeq C(\Gtilde)$. Therefore we can compute the K-theory groups of $C(\Gtilde)$ only. The K-theory groups of $C(\Z_s)\simeq \C^s$ are 
$$
    K_0(\C^s)\simeq \Z^{s}\quad\text{and}\quad
    K_1(\C^s)\simeq 0.
$$
A basis of generators for $K_0(\C^s)\simeq \Z^{s}$ is the classes $[e_i]$ for $1\leq i\leq s$ but we remark that we can replace the last generator by the class $[1]= \sum_{i=1}^s [e_i]$, so that $\Z\cdot [1]$ is a direct factor in it. We immediately have that
$$K_1(C(\GG))\simeq K_1(C(\Gtilde))\simeq K_1(M_N(\C)\ot C(H))\simeq \Z.$$
For the $K_0$ group we use the exact sequence
\begin{align*}
    0\rightarrow K_0(M_N(\C)) \rightarrow K_0(M_N(\C)\ot C(\Z_s))\oplus K_0(M_N(\C)\ot C(H)) \rightarrow K_0(C(\Gtilde))\rightarrow 0.
\end{align*}
This gives the isomorphism $$ K_0(C(\Gtilde))\simeq \Z^s\oplus \Z \oplus \Z_N / \langle [1]-[\beta(e_{11})] \rangle. $$
By the previous discussions on the generators, we know that $\Z[1]$ and $\Z[\beta(e_{11})]$ are direct factors, and therefore we get that $K_0(C(\GG))\simeq K_0(C(\Gtilde))\simeq \Z^s \oplus \Z_N$.
\end{proof}

\noindent We can also do the same for the dual of the free groups on $t$ generators, K-amenable thanks to the results of Cuntz \cite{Cun82}.
\begin{proposition}
For $t,N\in \N$, the compact quantum group $\GG :=\widehat{\F_t} \w \Aut^+(M_N(\C),\psi)$ is K-amenable and the K-theory groups of $C_\bullet(\GG)$ are 
$$
    K_0(C_\bullet(\GG))\simeq \Z\oplus \Z_N\quad\text{and}\quad
    K_1(C_\bullet(\GG))\simeq \Z^{t+1}.
$$
\end{proposition}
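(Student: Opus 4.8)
The plan is to mimic exactly the proof of Theorem \ref{thmEtxt}, replacing the ingredient $K_\bullet(C_\bullet(\Z_s)) = K_\bullet(\C^s)$ by $K_\bullet(C_\bullet(\widehat{\F}_t)) = K_\bullet(C^*(\F_t))$. First I would invoke Theorem \ref{thmDtxt}: since $\widehat{\F}_t$ is $K$-amenable by Cuntz's theorem \cite{Cun82} and $\widehat{\Aut^+(M_N(\C),\psi)}$ is $K$-amenable by Voigt's theorem, the free wreath product $\GG = \widehat{\F}_t \wr_* \Aut^+(M_N(\C),\psi)$ is $K$-amenable; hence it suffices to compute the $K$-theory of either the full or reduced C*-algebra, and I can work with the full one. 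Next, because $B = M_N(\C)$ has a single matrix block, the block-extended C*-algebra satisfies $M_N(\C) \ot C_\bullet(\GG) \simeq C_\bullet(\Gtilde)$, so by Morita invariance it is enough to compute $K_\bullet(C_\bullet(\Gtilde))$.

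The key computation uses the short exact sequence from Proposition \ref{exactseq1} (with $F$ trivial, $\kappa = K = 1$, $N_\kappa = N$). Here $\iota_*\colon \Z = K_0(\C) \to K_0(C^*(\F_t)) = \Z$ is injective (it sends $[1]$ to $[1]$, which is a nontorsion generator, by Proppsition \ref{1nontor}), and $K_1(B\ot C(F)) = 0$. Recall the classical computation $K_0(C^*(\F_t)) = \Z$ generated by $[1]$ and $K_1(C^*(\F_t)) = \Z^t$. So the $K_1$ half of the six-term sequence immediately gives
$$K_1(C_\bullet(\GG)) \simeq K_1(C_\bullet(\Gtilde)) \simeq K_1(M_N(\C)\ot C^*(\F_t)) \oplus K_1(M_N(\C)\ot C_\bullet(H)) \simeq \Z^t \oplus \Z \simeq \Z^{t+1},$$
using Voigt's $K_1(C_\bullet(\Aut^+(M_N(\C),\psi))) = \Z$. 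For the $K_0$ half we get the short exact sequence
$$0 \to K_0(M_N(\C)) \to K_0(M_N(\C)\ot C^*(\F_t)) \oplus K_0(M_N(\C)\ot C_\bullet(H)) \to K_0(C_\bullet(\Gtilde)) \to 0,$$
that is $0 \to \Z \to \Z \oplus (\Z \oplus \Z_N) \to K_0(C_\bullet(\GG)) \to 0$, where the map sends the generator $[e_{11}]$ of $K_0(M_N(\C))$ to $([e_{11}\ot 1], -[\beta(e_{11})])$, i.e. $[1] - [\beta(e_{11})]$ after identifying $[e_{11}\ot 1]$ with the class $[1]$ of the unit of $M_N(\C)\ot C^*(\F_t)$.

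The only remaining point — and the one place where a small amount of care is needed rather than routine bookkeeping — is to see that this quotient is exactly $\Z \oplus \Z_N$ and in particular has no extra torsion. For this I would use the two facts established just before Theorem \ref{thmEtxt}: the class $[1] = [e_{11}\ot 1] \in K_0(M_N(\C)\ot C^*(\F_t))$ generates a direct summand $\Z$ (split off by any character of $C^*(\F_t)$), and the class $[\beta(e_{11})] \in K_0(M_N(\C)\ot C_\bullet(H))$ generates a direct $\Z$-summand of $K_0(C_\bullet(H)) \simeq \Z \oplus \Z_N$ (split off using $\varepsilon_H$, exactly as in the Lemma preceding Theorem \ref{thmEtxt}). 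Consequently the inclusion $\Z \hookrightarrow \Z[1] \oplus \Z[\beta(e_{11})]$ identifies the anti-diagonal copy of $\Z$ inside a direct $\Z^2$-summand, and quotienting kills one free generator cleanly, leaving $K_0(C_\bullet(\Gtilde)) \simeq \Z \oplus \Z_N$; Morita invariance then gives $K_0(C_\bullet(\GG)) \simeq \Z \oplus \Z_N$, completing the proof.
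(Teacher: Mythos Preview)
Your proposal is correct and follows exactly the paper's approach: the paper's proof simply says ``The proof is exactly the same as the previous case'' (i.e., Theorem~\ref{thmEtxt}), records $K_0(C^*\F_t)\simeq\Z$ with generator $[1]$ and $K_1(C^*\F_t)\simeq\Z^t$, and observes that the two $\Z$'s in $K_0$ get identified. Your write-up is a faithful (and more detailed) unpacking of precisely that argument.
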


\begin{proof}
The proof is exactly the same as the previous case. Using that:
\begin{align*}
    K_0(C^\ast\F_t)&\simeq \Z, \text{ the generator being } [1],\\
    K_1(C^\ast\F_t)&\simeq \Z^t.
\end{align*}
We get that the two copies of $\Z$ in $K_0$ are identified, and the result.
\end{proof}

\bibliography{ref.bib}
\bibliographystyle{amsalpha}

\end{document}